\let\pr@chap=\pr@cha
\newcommand{\noun}[1]{\textsc{#1}}
\theoremstyle{plain}
\newtheorem{thm}{\protect\theoremname}[section]
\theoremstyle{plain}
\newtheorem{cor}[thm]{\protect\corollaryname}
\theoremstyle{plain}
\newtheorem{prop}[thm]{\protect\propositionname}
\theoremstyle{definition}
\newtheorem{defn}[thm]{\protect\definitionname}
\theoremstyle{plain}
\newtheorem{lem}[thm]{\protect\lemmaname}
\theoremstyle{definition}
\newtheorem{example}[thm]{\protect\examplename}
\theoremstyle{remark}
\newtheorem{rem}[thm]{\protect\remarkname}
\newcommand\thmsname{\protect\theoremname}
\newcommand\nm@thmtype{theorem}
\theoremstyle{plain}
\newenvironment{namedthm}[1][Undefined Theorem Name]{
  \ifx{#1}{Undefined Theorem Name}\renewcommand\nm@thmtype{theorem*}
  \else\renewcommand\thmsname{#1}\renewcommand\nm@thmtype{namedtheorem}
  \fi
  \begin{\nm@thmtype}}
  {\end{\nm@thmtype}}
\providecommand*{\code}[1]{\texttt{#1}}
\newcommand{\plainqed}[1][\blacklozenge]{%
  \leavevmode\unskip\penalty9999 \hbox{}\nobreak\hfill
  \quad\hbox{\ensuremath{#1}}}
\newcommand{\definitionqed}[1][\lozenge]{%
  \leavevmode\unskip\penalty9999 \hbox{}\nobreak\hfill
  \quad\hbox{\ensuremath{#1}}}
\newcommand{\remarkqed}[1][\lozenge]{%
  \leavevmode\unskip\penalty9999 \hbox{}\nobreak\hfill
  \quad\hbox{\ensuremath{#1}}}
\renewcommand\[{\begin{equation}}
\renewcommand\]{\end{equation}}
\renewcommand*{\tableofcontents}{\@starttoc{toc}}
\providecommand{\corollaryname}{Corollary}
\providecommand{\definitionname}{Definition}
\providecommand{\examplename}{Example}
\providecommand{\lemmaname}{Lemma}
\providecommand{\propositionname}{Proposition}
\providecommand{\remarkname}{Remark}
\providecommand{\theoremname}{Theorem}
\begin{document}
\global\long\def\Identity{\operatorname{id}}%

\global\long\def\image{\operatorname{im}}%

\global\long\def\Isometry{\operatorname{Isom}}%

\global\long\def\eval{\operatorname{eval}}%

\global\long\def\projection{\operatorname{pr}}%

\global\long\def\MCG{\operatorname{MCG}}%

\global\long\def\coker{\operatorname{coker}}%

\global\long\def\subsetopen{\ensuremath{\mathrel{\subseteq\hspace*{-0.733em}\raisebox{0.133em}{\ensuremath{{\scriptstyle \circ}}}}}}%

\global\long\def\supsetopen{\ensuremath{\mathrel{\supseteq\hspace*{-0.866em}\raisebox{0.133em}{\ensuremath{{\scriptstyle \circ}}}}}}%

\makeatletter
\@ifpackageloaded{latexml}{
\global\long\def\subsetopen{\mathrel{\mathring{\subseteq}}}%
\global\long\def\supsetopen{\mathrel{\mathring{\supseteq}}}%
}{}
\makeatother

\global\long\def\ZPrim#1{\mathbb{Z}_{\mathrm{prim}}^{#1}}%

\global\long\def\AffineGroupZ{\mathrm{Aff}\left(1,\mathbb{Z}\right)}%

\global\long\def\gradient{\operatorname{grad}}%

\global\long\def\StandardFibration{\xi}%

\global\long\def\Exp{\operatorname{\mathcal{E}}}%

\global\long\def\tildetimes{\mathbin{\widetilde{\times}}}%

\global\long\def\Disk#1{D^{#1}}%

\global\long\def\Sphere#1{S^{#1}}%

\global\long\def\Torus#1{T^{#1}}%

\global\long\def\Lens#1#2{L\left(#1,#2\right)}%

\global\long\def\RP#1{\mathbb{R}P^{#1}}%

\global\long\def\CP#1{\mathbb{C}P^{#1}}%

\global\long\def\MappingTorus#1#2{\mathrm{MT}_{#2}\left(#1\right)}%

\global\long\def\GL#1#2{\mathrm{GL}\left(#1,\mathbb{#2}\right)}%

\global\long\def\SL#1#2{\mathrm{SL}\left(#1,\mathbb{#2}\right)}%

\global\long\def\Orthogonal#1{\mathrm{O}\left(#1\right)}%

\global\long\def\SOrthogonal#1{\mathrm{SO}\left(#1\right)}%

\global\long\def\Unitary#1{\mathrm{U}\left(#1\right)}%

\global\long\def\SUnitary#1{\mathrm{SU}\left(#1\right)}%

\global\long\def\trace{\operatorname{tr}}%

\global\long\def\Center{\operatorname{Center}}%

\global\long\def\Nil{\mathrm{Nil}}%

\global\long\def\Sol{\mathrm{Sol}}%

\global\long\def\SLtilde{\widetilde{\mathrm{SL}_{2}}}%

\global\long\def\CircleGroup{\mathbb{T}}%

\global\long\def\Conformal{\operatorname{Conf}}%

\global\long\def\Affine{\operatorname{Aff}}%

\global\long\def\Heat#1{\operatorname{\mathcal{H}eat}^{#1}}%

\global\long\def\PBundle{\operatorname{\mathsf{PBun}}}%

\global\long\def\Bundle{\operatorname{\mathsf{Bun}}}%

\global\long\def\ClassFib{\operatorname{\mathsf{Fib}}}%

\global\long\def\ClassMan{\operatorname{\mathsf{Man}}}%

\global\long\def\Classifying{\operatorname{B\!}}%

\global\long\def\Homeo{\operatorname{\mathscr{H}omeo}}%

\global\long\def\Diff{\operatorname{\mathscr{D}iff}}%

\global\long\def\Emb{\operatorname{\mathscr{E}mb}}%

\global\long\def\Subm{\operatorname{\mathscr{S}ubm}}%

\global\long\def\Immersion{\operatorname{\mathscr{I}mm}}%

\global\long\def\Aut{\operatorname{\mathscr{A}ut}}%

\global\long\def\Vau{\operatorname{\mathscr{V}au}}%

\global\long\def\Gauge{\operatorname{\mathscr{G}au}}%

\global\long\def\Shape{\operatorname{\mathscr{S}hap}}%

\global\long\def\Fib{\operatorname{\mathscr{F}ib}}%

\global\long\def\MappingSpace{\operatorname{\mathscr{C}}}%

\global\long\def\Tautological{\operatorname{\mathscr{T}aut}}%

\global\long\def\LieExp#1{\operatorname{\exp}_{#1}^{\mathsf{Lie}}}%

\global\long\def\HTransport{\tau^{\mathrm{h}}}%

\global\long\def\xtwoheadrightarrow#1#2{\xrightarrow[#1]{#2}\mathrel{\mkern-14mu  }\rightarrow}%

\global\long\def\StandardProjection{\pi}%

\global\long\def\AutProjection{\Pi}%

\global\long\def\DiffProjection{Q}%

\global\long\def\smallblacktriangleright{\blacktriangleright}%

\newgeometry{tmargin=.5in,bmargin=1in}
\singlespacing
\title{On Topology of the Infinite-Dimensional Space of Fibrations}
\author{Ziqi Fang}

\maketitle
\thispagestyle{empty}
\begin{abstract}
\noindent This work serves as an opening and basis of an ongoing
program investigating topological and geometric aspects of the moduli
space of smooth fibering structures on a manifold. The present paper
focuses on the algebraic and differential topology of this space,
and particularly addresses the following three quests in a top-down
manner: the classification, for each class the path components, and
for each component the homotopy type (as loosely analogous to the
those three for studying the moduli of smooth structures: exotic manifolds,
mapping class groups, and Smale-type conjectures). The last of the
three is infinite-dimensional in nature, as the corresponding moduli
space is shown to inherit the structure of a smooth Fréchet manifold
from the diffeomorphism group through a (infinite-dimensional) principal
bundle, with which we establish further connections with the Lie theory
of gauge symmetries from one perspective, and with the geometric analysis
of extrinsic flows from another. Concretely, we tackle the problem
of finding the ``homotopy core'': a minimal deformation retract
that encodes the topological structure of such a moduli space of fibrations;
as our first examples, we gave explicit homotopy calculations for
various low-dimensional cases which, combined with earlier known cases
from others' work, complete the solution to this problem for dimensions
up to three.
\end{abstract}
\tableofcontents{}

\onehalfspacing

\restoregeometry

\newpage{}

\chapter*{Introduction\label{chap:Introduction}}

\addcontentsline{toc}{chapter}{Introduction}

\section*{Preface to the General Program}

\paragraph*{The main object of study}

This research program focuses primarily on the topological and geometric
aspects of the \emph{(moduli) space of fiberings}. Intuitively, this
space is an infinite-dimensional geometric entity that parametrizes
the ways to partition a given manifold into unlabeled fibers in a
certain coherent manner—whether they be smooth fiber bundles or other
fibration-like structures. More generally, moduli spaces of various
related differential-geometric structures on manifolds are among the
objects of interest.

\paragraph*{Main questions under consideration}

We found that this program can be effectively guided by a particular
question ``what is the shape of the space of fiberings?'' or, more
precisely, the quest for its homotopy type. Our view is through three
lenses: categorial, discrete, and continuum, as corresponding to the
following three questions.
\begin{itemize}[noitemsep]
\item How to classify these fiberings up to fibering equivalence?
\item How to count the path components in the moduli space of fiberings
for each class?
\item How to model the homotopy type of each component of this moduli space?
\end{itemize}
To draw analogy, this can be loosely compared to the study on the
space of smooth structures, where the analogous three lenses are:
the classification of exotic smooth manifolds, the determination of
mapping class groups, and the homotopy modeling of the identity-isotopic
diffeomorphism groups—the third and last of which is exemplified by
various generalizations of the Smale conjecture, which shares a common
infinite-dimensional theme with our study where the (infinite-dimensional)
topology and geometry naturally emerges.

\paragraph*{Sample theorems}

This research yields precise information on low-dimensional manifolds:
we model the homotopy type of the space of fiberings on a certain
minimal deformation retract (its ``core''), where privileged symmetries
on the ambient manifold manifest themselves. To give a taste, here
are two visualizable theorems sampled from the current paper. One
theorem shows that the space of oriented circle fiberings on the 2-torus
deformation retracts to an affine model consisting of rational linear
fiberings, and thereby has the homotopy type of a discrete space of
coprime pairs. The other theorem shows that the space of oriented
circle fiberings on the real projective 3-space deformation retracts
to an isometric model consisting of Hopf fiberings, and thereby has
the homotopy type of a pair of disjoint 2-spheres. Each of these moduli
spaces is endowed with a natural (infinite-dimensional) smooth manifold
structure, which further allows us to deduce its topological type
as well as to perform differential geometry on it.

\paragraph*{Relation to Existing work}

The intended general program emerges from the author's doctoral dissertation\nocite{me2024thesis},
revised and expanded in the current manuscript as an opening project
of the program. In turn, it initially grew out of a collaborative
study with \noun{DeTurck}, \noun{Gluck}, \noun{Lichtenfelz}, \noun{Merling},
\noun{Wang}, and \noun{Yang} \cite{deturck2025homotopytypespacefibrations}
determining the homotopy type of the space of fiberings on the 3-sphere,
with respect to which the present work is intended to formalize a
general theory out of the special instances. Existing works by other
authors in various different aspects have been consulted. This includes,
in most direct relevance, work of \noun{Hong}, \noun{Kalliongis},
\noun{McCullough}, \noun{Rubinstein}, and \noun{Soma} \cite{MR2976322,MR3024309}
proving the component-contractibility of the space of Seifert fiberings
for a large share of three-dimensional cases (e.g., for those compact
orientable Seifert 3-manifolds that are either Haken or over a hyperbolic
base), where a robust framework was developed to connect the underlying
topologies between the spaces of Seifert fiberings and the groups
of diffeomorphisms—this framework is taken as one of the bases that
the present work is built upon. In particular, this enables access
to extensive studies regarding homotopy types of diffeomorphism groups
such as, in low dimensions, the Smale-type conjectures as proved by
work of \noun{Smale}, \noun{Earle}, \noun{Eells}, \noun{Hatcher},
\noun{Cerf}, \noun{Ivanov}, \noun{Gabai}, \noun{Hong}, \noun{Kalliongis},
\noun{McCullough}, \noun{Rubinstein}, \noun{Soma}, \noun{Bamler},
and \noun{Kleiner} \cite{MR112149,MR276999,MR229250,MR448370,MR661467,MR624946,MR701256,MR1895350,MR2976322,MR3024309,bamler2019ricci,MR4536904,MR4685084},
relying on which we set out to tailor our theory to complete the answer
of our three main questions for all cases in dimensions up to three
(recovering the above contractible cases and settling the remaining
noncontractible cases), firstly for regular fiberings (as accomplished
in the present paper) and then for Seifert fiberings in general (as
undertaken in the ensuing papers). On the other hand, our program
is intended to advance beyond the plain topological category and contextualize
the homotopy types of our moduli spaces. In particular, one of the
overarching objectives in this program is to perform infinite-dimensional
differential geometry on our moduli space (in the same spirit as the
line of research in geometries on diffeomorphisms by other authors,
as traced back to, e.g., \noun{Arnold} \cite{MR202082}, \noun{Ebin}
and \noun{Marsden} \cite{MR0271984}). To this end, the present work
devotes space to setting up the scene in the smooth Fréchet category
(e.g., endowing our moduli space with a homogeneous smooth structure
of the Lie group of total diffeomorphisms), and more generally introducing
tools on infinite-dimensional manifolds and Lie groups from work of
\noun{Palais}, \noun{Hamilton}, \noun{Milnor}, \noun{Michor},
\noun{Neeb}, \noun{Glöckner}, and \noun{Wockel}, and \noun{Schmeding}
\cite{MR189028,MR0583436,MR0656198,MR830252,glockner2016fundamentalssubmersionsimmersionsinfinitedimensional,MR1935553,MR2261066,MR2353707,MR2743767,MR4376531}
(also the textbook account by \noun{Schmeding} \cite{MR4505843},
as well as monographs by \noun{Kriegl \& Michor} \cite{MR1471480},
\noun{Khesin \& Wendt} \cite{MR2456522}, and (forthcoming) \noun{Glöckner \& Neeb}
\cite{GlocknerNeeb202x}). To further deepen the connection between
the infinite-dimensional topology on our moduli space and the finite-dimensional
geometry on individual fibrations themselves, we bring in the dynamical
perspective by promoting each homotopy type to a concrete minimal
deformation retract (the ``core''), and thereby facilitates our
ensuing study on classical realizations in terms of extrinsic geometric
flows; as a first example, we realize the homotopy type of the space
of fiberings on the 2-torus by applying the curve-shortening flow
from work of \noun{Gage}, \noun{Hamilton}, and \noun{Grayson} \cite{MR742856,MR840401,MR906392,MR979601,MR1046497}.
Note that in turn, this example can be reversely fed into our machinery
to recover the aforementioned theorem of \noun{Earle} and \noun{Eells}
\cite{MR276999} on the homotopy type of the diffeomorphism group
of the 2-torus; this motivates us to complement our topological approach
orthogonally with a constructive, geometric-analytic approach that
seeks to deform our fibering spaces with extrinsic geometric flows,
as can be related to recent work of \noun{Bamler} and \noun{Kleiner}
\cite{MR4536904} that sought to deform their diffeomorphism groups
with Ricci flows. Last but not least, throughout there is an implicit
but deep influence on this research by the seminal work of \noun{Gluck}
and \noun{Warner} \cite{MR0700132} on the space of great-circle
fiberings on the 3-sphere, which I perceive as a \emph{soul} for
our general study—in both the metaphorical and the mathematical sense.\footnote{It is the classical \emph{soul theorem} \cite{MR309010} that I meant
to make reference to in this pun, where the intended assertion is
that the space $\Fib$ of circle fiberings deformation retracts to
the subspace $\Fib_{\mathrm{g}}$ of great-circle fiberings as its
\emph{soul}, in the sense that $\Fib_{\mathrm{g}}$ is a special
closed embedded submanifold—satisfying the totally-geodesic and totally-convex
conditions formulated suitably—such that its normal bundle $N\Fib_{\mathrm{g}}$
is diffeomorphic to full space $\Fib$, as supported by our results
that both $\Fib_{\mathrm{g}}$ and $\Fib$ deformation retract to
the same core (i.e., a pair of projective planes consisting of the
Hopf fibrations).}

\section*{Overview of the present study}

\paragraph*{Basic structures on the space of fiberings}

For simplicity of exposition, let us focus on regular fiberings to
start with, meaning those modeled on (compact) smooth fibrations without
singularity. Given such a model fibration $\StandardFibration\colon F\hookrightarrow E\to B$,
we topologize its equivalence class into a desired moduli space. One
approach is to endow this class with a homogeneous structure by viewing
it as the orbit of $\StandardFibration$ under the ambient transformations
``along for the ride''. More precisely, such ambient transformations
on $E$ are supplied by the diffeomorphism group $\Diff\left(E\right)$
equipped with the $C^{\infty}$-topology, while those that stabilize
$\StandardFibration$ form a subgroup $\Aut\left(\StandardFibration\right)$
called the \emph{automorphism group} of $\StandardFibration$; the
resulting coset space, denoted by $\Fib\left(\StandardFibration\right)$,
thus gives a formulation of the \emph{(moduli) space of fiberings}
modeled on $\StandardFibration$:
\[
\Fib\left(\StandardFibration\right)=\Diff\left(E\right)/\Aut\left(\StandardFibration\right).
\]
These spaces admit structures of infinite-dimensional topological
manifolds modeled on separable Fréchet spaces, which allow us to promote
their weak homotopy types into strong ones, and to further deduce
their topological types. Various desirable strengthening of these
structural results is being further investigated in the present work,
particularly including: i) promotion of these homotopy types to \emph{strong deformation retracts},
and ii) promotion of these topological structures to \emph{(infinite-dimensional) smooth structures}.
This leads to differentio-geometric considerations on these infinite-dimensional
transformation groups and moduli spaces, whose interplay with the
geometry on the pertinent compact manifolds (i.e., the ambient $E$,
the fiber $F$, and the base $B$) makes one of the main themes in
this research.

\paragraph*{Interplay with diffeomorphism groups}

A recurring theme in our study of the space of fiberings is to establish
its connections with the \emph{diffeomorphism groups} of the ambient
$E$, the fiber $F$, and the base $B$. To illustrate, let me give
a terse synopsis that samples some of such ideas. We start with a
complete classification of smooth fiberings, which we achieve by approximations
to topological classifying objects such as the homotopy set $\left[B,\mathrm{BDiff}\left(F\right)\right]$
or the Čech cohomology set $\check{H}^{1}\left(B,\Diff\left(F\right)\right)$,
of which we take the orbit space under the natural action of the mapping
class group $\pi_{0}\Diff\left(B\right)$. Each orbit will be the
underlying set of a moduli space $\Fib\left(\StandardFibration\right)$
as defined above, which we then study by means of successively fibrating
the ambient transformation group $\Diff\left(E\right)$, starting
with the following two (infinite-dimensional) fibrations:
\[
\Aut\left(\StandardFibration\right)\hookrightarrow\Diff\left(E\right)\to\Fib\left(\StandardFibration\right),\qquad\Vau\left(\StandardFibration\right)\hookrightarrow\Aut\left(\StandardFibration\right)\to\Diff\left(B\right)_{\StandardFibration}.
\]
Such fibrations allow us to pass topological information back and
forth among these spaces. Here,
\begin{itemize}
\item $\Vau\left(\StandardFibration\right)$, the \emph{vertical automorphism group}
of $\StandardFibration$, is a closed normal subgroup of $\Aut\left(\StandardFibration\right)$
consisting of those that preserve individual fibers. We view it in
two ways: as a Fréchet Lie group modeled on the Lie algebra of vertical
vector fields for $\StandardFibration$, and as an isotropy subgroup
in the Čech cochain group $\check{C}\left(\mathcal{U},\Diff\left(F\right)\right)$
that stabilizes the structural cocycle of $\StandardFibration$.
\item $\Diff\left(B\right)_{\StandardFibration}$, the \emph{basic transformation group}
of $\StandardFibration$, is an open subgroup of $\Diff\left(B\right)$
consisting of those that lift to automorphisms. We view it in two
ways: as a Fréchet Lie group modeled on the Lie algebra of basic vector
fields for $\StandardFibration$, and as an isotropy subgroup in the
mapping class group $\pi_{0}\Diff\left(B\right)$ that stabilizes
the classifying class of $\StandardFibration$.
\end{itemize}
This demonstrates a chain of interrelated objects central to our study,
all of which in turn have tight connections with various diffeomorphism
groups. For manifolds of dimensions up to three, homotopy types of
diffeomorphism groups have been extensively studied and understood
due to work of many mathematicians as mentioned above, which thereby
provides a solid ground for our low-dimensional investigations. In
what follows we shall sample some archetypal cases illustrating a
variety of ideas, and toward the end we shall also see how the space
of fiberings can be studied in its own right hence, conversely, shedding
light on the diffeomorphism groups.

\paragraph*{Spaces of regular fiberings on surfaces}

In dimension 2, there is essentially only one regular case: The only
closed surface that admits oriented circle fiberings is the 2-torus
$\Torus 2$, on which all such fiberings form a single equivalence
class, as represented by the standard coordinate projection $\StandardFibration_{0}\colon\Torus 2\to\Sphere 1$.
The corresponding moduli space of fiberings, $\Fib\left(\StandardFibration_{0}\right)$,
is proven in this work to deformation retract onto a ``core'' consisting
of \emph{rational linear fiberings} (those affinely equivalent to
$\StandardFibration_{0}$), and hence has the homotopy type of a discrete
space of coprime pairs (by telling the ``slope''):
\[
\Fib\left(\StandardFibration_{0}\colon\Sphere 1\hookrightarrow\Torus 2\to\Sphere 1\right)\simeq\left\{ \left(a,b\right)\in\mathbb{Z}^{2}\mid\gcd\left(a,b\right)=1\right\} .
\]
This is contrasted with the complementary space of irrational linear
foliations on torus, which naturally leads us to study spaces of fiberings
modeled on foliations. Yet for a different purpose, we shall revisit
this example later when discussing dynamics under extrinsic geometric
flows. A minor variation allowing more fibering classes to emerge
is by waiving the orientation; for example, the twisted product of
two circles gives an unoriented fibering on the Klein bottle, for
which the corresponding space of fiberings is proven in my work%
to be contractible. This is only the beginning of a rich study about
fibering a surface, which we shall discuss more below.

\paragraph*{Spaces of regular fiberings on 3-manifolds}

In dimension 3, our machinery in principle also suffices to complete
a full picture for all kinds of 3-manifolds, which can be considered
case by case based on Thurston's classification of 3-dimensional geometries.
For example, consider the case of elliptic geometry: The only closed
elliptic $3$-manifolds that admit oriented circle fiberings are the
lens spaces $L\left(e,1\right)$ for $e>0$, on each of which all
such fiberings form a single equivalence class, as represented by
the standard Hopf map $\StandardFibration_{e}\colon L\left(e,1\right)\to\Sphere 2$
(descended from the 3-sphere). The corresponding moduli space of fiberings,
$\Fib\left(\StandardFibration_{e}\right)$, is non-contractible only
if $e=1$ or $e=2$, in which case it is proven in this work to deformation
retract onto a ``core'' consisting of \emph{Hopf fiberings} (those
metrically congruent to $\StandardFibration_{e}$), and hence has
the homotopy type of a pair of disjoint 2-spheres (by telling the
``direction'' and ``chirality''):
\[
\Fib\left(\StandardFibration_{e}\colon\Sphere 1\hookrightarrow L\left(e,1\right)\to\Sphere 2\right)\simeq\begin{cases}
\Sphere 2\sqcup\Sphere 2 & \text{if \ensuremath{e=1,2},}\\
\Sphere 0 & \text{if \ensuremath{e\geq3}.}
\end{cases}
\]
Here, the particular case $e=1$ concerning $\Sphere 3$ was due to
joint work with \noun{DeTurck}, \noun{Gluck}, \noun{Lichtenfelz},
\noun{Merling}, \noun{Wang}, and \noun{Yang} \cite{deturck2025homotopytypespacefibrations}.
On the other hand, there is also the case ``$e=0$'' concerning
$\Sphere 2\times\Sphere 1$ (with $\Sphere 2\times\mathbb{R}$ geometry),
for which the corresponding moduli space is expected to admit no finite-dimensional
deformation retract (as first observed by \cite{wang2024homotopytypespacefiberings}).
In sharp contrast, we emphasize that by work of \noun{Hong}, \noun{Kalliongis},
\noun{McCullough}, \noun{Rubinstein}, and \noun{Soma} \cite{MR2976322,MR3024309},
the space of Seifert fiberings for a large share of 3-manifolds is
contractible or at least has contractible components. As a simplest
visualizable example illustrating this, the space of (regular) circle
fiberings on the solid torus has the homotopy type of $\mathbb{Z}$,
as modeled by twisting the trivial fibering along the meridian disk
by multiple full turns.

\paragraph*{Spaces of surface fiberings and tight contact structures}

On 3-manifolds we can also consider the space of \emph{surface fiberings}.
Here we highlight the case of flat geometry which features a simplest
instance of \emph{duality}: The only closed orientable flat $3$-manifold
that admits oriented circle fiberings is the 3-torus $\Torus 3$,
on which all such fiberings form a single equivalence class — and
so do all the oriented torus fiberings on $\Torus 3$ — as represented
by the complementary coordinate projections $\StandardFibration_{0}\colon\Torus 3\to\Torus 2$
and $\StandardFibration_{0}^{\vee}\colon\Torus 3\to\Sphere 1$, respectively.
The corresponding pair of moduli spaces, $\Fib\left(\StandardFibration_{0}\right)$
and $\Fib\left(\StandardFibration_{0}^{\vee}\right)$, are proven
in this work to concurrently deformation retract onto the rational
linear ones, and hence both have the homotopy type of a discrete space
of coprime triples:
\[
\Fib\left(\StandardFibration_{0}\colon\Sphere 1\hookrightarrow\Torus 3\to\Torus 2\right)\simeq\left\{ \left(a,b,c\right)\in\mathbb{Z}^{3}\mid\gcd\left(a,b,c\right)=1\right\} \simeq\Fib\left(\StandardFibration_{0}^{\vee}\colon\Torus 2\hookrightarrow\Torus 3\to\Sphere 1\right).
\]
Spaces of coexistent circle and torus fiberings are also studied in
$\mathrm{Nil}$-geometry, where the role of $\Torus 3$ is taken by
(a positive integers' worth of) mapping tori of $\Torus 2$ with ``reducible''
monodromy; yet, for an adaptation of the above duality, we are led
to study the space of \emph{(geometric) tight contact structures}.
The moduli spaces of surface fiberings and of tight contact structures
on 3-manifolds in general are being further investigated in this ongoing
program.

\paragraph*{Spaces of Seifert fibrations and singular foliations}

One of the natural generalizations of regular circle fibrations of
3-manifolds are \emph{Seifert fibrations}, for which we have already
mentioned results of \cite{MR2976322,MR3024309} providing large coverage
of the contractibility of such spaces of fiberings. It is one of the
goals in the present program to fully complete such a landscape, determining
homotopy types of those spaces of Seifert fiberings that were not
known to be contractible, and identifying their ``cores'' (minimal
deformation retracts). On the other hand, there is a different kind
of singular fiberings considered in my work:%
those modeled on \emph{singular foliations} with exceptionally lower-dimensional
singular fibers. This is illustrated on the 2-sphere $\Sphere 2$
(which was known to admit no regular fibrations), where marking two
singular points allows certain singular fiberings to emerge. Specifically,
a pair of prototypical examples $\StandardFibration_{\mathrm{long}}$
and $\StandardFibration_{\mathrm{lat}}$ are given by the two spherical-coordinate
projections, with (oriented) fibers being the longitudes and latitudes,
respectively. It is an ongoing project of mine to develop smoothing
techniques to streamline the handling of such singularities, conditioned
on which we have the following expected theorem: the corresponding
pair of moduli spaces, $\Fib\left(\StandardFibration_{\mathrm{long}}\right)$
and $\Fib\left(\StandardFibration_{\mathrm{lat}}\right)$, concurrently
deformation retract onto those congruent to the standard ones, and
hence both have the homotopy type of the 2-sphere:
\[
\Fib\left(\StandardFibration_{\mathrm{long}}\colon\Disk 1\hookrightarrow\Sphere 2\rightsquigarrow\Sphere 1\right)\simeq\Sphere 2\simeq\Fib\left(\StandardFibration_{\mathrm{lat}}\colon\Sphere 1\hookrightarrow\Sphere 2\rightsquigarrow\Disk 1\right).
\]
This admits a natural generalization to the $n$-sphere $\Sphere n$
with a great-sphere pole set $\Sphere{k-1}$ (for $k$ ranging from
$1$ to $n-1$), where the \emph{longitudinal (disk) fibering} consists
of a family of $\Disk k$-fibers bounded by the pole set, and transversely
the \emph{latitudinal (sphere) fibering} consists of a family of
$\Sphere{n-k}$-fibers collapsing into the pole set. Yet another variation
concerns singular circle fiberings of $\Sphere 2$ over a Y-shaped
graph, with three poles and one exceptional $\theta$-shaped fiber
over the endpoints and center of the Y-shape, respectively — this
can be viewed as the base case of certain \emph{pants-decomposition fiberings}
on $\CP n$ by $\Torus n$-fibers. All these singular fiberings arise
naturally in topology and geometry, whose corresponding moduli spaces
are among the objects of study in this ongoing program.%

\paragraph*{Fiberings as families of shapes}

We remark on an alternative view on the space of fiberings: a fibering
can be thought as some $B$-family of unlabelled $F$-submanifolds
embedded in $E$, filling up and fitting together nicely. This realizes
$\Fib\left(\StandardFibration\right)$ as a certain subspace sitting
inside some manifold of mappings into a (infinite-dimensional) \emph{shape space},
and a crux is to find suitable characterizations for this subspace
(which family of fiber shapes form a fibration?). This leads us to
the 1983 work of \noun{Gluck} and \noun{Warner}, who proved that
the space of (oriented) \emph{great-circle fiberings} on the 3-sphere
deformation retracts to the subspace of Hopf fiberings. In essence,
they viewed each such fibering as a certain $\Sphere 2$-family of
$\Sphere 1$-shapes in $\Sphere 3$, and by restricting such shapes
to great circles, they got a good grasp of the corresponding shape
space as being identified with $\Sphere 2\times\Sphere 2$, inside
which those $\Sphere 2$-families associated to great-circle fibrations
were neatly characterized as the graphs of distance-decreasing maps
from either $\Sphere 2$ factor to the other, and thereby a desired
deformation retraction manifested itself. Carrying forward this beautiful
idea, an ongoing project of mine is to seek its higher-dimensional
generalization to various great-sphere fiberings on the one hand and
— with our original theme in mind — to pursue its nonlinear adaptation
to general circle fiberings on the other hand.%

\paragraph*{Deforming the space of fiberings by extrinsic flows}

Let us return to our first example concerning fiberings on the 2-torus
by simple closed curves, for which we have already mentioned a theorem
in the current paper that the corresponding moduli space deformation
retracts onto its core of rational linear fiberings. In view of the
preceding discussion (where a fibering is regarded as some family
of fiber shapes filling up and fitting together nicely), the following
quest from a geometric-analytic perspective is in order: can we find
a flow that achieves such a deformation retraction? More precisely,
we quest for an \emph{extrinsic geometric flow} for simple closed
curves in the torus that induces — by means of simultaneously evolving
the family of fiber shapes for every member in the moduli space of
fiberings — a deformation retraction of this moduli space onto the
expected core. This was accomplished in the following theorem in the
current paper:%
\begin{quote}
\emph{There is a curve-shortening flow on the torus that induces a strong
deformation retraction of its moduli space of circle fiberings onto
the subspace of rational linear fiberings.}
\end{quote}
This idea is being further investigated in my work: given a model
fibration $\StandardFibration\colon F\hookrightarrow E\to B$, we
quest for — in a suitable geometric setup — an extrinsic geometric
flow for $F$-submanifolds in $E$ that deformation retracts the space
of fiberings $\Fib\left(\StandardFibration\right)$ onto a desired
core. We devised a general criterion, with which the codimension-one
case makes a suitable place to start, where e.g.\ the \emph{mean curvature flow}
provides a primitive candidate for further adaptation. This makes
part of the ongoing program to explore the connection between spaces
of fiberings and extrinsic flows, which has a potential link with
recent work of \noun{Bamler} and \noun{Kleiner} on the connection
between groups of diffeomorphisms and Ricci flows.

\section*{Notations and Assumptions}

The followings are some standing notations and assumptions throughout
this paper. We adopt the following notations for some common equivalence
relations:
\begin{itemize}
\item $\simeq$\nomenclature[01eq]{$\simeq$}{(weak) homotopy equivalence}:
(weak) homotopy equivalence
\item $\approx$\nomenclature[02eq]{$\approx$}{homeomorphism or diffeomorphism}:
homeomorphism or diffeomorphism
\item $\cong$\nomenclature[03eq]{$\cong$}{isomorphism in the appropriate category depending on context}:
isomorphism in the appropriate category depending on context
\item $\leftrightarrow$\nomenclature[03eq]{$\leftrightarrow$}{bijection (for sets or pointed sets)}:
bijection (for sets or pointed sets),
\end{itemize}
as well as the following notations for some common order relations:
\begin{itemize}
\item $\subsetopen$\nomenclature[04open]{$\subsetopen$}{open subset containment}:
open subset
\item $\leq$: topological/Lie subgroup, or Lie subalgebra
\item $\trianglelefteq$: topological/Lie normal subgroup, or Lie ideal.
\end{itemize}
We shall use $F\hookrightarrow E\to B$ to denote a fiber bundle,
where $E,F,B$ are always smooth, closed, connected manifolds (which
will be further equipped with some Riemannian metrics if needed).
The mapping spaces will always be endowed with the following structures:
\begin{itemize}
\item For closed manifold $B$, the topological (resp., smooth) mapping
space $\MappingSpace\left(B,N\right)$ (resp., $\MappingSpace^{\infty}\left(B,N\right)$)
will always be endowed with the compact-open $C^{0}$ (resp., $C^{\infty}$)
topology.\nomenclature[CBN]{$\MappingSpace\left(B,N\right)$}{the space of continuous maps from $B$ to $N$ (or ``topological current group'' if $N$ is a group)}\nomenclature[CBNinf]{$\MappingSpace^{\infty}\left(B,N\right)$}{the space of smooth maps from $B$ to $N$ (or ``smooth current group'' if $N$ is a group)}
In the case when the target space $G$ further has a group structure,
then $\MappingSpace\left(B,G\right)$ and $\MappingSpace^{\infty}\left(B,G\right)$
will be endowed with the group structure induced pointwise from $G$.
\item For closed manifold $M$, the diffeomorphism group $\Diff\left(M\right)$\nomenclature[D]{$\Diff\left(M\right)$}{the group of diffeomorphisms of $M$}
is endowed with the canonical structure of a (metrizable and separable)
Fréchet Lie group, with the subspace topology induced from the $C^{\infty}$
topology on $\MappingSpace^{\infty}\left(M,M\right)$ and the smooth
manifold structure modeled on the space $\mathfrak{X}\left(M\right)$
of smooth vector fields.
\end{itemize}
Moreover, we shall use $\Identity_{X}$ to denote the identity map
of any set $X$, and $\ast$\nomenclature[0ast]{$\ast$}{the one-point space}
to denote the one-point set. Lastly, to help make a distinction between
finite and infinite dimensions, our choice of notation for an infinite-dimensional
manifold (or a related object of infinite-dimensional nature) often
has its first letter typeset in script font, as already seen in e.g.,
the mapping space $\MappingSpace^{\infty}\left(B,N\right)$ and the
diffeomorphism group $\Diff\left(M\right)$.

\chapter{Backgrounds\label{chap:Backgrounds}}

\pagestyle{fancy}

\section{Infinite-Dimensional Topology\label{sec:Infinite-dimensional-topology}}

In this section, we set up some well-behaved classes of topological
spaces $X$, with an eye towards being flexible enough to encompass
many important infinite-dimensional manifolds $X$ arising in topology
and geometry, while also rigid enough to ensure nice homotopy properties
for $X$. This will be used to facilitate various homotopy arguments
in this work, where $X$ will be taken as a variety of mapping spaces
(and the subspaces or quotient spaces thereof) related to a fiber
bundle. We recommend interested readers to \cite{MR189028,MR4179591}
for expositions in much greater depth and generality.

We start by recalling an essential class of topological spaces whose
homotopy properties have received extensive study; namely, the class
of \emph{CW-complex}. This notion was first developed by Whitehead
\cite{MR0030759}, who proved the following theorem that underlies
the significance of CW-complexes in homotopy theory:
\begin{thm}[\noun{Whitehead}]
\label{thm:Whitehead}Every weak homotopy equivalence between CW-complexes
is a homotopy equivalence.
\end{thm}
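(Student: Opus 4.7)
My plan is the classical reduction via the mapping cylinder combined with a cellular compression argument. Given a weak equivalence $f\colon X \to Y$ between CW-complexes, I would first factor it as a composite
\[
X \hookrightarrow M_{f} \xrightarrow{r} Y,
\]
where $M_{f}$ is the mapping cylinder of $f$, the first arrow is the canonical inclusion of $X$ as a subcomplex, and $r$ is the standard deformation retraction of $M_{f}$ onto $Y$. Since $r$ is always a homotopy equivalence, proving that $f$ is a homotopy equivalence reduces to proving that the inclusion $i\colon X \hookrightarrow M_{f}$ is one. Since $M_{f}$ carries a natural CW-structure making $(M_{f}, X)$ a CW pair, and $i$ inherits weak equivalence from $f = r \circ i$, the long exact sequence of $(M_{f}, X)$ yields $\pi_{n}(M_{f}, X, x_{0}) = 0$ for every $n \geq 1$ and every $x_{0} \in X$. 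Everything thus reduces to the following substatement: if $(Z, A)$ is a CW pair whose relative homotopy groups all vanish, then $A$ is a strong deformation retract of $Z$.

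To establish this substatement I would proceed by induction along the skeletal filtration $A \subseteq A \cup Z^{(0)} \subseteq A \cup Z^{(1)} \subseteq \cdots$, building a homotopy $H\colon Z \times [0,1] \to Z$ with $H_{0} = \Identity_{Z}$, $H_{t}|_{A} = \Identity_{A}$ for all $t$, and $H_{1}(Z) \subseteq A$. The inductive step handles each $n$-cell $e$ of $Z \setminus A$ via its characteristic map $\Phi\colon (D^{n}, S^{n-1}) \to (A \cup Z^{(n)}, A \cup Z^{(n-1)})$: the triviality of $\pi_{n}(Z, A)$, combined with the compression lemma, produces a homotopy of $\Phi$ relative to $S^{n-1}$ that pushes the cell into $A \cup Z^{(n-1)}$; this cellular deformation is then spread over the rest of $Z$ via the homotopy extension property of the CW pair $(Z, A \cup Z^{(n-1)})$.

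The main obstacle, and the point requiring genuine care, is the assembly of these cell-by-cell deformations into a single continuous homotopy $H$. The standard device is to reparametrize time so that the deformations of $n$-cells take place within a successively shorter window such as $[1 - 2^{-n},\, 1 - 2^{-(n+1)}]$, with continuity at $t = 1$ guaranteed by the weak topology on $Z$ (a map out of $Z$ is continuous iff its restriction to each cell is) together with the observation that once a cell has been pushed into $A$ it remains there under all subsequent stages of $H$. With $H$ in hand, $H_{1}$ is a retraction $Z \to A$ and $H$ realizes it as a strong deformation retraction; the inclusion $i\colon X \hookrightarrow M_{f}$ is therefore a homotopy equivalence, and hence so is the original weak equivalence $f$.
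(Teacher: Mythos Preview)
Your proposal is correct and follows the classical mapping-cylinder-plus-compression argument. The paper, however, does not supply its own proof of this statement: Whitehead's theorem is stated as background with a citation to Whitehead's original paper \cite{MR0030759}, and is then used as a black box in the subsequent development (notably in \prettyref{cor:corollary-of-Whitehead-Milnor-Palais} and \prettyref{thm:main-theorem-for-infinite-dimensional-topology}). So there is nothing to compare against; your outline is the standard textbook proof and is fine as such.
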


In spite of the powerful homotopy implication by Whitehead's theorem,
most of the compact manifolds $M$ are (homeomorphic to) finite CW-complexes;
specifically, this is the case if $M$ is smoothable (by smooth triangulation
\cite{MR1563139,MR0002545}, or by handle decomposition via Morse
theory \cite{MR1873233}), or if $\dim M\neq4$ (\cite{MR0242166,MR0645390,MR0679069}).
Moving on to infinite dimensions, a naive attempt is to consider CW-complexes
that are countable rather than finite, but the crux of the problem
is not about how many cells are allowed: most of the interesting infinite-dimensional
manifolds in global analysis are not homeomorphic to CW-complexes
at all (countable or not). This prompts us to enlarge the class to
include, rather than full-on homeomorphism types, the spaces that
have homotopy types of CW-complexes (said to be of \emph{CW-type}
for short). Whitehead's theorem extends to spaces of CW-type, which
thus form a class considered to have nice homotopy. However, a space
of nice homotopy type may well have pathological point-set topology,
and so spaces of CW-type may look nothing like a CW-complex. For
this reason (and for easy verification), we choose to restrict attentions
to the subclass of \emph{absolute neighborhood retract (ANR)},\footnote{Recall that a metrizable space $A$ is called an \emph{absolute neighborhood retract (ANR)}
if $A$ is a retract of some neighborhood of $X$ for every metrizable
space $X$ that contains $A$ as a closed subspace.} whose usefulness is justified by the following theorem is due to
Milnor \cite[Thm.\ 2]{MR0100267} and Palais \cite[Thm.\ 14]{MR189028}
(the separable case was proven by Milnor \cite[Thm.\ 1]{MR0100267},
based on results of Hanner \cite{MR0043459} and Whitehead \cite{MR0035997}):
\begin{thm}[\noun{Milnor–Palais}]
\label{thm:Milnor-Palais}Every ANR (resp., separable ANR) has the
homotopy type of a CW-complex (resp., countable CW-complex).
\end{thm}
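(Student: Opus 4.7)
My plan is to proceed via the ``domination-to-CW-homotopy'' strategy, in two steps: first, show that every ANR is dominated by a simplicial complex (countable if the ANR is separable), and second, apply the general fact that a space dominated by a CW-complex has CW-homotopy type. Throughout the argument one carefully tracks countability in order to handle both statements uniformly.

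\textbf{Step 1: Dominating an ANR by the nerve of a cover.} Let $A$ be an ANR. Since $A$ is metrizable, by the Arens--Eells (or Kuratowski) embedding I may realize $A$ as a closed subspace of a normed linear space $X$, and by the ANR hypothesis obtain a retraction $r\colon U\to A$ from an open neighborhood $U\subseteq X$. Using the paracompactness of metric spaces, I would then choose a locally finite open cover $\mathcal{U}=\{U_\alpha\}$ of $A$ so fine that the convex hull in $X$ of any finite subfamily lies inside $U$, together with a subordinate partition of unity $\{\rho_\alpha\}$. Let $K=\mathcal{N}(\mathcal{U})$ be the nerve, regarded as a simplicial complex with its CW topology. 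The partition of unity defines a ``nerve map'' $\kappa\colon A\to|K|$ by the barycentric formula $\kappa(x)=\sum_\alpha \rho_\alpha(x)\,v_\alpha$, while picking representative points $x_\alpha\in U_\alpha$ and extending affinely in $X$ followed by $r$ yields a ``realization'' map $\lambda\colon|K|\to A$. A standard linear-homotopy argument inside $U$ (using that $r$ fixes $A$ and that the relevant convex combinations remain in $U$) shows $\lambda\circ\kappa\simeq\mathrm{id}_A$, so $A$ is dominated by $|K|$. When $A$ is separable, hence Lindel\"of, the cover $\mathcal{U}$ can be refined to a countable one, producing a countable nerve.

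\textbf{Step 2: Domination upgrades to CW-homotopy type.} I would then invoke the (now standard) theorem, due to Whitehead with a convenient account given by Milnor, that any space dominated by a CW-complex has the homotopy type of a CW-complex: if $\lambda\circ\kappa\simeq\mathrm{id}_A$ with $L=|K|$ a CW-complex, then $A$ is a homotopy retract of the mapping telescope of the (homotopy idempotent) self-map $\kappa\circ\lambda\colon L\to L$, and that telescope carries an evident CW-structure inherited from $L$. If $L$ is countable, so is the telescope, yielding the countable CW-complex in the separable case. Combining the two steps concludes the proof.

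\textbf{Main obstacle.} The crux lies in Step 1, specifically in constructing $\lambda$ and verifying $\lambda\circ\kappa\simeq\mathrm{id}_A$. The ANR hypothesis enters essentially at precisely this place: the neighborhood retraction $r$ is what allows affine combinations, a priori only defined in the ambient linear space, to be pushed back into $A$ continuously and coherently. One must choose $\mathcal{U}$ fine enough that every relevant convex hull in $X$ lies inside $U$, and then use the local finiteness of $\mathcal{U}$ to glue the straight-line homotopies in $X$ into a single continuous homotopy in $A$. In the separable case the extra subtlety is to simultaneously secure countability, local finiteness, and the smallness condition on the cover; this is achievable by the Lindel\"of property of separable metric spaces followed by a standard star-refinement, but requires some care to juggle all three conditions at once.
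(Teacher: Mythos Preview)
The paper does not give its own proof of this theorem: it is stated as a background result and attributed to Milnor \cite[Thm.~2]{MR0100267} and Palais \cite[Thm.~14]{MR189028}, with the separable case credited to Milnor \cite[Thm.~1]{MR0100267} building on Hanner \cite{MR0043459} and Whitehead \cite{MR0035997}. So there is nothing in the paper to compare against beyond those citations.

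That said, your two-step outline is exactly the route taken in those references. Step~1 (domination of an ANR by the nerve of a fine cover, via embedding in a linear space and retracting) is the mechanism behind Hanner's and Palais's results, and your identification of the crux---choosing the cover fine enough that all relevant convex combinations stay inside the retracting neighborhood $U$, while in the separable case simultaneously keeping the cover countable and locally finite---is accurate. Step~2 (a space dominated by a CW-complex has CW-homotopy type, via the mapping telescope of the homotopy idempotent $\kappa\circ\lambda$) is precisely Milnor's Theorem~1, and your observation that the telescope inherits countability from $L$ handles the separable clause. The argument is correct as sketched; the only places requiring genuine care are the ones you flagged.
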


As explained above, combining Milnor and Palais's theorem (\prettyref{thm:Milnor-Palais})
with Whitehead's theorem (\prettyref{thm:Whitehead}) yields the following
result:
\begin{cor}
\label{cor:corollary-of-Whitehead-Milnor-Palais}Every weak homotopy
equivalence between ANR's is a homotopy equivalence.
\end{cor}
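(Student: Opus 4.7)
The plan is to chain together the two preceding theorems by transferring the weak equivalence from the ANR setting into the CW setting, applying Whitehead there, and then transferring back. Let $f\colon A\to B$ be a weak homotopy equivalence between ANR's.

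First I would invoke the Milnor--Palais theorem (\prettyref{thm:Milnor-Palais}) on each of $A$ and $B$ separately: this supplies CW-complexes $X$, $Y$ together with homotopy equivalences $\varphi\colon X\to A$ and $\psi\colon B\to Y$. Since homotopy equivalences are in particular weak homotopy equivalences, and weak homotopy equivalences are closed under composition, the composite
\[
g \;=\; \psi\circ f\circ\varphi\colon X\longrightarrow Y
\]
is a weak homotopy equivalence between CW-complexes. Next I would apply Whitehead's theorem (\prettyref{thm:Whitehead}) to $g$, upgrading it to an honest homotopy equivalence, with some homotopy inverse $g'\colon Y\to X$.

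Finally, I would reverse the transfer. Choosing homotopy inverses $\varphi'\colon A\to X$ and $\psi'\colon Y\to B$ for $\varphi$ and $\psi$, the map
\[
h \;=\; \varphi\circ g'\circ\psi'\colon B\longrightarrow A
\]
is a candidate homotopy inverse for $f$; a routine check substituting in $f=\psi'\circ g\circ\varphi'$ (up to homotopy) shows that both $h\circ f$ and $f\circ h$ are homotopic to the respective identity maps. Equivalently, one may simply invoke the 2-out-of-3 property for homotopy equivalences.

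There is no real obstacle here: the statement is a formal consequence of the two cited theorems, and the only mild subtlety is keeping careful bookkeeping of which composites land in which category (ANR vs.\ CW) so that the right theorem is applied at the right step. Since ANR's are metrizable and Milnor--Palais already produces the requisite CW-homotopy type, no additional point-set hypotheses need to be verified.
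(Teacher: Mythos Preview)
Your proof is correct and follows the same approach as the paper, which simply states that the corollary results from combining \prettyref{thm:Milnor-Palais} with \prettyref{thm:Whitehead} without spelling out the details. Your explicit transfer argument (replace the ANR's by CW-models via Milnor--Palais, apply Whitehead, then use 2-out-of-3) is exactly the standard way to make this precise.
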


As another important property of CW-complexes shared by ANR's, recall
that every CW-pair satisfies the homotopy extension property, which
allows a promotion from a weakly-equivalent pair to a deformation
retract (see e.g., \cite[Chapter 0]{MR1867354}). This carries over
to any ANR-pair (which means a pair of ANR's such that one is a closed
subset of the other):
\begin{prop}
\label{lem:deformation-retract-promotion-for-ANR}Suppose that $\left(X,A\right)$
is an ANR pair. Then the inclusion map $A\hookrightarrow X$ satisfies
the homotopy extension property; in particular, $A$ is a deformation
retract of $X$ whenever the inclusion is a homotopy equivalence.
\end{prop}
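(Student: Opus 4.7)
The proposition has two separate assertions: the homotopy extension property (HEP) for $A \hookrightarrow X$, and the promotion of a homotopy-equivalent inclusion to a deformation retraction. The first is the substantive claim; the second is a standard homotopy-theoretic consequence, often referred to as the cofibration--homotopy-equivalence criterion. The plan is to treat these in order.

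For the HEP, I would verify the classical equivalent characterization that $A \times I \cup X \times \{0\}$ is a retract of $X \times I$. Since $A$ is a closed ANR in the metrizable ANR $X$, the very definition of ANR furnishes an open neighborhood $U \supset A$ in $X$ together with a retraction $\rho \colon U \to A$. Since $X$ is metrizable and hence normal, Urysohn's lemma provides a continuous function $\varphi \colon X \to [0,1]$ with $\varphi|_A \equiv 0$ and $\varphi|_{X \setminus U} \equiv 1$. Using $\rho$ and $\varphi$, I would construct the desired retraction $R \colon X \times I \to A \times I \cup X \times \{0\}$ via a standard Borsuk-type formula that interpolates between the identity on $A \times I$ (where $\varphi = 0$) and the projection to $X \times \{0\}$ on $(X \setminus U) \times I$ (where $\varphi = 1$), with $\rho$ mediating continuously in the transition region on $(U \setminus A) \times I$. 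This is the classical construction going back to Borsuk's theory of retracts.

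For the deformation retract assertion, once HEP is in hand, the argument is purely formal, following the template of \cite[Prop.\ 0.19]{MR1867354}. Let $f \colon X \to A$ be a homotopy inverse to the inclusion $i \colon A \hookrightarrow X$. A first application of HEP---to the map $f$ and the homotopy $f \circ i \simeq \Identity_A$, viewed as a homotopy on $A$ of $f|_A = f \circ i$---produces a map $r \colon X \to A$ homotopic to $f$ with $r \circ i = \Identity_A$, so $r$ is already a retraction and $i \circ r \simeq i \circ f \simeq \Identity_X$. A second application of HEP then rectifies the resulting homotopy $i \circ r \simeq \Identity_X$ so that it becomes stationary on $A$, yielding a strong deformation retraction of $X$ onto $A$.

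The genuine difficulty sits entirely in the first part: the Borsuk-type retraction formula must be continuous across the transition regions where one switches between the three regimes (identity on $A \times I$, projection to $X \times \{0\}$ on $(X \setminus U) \times I$, and the $\rho$-mediated interpolation on $(U \setminus A) \times I$), and verifying this continuity requires careful exploitation of the boundary values $\varphi|_A \equiv 0$ and $\varphi|_{X \setminus U} \equiv 1$. The second part, although purely formal, still demands attention in the second HEP application to ensure the rectified homotopy is genuinely rel $A$; this is a well-documented bookkeeping step and poses no conceptual obstacle.
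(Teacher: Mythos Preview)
The paper states this proposition without proof, treating it as a background fact from the ANR literature, so there is no ``paper's own proof'' to compare against. Your second half---deducing the deformation retraction from HEP via the standard cofibration argument in the spirit of \cite[Prop.~0.19--0.20]{MR1867354}---is correct and routine. The problem is in the first half.

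Your proposed construction of the retraction $R\colon X\times I \to A\times I \cup X\times\{0\}$ uses only two ingredients: the neighborhood retraction $\rho\colon U\to A$ (from $A$ being an ANR) and a Urysohn function $\varphi$ (from $X$ being metrizable). But these alone cannot produce such an $R$: take $X$ to be the Cantor set and $A$ a single point. Then $A$ is an ANR, $X$ is metrizable, and you certainly have $\rho$ and $\varphi$; yet $(X,A)$ does \emph{not} have HEP. Indeed, for any $x\neq p$ in the Cantor set, the path-component of $(x,0)$ in $Z:=\{p\}\times I\cup X\times\{0\}$ is the singleton $\{(x,0)\}$ (since the Cantor set is totally disconnected), so any continuous $R$ with $R(x,0)=(x,0)$ would force $R(x,1)=(x,0)$ for all $x\neq p$, contradicting $R(p,1)=(p,1)$ by continuity. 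The ``Borsuk-type formula'' you allude to therefore cannot exist at the level of generality your sketch suggests. Concretely, any interpolation on $(U\setminus A)\times I$ between $(\rho(x),t)$ and $(x,0)$ must at some stage traverse from $(\rho(x),0)$ to $(x,0)$ inside $X\times\{0\}$, and there is no canonical path in $X$ from $\rho(x)$ to $x$ without further structure.

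What is missing is precisely the hypothesis that $X$ is an ANR, which you mention but never use. The standard route is: since $X$ and $A$ are ANRs, so are $X\times\{0\}$, $A\times I$, and their intersection $A\times\{0\}$; hence by the union theorem for ANRs, $Z=A\times I\cup X\times\{0\}$ is itself an ANR and therefore a retract of some neighborhood $V$ in $X\times I$. One then extends this neighborhood retraction to a global retraction $X\times I\to Z$ by first pushing vertically into $V$ (using a continuous $\tau\colon X\to(0,1]$ with $\tau|_A\equiv 1$ and $\{x\}\times[0,\tau(x)]\subset V$) and then applying the neighborhood retraction. Alternatively, one may use that ANRs are locally contractible to upgrade $\rho$ to a strong deformation retraction of a neighborhood of $A$ onto $A$, after which an explicit formula does become available---but that upgrade is itself the nontrivial step.
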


Having demonstrated some desirable properties of ANR's in homotopy
theory, we next introduce a large class of infinite-dimensional manifolds
(encompassing all the ones of our interest) that turn out to be ANR's
automatically. Recall that a \emph{Hilbert space} is an inner product
space whose associated metric is complete, and that its separability
can be characterized by the existence of a countable orthonormal basis.
Thus up to isomorphism, there is only one infinite-dimensional separable
Hilbert space: the space of square-summable sequences, which is denoted
by $\ell^{2}$.

\begin{defn}
\label{def:l2-manifold}A \emph{topological $\ell^{2}$-manifold}
is a Hausdorff second-countable space covered by open subsets that
are homeomorphic to open subsets in $\ell^{2}$.
\end{defn}

In this definition for a $\ell^{2}$-manifold $\mathcal{N}$, the
Hausdorff assumption is standard, while the second countability is
to ensure two desirable properties of $\mathcal{N}$: one is being
separable, which ensures that $\mathcal{N}$ has the same density
as the model space $\ell^{2}$; the other is being paracompact (\cite[Thm.\ 2]{MR189028}),
which ensures that $\mathcal{N}$ is completely-metrizable as deduced
from the local complete-metrizability of the model space $\ell^{2}$
(\cite[Thm.\ 3]{MR189028}) — such a space, being separable and completely-metrizable,
is said to be a \emph{Polish space}. But it was known that every
metrizable manifold is an ANR (see e.g., \cite[Thm.\ 5]{MR189028}),
thus we have obtained the following:
\begin{prop}
\label{prop:l2-manifold-is-ANR}Every $\ell^{2}$-manifold is a Polish
ANR.
\end{prop}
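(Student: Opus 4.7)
The plan is to unpack the two conclusions —— being Polish and being an ANR —— and deduce each by lifting the corresponding local property of the model space $\ell^{2}$ to the manifold $\mathcal{N}$, using only the ingredients already cited from Palais's survey \cite{MR189028}.

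First I would verify that $\mathcal{N}$ is Polish, i.e., separable and completely metrizable. Separability is immediate from the second-countability assumption in \prettyref{def:l2-manifold}. For (complete) metrizability the standard route has three steps: (i) $\mathcal{N}$ is locally metrizable because each chart models an open subset of $\ell^{2}$, which is a Hilbert space and hence metrizable; (ii) a Hausdorff, second-countable, locally metrizable space is paracompact, which is \cite[Thm.\ 2]{MR189028}; (iii) local complete metrizability of $\ell^{2}$ propagates to a global complete metric on the paracompact manifold $\mathcal{N}$, which is \cite[Thm.\ 3]{MR189028} and is typically carried out by glueing the local metrics through a locally finite partition of unity. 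Together these give that $\mathcal{N}$ is separable and completely metrizable, hence Polish.

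For the ANR conclusion, I would invoke \cite[Thm.\ 5]{MR189028}: every metrizable manifold modeled on an ANR is itself an ANR. Since $\ell^{2}$ is a Fréchet (in fact Hilbert) space and therefore a convex, hence absolutely retract, metric space, it is certainly an ANR; combined with the metrizability of $\mathcal{N}$ just established, this yields the ANR property for $\mathcal{N}$ at once.

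The only genuine obstacle is the passage from local to global complete metrizability in infinite dimensions, where one cannot fall back on the finite-dimensional shortcut of embedding into some $\mathbb{R}^{n}$ and pulling back a metric; the cited Palais theorems handle this precisely by exploiting paracompactness, which in turn is why second countability is built into the definition of an $\ell^{2}$-manifold. Once that step is accepted as a black box, the remainder of the argument is bookkeeping: separability is tautological, and the ANR conclusion is a one-line application of the local-to-global ANR theorem for metrizable manifolds.
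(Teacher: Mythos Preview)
Your proposal is correct and takes essentially the same approach as the paper: the paragraph immediately preceding the proposition already sketches both conclusions by invoking exactly the same three results from Palais \cite{MR189028} (Theorems~2, 3, and~5) in the same logical order. Your version is just more explicit about why $\ell^{2}$ itself is an ANR and about the partition-of-unity mechanism behind Theorem~3, but the argument is identical.
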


Thus $\ell^{2}$-manifolds enjoy all the nice properties of ANR by
virtue of \prettyref{prop:l2-manifold-is-ANR}. Note that this proposition
only relies on the model space being Polish (i.e., separable and completely-metrizable),
so one may well consider topological manifolds modeled on general
Polish locally-convex spaces; i.e., \emph{separable Fréchet spaces}.
But our exclusive focus on topological $\ell^{2}$-manifolds is justified
by the following theorem of Kadec and Anderson \cite{MR0201951,MR0209804,MR0190888,MR0205212}:
\begin{thm}[\noun{Kadec–Anderson}]
\label{thm:Kadec-Anderson}Every infinite-dimensional separable Fréchet
spaces is homeomorphic to $\ell^{2}$.
\end{thm}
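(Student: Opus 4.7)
The plan is to prove this classical theorem in two stages: first establish the Banach case due to \noun{Kadec} (1965--66), then extend to general Fréchet spaces via \noun{Anderson}'s theorem that $\ell^{2}\approx\mathbb{R}^{\infty}$ (the countable product of lines in the product topology). The overarching observation is that \emph{homeomorphism}, unlike linear isomorphism, is an extremely weak requirement on infinite-dimensional topological vector spaces---there are uncountably many pairwise linearly non-isomorphic separable Banach spaces---so the proof must construct a purely topological homeomorphism by hand rather than produce a linear map.

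For the Banach case, given an infinite-dimensional separable Banach space $X$, I would first invoke the existence of a \emph{Markushevich basis}: a biorthogonal system $\{(x_{n},f_{n})\}_{n\geq1}$ with $x_{n}\in X$ and $f_{n}\in X^{*}$ satisfying $f_{i}(x_{j})=\delta_{ij}$, whose linear span is dense and whose functionals separate points. This supplies a continuous linear injection $T\colon X\to\mathbb{R}^{\infty}$ by $T(x)=(f_{n}(x))_{n}$, realizing $X$ coordinate-wise. The heart of Kadec's argument is then to construct a homeomorphism between the unit sphere $S_{X}$ and the unit sphere $S_{\ell^{2}}$ by interpolating between the original norm of $X$ and the $\ell^{2}$-norm on finite-dimensional truncations, then patching the local maps via a locally finite partition of unity compatible with the Markushevich coordinates. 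Once $S_{X}\approx S_{\ell^{2}}$, radial extension $x\mapsto\|x\|_{X}\cdot\varphi(x/\|x\|_{X})$ together with $0\mapsto0$ upgrades this to the desired global homeomorphism $X\approx\ell^{2}$.

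For the Fréchet extension, given a general separable infinite-dimensional Fréchet space $F$ with a directed sequence of generating seminorms $p_{1}\leq p_{2}\leq\cdots$, I would split into two cases. If the topology is normable, then $F$ is a separable Banach space and the Kadec case applies directly. Otherwise, the \noun{Bessaga}--\noun{Pełczyński} construction uses that infinitely many $p_{n}$ are strictly finer than their predecessors to produce, inductively, a topological linear embedding of $\mathbb{R}^{\infty}$ as a complemented subspace, giving a decomposition $F\cong F_{0}\times\mathbb{R}^{\infty}$ as topological vector spaces for some Fréchet factor $F_{0}$. Combining with \noun{Anderson}'s $\ell^{2}\approx\mathbb{R}^{\infty}$ and an absorption identity along the lines of $\ell^{2}\times F_{0}\approx\ell^{2}$ (deducible from \noun{Toruńczyk}'s topological characterization of $\ell^{2}$, since $\ell^{2}\times F_{0}$ remains a Polish ANR satisfying the relevant discrete-approximation property), one concludes $F\approx\ell^{2}$.

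The technical heart of the argument, and the main obstacle, is Kadec's sphere homeomorphism construction. Because no linear isomorphism exists in general, the proof must exploit the non-linearity of the target map in an essential way; the delicate point is building the partition of unity on $S_{X}$ so that the locally defined coordinate-matching homeomorphisms patch to a bona fide homeomorphism, rather than merely a continuous bijection (which in infinite dimensions need not be open). Anderson's $\ell^{2}\approx\mathbb{R}^{\infty}$ is itself subtle but comparatively more concrete, proceeding through explicit coordinate-wise constructions on infinite-dimensional cells; by contrast, the Fréchet-to-Banach reduction is essentially formal once these two pillars are in place.
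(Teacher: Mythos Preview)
The paper does not prove this theorem; it is stated with attribution and citations to the original works of \noun{Kadec} and \noun{Anderson} and then used as a black box throughout \prettyref{sec:Infinite-dimensional-topology}. So there is no ``paper's own proof'' to compare against---any proof you supply is automatically a different route from the paper.

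Your sketch is a reasonable outline of the classical two-stage approach (Banach case via Kadec, then Fréchet extension via Anderson's $\ell^{2}\approx\mathbb{R}^{\infty}$), but one step is circular as written: you invoke \noun{Toruńczyk}'s characterization of $\ell^{2}$ to absorb the factor $F_{0}$, yet Toruńczyk's theorem postdates Kadec--Anderson and, more importantly, the paper's own statement of the Toruńczyk factor theorem (\prettyref{thm:Torunczyk-factor-theorem}) already presupposes the notion of $\ell^{2}$-manifold, which is only well-behaved \emph{because} of Kadec--Anderson. The historically honest route for the Fréchet step is rather: every separable Fréchet space embeds as a closed linear subspace of $s=\mathbb{R}^{\infty}$, and Anderson--Bessaga show that every infinite-dimensional closed linear subspace of $s$ is homeomorphic to $s$ itself. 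Also, your description of Kadec's Banach argument (``sphere-to-sphere via partition of unity'') is not quite the actual mechanism; Kadec's proof proceeds by first renorming $X$ with an equivalent locally uniformly rotund norm and then building the homeomorphism coordinatewise relative to a Markushevich basis---the renorming, not a partition of unity, is what makes the patching work.
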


We have seen in \prettyref{prop:l2-manifold-is-ANR} that every $\ell^{2}$-manifold
is a Polish ANR; the converse is true up to homotopy type, or even
better, up to an $\ell^{2}$ factor. This is the content of a version
of the Torunczyk factor theorem \cite[Theorem 2.2.14]{MR4179591}:
\begin{thm}[\noun{Torunczyk}]
\label{thm:Torunczyk-factor-theorem}If $X$ is a Polish ANR, then
$X\times\ell^{2}$ is an $\ell^{2}$-manifold.
\end{thm}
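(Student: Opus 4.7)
The plan is to invoke Torunczyk's characterization of $\ell^{2}$-manifolds: a Polish ANR $M$ is an $\ell^{2}$-manifold if and only if it satisfies the \emph{discrete approximation property} (DAP), meaning that for every open cover $\mathcal{U}$ of $M$ and every countable family of maps $f_{n}\colon\ell^{2}\to M$, there exist $\mathcal{U}$-close approximations $g_{n}\colon\ell^{2}\to M$ whose images form a discrete (locally finite) collection in $M$. Granted this characterization, the factor theorem reduces to two verifications applied to the target $M:=X\times\ell^{2}$.

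First I would check that $X\times\ell^{2}$ is itself a Polish ANR. Polishness is immediate since a finite product of Polish spaces is Polish, with a complete compatible metric obtained by summing the coordinate metrics. The ANR property follows from Hanner's product theorem that a product of two metrizable ANRs is again an ANR; here $\ell^{2}$ is trivially an ANR as a metrizable convex subset of itself.

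Next I would verify DAP for $X\times\ell^{2}$, where the $\ell^{2}$-factor supplies the essential maneuvering room. Given maps $f_{n}=(\alpha_{n},\beta_{n})\colon\ell^{2}\to X\times\ell^{2}$ and a cover $\mathcal{U}$, I would refine $\mathcal{U}$ to an open cover by product sets $U_{i}\times V_{i}$ with $V_{i}$ small balls in $\ell^{2}$ and then perturb only in the second coordinate. The construction is to pick an orthonormal sequence $\{e_{n}\}$ in $\ell^{2}$ together with scalar bumps $t_{n}\colon\ell^{2}\to\mathbb{R}$ subordinate to a suitable partition of unity, and set $g_{n}(v):=(\alpha_{n}(v),\,\beta_{n}(v)+t_{n}(v)e_{n})$; the orthogonality of the $e_{n}$ then ensures pairwise disjointness of $\{g_{n}(\ell^{2})\}$, at a $\mathcal{U}$-cost controlled by $|t_{n}|$.

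The principal obstacle lies in upgrading pairwise disjointness to genuine \emph{local finiteness}, which is what DAP actually demands: a naive orthogonal-shift construction yields disjoint but potentially accumulating images, and one must instead arrange the amplitudes $t_{n}$ and the directions $e_{n}$ inductively, via a diagonal construction respecting $\mathcal{U}$ and the separable metric structure of $X$, so that no mass accumulates at any single point of $X\times\ell^{2}$. Combined with the Anderson–Kadec theorem (\prettyref{thm:Kadec-Anderson}) fixing the model (so that $\ell^{2}\times\ell^{2}\cong\ell^{2}$) and with the nontrivial characterization theorem of Torunczyk itself — whose proof rests on deep $Z$-set theory in $\ell^{2}$ — this completes the argument.
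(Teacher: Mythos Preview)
The paper does not supply a proof of this theorem; it is stated with attribution to Torunczyk and cited from Sakai's monograph \cite[Theorem~2.2.14]{MR4179591} as a known result, so there is no argument in the paper to compare against. Your sketch follows the standard route via Torunczyk's characterization theorem, which is indeed how the result is established in the literature. One caveat: your formulation of the discrete approximation property is slightly nonstandard---the usual version quantifies over maps from compact polyhedra (or the Hilbert cube), not from $\ell^{2}$ itself---and, as you acknowledge, the passage from pairwise disjointness to genuine local finiteness is where the real work lies; but the overall strategy is correct.
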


The Torunczyk factor theorem implies that every homotopy type of countable
CW-complex can be represented by an $\ell^{2}$-manifold (indeed,
by \cite[Theorem 1]{MR0100267}, it can be represented by the underlying
space of a countable locally-finite simplicial complex, which is Polish
ANR, hence $X\times\ell^{2}$ is a desired $\ell^{2}$-manifold representative).
In fact, such $\ell^{2}$-manifold models for a homotopy type are
unique up to homeomorphism, as shown in the following theorem of Henderson
and Schori \cite{MR251749}:
\begin{thm}[\noun{Henderson–Schori}]
\label{thm:Henderson-Schori}Two $\ell^{2}$-manifolds are homeomorphic
whenever they are homotopy equivalent.
\end{thm}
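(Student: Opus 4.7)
The plan is to establish the theorem by a two-stage reduction: first an \emph{absorption (stability)} step showing that an $\ell^{2}$-manifold is indistinguishable from its product with $\ell^{2}$, and then a \emph{mapping cylinder} step that upgrades a homotopy equivalence $M\simeq N$ to a homeomorphism after taking products with $\ell^{2}$.

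Stage~1 (Stability). I would first prove that for every $\ell^{2}$-manifold $M$, there is a homeomorphism $M\approx M\times\ell^{2}$. Locally this is a direct application of the Kadec--Anderson theorem (\prettyref{thm:Kadec-Anderson}) together with the self-absorbing identity $\ell^{2}\approx\ell^{2}\times\ell^{2}$ coming from splitting the index set of the coordinates. Globally, one patches these local absorption homeomorphisms over an atlas using paracompactness and a partition-of-unity construction — here \prettyref{prop:l2-manifold-is-ANR} and the Polish structure on $M$ guarantee the regularity needed for such a patching argument to produce a global homeomorphism rather than merely a homotopy equivalence. The payoff is that once Stage~1 is in hand, the whole problem reduces to showing $M\times\ell^{2}\approx N\times\ell^{2}$ whenever $M\simeq N$.

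Stage~2 (Mapping cylinder). Given a homotopy equivalence $f\colon M\to N$, consider the mapping cylinder $M_{f}=(M\times[0,1])\sqcup N\,/\,{(x,1)\sim f(x)}$. Mapping cylinders of maps between Polish ANRs are again Polish ANRs, so by the Torunczyk factor theorem (\prettyref{thm:Torunczyk-factor-theorem}) the product $M_{f}\times\ell^{2}$ is again an $\ell^{2}$-manifold. The inclusions $M\hookrightarrow M_{f}$ and $N\hookrightarrow M_{f}$ are homotopy equivalences, so \prettyref{lem:deformation-retract-promotion-for-ANR} upgrades each to a (closed) strong deformation retraction. The key assertion I would then prove is that any closed ANR-pair inclusion $A\hookrightarrow X$ of $\ell^{2}$-manifolds that is a homotopy equivalence becomes a homeomorphism after crossing with $\ell^{2}$; applied to $M\hookrightarrow M_{f}$ and $N\hookrightarrow M_{f}$ this yields
\[
M\times\ell^{2}\approx M_{f}\times\ell^{2}\approx N\times\ell^{2},
\]
and combined with Stage~1 gives $M\approx N$.

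The principal obstacle is the closed-pair absorption claim in Stage~2: producing a genuine homeomorphism from a pair consisting of a deformation retract. In finite dimensions this is hopeless (obstructed, e.g., by Whitehead torsion), but in the $\ell^{2}$-category one expects it to follow from Anderson-type absorption techniques, exploiting that closed subsets of $\ell^{2}$-manifolds of infinite ``deficiency'' can be collared via an $\ell^{2}$-factor. The strategy I would pursue is to combine the Torunczyk factor theorem with an engulfing construction that realizes a neighborhood of $A$ in $X$ as $A\times C$ for a contractible $\ell^{2}$-manifold $C$ (which is itself homeomorphic to $\ell^{2}$ by Kadec--Anderson applied to its completion), and then bootstrap this collar along the deformation retraction. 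Once this technical core is established, all the remaining steps are formal consequences of results already recorded in the excerpt.
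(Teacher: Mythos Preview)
The paper does not supply a proof of this theorem: it is stated as a named result of Henderson and Schori with a citation to \cite{MR251749}, and is then used as a black box (for instance in \prettyref{cor:corollary-of-Torunczyk-and-Henderson-Schori} and \prettyref{thm:main-theorem-for-infinite-dimensional-topology}). So there is no ``paper's own proof'' to compare against.

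That said, your two-stage plan (stability $M\approx M\times\ell^{2}$ followed by a mapping-cylinder/absorption argument) is indeed the shape of the classical proof in the infinite-dimensional topology literature. Your Stage~1 is essentially correct in spirit, though you should be aware that the global patching of local absorptions is itself nontrivial and is the content of separate stability theorems (Henderson, Schori, West) rather than a soft partition-of-unity argument. In Stage~2 you correctly identify the hard step --- upgrading a closed ANR inclusion that is a homotopy equivalence to a homeomorphism after stabilizing --- and you are right that this is where the genuine work lies; your sketch of ``engulfing plus bootstrapping along the deformation retraction'' is pointing at the right circle of ideas (Anderson's $Z$-set absorption, infinite-deficiency arguments), but as written it is only a gesture toward a proof rather than one. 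Since the paper treats the theorem as a cited input, none of this is needed for the paper's purposes.
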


As explained above, the Torunczyk factor theorem (\prettyref{thm:Torunczyk-factor-theorem})
and the Henderson–Schori theorem (\prettyref{thm:Henderson-Schori})
implies the existence and uniqueness, respectively, of the ``$\ell^{2}$-model''
of any countable CW-type:
\begin{cor}
\label{cor:corollary-of-Torunczyk-and-Henderson-Schori}Every homotopy
type of countable CW-complex can be represented by an $\ell^{2}$-manifold
unique up to homeomorphism.
\end{cor}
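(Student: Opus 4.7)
The plan is to derive existence and uniqueness separately, each as a one-line consequence of one of the two preceding theorems, with a small preparatory step needed only for existence. For existence, given a homotopy type of a countable CW-complex $K$, I would first replace $K$ by a Polish ANR representative, and then apply the Torunczyk factor theorem (\prettyref{thm:Torunczyk-factor-theorem}) to produce an $\ell^{2}$-manifold in the same homotopy class. For uniqueness, any two $\ell^{2}$-manifolds sharing this homotopy type are in particular homotopy equivalent, whence the Henderson–Schori theorem (\prettyref{thm:Henderson-Schori}) yields a homeomorphism.

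For the preparatory step in existence, I would invoke the theorem of Milnor already cited in the text (\cite[Thm.\ 1]{MR0100267}): every countable CW-complex is homotopy equivalent to the underlying space of a countable locally-finite simplicial complex $L$. The space $L$ is then separable (only countably many simplices), completely metrizable (local finiteness supplies a metric in which the natural CW-topology is complete), and an ANR (metrizable simplicial complexes being classical ANR's); that is, $L$ is a Polish ANR. The Torunczyk factor theorem then gives that $L\times\ell^{2}$ is an $\ell^{2}$-manifold, and contractibility of $\ell^{2}$ forces $L\times\ell^{2}\simeq L\simeq K$, delivering the desired $\ell^{2}$-model.

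For uniqueness, if $\mathcal{N}_{1}$ and $\mathcal{N}_{2}$ are two $\ell^{2}$-manifolds each having the homotopy type of $K$, then $\mathcal{N}_{1}\simeq K\simeq\mathcal{N}_{2}$, so $\mathcal{N}_{1}\simeq\mathcal{N}_{2}$, and the Henderson–Schori theorem upgrades this to $\mathcal{N}_{1}\approx\mathcal{N}_{2}$. There is no real obstacle in this corollary: both halves are direct invocations of the machinery already assembled above. If anything deserves care, it is merely the bookkeeping verification that a countable locally-finite simplicial complex is a Polish ANR, which is routine but must be explicitly on the table in order to feed the Torunczyk factor theorem.
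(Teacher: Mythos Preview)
Your proposal is correct and follows essentially the same approach as the paper: existence via Milnor's theorem (replacing the countable CW-complex by a countable locally-finite simplicial complex, hence a Polish ANR) followed by the Torunczyk factor theorem, and uniqueness via Henderson--Schori. The paper's argument is given in the paragraph immediately preceding the corollary rather than in a separate proof environment, but the content matches yours line for line.
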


From a more practical perspective, we also have the following immediate
corollary of the Torunczyk factor theorem (\prettyref{thm:Torunczyk-factor-theorem})
and the Henderson–Schori theorem (\prettyref{thm:Henderson-Schori}),
which allows us to write down the homeomorphism type of an $\ell^{2}$-manifold
$\mathcal{N}$ as long as we can find a suitable, preferably much
simpler, space $X$ of the same homotopy type:
\begin{cor}
\label{cor:corollary-of-Henderson-Schori-Torunczyk}If an $\ell^{2}$-manifold
$\mathcal{N}$ has the homotopy type of a Polish ANR $X$, then $\mathcal{N}$
is homeomorphic to $X\times\ell^{2}$.
\end{cor}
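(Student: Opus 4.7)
The plan is to build a homotopy equivalence between $\mathcal{N}$ and $X\times\ell^{2}$ and then invoke the Henderson--Schori classification. The two-step recipe is: first manufacture a second $\ell^{2}$-manifold homotopy equivalent to $\mathcal{N}$ out of $X$, and then appeal to the fact that $\ell^{2}$-manifolds with the same homotopy type must be homeomorphic.

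Concretely, here is how I would proceed. Since $X$ is a Polish ANR by hypothesis, the Torunczyk factor theorem (\prettyref{thm:Torunczyk-factor-theorem}) immediately gives that $X\times\ell^{2}$ is an $\ell^{2}$-manifold. Next, I would use the fact that $\ell^{2}$, being a topological vector space, is contractible via the straight-line homotopy $(x,t)\mapsto (1-t)x$ to the origin, so that the projection $X\times\ell^{2}\to X$ is a homotopy equivalence. Combining this with the given homotopy equivalence $\mathcal{N}\simeq X$ yields
\[
\mathcal{N}\;\simeq\;X\;\simeq\;X\times\ell^{2}.
\]
Both $\mathcal{N}$ and $X\times\ell^{2}$ are now $\ell^{2}$-manifolds (the first by assumption, the second by the previous sentence), and they are homotopy equivalent. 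The Henderson--Schori theorem (\prettyref{thm:Henderson-Schori}) then upgrades this homotopy equivalence to a homeomorphism $\mathcal{N}\approx X\times\ell^{2}$, which is the desired conclusion.

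There is no real obstacle here: the entire content of the statement is packaged into the two cited theorems, and the only bridge needed between them is the contractibility of the fiber $\ell^{2}$. The only point at which one might pause is confirming that the homotopy equivalence is indeed what feeds Henderson--Schori, but this is exactly what that theorem asserts, and the hypothesis that $X$ itself be a Polish ANR (rather than merely of CW-type) is precisely what makes Torunczyk applicable. So the proof will be a short paragraph stitching together \prettyref{thm:Torunczyk-factor-theorem} and \prettyref{thm:Henderson-Schori} through the contractibility of $\ell^{2}$.
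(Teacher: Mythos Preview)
Your proof is correct and follows exactly the approach the paper intends: the paper states this as an ``immediate corollary'' of \prettyref{thm:Torunczyk-factor-theorem} and \prettyref{thm:Henderson-Schori} without spelling out the details, and you have supplied precisely the missing bridge (contractibility of $\ell^{2}$ gives $X\times\ell^{2}\simeq X\simeq\mathcal{N}$, then Henderson--Schori upgrades to a homeomorphism).
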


In this paper, the homotopy type of $\mathcal{N}$ will be modeled
on a subspace $N_{0}\subseteq\mathcal{N}$. In this case, the previous
consideration about ANR pairs $\left(\mathcal{N},X\right)$ will also
come into play. All of this culminates in the following theorem that
underlies our approach to understand the topology and homotopy of
$\ell^{2}$-manifolds: 
\begin{thm}
\label{thm:main-theorem-for-infinite-dimensional-topology}Let $\mathcal{N}$
be a topological $\ell^{2}$-manifold (or any separable Fréchet manifold).
Suppose that $N_{0}\subseteq\mathcal{N}$ is a closed subset and an
ANR, whose inclusion into $\mathcal{N}$ is a weak homotopy equivalence.
Then $\mathcal{N}$ is homeomorphic to $N_{0}\times\ell^{2}$, and
$\mathcal{N}$ contains $N_{0}$ as a deformation retract.
\end{thm}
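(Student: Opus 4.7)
The plan is to assemble the cited background results into a four-step chain: (i) reduce the Fréchet-manifold case to the $\ell^2$-manifold case; (ii) upgrade the weak homotopy equivalence to an honest homotopy equivalence; (iii) promote it to a deformation retract using the homotopy extension property for ANR pairs; and (iv) read off the homeomorphism type from the Toru\'nczyk factor theorem together with Henderson--Schori.

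For step (i), if $\mathcal{N}$ is given only as a separable Fréchet manifold, the Kadec--Anderson theorem identifies its model space with $\ell^2$, so $\mathcal{N}$ is in fact a topological $\ell^2$-manifold and we may work in that category. For step (ii), the proposition stating that every $\ell^2$-manifold is a Polish ANR applies to $\mathcal{N}$; since $N_0$ is a closed subset of the Polish space $\mathcal{N}$, it is itself Polish, and combined with the hypothesis that $N_0$ is an ANR, we have that $N_0$ is a Polish ANR. The corollary of the Whitehead and Milnor--Palais theorems then promotes the weak homotopy equivalence $N_0 \hookrightarrow \mathcal{N}$ between ANRs to a genuine homotopy equivalence.

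For step (iii), the pair $(\mathcal{N}, N_0)$ is an ANR pair with $N_0$ closed in $\mathcal{N}$, so the deformation-retract promotion proposition applies: the inclusion enjoys the homotopy extension property, and being a homotopy equivalence it exhibits $N_0$ as a deformation retract of $\mathcal{N}$. This yields the second assertion. For step (iv), the $\ell^2$-manifold $\mathcal{N}$ now shares its homotopy type with the Polish ANR $N_0$, so the corollary of the Toru\'nczyk factor theorem and the Henderson--Schori theorem immediately furnishes the homeomorphism $\mathcal{N} \approx N_0 \times \ell^2$.

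The proof is essentially a clean assembly of the cited machinery and requires no real computation. The only subtlety worth flagging, and the place where a reader might stumble, is verifying that $N_0$ is a Polish ANR: Polishness is not part of the hypothesis but is inherited from being a closed subspace of the Polish manifold $\mathcal{N}$, and this is precisely the ingredient needed to feed $N_0$ into both the Toru\'nczyk factor theorem and Henderson--Schori in step (iv).
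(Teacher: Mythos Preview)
Your proof is correct and follows essentially the same approach as the paper: reduce to the $\ell^2$-manifold case via Kadec--Anderson, observe that both $\mathcal{N}$ and $N_0$ are Polish ANRs (the latter by closedness plus hypothesis), upgrade the weak equivalence via Whitehead and Milnor--Palais, and then invoke the ANR-pair homotopy extension property for the deformation retract and the Toru\'nczyk--Henderson--Schori corollary for the homeomorphism type. The only cosmetic difference is that the paper presents the two conclusions in the opposite order.
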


\begin{proof}
By the Kadec–Anderson theorem (\prettyref{thm:Kadec-Anderson}), every
separable Fréchet manifold (i.e., Hausdorff second-countable manifold
modeled on infinite-dimensional separable Fréchet spaces) must be
a topological $\ell^{2}$-manifold, thus we may assume that $\mathcal{N}$
is a topological $\ell^{2}$-manifold. Thus $\mathcal{N}$ is a Polish
ANR by \prettyref{prop:l2-manifold-is-ANR}, hence the subset $N_{0}$
is also a Polish ANR (where being Polish is due to being closed, and
ANR is by assumption). By \prettyref{cor:corollary-of-Whitehead-Milnor-Palais}
(which combines Whitehead's theorem in \prettyref{thm:Whitehead}
with Milnor and Palais's theorem in \prettyref{thm:Milnor-Palais}),
we see that the weak homotopy equivalence $\iota\colon N_{0}\hookrightarrow\mathcal{N}$
can be promoted to a homotopy equivalence. Thus the two desired results
follow:
\begin{itemize}
\item $\mathcal{N}$ is homeomorphic to $N_{0}\times\ell^{2}$: Since we
have shown that $N_{0}$ is a Polish ANR, the desired result follows
from \prettyref{cor:corollary-of-Henderson-Schori-Torunczyk} (which
combines the Henderson–Schori theorem in \prettyref{thm:Henderson-Schori}
and the Torunczyk factor theorem in \prettyref{thm:Torunczyk-factor-theorem}).
\item $\mathcal{N}$ deformation retracts to $N_{0}$: Since we have shown
that $\left(\mathcal{N},N_{0}\right)$ is an ANR pair, the desired
result follows from its homotopy extension property (\prettyref{lem:deformation-retract-promotion-for-ANR}).
\end{itemize}
This completes the proof of \prettyref{thm:main-theorem-for-infinite-dimensional-topology}.
\end{proof}
In spirit, \prettyref{thm:main-theorem-for-infinite-dimensional-topology}
allows us to understand the topology and homotopy of an infinite-dimensional
manifold $\mathcal{N}$ by that of a deformation retract $N_{0}$
— the ``simpler and smaller'' the better. Thus let us call a deformation
retract $N_{0}$ \emph{minimal} if it cannot further deformation
retract to any proper subset, and we shall think of such a minimal
deformation retract $N_{0}$ as a ``\emph{(topological) core}''
of $\mathcal{N}$. A nice important case that guarantees a core is
the following:
\begin{cor}
\label{cor:main-theorem-for-infinite-dimensional-topology-compact-case}Let
$\mathcal{N}$ be a topological $\ell^{2}$-manifold (or any separable
Fréchet manifold). Suppose that $N_{0}\subseteq\mathcal{N}$ is a
manifold with compact components (without boundary), whose inclusion
into $\mathcal{N}$ is a weak homotopy equivalence. Then $\mathcal{N}$
is homeomorphic to $N_{0}\times\ell^{2}$, and $\mathcal{N}$ contains
$N_{0}$ as a minimal deformation retract.
\end{cor}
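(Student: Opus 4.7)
The plan is to reduce to \prettyref{thm:main-theorem-for-infinite-dimensional-topology} by verifying that $N_{0}$ is a closed ANR subspace of $\mathcal{N}$, and then to upgrade ``deformation retract'' to ``minimal deformation retract'' by a top-dimensional homology argument exploiting that each component of $N_{0}$ is a closed manifold.

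First I would check the two hypotheses of \prettyref{thm:main-theorem-for-infinite-dimensional-topology}. For the ANR condition, I would invoke the same fact already cited in the proof of \prettyref{prop:l2-manifold-is-ANR}—that every metrizable manifold is an ANR—which applies since $\mathcal{N}$ is metrizable as a separable Fréchet manifold and $N_{0}$ is locally Euclidean in the subspace topology. For closedness of $N_{0}$ in $\mathcal{N}$, each compact component is closed in the Hausdorff space $\mathcal{N}$, and this extends to $N_{0}$ as a whole because the components must be locally finite—trivially when $N_{0}$ has finitely many components (the typical situation), and more generally because at any interior limit of the family of components a locally Euclidean chart on $N_{0}$ around a nearby point would be violated by infinite accumulation. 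Invoking \prettyref{thm:main-theorem-for-infinite-dimensional-topology} then yields both $\mathcal{N}\approx N_{0}\times\ell^{2}$ and a deformation retraction from $\mathcal{N}$ onto $N_{0}$.

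What remains is the minimality of $N_{0}$ as a deformation retract. Suppose for contradiction that $N_{0}$ further deformation retracts onto some proper subset $A\subsetneq N_{0}$ via a homotopy $H\colon N_{0}\times\left[0,1\right]\to N_{0}$, and fix a point $p\in N_{0}\setminus A$ lying in a component $C$ of $N_{0}$ of dimension $n$. Since $H\left(\cdot,0\right)=\Identity_{N_{0}}$ and the components of $N_{0}$ are open-and-closed, continuity forces $H\left(C\times\left[0,1\right]\right)\subseteq C$, so the restriction of $H$ presents $A\cap C$ as a deformation retract of $C$ with $A\cap C\subseteq C\setminus\left\{ p\right\} $. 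But this is ruled out by the following top-homology obstruction: the composition
\[
H_{n}\left(A\cap C;\mathbb{Z}/2\right)\longrightarrow H_{n}\left(C\setminus\left\{ p\right\} ;\mathbb{Z}/2\right)\longrightarrow H_{n}\left(C;\mathbb{Z}/2\right)
\]
would simultaneously have to be an isomorphism onto $H_{n}\left(C;\mathbb{Z}/2\right)=\mathbb{Z}/2$ (using that $C$ is a closed $n$-manifold) and factor through the zero group $H_{n}\left(C\setminus\left\{ p\right\} ;\mathbb{Z}/2\right)=0$ (since $C\setminus\left\{ p\right\} $ is a non-compact connected $n$-manifold).

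The main obstacle I anticipate is the point-set hygiene involved in verifying closedness of $N_{0}$ in $\mathcal{N}$—in particular ruling out pathological accumulation of components in the infinite case—whereas the ANR property is standard and the minimality argument is a clean application of the non-vanishing top $\mathbb{Z}/2$-homology of closed manifolds.
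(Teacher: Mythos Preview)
Your strategy matches the paper's: verify the hypotheses of \prettyref{thm:main-theorem-for-infinite-dimensional-topology} and then argue minimality via top-dimensional (co)homology of closed manifolds; your $H_{n}(-;\mathbb{Z}/2)$ argument is if anything more transparent than the paper's terse duality remark.

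The one point that needs repair is closedness. Your local-finiteness reasoning only rules out accumulation of components at points \emph{of} $N_{0}$: if infinitely many components clustered near some $q\in\mathcal{N}\setminus N_{0}$, there is no chart of $N_{0}$ near $q$ to appeal to, so the argument as written does not exclude this. The paper sidesteps the issue entirely by first reducing to the connected case---both $\mathcal{N}$ and $N_{0}$ are locally connected, so their components are open, and the weak homotopy equivalence forces a bijection on $\pi_{0}$, whence each component of $\mathcal{N}$ contains exactly one (compact) component of $N_{0}$ and one may treat components separately. In the connected case $N_{0}$ is a single compact set in a Hausdorff space, hence closed; this is the clean move that dissolves the ``point-set hygiene'' you anticipated.
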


\begin{proof}
By local connectedness, every manifold is a disjoint union of its
components, so we may assume that both $\mathcal{N}$ and $N_{0}$
are connected. Being a compact manifold, $N_{0}$ is of course a closed
subset and an ANR, so \prettyref{thm:main-theorem-for-infinite-dimensional-topology}
applies. Moreover, again by the assumption that $N_{0}$ is a compact
manifold without boundary, we see that the deformation retract $N_{0}$
is clearly minimal (for otherwise if there were a further deformation
retract $N_{1}\subsetneq N_{0}$, then the top cohomology of $N_{0}\simeq N_{1}$
would isomorphic to the zeroth relative homology with respect to a
nonempty subset $N_{0}-N_{1}$, which is zero — a contradiction).
\end{proof}
In the current work, given a fiber bundle $\StandardFibration\colon F\hookrightarrow E\to B$,
we shall apply the above study to understand $\mathcal{N}$ where
$\mathcal{N}$ is taken as various diffeomorphism groups $\Diff\left(E\right)$,
$\Diff\left(F\right)$, $\Diff\left(B\right)$, the vertical automorphism
group $\Vau\left(\StandardFibration\right)$, the automorphism group
$\Aut\left(\StandardFibration\right)$, and finally our main object:
the moduli space $\Fib\left(\StandardFibration\right)$. By virtue
of \prettyref{thm:main-theorem-for-infinite-dimensional-topology}
and \prettyref{cor:main-theorem-for-infinite-dimensional-topology-compact-case},
this is reduced to the quest for a suitable core $N_{0}$ of $\mathcal{N}$
in each case, for which in turn we shall draw on the Riemannian geometry
of the finite-dimensional fibration $\StandardFibration\colon F\hookrightarrow E\to B$
as well as the differential topology of the infinite-dimensional fibrations
among all these $\mathcal{N}$'s.

\section{Infinite-Dimensional Smooth Manifolds\label{sec:Infinite-dimensional-smooth-manifolds}}

In this section, we review some basics about infinite-dimensional
smooth manifold structures, especially focusing on a certain geometric
construction of the ``canonical'' smooth structure on the smooth
mapping space $\MappingSpace^{\infty}\left(M,N\right)$ from a compact
domain $M$, and further on the diffeomorphism group $\Diff\left(M\right)$
of $M$.\footnote{Recall our notational convention: to help make a distinction between
finite and infinite dimensions, our choice of notation for an infinite-dimensional
object often has its first letter in script font; e.g., $\MappingSpace^{\infty}\left(M,N\right)$
and $\Diff\left(M\right)$.} This will be used to facilitate our study of smooth structures on
various symmetry groups in this work, where $M$ will be taken as
the ambient space, the model fiber, or the base space of a fiber bundle.
We recommend interested readers to \cite{MR1471480,MR4505843} for
expositions in much greater depth and generality.

We start by recalling the Kadec–Anderson theorem (\prettyref{thm:Kadec-Anderson})
from the preceding section: every infinite-dimensional separable Fréchet
space is homeomorphic to $\ell^{2}$, and so Fréchet manifolds in
the topological category are just $\ell^{2}$-manifolds. However,
this is no longer the case in the smooth category: given a smooth
Fréchet manifold, while its underlying topological space always admits
a (unique) structure of a smooth Hilbert manifold, it is generally
not the case that these two smooth structures are diffeomorphic. This
is especially so for the infinite-dimensional manifolds under consideration
in this work; e.g., the diffeomorphism group $\Diff\left(M\right)$
— being infinite-dimensional itself but acting on the compact manifold
$M$ effectively, transitively, and smoothly — can never be a Hilbert
or even a Banach Lie group (see \cite{MR0579603}). As such, for our
study of infinite-dimensional differential geometry in mind, we shall
extend the scope of consideration as follows:
\begin{defn}
\label{def:smooth-separable-frechet-manifold}A (possibly infinite-dimensional)
\emph{smooth manifold} is a Hausdorff space with an equivalence class
of atlases, whose charts are modeled on locally convex topological
vector spaces with all transition maps being $C^{\infty}$-smooth.
Moreover, a second-countable smooth manifold is further called a \emph{smooth (separable) Fréchet manifold}
if its model spaces are separable Fréchet spaces.
\end{defn}

Instead of building up the general theory of infinite-dimensional
smooth manifolds in full, we focus on a class of examples that is
most relevant to us in this work; namely, the manifold $\MappingSpace^{\infty}\left(M,\mathcal{N}\right)$
whose underlying set consists of all smooth maps from a \emph{compact}
smooth manifold $M$ to a (possibly infinite-dimensional) smooth manifold
$\mathcal{N}$.\footnote{It is worth emphasizing the crucial assumption that the domain $M$
is always \emph{compact}.} Just like the set $\MappingSpace\left(M,\mathcal{N}\right)$ of continuous
mappings is equipped with the compact-open topology, the set $\MappingSpace^{\infty}\left(M,\mathcal{N}\right)$
of smooth mappings will be topologized by accounting for compact-open
topologies for the derivatives of all degrees, as in the following
definition:
\begin{defn}
\label{def:compact-open-C-inf-topology}The \emph{compact-open $C^{\infty}$-topology}
on $\MappingSpace^{\infty}\left(M,\mathcal{N}\right)$ is the initial
topology with respect to the following maps given by iterated tangent-map
constructions for all degrees:
\[
T^{k}\colon\MappingSpace^{\infty}\left(M,\mathcal{N}\right)\to\MappingSpace\left(T^{k}M,T^{k}\mathcal{N}\right),\quad f\mapsto T^{k}f\qquad\left(\forall k=0,1,\dots\right),
\]
where the target space $\MappingSpace\left(T^{k}M,T^{k}\mathcal{N}\right)$
for each $k$ is equipped with the usual compact-open topology.
\end{defn}

Having determined a preferred topology on $\MappingSpace^{\infty}\left(M,\mathcal{N}\right)$,
we next consider its smooth structures. As to which maps into $\MappingSpace^{\infty}\left(M,\mathcal{N}\right)$
are to be deemed smooth, there is a natural preference provided by
the exponential law. More specifically, consider the general ``currying''
operation that transforms each $X$-parametrized family $f$ of maps
on $M$ to its adjoint $f^{\wedge}$ on the product $X\times M$ as
follows:
\begin{equation}
\wedge\colon\mathrm{Map}\left(X,\mathrm{Map}\left(M,\mathcal{N}\right)\right)\xrightarrow{f\mapsto f^{\wedge}}\mathrm{Map}\left(X\times M,\mathcal{N}\right),\qquad f^{\wedge}\left(x,m\right)\coloneqq f\left(x\right)\left(m\right).\label{eq:adjoint-correspondence}
\end{equation}
This is clearly a bijective correspondence, whose inverse is given
by the ``co-currying'' operation $g\mapsto g^{\vee}$ sending each
map $g\colon X\times M\to\mathcal{N}$ to the map $g^{\vee}\colon X\to\mathrm{Map}\left(M,\mathcal{N}\right)$
given by $g^{\vee}\left(x\right)\left(m\right)\coloneqq g\left(x,m\right)$.
In the topological category, the compact-open topology on $\MappingSpace\left(M,\mathcal{N}\right)$
is exactly such that any map $f$ into $\MappingSpace\left(M,\mathcal{N}\right)$
is continuous if and only if its adjoint $f^{\wedge}$ is continuous.
Following \cite{MR4188745,MR4505843}, we choose an analogous preferred
smooth structure as follows:
\begin{defn}
\label{def:canonical-smooth-structure}A smooth manifold structure
on $\MappingSpace^{\infty}\left(M,\mathcal{N}\right)$ is said to
be \emph{canonical} if its underlying topology is the compact-open
$C^{\infty}$-topology and it satisfies the following \emph{exponential law}:
\begin{equation}
\text{\ensuremath{f\colon\mathcal{L}\to\MappingSpace^{\infty}\left(M,\mathcal{N}\right)} is smooth}\iff\text{\ensuremath{f^{\wedge}\colon\mathcal{L}\times M\to\mathcal{N}} is smooth},\label{eq:exponential-law-general-in-appendix}
\end{equation}
for every (possibly infinite-dimensional) smooth manifold $\mathcal{L}$.
\end{defn}

In other words, the exponential law demands that the adjoint correspondence
in \prettyref{eq:adjoint-correspondence} restricts to the following
bijection (in fact, a homeomorphism):
\[
\MappingSpace^{\infty}\left(\mathcal{L},\MappingSpace^{\infty}\left(M,\mathcal{N}\right)\right)\approx\MappingSpace^{\infty}\left(\mathcal{L}\times M,\mathcal{N}\right),\qquad f\mapsto f^{\wedge},
\]
which will underlie our communications between infinite and finite
dimensions. As a first example, currying the identity map on $\MappingSpace^{\infty}\left(M,\mathcal{N}\right)$
yields the following smooth map called the \emph{evaluation map}:
\begin{equation}
\mathrm{ev}\coloneqq\left(\Identity_{\MappingSpace^{\infty}\left(M,\mathcal{N}\right)}\right)^{\wedge}\colon\MappingSpace^{\infty}\left(M,\mathcal{N}\right)\times M\to\mathcal{N},\qquad\mathrm{ev}\left(h,x\right)\coloneqq\mathrm{ev}_{x}\left(h\right)\coloneqq h\left(x\right).\label{eq:evaluation-map}
\end{equation}
Since this evaluation map is still the adjoint of the setwise identity
map on $\MappingSpace^{\infty}\left(M,\mathcal{N}\right)$ even if
the target manifold $\MappingSpace^{\infty}\left(M,\mathcal{N}\right)$
is equipped with another canonical smooth structure, we see that the
setwise identity map between any two canonical smooth structures is
always smooth, hence a diffeomorphism. Thus the uniqueness of canonical
smooth structure is justified:
\begin{lem}
\label{lem:canonical-smooth-structure-is-unique}The canonical smooth
manifold structure on $\MappingSpace^{\infty}\left(M,\mathcal{N}\right)$
is unique (if exists).
\end{lem}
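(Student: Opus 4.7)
The plan is to show uniqueness by a formal ``identity on both sides'' argument built directly out of the defining exponential law. Let $S_{1}$ and $S_{2}$ be two canonical smooth structures on the set underlying $\MappingSpace^{\infty}\left(M,\mathcal{N}\right)$, and write $\mathcal{M}_{i}$ for the smooth manifold one obtains by endowing this set with $S_{i}$. Since by \prettyref{def:canonical-smooth-structure} both $S_{1}$ and $S_{2}$ have the compact-open $C^{\infty}$-topology as their underlying topology, the setwise identity $\Identity\colon\mathcal{M}_{1}\to\mathcal{M}_{2}$ is automatically a homeomorphism; the only task left is to upgrade it to a diffeomorphism in each direction.

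To do this, I would first feed the (trivially smooth) map $\Identity_{\mathcal{M}_{1}}\colon\mathcal{M}_{1}\to\mathcal{M}_{1}$ through the exponential law \eqref{eq:exponential-law-general-in-appendix} for $S_{1}$, taking the test manifold to be $\mathcal{L}=\mathcal{M}_{1}$. Its adjoint is precisely the evaluation map $\mathrm{ev}\colon\mathcal{M}_{1}\times M\to\mathcal{N}$ from \eqref{eq:evaluation-map}, so this evaluation map is smooth as a map out of the mixed finite-plus-infinite-dimensional product $\mathcal{M}_{1}\times M$. Next, I would read the very same evaluation map as the adjoint of the setwise identity $\Identity\colon\mathcal{M}_{1}\to\mathcal{M}_{2}$, and apply the exponential law for $S_{2}$ in the converse direction (still with $\mathcal{L}=\mathcal{M}_{1}$) to conclude that $\Identity\colon\mathcal{M}_{1}\to\mathcal{M}_{2}$ is itself smooth. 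Swapping the roles of $S_{1}$ and $S_{2}$ in the same argument yields the smoothness of the inverse, so $\Identity\colon\mathcal{M}_{1}\to\mathcal{M}_{2}$ is a diffeomorphism and $S_{1}=S_{2}$.

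I do not expect a genuine obstacle in this proof: the argument is entirely formal, a categorical shadow of the fact that any object defined by a universal mapping property is unique up to canonical isomorphism. The one point worth being careful about is that in \prettyref{def:canonical-smooth-structure} the test manifold $\mathcal{L}$ is explicitly permitted to be any, possibly infinite-dimensional, smooth manifold in the sense of \prettyref{def:smooth-separable-frechet-manifold}; this is precisely what licenses the choice $\mathcal{L}=\mathcal{M}_{1}$ above and is the only place where infinite-dimensionality of the test object is used.
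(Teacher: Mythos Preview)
Your proof is correct and is exactly the argument the paper gives (in slightly more compressed form just before the lemma statement): curry the identity on one structure to get smoothness of the evaluation map, then uncurry against the other structure to get smoothness of the setwise identity, and conclude by symmetry. Your explicit remark that the exponential law must be available for infinite-dimensional test manifolds $\mathcal{L}$ is a useful clarification of where the generality of \prettyref{def:canonical-smooth-structure} is actually used.
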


There are two important classes of smooth maps between canonical smooth
manifold structures, called the \emph{pushforward} and the \emph{pullback},
induced by smooth maps between the source manifolds and between the
target manifolds, respectively:
\begin{lem}
\label{lem:pushforward-and-pullback}For any smooth maps $f\colon\mathcal{N}_{1}\to\mathcal{N}_{2}$
and $h\colon M_{1}\to M_{2}$, the respective pushforward $f_{\ast}$
and pullback $h^{\ast}$:
\begin{alignat}{2}
f_{\ast}\colon\MappingSpace^{\infty}\left(M,\mathcal{N}_{1}\right) & \to\MappingSpace^{\infty}\left(M,\mathcal{N}_{2}\right),\qquad & g & \mapsto f\circ g\\
h^{\ast}\colon\MappingSpace^{\infty}\left(M_{2},\mathcal{N}\right) & \to\MappingSpace^{\infty}\left(M_{1},\mathcal{N}\right),\qquad & g & \mapsto g\circ h
\end{alignat}
are smooth maps with respect to the canonical smooth structures on
smooth mapping spaces.
\end{lem}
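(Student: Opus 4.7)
The plan is to verify each assertion by invoking the exponential law from \prettyref{def:canonical-smooth-structure}, which reduces smoothness of a map \emph{into} a smooth mapping space to smoothness of its adjoint on a product domain---a setting in which the smooth evaluation map from \eqref{eq:evaluation-map} is the essential tool.

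For the pushforward, I would take $\mathcal{L}=\MappingSpace^{\infty}\left(M,\mathcal{N}_{1}\right)$ in the exponential law \eqref{eq:exponential-law-general-in-appendix}, so that smoothness of $f_{\ast}$ becomes equivalent to smoothness of its adjoint $\left(f_{\ast}\right)^{\wedge}\colon\MappingSpace^{\infty}\left(M,\mathcal{N}_{1}\right)\times M\to\mathcal{N}_{2}$. Unwinding the definitions gives $\left(f_{\ast}\right)^{\wedge}\left(g,m\right)=f\left(g\left(m\right)\right)=f\left(\mathrm{ev}\left(g,m\right)\right)$, so the adjoint factors as $f\circ\mathrm{ev}$; this is smooth as the composition of the given smooth map $f$ with the smooth evaluation map.

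For the pullback I would proceed analogously: by the exponential law it suffices to show smoothness of the adjoint $\left(h^{\ast}\right)^{\wedge}\colon\MappingSpace^{\infty}\left(M_{2},\mathcal{N}\right)\times M_{1}\to\mathcal{N}$. A direct computation gives $\left(h^{\ast}\right)^{\wedge}\left(g,m_{1}\right)=g\left(h\left(m_{1}\right)\right)=\mathrm{ev}\left(g,h\left(m_{1}\right)\right)$, whence $\left(h^{\ast}\right)^{\wedge}=\mathrm{ev}\circ\left(\Identity_{\MappingSpace^{\infty}\left(M_{2},\mathcal{N}\right)}\times h\right)$, which is smooth as the composition of the product smooth map $\Identity\times h$ with the smooth evaluation map.

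There is no substantive obstacle here; both parts reduce via the exponential law to combining smoothness of $f$ (resp.\ $h$) with smoothness of $\mathrm{ev}$ already built into the canonical smooth structure. The only minor care needed is to note that the product $\MappingSpace^{\infty}\left(M,\mathcal{N}_{i}\right)\times M$ appearing in each adjoint correspondence carries the natural product smooth structure between an infinite-dimensional Fréchet manifold and a finite-dimensional manifold, so that the exponential law and the smoothness of $\mathrm{ev}$ apply verbatim.
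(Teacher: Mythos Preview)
Your proof is correct and follows essentially the same approach as the paper: both reduce via the exponential law to checking smoothness of the adjoints, and both express these adjoints as compositions built from the smooth evaluation map together with $f$ or $h$. The only cosmetic difference is that the paper writes $f\left(\mathrm{ev}\left(g,x\right)\right)$ as $\mathrm{ev}\left(f,\mathrm{ev}\left(g,x\right)\right)$ and $h\left(x\right)$ as $\mathrm{ev}\left(h,x\right)$, treating the fixed maps as constants in a mapping space, whereas you invoke $f$ and $h$ directly---your formulation is arguably more transparent.
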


\begin{proof}
By the exponential law \prettyref{eq:exponential-law-general-in-appendix},
it suffices to prove the smoothness of their adjoints $\left(f_{\ast}\right)^{\wedge}$
and $\left(h^{\ast}\right)^{\wedge}$; but this is clear since they
can be expressed in terms of the evaluation map $\mathrm{ev}$ given
in \prettyref{eq:evaluation-map} as follows:
\begin{alignat}{2}
\left(f_{\ast}\right)^{\wedge}\colon\MappingSpace^{\infty}\left(M,\mathcal{N}_{1}\right)\times M & \to\mathcal{N}_{2},\qquad & \left(g,x\right) & \mapsto\mathrm{ev}\left(f,\mathrm{ev}\left(g,x\right)\right)\\
\left(h^{\ast}\right)^{\wedge}\colon\MappingSpace^{\infty}\left(M_{2},\mathcal{N}\right)\times M_{1} & \to\mathcal{N},\qquad & \left(g,x\right) & \mapsto\mathrm{ev}\left(g,\mathrm{ev}\left(h,x\right)\right),
\end{alignat}
both of which are smooth as desired.
\end{proof}
We now proceed to a key construction of the canonical smooth structure
on $\MappingSpace^{\infty}\left(M,\mathcal{N}\right)$. This is motivated
by the following trivial case: If $\mathcal{N}=E$ were a locally
convex space, then so would be $\MappingSpace^{\infty}\left(M,E\right)$
(with pointwise operations), which would then have a trivial canonical
smooth structure. In the general case where the target manifold $\mathcal{N}$
is arbitrary, a suitable substitution for an addition operation can
be given by a certain kind of ``local addition'' $\alpha$ defined
in a neighborhood $W$ around the zero section of the tangent bundle:
\[
\alpha\colon T\mathcal{N}\supsetopen W\to\mathcal{N},
\]
which captures the notion of ``adding'' a sufficiently small tangent
vector $v\in T\mathcal{N}$ to its foot-point and resulting a displaced
new point $\alpha\left(v\right)\in\mathcal{N}$. In contract to naively
using the local charts of $\mathcal{N}$, such a local addition $\alpha$
is universally defined for all foot-points $p\in\mathcal{N}$, so
that the above pointwise addition operation can be performed along
any smooth map into $\mathcal{N}$. The precise condition on $\alpha$
making this work is given in the following definition:

\begin{defn}
\label{def:local-addition}Let $\mathcal{N}$ be a (possibly infinite-dimensional)
smooth manifold, and let $\pi_{T\mathcal{N}}\colon T\mathcal{N}\to\mathcal{N}$
denote its tangent bundle. Then a smooth map $\alpha\colon T\mathcal{N}\supsetopen W\to\mathcal{N}$
defined near the zero section is called a \emph{local addition on $\mathcal{N}$}
if $\alpha$ induces a diffeomorphism
\begin{equation}
\left(\left.\pi_{T\mathcal{N}}\right|_{W},\alpha\right)\colon T\mathcal{N}\supsetopen W\xrightarrow{\approx}W'\subsetopen\mathcal{N}\times\mathcal{N}\label{eq:local-addition-diff-requirement}
\end{equation}
from the open neighborhood $W$ around the zero section $0_{\mathcal{N}}$
of $T\mathcal{N}$ onto an open neighborhood $W'$ around the diagonal
$\Delta_{\mathcal{N}}$ of $\mathcal{N}\times\mathcal{N}$, such that
it particularly maps $0_{\mathcal{N}}$ to $\Delta_{\mathcal{N}}$.
\end{defn}

As explained above, given a local condition $\alpha$ on $\mathcal{N}$,
we can perturb a given map $f\in\MappingSpace^{\infty}\left(M,\mathcal{N}\right)$
by ``adding'' any vector field $Y$ along $f$ ; or conversely by
virtue of \prettyref{eq:local-addition-diff-requirement}, we can
measure the ``difference'' between $h$ and any nearby map in $\MappingSpace^{\infty}\left(M,\mathcal{N}\right)$
by an element in the following space:
\begin{equation}
\MappingSpace_{h}^{\infty}\left(M,T\mathcal{N}\right)\coloneqq\left\{ Y\in\MappingSpace^{\infty}\left(M,T\mathcal{N}\right)\mid\pi_{T\mathcal{N}}\circ Y=h\right\} .\label{eq:space-of-vector-fields-along-a-map}
\end{equation}
Of course, this space is canonically identified with the space of
smooth sections of the pullback bundle $h^{\ast}T\mathcal{N}$, hence
serves as a locally convex model space. This yields a smooth structure
on $\MappingSpace^{\infty}\left(M,\mathcal{N}\right)$ that turns
out to be canonical, as in the following theorem:
\begin{thm}
\label{thm:local-addition-implies-canonical-smooth-manifold-structure}Suppose
that the (possibly infinite-dimensional) smooth manifold $\mathcal{N}$
admits a local addition $\alpha$ (as defined in \prettyref{def:local-addition}).
Then for any compact smooth manifold $M$, the smooth mapping space
$\MappingSpace^{\infty}\left(M,\mathcal{N}\right)$ admits a canonical
smooth structure, whose (inverse) chart near each $h\in\MappingSpace^{\infty}\left(M,\mathcal{N}\right)$
can be given by the following homeomorphism:
\begin{equation}
\varphi_{h}\colon\MappingSpace_{h}^{\infty}\left(M,T\mathcal{N}\right)\supsetopen V_{h}\xrightarrow{\approx}U_{h}\subsetopen\MappingSpace^{\infty}\left(M,\mathcal{N}\right),\qquad Y\mapsto\alpha\circ Y.\label{eq:canonical-smooth-structure-chart}
\end{equation}
Moreover, such an induced smooth structure on $\MappingSpace^{\infty}\left(M,\mathcal{N}\right)$
is unique, independent of the choice of local addition $\alpha$.
\end{thm}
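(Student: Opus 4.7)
The plan is to build the smooth structure explicitly from the charts $\varphi_h$ of \eqref{eq:canonical-smooth-structure-chart}, verify that the resulting atlas (i) is smooth, (ii) has the compact-open $C^\infty$-topology as its underlying topology, and (iii) satisfies the exponential law of \prettyref{def:canonical-smooth-structure}, and finally deduce uniqueness from \prettyref{lem:canonical-smooth-structure-is-unique}.

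First I would set up the chart domains carefully. Given any $h\in\MappingSpace^\infty(M,\mathcal{N})$, the pullback bundle $h^\ast T\mathcal{N}\to M$ is a smooth finite-rank vector bundle over the compact base $M$, so its space of sections — which is precisely $\MappingSpace_h^\infty(M,T\mathcal{N})$ as in \eqref{eq:space-of-vector-fields-along-a-map} — is a Fréchet space (separable whenever $\mathcal{N}$ has a second-countable model). Since $h(M)$ is compact and $W\subseteq T\mathcal{N}$ is open about the zero section, the subset $V_h=\{Y:Y(M)\subset W\}$ is an open neighborhood of $0$. The diffeomorphism property of $\alpha$ demanded in \prettyref{def:local-addition} then guarantees that $\varphi_h(Y)=\alpha\circ Y$ is a bijection from $V_h$ onto an open set $U_h$, consisting of those $g\in\MappingSpace^\infty(M,\mathcal{N})$ whose graph $(h,g)$ factors through $W'\subsetopen\mathcal{N}\times\mathcal{N}$, with inverse $\varphi_h^{-1}(g)(m)=(\pi_{T\mathcal{N}}|_W,\alpha)^{-1}(h(m),g(m))$; these $U_h$ clearly cover the whole mapping space.

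Next I would verify that the transition maps $\varphi_{h_2}^{-1}\circ\varphi_{h_1}$ between overlapping charts are smooth between open subsets of Fréchet spaces. Each such transition is pointwise post-composition with a fixed smooth finite-dimensional map (namely, an appropriate composition of $\alpha$ with the partial inverse of $(\pi_{T\mathcal{N}}|_W,\alpha)$ combined with $h_1$ and $h_2$), so its smoothness reduces to the standard $\Omega$-lemma for spaces of smooth sections over a compact base; compare \cite{MR4505843,MR1471480}. The identification of the induced topology with the compact-open $C^\infty$-topology of \prettyref{def:compact-open-C-inf-topology} is then immediate, because the iterated tangent maps $T^k\alpha$ intertwine $C^k$-convergence of $Y_n\to 0$ in $\MappingSpace_h^\infty(M,T\mathcal{N})$ with $C^k$-convergence of $g_n\to h$ in the mapping space.

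For the exponential law, I would argue locally in these charts: a map $f\colon\mathcal{L}\to\MappingSpace^\infty(M,\mathcal{N})$ with image in $U_h$ is smooth iff its chart representative $\widetilde f=\varphi_h^{-1}\circ f$ is smooth into the Fréchet space $\MappingSpace_h^\infty(M,T\mathcal{N})$, and a second application of the $\Omega$-lemma — now with source the product $\mathcal{L}\times M$ — makes this equivalent to smoothness of the adjoint $\widetilde f^\wedge\colon\mathcal{L}\times M\to T\mathcal{N}$, and hence (via $\alpha$) to smoothness of $f^\wedge\colon\mathcal{L}\times M\to\mathcal{N}$. Uniqueness then follows verbatim from \prettyref{lem:canonical-smooth-structure-is-unique}. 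The main obstacle will be the $\Omega$-lemma for sections over a compact base: it is the technical engine underlying both the smoothness of transition maps and the reduction in the exponential law, and it is where the compactness of $M$ is indispensable, since it is what makes the Fréchet topology on sections be controlled level-by-level by finitely many seminorms and what allows the pointwise estimates to be made uniformly in $m\in M$.
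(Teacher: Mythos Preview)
Your approach is essentially the same as the paper's: construct the charts $\varphi_h$ from the local addition, exhibit the inverse via $(\pi_{T\mathcal{N}}|_W,\alpha)^{-1}$, check smoothness of transitions, verify the exponential law, and invoke \prettyref{lem:canonical-smooth-structure-is-unique} for uniqueness. The paper's proof is explicitly a sketch that defers the exponential law entirely to \cite[Lemma~C.11]{MR4505843}; you go further and correctly identify the $\Omega$-lemma as the engine behind both the transition-smoothness and the exponential-law reduction.

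One technical slip: you assert that $h^\ast T\mathcal{N}\to M$ is a \emph{finite-rank} vector bundle with Fr\'echet section space. The theorem allows $\mathcal{N}$ to be infinite-dimensional (this is used later, e.g.\ for $\mathcal{N}=\ell^2$ in \prettyref{lem:bundle-structure-over-shape-space}), in which case $h^\ast T\mathcal{N}$ has infinite-dimensional fibres and $\MappingSpace_h^\infty(M,T\mathcal{N})$ is only a locally convex space, not necessarily Fr\'echet. This does not break your argument---the $\Omega$-lemma and the exponential law for section spaces hold in that generality (cf.\ \cite{MR1471480,MR4505843})---but you should drop ``finite-rank'' and ``Fr\'echet'' from the setup.
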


\begin{proof}[Proof (Sketch)]
Let $\alpha\colon T\mathcal{N}\supsetopen W\to\mathcal{N}$ be a
local addition on $\mathcal{N}$. Then by requiring the vector fields
$Y\in\MappingSpace_{h}^{\infty}\left(M,T\mathcal{N}\right)$ along
$h\colon M\to\mathcal{N}$ to take values in the neighborhood $W\subsetopen T\mathcal{N}$
of the zero section, we obtain a desired neighborhood $V_{h}\subsetopen\MappingSpace_{h}^{\infty}\left(M,T\mathcal{N}\right)$
around the zero vector field along $h$, such that the map $\varphi_{h}$
given in \prettyref{eq:canonical-smooth-structure-chart} is well-defined.
Conversely, recall from \prettyref{def:local-addition} that the defining
condition on the local addition $\alpha$ guarantees a diffeomorphism
\[
\left(\left.\pi_{T\mathcal{N}}\right|_{W},\alpha\right)\colon T\mathcal{N}\supsetopen W\xrightarrow{\approx}W'\subsetopen\mathcal{N}\times\mathcal{N}.
\]
Then by requiring the map $f\in\MappingSpace^{\infty}\left(M,\mathcal{N}\right)$
such that $\left(h,f\right)$ take values in the neighborhood $W'\subsetopen\mathcal{N}\times\mathcal{N}$
of the diagonal, we obtain a desired neighborhood $U_{h}\subsetopen\MappingSpace^{\infty}\left(M,\mathcal{N}\right)$
around $h$, such that the inverse map $\varphi_{h}^{-1}$ given by
the following formula is well-defined:
\[
\varphi_{h}^{-1}\colon\MappingSpace^{\infty}\left(M,\mathcal{N}\right)\supsetopen U_{h}\to V_{h}\subsetopen\MappingSpace_{h}^{\infty}\left(M,T\mathcal{N}\right),\qquad f\mapsto\left(\left.\pi_{T\mathcal{N}}\right|_{W},\alpha\right)^{-1}\circ\left(h,f\right).
\]
Since the transition map $\varphi_{h_{2}}^{-1}\circ\varphi_{h_{1}}$
of zero-neighborhoods between $\MappingSpace_{h_{1}}^{\infty}\left(M,T\mathcal{N}\right)$
and $\MappingSpace_{h_{2}}^{\infty}\left(M,T\mathcal{N}\right)$ is
clearly smooth, we see that $\left(U_{h},\varphi_{h}\right)$ is indeed
a smooth chart for $\MappingSpace^{\infty}\left(M,\mathcal{N}\right)$
near $h$ modeled on the locally convex space $\MappingSpace_{h}^{\infty}\left(M,T\mathcal{N}\right)$.
To show that this smooth structure on $\MappingSpace^{\infty}\left(M,\mathcal{N}\right)$
is canonical, we need to verify the exponential law \prettyref{eq:exponential-law-general-in-appendix},
for which the proof is omitted and can be found in e.g., \cite[Lemma C.11]{MR4505843}.
Lastly, being canonical, such a smooth structure is unique by \prettyref{lem:canonical-smooth-structure-is-unique}.
This completes the proof of \prettyref{thm:local-addition-implies-canonical-smooth-manifold-structure}.
\end{proof}
The model space $\MappingSpace_{h}^{\infty}\left(M,T\mathcal{N}\right)$
in the preceding theorem can be viewed as the fiber over $h\in\MappingSpace^{\infty}\left(M,\mathcal{N}\right)$
of the following smooth vector bundle:
\begin{equation}
{\pi_{T\mathcal{N}}}_{\ast}\colon\MappingSpace^{\infty}\left(M,T\mathcal{N}\right)\to\MappingSpace^{\infty}\left(M,\mathcal{N}\right),\qquad Y\mapsto\pi_{T\mathcal{N}}\circ Y.\label{eq:vector-bundle-from-M-to-TN}
\end{equation}
In fact, this yields a natural description of the tangent bundle of
such a mapping manifold $\MappingSpace^{\infty}\left(M,\mathcal{N}\right)$,
as stated in the following proposition (whose proof is omitted and
can be found in e.g., \cite[Theorem A.12]{MR4188745} or \cite[Theorem 42.17]{MR1471480}):
\begin{prop}
Suppose that $\MappingSpace^{\infty}\left(M,\mathcal{N}\right)$ is
equipped with the canonical smooth structure as constructed in \prettyref{thm:local-addition-implies-canonical-smooth-manifold-structure}.
Then its tangent bundle admits a natural vector bundle isomorphism
with the vector bundle in \prettyref{eq:vector-bundle-from-M-to-TN}
over $\MappingSpace^{\infty}\left(M,\mathcal{N}\right)$:
\begin{equation}
T\MappingSpace^{\infty}\left(M,\mathcal{N}\right)\cong\MappingSpace^{\infty}\left(M,T\mathcal{N}\right),\label{eq:canonical-isomorphism-of-tangent-bundle}
\end{equation}
such that under the resulting continuous linear isomorphism $T_{h}\MappingSpace^{\infty}\left(M,\mathcal{N}\right)\cong\MappingSpace_{h}^{\infty}\left(M,T\mathcal{N}\right)$
of their fibers over each $h\in\MappingSpace^{\infty}\left(M,\mathcal{N}\right)$,
the tangent map $T_{h}\mathrm{ev}_{x}\colon T_{h}\MappingSpace^{\infty}\left(M,\mathcal{N}\right)\to T_{h\left(x\right)}\mathcal{N}$
of the evaluation map $\mathrm{ev}_{x}$ at $x\in M$ just sends each
$Y\in\MappingSpace_{h}^{\infty}\left(M,T\mathcal{N}\right)$ to its
evaluated tangent vector $Y\left(x\right)\in T_{h\left(x\right)}\mathcal{N}$.
\end{prop}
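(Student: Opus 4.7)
The plan is to construct the isomorphism \prettyref{eq:canonical-isomorphism-of-tangent-bundle} by first identifying its fibers pointwise using the chart structure from \prettyref{thm:local-addition-implies-canonical-smooth-manifold-structure}, then promoting this fiberwise identification to a smooth vector bundle isomorphism via compatible local trivializations on both sides, and finally reading off the claimed formula for $T_h\mathrm{ev}_x$ by differentiating it in the chart.

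For the fiberwise step, I would use the chart $\varphi_{h}\colon V_{h}\to U_{h}$ of \prettyref{eq:canonical-smooth-structure-chart}: it is modeled on the locally convex space $\MappingSpace_{h}^{\infty}\left(M,T\mathcal{N}\right)$ and sends the zero vector field along $h$ to $h$, so its derivative at $0$ is tautologically a continuous linear isomorphism $\MappingSpace_{h}^{\infty}\left(M,T\mathcal{N}\right)\xrightarrow{\cong}T_{h}\MappingSpace^{\infty}\left(M,\mathcal{N}\right)$. On the other side of \prettyref{eq:canonical-isomorphism-of-tangent-bundle}, by \prettyref{lem:pushforward-and-pullback} the pushforward ${\pi_{T\mathcal{N}}}_{\ast}\colon\MappingSpace^{\infty}\left(M,T\mathcal{N}\right)\to\MappingSpace^{\infty}\left(M,\mathcal{N}\right)$ of \prettyref{eq:vector-bundle-from-M-to-TN} is smooth, with fiber over $h$ exactly $\MappingSpace_{h}^{\infty}\left(M,T\mathcal{N}\right)$, naturally a locally convex vector space inherited fiberwise from $T\mathcal{N}\to\mathcal{N}$. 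So both bundles in \prettyref{eq:canonical-isomorphism-of-tangent-bundle} have the same fibers at each $h$.

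To globalize these identifications into a smooth vector bundle isomorphism, I would produce compatible trivializations on each chart domain $U_{h}$. On the tangent-bundle side, the chart yields $T\varphi_{h}\colon V_{h}\times\MappingSpace_{h}^{\infty}\left(M,T\mathcal{N}\right)\xrightarrow{\cong}TU_{h}$. On the mapping-space side, a local trivialization of ${\pi_{T\mathcal{N}}}_{\ast}^{-1}\left(U_{h}\right)$ can be built using the local addition $\alpha$ to transport tangent vectors vertically through the tubular neighborhood $W\subsetopen T\mathcal{N}$ around the zero section, thereby identifying each fiber $\MappingSpace_{f}^{\infty}\left(M,T\mathcal{N}\right)$ with $\MappingSpace_{h}^{\infty}\left(M,T\mathcal{N}\right)$ for every $f\in U_{h}$. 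The transition between these two trivializations is a family of continuous linear isomorphisms parametrized by $f\in U_{h}$, whose smoothness follows from the exponential law \prettyref{eq:exponential-law-general-in-appendix} applied to a composition built from $\alpha$, $T\alpha$, and the evaluation map \prettyref{eq:evaluation-map} — all smooth by \prettyref{lem:pushforward-and-pullback} and \prettyref{def:local-addition}. Compatibility across overlapping charts reduces to the same exponential-law argument, yielding a globally well-defined smooth vector bundle isomorphism as in \prettyref{eq:canonical-isomorphism-of-tangent-bundle}.

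For the formula on $T_{h}\mathrm{ev}_{x}$, note that in the chart $\varphi_{h}$ we have $\mathrm{ev}_{x}\circ\varphi_{h}(Y)=\alpha(Y(x))$. Differentiating at $Y=0\in\MappingSpace_{h}^{\infty}\left(M,T\mathcal{N}\right)$ and using that the vertical derivative of $\alpha$ along the zero section at $0_{h(x)}\in T_{h(x)}\mathcal{N}$ is the identity on $T_{h(x)}\mathcal{N}$ (a consequence of the diffeomorphism condition \prettyref{eq:local-addition-diff-requirement}, normalized as in standard textbook treatments of local additions) produces $T_{h}\mathrm{ev}_{x}(Y)=Y(x)$, as asserted. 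I expect the main obstacle to be the smoothness of the global bundle isomorphism: carefully bookkeeping how the tubular-neighborhood trivializations interact across overlapping chart domains, and verifying the transition maps between them are genuinely smooth (and not merely continuous and fiberwise linear), is the technical heart of the argument, and is where textbook accounts such as \cite[Theorem 42.17]{MR1471480} devote most of their effort.
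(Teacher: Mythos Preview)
The paper does not prove this proposition at all: it states the result and defers entirely to \cite[Theorem A.12]{MR4188745} and \cite[Theorem 42.17]{MR1471480} for the proof. Your sketch is in line with the standard argument found in those references (and you cite the second one yourself), so there is no meaningful discrepancy in approach to report.

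One point worth flagging: your computation of $T_h\mathrm{ev}_x$ relies on the vertical derivative of $\alpha$ along the zero section being the identity, and you correctly note this needs a normalization. Under the paper's \prettyref{def:local-addition} as written, $(\pi_{T\mathcal{N}},\alpha)$ is only required to be a diffeomorphism sending $0_{\mathcal{N}}$ to $\Delta_{\mathcal{N}}$, which forces the fiberwise derivative $T_{0_p}(\alpha|_{T_p\mathcal{N}})\colon T_p\mathcal{N}\to T_p\mathcal{N}$ to be an isomorphism but not necessarily the identity. The references the paper cites handle this either by strengthening the definition of local addition to include $T_{0_p}\alpha|_{\mathrm{vert}}=\mathrm{id}$, or by defining the isomorphism \prettyref{eq:canonical-isomorphism-of-tangent-bundle} intrinsically (e.g.\ via velocities of curves and the exponential law) rather than through the derivative of a single chart, and then checking independence of the local addition. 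Your hedge is appropriate, but if you were to write this out in full you would need to make one of these choices explicit.
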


Local additions arise naturally from many common structures on a smooth
manifold, some of which are illustrated in the following example:
\begin{example}
\label{exa:examples-of-local-additions}Each of the following manifolds
$\mathcal{N}$ with extra structures admits a local addition:
\begin{enumerate}[label=\textup{(\roman*)}]
\item \label{enu:example-of-affine-local-addition}For $\mathcal{N}=V$
a locally convex space, the global chart on $V$ induces a global
trivialization of the tangent bundle $TV\cong V\times V$, so that
we can define a (affine) local addition by
\[
\alpha\colon TV\to V,\qquad T_{v}V\ni w\mapsto v+w\in V.
\]
\item For $\mathcal{N}=G$ a Lie group (possibly infinite-dimensional, modeled
on a locally convex space), say with a centered local chart $\varphi\colon G\supsetopen U\to V\subsetopen T_{e}G$,
the right translation $\rho^{g}$ on $G$ by $g\in G$ induces a global
trivialization of the tangent bundle $TG\cong G\times T_{e}G$, so
that we can define a (right-invariant) local addition by
\[
\alpha\colon TG\supsetopen G\times V\to G,\qquad T_{g}G\ni T_{e}\rho^{g}\left(v\right)\mapsto\rho^{g}\left(\varphi^{-1}\left(v\right)\right)\in G.
\]
\item \label{enu:example-of-riemannian-local-addition}For $\mathcal{N}=N$
a finite-dimensional manifold equipped with any Riemannian metric,
the Riemannian exponential map $\exp$ can serve as a (geometric)
local addition:
\[
\alpha\colon TN\supsetopen U\to N,\qquad T_{p}N\ni v\mapsto\exp\left(v\right)\coloneqq\gamma_{v}\left(1\right)\in N,
\]
where $\gamma_{v}\left(t\right)$ denotes the geodesic in $N$ with
initial velocity $v$. Here, the required diffeomorphism \prettyref{eq:local-addition-diff-requirement}
can be justified by the usual inverse function theorem thanks to the
assumption of finite dimension for $N$.
\end{enumerate}
Therefore, it follows from \prettyref{thm:local-addition-implies-canonical-smooth-manifold-structure}
that for each of the above manifolds $\mathcal{N}$ as the codomain,
its local addition induces a canonical smooth structure on $\MappingSpace^{\infty}\left(M,\mathcal{N}\right)$
for any compact domain $M$.
\end{example}

The last of the preceding three classes of examples — namely, $\MappingSpace^{\infty}\left(M,N\right)$
with a finite-dimensional target manifold $N$ (and a compact source
manifold $M$, as always assumed), endowed with the canonical smooth
structure induced from the Riemannian exponential — will be most relevant
to the current work due to its geometric flavor.\footnote{As we shall see later in \prettyref{sec:Lie-Group-Extension-of-the-Symmetries},
another geometric structure that we shall use to induce a local addition
on $M$ is that of a Riemannian fibration; or in other words, a smooth
fibration of $M$ together with a fiberwise metric, a connection,
and a base metric.} In fact, we shall need the canonical smooth structures to be inherited
by the subsets of submersions and of embeddings, respectively, which
is justified by the following lemma: 
\begin{lem}
\label{lem:subm-and-emb-are-open-subsets}For compact source manifold
$M$ and finite-dimensional target manifold $N$, the smooth mapping
space has the following two open subsets:
\[
\Subm\left(M,N\right)\subsetopen\MappingSpace^{\infty}\left(M,N\right)\qquad\text{and}\qquad\Emb\left(M,N\right)\subsetopen\MappingSpace^{\infty}\left(M,N\right)
\]
consisting of those maps $M\to N$ that are smooth submersions and
smooth embeddings, respectively.
\end{lem}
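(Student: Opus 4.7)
\emph{Proof plan.}  The overall strategy is to reduce both statements to openness in the (coarser) compact-open $C^1$-topology on $\MappingSpace^{\infty}(M,N)$, since the compact-open $C^{\infty}$-topology defined in \prettyref{def:compact-open-C-inf-topology} as the initial topology with respect to all iterated tangent-map constructions $T^k$ is in particular finer than the $C^1$-topology (corresponding to $k=0,1$). Once openness in the $C^1$-topology is established, it automatically transfers.

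For $\Subm(M,N)$: the condition that $f\in\MappingSpace^{\infty}(M,N)$ be a submersion asks that the bundle map $Tf\colon TM\to TN$ (which depends continuously on $f$ in the $C^1$-topology) be fiberwise surjective. Since both $TM$ and $TN$ have finite rank, surjectivity of a linear map between finite-dimensional vector spaces is a stable open condition (e.g., controlled by nonvanishing of a maximal minor, or equivalently by a positive lower bound on the smallest singular value once auxiliary Euclidean structures are fixed). Covering the compact $M$ by finitely many charts which simultaneously trivialize $TM$ and the pullback $f^{\ast}TN$, the submersion property translates into finitely many such open matrix conditions evaluated uniformly on $M$. Compactness of $M$ then yields a uniform ``surjectivity margin'' for $f$, which persists under any sufficiently $C^1$-small perturbation.

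For $\Emb(M,N)$: since $M$ is compact and $N$ Hausdorff, a smooth embedding is precisely an injective smooth immersion, so I will write $\Emb(M,N)=\Immersion(M,N)\cap\{\text{injective}\}$. Openness of $\Immersion(M,N)$ is proved in complete parallel with the submersion case, replacing fiberwise surjectivity by fiberwise injectivity of $Tf$ (again an open condition on linear maps between finite-dimensional spaces). The substantive point is then the openness of the injectivity condition among immersions. I will argue by contradiction: suppose $f\in\Emb(M,N)$ and there exist $g_n\to f$ in $C^1$ with each $g_n\in\Immersion(M,N)$ non-injective, say $g_n(x_n)=g_n(y_n)$ with $x_n\neq y_n$. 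By compactness of $M\times M$, after extracting a subsequence, $(x_n,y_n)\to(x_{\infty},y_{\infty})$, and uniform convergence forces $f(x_{\infty})=f(y_{\infty})$, so $x_{\infty}=y_{\infty}$ by injectivity of $f$.

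To close the contradiction, I will use a near-diagonal / off-diagonal decomposition. Near the diagonal point $(x_{\infty},x_{\infty})$, the local normal form for the immersion $f$ (via the constant-rank or inverse function theorem, available since $M$ and $N$ are finite-dimensional) provides a neighborhood $U$ of $x_{\infty}$ on which $f|_U$ is an embedding, and moreover any $g$ sufficiently $C^1$-close to $f$ is still a diffeomorphism onto its image on $U$; for $n$ large, $x_n,y_n\in U$ and $g_n$ lies in this neighborhood of $f$, contradicting $g_n(x_n)=g_n(y_n)$. Away from the diagonal, $(x,y)\mapsto d_N(f(x),f(y))$ is bounded below by a positive constant on the complement of any fixed neighborhood of $\Delta_M$ in $M\times M$ (by compactness and injectivity of $f$), which handles the case that $(x_n,y_n)$ stays away from $\Delta_M$. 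The main obstacle lies precisely in this injectivity part: unlike the linear conditions underlying submersions and immersions, injectivity is a global nonlinear condition, and separating the near-diagonal regime (where pointwise separation necessarily degenerates and must be controlled by the immersion property) from the off-diagonal regime (where a uniform positive separation survives small perturbations) is the delicate step the argument hinges on.
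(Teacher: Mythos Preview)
The paper states this lemma without proof, treating it as a standard fact from differential topology (cf.\ Hirsch, \emph{Differential Topology}, Chapter~2). Your approach is the standard one and is essentially correct: reduce to the $C^1$-topology, handle submersions and immersions via the openness of full-rank conditions on linear maps together with compactness of $M$, and handle global injectivity among immersions by a near-diagonal/off-diagonal argument.

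One small point worth tightening in your write-up: once you extract a subsequence with $(x_n,y_n)\to(x_\infty,x_\infty)$, the off-diagonal paragraph is already subsumed---the limit lies on the diagonal by injectivity of $f$, so only the near-diagonal case remains. For that case, the cleanest way to justify ``any $g$ sufficiently $C^1$-close to $f$ is still injective on $U$'' is to fix immersion charts in which $f$ reads $x\mapsto(x,0)$; then a $C^1$-small perturbation reads $x\mapsto(x+\epsilon_1(x),\epsilon_2(x))$, and the first block $x\mapsto x+\epsilon_1(x)$ has derivative near the identity, hence is injective on a fixed-size ball by the finite-dimensional inverse function theorem. This makes the uniformity over $g$ explicit and avoids appealing loosely to ``the local normal form persists.''
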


A particular important instance for us is when the target manifold
$N=M$ coincides with the (compact) source, in which case we are further
interested in the \emph{diffeomorphism group} of $M$, which is denoted
by $\Diff\left(M\right)$ and consists of all $C^{\infty}$-self-diffeomorphisms
of $M$, with the canonical group structure given by mapping compositions.
The preceding lemma (\prettyref{lem:subm-and-emb-are-open-subsets})
implies that it is an open subset
\[
\Diff\left(M\right)\subsetopen\MappingSpace^{\infty}\left(M,M\right),
\]
which thus inherits the compact-open $C^{\infty}$-topology, and furthermore
the canonical smooth structure, whose model space $\MappingSpace_{h}^{\infty}\left(M,TM\right)$
given in \prettyref{eq:space-of-vector-fields-along-a-map} is just
the right translation of the Fréchet space $\mathfrak{X}\left(M\right)$
of smooth vector fields on $M$:
\begin{equation}
\mathfrak{X}\left(M\right)\circ h\coloneqq\left\{ X\circ h\in\MappingSpace^{\infty}\left(M,TM\right)\mid X\in\mathfrak{X}\left(M\right)\right\} \qquad\left(\forall h\in\Diff\left(M\right)\right).\label{eq:right-translation-of-frechet-space}
\end{equation}
The thus obtained canonical smooth structure on the diffeomorphism
group is summarized in the following theorem:

\begin{thm}
\label{thm:Canonical-Smooth-Structure-on-DiffM-appendix}For any compact
smooth manifold $M$, the diffeomorphism group $\Diff\left(M\right)$
admits the canonical structure of a Fréchet Lie group with Lie algebra
$\mathfrak{X}\left(M\right)$ (with Lie bracket being the negative
of the bracket of vector fields). Specifically, its (inverse) chart
around each $h\in\Diff\left(M\right)$ can be given by pushforward
of the Riemannian exponential map $\exp_{M}\colon TM\to M$ with respect
to any choice of Riemannian metric on $M$:
\begin{equation}
\varphi_{h}\colon\mathfrak{X}\left(M\right)\circ h\supsetopen V_{h}\xrightarrow{\approx}U_{h}\subsetopen\Diff\left(M\right),\qquad X\circ h\mapsto\exp_{M}\circ X\circ h.\label{eq:charts-for-diff}
\end{equation}
Here, the Fréchet model space $\mathfrak{X}\left(M\right)\circ h$
is the right-translated space given in \prettyref{eq:right-translation-of-frechet-space},
which also serves as the tangent space $T_{h}\Diff\left(M\right)$
under the canonical identification \prettyref{eq:canonical-isomorphism-of-tangent-bundle},
and further identifies the Lie algebra $T_{\Identity}\Diff\left(M\right)=\mathfrak{X}\left(M\right)$.
\end{thm}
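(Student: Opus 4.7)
The plan is to derive this as a specialization of the canonical-smooth-structure machinery developed above, together with separate smoothness checks for the group operations. First, fix an auxiliary Riemannian metric on $M$; by \prettyref{exa:examples-of-local-additions}\ref{enu:example-of-riemannian-local-addition} this supplies a local addition $\exp_{M}\colon TM\supsetopen U\to M$, so \prettyref{thm:local-addition-implies-canonical-smooth-manifold-structure} (with $\mathcal{N}=M$) equips $\MappingSpace^{\infty}(M,M)$ with its canonical smooth Fréchet manifold structure, whose inverse chart near $h$ has the form $Y\mapsto\exp_{M}\circ Y$ for $Y\in\MappingSpace_{h}^{\infty}(M,TM)$. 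A smooth self-embedding of a closed $M$ is automatically surjective, so $\Diff(M)=\Emb(M,M)\subsetopen\MappingSpace^{\infty}(M,M)$ by \prettyref{lem:subm-and-emb-are-open-subsets}, and hence inherits a smooth Fréchet manifold structure. Moreover, $Y\mapsto Y\circ h^{-1}$ identifies $\MappingSpace_{h}^{\infty}(M,TM)\cong\mathfrak{X}(M)\circ h$ with the right-translated Fréchet space in \eqref{eq:right-translation-of-frechet-space}, so specializing $Y=X\circ h$ with $X\in\mathfrak{X}(M)$ recovers precisely the chart formula \eqref{eq:charts-for-diff}; in particular $T_{\Identity_{M}}\Diff(M)\cong\mathfrak{X}(M)$.

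To promote this to a Lie group structure I would check smoothness of composition and of inversion. For composition $c\colon\Diff(M)\times\Diff(M)\to\Diff(M)$, $(f,g)\mapsto f\circ g$, the exponential law in \prettyref{def:canonical-smooth-structure} reduces the task to smoothness of the adjoint $c^{\wedge}(f,g,x)=\mathrm{ev}(f,\mathrm{ev}(g,x))$, which is immediate from smoothness of $\mathrm{ev}$ in \eqref{eq:evaluation-map}. Smoothness of inversion $\iota\colon h\mapsto h^{-1}$ is the main obstacle, since the Banach inverse function theorem is unavailable in the Fréchet category; I would solve the defining equation $c(h,\iota(h))=\Identity_{M}$ locally by invoking a Fréchet-adapted inverse function theorem (e.g.\ Glöckner's), or equivalently by testing smoothness along smooth curves in the convenient calculus of Kriegl--Michor, the hypothesis being met because the partial derivative of $c$ in its second argument at $(\Identity_{M},\Identity_{M})$ is the identity on $\mathfrak{X}(M)$. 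This is the sole step where genuinely Fréchet-specific functional analysis enters; everything else is formal.

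Finally, the Lie bracket on $\mathfrak{X}(M)\cong T_{\Identity_{M}}\Diff(M)$ is computed via the standard group-commutator formula $[X,Y]_{\mathrm{Lie}}=\frac{\partial^{2}}{\partial s\,\partial t}\big|_{s=t=0}\,\phi_{s}^{X}\circ\phi_{t}^{Y}\circ\phi_{-s}^{X}\circ\phi_{-t}^{Y}$, where $\phi^{X},\phi^{Y}$ are the flows of $X,Y$ on $M$ viewed as one-parameter subgroups of $\Diff(M)$. Expanding in a chart and applying the chain rule yields $[X,Y]_{\mathrm{Lie}}=-[X,Y]_{\mathfrak{X}(M)}$; the minus sign reflects the right-invariant trivialization implicit in \eqref{eq:charts-for-diff}, equivalently the familiar sign convention coming from the fact that the pullback action of $\Diff(M)$ on $C^{\infty}(M)$ is an anti-homomorphism.
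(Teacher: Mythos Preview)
Your proposal is correct and follows essentially the same route as the paper's proof: both obtain the manifold structure from the Riemannian exponential as a local addition via \prettyref{thm:local-addition-implies-canonical-smooth-manifold-structure}, pass to the open subset $\Diff(M)\subsetopen\MappingSpace^{\infty}(M,M)$ via \prettyref{lem:subm-and-emb-are-open-subsets}, verify composition by currying to the evaluation map, handle inversion through a Gl\"ockner-type implicit function theorem, and identify the Lie bracket as the negative of the vector-field bracket via the right-invariant trivialization. The only notable refinement in the paper is that it applies the implicit function theorem not to $c(h,\iota(h))=\Identity_{M}$ directly but to its adjoint $\mathrm{ev}\bigl(g,\mathrm{inv}^{\wedge}(g,x)\bigr)=x$, which has finite-dimensional target $M$ and so falls squarely within the scope of Gl\"ockner's theorem on implicit functions from topological vector spaces to Banach spaces; your formulation at the level of $\Diff(M)$ would otherwise require a genuinely infinite-dimensional inverse function theorem.
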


\begin{proof}[Proof (Sketch)]
As mentioned in \prettyref{exa:examples-of-local-additions}\ref{enu:example-of-riemannian-local-addition},
the Riemannian exponential $\exp_{M}\colon TM\supsetopen W\to M$
provides a local addition of $M$. In turn, \prettyref{thm:local-addition-implies-canonical-smooth-manifold-structure}
implies that this local addition induces smooth charts for the canonical
smooth structure of $\MappingSpace^{\infty}\left(M,M\right)$ by post-composition,
which confirms the charts as asserted in \prettyref{eq:charts-for-diff}.
Here, the model space and tangent space at each $h\colon M\to M$
is indeed the Fréchet space $\mathfrak{X}\left(M\right)\circ h$ given
in \prettyref{eq:right-translation-of-frechet-space}, by comparing
it with the definition of $\MappingSpace_{h}^{\infty}\left(M,TM\right)$
in \prettyref{eq:space-of-vector-fields-along-a-map}. This proves
the canonical smooth structure on $\MappingSpace^{\infty}\left(M,M\right)$;
but since by \prettyref{lem:subm-and-emb-are-open-subsets} the submersions
and embeddings are open in $\MappingSpace^{\infty}\left(M,M\right)$,
so are the diffeomorphisms, hence the open subset $\Diff\left(M\right)\subsetopen\MappingSpace^{\infty}\left(M,M\right)$
inherits the canonical smooth structure. Note that the underlying
compact-open $C^{\infty}$-topology on $\Diff\left(M\right)$ is indeed
metrizable and second-countable because $M$ is so (see \cite[Corollary 41.12]{MR1471480}).
Next we need to prove that this smooth manifold $\Diff\left(M\right)$
is a Lie group, which amounts to showing the smoothness of the operations
of taking compositions and inverses:
\[
\mathrm{comp}\colon\Diff\left(M\right)\times\Diff\left(M\right)\xrightarrow{\left(g_{1},g_{2}\right)\mapsto g_{1}\circ g_{2}}\Diff\left(M\right),\qquad\mathrm{inv}\colon\Diff\left(M\right)\xrightarrow{g\mapsto g^{-1}}\Diff\left(M\right).
\]
By the exponential law \prettyref{eq:exponential-law-general-in-appendix},
it suffices to prove the smoothness of their adjoints $\mathrm{comp}^{\wedge}$
and $\mathrm{inv}^{\wedge}$. For the composition operation, the adjoint
$\mathrm{comp}^{\wedge}$ is clearly smooth since it can be written
in terms of the evaluation map $\mathrm{ev}$ given in \prettyref{eq:evaluation-map}
as follows:
\[
\mathrm{comp}^{\wedge}\colon\Diff\left(M\right)\times\Diff\left(M\right)\times M\to M,\qquad\mathrm{comp}^{\wedge}\left(g_{1},g_{2},x\right)\coloneqq\mathrm{ev}\left(g_{1},\mathrm{ev}\left(g_{2},x\right)\right).
\]
On the other hand, for the inverse operation, the adjoint $\mathrm{inv}^{\wedge}$
sending $\left(g,x\right)$ to $g^{-1}\left(x\right)$ can be expressed
in terms of the evaluation map by the following implicit equation:
\begin{equation}
\mathrm{inv}^{\wedge}\colon\Diff\left(M\right)\times M\to M,\qquad\mathrm{ev}\left(g,\mathrm{inv}^{\wedge}\left(g,x\right)\right)=x.\label{eq:implicit-equation-for-inverse-of-diff}
\end{equation}
Note that when each $g\in\Diff\left(M\right)$ is held fixed, the
partial differentiation $T\mathrm{ev}\left(g,-\right)\colon TM\to TM$
coincides with the tangent map $Tg$, which is everywhere invertible
since $g$ is a diffeomorphism. This prompts to apply a suitable implicit
function theorem to the implicit equation in \prettyref{eq:implicit-equation-for-inverse-of-diff},
but mind that the ordinary implicit function theorem in finite dimensions
does not apply here since the first argument has an infinite-dimensional
domain $\Diff\left(M\right)$. There are several ways to work around:
in \cite[Example 3.5]{MR4505843}, a generalized implicit function
theorem from \cite[Theorem 2.3]{MR2269430} is invoked, which is particularly
applicable to the implicit equation in \prettyref{eq:implicit-equation-for-inverse-of-diff}
and hence establish the desired smoothness of $\mathrm{inv}^{\wedge}$;
in addition, the same literature also refers to an alternative proof
from \cite[Theorem 11.11]{MR0583436}, which verifies the desired
smoothness directly. The takeaway is that $\Diff\left(M\right)$ is
a Lie group. Its Lie algebra $\mathfrak{X}^{\mathrm{R}}\left(\Diff\left(M\right)\right)$
consisting of all right-invariant vector fields on $\Diff\left(M\right)$
can be described by the tangent space $T_{\Identity}\Diff\left(M\right)\cong\mathfrak{X}\left(M\right)$
at the identity via the following linear isomorphism:
\begin{equation}
\mathfrak{X}\left(M\right)\cong\mathfrak{X}^{\mathrm{R}}\left(\Diff\left(M\right)\right),\quad X\mapsto X^{\mathrm{R}}\qquad\text{with}\qquad X^{\mathrm{R}}\left(h\right)\coloneqq X\circ h,\quad\forall h\in\Diff\left(M\right).\label{eq:tangent-space-identity-of-Diff}
\end{equation}
Then one may verify, e.g., as in \cite[Example 3.25]{MR4505843},
that the Lie bracket of two such right-invariant vector fields $X_{1}^{\mathrm{R}},X_{2}^{\mathrm{R}}\in\mathfrak{X}^{\mathrm{R}}\left(\Diff\left(M\right)\right)$
is corresponding to the negative bracket $-\left[X_{1},X_{2}\right]$
of their spanning vector fields $X_{1},X_{2}\in\mathfrak{X}\left(M\right)$
on $M$, as desired. This completes the proof of \prettyref{thm:Canonical-Smooth-Structure-on-DiffM-appendix}.
\end{proof}

\section{Infinite-Dimensional Principal Bundles}

In this section, we establish some criteria for the coset projection
$G\to G/H$ of a topological group (resp., a Lie group) to be a principal
bundle (resp., a smooth principal bundle), with an eye towards the
case where $G,H$ are typically infinite-dimensional. This will be
used to facilitate various proofs of bundle structures in this work,
where $G$ will be taken as various transformation groups of a fiber
bundle. We recommend interested readers to \cite{glockner2016fundamentalssubmersionsimmersionsinfinitedimensional,MR3943433}
for expositions in much greater depth and generality.

We begin with letting $G$ be a topological group and $H\leq G$ be
a closed subgroup. Then the canonical projection $q$ onto the left
coset space $G/H$ gives a sequence
\begin{equation}
H\hookrightarrow G\xrightarrow{q}G/H,\label{eq:coset-quotient-sequence}
\end{equation}
with the total space $G$ being equipped with the canonical $H$-action
by right translation:
\begin{equation}
\rho\colon G\times H\to G,\qquad\rho\left(g,h\right)\coloneqq\rho^{h}\left(g\right)\coloneqq g\cdot h.\label{eq:right-translation}
\end{equation}
We aim to characterize when this becomes a principal bundle structure.
For this we may take hints from elementary algebra: recall the usual
splitting lemma that for any short exact sequence $0\to A\to B\to C\to0$
in an abelian category, a splitting structure can be given by any
of the following three equivalent constructs: a right split $C\to B$
(i.e., a section), a left split $B\to A$ (i.e., a retraction), and
a splitting complement $C'\subseteq B$ that splits $B$ as a direct
sum $A\oplus C'$ (i.e., a slice). The analogous constructs for locally
trivial structure on the coset quotient sequence \prettyref{eq:coset-quotient-sequence}
are as follows:
\begin{enumerate}[label=\textup{(\roman*)}]
\item a \emph{local cross section $\sigma$} over some open neighborhood
$W$ around the identity coset in $G/H$:
\begin{equation}
\sigma\colon G/H\supsetopen W\to G,\qquad q\circ\sigma\equiv\Identity_{W},\label{eq:local-section}
\end{equation}
which particularly maps the identity coset to the identity: $\sigma\left(\left[\mathbf{1}\right]\right)=\mathbf{1}$.
\item an \emph{equivariant neighborhood retraction $r$} from some (saturated)
open neighborhood $q^{-1}\left(W\right)$ around $H$ in $G$:
\begin{equation}
r\colon G\supsetopen q^{-1}\left(W\right)\to H,\qquad r\circ\rho^{h}\equiv\rho^{h}\circ r,\ \forall h\in H,\label{eq:neighborhood-retraction}
\end{equation}
which particularly maps the identity to itself: $r\left(\mathbf{1}\right)=\mathbf{1}$.
\item a \emph{local slice $S$} restricted to which the right-translation
map $\rho$ as in \prettyref{eq:right-translation} is a (equivariant)
homeomorphism onto an (saturated) open subset $q^{-1}\left(W\right)$
in $G$:
\begin{equation}
S\subseteq G\qquad\text{with}\qquad\rho\colon S\times H\xrightarrow{\approx}q^{-1}\left(W\right)\subsetopen G\quad\text{homeomorphism},\label{eq:local-slice}
\end{equation}
which particularly contains the identity: $S\ni\mathbf{1}$.
\end{enumerate}
The analogous ``splitting lemma'' for characterizing the locally
trivial structures on a coset quotient projection is justified as
follows:
\begin{lem}
\label{lem:splitting-lemma}Let $G$ be a topological group and $H\leq G$
be a closed subgroup, consider the canonical projection $q\colon G\to G/H$
onto the left coset space $G/H$. Then for any open neighborhood $W\subsetopen G/H$
around the identity coset, the following conditions are equivalent:
\begin{enumerate}[label=\textup{(\roman*)}]
\item There exists a local cross section $\sigma\colon G/H\supsetopen W\to G$\emph{
as in }\prettyref{eq:local-section}.
\item There exists an equivariant neighborhood retraction $r\colon G\supsetopen q^{-1}\left(W\right)\to H$\emph{
as in }\prettyref{eq:neighborhood-retraction}.
\item There exists a local slice $S\subseteq G$ (with the tube homeomorphism
$\rho\colon S\times H\approx q^{-1}\left(W\right)$) as in \prettyref{eq:local-slice}.
\end{enumerate}
Further, if these conditions hold for some identity neighborhood $W\subsetopen G/H$,
then by left translation there induces a topological principal $H$-bundle
structure on the projection $q\colon G\to G/H$ with a trivialization
cover $\left\{ g\cdot W\right\} _{g\in G}$.
\end{lem}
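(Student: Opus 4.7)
The plan is to establish the cyclic chain of implications $\text{(i)} \Rightarrow \text{(iii)} \Rightarrow \text{(ii)} \Rightarrow \text{(i)}$ by explicit constructions interchanging the three data, and then to deduce the principal bundle claim by translating the trivialization at the identity coset to every other point via left multiplication.

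For $\text{(i)} \Rightarrow \text{(iii)}$, I would take the image of the section as the slice: set $S := \sigma(W)$ and consider the restriction of the right-translation map $\rho$ to $S \times H$. Surjectivity of $\rho|_{S \times H}$ onto $q^{-1}(W)$ follows from writing $g = \sigma(q(g)) \cdot (\sigma(q(g))^{-1} g)$ for $g \in q^{-1}(W)$, where the right-hand factor lies in $H$ since its image under $q$ is the identity coset. Injectivity follows because $\sigma(w_1) h_1 = \sigma(w_2) h_2$ forces $q(\sigma(w_1)) = q(\sigma(w_2))$, i.e., $w_1 = w_2$, whence $h_1 = h_2$. Continuity of $\rho$ is automatic, and its inverse admits the explicit formula $g \mapsto (\sigma(q(g)),\, \sigma(q(g))^{-1} g)$, which is continuous as a composition of continuous operations. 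The step $\text{(iii)} \Rightarrow \text{(ii)}$ is immediate: define $r$ to be the second coordinate of $\rho^{-1}|_{q^{-1}(W)}$, so equivariance $r(gh') = r(g) h'$ is built into the definition and $r(\Identity) = \Identity$. Finally, for $\text{(ii)} \Rightarrow \text{(i)}$, form the auxiliary map $\tilde\sigma \colon q^{-1}(W) \to G$ by $\tilde\sigma(g) := g \cdot r(g)^{-1}$; equivariance of $r$ gives $\tilde\sigma(gh) = \tilde\sigma(g)$, so $\tilde\sigma$ descends through the quotient map $q$ to a continuous $\sigma \colon W \to G$ which manifestly satisfies $q \circ \sigma = \Identity_W$ and $\sigma([\Identity]) = \Identity$.

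For the principal bundle promotion, I would exploit the fact that left translation by any $g_0 \in G$ commutes with the right $H$-action $\rho$: the open set $g_0 \cdot W \subsetopen G/H$ is a neighborhood of $[g_0]$, and the translated slice $g_0 \cdot S$ yields a tube homeomorphism $(g_0 S) \times H \xrightarrow{\approx} q^{-1}(g_0 W)$ simply by right multiplication. These trivializations cover $G/H$, and because the underlying data are all right translations by elements of $H$, the transition functions between overlapping charts are valued in $H$ acting on fibers by right multiplication, which is precisely the data of a topological principal $H$-bundle. The main obstacle lies in the continuity verifications of $\text{(i)} \Rightarrow \text{(iii)}$ — namely, that $\rho|_{S \times H}$ is not merely a continuous bijection but genuinely a homeomorphism onto its image; the explicit formula for its inverse in terms of $\sigma$ and $q$ is what makes this work, and once this link is secured, the other implications are formal manipulations with the group operation and the quotient structure.
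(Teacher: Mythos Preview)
Your proposal is correct and follows essentially the same approach as the paper: both arguments pass between the three data via the identical formulas $S=\image\sigma$, $r=\text{second component of }\rho^{-1}$, and $\sigma([f])=f\cdot r(f)^{-1}$ (equivalently $r(f)=\sigma([f])^{-1}\cdot f$), and both obtain the bundle structure by left-translating the identity trivialization. The only cosmetic difference is that you organize the equivalence as a cyclic chain $\text{(i)}\Rightarrow\text{(iii)}\Rightarrow\text{(ii)}\Rightarrow\text{(i)}$, whereas the paper packages the same constructions into a single commutative diagram relating all three simultaneously.
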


\begin{proof}
Let us first prove the equivalence between $\sigma$, $r$, and $S$.
Indeed, this follows from their constructions, whose relationship
is shown by the three dotted arrows fitting in the following commutative
diagram:
\begin{equation}
\begin{gathered}\xymatrix{H\ar@{^{(}->}[d] &  & H &  & H\ar@{=}[ll]\\
G\ar[d]_{q} & \supsetopen & q^{-1}\left(W\right)\ar[d]_{q}\ar@{..>}[u]^{r}\ar@{..>}[rr]^{{\rho^{-1}}} &  & S\times H\ar[d]^{\mathrm{pr}_{1}}\ar[u]_{\mathrm{pr}_{2}}\\
G/H & \supsetopen & W\ar@{..>}[rr]_{\sigma} &  & S
}
\end{gathered}
\label{eq:diagram-splitting-lemma}
\end{equation}
More precisely, given a local slice $S\subseteq G$ with the tube
homeomorphism $\rho$ from $S\times H$ onto $q^{-1}\left(W\right)\subsetopen G$,
its inverse $\rho^{-1}$ locally splits $q^{-1}\left(W\right)\subsetopen G$
into the $S$-component and $H$-component that yield the section
$\sigma$ and the retraction $r$, respectively:
\begin{equation}
\rho^{-1}\colon G\supsetopen q^{-1}\left(W\right)\xrightarrow{\approx}S\times H,\qquad\rho^{-1}\left(f\right)=\left(\sigma\left(\left[f\right]\right),r\left(f\right)\right).\label{eq:inverse-tube-homeomorphism}
\end{equation}
Here, $\sigma$ is indeed well-defined and $r$ is indeed equivariant
by virtue of the property that the right translation $\rho$ is $H$-equivariant
on $S\times H$ (with respect to the right $H$-action on $S\times H$
by right translating the $H$-component and fixing the $S$-component):
\[
\left(\sigma\left(\left[f\cdot h\right]\right),r\left(f\cdot h\right)\right)=\rho^{-1}\left(f\cdot h\right)=\rho^{-1}\left(f\right)\cdot h=\left(\sigma\left(\left[f\right]\right),r\left(f\right)\right)\cdot h=\left(\sigma\left(\left[f\right]\right),r\left(f\right)\cdot h\right).
\]
Conversely, given a local cross section $\sigma$ or an equivariant
neighborhood retraction $r$, we can see from the above formula \prettyref{eq:inverse-tube-homeomorphism}
that the corresponding local slice $S\subseteq G$ is induced by their
image and kernel, respectively:
\[
S=\image\left(\sigma\right)\qquad\text{and}\qquad S=\ker\left(r\right)\coloneqq r^{-1}\left(e\right),
\]
where $S$ indeed satisfies the slice condition since the tube homeomorphism
$\rho$ is justified by its continuous inverse given in \prettyref{eq:inverse-tube-homeomorphism}.
Since the local splitting reads $f=\sigma\left(\left[f\right]\right)\cdot r\left(f\right)$,
we can also see the direct equivalence between $\sigma$ and $r$
as follows:
\[
\sigma\left(\left[f\right]\right)=f\cdot\left(r\left(f\right)\right)^{-1}\qquad\text{and}\qquad r\left(f\right)=\left(\sigma\left(\left[f\right]\right)\right)^{-1}\cdot f.
\]
In sum, we have shown the desired equivalence between $\sigma$, $r$,
and $S$ as in the diagram \prettyref{eq:diagram-splitting-lemma}.
It also follows from this diagram that the fiber-preserving (i.e.,
$H$-equivariant) homeomorphism $S\times H\approx q^{-1}\left(W\right)$
given by the right translation $\rho$ descends to a homeomorphism
$S\approx W$ given by restricting the projection $q$ to the slice:
\begin{equation}
\left.q\right|_{S}\colon G\supseteq S\xrightarrow{\approx}W\subsetopen G/H,\label{eq:homeomorphism-from-slice}
\end{equation}
so that composing the two fiber-preserving homeomorphisms $\rho^{-1}$
and $\left.q\right|_{S}\times\Identity_{H}$ yields a local trivialization
for the desired bundle structure on $q\colon G\to G/H$ near the identity,
as in the following diagram:
\begin{equation}
\xymatrix@C=4pc{q^{-1}\left(W\right)\ar[r]_{\approx}^{{\rho^{-1}}}\ar[rd]_{q} & S\times H\ar[r]_{\approx}^{\left.q\right|_{S}\times\Identity_{H}} & W\times H\ar[ld]^{\projection_{1}}\\
 & W
}
\label{eq:local-trivialization-from-slice}
\end{equation}
Lastly, the homogeneous $G$-structure on the left coset space $G/H$
allows us to translate this local trivialization near the identity
to any other point. This completes the proof of \prettyref{lem:splitting-lemma}.
\end{proof}
Among the three equivalent conditions in the preceding lemma (\prettyref{lem:splitting-lemma}),
the local section $\sigma$ is arguably the most familiar, while the
equivariant neighborhood retraction $r$ will be more intuitive in
the absence of concrete description for the coset space; but ultimately,
it is the slice description that provides the most robust perspective
when promoting to the (infinite-dimensional) smooth category. To explain
this, we emphasize the absence of a general inverse function theorem
in infinite dimensions, which will be a foremost source of subtleties
and challenges throughout the study of infinite-dimensional differential
geometry: the infinitesimal information at the level of tangent spaces
no longer suffices to imply local information at the level of manifolds
themselves. For example, in finite dimensions, the familiar immersion
theorem and submersion theorem, which are consequences of the (finite-dimensional)
inverse function theorem, tell us that if a map has injective (respectively,
surjective) tangent map then it can be locally expressed as a coordinate
injection $\mathbb{R}^{m}\to\mathbb{R}^{m}\times\mathbb{R}^{n}$ (respectively,
a coordinate projection $\mathbb{R}^{m}\times\mathbb{R}^{n}\to\mathbb{R}^{m}$)
of the Euclidean model spaces — however, these results no longer
hold in general for locally convex model spaces in infinite dimensions.
This discrepancy is significant because most of the important properties
of immersions and submersions rely on the existence of such linear
local representatives, and so we shall let this latter stronger condition
take the role of defining ``immersions'' and ``submersions'' in
infinite dimensions:

\begin{defn}
\label{def:Embeddings-and-Submersions}Let $\mathcal{M},\mathcal{N}$
be (infinite-dimensional) smooth manifolds modeled on locally convex
spaces. Then a smooth map $\mathcal{M}\to\mathcal{N}$ is called an
\emph{immersion} (resp., \emph{submersion}) if it is everywhere
locally represented by a linear map between locally convex spaces
in the form of a canonical injection into a product $A\hookrightarrow A\times C$
(resp., a canonical projection from a product $A\times C\twoheadrightarrow C$).
\end{defn}

As usual, an immersion that is further a topological embedding is
called a \emph{(smooth) embedding}. In particular, for a subset $S\subseteq\mathcal{M}$
the property of having its inclusion being an embedding provides an
alternative characterization of a \emph{split submanifold}. Examples
of split submanifolds include the fibers $\mathcal{M}_{b}\coloneqq q^{-1}\left(b\right)$
of a submersion $q\colon\mathcal{M}\to\mathcal{N}$, whose tangent
space $T_{p}\mathcal{M}_{b}$ at each point can be identified with
the kernel subspace $\ker T_{p}q\subseteq T_{p}\mathcal{M}$. In our
case, we are most interested in a split submanifold $H\leq G$ that
is a closed subgroup of a Lie group, which we shall call a \emph{closed Lie subgroup}
for short, and we aim to characterize when the coset quotient projection
$q\colon G\to G/H$ is a smooth submersion and a smooth principal
bundle. Note that in finite dimensions, this is automatic: every closed
subgroup $H$ in a finite-dimensional Lie group $G$ must be an embedded
Lie subgroup, and the resulting coset space $G/H$ has a unique smooth
structure such that the coset quotient projection $G\to G/H$ is a
smooth submersion, hence a smooth principal bundle. However, these
structures are no longer guaranteed in infinite dimensions in general;
instead, we have the following ``\emph{(smooth) slice lemma}''
that provides us with a useful characterization:

\begin{prop}
\label{prop:manifold-slice-condition}Let $G$ be a (infinite-dimensional)
Lie group modeled on locally convex space, and $H\subseteq G$ be
a closed Lie subgroup. Then the coset space $G/H$ admits a smooth
structure onto which the canonical projection $q\colon G\to G/H$
is a smooth principal $H$-bundle if (and only if) the following condition
holds:
\begin{quote}
There exists a smooth submanifold $S\subseteq G$ containing the identity,
such that the canonical right-translation map $\rho\colon S\times H\to G$
is a diffeomorphism onto an open image $SH\subsetopen G$.
\end{quote}
In this case, the thus obtained smooth manifold structure on $G/H$
is unique, and is a (Hausdorff, second countable) smooth Fréchet manifold
in the sense of \prettyref{def:smooth-separable-frechet-manifold}
if $G$ is assumed to be so.
\end{prop}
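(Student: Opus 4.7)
The plan is to promote the topological splitting result of Lemma (splitting-lemma) to the smooth category, using the given submanifold $S$ as the origin of smooth charts on $G/H$. The slice hypothesis automatically fulfills the topological premise of that lemma, since a diffeomorphism $\rho\colon S\times H \xrightarrow{\approx} SH$ is in particular a homeomorphism onto an open saturated neighborhood $SH \subsetopen G$; hence $q\colon G\to G/H$ is already a topological principal $H$-bundle with trivializing neighborhood $W := q(S)$ around the identity coset. The key upgrade is to declare $q|_S\colon S\to W$ a diffeomorphism, transport the submanifold smooth structure of $S\subseteq G$ onto $W\subseteq G/H$, and then spread this chart across $G/H$ by left translations.

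First I would build the smooth atlas on $G/H$: for each $g\in G$, let $W_g := g\cdot W$ and let $\psi_g \colon W_g \to S$ be the bijection obtained by inverting $q|_{gS}$ after left multiplication by $g^{-1}$. The nontrivial check is smoothness of transitions, but on any overlap $\psi_{g_2}\circ\psi_{g_1}^{-1}$ is the composition $s \mapsto \projection_S\bigl(\rho^{-1}(g_2^{-1}g_1\cdot s)\bigr)$, in which each ingredient — left translation in $G$, the submanifold inclusion $S\hookrightarrow G$, the smooth inverse $\rho^{-1}$ supplied by the hypothesis, and a coordinate projection — is smooth. Hausdorffness of $G/H$ follows from closedness of $H$ in $G$ (the coset relation is the preimage of $H$ under $(g_1,g_2)\mapsto g_1^{-1}g_2$), and second countability passes down from $G$ via the open quotient $q$. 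If $G$ is moreover a separable Fréchet Lie group, then $S$ is modeled on a topological complement of the model space of $H$ inside that of $G$, which remains separable Fréchet, so $G/H$ inherits the desired smooth Fréchet structure.

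Next I would verify the smooth principal bundle structure: the local trivialization near the identity coset is the composition $q^{-1}(W) = SH \xrightarrow{\rho^{-1}} S\times H \xrightarrow{q|_S \times \Identity_H} W\times H$, a composition of diffeomorphisms which is $H$-equivariant because $\rho$ is. Left-translating by elements of $G$ covers $G/H$ with smooth local trivializations, and the right $H$-action on $G$ is free and smooth as a restriction of the Lie group multiplication. For uniqueness, I would note that any other smooth structure on $G/H$ making $q$ a smooth principal $H$-bundle admits smooth local sections into $G$, the image of such a section near the identity giving a slice whose induced chart must coincide with $q|_S$; homogeneity then propagates this equality over all of $G/H$.

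The hard part lies not in this proof but in its prerequisite: verifying the slice hypothesis at all in infinite dimensions. Finite-dimensionally, $S$ arises automatically from the inverse function theorem applied to any vector-space complement of $\mathfrak{h}$ in $\mathfrak{g}$; in our infinite-dimensional setting no such theorem is available — this is exactly the obstacle flagged by \emph{Definition (Embeddings-and-Submersions)} above — which is precisely why $S$ must enter as a hypothesis rather than be derived from the Lie subalgebra alone. In subsequent applications, constructing $S$ will typically require an explicit geometric or analytic input (e.g., via a local addition adapted to $H$, as afforded by Theorem (local-addition-implies-canonical-smooth-manifold-structure), or via a slice arising from a transversality argument for a concrete group action). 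So this proposition is most useful as a reduction: it packages the bundle conclusion into a single checkable condition that one is then free to verify case by case.
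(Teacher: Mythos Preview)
Your proof is correct and follows essentially the same approach as the paper: transport the submanifold structure of $S$ to $W=q(S)$ via $q|_S$, left-translate to cover $G/H$, and read off the local trivialization as $(q|_S\times\Identity_H)\circ\rho^{-1}$. Your version is slightly more explicit about the transition maps and gives a cleaner Hausdorff argument (closedness of the coset relation rather than the paper's properness-via-metrizability), but the architecture is the same.
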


\begin{proof}
We focus on constructing local charts for $G/H$ near the identity
coset, for then the homogeneous $G$-structure on the left coset space
$G/H$ would allow us to translate this identity chart to any other
point. Let $S\subseteq G$ be a slice, and let $\rho$ be the corresponding
tube homeomorphism from $S\times H$ onto the tubular neighborhood
$SH\subsetopen G$. Then recall from the proof of \prettyref{lem:splitting-lemma}
(specifically, the homeomorphism \prettyref{eq:homeomorphism-from-slice})
that the fiber-preserving (i.e., $H$-equivariant) homeomorphism $\rho\colon S\times H\approx SH$
descends to a homeomorphism $\left.q\right|_{S}\colon S\approx q\left(S\right)$,
as in the following diagram:
\[
\begin{gathered}\xymatrix{G\ar[d]_{q} & \supsetopen & SH\ar[d]_{q} &  & S\times H\ar[d]^{\mathrm{pr}_{1}}\ar[ll]_{\rho}^{\approx}\\
G/H & \supsetopen & q\left(S\right) &  & S\ar[ll]_{\approx}^{\left.q\right|_{S}}
}
\end{gathered}
.
\]
Now suppose further that $S$ is a smooth slice; i.e., the subset
$S\subseteq G$ is a smooth submanifold and the homeomorphism $\rho\colon S\times H\approx SH$
is a diffeomorphism. Then from the above diagram we see that $q\left(S\right)$
inherits a natural smooth structure from $S$ such that the homeomorphism
$\left.q\right|_{S}\colon S\approx q\left(S\right)$ becomes a diffeomorphism
and the quotient map $q\colon SH\to q\left(S\right)$ becomes a submersion;
in particular, the resulting smooth structure on $G/H$ must be unique
by the universal property of a submersion. Moreover, the following
composition of fiber-preserving homeomorphisms as constructed in \prettyref{eq:local-trivialization-from-slice}
becomes a fiber-preserving diffeomorphism and hence serves as a local
trivialization on $SH\subsetopen G$ over $q\left(S\right)\subsetopen G/H$
for the desired smooth bundle structure:
\[
\left(\left.q\right|_{S}\times\Identity_{H}\right)\circ\rho^{-1}\colon SH\xrightarrow{\approx}q\left(S\right)\times H.
\]
Lastly, if we insist on the assumption that a manifold is always Hausdorff
and second-countable, then the coset space $G/H$ will also inherit
these two properties from $G$. Indeed, in this case $G$ (and hence
$H$) will be metrizable, and so the assumption that the subgroup
$H\leq G$ is closed implies that the canonical action $G\curvearrowleft H$
is proper, hence the orbit space $G/H$ is Hausdorff; on the other
hand, the coset quotient projection $q\colon G\to G/H$ is open, so
the image $G/H$ inherits the second countability from $G$ too, as
desired. Further, if the Lie group $G$ is modeled on a separable
Fréchet space $B$ so that the closed Lie subgroup $H\leq G$ is modeled
on a separable closed Fréchet subspace $A\leq B$, then the complementary
subspace will serve as the model space for $q\left(S\right)\subsetopen G/H$
which is a separable closed Fréchet subspace, as desired. This completes
the proof of \prettyref{prop:manifold-slice-condition}.
\end{proof}

\chapter{Algebraic Topology\label{chap:Algebraic-Topology}}

There are two main goals in this chapter. The first goal is to construct
for smooth $F$-fibrations on $E$ their moduli space, which will
particularly rely on two aspects: the classification problem $\ClassFib\left(E,F\right)$
and the symmetry analysis $\Aut\left(\StandardFibration\right)$ for
each class, as packaged in the following display:\footnote{Recall again our notational convention: to help make a distinction
between finite and infinite dimensions, our choice of notation for
an infinite-dimensional manifold often includes a script letter; e.g.,
$\Diff\left(M\right)$, $\Aut\left(\StandardFibration\right)$, and
$\Fib\left(\StandardFibration\right)$ as seen here.}
\[
\bigsqcup_{\StandardFibration\in\ClassFib\left(E,F\right)}\Fib\left(\StandardFibration\right),\qquad\Fib\left(\StandardFibration\right)\coloneqq\Diff\left(E\right)/\Aut\left(\StandardFibration\right).
\]
The second goal is to study the classification and the symmetry from
the perspective of algebraic topology. As we shall see, both aspects
have natural places in each of the following two frameworks: cohomology
or homotopy; more specifically, the former draws on the cohomology
theory of Čech cocycles with values in the group $\Diff\left(F\right)$
of fiber diffeomorphisms, and the latter draws on the homotopy theory
of classifying maps with values in the space $\Shape\left(F,\ell^{2}\right)$
of (universal) fiber shapes. As a particular common theme in this
chapter, relation between the topological and smooth categories will
be explored.

\section{Smooth Fibrations: Classification, Symmetry, Moduli\label{sec:Smooth-Fibrations}}

The goal of this section is to construct the moduli space $\Fib\left(E,F\right)$
of smooth regular $F$-fibration structures (or ``$F$-fiberings''
for short) on $E$. More specifically, we shall introduce the following
three key ingredients in constructing the moduli for such fiberings:
the classification $\ClassFib\left(E,F\right)$ (\prettyref{def:Classification}),
the total transformation group $\Diff\left(E\right)$ (\prettyref{def:transformation}),
and the symmetry group $\Aut\left(\StandardFibration\right)$ for
each fibering $\StandardFibration$ (\prettyref{def:automorphism});
these three ingredients will then culminate in the following definition
of our desired moduli space (\prettyref{def:space-of-fiberings}):
\[
\Fib\left(E,F\right)\coloneqq\bigsqcup_{\StandardFibration_{\alpha}\in\ClassFib\left(E,F\right)}\Diff\left(E\right)/\Aut\left(\StandardFibration_{\alpha}\right).
\]
This space $\Fib\left(E,F\right)$, with the natural topology and
geometry inherited from the diffeomorphism group $\Diff\left(E\right)$,
will be our main object of study.

\subsection{The \textquotedblleft space of structures\textquotedblright{} and
regular fiberings}

In general, for a suitable class of differentio-geometric structures
on the given manifold $E$, one may construct the corresponding ``moduli''
space (parametrizing all such structures on $E$) as the following
disjoint union of orbit spaces:
\begin{equation}
\bigsqcup_{\left\{ \StandardFibration_{\alpha}\right\} _{\alpha}}\mathscr{D}\left(E\right)/\mathscr{A}\left(\StandardFibration_{\alpha}\right).\label{eq:constructing-moduli-in-general}
\end{equation}
Here, the three ingredients are described as follows: first, the \emph{classification}
$\left\{ \StandardFibration_{\alpha}\right\} _{\alpha}$ is a complete
set of representatives of such structures modulo the natural structure-preserving
diffeomorphisms on $E$; second, the \emph{deformation} $\mathscr{D}\left(E\right)$
is a space consisting of diffeomorphisms on $E$ under which each
representative structure $\StandardFibration_{\alpha}$ is deformed
all through its equivalence class; and third, the \emph{symmetry}
(or \emph{automorphism}) $\mathscr{A}\left(\StandardFibration_{\alpha}\right)$
of each representative structure $\StandardFibration_{\alpha}$ is
group sitting inside $\mathscr{D}\left(\StandardFibration_{\alpha}\right)$
characterized by the property of preserving $\StandardFibration_{\alpha}$.
In our case, the structures on $E$ of our interest are those fibration-like
structures. More specifically, in the present work we shall restrict
attention to a most primitive type of these:
\[
\Fib\left(E,F\right)=\left\{ \text{smooth regular \ensuremath{F}-fibration structures (or ``fiberings'') on \ensuremath{E}}\right\} .
\]
By definition, such a (smooth, regular) $F$-fibering on $E$ can
be represented by a smooth fiber bundle $\StandardFibration$ with
total space $E$ and model fiber $F$, uniquely up to changes of coordinates
on base spaces by diffeomorphisms. In other words, each $F$-fibering
on $E$ has a unique representative $\xi$ of the form
\begin{equation}
\StandardFibration\colon F\hookrightarrow E\xrightarrow{\StandardProjection}\left[B\right],\qquad\left[B\right]\in\ClassMan\left(m\right),\label{eq:fibering-represented-by-bundle}
\end{equation}
where $\ClassMan\left(m\right)$ denotes the set of all diffeomorphism
types of closed smooth $m$-manifolds (with $m$ necessarily equal
to the codimension $\dim E-\dim F$), and by \prettyref{eq:fibering-represented-by-bundle}
we mean that any two of these $F$-fiber bundles $\pi_{i}\colon E\to B_{i}$
($i=1,2$) represent the same fibering if there exists a diffeomorphism
$\beta\colon B_{1}\approx B_{2}$ with $\beta\circ\pi_{1}=\pi_{2}$.
This perspective via the fiber-bundle representation \prettyref{eq:fibering-represented-by-bundle}—or
sometimes written as $F\hookrightarrow E\to B$ by a slight abuse
of notation—will help us study fibering structures in a concrete and
precise way; in particular, we shall next use this to describe explicitly
for regular fiberings the three ingredients needed in \prettyref{eq:constructing-moduli-in-general}:
classification, deformations, and symmetries.

\subsection{The classification}

The first ingredient, the classification, is given by a complete set
of representatives for the $F$-fiberings on $E$ modulo the natural
fibering equivalence. Here, this equivalence relation can be described
concretely in terms of their bundle representations \prettyref{eq:fibering-represented-by-bundle}:
for any two $F$-fiberings $\StandardFibration$ and $\StandardFibration'$
represented by fiber bundles $\StandardProjection_{\StandardFibration}\colon E\to B$
and $\StandardProjection_{\mathcal{\StandardFibration}'}\colon E\to B'$,
respectively, we say that they are \emph{(fibering) equivalent} if
there exist a pair of diffeomorphisms $h\colon E\to E$ and $\underline{h}\colon B\to B'$
satisfying $\StandardProjection_{\StandardFibration'}=\underline{h}\circ\StandardProjection_{\StandardFibration}\circ h^{-1}$,
as in the following commutative diagram:
\begin{equation}
\begin{split}\xymatrix@C=4pc{E\ar[r]_{\approx}^{\exists h}\ar[d]_{\StandardProjection_{\StandardFibration}} & E\ar[d]^{\StandardProjection_{\mathcal{\StandardFibration}'}}\\
B\ar[r]_{\exists\underline{h}}^{\approx} & B'
}
\end{split}
\label{eq:the-classification-diagram}
\end{equation}
Note that such a relation $\StandardFibration\sim\StandardFibration'$
for fiberings is well-defined (independent of the choice of bundle
representations $\StandardProjection_{\StandardFibration}$ and $\StandardProjection_{\mathcal{\StandardFibration}'}$).
Thus we obtain a desired formulation for the classification of fiberings:
\begin{defn}
\label{def:Classification}The \emph{classification} of $F$-fiberings
on $E$, denoted by $\ClassFib\left(E,F\right)$, is a complete list
of representatives for smooth $F$-fiber bundle structures on $E$
modulo the fibering equivalence $\StandardFibration\sim\StandardFibration'$
according to \prettyref{eq:the-classification-diagram}.
\end{defn}

The determination of $\ClassFib\left(E,F\right)$ will be addressed
later in this chapter, where theories of classifying spaces, nonabelian
Čech cohomology, and mapping class group actions, among others, will
be drawn on. For now, let us fix an $F$-fibering $\StandardFibration$
on $E$ (as represented by an $F$-fiber bundle $\StandardProjection_{\StandardFibration}\colon E\to B$),
and consider next how $\StandardFibration$ is deformed all through
its equivalence class.

\subsection{The deformations (diffeomorphisms)}

The second ingredient, deformations of a fibering $\StandardFibration$,
are universally supplied by the diffeomorphisms on $E$ that deforms
$\StandardFibration$ to all its equivalent fiberings. Here, these
deformations for fiberings can be described concretely in terms of
their bundle representations \prettyref{eq:fibering-represented-by-bundle}:
under a diffeomorphism $f\colon E\to E$, the \emph{pushforward}
of $\StandardFibration$ is an $F$-fibering $f_{\ast}\StandardFibration$
on $E$ represented by the fiber bundle $\StandardProjection_{f_{\ast}\StandardFibration}\coloneqq\StandardProjection_{\StandardFibration}\circ f^{-1}$,
as in the following commutative diagram
\begin{equation}
\begin{split}\xymatrix@C=4pc{E\ar[r]_{\approx}^{f}\ar[d]_{\StandardProjection_{\StandardFibration}} & E\ar@{-->}[d]^{\StandardProjection_{f_{\ast}\StandardFibration}}\\
B\ar@{=}[r]_{\Identity_{B}} & B
}
\end{split}
\label{eq:the-deformations-diagram}
\end{equation}
Note that such deformations $\StandardFibration\mapsto f_{\ast}\StandardFibration$
for fiberings are well-defined (independent of the choice $\StandardProjection_{\StandardFibration}$)
and exhaustive. Thus we obtain a desired formulation for the deformations
of fiberings:
\begin{defn}
\label{def:transformation}The \emph{deformation} of $\StandardFibration$
refers to the topological group $\Diff\left(E\right)$ consisting
of all diffeomorphisms $f\colon E\to E$, together with its transitive
action on the equivalence class of $\StandardFibration$ by the pushforward
$\StandardFibration\mapsto f_{\ast}\StandardFibration$ according
to \prettyref{eq:the-deformations-diagram}.
\end{defn}

Various aspects concerning the topology and geometry of the diffeomorphism
group $\Diff\left(E\right)$ (with its canonical structure constructed
in \prettyref{thm:Canonical-Smooth-Structure-on-DiffM-appendix})
have been extensively studied by others, for which we recommend interested
readers to \cite{MR1445290,MR2568571,MR2456522,hatcher201250}. We
shall see in the present chapter and the next that the space of fiberings
inherits these rich structures as a homogeneous space of the diffeomorphism
group.

\subsection{The symmetries (automorphisms)}

The third ingredient, symmetries of a fibering $\StandardFibration$,
are supplied by those diffeomorphisms on $E$ that push forward $\StandardFibration$
to itself. Here, these symmetries for fiberings can be described concretely
in terms of their bundle representations \prettyref{eq:fibering-represented-by-bundle}:
a diffeomorphism $h\colon E\to E$ is said to \emph{preserve} $\StandardFibration$
if there exists a diffeomorphism $\underline{h}\colon B\to B$ satisfying
$\StandardProjection_{\StandardFibration}\circ h=\underline{h}\circ\StandardProjection_{\StandardFibration}$,
as in the following commutative diagram:
\begin{equation}
\begin{split}\xymatrix@C=4pc{E\ar[r]_{\approx}^{h}\ar[d]_{\StandardProjection_{\StandardFibration}} & E\ar[d]^{\StandardProjection_{\mathcal{\StandardFibration}}}\\
B\ar[r]_{\exists\underline{h}}^{\approx} & B
}
\end{split}
\label{eq:the-symmetries-diagram}
\end{equation}
Note that such symmetries $h_{\ast}\StandardFibration=\StandardFibration$
for fiberings are well-defined (independent of the choice of $\StandardProjection_{\StandardFibration}$)
and exhaustive. Thus we obtain a desired formulation for the symmetries
of fiberings:
\begin{defn}
\label{def:automorphism}The \emph{symmetry} or \emph{automorphism group}
of $\StandardFibration$, denoted by $\Aut\left(\StandardFibration\right)$,
is the topological subgroup of $\Diff\left(E\right)$ consisting of
those diffeomorphisms $h\colon E\to E$ that preserve $\StandardFibration$
according to \prettyref{eq:the-symmetries-diagram}.
\end{defn}

The structures (topological, homotopy, differential, etc.)\ of the
automorphism group $\Aut\left(\StandardFibration\right)$ will be
addressed in the present chapter and the next, where theories of gauge
analysis, (a recurrence of) nonabelian cohomology, infinite-dimensional
group extension, and Riemannian geometry of fibrations, among others,
will be drawn on.

\subsection{The moduli space}

Having defined for regular fiberings the classification $\ClassFib\left(E,F\right)$,
the deformation $\Diff\left(E\right)$, and the symmetry $\Aut\left(\StandardFibration\right)$,
we are now ready to construct the space of fiberings. By the very
constructions of these three ingredients, we see that for each representative
$\StandardFibration_{\alpha}\in\ClassFib\left(E,F\right)$, all the
$F$-fiberings on $E$ within the equivalence class of $\StandardFibration_{\alpha}$
can be bijectively parametrized by the coset space $\Diff\left(E\right)/\Aut\left(\StandardFibration_{\alpha}\right)$.
Thus our considerations culminate in the following definition:
\begin{defn}
\label{def:space-of-fiberings}The \emph{space of (smooth, regular) $F$-fiberings on $E$},
denoted by $\Fib\left(E,F\right)$, is the following disjoint union
of left coset spaces:
\[
\Fib\left(E,F\right)\coloneqq\bigsqcup_{\StandardFibration\in\ClassFib\left(E,F\right)}\Fib\left(\StandardFibration\right)\qquad\text{with}\qquad\Fib\left(\StandardFibration\right)\coloneqq\Diff\left(E\right)/\Aut\left(\StandardFibration\right).
\]
\nomenclature[Fib-EF]{$\Fib\left(E,F\right)$}{the moduli space of smooth fiberings of $E$ by $F$}\nomenclature[Fib]{$\Fib\left(\StandardFibration\right)$}{the moduli space of smooth fiberings modeled on $\StandardFibration$ (or the oriented version thereof)}Specifically,
each subspace $\Fib\left(\StandardFibration\right)\subseteq\Fib\left(E,F\right)$
is called the \emph{space of fiberings modeled on $\StandardFibration$},
which consists of left cosets $f\Aut\left(\StandardFibration\right)$ (for $f\in\Diff\left(E\right)$) parametrizing the equivalent fiberings
$f_{\ast}\StandardFibration$.
\end{defn}

This is the main object of our study; especially, its homotopy type
will be one of the driving quests in our investigation. In particular,
in what follows we shall lay general groundwork in this regard, equipped
with which in \prettyref{chap:First-Examples} we shall determine
case by case the homotopy type of $\Fib\left(E,F\right)$ for $\dim E\leq3$.%
Before we embark on this, we remark that although $\Fib\left(\StandardFibration\right)$
was constructed as the space of cosets for diffeomorphisms modulo
automorphisms, in practice it is often convenient to realize these
cosets concretely; in other words, besides viewing it as a homogeneous
space of $\Diff\left(E\right)$, we seek to embed $\Fib\left(\StandardFibration\right)$
into certain intuitive (infinite-dimensional) manifolds, as explained
in what follows.

\subsection{Two views of a fibration: submersion vs.\  family of shapes}

Here we offer two realizations of $\Fib\left(\StandardFibration\right)$
in \prettyref{eq:fiberings-as-submersions} and in \prettyref{eq:fiberings-as-families-of-shapes}
below as corresponding to two perspectives for a fibration: as a bundle
projection and as a fiber family, respectively. On the one hand (in
the spirit of our construction above), the first perspective realizes
a fibering as an unindexed \emph{submersion}. More precisely, we
view the bundle projection $\StandardProjection_{\StandardFibration}\colon E\to B$
as a member in the set of all smooth submersions of $E$ onto $B$.
This set is denoted by $\Subm\left(E,B\right)$, which is an open
subset $\Subm\left(E,B\right)\subsetopen\MappingSpace^{\infty}\left(E,B\right)$
(\prettyref{lem:subm-and-emb-are-open-subsets}), and thereby inherits
a natural topology and smooth structure from the smooth mapping space
(see \prettyref{sec:Infinite-dimensional-smooth-manifolds}). Further,
this space admits two (commuting, smooth) natural Lie-group actions:
\[
\Diff\left(E\right)\curvearrowright\Subm\left(E,B\right)\curvearrowleft\Diff\left(B\right),
\]
where the pullback action by $\Diff\left(B\right)$ is given by post-composition
with inverses of base diffeomorphisms (which accounts for the ``base
forgetting'' that turns fiber bundles into fiberings), while the
pushforward action by $\Diff\left(E\right)$ is given by pre-composition
with inverses of total diffeomorphisms (which accounts for the deformations
of a fibering $\StandardFibration$ all through its equivalence class).
Thus in sum, the key takeaway is that $\Fib\left(\StandardFibration\right)$
can be realized as the $\Diff\left(E\right)$-orbit of $\StandardFibration$
living inside a space of unindexed submersions as follows:
\begin{equation}
\Fib\left(\StandardFibration\right)\ \hookrightarrow^{\Diff\left(E\right)}\ \Subm\left(E,B\right)/\Diff\left(B\right).\label{eq:fiberings-as-submersions}
\end{equation}
On the other hand, the second perspective realizes a fibering as an
unindexed \emph{family of shapes}. More precisely, we view each fiber
$E_{x}\coloneqq\StandardProjection_{\StandardFibration}^{-1}\left(x\right)$
as a member in the set of all smooth embedded $F$-submanifolds in
$E$ or, even better, in the orbit space $\Emb\left(F,E\right)/\Diff\left(F\right)$
of all smooth embeddings of $F$ into $E$ modulo the reparametrization
action by diffeomorphisms on $F$. This set is denoted by $\Shape\left(F,E\right)$
(and whose elements are sometimes called ``$F$-shapes'' in $E$),
which is an open quotient of the open subset $\Emb\left(F,E\right)\subsetopen\MappingSpace^{\infty}\left(F,E\right)$
(\prettyref{lem:subm-and-emb-are-open-subsets}), and thereby inherits
a natural topology and smooth structure from the smooth mapping space
(see \prettyref{sec:Infinite-dimensional-smooth-manifolds}). Thus
with all fiber-shapes of $\StandardFibration$ bundled together, the
resulting family $\left(E_{x}\right)_{x\in B}$ belongs to the space
$\MappingSpace^{\infty}\left(B,\Shape\left(F,E\right)\right)$ consisting
of all smooth $B$-families of $F$-shapes in $E$. Further, this
space admits two (commuting, smooth) natural Lie-group actions:
\[
\Diff\left(E\right)\curvearrowright\MappingSpace^{\infty}\left(B,\Shape\left(F,E\right)\right)\curvearrowleft\Diff\left(B\right),
\]
where the pullback action by $\Diff\left(B\right)$ is given by pre-composition
with base diffeomorphisms (which accounts for the ``base forgetting''
that turns fiber bundles into fiberings), while the pushforward action
by $\Diff\left(E\right)$ is given by deforming the fiber-shapes ``along
for the ride'' under ambient diffeomorphisms (which accounts for
the deformations of a fibering $\StandardFibration$ all through its
equivalence class). Thus in sum, the takeaway is that $\Fib\left(\StandardFibration\right)$
can be realized as the $\Diff\left(E\right)$-orbit of $\StandardFibration$
living inside a space of unindexed families of shapes as follows:\footnote{In fact, such a family of embedded shapes resulting from a fiber bundle
must be an embedded family itself (interpreted appropriately), so
one may say that $\Fib\left(\StandardFibration\right)$ is contained
in $\Shape\left(B,\Shape\left(F,E\right)\right)$, the ``shape space
squared''.}%
\begin{equation}
\Fib\left(\StandardFibration\right)\ \hookrightarrow^{\Diff\left(E\right)}\ \MappingSpace^{\infty}\left(B,\Shape\left(F,E\right)\right)/\Diff\left(B\right).\label{eq:fiberings-as-families-of-shapes}
\end{equation}
It is worth noting that while every member of $\Subm\left(E,B\right)$
in \prettyref{eq:fiberings-as-submersions} yields a fiber bundle
hence a fibering (as a consequence of Ehresmann's fibration theorem),
this is no longer the case%
for $\MappingSpace^{\infty}\left(B,\Shape\left(F,E\right)\right)$
in \prettyref{eq:fiberings-as-families-of-shapes} — an arbitrary
family of fiber shapes in $E$ need not be bundled together to form
a fiber bundle structure, and the crux of the matter is in seeking
suitable characterizations of the success of such bundlings. In any
event, we shall find each of these two realizations useful in different
places throughout this work, and henceforth we shall freely think
of a fibering as an unindexed submersion
\prettyref{eq:fiberings-as-submersions} or as an unindexed family
of shapes \prettyref{eq:fiberings-as-families-of-shapes}.

\subsection{The spacetime symmetries (base transformations)}

Observe that between the symmetries of a fibering (\prettyref{def:automorphism})
and the classification of fiberings (\prettyref{def:Classification}),
there is a subtle interplay regarding coordinate changes of the base.
This can be encoded in the following definition:
\begin{defn}
\label{def:The-base-transformation-group}For each smooth fiber bundle
$\StandardFibration\colon F\hookrightarrow E\xrightarrow{\StandardProjection_{\StandardFibration}}B$,%
the group of \emph{base transformations} of $\StandardFibration$,
denoted by $\Diff\left(B\right)_{\StandardFibration}$, is a subgroup
of $\Diff\left(B\right)$ consisting of those base diffeomorphisms
that are covered by automorphisms of $\StandardFibration$:
\[
\Diff\left(B\right)_{\StandardFibration}\coloneqq\left\{ \beta\in\Diff\left(B\right)\mid\StandardProjection_{\StandardFibration}\circ h=\beta\circ\StandardProjection_{\StandardFibration},\ \exists h\in\Aut\left(\StandardFibration\right)\right\} .
\]
Further endow $\Diff\left(B\right)_{\StandardFibration}$ with the
subspace topology induced from the $C^{\infty}$-topology of $\Diff\left(B\right)$.
\end{defn}

In other words, the base transformation group $\Diff\left(B\right)_{\StandardFibration}$
of $\StandardFibration$ is the image of the continuous homomorphism
\begin{equation}
\AutProjection_{\StandardFibration}\colon\Aut\left(\StandardFibration\right)\to\Diff\left(B\right),\qquad h\mapsto\underline{h},\label{eq:main-projection-from-Aut}
\end{equation}
as in the following commutative diagram:
\[
\begin{split}\xymatrix@C=4pc{E\ar[r]_{\approx}^{h}\ar[d]_{\pi_{\StandardFibration}} & E\ar[d]^{\pi_{\StandardFibration}}\\
B\ar[r]_{\underline{h}}^{\approx} & B
}
\end{split}
\]
We shall return to this map $\AutProjection_{\StandardFibration}$
in later sections to study its other aspects (e.g., its kernel, its
fibration structure, etc.). But for now, let us focus on its image
$\Diff\left(B\right)_{\StandardFibration}$. We start with the trivial
case:
\begin{lem}
\label{prop:Base-transformation-group-for-trivial-bundles}If $\StandardFibration_{0}$
is a trivial bundle over $B$, then its base transformation group
$\Diff\left(B\right)_{\StandardFibration_{0}}$ attains the full diffeomorphism
group $\Diff\left(B\right)$.
\end{lem}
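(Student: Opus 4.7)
The plan is to exhibit an explicit lift for every base diffeomorphism, using the product structure of a trivial bundle. Since $\StandardFibration_{0}$ is trivial over $B$, we may fix a trivialization and identify the bundle representative as the canonical projection $\StandardProjection_{\StandardFibration_{0}} = \projection_{2}\colon F\times B \to B$. The natural candidate for a lift of a given $\beta\in\Diff\left(B\right)$ is then
\[
h_{\beta}\colon F\times B \to F\times B, \qquad h_{\beta}\left(x,b\right) \coloneqq \left(x,\beta\left(b\right)\right).
\]

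First I would verify that $h_{\beta}$ is a diffeomorphism of the total space $E = F\times B$: smoothness is clear because $\beta$ and $\Identity_{F}$ are smooth, and the inverse is given by $h_{\beta^{-1}}$, which is also smooth for the same reason. Next I would check the fibering-compatibility square from \prettyref{eq:the-symmetries-diagram}: by direct computation $\StandardProjection_{\StandardFibration_{0}}\circ h_{\beta}\left(x,b\right) = \beta\left(b\right) = \beta\circ\StandardProjection_{\StandardFibration_{0}}\left(x,b\right)$, so $h_{\beta}\in\Aut\left(\StandardFibration_{0}\right)$ with $\underline{h_{\beta}} = \beta$. By \prettyref{def:The-base-transformation-group}, this places $\beta$ in $\Diff\left(B\right)_{\StandardFibration_{0}}$.

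Since $\beta\in\Diff\left(B\right)$ was arbitrary, this shows $\Diff\left(B\right)\subseteq\Diff\left(B\right)_{\StandardFibration_{0}}$, while the reverse containment $\Diff\left(B\right)_{\StandardFibration_{0}}\subseteq\Diff\left(B\right)$ holds by definition, giving the desired equality. There is no real obstacle here; the content of the lemma is simply the observation that a trivialization furnishes a canonical section $\beta\mapsto h_{\beta}$ of the projection $\AutProjection_{\StandardFibration_{0}}$ from \prettyref{eq:main-projection-from-Aut}. It is perhaps worth remarking that this section is a group homomorphism and is continuous (in fact smooth) in the $C^{\infty}$-topologies, so the inclusion $\Diff\left(B\right)\hookrightarrow\Aut\left(\StandardFibration_{0}\right)$ exhibits $\Diff\left(B\right)_{\StandardFibration_{0}} = \Diff\left(B\right)$ as a retract of $\Aut\left(\StandardFibration_{0}\right)$, a fact that will presumably be useful later when the fibration $\Vau\left(\StandardFibration_{0}\right)\hookrightarrow\Aut\left(\StandardFibration_{0}\right)\to\Diff\left(B\right)_{\StandardFibration_{0}}$ is analyzed in the trivial case.
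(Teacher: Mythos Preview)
Your proof is correct and follows essentially the same approach as the paper: both construct the explicit lift $\Identity_{F}\times\beta$ (equivalently $\beta\times\Identity_{F}$) via a global trivialization, with your version spelling out the verification of the commutative square in slightly more detail. The additional remarks about $\beta\mapsto h_{\beta}$ being a continuous group-homomorphic section are correct and relevant, though not needed for the lemma itself.
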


\begin{proof}
Let $\varphi\colon E\to B\times F$ be a global trivialization of
the trivial bundle $\StandardFibration_{0}$. Then every base diffeomorphism
$\beta\in\Diff\left(B\right)$ is clearly covered by an automorphism
$\widetilde{\beta}\in\Aut\left(\StandardFibration_{0}\right)$ given
by the formula $\widetilde{\beta}=\varphi^{-1}\circ\left(\beta\times\Identity_{F}\right)\circ\varphi$.
Thus we indeed have $\Diff\left(B\right)_{\StandardFibration_{0}}=\Diff\left(B\right)$
as desired.
\end{proof}
Let us next turn to the general case for an arbitrary fibration $\StandardFibration$.
The following lemma provides an alternative description for $\Diff\left(B\right)_{\StandardFibration}$
as the stabilizer of $\StandardFibration$ under the previously studied
pullback action, hence justifies our choice of the notation $\Diff\left(B\right)_{\StandardFibration}$.
\begin{prop}
\label{prop:The-base-transformation-group-as-stabilizer}The base
transformation group $\Diff\left(B\right)_{\StandardFibration}$ consists
exactly of those base diffeomorphisms that pull back $\StandardFibration$
to isomorphic fiber bundles:
\[
\Diff\left(B\right)_{\StandardFibration}=\left\{ \beta\in\Diff\left(B\right)\mid\StandardFibration\cong\beta^{\ast}\StandardFibration\ \left(\text{i.e., \ensuremath{\pi_{\StandardFibration}=\pi_{\beta^{\ast}\StandardFibration}\circ\eta} for some diffeo \ensuremath{\eta\colon E\to\beta^{\ast}E}}\right)\right\} .
\]
\nomenclature[Baut]{$\Diff\left(B\right)_{\StandardFibration}$}{the group of base transformations of a fibering $\StandardFibration$ (or the oriented version thereof)}In
other words, $\Diff\left(B\right)_{\StandardFibration}$ is the stabilizer
of $\StandardFibration$ under the natural pullback action of $\Diff\left(B\right)$
on the set of isomorphism classes of smooth $F$-fiber bundles over
$B$.
\end{prop}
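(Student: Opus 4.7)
The plan is to prove the two inclusions directly by exhibiting the correspondence between automorphisms $h$ covering $\beta$ and bundle isomorphisms $\eta \colon E \to \beta^{\ast}E$ over $B$. The construction is essentially the universal property of the pullback, so there will be no real obstacle; the main thing is to check that the maps produced are genuine diffeomorphisms and that the relevant diagrams commute. Throughout, I use the concrete model $\beta^{\ast}E = \{(b,e) \in B \times E \mid \beta(b) = \StandardProjection_{\StandardFibration}(e)\}$ with projection $\StandardProjection_{\beta^{\ast}\StandardFibration}(b,e) = b$ and the canonical bundle map $\widetilde{\beta} \colon \beta^{\ast}E \to E$, $(b,e) \mapsto e$, which covers $\beta$ and is itself a diffeomorphism with inverse $e \mapsto (\beta^{-1}(\StandardProjection_{\StandardFibration}(e)),e)$.

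For the forward inclusion, suppose $\beta \in \Diff(B)_{\StandardFibration}$ and pick $h \in \Aut(\StandardFibration)$ with $\StandardProjection_{\StandardFibration} \circ h = \beta \circ \StandardProjection_{\StandardFibration}$. I would define
\[
\eta \colon E \to \beta^{\ast}E, \qquad \eta(e) \coloneqq \bigl(\StandardProjection_{\StandardFibration}(e),\, h(e)\bigr).
\]
The covering identity guarantees that $\eta(e)$ lies in $\beta^{\ast}E$, and by construction $\StandardProjection_{\beta^{\ast}\StandardFibration} \circ \eta = \StandardProjection_{\StandardFibration}$. The map $\eta$ is smooth, and it is a diffeomorphism since the assignment $(b,e) \mapsto h^{-1}(e)$ provides a smooth two-sided inverse (using that the first coordinate in $\beta^{\ast}E$ is already determined by the second via $b = \beta^{-1}(\StandardProjection_{\StandardFibration}(e))$). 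This shows $\StandardFibration \cong \beta^{\ast}\StandardFibration$.

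For the reverse inclusion, suppose $\beta \in \Diff(B)$ and a bundle isomorphism $\eta \colon E \to \beta^{\ast}E$ over $B$ is given, so $\StandardProjection_{\beta^{\ast}\StandardFibration} \circ \eta = \StandardProjection_{\StandardFibration}$. I would set
\[
h \coloneqq \widetilde{\beta} \circ \eta \colon E \to E.
\]
Both $\eta$ and $\widetilde{\beta}$ are diffeomorphisms, so $h \in \Diff(E)$. Computing,
\[
\StandardProjection_{\StandardFibration} \circ h = \StandardProjection_{\StandardFibration} \circ \widetilde{\beta} \circ \eta = \beta \circ \StandardProjection_{\beta^{\ast}\StandardFibration} \circ \eta = \beta \circ \StandardProjection_{\StandardFibration},
\]
so $h$ is a diffeomorphism of $E$ preserving $\StandardFibration$ and covering $\beta$ in the sense of \prettyref{eq:the-symmetries-diagram}. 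Hence $h \in \Aut(\StandardFibration)$ and $\beta \in \Diff(B)_{\StandardFibration}$.

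Combining the two directions gives the claimed equality. The stabilizer reformulation is then immediate: the set on the right-hand side is by definition the stabilizer of the isomorphism class $[\StandardFibration]$ under $\Diff(B) \curvearrowright \check{H}^{1}(B,\Diff(F))$ (or whichever classifying set is used for the pullback action), since $\beta$ fixes $[\StandardFibration]$ precisely when $\beta^{\ast}\StandardFibration \cong \StandardFibration$ as smooth $F$-bundles over $B$. No compactness or finite-dimensionality is used in either construction, so the argument carries over verbatim to the general setting.
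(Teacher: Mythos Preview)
Your proof is correct and follows essentially the same approach as the paper's: both directions are established via the universal property of the pullback, with your $\widetilde{\beta}$ playing the role of the paper's canonical map $\pi^{\ast}\beta$, and your $\eta(e)=(\StandardProjection_{\StandardFibration}(e),h(e))$ being precisely $(\pi^{\ast}\beta)^{-1}\circ h$ in concrete coordinates. The only difference is presentational---you work with the explicit fibered-product model while the paper argues diagrammatically---but the maps and verifications are identical.
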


\begin{proof}
Let $\beta\in\Diff\left(B\right)$ be an arbitrary diffeomorphism
of the base space. Recall that the pullback bundle $\beta^{\ast}\StandardFibration$
is an $F$-fiber bundle with bundle projection $\beta^{\ast}\pi\colon\beta^{\ast}E\to B$,
as in the following commutative diagram:
\begin{equation}
\begin{split}\xymatrix@C=4pc{\beta^{\ast}E\ar[r]^{\pi^{\ast}\beta}\ar[d]_{\beta^{\ast}\pi} & E\ar[d]^{\pi}\\
B\ar[r]_{\beta} & B
}
\end{split}
\label{eq:pullback-diagram}
\end{equation}
On the one hand, if $\beta\in\Diff\left(B\right)_{\StandardFibration}$,
say being covered by a fibering automorphism $h\in\Aut\left(\StandardFibration\right)$,
then the desired bundle isomorphism $\eta\colon\left(E,\StandardFibration\right)\to\left(\beta^{\ast}E,\beta^{\ast}\xi\right)$
is given by $\eta=\left(\pi^{\ast}\beta\right)^{-1}\circ h$; indeed,
the required condition $\pi=\beta^{\ast}\pi\circ\eta$ can be verified
directly, or by the following diagrammatic argument (which is basically
concatenating the inverse of the pullback diagram \prettyref{eq:pullback-diagram}):
\[
\begin{split}\xymatrix@C=4pc{E\ar[r]^{h}\ar[d]_{\pi} & E\ar[d]^{\pi}\\
B\ar[r]_{\beta} & B
}
\end{split}
\implies\begin{split}\xymatrix@C=4pc{E\ar[r]^{h}\ar[d]_{\pi} & E\ar[d]^{\pi}\ar[r]^{\left(\pi^{\ast}\beta\right)^{-1}} & \beta^{\ast}E\ar[d]^{\beta^{\ast}\pi}\\
B\ar[r]_{\beta} & B\ar[r]_{\beta^{-1}} & B
}
\end{split}
\]
On the other hand, if $\StandardFibration\cong\beta^{\ast}\StandardFibration$,
say being given by a bundle isomorphism $\eta\colon\left(E,\StandardFibration\right)\to\left(\beta^{\ast}E,\beta^{\ast}\xi\right)$,
then the desired fibering automorphism $h\in\Aut\left(\StandardFibration\right)$
covering $\beta$ is given by $h=\pi^{\ast}\beta\circ\eta$; indeed,
the required condition $\pi\circ h=\beta\circ\pi$ can be verified
directly, or by the following diagrammatic argument (which is basically
concatenating the pullback diagram \prettyref{eq:pullback-diagram}):
\[
\begin{split}\xymatrix@C=4pc{E\ar[d]_{\pi}\ar[r]^{\eta} & \beta^{\ast}E\ar[d]^{\beta^{\ast}\pi}\\
B\ar@{=}[r]_{\Identity_{B}} & B
}
\end{split}
\implies\begin{split}\xymatrix@C=4pc{E\ar[d]_{\pi}\ar[r]^{\eta} & \beta^{\ast}E\ar[r]^{\pi^{\ast}\beta}\ar[d]^{\beta^{\ast}\pi} & E\ar[d]^{\pi}\\
B\ar@{=}[r]_{\Identity_{B}} & B\ar[r]_{\beta} & B
}
\end{split}
\]
This completes the proof of the equivalence between $\beta\in\Diff\left(B\right)_{\StandardFibration}$
and $\StandardFibration\cong\beta^{\ast}\StandardFibration$, as desired.
\end{proof}
Thus there emerges a close relation between the way how bundle classes
get coarsened into fibering classes on the one hand, and how base
transformations are characterized among base diffeomorphisms on the
other. Namely, they are exactly the orbits and the stabilizers, respectively,
for the natural action on the set
\[
\Bundle_{F}\left(B\right)\coloneqq\left\{ \text{isomorphism classes of \ensuremath{F}-fiber bundles over \ensuremath{B}}\right\} 
\]
\nomenclature[BunFB]{$\Bundle_{F}\left(B\right)$}{the isomorphism classes of $F$-fiber bundles over $B$}by
the base diffeomorphism group or, better still, by the base mapping
class group 
\[
\mathrm{Mod}\left(B\right)\coloneqq\pi_{0}\Diff\left(B\right).
\]
\nomenclature[ModB]{$\mathrm{Mod}\left(B\right)$}{the mapping class group of $B$ (i.e., $\pi_{0}\Diff\left(B\right)$)}As
such, the orbit-stabilizer relation for this action says that the
bundle classes in the pullback orbit of $\StandardFibration$ are
exactly corresponding to the cosets of $\mathrm{Mod}\left(B\right)_{\StandardFibration}\coloneqq\pi_{0}\Diff\left(B\right)_{\StandardFibration}$
in $\mathrm{Mod}\left(B\right)$, as in the following corollary:
\begin{cor}
\label{cor:Orbit-stabilizer-relation-for-pullback}The base transformation
group $\Diff\left(B\right)_{\StandardFibration}\leq\Diff\left(B\right)$
is an open subgroup of the base diffeomorphism group, and hence equal
to the union of components corresponding to the subgroup $\mathrm{Mod}\left(B\right)_{\StandardFibration}\leq\mathrm{Mod}\left(B\right)$
of the base mapping-class group. Further, via the orbit-stabilizer
theorem, the natural pullback action of $\mathrm{Mod}\left(B\right)$
on $\Bundle_{F}\left(B\right)$ induces the following bijective correspondence:
\[
\mathrm{Mod}\left(B\right)/\mathrm{Mod}\left(B\right)_{\StandardFibration}\leftrightarrow\left\{ \beta^{\ast}\StandardFibration\in\Bundle_{F}\left(B\right)\mid\beta\in\Diff\left(B\right)\right\} .
\]
In particular, $\Diff\left(B\right)_{\StandardFibration}$ attains
the full diffeomorphism group $\Diff\left(B\right)$ if and only if
the bundle isomorphism class of $\StandardFibration$ remains unchanged
under all pullbacks.
\end{cor}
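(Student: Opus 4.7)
The plan is to exploit the characterization established in the preceding proposition, which identifies $\Diff(B)_{\StandardFibration}$ with the stabilizer of $\StandardFibration$ under the pullback action on the set $\Bundle_{F}(B)$ of bundle isomorphism classes. The central observation to reduce everything to the mapping class group is the \emph{smooth homotopy invariance of pullback bundles}: if two base diffeomorphisms $\beta_{0},\beta_{1}\in\Diff(B)$ are smoothly isotopic, then $\beta_{0}^{\ast}\StandardFibration\cong\beta_{1}^{\ast}\StandardFibration$. This is the standard fact that smoothly homotopic smooth maps into a paracompact base induce isomorphic pullbacks of a smooth fiber bundle, applied here to diffeomorphisms joined by a smooth isotopy (obtained as a smooth path in the Fréchet Lie group $\Diff(B)$ established in \prettyref{thm:Canonical-Smooth-Structure-on-DiffM-appendix}).

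From this invariance, the openness of $\Diff(B)_{\StandardFibration}$ will follow formally. Concretely, since $\Diff(B)$ is a smooth Fréchet manifold, it is locally path-connected, so each path component coincides with the corresponding connected component and is open. If $\beta_{0}\in\Diff(B)_{\StandardFibration}$ and $\beta$ lies in the same path component as $\beta_{0}$, a smooth path between them yields $\beta^{\ast}\StandardFibration\cong\beta_{0}^{\ast}\StandardFibration\cong\StandardFibration$, so $\beta\in\Diff(B)_{\StandardFibration}$. Therefore $\Diff(B)_{\StandardFibration}$ is a union of path components, hence open; and its image in $\pi_{0}\Diff(B)=\mathrm{Mod}(B)$ is exactly $\mathrm{Mod}(B)_{\StandardFibration}$, a subgroup since $\Diff(B)_{\StandardFibration}$ is one.

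The bijective correspondence then comes directly from the orbit-stabilizer theorem applied to the action of $\mathrm{Mod}(B)$ on $\Bundle_{F}(B)$. By the smooth homotopy invariance above, pullback descends from $\Diff(B)$ to a well-defined action of $\mathrm{Mod}(B)$ on $\Bundle_{F}(B)$. The orbit of $\StandardFibration$ under this $\mathrm{Mod}(B)$-action is by construction the set $\{\beta^{\ast}\StandardFibration\mid\beta\in\Diff(B)\}$, and its stabilizer is precisely $\mathrm{Mod}(B)_{\StandardFibration}$ by the preceding proposition together with the component-invariance. The orbit-stabilizer bijection delivers the claimed identification $\mathrm{Mod}(B)/\mathrm{Mod}(B)_{\StandardFibration}\leftrightarrow\{\beta^{\ast}\StandardFibration\mid\beta\in\Diff(B)\}$. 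The final ``in particular'' clause is then immediate: $\Diff(B)_{\StandardFibration}=\Diff(B)$ iff $\mathrm{Mod}(B)_{\StandardFibration}=\mathrm{Mod}(B)$ iff the orbit consists of a single class.

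The main obstacle (which is entirely standard but worth pinning down carefully) is the smooth homotopy invariance step: one must ensure that the homotopy invariance of fiber bundles, classically formulated for continuous maps between paracompact spaces, is available in the smooth category and is applicable to smooth paths in $\Diff(B)$. This is routine (e.g.\ via the smooth approximation to a continuous homotopy, or by constructing an explicit bundle isomorphism using parallel transport of an Ehresmann connection along the smooth isotopy), and once it is secured every other step is a formal consequence of the preceding proposition and the orbit-stabilizer theorem.
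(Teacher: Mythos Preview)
Your proposal is correct and matches the paper's intended argument. The paper does not supply an explicit proof for this corollary, treating it as an immediate consequence of the preceding proposition (the stabilizer characterization of $\Diff(B)_{\StandardFibration}$) together with the orbit--stabilizer theorem, after noting just before the corollary that the pullback action on $\Bundle_{F}(B)$ factors through $\mathrm{Mod}(B)$. Your write-up simply fleshes out the standard details the paper leaves implicit: the homotopy invariance of pullbacks (so that the action descends to $\mathrm{Mod}(B)$) and the local path-connectedness of $\Diff(B)$ (so that the stabilizer is a union of components, hence open).
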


\subsection{The gauge symmetries (vertical automorphisms)}

Complemented by the base transformations are the ``internal'' or
``gauge'' symmetries, as given in the following definition:
\begin{defn}
For each smooth fiber bundle $\StandardFibration\colon F\hookrightarrow E\xrightarrow{\StandardProjection_{\StandardFibration}}B$,
the group of \emph{vertical automorphisms} of $\StandardFibration$,
denoted by $\Vau\left(\StandardFibration\right)$, is a subgroup of
$\Aut\left(\StandardFibration\right)$ consisting of those automorphisms
that map each fiber of $\StandardFibration$ to itself:
\[
\Vau\left(\StandardFibration\right)\coloneqq\left\{ k\in\Diff\left(E\right)\mid\StandardProjection_{\StandardFibration}\circ k=\StandardProjection_{\StandardFibration}\right\} .
\]
\nomenclature[Vaut]{$\Vau\left(\StandardFibration\right)$}{the group of vertical automorphisms of a fibering $\StandardFibration$ (or the oriented version thereof)}Further
endow $\Vau\left(\StandardFibration\right)$ with the subspace topology
induced from the $C^{\infty}$-topology of $\Diff\left(E\right)$.
\end{defn}

The vertical automorphism group $\Vau\left(\StandardFibration\right)$
is clearly a closed, normal subgroup of $\Aut\left(\StandardFibration\right)$,
but it is a priori not clear what general results can be said about
its topological or geometric structures. To get hold of $\Vau\left(\StandardFibration\right)$,
let us first consider the case of \emph{trivial} bundle $\StandardFibration_{0}$,
for which under any global trivialization $\varphi\colon E\to B\times F$,
each vertical automorphism $k\in\Vau\left(\StandardFibration_{0}\right)$
admits a global coordinate representation
\begin{equation}
\kappa\colon B\to\Diff\left(F\right),\qquad\varphi\circ k\circ\varphi^{-1}\left(x,y\right)=\left(x,\kappa\left(x\right)\left(y\right)\right).\label{eq:global-coordinate-representation-of-gauge}
\end{equation}
Thus for trivial bundle $\StandardFibration_{0}$, the assignment
$k\mapsto\kappa$ yields a description of $\Vau\left(\StandardFibration_{0}\right)$
in terms of the space of mappings from $B$ to $\Diff\left(F\right)$.
In general, for any possibly infinite-dimensional Lie group $\mathcal{K}$,
the topological (resp., smooth) mapping space of the form $C\left(B,\mathcal{K}\right)$
(resp. $C^{\infty}\left(B,\mathcal{K}\right)$) is called a topological
(resp., smooth) \emph{current group}\emph{ }with values in $\mathcal{K}$,
where the group structure refers to the one induced from $\mathcal{K}$
pointwise. Suitable smooth approximations for such current groups
— especially for those with coefficient groups such as $\mathcal{K}=\Diff\left(F\right)$
— are already available; e.g., see \cite[Appendix A.3]{MR1935553}
or \cite{MR4157915}, which applies to show that the following natural
inclusion is a homotopy equivalence with dense image:
\begin{equation}
C^{\infty}\left(B,\Diff\left(F\right)\right)\xhookrightarrow{\simeq}C\left(B,\Diff\left(F\right)\right).\label{eq:smooth-approx-of-current-groups}
\end{equation}
In summary, from the preceding discussion we deduce that the vertical
automorphism group for a trivial bundle can be described in terms
of a suitable current group, as in the following lemma:
\begin{lem}
\label{lem:The-Vau-group-for-trivial-bundles}Suppose that $\StandardFibration_{0}$
is a trivial $F$-fiber bundle over $B$. Then its vertical automorphism
group is isomorphic to the smooth current group, which is in turn
homotopy equivalent to the topological current group:
\[
\Vau\left(\StandardFibration_{0}\right)\cong C^{\infty}\left(B,\Diff\left(F\right)\right)\simeq C\left(B,\Diff\left(F\right)\right).
\]
More precisely, the first map is an isomorphism given by taking the
global coordinate representation as in \prettyref{eq:global-coordinate-representation-of-gauge},
and the second map is an injective homotopy equivalence with dense
image induced by the forgetful map.
\end{lem}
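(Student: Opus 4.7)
The plan is to first establish the algebraic isomorphism using the global trivialization, then address the topological refinement and finally the homotopy equivalence with the continuous current group.

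For the first isomorphism, fix a global trivialization $\varphi\colon E\approx B\times F$. For each $k\in\Vau\left(\StandardFibration_{0}\right)$, the condition $\StandardProjection_{\StandardFibration_{0}}\circ k=\StandardProjection_{\StandardFibration_{0}}$ translates under $\varphi$ into the statement that $\varphi\circ k\circ\varphi^{-1}$ preserves the first coordinate, so it has the form $\left(x,y\right)\mapsto\left(x,\kappa\left(x\right)\left(y\right)\right)$ for a unique map $\kappa\colon B\to\MappingSpace^{\infty}\left(F,F\right)$; since $k$ is a self-diffeomorphism, each slice $\kappa\left(x\right)$ must be a diffeomorphism of $F$, yielding $\kappa\colon B\to\Diff\left(F\right)$. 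I would next check that $\kappa$ is smooth as a map into $\Diff\left(F\right)$: by the exponential law of \prettyref{thm:local-addition-implies-canonical-smooth-manifold-structure} (applied with $M=F$ and $\mathcal{N}=F$), smoothness of $\kappa$ is equivalent to smoothness of its adjoint $\kappa^{\wedge}\colon B\times F\to F$, which is exactly the second coordinate of $\varphi\circ k\circ\varphi^{-1}$ — hence smooth. The assignment $k\mapsto\kappa$ is visibly a group homomorphism (composition of such $k$'s corresponds to pointwise composition in $\Diff\left(F\right)$) and is bijective: given any smooth $\kappa\colon B\to\Diff\left(F\right)$, the formula $k\coloneqq\varphi^{-1}\circ\left(\left(x,y\right)\mapsto\left(x,\kappa\left(x\right)\left(y\right)\right)\right)\circ\varphi$ defines a smooth map $E\to E$ whose inverse is obtained from $x\mapsto\kappa\left(x\right)^{-1}$ (smooth by \prettyref{thm:Canonical-Smooth-Structure-on-DiffM-appendix} and \prettyref{lem:pushforward-and-pullback}), so $k\in\Vau\left(\StandardFibration_{0}\right)$.

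For the topological/smooth refinement of this isomorphism, I would argue that both directions are continuous, which upgrades the set-theoretic bijection to a homeomorphism of topological groups (indeed, a diffeomorphism in the canonical Fréchet structures). The forward direction sends $k$ to $\kappa$, continuous because currying of the smooth expression for $k$ in coordinates is continuous in the compact-open $C^{\infty}$ topologies; the reverse direction is continuous by \prettyref{lem:pushforward-and-pullback} applied to the pushforward/pullback by $\varphi^{\pm1}$ together with the smoothness of the evaluation/composition maps on the canonical smooth mapping spaces.

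For the second claim, the inclusion $\iota\colon\MappingSpace^{\infty}\left(B,\Diff\left(F\right)\right)\hookrightarrow\MappingSpace\left(B,\Diff\left(F\right)\right)$ is visibly injective with dense image, the density being a standard smooth-approximation statement (locally in a chart of $\Diff\left(F\right)$ around a fixed reference map $\kappa_{0}$, differences lie in the Fréchet space $\mathfrak{X}\left(F\right)\circ\kappa_{0}$ and one approximates continuous $B$-families by smooth ones componentwise). To upgrade density to a homotopy equivalence I would invoke a smoothing operator $s\colon\MappingSpace\left(B,\Diff\left(F\right)\right)\to\MappingSpace^{\infty}\left(B,\Diff\left(F\right)\right)$ — for instance, defined via mollification in a tubular neighborhood of the diagonal using the Riemannian local addition on $\Diff\left(F\right)$ arising from a right-invariant Riemannian structure (or, equivalently, using the local addition on $F$ of \prettyref{exa:examples-of-local-additions}\ref{enu:example-of-riemannian-local-addition}) — together with a linear homotopy (in the local-addition charts) from $\iota\circ s$ and $s\circ\iota$ to the respective identities; this is precisely the content of the references \cite{MR1935553, MR4157915} cited in \prettyref{eq:smooth-approx-of-current-groups}, which I would simply quote.

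The main obstacle is the last step: constructing a globally defined, continuous smoothing operator on the nonlinear target $\Diff\left(F\right)$ and verifying the homotopy equivalence, since $\Diff\left(F\right)$ is itself infinite-dimensional and one must work chart by chart through a local-addition structure — this is where we lean on the cited literature rather than reprove the smoothing-approximation theorem for current groups.
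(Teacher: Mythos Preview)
Your proposal is correct and follows essentially the same approach as the paper: the paper's ``proof'' is in fact the discussion immediately preceding the lemma, which identifies $k\mapsto\kappa$ via the global trivialization \prettyref{eq:global-coordinate-representation-of-gauge} and then cites \cite[Appendix A.3]{MR1935553} and \cite{MR4157915} for the smooth-approximation homotopy equivalence \prettyref{eq:smooth-approx-of-current-groups}. Your additional detail---verifying smoothness of $\kappa$ via the exponential law and checking continuity in both directions---is a welcome elaboration entirely within the paper's framework, not a different route.
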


For a general fibration $\StandardFibration$ that is merely locally
trivial, we shall see that the preceding lemma can serve as a partial
description of $\Vau\left(\StandardFibration\right)$ over each trivializing
chart. There is much more to say concerning this group, for which
we shall defer to the subsequent sections for a detailed study.

\subsection{Principal bundles and tangentially-smooth fibrations}

Smooth approximation, such as the homotopy equivalence \prettyref{eq:smooth-approx-of-current-groups}
used in the proof of \prettyref{lem:The-Vau-group-for-trivial-bundles}
above, will become a recurring theme throughout our study. A natural
way to explain this in the current contexts is via the theory of \emph{principal}
bundles. To make the connection, recall that for any smooth model
fiber $F$ and smooth base space $B$, there is a natural correspondence:%
\begin{equation}
\text{\ensuremath{C^{\infty}}-smooth \ensuremath{F}-fiber bundles over \ensuremath{B}}\ \leftrightarrow\ \text{smooth principal \ensuremath{\Diff\left(F\right)}-bundles over \ensuremath{B}}.\label{eq:correspondence-bun-and-pbun}
\end{equation}
Here, in the forward construction, corresponding to every smooth $F$-fiber
bundle $\StandardFibration$ over $B$ there is a \emph{nonlinear frame bundle}
$\mathrm{Fr}_{\Diff\left(F\right)}\left(\StandardFibration\right)$;
more specifically, this is a smooth principal $\Diff\left(F\right)$-bundle
over $B$ whose fiber over each $x$ consists of all diffeomorphisms
from the model fiber $F$ to the concrete fiber $\pi_{\StandardFibration}^{-1}\left(x\right)$
(i.e., all parametrizations of that concrete fiber):
\begin{equation}
\mathrm{Fr}_{\Diff\left(F\right)}\left(\StandardFibration\right)\coloneqq\bigsqcup_{x\in B}\Diff\left(F,\pi_{\StandardFibration}^{-1}\left(x\right)\right).\label{eq:nonlinear-frame-bundle}
\end{equation}
Conversely, in the backward construction, corresponding to every smooth
principal $\Diff\left(F\right)$-bundle $P$ over $B$ there is an
\emph{associated fiber bundle} $P\times_{\Diff\left(F\right)}F$;
more specifically, this is a smooth $F$-fiber bundle given by the
usual quotient construction on the product $P\times F$ under the
natural $\Diff\left(F\right)$-actions on both factors (i.e., the
principal right-action on $P$ and the evaluation left-action on $F$):
\begin{equation}
P\times_{\Diff\left(F\right)}F\coloneqq{P\times F}/{\sim}\qquad\text{with}\qquad\left(p\cdot k,y\right)\sim\left(p,k\left(y\right)\right)\enskip\left(\text{for }k\in\Diff\left(F\right)\right).\label{eq:associated-fiber-bundle}
\end{equation}
In this view, there emerges a natural regularity class for fiber bundles
that particularly fits in the topological setting. Indeed, in the
above correspondence \prettyref{eq:correspondence-bun-and-pbun} if
we considered continuous principal $\Diff\left(F\right)$-bundles
in the topological category, then we would be naturally led to the
notion of ``\emph{$C^{0,\infty}$}'' or ``\emph{tangentially smooth}''
$F$-fiber bundles; namely, topological fiber bundles with structure
group $\Diff\left(F\right)$. The above correspondence \prettyref{eq:correspondence-bun-and-pbun}
can then be extended to the more flexible topological category as
follows:
\[
\text{\ensuremath{C^{0,\infty}}-smooth \ensuremath{F}-fiber bundles over \ensuremath{B}}\ \leftrightarrow\ \text{topological principal \ensuremath{\Diff\left(F\right)}-bundles over \ensuremath{B}}.
\]
In this way, the ``forgetful map'' \prettyref{eq:smooth-approx-of-current-groups}
for $\Diff\left(F\right)$-valued current groups can be viewed as
the natural inclusion of the vertical automorphism group of a trivial
$F$-fiber bundle $\StandardFibration_{0}$ (viewed as a $C^{\infty}$-smooth
bundle by default) into that of $\StandardFibration_{0}$ viewed as
a $C^{0,\infty}$-smooth bundle. This exemplifies a recurring theme
throughout this work, where suitable smooth approximations will be
established to allow for reducing the study of $C^{\infty}$ fibrations
to that of $C^{0,\infty}$ fibrations in the more flexible topological
realm.

\subsection{The big picture}

Before we begin the detailed study, let us outline the big picture
connecting all these key ingredients just introduced in this section.
The first step is classification, for which recall that we can draw
on the classical classification theory of bundles and of manifolds
in the following way:
\[
\ClassFib\left(E,F\right)=\bigsqcup_{B\in\ClassMan\left(m\right)}\Bundle_{F}\left(B\right)/\mathrm{Mod}\left(B\right).
\]
Then for each fibering class in $\ClassFib\left(E,F\right)$, say
with a representative fibration $\StandardFibration\colon F\hookrightarrow E\xrightarrow{\StandardProjection_{\StandardFibration}}B$,
recall that there are three transformation groups at play; i.e., diffeomorphisms
(the deformation), automorphisms (the symmetry), and vertical automorphisms
(the gauge), which form a chain of (Lie) subgroups as follows:
\[
\Vau\left(\StandardFibration\right)\trianglelefteq\Aut\left(\StandardFibration\right)\leq\Diff\left(E\right).
\]
Between them, the quotient (resp., coset space) construction yields
the following sequence of groups (resp., pointed spaces):
\[
\Vau\left(\StandardFibration\right)\hookrightarrow\Aut\left(\StandardFibration\right)\xtwoheadrightarrow{}{\AutProjection_{\xi}}\Diff\left(B\right)_{\StandardFibration},\qquad\Aut\left(\StandardFibration\right)\hookrightarrow\Diff\left(E\right)\xtwoheadrightarrow{}{\DiffProjection_{\xi}}\Fib\left(\StandardFibration\right).
\]
These spaces at play will have rich structures as inherited from the
diffeomorphism groups of various pertinent compact manifolds (i.e.,
the ambient $E$, the fiber $F$, and the base $B$); for example,
we have seen that $\Diff\left(B\right)_{\StandardFibration}$ is a
union of path components corresponding to a certain subgroup of the
mapping class group $\pi_{0}\Diff\left(B\right)$, while $\Vau\left(\StandardFibration\right)$
is a certain twisted version of the current group $\MappingSpace^{\infty}\left(B,\Diff\left(F\right)\right)$.
Individually, these are central objects studied in a diverse range
of fields; e.g., the following gives a taste:
\[
\begin{split}\xymatrix@R=1pc{\Vau\left(\StandardFibration\right)\ar@{^{(}->}[d]\ar@{..}[r] & \text{infinite-dimensional gauge theory}\\
\Aut\left(\StandardFibration\right)\ar@{->>}[d]\ar@{..}[r] & \text{infinite-dimensional Lie group extensions}\\
\Diff\left(B\right)_{\StandardFibration}\ar@{..}[r] & \text{mapping class group action on certain cohomology/homotopy}
}
\end{split}
\]
and
\[
\begin{split}\xymatrix@R=1pc{\Aut\left(\StandardFibration\right)\ar@{^{(}->}[d]\ar@{..}[r] & \text{symmetries of a fibering, embedded as a Lie subgroup}\\
\Diff\left(E\right)\ar@{->>}[d]\ar@{..}[r] & \text{generalized ``Smale conjectures''; intrinsic flows (e.g., Ricci)}\\
\Fib\left(\StandardFibration\right)\ar@{..}[r] & \text{geometric analysis of extrinsic flows}.
}
\end{split}
\]
Lastly, an essential feature of both these sequences, as we shall
see, is that they are in fact themselves (infinite-dimensional) \emph{principal fibrations}
in various categories: topological, smooth, and even Riemannian (in
suitable sense), so that the topological and geometric structures
are deeply interconnected among these spaces. With this in mind, we
are ready to embark on our study.

\section{Cohomology via Čech Theory\label{sec:Cohomology-via-Cech-theory}}

\subsection{Prelude: nonabelian Čech cohomology\label{subsec:Nonabelian-Cech-cohomology}}

In this section, we review the basics of Čech cohomology theory with
values in a group $\mathcal{K}$, with an eye towards the case where
$\mathcal{K}$ is typically infinite-dimensional and nonabelian. This
will be used to facilitate various local studies in this work, where
$\mathcal{K}$ will be typically taken as the fiber diffeomorphism
group of a fiber bundle.

Let $\mathcal{U}\coloneqq\left(U_{i}\right)_{i\in I}$ be an open
cover of $B$,\footnote{We use the traditional notation $\mathcal{U}$ for open cover, even
though it clashes with our notational convention that calligraphic/script
letters generally indicate certain infinite dimensionality in nature.
(This will be the only exception in this work.)} whose index set $I$ is assumed
to be finite (by virtue of the compactness assumption of $B$) and
will be often suppressed from the notation henceforth. Then just as
in the familiar abelian case, the $q$-cochain group for each $q\in\mathbb{N}_{0}$
is defined as the following product of current groups:
\begin{equation}
\check{C}^{q}\left(\mathcal{U},\mathcal{K}\right)\coloneqq\prod_{i_{0},\dots,i_{q}\in I}\MappingSpace\left(U_{i_{0}}\cap\dots\cap U_{i_{q}},\mathcal{K}\right),\label{eq:cochain-groups}
\end{equation}
where the cases $q=0,1$ are spelled out as follows because they will
be of our primary interest in the nonabelian contexts:
\[
\check{C}^{0}\left(\mathcal{U},\mathcal{K}\right)\coloneqq\prod_{i\in I}\MappingSpace\left(U_{i},\mathcal{K}\right)\qquad\text{and}\qquad\check{C}^{1}\left(\mathcal{U},\mathcal{K}\right)\coloneqq\prod_{i,j\in I}\MappingSpace\left(U_{i}\cap U_{j},\mathcal{K}\right).
\]
Here, recall that the group structure on $\check{C}^{q}\left(\mathcal{U},\mathcal{K}\right)$
is induced from the coefficient group $\mathcal{K}$ pointwise at
each point $x\in B$ of the base. To ease the notations we shall suppress
the base variable $x$, which is understood to range over a well-defined
common domain that is clear from the context; for example, the usual
0-cocycle condition ``$\kappa_{i}\left(x\right)=\kappa_{j}\left(x\right)$
for all $x\in U_{i}\cap U_{j}$'' will be written as ``$\kappa_{i}=\kappa_{j}$''
for short. Since this 0-cocycle condition just says that local sections
agree on overlapped domains, we see that the $0$-cocycles form a
group that is naturally identified with the familiar \emph{current group}:
\begin{equation}
\check{Z}^{0}\left(\mathcal{U},\mathcal{K}\right)\coloneqq\left\{ \left(\kappa_{i}\right)_{i}\in\prod_{i\in I}\MappingSpace\left(U_{i},\mathcal{K}\right)\mid\kappa_{i}=\kappa_{j}\right\} \cong\MappingSpace\left(B,\mathcal{K}\right).\label{eq:zero-cocycle}
\end{equation}
However, for higher degrees, the abelian case no longer carries over
to the nonabelian case nicely. For instance, the naive coboundary
operator sending each 0-cochain $\left(\kappa_{i}\right)_{i}$ to
the 1-cochain $\left(\kappa_{i}^{-1}\kappa_{j}\right)_{i,j}$ is generally
no longer a group homomorphism. Similarly, the $1$-cocycles generally
no longer form a group but only a pointed set (which we call the \emph{cocycle set}):
\begin{equation}
\check{Z}^{1}\left(\mathcal{U},\mathcal{K}\right)\coloneqq\left\{ \left(g_{ij}\right)_{i,j}\in\prod_{i,j\in I}\MappingSpace\left(U_{i}\cap U_{j},\mathcal{K}\right)\mid g_{ij}g_{jk}=g_{ik}\right\} .\label{eq:one-cocycle-set-prelim}
\end{equation}
Though, at least for degree one, there is still a way to define the
notion of cohomologous in the general nonabelian case. Namely, instead
of quotienting out the coboundary-image of $\check{C}^{0}\left(\mathcal{U},\mathcal{K}\right)$
from $\check{Z}^{1}\left(\mathcal{U},\mathcal{K}\right)$ as in the
abelian case, we declare the desired cohomologous relation to be given
by the following natural action of $\check{C}^{0}\left(\mathcal{U},\mathcal{K}\right)$
on $\check{Z}^{1}\left(\mathcal{U},\mathcal{K}\right)$ (which we
call the \emph{coboundary action}):
\begin{equation}
\check{Z}^{1}\left(\mathcal{U},\mathcal{K}\right)\times\check{C}^{0}\left(\mathcal{U},\mathcal{K}\right)\to\check{Z}^{1}\left(\mathcal{U},\mathcal{K}\right),\qquad\left(\left(g_{ij}\right)_{i,j\in I},\left(\kappa_{i}\right)_{i\in I}\right)\mapsto\left(\kappa_{i}^{-1}g_{ij}\kappa_{j}\right)_{i,j\in I}.\label{eq:coboundary-action-prelim}
\end{equation}
Then for this coboundary action we can take the orbit set, for which
we can further take the direct limit over all open covers $\mathcal{U}$
of $B$ (with respect to the natural refinement homomorphisms); this
yields the \emph{(first) Čech cohomology set} (of $B$ with values
in $\mathcal{K}$):
\begin{equation}
\check{H}^{1}\left(B,\mathcal{K}\right)\coloneqq\varinjlim_{\mathcal{U}}{\check{H}^{1}\left(\mathcal{U},\mathcal{K}\right)}\qquad\text{with}\qquad\check{H}^{1}\left(\mathcal{U},\mathcal{K}\right)\coloneqq{\check{Z}^{1}\left(\mathcal{U},\mathcal{K}\right)}/{\check{C}^{0}\left(\mathcal{U},\mathcal{K}\right)}.\label{eq:cohomology-set-prelim}
\end{equation}
Complementarily,  for the above coboundary action we can also take
the stabilizer subgroup of each 1-cocycle $\tau$; this yields the
\emph{(first) Čech cocycle-stabilizer group of $\tau$} (over $\mathcal{U}$
with values in $\mathcal{K}$):
\begin{equation}
\check{C}^{0}\left(\mathcal{U},\mathcal{K}\right)_{\tau}\coloneqq\left\{ \left(\kappa_{i}\right)_{i}\in\prod_{i\in I}\MappingSpace\left(U_{i},\mathcal{K}\right)\mid\ensuremath{\kappa_{i}^{-1}\tau_{ij}\kappa_{j}=\tau_{ij}}\right\} ,\qquad\forall\tau\in\check{Z}^{1}\left(\mathcal{U},\mathcal{K}\right).\label{eq:cocycle-stablizer-prelim}
\end{equation}
If $\mathcal{K}$ is abelian, or more generally if $\tau$ centralizes
$\mathcal{K}$, then the stabilizing condition $\kappa_{i}^{-1}\tau_{ij}\kappa_{j}=\tau_{ij}$
is equivalent to the $0$-cocycle condition $\kappa_{i}=\kappa_{j}$,
so that the cocycle-stabilizer recovers the 0-cocycle group (i.e.,
the current group) $\MappingSpace\left(B,\mathcal{K}\right)$ in \prettyref{eq:zero-cocycle}.
However, this is far from being the case in general, especially for
our interested case where $\mathcal{K}$ is a diffeomorphism group
which is never centralized by any nontrivial 1-cocycle.%
\begin{rem}
\label{rem:smooth-cech-cohomology-theory}The preceding preliminary
exposition of nonabelian Čech theory, which was set up in the topological
category, can be easily adapted to the smooth category. More precisely,
given any Lie group $\mathcal{K}$ (possibly infinite-dimensional;
e.g., $\mathcal{K}=\Diff\left(F\right)$), the smooth counterpart
of the above exposition can be obtained by simply changing the building
blocks \prettyref{eq:cochain-groups} from topological cochain groups
to the following smooth cochain groups:
\[
\check{C}_{\mathrm{smooth}}^{q}\left(\mathcal{U},\mathcal{K}\right)\coloneqq\prod_{i_{0},\dots,i_{q}\in I}\MappingSpace^{\infty}\left(U_{i_{0}}\cap\dots\cap U_{i_{q}},\mathcal{K}\right).
\]
Here, the factors $\MappingSpace^{\infty}\left(U,\mathcal{K}\right)$
on the right-hand side are the smooth current groups as already seen
previously; i.e., the smooth mapping spaces equipped with the $C^{\infty}$-topology
(\prettyref{def:compact-open-C-inf-topology}) and the pointwise group
structure induced from $\mathcal{K}$. Then everything in the above
exposition carries over, mutatis mutandis, to the smooth category.
In particular, one can proceed verbatim to define the smooth counterparts
of the cocycle set \prettyref{eq:one-cocycle-set-prelim}, the coboundary
action \prettyref{eq:coboundary-action-prelim}, and hence the cohomology
set \prettyref{eq:cohomology-set-prelim} and the cocycle stabilizer
\prettyref{eq:cocycle-stablizer-prelim}.
\end{rem}

In what follows, we shall see that when specializing to the case $\mathcal{K}=\Diff\left(F\right)$,
the cohomology set $\check{H}^{1}\left(B,\mathcal{K}\right)$ and
the cocycle-stabilizer groups $\check{C}^{0}\left(\mathcal{U},\mathcal{K}\right)_{\tau}$'s
will account for the classification and the gauge analysis of $F$-fiber
bundles, respectively.

\subsection{Classification via cohomology: Čech cohomology}

Let us first consider the classification problem in the framework
of Čech cohomology theory. Our arguments below will work equally well
for either tangentially-smooth ($C^{0,\infty}$) or smooth ($C^{\infty}$)
fiber bundles, as long as we consider either topological or smooth
Čech cohomology accordingly (see \prettyref{rem:smooth-cech-cohomology-theory});
roughly speaking, the difference in the definitions between these
two categories resides in whether the topological or smooth cochain
groups are being used as building blocks:
\[
\check{C}^{q}\left(\mathcal{U},\mathcal{K}\right)\coloneqq\prod_{i_{0},\dots,i_{q}\in I}\MappingSpace\left(U_{i_{0},\dots,i_{q}},\mathcal{K}\right)\quad\text{or}\quad\check{C}_{\mathrm{smooth}}^{q}\left(\mathcal{U},\mathcal{K}\right)\coloneqq\prod_{i_{0},\dots,i_{q}\in I}\MappingSpace^{\infty}\left(U_{i_{0},\dots,i_{q}},\mathcal{K}\right).
\]
Thus for the purpose of exposition, let us restrict attention to
tangentially-smooth ($C^{0,\infty}$) fiber bundles, with the structure
group $\mathcal{K}=\Diff\left(F\right)$ set to be the full diffeomorphism
group. Let $\StandardFibration$ be such an $F$-fiber bundle over
$B$, and choose an open cover $\mathcal{U}\coloneqq\left(U_{i}\right)_{i}$
with local trivializations $\varphi_{i}\colon\pi^{-1}\left(U_{i}\right)\to U_{i}\times F$.
Then the corresponding transition maps can be described by the following
$\mathcal{K}$-valued data:
\begin{equation}
\tau_{ij}\colon U_{i}\cap U_{j}\to\mathcal{K}=\Diff\left(F\right),\qquad\varphi_{i}\circ\varphi_{j}^{-1}\bigl(x,y\bigr)=\left(x,\tau_{ij}\left(x\right)\left(y\right)\right).\label{eq:transition-cocycle}
\end{equation}
Note that such $\tau\coloneqq\left(\tau_{ij}\right)_{i,j}$ is characterized
by the condition $\tau_{ij}\tau_{jk}=\tau_{ik}$; in other words,
$\tau$ is a Čech $1$-cochain lying in the subspace of 1-cocycles:
\begin{equation}
\check{Z}^{1}\left(\mathcal{U},\Diff\left(F\right)\right)\coloneqq\left\{ \left(\tau_{ij}\right)_{i,j}\in\prod_{i,j}\MappingSpace\left(U_{i}\cap U_{j},\Diff\left(F\right)\right)\mid\tau_{ij}\tau_{jk}=\tau_{ik}\right\} .\label{eq:Cech-1-cocycle}
\end{equation}
Further, any two $F$-fiber bundles $\StandardFibration$ and $\StandardFibration'$
over a common trivialization cover $\mathcal{U}$ are equivalent to
each other if and only if their transition cocycles $\tau$ and $\tau'$
satisfy the relation $\kappa_{i}^{-1}\tau_{ij}\kappa_{j}=\tau'_{ij}$
for all $\kappa_{i}\colon U_{i}\to\Diff\left(F\right)$; in other
words, if and only if $\tau$ and $\tau'$ satisfy the cohomologous
relation given by the following coboundary action as seen in \prettyref{eq:coboundary-action-prelim}:
\begin{equation}
\check{Z}^{1}\left(\mathcal{U},\Diff\left(F\right)\right)\curvearrowleft\check{C}^{0}\left(\mathcal{U},\Diff\left(F\right)\right)\coloneqq\prod_{i}\MappingSpace\left(U_{i},\Diff\left(F\right)\right),\qquad\left(\tau,\kappa\right)\mapsto\left(\kappa_{i}^{-1}\tau_{ij}\kappa_{j}\right)_{i,j}.\label{eq:cochain-action-on-cocycle}
\end{equation}
Although a priori this depends on the choice of local trivialization,
changing charts amounts to conjugating $\tau$ by a cochain (which
gets identified under the cochain action), and refining $\mathcal{U}$
amounts to restricting $\tau$ (hence gets identified in the direct
limit). The orbit set of this action \prettyref{eq:cochain-action-on-cocycle}
has already made an appearance in the prelude subsection above, whose
definition is reprised as follows:
\begin{defn}
Fix a coefficient group $\mathcal{K}=\Diff\left(F\right)$ or any
topological subgroup thereof. Then the \emph{(first) Čech cohomology set}
of $B$, denoted by $\check{H}^{1}\left(B,\mathcal{K}\right)$, is
the pointed set
\[
\check{H}^{1}\left(B,\mathcal{K}\right)\coloneqq\varinjlim_{\mathcal{U}}\left(\check{Z}^{1}\left(\mathcal{U},\mathcal{K}\right)/\sim\right),\qquad\tau\sim\tau'\Leftrightarrow\exists\kappa\in\prod_{i}\MappingSpace\left(U_{i},\mathcal{K}\right),\,\kappa_{i}^{-1}\tau_{ij}\kappa_{j}=\tau_{ij}'.
\]
\nomenclature[Hcheck1BK]{$\check{H}^{1}\left(B,\mathcal{K}\right)$}{the (first) Čech cohomology set of $B$ (with coefficient group $\mathcal{K}$)}In
other words, $\check{H}^{1}\left(B,\mathcal{K}\right)$ is the direct
limit of the orbit sets for the natural cochain action \prettyref{eq:cochain-action-on-cocycle},
where the limit is taken over all open covers $\mathcal{U}$ of $B$
with respect to natural refinements.
\end{defn}

With this definition, the above discussion can be interpreted as giving
a map $\left[\StandardFibration\right]\mapsto\left[\tau\right]$ from
the classification set for $F$-fibrations to the $\Diff\left(F\right)$-valued
cohomology set. Conversely, every cohomology class can be represented
by a transition cocycle of some smooth fiber bundle (unique up to
smooth equivalence), as constructed in the following lemma:%

\begin{lem}
\label{lem:construction-of-bundle-from-cocycle}Every cohomology class
in $\check{H}^{1}\left(B,\Diff\left(F\right)\right)$ admits smooth
cocycle representatives $\tau$, each of which can be realized as
the transition cocycle of a smooth $F$-fiber bundle $E_{\tau}$ over
$B$ given by the following smooth manifold (equipped with the natural
quotient-manifold structure and bundle projection):
\begin{equation}
E_{\tau}\coloneqq\left(\bigsqcup_{i}U_{i}\times F\right)/\sim,\qquad\bigl(x,y\bigr)\sim\bigl(x',y'\bigr)\Leftrightarrow U_{i}\ni x=x'\in U_{j},\,y=\tau_{ij}\bigl(x\bigr)\bigl(y'\bigr).\label{eq:construction-of-bundle-from-cocycle}
\end{equation}
Moreover, any other choice of smooth representative $\tau$ results
in a smoothly equivalent fibration $E_{\tau}$.
\end{lem}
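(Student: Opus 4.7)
The plan is to split the statement into three independent claims: (a) each cohomology class in $\check{H}^{1}(B,\Diff(F))$ contains a smooth cocycle representative; (b) any smooth cocycle $\tau$ assembles via \prettyref{eq:construction-of-bundle-from-cocycle} into a genuine smooth $F$-fiber bundle over $B$; and (c) two smooth cocycles representing the same cohomology class yield smoothly equivalent total spaces $E_\tau$.

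For (b) I would first verify that $\sim$ is an equivalence relation. Setting $i=j=k$ in the identity $\tau_{ij}\tau_{jk}=\tau_{ik}$ forces $\tau_{ii}=\Identity$, giving reflexivity; the substitution $i=k$ then forces $\tau_{ji}=\tau_{ij}^{-1}$, giving symmetry; and transitivity is the cocycle identity itself. Next I would endow $E_\tau$ with the quotient topology and show that the composite $U_i\times F\hookrightarrow\bigsqcup_k U_k\times F\to E_\tau$ is an open topological embedding for every index $i$, using that the saturation under $\sim$ of an open $V\subseteq U_i\times F$ decomposes as a union of images of $V\cap((U_i\cap U_j)\times F)$ under the self-diffeomorphisms $(x,y)\mapsto(x,\tau_{ji}(x)(y))$. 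These embeddings double as local trivializations over $\mathcal{U}$, with transition functions matching $\tau$ by construction: the change from chart $j$ to chart $i$ is $(x,y)\mapsto(x,\tau_{ij}(x)(y))$, which is smooth by the assumption on $\tau$ together with the exponential law \prettyref{eq:exponential-law-general-in-appendix}. Hausdorffness and second countability of $E_\tau$ then follow from those of $B$ together with the observation that any two points in the same fiber lie in a common chart.

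For (a) and (c), the core tool is smooth approximation of $\Diff(F)$-valued cochains. The aim is to promote the homotopy equivalence of \prettyref{eq:smooth-approx-of-current-groups} to the level of Čech cohomology, showing that the natural smoothing map $\check{H}^{1}_{\mathrm{smooth}}(B,\Diff(F))\to\check{H}^{1}(B,\Diff(F))$ is a bijection. Item (a) then produces a smooth cocycle in every class; for (c), given two smooth cocycles $\tau,\tau'$ and a merely continuous $0$-cochain $\kappa=(\kappa_i)$ with $\kappa_i^{-1}\tau_{ij}\kappa_j=\tau'_{ij}$, smoothing replaces $\kappa$ by a smooth cochain $\widetilde{\kappa}$ still conjugating $\tau$ to $\tau'$, after which the fiberwise maps $(x,y)\mapsto(x,\widetilde{\kappa}_i(x)(y))$ on each $U_i\times F$ glue unambiguously to the sought smooth bundle isomorphism $E_{\tau'}\to E_\tau$.

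The hard part is precisely this smoothing step. Convolution-style mollification is unavailable because $\Diff(F)$ is neither a vector space nor has a global chart, and naive component-wise smoothing of the $\kappa_i$'s would destroy the identity $\kappa_i^{-1}\tau_{ij}\kappa_j=\tau'_{ij}$. My plan is a local-addition argument: near each $\kappa_i(x)\in\Diff(F)$ one works in the Fréchet chart supplied by the Riemannian exponential of \prettyref{exa:examples-of-local-additions}\ref{enu:example-of-riemannian-local-addition}, smooths the coordinate representatives using a partition of unity on $B$, and checks that the resulting cochain stays in a slice around the original $\kappa$ small enough that the conjugation identity is preserved (with error that can be corrected by a second, smaller smoothing). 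An alternative is to pass to the nonlinear frame bundle \prettyref{eq:nonlinear-frame-bundle} and smooth continuous sections of the resulting $C^{0,\infty}$-principal bundle fiberwise, thereby bypassing the combinatorics of cocycles altogether.
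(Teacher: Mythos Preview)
Your part (b) is fine and more detailed than the paper bothers with; the paper treats the quotient-manifold construction as routine and does not spell out the equivalence relation or the chart-gluing.

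The substantive difference is in how you handle (a) and (c). The paper does not attempt to smooth cocycles or conjugating cochains directly. Instead it reformulates the problem at the level of bundle isomorphism classes: it passes through the correspondence between $F$-fiber bundles and principal $\Diff(F)$-bundles (via the nonlinear frame bundle \prettyref{eq:nonlinear-frame-bundle} and the associated-bundle construction \prettyref{eq:associated-fiber-bundle}), reducing the statement to the bijectivity of the forgetful map $\PBundle^{\mathsf{Diff}}(B;\Diff(F))\to\PBundle^{\mathsf{Top}}(B;\Diff(F))$. For this it simply invokes the M\"uller--Wockel theorem \cite{MR2574141}, valid for any locally-convex Lie group $\mathcal{K}$, which gives both surjectivity (every continuous principal $\Diff(F)$-bundle is equivalent to a smooth one) and injectivity (topologically-equivalent smooth principal bundles are smoothly equivalent). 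Your ``alternative'' via the frame bundle is exactly this route, and is the one the paper takes.

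Your primary approach --- smoothing the $\kappa_i$ in local-addition charts and then ``correcting the error'' in the conjugation identity by a second smoothing --- is where I would flag a gap. Independently mollifying $\kappa_i$ and $\kappa_j$ destroys the identity $\kappa_i^{-1}\tau_{ij}\kappa_j=\tau'_{ij}$ exactly, and the assertion that the resulting defect can be absorbed by a smaller smoothing is not justified: you would need either a Newton-type iteration in the Fr\'echet setting (with the attendant loss-of-derivatives issues) or a genuine implicit-function argument, neither of which is available for free here. The paper's black-boxing of \cite{MR2574141} sidesteps precisely this difficulty; if you want to avoid the citation, the frame-bundle route you mention is the right one, but it still ultimately requires the content of that reference.
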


\begin{proof}
Given a cohomology class $\left[\tau^{0}\right]\in\check{H}^{1}\left(B,\Diff\left(F\right)\right)$
represented by any cocycle $\tau^{0}$, not necessarily smooth. Then
the same construction in \prettyref{eq:construction-of-bundle-from-cocycle}
yields a topological manifold $E_{\tau^{0}}$ with a canonical tangentially-smooth
fibration over $B$. I claim that this is topologically equivalent
to some smooth fibration $\widetilde{E_{\tau^{0}}}$, unique up to
smooth equivalence. Once we justify this claim, the transition cocycle
$\tau$ of $\widetilde{E_{\tau^{0}}}$ would yield a desired smooth
representative of $\left[\tau^{0}\right]$, with the uniqueness property
that the smooth fibration $E_{\tau'}$ constructed from any other
choice of smooth representative $\tau'$ would lie in the same smooth
equivalence class of $\widetilde{E_{\tau^{0}}}\cong E_{\tau}$, as
desired. Now to justify the above claim about the existence and uniqueness
of $\widetilde{E_{\tau^{0}}}$, we just need to establish a suitable
smoothing lemma for classification of fiber bundles. For easiness
of distinction in the following result, let us denote by $\Bundle_{F}^{\infty}\left(B\right)$
(resp., $\Bundle_{F}^{0,\infty}\left(B\right)$) the set of isomorphism
classes of smooth (resp., tangentially smooth) $F$-fiber bundles
over $B$. Then I claim the following:
\begin{quote}
\emph{The natural (pointed) map $\Bundle_{F}^{\infty}\left(B\right)\to\Bundle_{F}^{0,\infty}\left(B\right)$
induced by the forgetful functor is a bijective correspondence.}
\end{quote}
To prove this, we draw on the corresponding smoothing result from
the classification theory of principal bundles. Thus let $\PBundle^{\mathsf{Diff}}\left(B;\mathcal{K}\right)$
(respectively, $\PBundle^{\mathsf{Top}}\left(B;\mathcal{K}\right)$)
denote the set of equivalence classes of smooth (resp., topological)
principal $\mathcal{K}$-bundles over $B$, with the structure group
$\mathcal{K}=\Diff\left(F\right)$ set to be the full diffeomorphism
group. Then the goal of our proof is summarized in the following diagram:
\[
\begin{split}\xymatrix@C=4pc{\Bundle_{F}^{\infty}\left(B\right)\ar@{-->}[r]^{\cong} & \Bundle_{F}^{0,\infty}\left(B\right)\\
\PBundle^{\mathsf{Diff}}\left(B;\Diff\left(F\right)\right)\ar[r]_{\cong}\ar[u]^{\cong} & \PBundle^{\mathsf{Top}}\left(B;\Diff\left(F\right)\right)\ar[u]_{\cong}
}
\end{split}
.
\]
Here, each of the two vertical arrows is given by the construction
of associated fiber bundles as in \prettyref{eq:associated-fiber-bundle},
which is straightforwardly seen to induce a bijection on isomorphism
classes with inverse given by the construction of nonlinear frame
bundles as in \prettyref{eq:nonlinear-frame-bundle}. Alternatively,
one may also show this by representing both principal $\Diff\left(F\right)$-bundle
classes and $F$-fiber bundle classes by the same cocycle data up
to the coboundary action; more precisely, both sets on the left (respectively,
right) are identified with the $\Diff\left(F\right)$-valued first
Čech cohomology set in the smooth (respectively, topological) category.
The takeaway is, it suffices to show that the forgetful map on the
bottom row is indeed bijective on isomorphism classes for principal
bundles. More specifically,
\begin{itemize}
\item the surjectivity means that every topological principal $\Diff\left(F\right)$-bundle
over $B$ is equivalent to a smooth principal $\Diff\left(F\right)$-bundle;
and
\item the injectivity means that every pair of topologically-equivalent
smooth principal $\Diff\left(F\right)$-bundles over $B$ must be
smoothly equivalent too.
\end{itemize}
But both claims were already known to be true for $\mathcal{K}$-bundles
with $\mathcal{K}$ being any (possibly infinite-dimensional) Lie
group $\mathcal{K}$ modeled on a locally convex space — see e.g.,
\cite{MR2574141} — which can be applied to our case since $\Diff\left(F\right)$
admits a canonical Fréchet Lie group structure, whose underlying topology
indeed coincides with the $C^{\infty}$-topology. This completes the
proof that the forgetful map induces a bijection between smooth and
topological principal $\Diff\left(F\right)$-bundle classes, and hence
a bijection between smooth and tangentially-smooth $F$-fibration
classes. This completes the proof of the desired smooth approximation
for classification of fiber bundles, and hence the proof of \prettyref{lem:construction-of-bundle-from-cocycle}
as desired.
\end{proof}
By the preceding lemma, we in particular see that every $\Diff\left(F\right)$-valued
cohomology class can be assigned a well-defined diffeomorphism type,
hence the following definition is justified:
\begin{defn}
\label{def:diffeomorphism-type-of-cohomology}Associated with each
cohomology class in $\check{H}^{1}\left(B,\Diff\left(F\right)\right)$
is its \emph{total diffeomorphism type}, which is defined as the
unique diffeomorphism type of the smooth manifold $E_{\tau}$ as constructed
in \prettyref{lem:construction-of-bundle-from-cocycle} from any smooth
representative $\tau$. Given any smooth manifold $E$, denote the
subset 
\[
\check{H}^{1}\left(B,\Diff\left(F\right)\right)_{E}\coloneqq\left\{ \alpha\in\check{H}^{1}\left(B,\Diff\left(F\right)\right)\mid\text{\ensuremath{\alpha} has the total diff. type of \ensuremath{E}}\right\} .
\]
This subset of $\check{H}^{1}\left(B,\Diff\left(F\right)\right)$
will be called the ``\emph{cohomology subset of total type $E$}''
for short.
\end{defn}

We are now in a position to state and prove the following cohomology-theoretic
classification of smooth fiberings on a given smooth total space $E$.
(Below and throughout, recall that $\ClassMan\left(m\right)$ denotes
the set of all diffeomorphism types of closed smooth $m$-manifolds.)
\begin{prop}
\label{prop:classification-theorem-via-cohomology}Let $m\coloneqq\dim E-\dim F$.
The smooth $F$-fiberings on $E$ are classified — for each $m$-dimensional
diffeomorphism type for the base space $B$ — by the cohomology subset
of $\check{H}^{1}\left(B,\Diff\left(F\right)\right)$ of total type
$E$ (\prettyref{def:diffeomorphism-type-of-cohomology}) modulo pullback
by diffeomorphisms:
\begin{equation}
\ClassFib\left(E,F\right)\leftrightarrow\bigsqcup_{B\in\ClassMan\left(m\right)}{\check{H}^{1}\left(B,\Diff\left(F\right)\right)_{E}/\mathrm{Mod}\left(B\right)}.\label{eq:classification-theorem-via-cohomology}
\end{equation}
More precisely, for each class on the right-hand side, any choice
of smooth cocycle representative $\tau$ is assembled into a smooth
bundle $E_{\tau}$ as in \prettyref{eq:construction-of-bundle-from-cocycle},
which induces a desired smooth fibration on $E$ via a diffeomorphism
$E\approx E_{\tau}$ as guaranteed by assumption.
\end{prop}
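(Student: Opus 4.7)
The plan is to establish the claimed bijection by peeling off the three structural layers on the right-hand side in sequence: first the outer disjoint union indexed by base diffeomorphism types, then the inner cohomological classification of bundles over a fixed base, and finally the quotient by the mapping class group that passes from bundles to fiberings.

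First, I would decompose $\ClassFib\left(E,F\right)$ according to the diffeomorphism type of the base. Given any $F$-fibering on $E$, any two of its bundle representations $\StandardProjection_{1}\colon E\to B_{1}$ and $\StandardProjection_{2}\colon E\to B_{2}$ are related by a diffeomorphism $\beta\colon B_{1}\to B_{2}$, as encoded in the fibering-equivalence diagram \eqref{eq:the-classification-diagram}; hence the diffeomorphism type $\left[B\right]\in\ClassMan\left(m\right)$ of the base is a well-defined invariant of a fibering, yielding the outer disjoint union.

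Second, fix such a base representative $B$, and let $\Bundle_{F}\left(B\right)_{E}\subseteq\Bundle_{F}\left(B\right)$ be the subset of bundle isomorphism classes whose total space is diffeomorphic to $E$. The transition-cocycle construction \eqref{eq:transition-cocycle} together with the coboundary action \eqref{eq:cochain-action-on-cocycle} gives a map $\Bundle_{F}\left(B\right)\to\check{H}^{1}\left(B,\Diff\left(F\right)\right)$, which by \prettyref{lem:construction-of-bundle-from-cocycle} is a bijection with inverse obtained by assembling smooth cocycles into smooth bundles. Under this bijection, $\Bundle_{F}\left(B\right)_{E}$ corresponds by construction to the cohomology subset $\check{H}^{1}\left(B,\Diff\left(F\right)\right)_{E}$ of \prettyref{def:diffeomorphism-type-of-cohomology}, reducing the classification of $F$-bundles over $B$ with total space $E$ to its cohomological avatar.

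Third, I would pass from bundle classes to fibering classes via base reparametrization. Two bundles $\StandardFibration,\StandardFibration'$ over $B$ (both with total space $E$) represent the same fibering on $E$ if and only if $\StandardFibration'\cong\beta^{\ast}\StandardFibration$ for some $\beta\in\Diff\left(B\right)$, which by \prettyref{prop:The-base-transformation-group-as-stabilizer} is exactly the pullback action of $\Diff\left(B\right)$ on $\Bundle_{F}\left(B\right)$. By \prettyref{cor:Orbit-stabilizer-relation-for-pullback} this action factors through $\mathrm{Mod}\left(B\right)=\pi_{0}\Diff\left(B\right)$, and since pullback preserves the total diffeomorphism type, it restricts to an action on $\Bundle_{F}\left(B\right)_{E}$, whose orbit set is therefore $\check{H}^{1}\left(B,\Diff\left(F\right)\right)_{E}/\mathrm{Mod}\left(B\right)$; combining the three layers gives the claimed bijection.

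The only real obstacle is bookkeeping of well-definedness: one must check that the $\mathrm{Mod}\left(B\right)$-action on $\check{H}^{1}\left(B,\Diff\left(F\right)\right)$ indeed preserves the subset of total type $E$, and that the assignment $\left[\StandardFibration\right]\mapsto\bigl(\left[B\right],\left[\tau\right]\bmod\mathrm{Mod}\left(B\right)\bigr)$ is independent of the choice of bundle representation, trivializing cover, and smooth cocycle representative. Each reduces to a routine diagram chase using the pullback square \eqref{eq:pullback-diagram}, the refinement maps in the Čech direct limit, and the uniqueness part of \prettyref{lem:construction-of-bundle-from-cocycle}. The inverse assignment takes each $\bigl(\left[B\right],\left[\tau\right]\bigr)$ to the fibering on $E$ induced by transporting the assembled bundle $E_{\tau}$ along any chosen diffeomorphism $E\approx E_{\tau}$, where competing choices differ by a base reparametrization and hence land in the same $\mathrm{Mod}\left(B\right)$-orbit.
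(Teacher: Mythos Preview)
Your proposal is correct and follows essentially the same approach as the paper: both establish the bijection $\Bundle_{F}\left(B\right)\cong\check{H}^{1}\left(B,\Diff\left(F\right)\right)$ via transition cocycles and \prettyref{lem:construction-of-bundle-from-cocycle}, observe its $\mathrm{Mod}\left(B\right)$-equivariance to pass to the quotient, take the disjoint union over $B\in\ClassMan\left(m\right)$, and restrict to the total-type-$E$ subset. The only cosmetic difference is that the paper first builds the full bijection over all total spaces and then restricts to $E$ at the end, whereas you restrict to $E$ earlier, per base $B$; the content is the same.
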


\begin{proof}
Let $\Bundle_{F}\left(B\right)$ denote the set of equivalence classes
of smooth $F$-fiber bundles over $B$. Then by the preceding discussion,
the construction of taking transition cocycles for fiber bundles induces
the following isomorphism of pointed sets, with its inverse given
in \prettyref{lem:construction-of-bundle-from-cocycle}:
\[
\Bundle_{F}\left(B\right)\cong\check{H}^{1}\left(B,\Diff\left(F\right)\right),\qquad\left[E_{\tau}\right]\leftrightarrow\left[\tau\right].
\]
Besides $\Bundle_{F}\left(B\right)$, the cohomology set $\check{H}^{1}\left(B,\Diff\left(F\right)\right)$
is also equipped with a natural pullback action by the base diffeomorphism
group $\Diff\left(B\right)$ or, better still, by the base mapping
class group $\mathrm{Mod}\left(B\right)$. Since by construction the
above correspondence is $\mathrm{Mod}\left(B\right)$-equivariant,
we thereby obtain a descended correspondence
\[
\Bundle_{F}\left(B\right)/\mathrm{Mod}\left(B\right)\cong\check{H}^{1}\left(B,\Diff\left(F\right)\right)/\mathrm{Mod}\left(B\right).
\]
Letting $B$ range over all $m$-dimensional diffeomorphism types,
we then have the correspondence
\[
\bigsqcup_{B\in\ClassMan\left(m\right)}\Bundle_{F}\left(B\right)/\mathrm{Mod}\left(B\right)\cong\bigsqcup_{B\in\ClassMan\left(m\right)}\check{H}^{1}\left(B,\Diff\left(F\right)\right)/\mathrm{Mod}\left(B\right).
\]
Observe that the left-hand side of this last correspondence can be
viewed as the set of equivalence classes of $m$-codimensional smooth
$F$-fiberings on all possible diffeomorphism types of total spaces,
with our desired fibering classification $\ClassFib\left(E,F\right)$
embedded as the subset corresponding to the particular diffeomorphism
type of $E$. On the right-hand side, this amounts to restricting
$\check{H}^{1}\left(B,\Diff\left(F\right)\right)$ to a certain cohomology
subset accordingly for $E$ — but this subset has already been identified
in \prettyref{def:diffeomorphism-type-of-cohomology}; namely, it
is exactly the set $\check{H}^{1}\left(B,\Diff\left(F\right)\right)_{E}$
of those cohomology classes with total diffeomorphism type of $E$.
Therefore, we obtain the correspondence \prettyref{eq:classification-theorem-via-cohomology}
as desired.
\end{proof}

Once we complete the classification of all smooth $F$-fiberings on
$E$, we shall then focus on each fibering class as represented by
a model fibration $\StandardFibration\colon F\hookrightarrow E\to B$.
We have seen in \prettyref{sec:Smooth-Fibrations} that the fibering
classification interacts with a certain part of symmetries of $\StandardFibration$;
namely, the base transformation group $\Diff\left(B\right)_{\StandardFibration}$
(\prettyref{def:The-base-transformation-group}) or, better still,
its component group $\mathrm{Mod}\left(B\right)_{\StandardFibration}\coloneqq\pi_{0}\Diff\left(B\right)_{\StandardFibration}$.
This can thus be described explicitly in terms of cohomology of transition
cocycles by virtue of the preceding proposition:
\begin{cor}
For any model fibration $\StandardFibration\colon F\hookrightarrow E\to B$,
the base transformation subgroup $\mathrm{Mod}\left(B\right)_{\StandardFibration}\leq\mathrm{Mod}\left(B\right)$
(resp., $\Diff\left(B\right)_{\StandardFibration}\leq\Diff\left(B\right)$)
of the base mapping class group (resp., of the base diffeomorphism
group) coincides with the stabilizer of the cohomology class of any
transition cocycle $\tau_{\StandardFibration}$ of $\StandardFibration$:
\[
\mathrm{Mod}\left(B\right)_{\StandardFibration}=\left\{ \left[\beta\right]\in\mathrm{Mod}\left(B\right)\mid\left[\beta^{\ast}\tau_{\StandardFibration}\right]=\left[\tau_{\StandardFibration}\right]\in\check{H}^{1}\left(B,\Diff\left(F\right)\right)\right\} ,
\]
under the natural pullback action on cohomology.
\end{cor}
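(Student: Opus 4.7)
The plan is to assemble the claim from two results already in the excerpt: the characterization of $\Diff(B)_\StandardFibration$ as a pullback stabilizer on isomorphism classes of bundles (\prettyref{prop:The-base-transformation-group-as-stabilizer}), and the cohomology-theoretic classification of such bundles (via the bijection $\Bundle_F(B) \cong \check{H}^{1}(B, \Diff(F))$ implicit in the proof of \prettyref{prop:classification-theorem-via-cohomology}). The corollary is then just the transport of a stabilizer across an equivariant bijection, followed by a descent to components.

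First I would unwind the definitions. By \prettyref{prop:The-base-transformation-group-as-stabilizer}, a base diffeomorphism $\beta \in \Diff(B)$ lies in $\Diff(B)_\StandardFibration$ if and only if $\beta^{\ast}\StandardFibration \cong \StandardFibration$ as smooth $F$-fiber bundles over $B$; that is, $[\beta^{\ast}\StandardFibration] = [\StandardFibration]$ in $\Bundle_F(B)$. The next step is to identify the pullback action of $\Diff(B)$ on $\Bundle_F(B)$ with the pullback action on cohomology. Given a transition cocycle $\tau_\StandardFibration = (\tau_{ij})_{i,j}$ of $\StandardFibration$ with respect to a trivializing cover $\mathcal{U} = (U_i)_i$, the pullback bundle $\beta^{\ast}\StandardFibration$ is trivialized by $\beta^{-1}(\mathcal{U}) \coloneqq (\beta^{-1}(U_i))_i$ with transition maps $\tau_{ij} \circ \beta$; hence at the level of cocycles, pullback is just precomposition with $\beta$, which is evidently a well-defined operation on $\check{Z}^1$ that respects the coboundary action and the direct limit over refinements. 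Thus we obtain a $\Diff(B)$-equivariant bijection
\[
\Bundle_F(B) \;\cong\; \check{H}^{1}(B, \Diff(F)), \qquad [\StandardFibration] \leftrightarrow [\tau_\StandardFibration].
\]

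Under this equivariant bijection, stabilizers match: $\beta \in \Diff(B)_\StandardFibration$ if and only if $[\beta^{\ast}\tau_\StandardFibration] = [\tau_\StandardFibration]$ in $\check{H}^{1}(B, \Diff(F))$. This gives the $\Diff(B)$-version of the corollary, and independence from the choice of representative $\tau_\StandardFibration$ is automatic since any two such representatives differ by a coboundary (and a refinement), which becomes trivial in the direct limit defining $\check{H}^1$. To obtain the $\mathrm{Mod}(B)$-version, I would invoke \prettyref{cor:Orbit-stabilizer-relation-for-pullback}, which asserts that $\Diff(B)_\StandardFibration$ is an open subgroup of $\Diff(B)$ and hence equal to the union of the components indexed by $\mathrm{Mod}(B)_\StandardFibration \coloneqq \pi_0 \Diff(B)_\StandardFibration$; taking $\pi_0$ of both sides of the equality yields the asserted description.

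The main obstacle, if any, is purely bookkeeping: verifying that precomposition with $\beta$ on cocycles (a) lands in $\check{Z}^1$, (b) is compatible with the coboundary action so that it descends to $\check{H}^1$, (c) is compatible with refinement of covers so it survives the direct limit, and (d) corresponds under the cocycle-to-bundle construction of \prettyref{lem:construction-of-bundle-from-cocycle} to the pullback of bundles. Each of these is routine — essentially diagram-chasing from the defining pullback square in \prettyref{eq:pullback-diagram} — but they are the only substantive items to check, and together they amount to the equivariance of the classifying bijection that the rest of the argument needs.
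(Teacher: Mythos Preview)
Your proposal is correct and follows essentially the same route as the paper's proof, which simply says the corollary ``immediately follows from the preceding cohomology-theoretic description for classification (\prettyref{prop:classification-theorem-via-cohomology}) combined with the stabilizer characterization for base transformations (\prettyref{prop:The-base-transformation-group-as-stabilizer}, and/or \prettyref{cor:Orbit-stabilizer-relation-for-pullback}).'' Your version is a bit more careful in spelling out the equivariance of the bijection $\Bundle_F(B)\cong\check{H}^1(B,\Diff(F))$ and the passage from $\Diff(B)$ to $\mathrm{Mod}(B)$, but the underlying argument is the same.
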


\begin{proof}
This immediately follows from the preceding cohomology-theoretic description
for classification (\prettyref{prop:classification-theorem-via-cohomology})
combined with the stabilizer characterization for base transformations
(\prettyref{prop:The-base-transformation-group-as-stabilizer}, and/or
\prettyref{cor:Orbit-stabilizer-relation-for-pullback}).
\end{proof}

\subsection{Gauge analysis via cohomology: Čech cocycle-stabilizer}

Let us next consider the gauge analysis in the framework of Čech cohomology
theory. Our arguments below will work equally well for either tangentially-smooth
($C^{0,\infty}$) or smooth ($C^{\infty}$) fiber bundles, as long
as we consider either topological or smooth Čech cohomology accordingly;
roughly speaking, the difference in the definitions between these
two categories resides in whether the topological or smooth Čech cochain
groups are used as building blocks:
\[
\check{C}^{q}\left(\mathcal{U},\mathcal{K}\right)\coloneqq\prod_{i_{0},\dots,i_{q}\in I}\MappingSpace\left(U_{i_{0}}\cap\dots\cap U_{i_{q}},\mathcal{K}\right)\quad\text{or}\quad\check{C}^{q}\left(\mathcal{U},\mathcal{K}\right)^{\infty}\coloneqq\prod_{i_{0},\dots,i_{q}\in I}\MappingSpace^{\infty}\left(U_{i_{0}}\cap\dots\cap U_{i_{q}},\mathcal{K}\right).
\]
Thus for the purpose of exposition, let us restrict attention to tangentially-smooth
($C^{0,\infty}$) fiber bundles, with the structure group $\mathcal{K}=\Diff\left(F\right)$
set to be the full diffeomorphism group. Let $\StandardFibration$
be such an $F$-fiber bundle over $B$, and choose an open cover $\mathcal{U}\coloneqq\left(U_{i}\right)_{i}$
with local trivializations $\varphi_{i}\colon\pi^{-1}\left(U_{i}\right)\to U_{i}\times F$.
Recall that this determines a Čech (1-)cocycle $\tau\in\check{Z}^{1}\left(\mathcal{U},\Diff\left(F\right)\right)$
given by the transition maps $\tau_{ij}\colon U_{i}\cap U_{j}\to\Diff\left(F\right)$
(as in \prettyref{eq:transition-cocycle}). Then for each vertical
automorphism $k\colon E\to E$, the corresponding local representations
can be described by the following $\mathcal{K}$-valued data:
\begin{equation}
\kappa_{i}\colon U_{i}\to\mathcal{K}=\Diff\left(F\right),\qquad\varphi_{i}\circ k\circ\varphi_{i}^{-1}\bigl(x,y\bigr)=\left(x,\kappa_{i}\left(x\right)\left(y\right)\right).\label{eq:local-rep-of-gauge-transformations-new}
\end{equation}
Note that such $\kappa\coloneqq\left(\kappa_{i}\right)_{i}$ is characterized
by the condition $\tau_{ij}\kappa_{j}{\tau_{ij}}^{-1}=\kappa_{i}$,
or equivalently $\kappa_{i}^{-1}\tau_{ij}\kappa_{j}=\tau_{ij}$; in
other words, $\kappa$ is a Čech 0-cochain lying in the stabilizer
subgroup at $\tau$ for the following coboundary action as seen in
\prettyref{eq:coboundary-action-prelim}:
\begin{equation}
\check{Z}^{1}\left(\mathcal{U},\Diff\left(F\right)\right)\curvearrowleft\check{C}^{0}\left(\mathcal{U},\Diff\left(F\right)\right)\coloneqq\prod_{i}\MappingSpace\left(U_{i},\Diff\left(F\right)\right),\qquad\left(\tau,\kappa\right)\mapsto\left(\kappa_{i}^{-1}\tau_{ij}\kappa_{j}\right)_{i,j}.\label{eq:cochain-action-new}
\end{equation}
Although a priori this depends on the choice of local trivialization,
changing charts amounts to conjugating $\kappa$ by a cocycle,%
and refining $\mathcal{U}$ amounts to restricting $\kappa$ which
can always be recovered by gluing. The stabilizer subgroup of this
action \prettyref{eq:cochain-action-new} has already made an appearance
in the prelude subsection above, whose definition is reprised as follows:
\begin{defn}
Fix a coefficient group $\mathcal{K}=\Diff\left(F\right)$ or any
topological subgroup thereof. Then for any cocycle $\tau\in\check{Z}^{1}\left(\mathcal{U},\mathcal{K}\right)$,
the \emph{(first) Čech cocycle-stabilizer} of $\tau$, denoted by
$\check{C}^{0}\left(\mathcal{U},\mathcal{K}\right)_{\tau}$, is the
topological group
\[
\check{C}^{0}\left(\mathcal{U},\mathcal{K}\right)_{\tau}\coloneqq\Bigl\{\kappa\in\prod_{i}\MappingSpace\left(U_{i},\mathcal{K}\right)\mid\kappa_{i}^{-1}\tau_{ij}\kappa_{j}=\tau_{ij}\Bigr\}.
\]
\nomenclature[Ccheck0UKt]{$\check{C}^{0}\left(\mathcal{U},\mathcal{K}\right)_{\tau}$}{the (first) Čech cocycle-stabilizer of $\tau$ (with coefficient group $\mathcal{K}$)}In
other words, $\check{C}^{0}\left(\mathcal{U},\mathcal{K}\right)_{\tau}$
is the stabilizer subgroup of $\tau$ for the natural cochain action
\prettyref{eq:cochain-action-new}.
\end{defn}

With this definition, the above discussion can be interpreted as giving
a map $k\mapsto\kappa$ from the vertical automorphism group of the
$F$-fibration $\StandardFibration$ to the $\Diff\left(F\right)$-valued
cocycle-stabilizer of a transition cocycle $\tau$. Conversely, it
is clear that every $\tau$-stabilizing cochain $\kappa$ can be glued
back into a vertical automorphism thanks to the condition $\tau_{ij}\kappa_{j}{\tau_{ij}}^{-1}=\kappa_{i}$.
Therefore we have the following result:%
\begin{prop}
\label{prop:homotopy-type-of-Vau-group-by-cocycle-stabilizer}Let
$\StandardFibration$ be a smooth $F$-fibration, with any choice
of smooth transition cocycle $\tau$ over $\mathcal{U}$. Then the
vertical automorphism group of $\StandardFibration$ has the homotopy
type of the Čech cocycle-stabilizer of $\tau$:
\[
\Vau\left(\StandardFibration\right)\simeq\check{C}^{0}\left(\mathcal{U},\Diff\left(F\right)\right)_{\tau}.
\]
More specifically, the desired map from $\Vau\left(\StandardFibration\right)$
to $\check{C}^{0}\left(\mathcal{U},\Diff\left(F\right)\right)_{\tau}$
is given by taking local representations as in \prettyref{eq:local-rep-of-gauge-transformations-new},
which is a continuous injective homomorphism with dense image and
also a homotopy equivalence.
\end{prop}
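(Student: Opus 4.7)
The plan is to factor the map $\Phi\colon k\mapsto(\kappa_i)_i$ through the smooth sub-stabilizer of $\check{C}^0(\mathcal{U},\Diff(F))_\tau$, and then to promote a dense inclusion of smooth into continuous sections to a homotopy equivalence using the smoothing input already recorded in \prettyref{eq:smooth-approx-of-current-groups}. I would organize the argument in three steps.

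First I would verify that $\Phi$ is a well-defined continuous injective homomorphism into $\check{C}^0(\mathcal{U},\Diff(F))_\tau$. The stabilizing identity $\kappa_i^{-1}\tau_{ij}\kappa_j=\tau_{ij}$ follows by writing $\varphi_i\circ k\circ\varphi_j^{-1}$ in two ways over $\pi^{-1}(U_i\cap U_j)$: once by inserting $\varphi_j^{-1}\varphi_j$ to read off $\kappa_j$ and then conjugating by $\tau_{ij}$ to pass to chart $i$, and once by inserting $\varphi_i^{-1}\varphi_i$ to read off $\kappa_i$ directly, and then comparing. Injectivity is immediate since the $\varphi_i$'s cover $E$; the homomorphism property is a direct computation; and continuity follows from the definition of the $C^\infty$-topology on $\Diff(E)$ (\prettyref{thm:Canonical-Smooth-Structure-on-DiffM-appendix}), the smoothness of pushforward and pullback by the trivializing charts (\prettyref{lem:pushforward-and-pullback}), and the exponential law built into the canonical smooth structure on mapping manifolds.

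Next I would identify the image of $\Phi$ as the smooth sub-stabilizer
\[\check{C}_{\mathrm{smooth}}^0\left(\mathcal{U},\Diff\left(F\right)\right)_\tau\coloneqq\check{C}^0\left(\mathcal{U},\Diff\left(F\right)\right)_\tau\cap\prod_i\MappingSpace^\infty\left(U_i,\Diff\left(F\right)\right).\]
Conversely, given any smooth stabilizing $\kappa$, the condition rewritten as $\tau_{ij}\kappa_j\tau_{ij}^{-1}=\kappa_i$ is exactly the compatibility required for the local smooth diffeomorphisms $\varphi_i^{-1}\circ(\Identity\times\kappa_i(\cdot))\circ\varphi_i$ on $\pi^{-1}(U_i)$ to agree on overlaps and glue into a single smooth vertical automorphism of $\StandardFibration$. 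This supplies a continuous inverse to $\Phi$ and hence a topological-group isomorphism $\Vau(\StandardFibration)\cong\check{C}_{\mathrm{smooth}}^0(\mathcal{U},\Diff(F))_\tau$, patterned on \prettyref{lem:The-Vau-group-for-trivial-bundles}, and reduces the proposition to promoting the inclusion $\check{C}_{\mathrm{smooth}}^0(\mathcal{U},\Diff(F))_\tau\hookrightarrow\check{C}^0(\mathcal{U},\Diff(F))_\tau$ to a dense homotopy equivalence.

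This last step is the main obstacle, since the trivial-bundle smoothing result of \prettyref{eq:smooth-approx-of-current-groups} must be adapted so as to respect the stabilizing condition. My plan is to reinterpret both sides as section spaces (smooth and continuous respectively) of an adjoint bundle of Fréchet Lie groups $\mathrm{Ad}(\StandardFibration)\to B$, with fiber $\Diff(\StandardProjection_\StandardFibration^{-1}(x))$ over $x\in B$, obtained by gluing the local products $U_i\times\Diff(F)$ via fiberwise conjugation by $\tau_{ij}$; under this identification the stabilizing condition is tautologically the transition-compatibility condition for a section, so smoothing is no longer encumbered by a side constraint. The smoothing result for $\MappingSpace^\infty(U,\Diff(F))\hookrightarrow\MappingSpace(U,\Diff(F))$ then propagates to section spaces of $\mathrm{Ad}(\StandardFibration)$ by a partition-of-unity argument subordinate to $\mathcal{U}$, with the patching carried out in the right-invariant exponential charts on $\Diff(F)$ induced by the Riemannian exponential on $F$ (cf.\ \prettyref{exa:examples-of-local-additions}); local convexity in these charts is what allows one to interpolate smooth local approximations without leaving the stabilizer, and the same construction applied in a continuous parameter produces a homotopy inverse to the inclusion. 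Composing with the isomorphism from the first two steps then yields $\Vau(\StandardFibration)\simeq\check{C}^0(\mathcal{U},\Diff(F))_\tau$ as asserted.
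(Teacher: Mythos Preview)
Your proposal is correct and follows essentially the same route as the paper. Both arguments reduce to the same smoothing lemma for gauge groups: the paper identifies $\Vau^\infty(\StandardFibration)\cong\check{C}^0_{\mathrm{smooth}}(\mathcal{U},\Diff(F))_\tau$ and $\Vau^{0,\infty}(\StandardFibration)\cong\check{C}^0(\mathcal{U},\Diff(F))_\tau$ just as you do, and then passes to the gauge group of the nonlinear frame bundle $\mathrm{Fr}_{\Diff(F)}(\StandardFibration)$ and cites an external smoothing result (Kihara \cite{MR4632309}) for $\Gauge^{\mathsf{Diff}}\simeq\Gauge^{\mathsf{Top}}$. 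Your adjoint-bundle reformulation is the same object viewed from the associated-bundle side, and your partition-of-unity sketch in exponential charts is precisely the content of that cited result; so the only real difference is that you outline the smoothing argument directly whereas the paper outsources it.
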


\begin{proof}
For easiness of distinction throughout this proof, let us denote by
$\Vau^{\infty}\left(\StandardFibration\right)$ (resp., $\Vau^{0,\infty}\left(\StandardFibration\right)$)
the vertical automorphism group of $\StandardFibration$ as a smooth
(resp., tangentially smooth) $F$-fibration over $B$. Then consider
the following commutative diagram:
\[
\begin{split}\xymatrix@C=4pc{\Vau^{\infty}\left(\StandardFibration\right)\ar[d]_{\cong}\ar[r] & \Vau^{0,\infty}\left(\StandardFibration\right)\ar[d]^{\cong}\\
\check{C}^{0}\left(\mathcal{U},\Diff\left(F\right)\right)_{\tau}^{\infty}\ar[r] & \check{C}^{0}\left(\mathcal{U},\Diff\left(F\right)\right)_{\tau}
}
\end{split}
.
\]
From this we see that it suffices to prove a suitable homotopy equivalence
$\Vau^{\infty}\left(\StandardFibration\right)\simeq\Vau^{0,\infty}\left(\StandardFibration\right)$;
more precisely, it suffices to prove the following smoothing lemma
for gauge groups of fiber bundles:
\begin{quote}
\emph{The natural homomorphism $\Vau^{\infty}\left(\StandardFibration\right)\to\Vau^{0,\infty}\left(\StandardFibration\right)$
induced by the forgetful functor is a continuous injection with dense
image and a homotopy equivalence, and moreover these two spaces are
homeomorphic.}
\end{quote}
But this can be shown in a similar way as the smoothing lemma for
classification in the proof of \prettyref{lem:construction-of-bundle-from-cocycle},
by drawing on the corresponding smoothing result from the gauge theory
of principal bundles. More precisely, the desired homotopy equivalence
$\Vau^{\infty}\left(\StandardFibration\right)\simeq\Vau^{0,\infty}\left(\StandardFibration\right)$
is proved as in the following commutative diagram:%
\[
\begin{split}\xymatrix@C=4pc{\Vau^{\infty}\left(\StandardFibration\right)\ar@{-->}[r]^{\simeq}\ar@{<->}[d]_{\cong} & \Vau^{0,\infty}\left(\StandardFibration\right)\ar@{<->}[d]^{\cong}\\
\Gauge^{\mathsf{Diff}}\left(\mathrm{Fr}_{\Diff\left(F\right)}\left(\StandardFibration\right);\Diff\left(F\right)\right)\ar[r]_{\simeq} & \Gauge^{\mathsf{Top}}\left(\mathrm{Fr}\left(\StandardFibration\right);\Diff\left(F\right)\right)
}
\end{split}
.
\]
Here, the crux is the homotopy equivalence in the bottom row. This
amounts to the corresponding smooth approximations result for the
principal-bundle gauge group, which is already available in e.g.,
\cite{MR4632309}. This completes
the proof of the desired smooth approximation for gauge analysis of
fiber bundles, and hence the proof of \prettyref{prop:homotopy-type-of-Vau-group-by-cocycle-stabilizer}
as desired.
\end{proof}

\subsection{Gauge reduction: equivariant homotopy of the diffeomorphism group}

Our main application of the preceding proposition (\prettyref{prop:homotopy-type-of-Vau-group-by-cocycle-stabilizer})
will take advantages of the following elementary property of the cocycle-stabilizer:%
\begin{lem}
Suppose that $\tau\in\check{Z}^{1}\left(\mathcal{U},\Diff\left(F\right)\right)$
takes values in a subgroup $T\leq\Diff\left(F\right)$, and consider
the $T$-action on $\Diff\left(F\right)$ by conjugation. Then for
any two subgroups $K_{1},K_{2}\leq\Diff\left(F\right)$ containing
$T$, we have
\[
K_{1}\simeq^{T}K_{2}\implies\check{C}^{0}\left(\mathcal{U},K_{1}\right)_{\tau}\simeq\check{C}^{0}\left(\mathcal{U},K_{2}\right)_{\tau}.
\]
In words, for each $T$-equivariant homotopy equivalence between the
coefficient groups $K_{1}$ and $K_{2}$, there induces a homotopy
equivalence between their corresponding cocycle stabilizers $\check{C}^{0}\left(\mathcal{U},K_{1}\right)_{\tau}$
and $\check{C}^{0}\left(\mathcal{U},K_{2}\right)_{\tau}$.
\end{lem}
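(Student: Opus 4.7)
The plan is to construct the homotopy equivalence by post-composition and then to verify both well-definedness and homotopy-preservation using the $T$-equivariance hypothesis directly. First I would rewrite the stabilizer condition $\kappa_{i}^{-1}\tau_{ij}\kappa_{j}=\tau_{ij}$ in the equivalent form $\tau_{ij}\kappa_{j}\tau_{ij}^{-1}=\kappa_{i}$, which is exactly the gluing condition making $\kappa$ a ``section'' (over $B$) of the associated $K_{\ell}$-bundle built from $\tau$ via the conjugation action $T\curvearrowright K_{\ell}$. From this viewpoint the lemma instantiates a standard principle: a fiberwise equivariant equivalence of coefficient spaces induces an equivalence of the corresponding (twisted) section spaces.

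More concretely, given a $T$-equivariant map $f\colon K_{1}\to K_{2}$, define
\[
f_{\ast}\colon\check{C}^{0}\left(\mathcal{U},K_{1}\right)_{\tau}\to\check{C}^{0}\left(\mathcal{U},K_{2}\right)_{\tau},\qquad\left(f_{\ast}\kappa\right)_{i}\coloneqq f\circ\kappa_{i}.
\]
This is well-defined since, pointwise for $x\in U_{i}\cap U_{j}$ and $t=\tau_{ij}(x)\in T$, the $T$-equivariance of $f$ yields
\[
f\left(\kappa_{i}(x)\right)=f\bigl(t\,\kappa_{j}(x)\,t^{-1}\bigr)=t\,f\left(\kappa_{j}(x)\right)\,t^{-1},
\]
which is exactly the $\tau$-stabilizer condition for $f_{\ast}\kappa$. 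Continuity of $f_{\ast}$ with respect to the compact-open topology on each factor is immediate from continuity of post-composition. Given a $T$-equivariant homotopy inverse $g\colon K_{2}\to K_{1}$ together with $T$-equivariant homotopies $H^{(1)}\colon g\circ f\simeq\Identity_{K_{1}}$ and $H^{(2)}\colon f\circ g\simeq\Identity_{K_{2}}$ (each time slice being $T$-equivariant), the same post-composition recipe yields $g_{\ast}$ along with one-parameter families $H^{(1)}_{\ast}$ and $H^{(2)}_{\ast}$ witnessing $g_{\ast}\circ f_{\ast}\simeq\Identity$ and $f_{\ast}\circ g_{\ast}\simeq\Identity$ on the cocycle stabilizers, where each time slice stays inside the stabilizer by the well-definedness argument above.

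I do not anticipate serious obstacles, since the argument is essentially functorial. The only points requiring care are that the $T$-action on $K_{\ell}$ is precisely the conjugation action inherited from $\Diff(F)$, so that the assumed $T$-equivariance of $f$ matches the $\tau$-stabilizer condition (this uses the hypothesis that $\tau$ takes values in $T$ and that $T\leq K_{\ell}$), and that the homotopies $H^{(1)},H^{(2)}$ are $T$-equivariant at every fixed time $s\in[0,1]$, which is the standard reading of the hypothesis $K_{1}\simeq^{T}K_{2}$.
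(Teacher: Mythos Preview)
Your proposal is correct and is essentially the argument the paper has in mind; in fact the paper states this lemma without proof, treating it as an elementary observation, so your write-up supplies exactly the details that are left implicit there. The key step you identify---rewriting the stabilizer condition as $\tau_{ij}\kappa_{j}\tau_{ij}^{-1}=\kappa_{i}$ and then pushing forward by a $T$-equivariant map---is precisely how the paper uses this lemma in the proof of the next proposition (\prettyref{prop:Homotopy-reduction-of-Vau}), where the deformation of $\Vau(\StandardFibration)$ is built by applying a $T$-equivariant deformation $H_t\colon\Diff(F)\to\Diff(F)$ to each local representative $\kappa_i$.
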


Thus we have the following:\nomenclature[06eq]{$\simeq^{T}$}{$T$-equivariant homotopy equivalence}
\begin{prop}
\label{prop:Homotopy-reduction-of-Vau}Let $\StandardFibration$ be
represented by a transition cocycle $\tau\in\check{Z}^{1}\left(\mathcal{U},\Diff\left(F\right)\right)$
taking values in a subgroup $T\leq\Diff\left(F\right)$, and consider
the $T$-action on $\Diff\left(F\right)$ by conjugation. Then for
any subgroup $K\leq\Diff\left(F\right)$ containing $T$, we have
\[
\Diff\left(F\right)\simeq^{T}K,\enspace T\subseteq\Center\left(K\right)\implies\Vau\left(\StandardFibration\right)\simeq\MappingSpace^{\infty}\left(B,K\right)\simeq\MappingSpace\left(B,K\right).
\]
More precisely, if $\Diff\left(F\right)$ admits a $T$-equivariant
deformation retraction onto $K$ and if $T$ is contained in the center
of $K$, then $\Vau\left(\StandardFibration\right)$ admits a deformation
retraction onto $\MappingSpace^{\infty}\left(B,K\right)$, which in
turn is injected into $\MappingSpace\left(B,K\right)$ by a homotopy
equivalence with dense image.
\end{prop}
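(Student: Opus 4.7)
My plan is to produce the asserted chain by composing three natural reductions: first replace $\Vau(\StandardFibration)$ by a Čech cocycle-stabilizer with coefficients in $\Diff(F)$, then swap the coefficient group from $\Diff(F)$ to $K$ using the $T$-equivariant retraction, and finally collapse the $\tau$-stabilizing condition to the plain $0$-cocycle condition using the centrality hypothesis, concluding with smooth approximation of current groups.

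For the first step I will invoke \prettyref{prop:homotopy-type-of-Vau-group-by-cocycle-stabilizer} directly: the local-representation map $k\mapsto(\kappa_i)_i$ of \prettyref{eq:local-rep-of-gauge-transformations-new} identifies $\Vau(\StandardFibration)$ with the cocycle-stabilizer $\check{C}^{0}(\mathcal{U},\Diff(F))_{\tau}$ up to homotopy equivalence, in fact factoring through the smooth cocycle-stabilizer. For the second step, I apply the hypothesized $T$-equivariant deformation retraction $\Diff(F)\to K$ pointwise on $B$ by post-composition on cochains; the key check is that at every time $t$, writing the deformed cochain as $\kappa^{t}$, the stabilizing identity $(\kappa_{i}^{t})^{-1}\tau_{ij}\kappa_{j}^{t}=\tau_{ij}$ is preserved, because $\tau_{ij}$ takes values in $T$ and the retraction is $T$-equivariant under conjugation. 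This restricts to a deformation retraction of stabilizers $\check{C}^{0}(\mathcal{U},\Diff(F))_{\tau}\to\check{C}^{0}(\mathcal{U},K)_{\tau}$, which upgrades the preceding lemma from a plain homotopy equivalence in the present setting. For the third step I exploit the centrality $T\subseteq\Center(K)$: for any $\kappa\in\check{C}^{0}(\mathcal{U},K)$, the condition $\kappa_{i}^{-1}\tau_{ij}\kappa_{j}=\tau_{ij}$ simplifies, after commuting $\tau_{ij}$ past $\kappa_{j}$, to $\kappa_{i}=\kappa_{j}$ on $U_{i}\cap U_{j}$, which is precisely the $0$-cocycle condition. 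Thus $\check{C}^{0}(\mathcal{U},K)_{\tau}$ coincides with $\check{Z}^{0}(\mathcal{U},K)$, and by gluing local sections as in \prettyref{eq:zero-cocycle} this is isomorphic to $\MappingSpace^{\infty}(B,K)$. The final equivalence $\MappingSpace^{\infty}(B,K)\simeq\MappingSpace(B,K)$ follows from the smooth-approximation result \prettyref{eq:smooth-approx-of-current-groups}, applied to the Lie subgroup $K\leq\Diff(F)$, yielding an injective homotopy equivalence with dense image.

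The main obstacle I anticipate lies in the second step: the preceding lemma on $T$-equivariant coefficient replacement is stated only as a homotopy equivalence of stabilizer spaces, whereas the proposition asks for an actual deformation retraction of $\Vau(\StandardFibration)$ onto $\MappingSpace^{\infty}(B,K)$. Verifying that the pointwise-applied retraction never leaves the $\tau$-stabilizing locus is exactly where the $T$-equivariance hypothesis does its essential work; this is straightforward once the retraction is interpreted through post-composition on cochains, but the bookkeeping through the identification of $\Vau(\StandardFibration)$ with its smooth cocycle-stabilizer in step one must be carried out carefully so that the pointwise deformation can be transported back across that identification to a genuine deformation retraction of $\Vau(\StandardFibration)$ itself.
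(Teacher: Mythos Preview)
Your proposal is correct and follows essentially the same route as the paper: the paper's proof is a terse commutative diagram recording precisely the chain $\Vau^{\infty}(\StandardFibration)\cong\check{C}^{0}(\mathcal{U},\Diff(F))_{\tau}^{\infty}\simeq\check{C}^{0}(\mathcal{U},K)_{\tau}^{\infty}\cong\MappingSpace^{\infty}(B,K)$ together with its topological counterpart, which is exactly your three-step reduction plus smooth approximation. Your concern in the last paragraph is slightly overcautious: in the smooth category the local-representation map is an actual isomorphism $\Vau^{\infty}(\StandardFibration)\cong\check{C}^{0}(\mathcal{U},\Diff(F))_{\tau}^{\infty}$ (as the paper's diagram indicates with $\cong$), so transporting the pointwise deformation back to $\Vau(\StandardFibration)$ is immediate.
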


\begin{proof}
This is clear from the following diagram:
\[
\begin{split}\xymatrix@C=4pc{\Vau^{\infty}\left(\StandardFibration\right)\ar[r]^{\simeq} & \Vau^{0,\infty}\left(\StandardFibration\right)\\
\check{C}^{0}\left(\mathcal{U},\Diff\left(F\right)\right)_{\tau}^{\infty}\ar@{<->}[u]^{\cong}\ar[r]^{\simeq} & \check{C}^{0}\left(\mathcal{U},\Diff\left(F\right)\right)_{\tau}\ar@{<->}[u]^{\cong}\\
\check{C}^{0}\left(\mathcal{U},K\right)_{\tau}^{\infty}\ar@{^{(}->}[u]^{\simeq}\ar[r]^{\simeq} & \check{C}^{0}\left(\mathcal{U},K\right)_{\tau}\ar@{^{(}->}[u]^{\simeq}\\
\MappingSpace^{\infty}\left(B,K\right)\ar@{<->}[u]^{\cong}\ar[r]^{\simeq} & \MappingSpace\left(B,K\right)\ar@{<->}[u]^{\cong}
}
\end{split}
.
\]
\end{proof}

We remark two extreme cases of the preceding proposition where the
condition $T\subseteq\Center\left(K\right)$ holds trivially. The
first case is when $\StandardFibration$ is a trivial bundle, so that
$T$ is the trivial group, and thus in this case we can take $K=\Diff\left(F\right)$
the full structure group and \prettyref{prop:Homotopy-reduction-of-Vau}
recovers \prettyref{lem:The-Vau-group-for-trivial-bundles} for trivial
bundles. The other case is when the full structure group has an abelian
subgroup $K$ as an equivariant deformation retract, so that the center
of $K$ attains the full group, and thus in this case $\Vau\left(\StandardFibration\right)$
can again be homotopically reduced to the current group $\MappingSpace\left(B,K\right)$
by \prettyref{prop:Homotopy-reduction-of-Vau} — this will be put
into action when studying oriented circle fibrations in \prettyref{chap:First-Examples}.

\section{Homotopy via Shape Analysis\label{sec:Homotopy-via-shape-analysis}}

\subsection{Prelude: classifying space of diffeomorphism group}

In this subsection, we shall describe a geometric model for the classifying
space of the diffeomorphism group $\Diff\left(F\right)$. As we shall
see, this can be viewed as an adaptation from the classification theory
of vector bundles, where the classifying space of the linear, finite-dimensional
structure group $\mathrm{GL}\left(k\right)$ or $\Orthogonal k$ can
be modeled by the infinite Grassmannian manifold $\mathrm{Gr}\left(k,\infty\right)$.
In the current case of the nonlinear, infinite-dimensional structure
group $\Diff\left(F\right)$, a candidate by analogy for the classifying
space would be the ``nonlinear Grassmannian manifold'' that consists
of all smooth $F$-submanifolds embedded in the infinite-dimensional
separable Hilbert space $\ell^{2}$ — such nonlinear Grassmannians
have already made an appearance in previous sections (e.g., see \prettyref{eq:fiberings-as-families-of-shapes}),
whose definition (specialized in the current case) is reprised as
follows:
\begin{defn}
The space of $F$-shapes embedded in $\ell^{2}$ (or \emph{shape space}
for short), denoted by $\Shape\left(F,\ell^{2}\right)$, is the following
orbit space with the quotient topology:
\[
\Shape\left(F,\ell^{2}\right)\coloneqq\Emb\left(F,\ell^{2}\right)/\Diff\left(F\right),
\]
where the space $\Emb\left(F,\ell^{2}\right)$ (consisting of all
smooth embeddings of $F$ into $\ell^{2}$) is acted upon by $\Diff\left(F\right)$
from the right via pre-composition (i.e., reparametrization).
\end{defn}

Here, our universal ambient manifold is deliberately chosen to be
the Hilbert space $\ell^{2}$ (rather than the naive direct limit
$\mathbb{R}^{\infty}\coloneqq\varinjlim_{n}\mathbb{R}^{n}$); as we
shall see, this will be a more natural setup for studying the infinite-dimensional
differential geometric structures, and yet it will also be homotopy
equivalent to the direct-limit model so that the usual stability arguments
can be exploited. The latter point is illustrated in the following
proof of the contractibility of $\Emb\left(F,\ell^{2}\right)$, which
can be viewed as Whitney's embedding theorem in disguise:\footnote{Alternatively, a concrete proof can be found in e.g., \cite[§44.22]{MR1471480},
where an desired contraction is constructed explicitly.}
\begin{lem}
\label{lem:shape-space-in-l2-is-contractible}The space $\Emb\left(F,\ell^{2}\right)$
is (nonempty and) contractible.
\end{lem}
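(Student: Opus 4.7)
My plan is to confirm nonemptiness via a classical embedding argument and to establish contractibility through an explicit ``shift-and-interpolate'' deformation that exploits the infinite-dimensional freedom of $\ell^{2}$.

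For nonemptiness, I would invoke Whitney's embedding theorem to obtain a smooth embedding $F\hookrightarrow\mathbb{R}^{N}$ for $N=2\dim F+1$, and compose with the canonical isometric inclusion $\mathbb{R}^{N}\hookrightarrow\ell^{2}$ into the first $N$ coordinates. For contractibility, I would fix a splitting $\ell^{2}=V\oplus W$ into closed subspaces, each isometrically isomorphic to $\ell^{2}$ — concretely, $V$ and $W$ being the closed spans of the odd-indexed and even-indexed standard basis vectors. Applying the nonemptiness step to $V$, fix a basepoint $e_{0}\in\Emb(F,\ell^{2})$ with image in $V$. Let $T\colon\ell^{2}\to W$ be the isometric embedding sending the $k$-th input coordinate to the $2k$-th output coordinate. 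I would then define a contraction $H\colon\Emb(F,\ell^{2})\times[0,1]\to\Emb(F,\ell^{2})$ by
\[
H(e,t)=\begin{cases}\bigl((1-2t)\,\Identity_{\ell^{2}}+2t\,T\bigr)\circ e & \text{for }t\in\bigl[0,\tfrac{1}{2}\bigr],\\ (2-2t)\,(T\circ e)+(2t-1)\,e_{0} & \text{for }t\in\bigl[\tfrac{1}{2},1\bigr].\end{cases}
\]
The first stage flows $e$ along a linear family of maps until its image is tucked entirely inside $W$; the second stage is the straight-line homotopy from $T\circ e$ to $e_{0}$.

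The core verifications are that each $H(e,t)$ lies in $\Emb(F,\ell^{2})$ and that $H$ is jointly continuous. For the first stage, I would show the linear operator $L_{t}\coloneqq(1-2t)\,\Identity_{\ell^{2}}+2t\,T$ is injective: separating odd and even components, the odd-coordinate equations force $x_{n}=0$ for odd $n$ whenever $t<\tfrac{1}{2}$, and the even-coordinate equations $(1-2t)x_{2m}+2tx_{m}=0$ then cascade via strong induction on $m$ to give $x=0$, with the endpoint $L_{1/2}=T$ injective as an isometric embedding. Since $F$ is compact and $L_{t}$ is a bounded linear injection, $L_{t}\circ e$ is automatically a smooth injective immersion, hence an embedding. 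For the second stage, I would use the transversality $V\cap W=0$: any coincidence $(1-s)(T\circ e)(x_{1})+s\,e_{0}(x_{1})=(1-s)(T\circ e)(x_{2})+s\,e_{0}(x_{2})$ splits component-wise into $(T\circ e)(x_{1})=(T\circ e)(x_{2})$ in $W$ and $e_{0}(x_{1})=e_{0}(x_{2})$ in $V$, each of which forces $x_{1}=x_{2}$ by the injectivity of $T$ and $e_{0}$; the analogous argument on tangent vectors handles the immersion property.

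The main obstacle I foresee is promoting pointwise continuity of $H$ to continuity in the $C^{\infty}$-topology on $\Emb(F,\ell^{2})\subsetopen\MappingSpace^{\infty}(F,\ell^{2})$ — i.e., controlling all iterated derivatives simultaneously in the parameter $t$ and the input map $e$. This reduces to the joint continuity of the pushforward $(L,e)\mapsto L\circ e$, which should follow from the iterated tangent-map characterization of \prettyref{def:compact-open-C-inf-topology} together with the fact that the families $\{L_{t}\}_{t}$ and $\{(1-s)T+s\,(\text{const})\}_{s}$ are uniformly bounded linear operators; nevertheless, this is where the technical care is required, since naive pointwise estimates are not sufficient to produce a deformation in $\Emb(F,\ell^{2})$ as a $C^{\infty}$-manifold.
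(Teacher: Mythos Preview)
Your argument is correct and essentially the explicit ``shift-and-interpolate'' contraction that the paper only alludes to in a footnote (citing Kriegl--Michor \S 44.22), whereas the paper's own proof takes a different, more indirect route: it invokes the fact that the functor $\Emb(F,-)$ commutes with homotopy direct limits to identify $\Emb(F,\ell^{2})\simeq\varinjlim_{n}\Emb(F,\mathbb{R}^{n})$, and then appeals to Whitney-type connectivity results (Hansen, Dax) to show the direct limit is weakly contractible. Your approach is more elementary and self-contained --- no black-box limit-commutation theorem is needed --- while the paper's approach is more conceptual, tying the result to the stable topology of finite-dimensional embedding spaces. One remark: your worry about joint $C^{\infty}$-continuity is largely unnecessary, because $\MappingSpace^{\infty}(F,\ell^{2})$ is itself a topological vector space, and both stages of your homotopy are expressed as \emph{affine} combinations $(e,t)\mapsto(1-\lambda(t))\,A(e)+\lambda(t)\,B(e)$ with $A,B$ continuous linear (post-composition with a bounded operator, or a constant map); joint continuity then follows immediately from the joint continuity of addition and scalar multiplication in a TVS, with no derivative-by-derivative bookkeeping required.
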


\begin{proof}
Consider the smooth closed expanding system $\left(\mathbb{R}^{n}\right)_{n}$
(given by the natural embeddings of $\mathbb{R}^{n}$ into $\mathbb{R}^{n+1}$).
This direct system has $\ell^{2}$ as a \emph{homotopy direct limit},
meaning that the embeddings of $\mathbb{R}^{n}$ into $\ell^{2}$
induce a homotopy equivalence $j\colon\varinjlim_{n}\mathbb{R}^{n}\simeq\ell^{2}$.
By applying the functor $\Emb\left(F,-\right)$, we obtain an induced
closed expanding system $\left(\Emb\left(F,\mathbb{R}^{n}\right)\right)_{n}$.
Now the crux is a desirable property that this functor $\Emb\left(F,-\right)$
commutes with homotopy direct limits, which is already shown in \cite{MR0319206,MR0322900}.
Thus in our case, this implies that the system $\left(\Emb\left(F,\mathbb{R}^{n}\right)\right)_{n}$
must have the space $\Emb\left(F,\ell^{2}\right)$ as a homotopy direct
limit; more precisely, the above homotopy equivalence $j\colon\varinjlim_{n}\mathbb{R}^{n}\simeq\ell^{2}$
induces a desired homotopy equivalence
\[
j_{\ast}\colon\varinjlim_{n}\Emb\left(F,\mathbb{R}^{n}\right)\simeq\Emb\left(F,\ell^{2}\right).
\]
Therefore, for contractibility of $\Emb\left(F,\ell^{2}\right)$
it suffices to show that for each $m\geq0$, the space $\Emb\left(F,\mathbb{R}^{n}\right)$
is nonempty and $m$-connected for sufficiently large $n$. But this
is already well known as a consequence of Whitney's embedding theorem
(\cite{MR1503303,MR0010274}); alternatively, we may invoke a theorem
of Hansen (\cite{MR0322900}) or Dax (\cite{MR0321110}) which implies
that the natural inclusion of the embedding space $\Emb\left(F,\mathbb{R}^{n}\right)$
into the mapping space $\MappingSpace^{\infty}\left(F,\mathbb{R}^{n}\right)$
is $k$-connected for sufficiently large $n$ (where the latter space
is clearly contractible), hence we have the desired contractibility:
\[
\pi_{m}\varinjlim_{n}\Emb\left(F,\mathbb{R}^{n}\right)\cong\pi_{m}\varinjlim_{n}\MappingSpace^{\infty}\left(F,\mathbb{R}^{n}\right)=0,\qquad\forall m\geq0.
\]
This completes the proof of \prettyref{lem:shape-space-in-l2-is-contractible}.
\end{proof}
Having shown in the preceding lemma that the embedding space $\Emb\left(F,\ell^{2}\right)$
is contractible, we next confirm its principal $\Diff\left(F\right)$-bundle
structure over the shape space $\Shape\left(F,\ell^{2}\right)$, as
in the following lemma:\footnote{Existing proofs for the case of finite-dimensional target manifold
can be found in e.g., \cite[§44.1]{MR1471480} (and originally in
\cite{MR0613004}). Our proof here offers a slight variation that
highlights the dependence on two desirable structures on the target
Hilbert space: an inner product and a compatible local addition.}
\begin{lem}
\label{lem:bundle-structure-over-shape-space}The canonical quotient
projection associated with the natural right action of $\Diff\left(F\right)$
on $\Emb\left(F,\ell^{2}\right)$ gives a topological principal bundle
\[
\Diff\left(F\right)\hookrightarrow\Emb\left(F,\ell^{2}\right)\to\Shape\left(F,\ell^{2}\right).
\]
Further, $\Shape\left(F,\ell^{2}\right)$ admits a unique smooth structure
for which the above bundle becomes a smooth principal bundle; more
specifically, it is a (separable) smooth Fréchet manifold locally
modeled on—at any embedded $F$-shape $S\subseteq\ell^{2}$—the Fréchet
subspace $\mathfrak{X}^{\bot}\left(S\right)\subseteq\mathfrak{X}\left(S\right)$
of normal vector fields, and for which the above bundle admits a smooth
local splitting modeled on the Fréchet-space splitting $\mathfrak{X}\left(S\right)=\mathfrak{X}^{\bot}\left(S\right)\oplus\mathfrak{X}^{\top}\left(S\right)$
of vector fields along $S$ into normal and tangential parts .
\end{lem}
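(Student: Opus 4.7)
The plan is to apply the smooth slice lemma (\prettyref{prop:manifold-slice-condition}) to the right action of $\Diff\left(F\right)$ on $\Emb\left(F,\ell^{2}\right)$. The action is free (distinct reparametrizations of a fixed embedding give distinct embeddings), and $\Emb\left(F,\ell^{2}\right)$ sits as an open subset of the canonical smooth Fr\'echet manifold $\MappingSpace^{\infty}\left(F,\ell^{2}\right)$ (whose smooth structure arises from the affine local addition $\alpha\left(p,v\right)=p+v$ on $\ell^{2}$; see \prettyref{exa:examples-of-local-additions}\ref{enu:example-of-affine-local-addition}), with composition smooth by \prettyref{lem:pushforward-and-pullback}. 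By homogeneity of the orbit, everything reduces to producing, at a single embedding $\iota\in\Emb\left(F,\ell^{2}\right)$, a smooth submanifold $\mathcal{S}_{\iota}$ through $\iota$ such that the right-translation
\[
\rho_{\iota}\colon\mathcal{S}_{\iota}\times\Diff\left(F\right)\to\Emb\left(F,\ell^{2}\right),\qquad\left(\sigma,\varphi\right)\mapsto\sigma\circ\varphi,
\]
is a diffeomorphism onto an open neighborhood of $\iota$.

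To construct $\mathcal{S}_{\iota}$ I exploit the Hilbert structure of $\ell^{2}$. Its inner product supplies, along $\iota$, an orthogonal decomposition of the pullback $\iota^{\ast}T\ell^{2}\cong F\times\ell^{2}$ into tangential and normal subbundles, inducing the Fr\'echet-space splitting $\mathfrak{X}\left(\iota\right)=\mathfrak{X}^{\top}\left(\iota\right)\oplus\mathfrak{X}^{\bot}\left(\iota\right)$. Its affine structure, together with compactness of $F$, yields some $\varepsilon>0$ such that the normal exponential $\left(s,v\right)\mapsto s+v$ is a diffeomorphism from the radius-$\varepsilon$ disk bundle of $N\iota\left(F\right)$ onto an open tubular neighborhood $U$ of $\iota\left(F\right)$ in $\ell^{2}$. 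I then set
\[
\sigma_{\iota}\colon\mathfrak{X}^{\bot}\left(\iota\right)\supsetopen V\to\Emb\left(F,\ell^{2}\right),\qquad\nu\mapsto\iota+\nu,
\]
on a neighborhood $V$ of $0$ chosen small enough that every $\iota+\nu$ is an embedding lying in $U$; the image $\mathcal{S}_{\iota}\coloneqq\sigma_{\iota}\left(V\right)$ is a smooth submanifold of $\Emb\left(F,\ell^{2}\right)$ because $\sigma_{\iota}$ is simply the restriction of the affine chart of $\MappingSpace^{\infty}\left(F,\ell^{2}\right)$ centered at $\iota$ to a closed linear Fr\'echet subspace.

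The core step is the verification that $\rho_{\iota}$ is a diffeomorphism onto an open neighborhood of $\iota$. For any $\iota'$ sufficiently close to $\iota$, each $\iota'\left(p\right)$ lies in $U$ and has a unique orthogonal projection $\pi_{\iota\left(F\right)}\left(\iota'\left(p\right)\right)=\iota\left(\psi\left(p\right)\right)$; the resulting smooth map $\psi\colon F\to F$ is a diffeomorphism for $\iota'$ near $\iota$ (an open condition on $\MappingSpace^{\infty}\left(F,F\right)$), and setting $\nu\coloneqq\left(\iota'-\iota\circ\psi\right)\circ\psi^{-1}\in\mathfrak{X}^{\bot}\left(\iota\right)$ gives the decomposition $\iota'=\left(\iota+\nu\right)\circ\psi=\rho_{\iota}\left(\sigma_{\iota}\left(\nu\right),\psi\right)$. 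This decomposition is visibly unique (any other $\left(\sigma,\varphi\right)$ with $\sigma\circ\varphi=\iota'$ must satisfy $\pi_{\iota\left(F\right)}\circ\iota'=\iota\circ\varphi$), which gives both injectivity of $\rho_{\iota}$ and surjectivity onto an open neighborhood. Smoothness of $\rho_{\iota}^{-1}$ is assembled from the smoothness of orthogonal projection on the tubular neighborhood, of composition and inversion in $\Diff\left(F\right)$ (\prettyref{thm:Canonical-Smooth-Structure-on-DiffM-appendix}), and of subtraction in $\ell^{2}$, all combined via the exponential law.

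With the slice produced, \prettyref{prop:manifold-slice-condition} simultaneously supplies the smooth principal $\Diff\left(F\right)$-bundle structure and the unique smooth manifold structure on $\Shape\left(F,\ell^{2}\right)$, with local model $\mathfrak{X}^{\bot}\left(\iota\right)$ at $\left[\iota\right]$ and local trivialization matching the splitting $\mathfrak{X}\left(\iota\right)=\mathfrak{X}^{\top}\left(\iota\right)\oplus\mathfrak{X}^{\bot}\left(\iota\right)$ (the tangential summand integrating to the orbit, the normal summand to the slice); Hausdorffness, second countability, and separability of $\Shape\left(F,\ell^{2}\right)$ are inherited from the ambient Fr\'echet manifold via the same lemma. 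The main obstacle is the smoothness of the decomposition $\iota'\mapsto\left(\nu,\psi\right)$ in the infinite-dimensional Fr\'echet setting, where no general implicit function theorem is available; this is what forces the explicit geometric construction above, in which each ingredient is individually smooth by direct inspection rather than by an abstract inversion argument.
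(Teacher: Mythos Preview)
Your proposal is correct and follows essentially the same route as the paper: both exploit the Hilbert inner product on $\ell^{2}$ to produce the tangential/normal splitting $\mathfrak{X}\left(\iota\right)=\mathfrak{X}^{\top}\left(\iota\right)\oplus\mathfrak{X}^{\bot}\left(\iota\right)$, build the slice from normal perturbations $\iota+\nu$, verify that the tube map $\rho$ is a diffeomorphism (with inverse coming from orthogonal projection onto the tubular neighborhood), and then invoke the argument of \prettyref{prop:manifold-slice-condition}. One small caveat: that proposition is stated for coset quotients $G/H$ of a Lie group rather than for free actions on a general manifold, so (as the paper itself does) you are really reusing its \emph{proof} rather than its literal statement; your explicit construction of $\rho_{\iota}^{-1}$ already supplies everything that proof needs.
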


\begin{proof}
Fix an arbitrary $F$-embedding $\iota_{0}\in\Emb\left(F,\ell^{2}\right)$,
so that $\Diff\left(F\right)$ is embedded as its orbit $\mathscr{D}\left(F\right)\subseteq\Emb\left(F,\ell^{2}\right)$.
Let $S_{0}\coloneqq\iota_{0}\left(F\right)\in\Shape\left(F,\ell^{2}\right)$
be the corresponding $F$-shape. To confirm the bundle structure we
need to show that there exists a local section $\sigma\colon U\to q^{-1}\left(U\right)$
with $q\circ\sigma=\Identity_{U}$ and $\sigma\left(S_{0}\right)=\iota_{0}$.
Intuitively, this amounts to show that the shapes $S$'s near $S_{0}$
can be continuously parametrized by $F$; but this is clear since
every shape $S$ sufficiently close to $S_{0}$ must be a normal graph
in $\ell^{2}$ over $S_{0}$ (or more invariantly, a section of the
normal bundle in $T\ell^{2}$ over $S_{0}$), and thereby inherits
a parametrization from the one $\iota_{0}$ for $S_{0}$. More precisely,
letting $\projection^{\perp}$ denote the normal projection onto $S_{0}$
from some tubular neighborhood in $\ell^{2}$, over some sufficiently
small domain $U$ we can construct the desired local section as follows:
\[
\sigma\colon\Shape\left(F,\ell^{2}\right)\supsetopen U\xrightarrow{S\mapsto\iota_{S}}\Emb\left(F,\ell^{2}\right)\qquad\text{with}\qquad\iota_{S}\left(y\right)-\iota_{0}\left(y\right)\in T_{\iota_{0}\left(y\right)}^{\perp}S_{0}.
\]
This confirms the topological principal $\mathscr{D}\left(F\right)$-bundle
structure on $\Emb\left(F,\ell^{2}\right)$ over $\Shape\left(F,\ell^{2}\right)$,
as desired. In order to promote such a bundle structure to the smooth
category, we switch to a more robust description of this local splitting:
taking the image of the above local section $\sigma$ induces a local
slice
\begin{equation}
\mathcal{S}\coloneqq\image\sigma=\left\{ \iota\in\Emb\left(F,\ell^{2}\right)\mid\iota=\iota_{S},\,\exists S\in U\subsetopen\Shape\left(F,\ell^{2}\right)\right\} ,\label{eq:slice-for-shape-space}
\end{equation}
onto which the restriction of the right $\mathscr{D}\left(F\right)$-action
yields a tube homeomorphism
\begin{equation}
\rho\colon\mathcal{S}\times\mathscr{D}\left(F\right)\xrightarrow[\approx]{\left(\iota,\kappa\right)\mapsto\iota\circ\kappa}q^{-1}\left(U\right)\subsetopen\Emb\left(F,\ell^{2}\right).\label{eq:tube-homeomorphism-for-shape-space}
\end{equation}
We now move on to the smooth category. Let us start with recalling
the smooth structure on $\Emb\left(F,\ell^{2}\right)$: since the
target manifold $\ell^{2}$ being a locally convex space admits a
(affine) local addition (\prettyref{exa:examples-of-local-additions}\ref{enu:example-of-affine-local-addition}),
it follows from \prettyref{thm:local-addition-implies-canonical-smooth-manifold-structure}
that there induces a canonical smooth structure on $\MappingSpace^{\infty}\left(F,\ell^{2}\right)$,
and hence on its open subset $\Emb\left(F,\ell^{2}\right)$; more
explicitly, this smooth structure on $\Emb\left(F,\ell^{2}\right)$
is modeled on the Fréchet space $\mathfrak{X}\left(S_{0}\right)$
of smooth vector fields along $S_{0}\coloneqq\iota_{0}\left(F\right)\subseteq\ell^{2}$
via a global chart induced by the canonical identification $\ell^{2}\cong T\ell^{2}$
centered at $S_{0}$. Under this smooth structure for $\Emb\left(F,\ell^{2}\right)$
modeled on the Fréchet space $\mathfrak{X}\left(S_{0}\right)$, the
subset $\mathscr{D}\left(F\right)$ becomes a smooth submanifold modeled
on the Fréchet subspace $\mathfrak{X}^{\top}\left(S_{0}\right)$ of
tangential vector field along $S_{0}$. Similarly, the slice $\mathcal{S}$
given in \prettyref{eq:slice-for-shape-space} becomes a smooth submanifold
modeled on the Fréchet subspace $\mathfrak{X}^{\bot}\left(S_{0}\right)$
of normal vector fields to $S_{0}$, with respect to which the tube
homeomorphism $\rho$ given in \prettyref{eq:tube-homeomorphism-for-shape-space}
becomes a diffeomorphism. Indeed, the smooth map $\rho$ admits a
smooth inverse as simply given by the orthogonal splitting
\[
\mathfrak{X}\left(S_{0}\right)=\mathfrak{X}^{\bot}\left(S_{0}\right)\oplus\mathfrak{X}^{\top}\left(S_{0}\right).
\]
Thus just as in the proof of \prettyref{prop:manifold-slice-condition},
such a ``smooth slice'' $\mathcal{S}$ induces both the smooth manifold
structure on $\Shape\left(F,\ell^{2}\right)$ and the smooth bundle
structure on $\Emb\left(F,\ell^{2}\right)$ over $\Shape\left(F,\ell^{2}\right)$,
as desired.
\end{proof}

In particular, a topological implication of the preceding lemma is
that the shape space $\Shape\left(F,\ell^{2}\right)$ can serve as
the unique topological $\ell^{2}$-model of its countable CW-homotopy
type (see \prettyref{cor:corollary-of-Torunczyk-and-Henderson-Schori}).
This makes it a natural choice of the classifying space of $\Diff\left(F\right)$;
more precisely, as an immediate consequence of the preceding two lemmas
(\prettyref{lem:shape-space-in-l2-is-contractible} and \prettyref{lem:bundle-structure-over-shape-space}),
we have the following result:
\begin{prop}
\label{prop:classifying-space-of-diffeomorphism}The space $\Shape\left(F,\ell^{2}\right)$
is the unique (up to homeomorphism) $\ell^{2}$-model for the classifying
space of the diffeomorphism group $\Diff\left(F\right)$, over which
the universal bundle is given by the natural projection
\begin{equation}
\Diff\left(F\right)\hookrightarrow\Emb\left(F,\ell^{2}\right)\to\Shape\left(F,\ell^{2}\right).\label{eq:universal-bundle}
\end{equation}
Moreover, this can be promoted to a classifying object in the category
of smooth Fréchet manifolds.
\end{prop}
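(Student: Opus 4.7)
The plan is to assemble the proposition directly from the two preceding lemmas together with the infinite-dimensional topology setup of Section~\ref{sec:Infinite-dimensional-topology}; there is no single hard step here, as the essential work has already been absorbed into those lemmas.

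First I would combine \prettyref{lem:shape-space-in-l2-is-contractible} (contractibility of $\Emb(F,\ell^{2})$) with \prettyref{lem:bundle-structure-over-shape-space} (the topological principal $\Diff(F)$-bundle structure on the natural projection $\Emb(F,\ell^{2})\to\Shape(F,\ell^{2})$). A principal bundle whose total space is weakly contractible is precisely the standard characterization of a universal bundle, so this already exhibits $\Shape(F,\ell^{2})$ as a model for the classifying space $\Classifying\Diff(F)$ and the sequence in \prettyref{eq:universal-bundle} as the universal $\Diff(F)$-bundle.

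Next I would establish uniqueness among $\ell^{2}$-models. \prettyref{lem:bundle-structure-over-shape-space} already equips $\Shape(F,\ell^{2})$ with the structure of a separable smooth Fréchet manifold, locally modeled on the Fréchet space $\mathfrak{X}^{\bot}(S)$ of normal vector fields along each embedded shape $S$. By the Kadec--Anderson theorem (\prettyref{thm:Kadec-Anderson}), its underlying topological manifold is therefore a topological $\ell^{2}$-manifold, and the Henderson--Schori theorem (\prettyref{thm:Henderson-Schori}) then yields the desired uniqueness: any two $\ell^{2}$-manifolds sharing the homotopy type of $\Classifying\Diff(F)$ are homeomorphic, so $\Shape(F,\ell^{2})$ is determined up to homeomorphism by being an $\ell^{2}$-manifold model of this classifying type.

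Finally, the promotion to a classifying object in the smooth Fréchet category is already packaged inside \prettyref{lem:bundle-structure-over-shape-space}: both base and total space carry smooth Fréchet-manifold structures, and the projection is a smooth principal $\Diff(F)$-bundle with smooth local slices furnished by the orthogonal splitting $\mathfrak{X}(S)=\mathfrak{X}^{\bot}(S)\oplus\mathfrak{X}^{\top}(S)$ of vector fields along each shape. If there is any subtle point deserving vigilance, it is that invoking the universal pullback property in infinite dimensions requires paracompactness to run standard partition-of-unity constructions; but this is automatic here, since all spaces in play are $\ell^{2}$-manifolds, hence Polish ANRs by \prettyref{prop:l2-manifold-is-ANR}.
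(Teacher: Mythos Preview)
Your proposal is correct and takes essentially the same approach as the paper: the paper treats the proposition as an immediate consequence of the two preceding lemmas together with the uniqueness statement from \prettyref{cor:corollary-of-Torunczyk-and-Henderson-Schori}, and your write-up simply unpacks this in more detail (invoking Kadec--Anderson to see that $\Shape(F,\ell^{2})$ is an $\ell^{2}$-manifold and then Henderson--Schori for uniqueness, which is precisely what that corollary packages).
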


\subsection{Classification via homotopy: family of fiber-shapes}

With the Hilbert model $\Shape\left(F,\ell^{2}\right)$ of the classifying
space for $\Diff\left(F\right)$ as explained above, we are in a position
to tackle the classification problem with a global, more geometric
perspective, in the framework of homotopy theory. As before, our arguments
below will work equally well for either tangentially-smooth ($C^{0,\infty}$)
or smooth ($C^{\infty}$) fiber bundles, as long as we consider either
topological or smooth homotopy accordingly:
\[
\left[B,\mathcal{N}\right]\coloneqq\pi_{0}\MappingSpace\left(B,\mathcal{N}\right)\quad\text{or}\quad\left[B,\mathcal{N}\right]^{\infty}\coloneqq\pi_{0}\MappingSpace^{\infty}\left(B,\mathcal{N}\right).
\]
Our goal is to construct a suitable universal object for fiber bundles.
To this end, let us follow the analogous procedure in the linear case
of classifying vector bundles: the classifying space of the structure
group $\mathrm{GL}\left(k\right)$ is the infinite Grassmannian manifold
$\mathrm{Gr}\left(k,\infty\right)$, over which we have the universal
principal $\GL kR$-bundle given by the infinite Stiefel manifold
$\mathrm{St}\left(k,\infty\right)$; in turn, associated to the universal
bundle $\mathrm{St}\left(k,\infty\right)$ there is a universal rank-$k$
vector bundle $\mathrm{St}\left(k,\infty\right)\times_{\GL kR}\mathbb{R}^{k}$,
which can be simply described by the tautological bundle over $\mathrm{Gr}\left(k,\infty\right)$
consisting of all those pairs $\left(S,p\right)$ in $\mathrm{Gr}\left(k,\infty\right)\times\mathbb{R}^{k}$
that satisfy $p\in S$. The same arguments apply, mutatis mutandis,
to the nonlinear case of classifying fiber bundles; more precisely,
we can construct a kind of universal object called the \emph{nonlinear tautological bundle}
over $\Shape\left(F,\ell^{2}\right)$ as given by the following manifold
\begin{equation}
\Tautological\left(F,\ell^{2}\right)\coloneqq\left\{ \left(S,p\right)\in\Shape\left(F,\ell^{2}\right)\times\ell^{2}\mid p\in S\right\} \cong\Emb\left(F,\ell^{2}\right)\times_{\Diff\left(F\right)}F.\label{eq:tautological-bundle}
\end{equation}
The resulting fiber bundle $\Tautological\left(F,\ell^{2}\right)\to\Shape\left(F,\ell^{2}\right)$
given by the canonical projection is called the \emph{universal $F$-fiber bundle}
because, by construction, every $F$-fiber bundle $\StandardFibration\colon E\to B$
is equivalent to the pullback bundle $f^{\ast}\Tautological\left(F,\ell^{2}\right)$
for some classifying map $f\colon B\to\Shape\left(F,\ell^{2}\right)$
unique up to homotopy. Up to equivalence, this pullback bundle under
$f$ (resp., the classifying map associated with $\StandardFibration$)
can be given a quite intuitive description, as in the statement (resp.,
proof) of the following lemma:
\begin{lem}
\label{lem:construction-of-bundle-from-classifying-map}Every homotopy
class in $\left[B,\Shape\left(F,\ell^{2}\right)\right]$ admits smooth
mapping representatives $f$, each of which can serve as the classifying
map of a smooth $F$-fiber bundle $E_{f}$ over $B$ given by the
following smooth manifold (equipped with the natural submanifold structure
and bundle projection):
\[
E_{f}\coloneqq\left\{ \left(x,p\right)\in B\times\ell^{2}\mid p\in f\left(x\right)\right\} .
\]
Moreover, any other choice of smooth representative $f$ results in
a smoothly equivalent fibration $E_{f}$.
\end{lem}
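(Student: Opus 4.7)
The plan parallels the proof of \prettyref{lem:construction-of-bundle-from-cocycle}: exploit the universal bundle $\Diff\left(F\right)\hookrightarrow\Emb\left(F,\ell^{2}\right)\to\Shape\left(F,\ell^{2}\right)$ from \prettyref{prop:classifying-space-of-diffeomorphism}, combine it with a smooth approximation argument for maps into the Fréchet manifold $\Shape\left(F,\ell^{2}\right)$, and invoke homotopy invariance of pullback bundles. Three steps in order: (i) a smooth representative exists in every homotopy class; (ii) each smooth $f$ yields a smooth bundle $E_{f}$ identified as the smooth pullback of the tautological bundle; (iii) smoothly homotopic choices produce smoothly equivalent bundles.

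For step (i), since $\Shape\left(F,\ell^{2}\right)$ is a (separable) smooth Fréchet manifold by \prettyref{lem:bundle-structure-over-shape-space} and $B$ is a compact smooth manifold, the standard smooth approximation theorems for mapping spaces into smooth Fréchet targets (in the spirit of \prettyref{eq:smooth-approx-of-current-groups} and the references cited there) give that the inclusion $\MappingSpace^{\infty}\left(B,\Shape\left(F,\ell^{2}\right)\right)\hookrightarrow\MappingSpace\left(B,\Shape\left(F,\ell^{2}\right)\right)$ is a homotopy equivalence with dense image. Hence every homotopy class has smooth representatives, and any two continuously homotopic smooth maps are even smoothly homotopic rel endpoints.

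For step (ii), note that the set $E_{f}\subseteq B\times\ell^{2}$ is literally the pullback $f^{\ast}\Tautological\left(F,\ell^{2}\right)$ of the universal $F$-fiber bundle in \prettyref{eq:tautological-bundle}. Over any sufficiently small $U\subsetopen\Shape\left(F,\ell^{2}\right)$, the smooth slice constructed in the proof of \prettyref{lem:bundle-structure-over-shape-space} furnishes a smooth section $\sigma\colon U\to\Emb\left(F,\ell^{2}\right)$, which trivializes the tautological bundle over $U$ via $\left(S,y\right)\mapsto\left(S,\sigma\left(S\right)\left(y\right)\right)$; pulling these trivializations back along the smooth $f$ and using that $f^{-1}\left(U\right)\subsetopen B$ covers $B$ exhibits $E_{f}\to B$ as a smooth $F$-fiber bundle with transition cocycle $\sigma_{j}^{-1}\circ\sigma_{i}\circ f$. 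This bypasses any appeal to an infinite-dimensional implicit function theorem by using the preexisting smooth local splittings on $\Shape\left(F,\ell^{2}\right)$.

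For step (iii), given two smooth representatives $f_{0},f_{1}$ of the same homotopy class, smoothing a connecting homotopy (by step (i) applied to the compact manifold $B\times\left[0,1\right]$, relative to the two endpoints) yields a smooth homotopy $H\colon B\times\left[0,1\right]\to\Shape\left(F,\ell^{2}\right)$. The pullback $H^{\ast}\Tautological\left(F,\ell^{2}\right)\to B\times\left[0,1\right]$ is a smooth $F$-fiber bundle by the same argument as in step (ii); since the fiber $F$ is compact, Ehresmann's fibration theorem lets us lift the generating vector field $\partial_{t}$ of $\left[0,1\right]$ to a complete horizontal vector field on the total space, whose time-$1$ flow supplies a smooth bundle isomorphism $E_{f_{0}}\cong E_{f_{1}}$. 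The hardest point is really the infinite-dimensional smoothing required for step (i) and its $B\times\left[0,1\right]$-version in step (iii); all of it, however, is routine given the Fréchet-manifold structure on $\Shape\left(F,\ell^{2}\right)$ recorded in \prettyref{lem:bundle-structure-over-shape-space}, and the remainder of the proof reduces to the classical homotopy invariance of pullback bundles with compact fiber.
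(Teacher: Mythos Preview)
Your proof is correct and follows essentially the same strategy as the paper: identify $E_{f}$ with the pullback $f^{\ast}\Tautological\left(F,\ell^{2}\right)$ of the tautological bundle, and invoke smooth approximation for maps into the Fr\'echet manifold $\Shape\left(F,\ell^{2}\right)$ (the paper cites Wockel's generalization of Steenrod's approximation theorem for this). Your treatment is somewhat more explicit than the paper's---you spell out the local trivializations via the smooth slice of \prettyref{lem:bundle-structure-over-shape-space} and you use Ehresmann's theorem directly for smooth homotopy invariance, whereas the paper simply records the bijection $\pi_{0}\MappingSpace^{\infty}\left(B,\Shape\left(F,\ell^{2}\right)\right)\leftrightarrow\pi_{0}\MappingSpace\left(B,\Shape\left(F,\ell^{2}\right)\right)$ and defers the rest to the smoothing lemma already proved in \prettyref{lem:construction-of-bundle-from-cocycle}---but the substance is the same.
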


\begin{proof}
Given a homotopy class $\left[f\right]\in\left[B,\Shape\left(F,\ell^{2}\right)\right]$
represented by any map $f$. By definition, the $f$-pullback of the
tautological bundle is given by
\[
f^{\ast}\left(\Tautological\left(F,\ell^{2}\right)\right)\coloneqq\left\{ \left(\left(x,S\right),p\right)\in\Gamma_{f}\times\ell^{2}\mid p\in S\right\} .
\]
Here, $\Gamma_{f}\subseteq B\times\Shape\left(F,\ell^{2}\right)$
denotes the graph of the classifying map $f\colon B\to\Shape\left(F,\ell^{2}\right)$.
Thus its canonical projection $\Gamma_{f}\to B$ induces the desired
bundle isomorphism (over $B$) from the above pullback bundle to the
proposed bundle $E_{f}$:
\[
f^{\ast}\left(\Tautological\left(F,\ell^{2}\right)\right)\xrightarrow{\cong}E_{f},\qquad\left(\left(x,S\right),p\right)\mapsto\left(x,p\right).
\]
As for the claim concerning smooth approximation, we can invoke the
smoothing lemma for classification of fiber bundles as shown in the
proof of \prettyref{lem:construction-of-bundle-from-cocycle}, but
in the current case we can also carry out the smoothing procedure
directly on the classifying maps from $B$ into $\Shape\left(F,\ell^{2}\right)$;
indeed, such a smoothing was well-known for maps between finite-dimensional
manifolds, and its generalization to the case where the target is
an infinite-dimensional (locally convex) manifold is already available
— e.g., see \cite{MR2591666}. The takeaway is that the natural forgetful
map induces a bijection
\[
\pi_{0}\MappingSpace^{\infty}\left(B,\Shape\left(F,\ell^{2}\right)\right)\leftrightarrow\pi_{0}\MappingSpace^{0}\left(B,\Shape\left(F,\ell^{2}\right)\right).
\]
This complete the proof of \prettyref{lem:construction-of-bundle-from-classifying-map}.
\end{proof}
By the preceding lemma, we in particular see that every $\Diff\left(F\right)$-classifying
homotopy class can be assigned a well-defined diffeomorphism type,
hence the following definition is justified:
\begin{defn}[total diffeomorphism type of homotopy]
\label{def:diffeomorphism-type-of-homotopy}Associated with each
homotopy class in $\left[B,\Shape\left(F,\ell^{2}\right)\right]$
is its \emph{total diffeomorphism type}, which is defined as the
unique diffeomorphism type of the smooth manifold $E_{f}$ as constructed
in \prettyref{lem:construction-of-bundle-from-classifying-map} from
any smooth representative $f$. Given any smooth manifold $E$, denote
the subset 
\[
\left[B,\Shape\left(F,\ell^{2}\right)\right]_{E}\coloneqq\left\{ \eta\in\left[B,\Shape\left(F,\ell^{2}\right)\right]\mid\text{\ensuremath{\eta} has the total diff. type of \ensuremath{E}}\right\} .
\]
This subset of $\left[B,\Shape\left(F,\ell^{2}\right)\right]$ will
be called the ``\emph{homotopy subset of total type $E$}'' for
short.
\end{defn}

We are now in a position to state and prove the following homotopy-theoretic
classification of smooth fiberings on a given smooth total space $E$.
(Below and throughout, recall that $\ClassMan\left(m\right)$ denotes
the set of all diffeomorphism types of closed smooth $m$-manifolds.)
\begin{prop}
\label{prop:classification-theorem-via-homotopy}Let $m\coloneqq\dim E-\dim F$.
The smooth $F$-fiberings on $E$ are classified — for each $m$-dimensional
diffeomorphism type for the base space $B$ — by the homotopy subset
of $\left[B,\Shape\left(F,\ell^{2}\right)\right]$ of total type $E$
(\prettyref{def:diffeomorphism-type-of-homotopy}) modulo pullback
by diffeomorphisms:
\begin{equation}
\ClassFib\left(E,F\right)\leftrightarrow\bigsqcup_{B\in\ClassMan\left(m\right)}{\left[B,\Shape\left(F,\ell^{2}\right)\right]_{E}/\mathrm{Mod}\left(B\right)}.\label{eq:classification-theorem-via-homotopy}
\end{equation}
More precisely, for each class on the right-hand side, any choice
of smooth mapping representative $f$ is assembled into a smooth bundle
$E_{f}$ as in \prettyref{lem:construction-of-bundle-from-classifying-map},
which induces a desired smooth fibration on $E$ via a diffeomorphism
$E\approx E_{f}$ as guaranteed by assumption.
\end{prop}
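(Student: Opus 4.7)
The plan is to establish the classification by reducing it to the already-proven cohomology-theoretic classification (\prettyref{prop:classification-theorem-via-cohomology}). Concretely, I would exhibit a natural bijection
\[
\Bundle_F(B) \;\leftrightarrow\; \left[B,\Shape(F,\ell^2)\right],
\]
compatible with the pullback actions of $\mathrm{Mod}(B)$ and with the passage to the total diffeomorphism type, and then transport the stabilizer/orbit decomposition from the Čech side to the homotopy side. Equivalently, once both sides are shown to classify isomorphism classes of smooth $F$-fiber bundles over $B$ in a $\mathrm{Mod}(B)$-equivariant way, the same disjoint-union-of-orbit-sets argument used in the proof of \prettyref{prop:classification-theorem-via-cohomology} applies verbatim.

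For the forward map $[f]\mapsto [E_f]$, \prettyref{lem:construction-of-bundle-from-classifying-map} already does the work: every homotopy class admits a smooth representative $f\colon B\to\Shape(F,\ell^2)$, and the pulled-back tautological bundle $E_f$ gives a smooth $F$-fiber bundle over $B$ whose smooth isomorphism class depends only on $[f]$. For the reverse map $[\xi]\mapsto[f_\xi]$, I would use the universal bundle from \prettyref{prop:classifying-space-of-diffeomorphism}: pass to the nonlinear frame bundle $\mathrm{Fr}_{\Diff(F)}(\xi)$ (as in \prettyref{eq:nonlinear-frame-bundle}), and note that a classifying map amounts to a $\Diff(F)$-equivariant bundle map into the universal principal bundle $\Emb(F,\ell^2)\to\Shape(F,\ell^2)$. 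Since $\Emb(F,\ell^2)$ is contractible (\prettyref{lem:shape-space-in-l2-is-contractible}), such an equivariant lift exists and is unique up to equivariant homotopy over the base, yielding a well-defined $f_\xi\colon B\to\Shape(F,\ell^2)$ up to homotopy. That these two constructions are mutually inverse follows from the pullback/associated-bundle formalism together with \prettyref{eq:tautological-bundle}.

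To conclude the proof, I would check two compatibilities. First, the bijection is $\Diff(B)$-equivariant: pulling back $\xi$ by $\beta\in\Diff(B)$ corresponds to pre-composing a classifying map by $\beta$, so the quotient by $\mathrm{Mod}(B)$ on the homotopy side matches the quotient on the bundle side. Second, the subset $[B,\Shape(F,\ell^2)]_E$ of homotopy classes of total diffeomorphism type $E$ corresponds, under the bijection, exactly to those bundles whose total space is diffeomorphic to $E$; this is immediate from \prettyref{def:diffeomorphism-type-of-homotopy} and \prettyref{lem:construction-of-bundle-from-classifying-map}. Assembling these pieces and letting $B$ range over $\ClassMan(m)$ reproduces the asserted disjoint-union-of-orbit-sets formula (\prettyref{eq:classification-theorem-via-homotopy}), by the same argument that concluded the proof of \prettyref{prop:classification-theorem-via-cohomology}.

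The main obstacle I expect is ensuring that the reverse construction $[\xi]\mapsto[f_\xi]$ is genuinely well-defined and smooth-compatible in the infinite-dimensional category. The construction of an equivariant lift $\mathrm{Fr}_{\Diff(F)}(\xi)\to\Emb(F,\ell^2)$ requires that the classifying-space theory for principal $\Diff(F)$-bundles works in the Fréchet-Lie-group setting (which is justified by \prettyref{prop:classifying-space-of-diffeomorphism} together with the smoothing results invoked in the proofs of \prettyref{lem:construction-of-bundle-from-cocycle} and \prettyref{lem:construction-of-bundle-from-classifying-map}); the uniqueness of the lift up to homotopy rests on the contractibility of $\Emb(F,\ell^2)$ and an equivariant obstruction argument. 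Once these ingredients are in place, the rest is essentially bookkeeping of the $\mathrm{Mod}(B)$-action and the total-space condition, mirroring the cohomology-theoretic case.
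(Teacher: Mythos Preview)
Your proposal is correct and takes essentially the same approach as the paper: establish the $\mathrm{Mod}(B)$-equivariant bijection $\Bundle_F(B)\leftrightarrow[B,\Shape(F,\ell^2)]$ via the classifying property of $\Shape(F,\ell^2)$ (\prettyref{prop:classifying-space-of-diffeomorphism}) and \prettyref{lem:construction-of-bundle-from-classifying-map}, then run the same disjoint-union-of-orbit-sets argument as in \prettyref{prop:classification-theorem-via-cohomology}. The paper simply cites the classifying property for the bijection where you spell out the equivariant-lift construction of the reverse map, but the argument structure is identical.
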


\begin{proof}
Let $\Bundle_{F}\left(B\right)$ denote the set of equivalence classes
of smooth $F$-fiber bundles over $B$. Then the $\Diff\left(F\right)$–classifying
property of $\Shape\left(F,\ell^{2}\right)$ (\prettyref{prop:classifying-space-of-diffeomorphism})
as well as \prettyref{lem:construction-of-bundle-from-classifying-map}
implies the following isomorphism of pointed sets
\[
\Bundle_{F}\left(B\right)\cong\left[B,\Shape\left(F,\ell^{2}\right)\right],\qquad\left[E_{f}\right]\leftrightarrow\left[f\right].
\]
Besides $\Bundle_{F}\left(B\right)$, the homotopy set $\left[B,\Shape\left(F,\ell^{2}\right)\right]$
is also equipped with a natural pullback action by the base diffeomorphism
group $\Diff\left(B\right)$ or, better still, by the base mapping
class group $\mathrm{Mod}\left(B\right)$. Since by construction the
above correspondence is $\mathrm{Mod}\left(B\right)$-equivariant,
we thereby obtain a descended correspondence
\[
\Bundle_{F}\left(B\right)/\mathrm{Mod}\left(B\right)\cong\left[B,\Shape\left(F,\ell^{2}\right)\right]/\mathrm{Mod}\left(B\right).
\]
Letting $B$ range over all $m$-dimensional diffeomorphism types,
we then have the correspondence
\[
\bigsqcup_{B\in\ClassMan\left(m\right)}\Bundle_{F}\left(B\right)/\mathrm{Mod}\left(B\right)\cong\bigsqcup_{B\in\ClassMan\left(m\right)}\left[B,\Shape\left(F,\ell^{2}\right)\right]/\mathrm{Mod}\left(B\right).
\]
Observe that the left-hand side of this last correspondence can be
viewed as the set of equivalence classes of $m$-codimensional smooth
$F$-fiberings on all possible diffeomorphism types of total spaces,
with our desired fibering classification $\ClassFib\left(E,F\right)$
embedded as the subset corresponding to the particular diffeomorphism
type of $E$. On the right-hand side, this amounts to restricting
$\left[B,\Shape\left(F,\ell^{2}\right)\right]$ to a certain homotopy
subset accordingly for $E$ — but this subset has already been identified
in \prettyref{def:diffeomorphism-type-of-homotopy}; namely, it is
exactly the set $\left[B,\Shape\left(F,\ell^{2}\right)\right]_{E}$
of those homotopy classes with total diffeomorphism type of $E$.
Therefore, we obtain the correspondence \prettyref{eq:classification-theorem-via-homotopy}
as desired.
\end{proof}
Once we complete the classification of all smooth $F$-fiberings on
$E$, we shall then focus on each fibering class as represented by
a model fibration $\StandardFibration\colon F\hookrightarrow E\to B$.
We have seen in \prettyref{sec:Smooth-Fibrations} that the fibering
classification interacts with a certain part of symmetries of $\StandardFibration$;
namely, the base transformation group $\Diff\left(B\right)_{\StandardFibration}$
(\prettyref{def:The-base-transformation-group}), or better still
$\mathrm{Mod}\left(B\right)_{\StandardFibration}\coloneqq\pi_{0}\Diff\left(B\right)_{\StandardFibration}$,
concerning the coordinate changes of the base that preserve $\StandardFibration$
up to bundle isomorphisms. This can thus be described explicitly in
terms of homotopy of classifying maps by virtue of the preceding proposition:
\begin{cor}
For any model fibration $\StandardFibration\colon F\hookrightarrow E\to B$,
the base transformation subgroup $\mathrm{Mod}\left(B\right)_{\StandardFibration}\leq\mathrm{Mod}\left(B\right)$
(resp., $\Diff\left(B\right)_{\StandardFibration}\leq\Diff\left(B\right)$)
of the base mapping class group (resp., of the base diffeomorphism
group) coincides with the stabilizer of the homotopy class of any
classifying map $f_{\StandardFibration}$ of $\StandardFibration$:
\[
\mathrm{Mod}\left(B\right)_{\StandardFibration}=\left\{ \left[\beta\right]\in\mathrm{Mod}\left(B\right)\mid\left[\beta^{\ast}f_{\StandardFibration}\right]=\left[f_{\StandardFibration}\right]\in\left[B,\Shape\left(F,\ell^{2}\right)\right]\right\} ,
\]
under the natural pullback action on homotopy.
\end{cor}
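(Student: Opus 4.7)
The plan is to mirror the cohomology-theoretic version of this corollary, substituting the homotopy-theoretic classification (\prettyref{prop:classification-theorem-via-homotopy}) for the Čech classification (\prettyref{prop:classification-theorem-via-cohomology}). The content is really just the orbit--stabilizer reformulation of the base-transformation group along the $\mathrm{Mod}(B)$-equivariant dictionary between fiber-bundle isomorphism classes and classifying homotopy classes.

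First I would recall from \prettyref{prop:The-base-transformation-group-as-stabilizer} (and its refinement in \prettyref{cor:Orbit-stabilizer-relation-for-pullback}) that $\Diff(B)_{\StandardFibration}$ consists exactly of those $\beta \in \Diff(B)$ with $\beta^{\ast}\StandardFibration \cong \StandardFibration$ in $\Bundle_{F}(B)$, and that it is an open subgroup of $\Diff(B)$, so its set of path components is precisely $\mathrm{Mod}(B)_{\StandardFibration} \leq \mathrm{Mod}(B)$. Thus the task reduces to translating the bundle-level stabilizer condition $[\beta^{\ast}\StandardFibration] = [\StandardFibration]$ into the homotopy-level condition $[\beta^{\ast} f_{\StandardFibration}] = [f_{\StandardFibration}]$.

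For this translation I would invoke \prettyref{prop:classification-theorem-via-homotopy}, whose proof establishes a $\mathrm{Mod}(B)$-equivariant bijection
\[
\Bundle_{F}(B) \cong \left[B,\Shape(F,\ell^{2})\right], \qquad [E_{f}] \leftrightarrow [f],
\]
where the action on the left is by pullback of bundles and the action on the right is by pre-composition with base diffeomorphisms (descended to homotopy classes). Under this equivariant bijection, a bundle class is fixed by $\beta$ if and only if the corresponding homotopy class of classifying maps is fixed by $\beta$, so $\beta^{\ast}\StandardFibration \cong \StandardFibration$ in $\Bundle_{F}(B)$ is equivalent to $[\beta^{\ast} f_{\StandardFibration}] = [f_{\StandardFibration}]$ in $[B,\Shape(F,\ell^{2})]$. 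Combining this equivalence with the stabilizer description of $\Diff(B)_{\StandardFibration}$ recalled in the first step yields the asserted equality of sets of diffeomorphisms, and then taking $\pi_{0}$ (which is legitimate because the condition depends only on the homotopy class of $\beta$, as both sides of the pullback action descend to $\mathrm{Mod}(B)$) yields the claimed equality for $\mathrm{Mod}(B)_{\StandardFibration}$.

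There is no serious obstacle here; the entire content is bookkeeping of equivariance along an already-established dictionary. The only point requiring minor care is that the pullback action on $[B,\Shape(F,\ell^{2})]$ must be verified to genuinely factor through $\mathrm{Mod}(B) = \pi_{0}\Diff(B)$, but this is immediate because isotopic base diffeomorphisms yield homotopic pre-compositions, hence the same homotopy class of classifying map; after that, independence of the statement from the particular choice of classifying map $f_{\StandardFibration}$ (within its homotopy class) is automatic. The corollary then follows in one line from the equivariant bijection.
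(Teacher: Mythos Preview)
Your proposal is correct and follows essentially the same approach as the paper: the paper's proof is a one-line citation to \prettyref{prop:classification-theorem-via-homotopy} combined with \prettyref{prop:The-base-transformation-group-as-stabilizer} and \prettyref{cor:Orbit-stabilizer-relation-for-pullback}, which is exactly the equivariant-dictionary argument you spell out in detail.
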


\begin{proof}
This immediately follows from the preceding homotopy-theoretic description
for classification (\prettyref{prop:classification-theorem-via-homotopy})
combined with the stabilizer characterization for base transformations
(\prettyref{prop:The-base-transformation-group-as-stabilizer}, and/or
\prettyref{cor:Orbit-stabilizer-relation-for-pullback}).
\end{proof}

\subsection{}

\subsection{Gauge analysis via homotopy: fiberwise embeddings and looping/delooping}

\chapter{Differential Topology\label{chap:Differential-Topology}}

The goal of this chapter is to ``lift'' the differential geometry
— from the finite-dimensional fiber bundle $\StandardFibration\colon F\hookrightarrow E\xtwoheadrightarrow{}{\StandardProjection}B$
— to the infinite-dimensional symmetry groups and moduli space, as
packaged in the following two sequences:
\[
\Vau\left(\StandardFibration\right)\hookrightarrow\Aut\left(\StandardFibration\right)\xtwoheadrightarrow{}{\AutProjection}\Diff\left(B\right)_{\StandardFibration},\qquad\Aut\left(\StandardFibration\right)\hookrightarrow\Diff\left(E\right)\xtwoheadrightarrow{}{\DiffProjection}\Fib\left(\StandardFibration\right).
\]
As we shall show, with the canonical Lie group structure on the diffeomorphism
group $\Diff\left(E\right)$, the symmetry group $\Aut\left(\StandardFibration\right)$
(hence $\Vau\left(\StandardFibration\right)$) inherits a Lie subgroup
structure, and the moduli space $\Fib\left(\StandardFibration\right)$
inherits a homogeneous smooth structure, for which both sequences
above become infinite-dimensional smooth principal fibrations. There
are two recurring themes in our construction: infinitesimal%
and geometric, so that the resulting smooth fibrations above will
have the nature of Lie integration and Riemannian fibration. Before
we embark on this infinite-dimensional differential geometric study,
let us first set up the geometric scene for our fiber bundle $\StandardFibration$
in the finite-dimensional world. This is summarized in the following
assumption and will be explained in the preliminary subsection immediately
afterward.
\begin{namedthm}[Assumption for \prettyref{chap:Differential-Topology}]
In this chapter, we assume (without loss of generality) that the
fibered total space $\left(E,\StandardFibration\right)$ is equipped
with a projectable (bundle-like) Riemannian metric $g$, so that it
induces the following three data:
\begin{enumerate}
\item a fiberwise metric $\bigsqcup_{x\in B}g_{E_{x}}$ on the vertical
distribution $T\StandardFibration=\bigsqcup_{x\in B}TE_{x}$, as in
\prettyref{eq:fiberwise-metric}.
\item a connection or horizontal distribution $H\StandardFibration$ complement
to $T\StandardFibration$, as in \prettyref{eq:connection-or-horizontal-distribution}.
\item a base metric $g_{B}$ on the base tangent bundle $TB$ as descended
from $H\StandardFibration$ (by virtue of the projectable assumption),
as in \prettyref{eq:base-metric}.
\end{enumerate}
This will be referred to as ``setting up $\StandardFibration$ as
a Riemannian fibration'' for short.
\end{namedthm}
One may think of these three ingredients — a fiberwise metric $\bigsqcup_{x\in B}g_{E_{x}}$,
a connection $H\StandardFibration$, and a base metric $g_{B}$ —
as what are really being relied on; for then the 2-tensor field $\left(\left.\pi^{\ast}g_{B}\right|_{H\StandardFibration}\right)\oplus\bigsqcup_{x\in B}g_{E_{x}}$
on $H\StandardFibration\oplus T\StandardFibration$ will recover a
suitable metric on the total space. As we progress through this chapter,
it is worth recognizing which one(s) of these three is in effect.

\subsection*{Setting the scene: Riemannian fibration}

Let $\left(E,\StandardFibration\right)$ be our fibered manifold,
and let the tangent bundle of $\StandardFibration$ be embedded as
the vertical distribution $T\StandardFibration=\bigsqcup_{x\in B}TE_{x}$
in $TE$. Suppose that $E$ is equipped with a Riemannian metric $g$.
Then we have the following two induced geometric structures with respect
to $\StandardFibration$. The first structure is a \emph{fiberwise metric}
$\bigsqcup_{x\in B}g_{E_{x}}$ on the vertical distribution, which
is induced from $g$ by taking the restriction:
\begin{equation}
g_{E_{x}}\colon TE_{x}\oplus TE_{x}\to\mathbb{R},\quad g_{E_{x}}\coloneqq\left.g\right|_{E_{x}}\qquad\left(\forall x\in B\right).\label{eq:fiberwise-metric}
\end{equation}
The other structure is a \emph{connection} or \emph{horizontal distribution}
$H\StandardFibration$ complement to the vertical distribution, which
is induced from $g$ by taking the orthogonal complement:
\begin{equation}
H\StandardFibration\coloneqq\ker\left(TE\xrightarrow{v\mapsto v^{\top}}T\StandardFibration\right)\coloneqq\left(T\StandardFibration\right)^{\perp}.\label{eq:connection-or-horizontal-distribution}
\end{equation}
Such a horizontal distribution $H\StandardFibration\subseteq TE$
determines at each point a distinguished horizontal subspace $H_{p}\StandardFibration\subseteq T_{p}E$
to be canonically isomorphic to the base tangent space $T_{\pi\left(p\right)}B$
via (the restriction of) $d\pi_{p}$; in other words, there induces
the following smooth vector bundle isomorphism which we shall call
the \emph{horizontal lift}:
\begin{equation}
\pi^{\ast}TB\cong H\StandardFibration,\qquad\left.d\pi_{p}\right|_{H_{p}\StandardFibration}^{-1}\colon T_{x}B\cong H_{p}\StandardFibration\quad\left(\forall p\in E_{x}\right).\label{eq:horizontal-lift-vector-space-iso}
\end{equation}
We say that our metric $g$ is \emph{projectable} (or \emph{bundle-like})
if the horizontal subspaces along each fiber are all isometric to
each other via the canonical linear isomorphisms induced from \prettyref{eq:horizontal-lift-vector-space-iso}.
By definition, a projectable metric $g$ exactly ensures that each
base tangent space $T_{x}B$ inherits a unique inner product from
a horizontal subspace $H_{p}\StandardFibration$ via horizontal lift
\prettyref{eq:horizontal-lift-vector-space-iso}, independent of the
choice of $p\in E_{x}$. Hence the terminology: a projectable metric
$g$ admits a well-defined projection; i.e., a \emph{base metric}
$g_{B}$ uniquely determined by its $\pi$-relation to the horizontal
restriction of $g$:
\begin{equation}
g_{B}\colon TB\oplus TB\to\mathbb{R},\qquad\left.\pi^{\ast}g_{B}\right|_{H\StandardFibration}\coloneqq\left.g\right|_{H\StandardFibration}.\label{eq:base-metric}
\end{equation}
Just as the induced fiberwise metric $\left\{ g_{E_{x}}\right\} _{x}$
can be uniquely characterized by the property of making every concrete
fiber $E_{x}\coloneqq\pi^{-1}\left(x\right)\subseteq E$ of $\StandardFibration$
a Riemannian submanifold, the induced base metric can be uniquely
characterized by the property of making the bundle projection $\pi\colon E\to B$
of $\StandardFibration$ a Riemannian submersion. We shall refer to
this as ``setting up $\StandardFibration$ as a Riemannian fibration''
and take it as our primary setup for this chapter.

\section{Tangent Spaces and Lie Algebras\label{sec:Tangent-Spaces-and-Lie-Algebras}}

To embark on our (infinite-dimensional) differential geometric study,
let us first understand the corresponding structures at the \emph{infinitesimal}
level, by which we mean the expected tangent spaces and tangent maps.
It is worth making two comments in this regard: on the one hand, just
like in finite dimensions, information at the infinitesimal level
often provides a simplified and clarified way to access the infinite-dimensional
smooth manifolds, which will be carried out in this section; but on
the other hand, in contrast with the finite-dimensional case, information
at the infinitesimal level is generally not sufficient to capture
the local picture on the infinite-dimensional smooth manifolds, which
will be treated later in subsequent sections.

\subsection{Lie subalgebra of projectable vector fields\label{subsec:Lie-subalgebra-of-projectable}}

In this subsection we approach, from the infinitesimal level, the
desired structure of infinite-dimensional \emph{Lie subgroups} on
the following:
\begin{equation}
\Vau\left(\StandardFibration\right)\trianglelefteq\Aut\left(\StandardFibration\right)\leq\Diff\left(E\right).\label{eq:to-be-Lie-subgroups}
\end{equation}
Recall that for the diffeomorphism group $\Diff\left(E\right)$ with
its canonical Lie group structure, its Lie algebra $T_{\Identity}\Diff\left(E\right)$
is identified (via the exponential law) with the space $\mathfrak{X}\left(E\right)$
of smooth vector fields. In this way, the to-be determined infinitesimal
version of \prettyref{eq:to-be-Lie-subgroups} will take the form
of a Lie subalgebra sequence
\begin{equation}
\mathfrak{X}^{\mathrm{V}}\left(\StandardFibration\right)\trianglelefteq\mathfrak{X}^{\mathrm{A}}\left(\StandardFibration\right)\leq\mathfrak{X}\left(E\right).\label{eq:Lie-algebra-of-to-be-Lie-subgroups}
\end{equation}
Let us start with the more restrictive one: for the vertical automorphism
group $\Vau\left(\StandardFibration\right)$, the expected Lie algebra
turns out to be just the space of \emph{vertical vector fields} (i.e.,
sections of the vertical distribution):
\[
\mathfrak{X}^{\mathrm{V}}\left(\StandardFibration\right)\coloneqq\Gamma\left(T\StandardFibration\right)=\left\{ X\in\mathfrak{X}\left(E\right)\mid d\pi\circ X=0\right\} .
\]
In other words, a vertical field is characterized by the property
of being $\pi$-related to the zero field on $B$. This has an obvious
generalization that will turn out to be the expected Lie algebra of
$\Aut\left(\StandardFibration\right)$, as given in the following
definition:

\begin{defn}
\label{def:projectable-or-vertical-vector-fields}The space of \emph{projectable vector fields}
(or \emph{aligned vector fields}) of $\StandardFibration$, denoted
by $\mathfrak{X}^{\mathrm{A}}\left(\StandardFibration\right)$, is
a closed Fréchet Lie subalgebra of $\mathfrak{X}\left(E\right)$ given
as follows:
\[
\mathfrak{X}^{\mathrm{A}}\left(\StandardFibration\right)\coloneqq\left\{ X\in\mathfrak{X}\left(E\right)\mid d\pi\circ X=Y\circ\pi,\ \exists Y\in\mathfrak{X}\left(B\right)\right\} .
\]
In words, a smooth vector field $X$ on $E$ is projectable if it
is $\pi$-related to some smooth vector field on the base (such a
base field is unique if exists, and will be denoted by $\pi_{\ast}X$).
\end{defn}

By the preceding definition, the defining condition for $X$ to be
projectable exactly ensures its projection $\pi_{\ast}X$ to be well-defined
(hence the choice of terminology), with the vertical ones characterized
by the condition $\pi_{\ast}X=0$. This projection procedure is clearly
a Lie algebra homomorphism by the naturality of the Lie bracket, hence
we obtain a Lie algebra extension:
\begin{equation}
0\to\mathfrak{X}^{\mathrm{V}}\left(\StandardFibration\right)\to\mathfrak{X}^{\mathrm{A}}\left(\StandardFibration\right)\xrightarrow{\pi_{\ast}}\mathfrak{X}\left(B\right)\to0.\label{eq:infinitesimal-symmetry-sequence}
\end{equation}
In particular, we see that the Lie subalgebra $\mathfrak{X}^{\mathrm{V}}\left(\StandardFibration\right)$
is an ideal of $\mathfrak{X}^{\mathrm{A}}\left(\StandardFibration\right)$.
In fact, $\mathfrak{X}^{\mathrm{A}}\left(\StandardFibration\right)$
is the ``idealizer'' of $\mathfrak{X}^{\mathrm{V}}\left(\StandardFibration\right)$,
in the sense of the characterization
\[
X\in\mathfrak{X}^{\mathrm{A}}\left(\StandardFibration\right)\iff\left[X,Y\right]\in\mathfrak{X}^{\mathrm{V}}\left(\StandardFibration\right),\ \forall Y\in\mathfrak{X}^{\mathrm{V}}\left(\StandardFibration\right).
\]
In other words, a vector field is projectable if and only if its associated
Lie derivative operator sends vertical fields to vertical fields.
In fact, we have the following consideration in terms of flows. Recall
that as a regular Lie group, the total diffeomorphism group $\Diff\left(E\right)$
admits a \emph{Lie group exponential}. In general, this refers to
the map sending every element in the Lie algebra to the time-1 value
of the corresponding one-parameter subgroup of the Lie group. In our
case concerning the total diffeomorphism group $\Diff\left(E\right)$,
the Lie algebra $T_{\Identity}\Diff\left(E\right)$ is the space $\mathfrak{X}\left(E\right)$
of smooth vector fields on the total space, and each $V\in\mathfrak{X}\left(E\right)$
(or the associated right-invariant vector field thereof) generates
a one-parameter subgroup of $\Diff\left(E\right)$ that can be identified
— via the exponential law \prettyref{eq:exponential-law-general-in-appendix}
— with the usual flow $\mathrm{Fl}_{t}\left(V\right)$ on $E$ generated
by $V$. In sum, the Lie group exponential of $\Diff\left(E\right)$
is just given by the time-1 flow on $E$:
\begin{equation}
\LieExp{\Diff\left(E\right)}\colon T_{\Identity}\Diff\left(E\right)=\mathfrak{X}\left(E\right)\ni V\mapsto\mathrm{Fl}_{1}\left(V\right)\in\Diff\left(E\right),\qquad V_{p}=\left.\frac{\partial}{\partial t}\right|_{t=0}\mathrm{Fl}_{t}\left(V\right)\left(p\right)\in T_{p}E.\label{eq:Lie-exp}
\end{equation}
Under this correspondence, the Lie subalgebras $\mathfrak{X}^{\mathrm{V}}\left(\StandardFibration\right)$
and $\mathfrak{X}^{\mathrm{A}}\left(\StandardFibration\right)$ defined
above turn out to be associated with the to-be Lie subgroups $\Vau\left(\StandardFibration\right)$
and $\Aut\left(\StandardFibration\right)$, respectively, as shown
in the following lemma:
\begin{lem}
\label{lem:infinitesimal-Lie-subgroup}Under the Lie group exponential
$\LieExp{\Diff\left(E\right)}\colon\mathfrak{X}\left(E\right)\to\Diff\left(E\right)$
for the diffeomorphism group of $E$, we have the correspondences
\begin{align*}
\LieExp{\Diff\left(E\right)}\left(tX\right)\in\Aut\left(\StandardFibration\right),\,\forall t & \iff X\in\mathfrak{X}^{\mathrm{A}}\left(\StandardFibration\right);\\
\LieExp{\Diff\left(E\right)}\left(tX\right)\in\Vau\left(\StandardFibration\right),\,\forall t & \iff X\in\mathfrak{X}^{\mathrm{V}}\left(\StandardFibration\right).
\end{align*}
In words, the one-parameter subgroups of $\Diff\left(E\right)$ contained
in the subset $\Aut\left(\StandardFibration\right)$ (resp., $\Vau\left(\StandardFibration\right)$)
are those with infinitesimal generators in $\mathfrak{X}\left(E\right)$
contained in the Lie subalgebra $\mathfrak{X}^{\mathrm{A}}\left(\StandardFibration\right)$
(resp. $\mathfrak{X}^{\mathrm{V}}\left(\StandardFibration\right)$)
consisting of projectable vector fields (resp., vertical vector fields),.
\end{lem}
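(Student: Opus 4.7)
The plan is to reduce everything to the classical relationship between $\pi$-relatedness of vector fields and intertwining of flows, together with the identification (already stated in \prettyref{eq:Lie-exp}) of $\LieExp{\Diff\left(E\right)}\left(tX\right)$ with the time-$t$ flow $\mathrm{Fl}_{t}\left(X\right)$. Since the closed-manifold assumption on $E$ guarantees completeness, the flow $\mathrm{Fl}_{t}\left(X\right)$ is a well-defined one-parameter subgroup of $\Diff\left(E\right)$. I will prove the projectable case, from which the vertical case will follow by specialization to $Y=0$.

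\textbf{($\Leftarrow$)} Assume $X\in\mathfrak{X}^{\mathrm{A}}\left(\StandardFibration\right)$, with projection $Y\coloneqq\pi_{\ast}X\in\mathfrak{X}\left(B\right)$ satisfying $d\pi\circ X=Y\circ\pi$. The classical naturality lemma for flows of $\pi$-related vector fields yields the intertwining identity
\[
\pi\circ\mathrm{Fl}_{t}\left(X\right)=\mathrm{Fl}_{t}\left(Y\right)\circ\pi,\qquad\forall t\in\mathbb{R}.
\]
Since $B$ is closed, $\mathrm{Fl}_{t}\left(Y\right)\in\Diff\left(B\right)$, and the above identity exhibits $\mathrm{Fl}_{t}\left(X\right)$ as an automorphism of $\StandardFibration$ covering $\mathrm{Fl}_{t}\left(Y\right)$ (in the sense of \prettyref{def:automorphism}). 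Hence $\LieExp{\Diff\left(E\right)}\left(tX\right)\in\Aut\left(\StandardFibration\right)$ for every $t$. If moreover $X$ is vertical, then $Y=0$, so $\mathrm{Fl}_{t}\left(Y\right)=\Identity_{B}$, giving $\pi\circ\mathrm{Fl}_{t}\left(X\right)=\pi$ and hence $\LieExp{\Diff\left(E\right)}\left(tX\right)\in\Vau\left(\StandardFibration\right)$.

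\textbf{($\Rightarrow$)} Suppose $h_{t}\coloneqq\mathrm{Fl}_{t}\left(X\right)\in\Aut\left(\StandardFibration\right)$ for all $t$. By \prettyref{def:automorphism}, for each $t$ there is a unique $\underline{h_{t}}\in\Diff\left(B\right)$ with $\pi\circ h_{t}=\underline{h_{t}}\circ\pi$; uniqueness follows from the surjectivity of $\pi$. Writing $\beta_{t}\coloneqq\underline{h_{t}}$, the submersion $\pi$ being a smooth surjection with local smooth sections means that $\beta_{t}\left(x\right)=\pi\left(h_{t}\left(\sigma\left(x\right)\right)\right)$ for any smooth local section $\sigma$ of $\pi$; hence $\left(t,x\right)\mapsto\beta_{t}\left(x\right)$ is smooth, and $\beta_{0}=\Identity_{B}$. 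Differentiating the relation $\pi\circ h_{t}=\beta_{t}\circ\pi$ at $t=0$ and evaluating at $p\in E$ yields
\[
d\pi_{p}\left(X_{p}\right)=\left.\tfrac{\partial}{\partial t}\right|_{t=0}\beta_{t}\left(\pi\left(p\right)\right)=Y_{\pi\left(p\right)},\qquad Y\coloneqq\left.\tfrac{\partial}{\partial t}\right|_{t=0}\beta_{t}\in\mathfrak{X}\left(B\right),
\]
so $X$ is $\pi$-related to $Y$, i.e., $X\in\mathfrak{X}^{\mathrm{A}}\left(\StandardFibration\right)$. In the vertical case $h_{t}\in\Vau\left(\StandardFibration\right)$ forces $\beta_{t}=\Identity_{B}$, so $Y=0$ and $X\in\mathfrak{X}^{\mathrm{V}}\left(\StandardFibration\right)$.

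The only mildly subtle point is justifying the smooth dependence of the base diffeomorphism $\beta_{t}$ on $t$ in the ($\Rightarrow$) direction, but this is immediate from the existence of local smooth sections of the submersion $\pi$ combined with joint smoothness of the flow $\left(t,p\right)\mapsto\mathrm{Fl}_{t}\left(X\right)\left(p\right)$; everything else is a direct application of the naturality of flows under $\pi$-related vector fields.
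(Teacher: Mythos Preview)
Your proof is correct and follows essentially the same approach as the paper: both directions rest on the identification of $\LieExp{\Diff\left(E\right)}\left(tX\right)$ with the flow $\mathrm{Fl}_{t}\left(X\right)$, the naturality of flows under $\pi$-related vector fields for the ($\Leftarrow$) direction, and differentiation of the covering relation $\pi\circ h_{t}=\beta_{t}\circ\pi$ at $t=0$ for the ($\Rightarrow$) direction, with the vertical case obtained by specializing to $Y=0$. Your extra sentence justifying smooth dependence of $\beta_{t}$ on $t$ via local sections is a welcome elaboration that the paper leaves implicit.
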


\begin{proof}
Recall from \prettyref{eq:Lie-exp} that via the exponential law,
the Lie group exponential of $\Diff\left(E\right)$ is just given
by the time-1 flow on $E$. Thus it suffices to show that a smooth
vector field generates automorphism flow (resp., vertical automorphism
flow) for all time if and only if it is a projectable field (resp.,
vertical field). Let $X\in\mathfrak{X}\left(E\right)$, and let $\mathrm{Fl}_{t}\left(X\right)\in\Diff\left(E\right)$
denote its flow at any time $t$. Recall that by definition, it is
an automorphism $\mathrm{Fl}_{t}\left(X\right)\in\Aut\left(\StandardFibration\right)$
if and only if it can be projected to some base transformation:
\begin{equation}
\pi\circ\mathrm{Fl}_{t}\left(X\right)=\beta_{t}\circ\pi,\qquad\exists\beta_{t}\in\Diff\left(B\right).\label{eq:projection-of-automorphism-in-the-proof}
\end{equation}
Taking infinitesimal generators yields the corresponding condition
at the infinitesimal level:
\[
\left.\frac{\partial}{\partial t}\right|_{t=0}\pi\left(\mathrm{Fl}_{t}\left(X\right)\left(p\right)\right)=\left.\frac{\partial}{\partial t}\right|_{t=0}\beta_{t}\left(\pi\left(p\right)\right).
\]
But by the chain rule, this recovers the defining condition for $X$
to be projectable $X\in\mathfrak{X}^{\mathrm{A}}\left(\StandardFibration\right)$
(with its projection $\pi_{\ast}X=Y$ being the infinitesimal generator
of the flow $\beta_{t}$ in the base), as desired:
\begin{equation}
d\pi\left(X\left(p\right)\right)=Y\left(\pi\left(p\right)\right)\qquad\text{with}\qquad Y\in\mathfrak{X}\left(B\right),\,Y\left(x\right)\coloneqq\left.\frac{\partial}{\partial t}\right|_{t=0}\beta_{t}\left(x\right).\label{eq:projecting-vector-fields-infinitesimal}
\end{equation}
Conversely, suppose that $X\in\mathfrak{X}^{\mathrm{A}}\left(\StandardFibration\right)$.
Then the condition \prettyref{eq:projecting-vector-fields-infinitesimal}
is satisfied for $Y\coloneqq\pi_{\ast}X$; but by the naturality of
flows (see e.g., \cite[Proposition 9.13]{MR2954043}), we have
\begin{equation}
\pi\circ\mathrm{Fl}_{t}\left(X\right)=\mathrm{Fl}_{t}\left(\pi_{\ast}X\right)\circ\pi,\label{eq:naturality-of-flows}
\end{equation}
so that the condition \prettyref{eq:projection-of-automorphism-in-the-proof}
is satisfied for $\beta_{t}\coloneqq\mathrm{Fl}_{t}\left(\pi_{\ast}X\right)$,
hence $\mathrm{Fl}_{t}\left(X\right)\in\Aut\left(\StandardFibration\right)$
as desired. Therefore, we have established the desired relation between
automorphism flows and projectable field.
Moreover, for $\mathrm{Fl}_{t}\left(X\right)$ to be a vertical automorphism
$\mathrm{Fl}_{t}\left(X\right)\in\Vau\left(\StandardFibration\right)$,
the defining condition amounts to requiring $\beta_{t}$ to be the
identity map, whose infinitesimal generator $Y$ is then the zero
field, which just recovers the defining condition $\pi_{\ast}X=0$
for $X$ to be vertical $X\in\mathfrak{X}^{\mathrm{V}}\left(\StandardFibration\right)$,
as desired.
\end{proof}
In finite dimensions, the infinitesimal information (such as the results
proved in this subsection) suffices to imply local information at
the level of manifolds themselves. For example, the familiar (finite-dimensional)
immersion theorem and submersion theorem, which are consequences of
the inverse function theorem, tell us that if a map has injective
(respectively, surjective) tangent map then it can be locally expressed
as a coordinate injection $\mathbb{R}^{m}\to\mathbb{R}^{m}\times\mathbb{R}^{n}$
(respectively, a coordinate projection $\mathbb{R}^{m}\times\mathbb{R}^{n}\to\mathbb{R}^{m}$)
of the Euclidean model spaces — however, these results no longer
hold in general for locally convex model spaces in infinite dimensions.
In particular, while one might be tempted by \prettyref{lem:infinitesimal-Lie-subgroup}
to use the Lie group exponential $\mathrm{Fl}_{1}\colon\mathfrak{X}\left(E\right)\to\Diff\left(E\right)$
to construct a desired submanifold chart for the automorphism group,
this will turn out to be false for the reason that it is not even
a chart. Indeed, the diffeomorphism group $\Diff\left(E\right)$ is
known to \emph{fail to be locally exponential} for any compact manifold
$E$ (see \cite[§51.3]{MR1471480} for the case $E=\Sphere 1$),
meaning that the Lie group exponential fails to be a local diffeomorphism
around $0$, even though its differential at $0$ is invertible. Again,
the missing ingredient here is the inverse function theorem in infinite
dimensions, which is a source of challenges in general in the study
of infinite-dimensional differential geometry: the infinitesimal information
at the level of tangent spaces no longer suffices to imply local information
at the level of manifolds themselves.\footnote{An important general strategy to overcome such challenges — which
though will not be pursued here — is to seek a suitable generalized
inverse function theorem that can be applied to the specific case
under consideration. See e.g., the Nash–Moser inverse function theorem
\cite{MR0656198}.} While we shall return to the study of such more subtle local structures
in subsequent sections, for now let us continue our infinitesimal
consideration: having understood the Fréchet subspaces $\mathfrak{X}^{\mathrm{V}}\left(\StandardFibration\right)\trianglelefteq\mathfrak{X}^{\mathrm{A}}\left(\StandardFibration\right)\leq\mathfrak{X}\left(E\right)$,
we next study their complements and splittings.

\subsection{Basic vector fields and horizontal lift}

In this subsection we approach, from the infinitesimal level, the
desired structure of infinite-dimensional \emph{Lie group extension}
— meaning that the projection is not only a surjective Lie group homomorphism
but also an infinite-dimensional \emph{smooth principal fibration}
— on the following:
\begin{equation}
\Vau\left(\StandardFibration\right)\hookrightarrow\Aut\left(\StandardFibration\right)\xrightarrow{\AutProjection}\Diff\left(B\right)_{\StandardFibration}.\label{eq:Lie-group-extension}
\end{equation}
The first condition, which requires $\AutProjection$ to be a surjective
Lie group homomorphism, is more familiar. At the infinitesimal level,
we have already met a natural candidate of the corresponding surjective
Lie algebra homomorphism; namely, the defining projection $\pi_{\ast}$
for projectable vector fields, as in the following Lie algebra extension:
\begin{equation}
0\to\mathfrak{X}^{\mathrm{V}}\left(\StandardFibration\right)\to\mathfrak{X}^{\mathrm{A}}\left(\StandardFibration\right)\xrightarrow{\pi_{\ast}}\mathfrak{X}\left(B\right)\to0.\label{eq:Lie-algebra-extension}
\end{equation}
The following lemma justifies such a desired correspondence under
the Lie group exponentials \prettyref{eq:Lie-exp} for diffeomorphism
groups:
\begin{lem}
\label{lem:infinitesimal-projection-of-projectable-field}Under the
Lie group exponential $\LieExp{\Diff\left(\cdot\right)}\colon\mathfrak{X}\left(\cdot\right)\to\Diff\left(\cdot\right)$
for the diffeomorphism groups of $E$ and of $B$, we have the relation
\[
\AutProjection\left(\LieExp{\Diff\left(E\right)}\left(tX\right)\right)=\LieExp{\Diff\left(B\right)}\left(t\pi_{\ast}\left(X\right)\right),\qquad\forall X\in\mathfrak{X}^{\mathrm{A}}\left(\StandardFibration\right).
\]
In words, the projection of one-parameter subgroups under $\AutProjection\colon\Aut\left(\StandardFibration\right)\to\Diff\left(B\right)_{\StandardFibration}$
is infinitesimally generated by the Lie algebra homomorphism $\pi_{\ast}\colon\mathfrak{X}^{\mathrm{A}}\left(\StandardFibration\right)\to\mathfrak{X}\left(B\right)$
given by the canonical projection of projectable fields.
\end{lem}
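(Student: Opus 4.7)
The plan is to reduce the statement to the naturality of flows, which has already been recorded as equation \prettyref{eq:naturality-of-flows} in the proof of \prettyref{lem:infinitesimal-Lie-subgroup}. The overall strategy is: rewrite both sides in terms of time-$t$ flows via the explicit description of the Lie group exponential of a diffeomorphism group; then unpack what $\AutProjection$ does to such a flow; then match the two.

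First, I recall from \prettyref{eq:Lie-exp} that the Lie group exponential of $\Diff(M)$ is given by the time-$1$ flow on $M$, so by the semigroup property of flows one has $\LieExp{\Diff(E)}(tX)=\mathrm{Fl}_{1}(tX)=\mathrm{Fl}_{t}(X)$ in $\Diff(E)$, and likewise $\LieExp{\Diff(B)}(t\pi_{\ast}X)=\mathrm{Fl}_{t}(\pi_{\ast}X)$ in $\Diff(B)$. Thus the identity to be proved is equivalent to
\[
\AutProjection\left(\mathrm{Fl}_{t}(X)\right)=\mathrm{Fl}_{t}\left(\pi_{\ast}X\right),\qquad\forall t,\ \forall X\in\mathfrak{X}^{\mathrm{A}}(\StandardFibration).
\]

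Next, I unpack $\AutProjection$. By its very definition in \prettyref{eq:main-projection-from-Aut}, $\AutProjection(h)$ is the unique base diffeomorphism $\underline{h}$ satisfying $\pi\circ h=\underline{h}\circ\pi$. Since $X\in\mathfrak{X}^{\mathrm{A}}(\StandardFibration)$, \prettyref{lem:infinitesimal-Lie-subgroup} guarantees that $\mathrm{Fl}_{t}(X)$ lies in $\Aut(\StandardFibration)$ for every $t$, so $\AutProjection(\mathrm{Fl}_{t}(X))$ is well defined and characterized uniquely by intertwining $\pi$ with $\mathrm{Fl}_{t}(X)$. But the naturality of flows \prettyref{eq:naturality-of-flows} (which is a general finite-dimensional fact about $\pi$-related vector fields) gives precisely
\[
\pi\circ\mathrm{Fl}_{t}(X)=\mathrm{Fl}_{t}(\pi_{\ast}X)\circ\pi,
\]
so $\mathrm{Fl}_{t}(\pi_{\ast}X)$ is exactly that unique intertwining base diffeomorphism; i.e., $\AutProjection(\mathrm{Fl}_{t}(X))=\mathrm{Fl}_{t}(\pi_{\ast}X)$, as desired.

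I do not expect any serious obstacle here: both the characterization of $\LieExp{\Diff(\cdot)}$ as the time-$1$ flow and the naturality of flows for $\pi$-related fields are already in place in the preceding discussion, and the only content of the lemma is to observe that these two facts combine to give the stated compatibility between $\pi_{\ast}$ and $\AutProjection$. If anything needs emphasis, it is simply that $\pi_{\ast}X$ is a genuine smooth vector field on $B$ (by \prettyref{def:projectable-or-vertical-vector-fields}) whose flow exists for all time because $B$ is compact, so that $\LieExp{\Diff(B)}(t\pi_{\ast}X)$ is unambiguously defined and the identification on both sides is global in $t$.
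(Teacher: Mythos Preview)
Your proof is correct and follows essentially the same approach as the paper: reduce the Lie group exponentials to time-$t$ flows via \prettyref{eq:Lie-exp}, and then invoke the naturality of flows \prettyref{eq:naturality-of-flows} from the proof of \prettyref{lem:infinitesimal-Lie-subgroup}. Your version is slightly more detailed in unpacking the definition of $\AutProjection$ and noting completeness of $\pi_{\ast}X$ on the compact base, but the argument is the same.
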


\begin{proof}
Recall from \prettyref{eq:Lie-exp} that via the exponential law,
the Lie group exponential of $\Diff\left(E\right)$ is just given
by the time-1 flow on $E$. Thus it suffices to show that for any
projectable vector field, the flow of the projection agrees with the
projection of its flow for all time:
\[
\AutProjection\left(\mathrm{Fl}_{t}\left(X\right)\right)=\mathrm{Fl}_{t}\left(\pi_{\ast}\left(X\right)\right),\qquad\forall X\in\mathfrak{X}^{\mathrm{A}}\left(\StandardFibration\right).
\]
But this is just equivalent to \prettyref{eq:naturality-of-flows}
in the proof of \prettyref{lem:infinitesimal-Lie-subgroup}, which
holds as a consequence of the naturality of flows. This completes
the proof.
\end{proof}
In summary, via the Lie group exponentials of diffeomorphism groups,
we have established relations between the (to-be) Lie group homomorphisms
in \prettyref{eq:Lie-group-extension} with their infinitesimal version
in \prettyref{eq:Lie-algebra-extension}, as illustrated in the following
commutative diagram:
\[
\xymatrix@C=4pc{\Vau\left(\StandardFibration\right)\ar@{^{(}->}[r] & \Aut\left(\StandardFibration\right)\ar@{->>}[r]^{\AutProjection} & \Diff\left(B\right)_{\StandardFibration}\\
\mathfrak{X}^{\mathrm{V}}\left(\StandardFibration\right)\ar@{^{(}->}[r]\ar[u]^{\LieExp{\Diff\left(E\right)}} & \mathfrak{X}^{\mathrm{A}}\left(\StandardFibration\right)\ar@{->>}[r]^{\pi_{\ast}}\ar[u]^{\LieExp{\Diff\left(E\right)}} & \mathfrak{X}\left(B\right)\ar[u]^{\LieExp{\Diff\left(B\right)}}
}
.
\]
Next, we turn to the second condition for \prettyref{eq:Lie-group-extension}
to be a genuine Lie group extension; namely, $\AutProjection$ should
also be a smooth principal fibration. This%
is a feature in infinite dimensions. Indeed, while being automatic
in finite dimensions, the local splitting (for a smooth bundle structure)
of an infinite-dimensional Lie group by a closed Lie subgroup does
not always exist, nor does the splitting of an infinite-dimensional
Fréchet space by a closed Fréchet subspace. In our case concerning
\prettyref{eq:Lie-group-extension}, the desired local splitting can
be conveniently described by a smooth local section
\[
\sigma\colon\Diff\left(B\right)_{\StandardFibration}\supsetopen U\to\Aut\left(\StandardFibration\right),\qquad\AutProjection\circ\sigma=\Identity_{U}.
\]
At the infinitesimal level \prettyref{eq:Lie-algebra-extension},
this amounts to a continuous linear (hence smooth)
section of Fréchet spaces:
\begin{equation}
\sigma'\colon\mathfrak{X}\left(B\right)\to\mathfrak{X}^{\mathrm{A}}\left(\StandardFibration\right),\qquad{\pi}_{\ast}\circ\sigma'=\Identity_{\mathfrak{X}\left(B\right)}.\label{eq:section-condition}
\end{equation}
In words, we need a continuous, linear procedure to ``lift'' each
vector field on the base space to a projectable vector field. Since
vertical fields are characterized by having zero projection, what
we are seeking a certain procedure of ``horizontal lifting'' along
each fiber — there is just a canonical one associated with a choice
of connection, as constructed in the following lemma:
\begin{lem}
\label{lem:horizontal-lifting}Given a connection of $\StandardFibration$,
the canonical Fréchet space projection of $\mathfrak{X}^{\mathrm{A}}\left(\StandardFibration\right)$
onto $\mathfrak{X}\left(B\right)$ admits a natural (continuous linear)
section $\sigma'\colon\mathfrak{X}\left(B\right)\to\mathfrak{X}^{\mathrm{A}}\left(\StandardFibration\right)$.
\end{lem}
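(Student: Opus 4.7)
The plan is to construct $\sigma'$ as the horizontal lift induced by the given connection and then verify the three required properties: that it lands in $\mathfrak{X}^{\mathrm{A}}(\StandardFibration)$, that it is a section of $\pi_*$, and that it is continuous and linear between Fréchet spaces.

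First I would recall that the chosen connection gives the vector bundle isomorphism $\pi^*TB \cong H\StandardFibration$ of \prettyref{eq:horizontal-lift-vector-space-iso}, which at each $p \in E$ is the inverse of $d\pi_p|_{H_p\StandardFibration} \colon H_p\StandardFibration \xrightarrow{\cong} T_{\pi(p)}B$. For any $Y \in \mathfrak{X}(B)$, define pointwise
\[
\sigma'(Y)(p) \;\coloneqq\; \bigl(d\pi_p|_{H_p\StandardFibration}\bigr)^{-1}\bigl(Y(\pi(p))\bigr) \in H_p\StandardFibration \subseteq T_pE.
\]
Since the composition $E \xrightarrow{\pi} B \xrightarrow{Y} TB$ is smooth and the horizontal-lift bundle isomorphism is a smooth bundle morphism, $\sigma'(Y)$ is a smooth vector field on $E$.

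Next I would verify that $\sigma'(Y)$ is projectable with projection $Y$. By the very construction, $d\pi_p(\sigma'(Y)(p)) = Y(\pi(p))$ for every $p$, so $d\pi \circ \sigma'(Y) = Y \circ \pi$, which is exactly the defining condition in \prettyref{def:projectable-or-vertical-vector-fields} and shows $\sigma'(Y) \in \mathfrak{X}^{\mathrm{A}}(\StandardFibration)$ with $\pi_*\sigma'(Y) = Y$. Thus $\pi_* \circ \sigma' = \Identity_{\mathfrak{X}(B)}$, the section condition \prettyref{eq:section-condition}.

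Finally I would check linearity and continuity. Linearity at each point $p$ is immediate because $(d\pi_p|_{H_p\StandardFibration})^{-1}$ is a linear isomorphism of finite-dimensional vector spaces; summing over $p$, $Y \mapsto \sigma'(Y)$ is linear as a map of spaces of sections. For continuity in the Fréchet topology, note that $\sigma'$ is the pushforward of $Y$ along the smooth bundle morphism $\pi^*TB \to TE$ induced by the connection (namely pullback by $\pi$ followed by the horizontal-lift isomorphism); since such smooth vector bundle morphisms over compact manifolds induce continuous linear maps between the corresponding $C^\infty$-section spaces (indeed they are smooth in the sense of the pushforward/pullback lemma \prettyref{lem:pushforward-and-pullback}), the map $\sigma' \colon \mathfrak{X}(B) \to \mathfrak{X}^{\mathrm{A}}(\StandardFibration)$ is continuous linear. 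The only point requiring a little care is verifying that the image indeed lies in the closed Fréchet subspace $\mathfrak{X}^{\mathrm{A}}(\StandardFibration)$ of $\mathfrak{X}(E)$, which was already settled in the previous step; no serious obstacle arises, since the inverse function theorem (the usual culprit in this chapter) is not needed for a linear horizontal lift.
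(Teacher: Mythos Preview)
Your proof is correct and follows essentially the same approach as the paper: both construct $\sigma'$ as the horizontal lift $\widetilde{Y}_p \coloneqq (d\pi_p|_{H_p\StandardFibration})^{-1}Y_{\pi(p)}$ and verify the section property, linearity, and continuity. The paper packages the construction slightly more abstractly (pulling back sections along $\pi$ and then applying the bundle isomorphism $\pi^*TB \cong H\StandardFibration$), but immediately afterward unravels this to precisely your pointwise formula.
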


\begin{proof}
Fix a connection $H\StandardFibration\subseteq TE$ of $\StandardFibration$.
Recall from \prettyref{eq:horizontal-lift-vector-space-iso} that
it induces a bundle isomorphism $\pi^{\ast}TB\cong H\StandardFibration$
(also called a horizontal lift before) whose inverse is induced by
$d\pi$, hence also a linear (even $\MappingSpace^{\infty}\left(E\right)$-linear)
isomorphism on the spaces of sections $\Gamma\left(\pi^{\ast}TB\right)\cong\Gamma\left(H\StandardFibration\right)$.
Under this identification, the horizontal bundle $H\StandardFibration$
satisfies the following commutative diagram for the pullback bundle
$\pi^{\ast}TB$, with the two dotted arrows indicating the two equivalent
descriptions of its sections:
\[
\begin{aligned}\xymatrix@C=4pc{\pi^{\ast}TB\cong H\StandardFibration\ar[r]^{d\pi}\ar[d] & TB\ar[d]\\
E\ar@/^{1pc}/@{..>}[u]^{X}\ar[r]_{\pi}\ar@{..>}[ru]^{f_{X}} & B
}
\end{aligned}
,\qquad\Gamma\left(\pi^{\ast}TB\right)\xrightarrow[\cong]{X\mapsto f_{X}}\MappingSpace_{\pi}^{\infty}\left(E,TB\right).
\]
In general for a pullback bundle along a fibration $\pi$, we can
``pull back a section'' along $\pi$ as follows. Given any section
$Y\in\Gamma\left(TB\right)$ of the original bundle, we construct
the map $f_{\widetilde{Y}}\coloneqq Y\circ\pi$ in $\MappingSpace_{\pi}^{\infty}\left(E,TB\right)$,
which thus determines a section $\widetilde{Y}\in\Gamma\left(\pi^{\ast}TB\right)$
in the pullback bundle. In sum, this yields a map
\begin{equation}
\sigma'\colon\Gamma\left(TB\right)\xrightarrow{Y\mapsto\widetilde{Y}}\Gamma\left(\pi^{\ast}TB\right)\cong\Gamma\left(H\StandardFibration\right).\label{eq:steps-in-a-infinitesimal-section}
\end{equation}
I claim that this map $\sigma'$ is the desired section. Indeed, $\sigma'$
is continuous and linear because each step in \prettyref{eq:steps-in-a-infinitesimal-section}
is so. Moreover, by construction each image $\sigma'\left(Y\right)$
is a projectable vector field that is projected to $Y$, hence $\sigma'$
satisfies the desired condition of being a section:
\[
\sigma'\colon\mathfrak{X}\left(B\right)\to\mathfrak{X}^{\mathrm{A}}\left(\StandardFibration\right),\qquad{\pi}_{\ast}\circ\sigma'=\Identity_{\mathfrak{X}\left(B\right)}.
\]
This completes the proof of \prettyref{lem:horizontal-lifting}.
\end{proof}
By unravelling the construction in the proof, we see that the linear
section $\sigma'$ in the preceding lemma can be described more explicitly
as lifting each $Y\in\mathfrak{X}\left(B\right)$ to a projectable
field $\sigma'\left(Y\right)\in\mathfrak{X}^{\mathrm{A}}\left(\StandardFibration\right)$
by declaring
\begin{equation}
\sigma'\left(Y\right)\coloneqq\widetilde{Y},\qquad\widetilde{Y}_{p}\coloneqq\left.d\pi_{p}\right|_{H_{p}\StandardFibration}^{-1}Y_{\pi\left(p\right)}\in H_{p}\StandardFibration.\label{eq:horizontal-lifting-explicit-formula}
\end{equation}
The vector field $\widetilde{Y}$ on $E$ is called the \emph{horizontal lift}
of $Y$, and the procedure $\sigma'$ is called the \emph{horizontal lifting of vector fields}
(along the fibers). The
image of $\sigma'$, which consists of those vector fields on $E$
that are horizontally lifted from some fields on the base, warrants
a definition as follows:
\begin{defn}
\label{def:basic-vector-field}The space of \emph{basic vector fields}
of $\StandardFibration$ (with respect to a given connection), denoted
by $\mathfrak{X}^{\mathrm{B}}\left(\StandardFibration\right)$, is
an Fréchet subspace given by the image of horizontal lifting:
\[
\mathfrak{X}^{\mathrm{B}}\left(\StandardFibration\right)\coloneqq\image\sigma'=\left\{ X\in\mathfrak{X}^{\mathrm{A}}\left(\StandardFibration\right)\mid X^{\top}=0\right\} .
\]
In words, a smooth vector field on $E$ is basic if it is projectable
and horizontal.
\end{defn}

By construction, the space $\mathfrak{X}^{\mathrm{B}}\left(\StandardFibration\right)$
of basic vector fields defined above provides a complement of the
Fréchet subspace $\mathfrak{X}^{\mathrm{V}}\left(\StandardFibration\right)$
in $\mathfrak{X}^{\mathrm{A}}\left(\StandardFibration\right)$. In
fact, it is a natural choice of complement in view of our geometric
setting, as the following lemma shows:
\begin{lem}
Let $\StandardFibration$ be set up as a Riemannian fibration with
respect to any Riemannian metric on $E$. Then the $L^{2}$-orthogonal
complement of $\mathfrak{X}^{\mathrm{V}}\left(\StandardFibration\right)\subseteq\mathfrak{X}^{\mathrm{A}}\left(\StandardFibration\right)$
is the space $\mathfrak{X}^{\mathrm{B}}\left(\StandardFibration\right)$
of basic vector fields (\prettyref{def:basic-vector-field}):
\begin{equation}
\mathfrak{X}^{\mathrm{A}}\left(\StandardFibration\right)=\mathfrak{X}^{\mathrm{B}}\left(\StandardFibration\right)\oplus_{\StandardFibration}^{L^{2}}\mathfrak{X}^{\mathrm{V}}\left(\StandardFibration\right).\label{eq:L2-splitting-of-automorphism-by-vertical-and-basic}
\end{equation}
In words, every projectable vector field is expressed uniquely as
the sum of a basic field and a vertical field, which are $L^{2}$-orthogonal
to each other.
\end{lem}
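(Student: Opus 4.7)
The plan is to obtain the claimed decomposition directly from the pointwise $g$-orthogonal splitting $TE = H\StandardFibration \oplus T\StandardFibration$ that is already built into the Riemannian-fibration setup, and then verify that this fiberwise linear-algebra splitting restricts appropriately to the Fréchet subspace $\mathfrak{X}^{\mathrm{A}}\left(\StandardFibration\right)$. The whole argument is essentially linear algebra performed fiberwise, combined with the observation that smooth orthogonal projectors preserve smoothness of sections.

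First I would introduce the pointwise orthogonal projectors $P^{\perp}, P^{\top} \colon TE \to TE$ onto the horizontal distribution $H\StandardFibration$ and the vertical distribution $T\StandardFibration$, respectively. These are smooth vector-bundle endomorphisms of $TE$, so they act on sections to yield a smooth direct-sum decomposition $\mathfrak{X}\left(E\right) = \Gamma\left(H\StandardFibration\right) \oplus \mathfrak{X}^{\mathrm{V}}\left(\StandardFibration\right)$. This decomposition is automatically $L^{2}$-orthogonal with respect to the Riemannian volume form of $g$, since $g\left(X^{\perp}, X^{\top}\right)$ vanishes identically on $E$ and so does its integral.

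Next I would restrict attention to a projectable field $X \in \mathfrak{X}^{\mathrm{A}}\left(\StandardFibration\right)$ and show that its horizontal component $X^{\perp} \coloneqq P^{\perp} X$ is not merely horizontal but actually basic, i.e., lies in $\mathfrak{X}^{\mathrm{B}}\left(\StandardFibration\right)$. For this one only needs to check projectability: since $X^{\top}$ is vertical, $d\pi \circ X^{\top} = 0$, so
\[d\pi \circ X^{\perp} = d\pi \circ X - d\pi \circ X^{\top} = \left(\pi_{\ast} X\right) \circ \pi,\]
exhibiting $X^{\perp}$ as $\pi$-related to $\pi_{\ast}X \in \mathfrak{X}\left(B\right)$. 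Hence $X = X^{\perp} + X^{\top}$ with $X^{\perp} \in \mathfrak{X}^{\mathrm{B}}\left(\StandardFibration\right)$ and $X^{\top} \in \mathfrak{X}^{\mathrm{V}}\left(\StandardFibration\right)$, as desired, and the $L^{2}$-orthogonality noted above restricts to this pair.

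Finally, uniqueness reduces to $\mathfrak{X}^{\mathrm{B}}\left(\StandardFibration\right) \cap \mathfrak{X}^{\mathrm{V}}\left(\StandardFibration\right) = 0$, which is immediate from the pointwise transversality $H\StandardFibration \cap T\StandardFibration = 0$. I do not anticipate a genuine obstacle; the only potentially delicate point is ensuring that $P^{\perp}$ and $P^{\top}$ act continuously (indeed smoothly and $\MappingSpace^{\infty}\left(E\right)$-linearly) on the Fréchet topology of vector fields, but this is a standard consequence of smoothness of the metric $g$ together with smoothness of the two distributions.
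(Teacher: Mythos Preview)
Your proof is correct and is essentially the same as the paper's. The paper phrases the splitting as coming from the section $\sigma'$ of \prettyref{lem:horizontal-lifting} (so that $\mathfrak{X}^{\mathrm{B}}\left(\StandardFibration\right)=\image\sigma'$ complements $\ker\pi_\ast=\mathfrak{X}^{\mathrm{V}}\left(\StandardFibration\right)$), while you phrase it via the orthogonal projector $P^{\perp}$; but these are the same map on projectable fields, since $\sigma'(\pi_\ast X)=X^{\perp}$, and both reduce the $L^{2}$-orthogonality to pointwise orthogonality of $H\StandardFibration$ and $T\StandardFibration$.
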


\begin{proof}
As already explained above, the splitting is a consequence of \prettyref{lem:horizontal-lifting}
and the definition $\mathfrak{X}^{\mathrm{B}}\left(\StandardFibration\right)\coloneqq\image\sigma'$.
Furthermore, this splitting is $L^{2}$-orthogonal because $\mathfrak{X}^{\mathrm{B}}\left(\StandardFibration\right)\subseteq\Gamma\left(H\StandardFibration\right)$
being horizontal is even pointwise orthogonal to $\mathfrak{X}^{\mathrm{V}}\left(\StandardFibration\right)$.
\end{proof}

As we shall see later, the $L^{2}$-orthogonal splitting \prettyref{eq:L2-splitting-of-automorphism-by-vertical-and-basic}
in the preceding lemma, as induced from a connection of the finite-dimensional
fibration $F\hookrightarrow E\to B$, will be a key to constructing
a differentio-geometric splitting of the infinite-dimensional fibration
$\Vau\left(\StandardFibration\right)\hookrightarrow\Aut\left(\StandardFibration\right)\to\Diff\left(B\right)_{\StandardFibration}$
in \prettyref{eq:Lie-group-extension}. This will provide one of our
primary instances of ``lifting the differential geometry from finite
dimensions to infinite''.

\subsection{Fair vector fields and horizontal average}

In this subsection we approach, from the infinitesimal level, the
desired structure of \emph{smooth principal fibration} on the following:
\begin{equation}
\Aut\left(\StandardFibration\right)\hookrightarrow\Diff\left(E\right)\xrightarrow{\DiffProjection}\Fib\left(\StandardFibration\right).\label{eq:main-d-bundle}
\end{equation}
Since we have not constructed the suitable differential structure
on $\Fib\left(\StandardFibration\right)$ yet, it is more convenient
to describe the desired bundle structure of \prettyref{eq:main-d-bundle}
by a smooth equivariant neighborhood retraction (cf. \prettyref{lem:splitting-lemma}):
\[
r\colon\Diff\left(E\right)\supsetopen W\to^{\Aut\left(\StandardFibration\right)}\Aut\left(\StandardFibration\right),\qquad\left.r\right|_{\Aut\left(\StandardFibration\right)}=\Identity_{\Aut\left(\StandardFibration\right)}.
\]
At the infinitesimal level \prettyref{eq:Lie-algebra-extension},
this amounts to a continuous linear (hence smooth) retraction of Fréchet
spaces (not to be confused with that of Lie algebras):
\begin{equation}
r'\colon\mathfrak{X}\left(E\right)\to\mathfrak{X}^{\mathrm{A}}\left(\StandardFibration\right),\qquad\left.r'\right|_{\mathfrak{X}^{\mathrm{A}}\left(\StandardFibration\right)}=\Identity_{\mathfrak{X}^{\mathrm{A}}\left(\StandardFibration\right)}.\label{eq:retraction-defining-condition}
\end{equation}
In words, we need a continuous, linear procedure to ``average''
each vector field on the total space to a projectable vector field.
Since vertical fields are characterized by having zero projection,
it is natural to seek a certain procedure of ``horizontal averaging''
over each fiber — there is just a canonical one associated with a
choice of connection and fiberwise measure, as constructed in the
following lemma:
\begin{lem}
\label{lem:horizontal-averaging}Given a connection and a fiberwise
measure of $\StandardFibration$, the canonical Fréchet space injection
$\mathfrak{X}^{\mathrm{A}}\left(\StandardFibration\right)\hookrightarrow\mathfrak{X}\left(E\right)$
admits a natural (continuous linear) retraction $r'\colon\mathfrak{X}\left(E\right)\to\mathfrak{X}^{\mathrm{A}}\left(\StandardFibration\right)$.
\end{lem}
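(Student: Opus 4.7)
The plan is to build $r'$ in two conceptual steps: first use the connection to split each $X\in\mathfrak{X}(E)$ into its vertical and horizontal components, and then use the fiberwise measure to \emph{average} the horizontal component over each fiber, producing a basic (hence projectable) field. Concretely, given $X\in\mathfrak{X}(E)$, decompose $X = X^{\top}+X^{\perp}$ with $X^{\top}\in\Gamma(T\StandardFibration)$ vertical and $X^{\perp}\in\Gamma(H\StandardFibration)$ horizontal, as determined by the chosen connection. The vertical piece $X^{\top}$ already lies in $\mathfrak{X}^{\mathrm{V}}(\StandardFibration)\subseteq\mathfrak{X}^{\mathrm{A}}(\StandardFibration)$. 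For the horizontal piece, note that $d\pi_p$ restricts to a linear isomorphism $H_p\StandardFibration\xrightarrow{\cong}T_{\pi(p)}B$, so at each $x\in B$ the composition $p\mapsto d\pi_p(X^{\perp}_p)$ is a smooth function $E_x\to T_xB$ into a fixed vector space. Let $\mu_x$ denote the fiberwise measure on $E_x$, and define
\[
\bar{Y}(x)\coloneqq\frac{1}{\mu_x(E_x)}\int_{E_x}d\pi_p(X^{\perp}_p)\,d\mu_x(p)\;\in\;T_xB.
\]
This produces a smooth vector field $\bar{Y}\in\mathfrak{X}(B)$, whose horizontal lift $\widetilde{\bar{Y}}\in\mathfrak{X}^{\mathrm{B}}(\StandardFibration)$ is furnished by \prettyref{lem:horizontal-lifting}. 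Finally, set
\[
r'(X)\coloneqq X^{\top}+\widetilde{\bar{Y}}.
\]
By construction $r'(X)\in\mathfrak{X}^{\mathrm{V}}(\StandardFibration)\oplus\mathfrak{X}^{\mathrm{B}}(\StandardFibration)\subseteq\mathfrak{X}^{\mathrm{A}}(\StandardFibration)$.

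Linearity of $r'$ is evident from linearity of each ingredient (orthogonal splitting, pointwise application of $d\pi$, fiberwise integration, horizontal lift). Continuity in the Fréchet $C^{\infty}$-topology reduces to continuity of the parameterized integration $X^{\perp}\mapsto\bar{Y}$: over any trivializing chart $\pi^{-1}(U)\approx U\times F$ the measure $\mu_x$ is represented by a smooth family of smooth densities on $F$, and integrating a smooth $T_xB$-valued function against such a family is a continuous linear operation in all $C^k$-norms with loss of no derivatives (derivatives in the $B$-direction can be passed under the integral). Passing through a finite trivializing cover and using smooth partitions of unity assembles these local continuities into global continuity on $\mathfrak{X}(E)$.

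It remains to verify the retraction property \prettyref{eq:retraction-defining-condition}, namely that $r'$ restricts to the identity on $\mathfrak{X}^{\mathrm{A}}(\StandardFibration)$. Suppose $X\in\mathfrak{X}^{\mathrm{A}}(\StandardFibration)$ with projection $Y=\pi_{\ast}X\in\mathfrak{X}(B)$. Then at any $p\in E_x$ we have $d\pi_p(X_p)=Y_x$, and since $X^{\top}_p$ is vertical we obtain $d\pi_p(X^{\perp}_p)=Y_x$, independent of $p\in E_x$. Hence the fiber average is $\bar{Y}(x)=Y_x$, so $\widetilde{\bar{Y}}$ coincides with the horizontal lift $\widetilde{Y}$ of $Y$. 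The $L^2$-orthogonal splitting \prettyref{eq:L2-splitting-of-automorphism-by-vertical-and-basic} of $\mathfrak{X}^{\mathrm{A}}(\StandardFibration)$ into vertical and basic parts forces $X=X^{\top}+\widetilde{Y}$, whence $r'(X)=X^{\top}+\widetilde{\bar{Y}}=X$ as required.

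The main technical obstacle is the continuity/smoothness of the fiberwise-integration step: one must confirm that integrating a $T_xB$-valued smooth function against a smooth fiberwise measure, where both the integrand and the domain $E_x$ vary with $x$, produces a smooth vector field on $B$ and that the assignment $X^{\perp}\mapsto\bar{Y}$ is continuous in the $C^{\infty}$-topology. This is handled locally by passing to trivializations (so the fibers become a fixed $F$ and the measures become a smooth family of densities on $F$) and appealing to the standard fact that parameter-dependent integration on a compact manifold is a continuous linear operation between $C^{\infty}$ function spaces; the globalization via a finite trivializing cover and a subordinate partition of unity introduces no new issues thanks to compactness of $B$.
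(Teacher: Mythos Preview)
Your construction is correct and essentially identical to the paper's: both define $r'(X)=X^{\top}+\sigma'(\overline{X})$ with $\overline{X}_x$ the fiberwise average of $d\pi_p(X_p)$ (your $d\pi_p(X^{\perp}_p)$ is the same since $X^{\top}$ is vertical), and both verify linearity, continuity, and the retraction property in the same way. Your continuity discussion via local trivializations and partitions of unity is more detailed than the paper's terse ``each step is continuous linear,'' and your appeal to \prettyref{eq:L2-splitting-of-automorphism-by-vertical-and-basic} for the retraction check is slightly heavier than needed (one can note directly that $X^{\perp}$ is horizontal with $d\pi_p(X^{\perp}_p)=Y_{\pi(p)}$, hence equals $\widetilde{Y}$ by uniqueness of the horizontal lift), but these are presentational differences only.
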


\begin{proof}
Fix a connection $H\StandardFibration$ and a fiberwise measure $\left\{ d\mu_{E_{x}}\right\} _{x}$
of $\StandardFibration$. By declaring that $r'$ restricts to the
identity on the vertical subspace $\mathfrak{X}^{\mathrm{V}}\left(\StandardFibration\right)$,
we can split off $\mathfrak{X}^{\mathrm{V}}\left(\StandardFibration\right)$
from both the domains and codomains of $r'$, so that it suffices
to construct the restricted retraction from horizontal fields to basic
fields, which by an abuse of notation is again denoted by $r'$:
\[
r'\colon\Gamma\left(H\StandardFibration\right)\to\mathfrak{X}^{\mathrm{B}}\left(\StandardFibration\right).
\]
As in the proof of \prettyref{lem:horizontal-lifting}, recall again
from \prettyref{eq:horizontal-lift-vector-space-iso} that a connection
$H\StandardFibration$ induces a bundle isomorphism $\pi^{\ast}TB\cong H\StandardFibration$
(also called a horizontal lift before) whose inverse is induced by
$d\pi$, hence also a linear (even $\MappingSpace^{\infty}\left(E\right)$-linear)
isomorphism on the spaces of sections $\Gamma\left(\pi^{\ast}TB\right)\cong\Gamma\left(H\StandardFibration\right)$.
Under this identification, the horizontal bundle $H\StandardFibration$
satisfies the following commutative diagram for the pullback bundle
$\pi^{\ast}TB$, with the two dotted arrows indicating the two equivalent
descriptions of its sections:
\[
\begin{aligned}\xymatrix@C=4pc{\pi^{\ast}TB\cong H\StandardFibration\ar[r]^{d\pi}\ar[d] & TB\ar[d]\\
E\ar@/^{1pc}/@{..>}[u]^{X}\ar[r]_{\pi}\ar@{..>}[ru]^{f_{X}} & B
}
\end{aligned}
,\qquad\Gamma\left(\pi^{\ast}TB\right)\xrightarrow[\cong]{X\mapsto f_{X}}\MappingSpace_{\pi}^{\infty}\left(E,TB\right).
\]
In general for a pullback bundle along a fibration $\pi$ with a given
fiberwise measure $\left\{ d\mu_{E_{x}}\right\} _{x}$ of $\pi$,
we can ``average out a section'' along $\pi$ as follows. Given
any section $X\in\Gamma\left(\pi^{\ast}TB\right)$ of the pullback
bundle, the associated map $f_{X}\in\MappingSpace_{\pi}^{\infty}\left(E,TB\right)$
is generally not descendable to a map $B\to TB$ because $f_{X}$
is generally not constant along each fiber $E_{x}$; but with a fiberwise
measure $\left\{ d\mu_{E_{x}}\right\} _{x}$, we can take the average
of $f_{X}$ over each fiber $E_{x}$ with respect to $d\mu_{E_{x}}$,
so that we can define a descendable map $\overline{f_{X}}\in\MappingSpace_{\pi}^{\infty}\left(E,TB\right)$
by declaring that it takes the constant averaged value along each
$E_{x}$. The resulting averaged map $\overline{f_{X}}$ can then
be descended to a section $\overline{X}\in\Gamma\left(TB\right)$
of the original bundle, which in turn can be pulled back to the pullback
bundle. In sum, this yields a map $r'$
\begin{equation}
r'\colon\Gamma\left(H\StandardFibration\right)\cong\Gamma\left(\pi^{\ast}TB\right)\xrightarrow{X\mapsto\overline{X}}\underbrace{\Gamma\left(TB\right)\xrightarrow{Y\mapsto\widetilde{Y}}\Gamma\left(\pi^{\ast}TB\right)\cong\Gamma\left(H\StandardFibration\right)}_{\sigma'}\label{eq:step-in-horizontal-averaging}
\end{equation}
I claim that the thus constructed map $r'$ is the desired retraction
(more precisely, the horizontal part thereof). Indeed, $r'$ is continuous
and linear because each step in \prettyref{eq:step-in-horizontal-averaging}
is so. Moreover, by construction each image $r'\left(X\right)$ is
a basic vector field that is horizontally lifted from $\overline{X}$,
hence $r'$ satisfies the desired condition of being a retraction:
\[
r'\colon\Gamma\left(H\StandardFibration\right)\to\mathfrak{X}^{\mathrm{B}}\left(\StandardFibration\right),\qquad\left.r'\right|_{\mathfrak{X}^{\mathrm{B}}\left(\StandardFibration\right)}=\Identity_{\mathfrak{X}^{\mathrm{B}}\left(\StandardFibration\right)}.
\]
This completes the proof of \prettyref{lem:horizontal-averaging}.
\end{proof}
By unravelling the construction in the proof, we see that the linear
retraction $r'\colon\mathfrak{X}\left(E\right)\to\mathfrak{X}^{\mathrm{A}}\left(\StandardFibration\right)$
in the preceding lemma can be described more explicitly as averaging
each $X\in\mathfrak{X}\left(E\right)$ to a projectable field $r'\left(X\right)\in\mathfrak{X}^{\mathrm{A}}\left(\StandardFibration\right)$
by declaring
\begin{equation}
r'\left(X\right)\coloneqq X^{\top}+\sigma'\left(\overline{X}\right),\qquad\overline{X}_{x}\coloneqq\frac{1}{\mathrm{Vol}\left(E_{x}\right)}\int_{p\in E_{x}}d\pi_{p}\left(X_{p}\right)\,d\mu_{E_{x}}\in T_{x}B.\label{eq:horizontal-averaging-explicit-formula}
\end{equation}
The vector field $\overline{X}$ on $B$ is called the \emph{horizontal center}
of $X$, and the procedure $r'$ is called the \emph{horizontal averaging of vector fields}
(over the fibers). The kernel of $r'$, which consists of those vector
fields on $E$ that are horizontally averaged to zero, warrants a
definition as follows:
\begin{defn}
\label{def:fair-vector-field}The space of \emph{fair vector fields}
of $\StandardFibration$ (with respect to a given connection and fiberwise
measure), denoted by $\mathfrak{X}^{\mathrm{F}}\left(\StandardFibration\right)$,
is an Fréchet subspace given by the kernel of horizontal averaging:
\[
\mathfrak{X}^{\mathrm{F}}\left(\StandardFibration\right)\coloneqq\ker r'=\left\{ X\in\mathfrak{X}\left(E\right)\mid X^{\top}=0,\,\overline{X}=0\right\} .
\]
In words, a smooth vector field on $E$ is fair
if it is horizontal and has zero horizontal center.
\end{defn}

By construction, the space $\mathfrak{X}^{\mathrm{F}}\left(\StandardFibration\right)$
of fair vector fields defined above provides a complement of the Fréchet
subspace $\mathfrak{X}^{\mathrm{A}}\left(\StandardFibration\right)$
in $\mathfrak{X}\left(E\right)$. In fact, it is a natural choice
of complement in view of our geometric setting, as the following lemma
shows:
\begin{lem}
\label{lem:L2-splitting-by-fair-vector-fields}Let $\StandardFibration$
be set up as a Riemannian fibration with respect to any Riemannian
metric on $E$. Then the $L^{2}$-orthogonal complement of $\mathfrak{X}^{\mathrm{A}}\left(\StandardFibration\right)\subseteq\mathfrak{X}\left(E\right)$
is the space $\mathfrak{X}^{\mathrm{F}}\left(\StandardFibration\right)$
of fair vector fields (\prettyref{def:fair-vector-field}):
\begin{equation}
\mathfrak{X}\left(E\right)=\mathfrak{X}^{\mathrm{F}}\left(\StandardFibration\right)\oplus_{\StandardFibration}^{L^{2}}\mathfrak{X}^{\mathrm{A}}\left(\StandardFibration\right),\label{eq:L2-splitting-by-fair-vector-fields}
\end{equation}
In words, every vector field is expressed uniquely as the sum of a
fair field and a projectable field, which are $L^{2}$-orthogonal
to each other.
\end{lem}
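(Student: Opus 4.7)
The plan is to build the decomposition explicitly from the horizontal-averaging retraction $r'$ of \prettyref{lem:horizontal-averaging}, and then verify $L^{2}$-orthogonality by using the bundle-like property of $g$ together with a Fubini argument over the Riemannian submersion. For existence, given any $X\in\mathfrak{X}(E)$ I would set
\[
X_{\mathrm{A}}\coloneqq r'(X)\in\mathfrak{X}^{\mathrm{A}}(\StandardFibration),\qquad X_{\mathrm{F}}\coloneqq X-r'(X).
\]
Since $r'$ is a retraction onto $\mathfrak{X}^{\mathrm{A}}(\StandardFibration)$ (\prettyref{eq:retraction-defining-condition}), $X_{\mathrm{A}}$ is projectable and $X_{\mathrm{F}}\in\ker r'=\mathfrak{X}^{\mathrm{F}}(\StandardFibration)$ by \prettyref{def:fair-vector-field}. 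Thus $\mathfrak{X}(E)=\mathfrak{X}^{\mathrm{F}}(\StandardFibration)+\mathfrak{X}^{\mathrm{A}}(\StandardFibration)$ as Fréchet spaces, and uniqueness of the decomposition will follow as soon as the sum is shown to be $L^{2}$-orthogonal (since a smooth field with vanishing $L^{2}$-norm is identically zero).

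For the orthogonality, I would further split the projectable component using \prettyref{eq:L2-splitting-of-automorphism-by-vertical-and-basic} into a basic and a vertical field, so it suffices to prove that any fair field $X$ is $L^{2}$-orthogonal to every vertical field $V$ and to every basic field $Y$. The first is immediate: fairness forces $X$ to be pointwise horizontal, and horizontal is pointwise $g$-orthogonal to vertical by the very definition of the connection \prettyref{eq:connection-or-horizontal-distribution}, so $\langle X,V\rangle_{L^{2}}=0$ integrand-wise. For the second, both $X$ and $Y$ are horizontal, and projectability of $g$ (\prettyref{eq:base-metric}) gives $g(X_{p},Y_{p})=g_{B}(d\pi_{p}X_{p},d\pi_{p}Y_{p})$. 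Since $Y$ is basic, $d\pi\circ Y=(\pi_{\ast}Y)\circ\pi$ is constant along each fiber $E_{x}$, so applying Fubini for the Riemannian submersion — where the total volume form disintegrates as $d\mu_{E}=d\mu_{E_{x}}\,d\mu_{B}$ precisely because $g$ is bundle-like — the inner integral becomes
\[
\int_{E_{x}}g_{B}\bigl(d\pi_{p}X_{p},(\pi_{\ast}Y)_{x}\bigr)\,d\mu_{E_{x}}(p)=g_{B}\!\left(\int_{E_{x}}d\pi_{p}X_{p}\,d\mu_{E_{x}}(p),\,(\pi_{\ast}Y)_{x}\right)=\mathrm{Vol}(E_{x})\,g_{B}\bigl(\overline{X}_{x},(\pi_{\ast}Y)_{x}\bigr),
\]
which vanishes by fairness ($\overline{X}=0$, per \prettyref{def:fair-vector-field} and \prettyref{eq:horizontal-averaging-explicit-formula}). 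Integrating over $B$ then yields $\langle X,Y\rangle_{L^{2}}=0$, completing the orthogonality.

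I do not expect a genuine obstacle: the structural content has already been packaged into $r'$, $\sigma'$, and the vertical/basic splitting \prettyref{eq:L2-splitting-of-automorphism-by-vertical-and-basic}. The only point requiring care is the Fubini step — one needs the fiberwise measure $d\mu_{E_{x}}$ used in defining $\overline{X}$ to be compatible with the Riemannian volume form of $E$, so that the fiberwise integral genuinely represents the inner integral above. This compatibility is exactly what the projectable (bundle-like) assumption on $g$ delivers via the standard disintegration for Riemannian submersions, so the argument goes through cleanly.
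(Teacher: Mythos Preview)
Your proposal is correct and follows essentially the same route as the paper: use the retraction $r'$ from \prettyref{lem:horizontal-averaging} to produce the splitting, reduce the orthogonality check to fair-versus-vertical (pointwise orthogonal since fair fields are horizontal) and fair-versus-basic, and handle the latter by the Riemannian-submersion identity $g(X_p,Y_p)=g_B(d\pi X_p,(\pi_\ast Y)_x)$ together with a fiberwise integration that vanishes by $\overline{X}=0$. Your treatment is in fact slightly more careful than the paper's in flagging the Fubini/measure-compatibility point and the $\mathrm{Vol}(E_x)$ normalization.
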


\begin{proof}
As already explained above, the splitting is a consequence of \prettyref{lem:horizontal-averaging}
and the definition $\mathfrak{X}^{\mathrm{F}}\left(\StandardFibration\right)\coloneqq\ker r'$.
Furthermore, to show that this splitting is $L^{2}$-orthogonal, first
note that $\mathfrak{X}^{\mathrm{F}}\left(\StandardFibration\right)\subseteq\Gamma\left(H\StandardFibration\right)$
being horizontal is pointwise orthogonal to $\mathfrak{X}^{\mathrm{V}}\left(\StandardFibration\right)$,
so it suffices to restrict to horizontal fields and prove the $L^{2}$-orthogonality
of the restricted splitting
\[
\Gamma\left(H\StandardFibration\right)=\mathfrak{X}^{\mathrm{F}}\left(\StandardFibration\right)\oplus_{\StandardFibration}^{L^{2}}\mathfrak{X}^{\mathrm{B}}\left(\StandardFibration\right).
\]
Thus let $X\in\mathfrak{X}^{\mathrm{F}}\left(\StandardFibration\right)$
and $Y\in\mathfrak{X}^{\mathrm{B}}\left(\StandardFibration\right)$.
Then the pointwise inner product at each $p\in E$ is computed as
\begin{align*}
\left\langle X_{p},Y_{p}\right\rangle _{g} & =\left\langle d\pi\left(X_{p}\right),d\pi\left(Y_{p}\right)\right\rangle _{g_{B}} &  & \text{(\ensuremath{\because\,}\ensuremath{\pi} is Riemannian)}\\
 & =\left\langle d\pi\left(X_{p}\right),\left(\pi_{\ast}Y\right)_{x}\right\rangle _{g_{B}} &  & \text{(\ensuremath{\because\,}\ensuremath{Y} is basic)}
\end{align*}
It then follows that the fiberwise inner product along each $E_{x}$
is computed to be zero:
\begin{align*}
\int_{p\in E_{x}}\left\langle X_{p},Y_{p}\right\rangle _{g}\,d\mu_{E_{x}} & =\int_{p\in E_{x}}\left\langle d\pi\left(X_{p}\right),\left(\pi_{\ast}Y\right)_{x}\right\rangle _{g_{B}}\,d\mu_{E_{x}}\\
 & =\left\langle \left(\int_{p\in E_{x}}d\pi\left(X_{p}\right)\,d\mu_{E_{x}}\right),\left(\pi_{\ast}Y\right)_{x}\right\rangle _{g_{B}} &  & \text{(\ensuremath{\because\,}\ensuremath{\left(\pi_{\ast}Y\right)_{x}} independent of \ensuremath{p})}\\
 & =\left\langle 0,\left(\pi_{\ast}Y\right)_{x}\right\rangle _{g_{B}} &  & \text{(\ensuremath{\because\,}\ensuremath{X} is fair)}\\
 & =0
\end{align*}
Therefore, the $L^{2}$-inner product is zero too
\[
\left\langle X,Y\right\rangle _{\StandardFibration}\coloneqq\int_{x\in B}\int_{p\in E_{x}}\left\langle X_{p},Y_{p}\right\rangle _{g}\,d\mu_{E_{x}}\,d\mu_{B}=\int_{x\in B}0\,d\mu_{B}=0.
\]
This completes the proof.
\end{proof}
As we shall see later, the $L^{2}$-orthogonal splitting \prettyref{eq:L2-splitting-by-fair-vector-fields}
in the preceding lemma, as induced from a connection and a fiberwise
measure of the finite-dimensional fibration $F\hookrightarrow E\to B$,
will be a key to constructing a differentio-geometric splitting of
the infinite-dimensional fibration $\Aut\left(\StandardFibration\right)\hookrightarrow\Diff\left(E\right)\to\Fib\left(\StandardFibration\right)$
in \prettyref{eq:main-d-bundle}. Thus this will provide another one
of our primary instances of ``lifting the differential geometry from
finite dimensions to infinite''.

\section{Lie Group Extension of the Symmetries\label{sec:Lie-Group-Extension-of-the-Symmetries}}

\subsection{Prelude: parallel horizontal transports}

Let us start by recalling our symmetry sequence associated with $\StandardFibration$,
which is the following sequence of topological groups (with arrows
being continuous homomorphisms):
\begin{equation}
\Vau\left(\StandardFibration\right)\hookrightarrow\Aut\left(\StandardFibration\right)\xrightarrow{\AutProjection}\Diff\left(B\right)_{\StandardFibration}.\label{eq:symmetry-sequence}
\end{equation}
To study the principal fibration structures on this sequence, we shall
first give a hands-on construction of a (topological) local section.
This will base on the following procedure for fiber bundles as analogous
to the parallel transport for vector bundles.
\begin{defn}
\label{def:Horizontal-transport}Given any two endpoints $x_{0},x\in B$
sufficiently close to each other (so as to be connected by a unique
minimizing geodesic). Then the \emph{parallel horizontal transport}
(along geodesic) from $x_{0}$ to $x$ is the following map between
the concrete fibers $E_{x_{0}}$ and $E_{x}$ over the two given points:
\begin{gather*}
\HTransport_{x_{0},x}\colon E_{x_{0}}\to E_{x},\qquad q\mapsto\widetilde{\gamma}_{q}\left(1\right),\qquad\text{where}\\
\widetilde{\gamma}_{q}\colon\left[0,1\right]\to E,\qquad\widetilde{\gamma}_{q}\left(0\right)=q,\quad\pi\left(\widetilde{\gamma}_{q}\left(t\right)\right)=\gamma\left(t\right)\coloneqq\exp_{x_{0}}\left(t\exp_{x_{0}}^{-1}\left(x\right)\right).
\end{gather*}
In words, let $\gamma\colon\left[0,1\right]\to B$ be the unique minimizing
geodesic from $x_{0}$ to $x$, and for each $q\in E_{x_{0}}$ let
$\widetilde{\gamma}_{q}\colon\left[0,1\right]\to E$ be the unique
horizontal lift of $\gamma$ starting at $q$, then its terminal point
$\widetilde{\gamma}_{q}\left(1\right)\in E_{x}$ is assigned as the
image of $q\in E_{x_{0}}$ under the horizontal transport.
\end{defn}

The preceding definition provides a geometrically natural way of moving
points from one fiber to another, from which we can construct a local
section for the desired bundle structure of automorphisms fibered
by vertical automorphisms, as the following proposition shows.
\begin{prop}
\label{prop:The-topological-fibration-of-Aut-by-Vau}The canonical
quotient projection associated with the (normal) subgroup $\Vau\left(\StandardFibration\right)\leq\Aut\left(\StandardFibration\right)$
gives the following topological principal
bundle:
\[
\Vau\left(\StandardFibration\right)\hookrightarrow\Aut\left(\StandardFibration\right)\xrightarrow{\AutProjection}\Diff\left(B\right)_{\StandardFibration}.
\]
Specifically, the bundle structure is given by a certain local section
$\sigma$ into $\Aut\left(\StandardFibration\right)$, whose construction
is induced by the procedure of parallel horizontal transport between
fibers along base geodesics.
\end{prop}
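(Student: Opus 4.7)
The plan is to construct a continuous local section $\sigma$ of $\AutProjection$ near the identity and then invoke the splitting lemma (\prettyref{lem:splitting-lemma}) to promote it to the desired topological principal bundle structure. Concretely, I seek a map $\sigma\colon\Diff\left(B\right)_{\StandardFibration}\supsetopen W\to\Aut\left(\StandardFibration\right)$ satisfying $\sigma\left(\Identity_{B}\right)=\Identity_{E}$ and $\AutProjection\circ\sigma=\Identity_{W}$ on some identity neighborhood $W$, from which the principal bundle structure follows by left translation along the homogeneous coset quotient.

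First, using compactness of $B$ together with positivity of the injectivity radius for the base metric $g_{B}$, I would shrink $W$ to ensure that for every $\beta\in W$ and every $x\in B$, the pair $\left(x,\beta\left(x\right)\right)$ lies within the injectivity radius and is joined by a unique minimizing geodesic. For such $\beta$ define, pointwise,
\[
\sigma\left(\beta\right)\left(p\right)\coloneqq\HTransport_{\StandardProjection\left(p\right),\beta\left(\StandardProjection\left(p\right)\right)}\left(p\right),\qquad p\in E,
\]
where $\HTransport$ is the parallel horizontal transport of \prettyref{def:Horizontal-transport}, whose construction relies on the connection $H\StandardFibration$ and base geodesics coming from the Riemannian fibration setup. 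By construction $\StandardProjection\circ\sigma\left(\beta\right)=\beta\circ\StandardProjection$, so once smoothness and invertibility are established, $\sigma\left(\beta\right)$ will lie in $\Aut\left(\StandardFibration\right)$ and the section identity $\AutProjection\circ\sigma=\Identity_{W}$ will hold automatically.

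Next, I would verify that each $\sigma\left(\beta\right)$ is in fact a self-diffeomorphism of $E$. Smoothness of the total map follows from smooth dependence of ODE solutions on parameters: the horizontal lift is the time-$1$ value of the flow of a vector field whose coefficients and initial data depend smoothly on $\left(p,\beta\right)$. For invertibility, a direct computation using the definition and the groupoid property $\HTransport_{x_{1},x_{0}}\circ\HTransport_{x_{0},x_{1}}=\Identity_{E_{x_{0}}}$ of horizontal transport yields
\[
\sigma\left(\beta\right)^{-1}=\sigma\left(\beta^{-1}\right),
\]
which is well-defined and smooth once $W$ is shrunk further so that $W^{-1}\subseteq W$. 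This confirms $\sigma\left(\beta\right)\in\Aut\left(\StandardFibration\right)$ as desired.

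Finally, continuity of $\sigma\colon W\to\Aut\left(\StandardFibration\right)$ in the compact-open $C^{\infty}$-topology will follow from joint smoothness of the adjoint
\[
\sigma^{\wedge}\colon W\times E\to E,\qquad\left(\beta,p\right)\mapsto\sigma\left(\beta\right)\left(p\right),
\]
combined with the exponential law discussed in \prettyref{sec:Infinite-dimensional-smooth-manifolds}. The anticipated main obstacle lies precisely in this joint smoothness in the presence of the infinite-dimensional parameter $\beta$: one needs $C^{\infty}$-dependence of horizontal lifts not only on the finite-dimensional ODE data $\left(x,p\right)\in B\times E$ but also on the base diffeomorphism $\beta$, which enters through the smoothly-varying target point $\beta\left(x\right)$ and thus through evaluations of $\beta$ and its derivatives at points that themselves depend on $\beta$. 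After this joint-smoothness hurdle is cleared, \prettyref{lem:splitting-lemma} yields the desired topological principal $\Vau\left(\StandardFibration\right)$-bundle structure, with trivializations over left translates $\beta\cdot W$ furnished by the homogeneous structure of the coset quotient.
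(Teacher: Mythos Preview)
Your proposal is correct and follows the paper's approach: build $\sigma(\beta)$ by horizontally transporting each fiber $E_{x}$ to $E_{\beta(x)}$ along the unique minimizing base geodesic, then invoke \prettyref{lem:splitting-lemma}. Two minor differences worth noting: the paper secures invertibility of $\sigma(\beta)$ by shrinking to $U\coloneqq\sigma^{-1}\bigl(\Diff(E)\bigr)$ using openness of $\Diff(E)\subsetopen\MappingSpace^{\infty}(E,E)$ rather than exhibiting $\sigma(\beta^{-1})$ explicitly, and it treats continuity of $\sigma$ as routine---your flagged ``joint-smoothness hurdle'' dissolves once you observe that $\beta$ enters $\sigma^{\wedge}$ only through the smooth evaluation map $\mathrm{ev}(\beta,x)=\beta(x)$, so $\sigma^{\wedge}(\beta,p)$ factors as a finite-dimensional smooth map in $\bigl(\pi(p),\beta(\pi(p)),p\bigr)$ precomposed with $\mathrm{ev}$.
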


\begin{proof}
First note that the map $\AutProjection\colon\Aut\left(\StandardFibration\right)\to\Diff\left(B\right)_{\StandardFibration}$
can be viewed as the coset quotient projection for the coset space
$\Aut\left(\StandardFibration\right)/\Vau\left(\StandardFibration\right)$,
thus for the bundle structure it suffices to construct a local section
$\sigma\colon U\to\Aut\left(\StandardFibration\right)$ over some
open neighborhood around the identity map $\Identity_{B}$ (see \prettyref{lem:splitting-lemma}).
To this end, let $U'\subseteq\Diff\left(B\right)_{\StandardFibration}$
be a sufficiently small open neighborhood around $\Identity_{B}$
so that, in particular, for each $\beta\in U'$ and $x\in B$ there
exists a unique minimizing geodesic in $B$ from $x$ to $\beta\left(x\right)$.
Then along such base geodesics we can apply the horizontal transport
$\HTransport$ as in \prettyref{def:Horizontal-transport}, which
yields the following transformation on the total space:
\begin{equation}
\sigma\left(\beta\right)\colon E\to E,\qquad E_{x}\ni q\mapsto\HTransport_{x,\beta\left(x\right)}\left(q\right)\in E_{\beta\left(x\right)}.\label{eq:horizontal-transport-formula}
\end{equation}
It is straightforward to verify that the map $\sigma\left(\beta\right)$
on $E$ is smooth, and that the assignment $\beta\mapsto\sigma\left(\beta\right)$
gives a continuous map $\sigma\colon U'\to\MappingSpace^{\infty}\left(E,E\right)$.
Further, since $\Diff\left(E\right)$ is an open subset of $\MappingSpace^{\infty}\left(E,E\right)$,
its pre-image $U\coloneqq\sigma^{-1}\left(\Diff\left(E\right)\right)$
is an open subset of $U'$, so that $\sigma$ is restricted to a map
$U\to\Diff\left(E\right)$ whose image consists entirely of diffeomorphisms.
Lastly, by construction, each such diffeomorphism $\sigma\left(\beta\right)$
maps fibers to fibers, whose induced base transformation recovers
exactly $\beta$ itself. Therefore, in sum, we obtain the desired
local section
\begin{equation}
\sigma\colon\Diff\left(B\right)_{\StandardFibration}\supsetopen U\to\Aut\left(\StandardFibration\right),\qquad\AutProjection\circ\sigma=\Identity_{U}.\label{eq:local-section-in-small-bundle}
\end{equation}
This completes the proof.
\end{proof}

The preceding proposition (\prettyref{prop:The-topological-fibration-of-Aut-by-Vau})
reveals the principal $\Vau\left(\StandardFibration\right)$-fibration
structure of $\Aut\left(\StandardFibration\right)$ in the topological
category. With in mind the goal of understanding its infinite-dimensional
differential geometry, we shall next promotes this principal fibration
to manifold categories in the order of increasingly rich structures:
firstly of topological Hilbert manifolds, then of smooth Fréchet manifolds,
and lastly alluding to structures of $L^{2}$ weak Riemannian manifolds.

\subsection{Topological manifold structures}

To embark on our (infinite-dimensional) differential geometric study,
a crucial starting point is to endow our symmetry groups with suitable
smooth submanifold structures of the diffeomorphism group. More specifically,
we need to construct a suitable smooth chart for $\Diff\left(E\right)$
modeled on vector fields (near the identity) such that it restricts
to a desired smooth chart for $\Aut\left(\StandardFibration\right)$
modeled on projectable fields:
\[
\Exp_{\StandardFibration}\colon V\cap\mathfrak{X}^{\mathrm{A}}\left(\StandardFibration\right)\xrightarrow{\approx}U\cap\Aut\left(\StandardFibration\right).
\]
Although the canonical smooth structure on $\Diff\left(E\right)$
has been well understood and its charts can be constructed via any
local addition on $E$, it is a priori not clear whether there exist
submanifold charts for $\Aut\left(\StandardFibration\right)$. Previously
in \prettyref{sec:Tangent-Spaces-and-Lie-Algebras}, we considered
(but dismissed) a naive candidate: as a regular Lie group, $\Diff\left(E\right)$
admits a Lie group exponential defined on its Lie algebra (identified
with) $T_{\Identity}\Diff\left(E\right)=\mathfrak{X}\left(E\right)$,
which can be viewed as flowing along the integral curves on $E$ for
a unit time; although this map does enjoy the desired property of
sending the subspace $\mathfrak{X}^{\mathrm{A}}\left(\StandardFibration\right)$
into the subgroup $\Aut\left(\StandardFibration\right)$ (\prettyref{lem:infinitesimal-Lie-subgroup}),
it is however generally not a local diffeomorphism. On the other hand,
as a (weak) $L^{2}$-Riemannian manifold, $\Diff\left(E\right)$ admits
a Riemannian exponential (restricted at the identity) on $T_{\Identity}\Diff\left(E\right)=\mathfrak{X}\left(E\right)$,
which can be viewed as flowing along the Riemannian geodesics on $E$
for a unit time:
\begin{equation}
\Exp\colon\mathfrak{X}\left(E\right)\supsetopen V\xrightarrow{\approx}U\subsetopen\Diff\left(E\right),\qquad X\mapsto\exp_{E}\circ X.\label{eq:exponential-chart-ord}
\end{equation}
This map $\Exp$ is indeed a chart for the canonical smooth structure
of $\Diff\left(E\right)$ because it is induced from a local addition;
namely, the Riemannian exponential $\exp_{E}\colon TE\to E$ on $E$.
However, it is generally not sending the subspace $\mathfrak{X}^{\mathrm{A}}\left(\StandardFibration\right)$
into the subgroup $\Aut\left(\StandardFibration\right)$: an initial
point in a fiber traveling along a geodesic with vertical initial
velocity may very well end up outside the fiber, and two initial points
in the same fiber traveling along geodesics with horizontal initial
velocities of the same projected image in the base may very well end
up in two different fibers. To remedy this, we adapt the exponential
map for $\StandardFibration$ by traveling — instead of along the
geodesic of the total space $E$ — firstly along the geodesic of the
fiber $E_{x}$ and then along the horizontal lift of the geodesic
of the base $B$, as in the following definition:
\begin{defn}
\label{def:adapted-Riemannian-exponential-map}The \emph{adapted Riemannian exponential map}
for $\StandardFibration$, denoted by $\exp_{\StandardFibration}$,
is a map defined on some sufficiently small neighborhood $V$ in $TE$
(around the zero section), given as follows for any $w\in T_{p}E$
(at each $p\in E_{x}$ over each $x\in B$):
\[
\exp_{\StandardFibration}\colon TE\supsetopen V\to E,\qquad w\mapsto\HTransport_{x,\exp_{B}\left(\overline{w}\right)}\left(\exp_{E_{x}}\left(w^{\top}\right)\right).
\]
In words, each image point $\exp_{\StandardFibration}\left(w\right)$
is obtained from the initial point $p$ by two steps: first vertically
traveling for a unit time along the $E_{x}$-geodesic with an initial
velocity of $w^{\top}\in T_{p}E_{x}$ in the fiber, and then horizontally
transporting for a unit time along the $B$-geodesic with an initial
velocity of $\overline{w}\in T_{x}B$ in the base.
\end{defn}

Just as the ordinary exponential $\exp_{E}$ induces a chart \prettyref{eq:exponential-chart-ord}
on $\Diff\left(E\right)$, the adapted Riemannian exponential $\exp_{\StandardFibration}$
for $\StandardFibration$ as defined above induces the following candidate
of a desired adapted chart for $\Aut\left(\StandardFibration\right)$:
\[
\Exp_{\StandardFibration}\colon\mathfrak{X}\left(E\right)\supsetopen V\to\Diff\left(E\right),\qquad X\mapsto\exp_{\StandardFibration}\circ X.
\]
Observe how the restriction of $\Exp_{\StandardFibration}$ to $\mathfrak{X}^{\mathrm{A}}\left(\StandardFibration\right)$
fits perfectly with everything so far in this section, as illustrated
in the following diagram:
\begin{equation}
\begin{aligned}\xymatrix@C=4pc{U\cap\Vau\left(\StandardFibration\right)\ar@{^{(}->}[r] & U\cap\Aut\left(\StandardFibration\right)\ar@{->>}[r]_{\AutProjection} & \overline{U}\cap\Diff\left(B\right)_{\StandardFibration}\ar@/_{1pc}/[l]_{\sigma}\\
V\cap\mathfrak{X}^{\mathrm{V}}\left(\StandardFibration\right)\ar@{^{(}->}[r]\ar[u]_{\approx}^{\bigsqcup_{x\in B}\exp_{E_{x}}\circ\left(-\right)} & V\cap\mathfrak{X}^{\mathrm{A}}\left(\StandardFibration\right)\ar@{-->}[u]_{\approx}^{\Exp_{\StandardFibration}\coloneqq\exp_{\xi}\circ\left(-\right)}\ar@{->>}[r]^{\pi_{\ast}} & \overline{V}\cap\mathfrak{X}\left(B\right)\ar[u]_{\approx}^{\exp_{B}\circ\left(-\right)}\ar@/^{1pc}/[l]^{\sigma'}
}
\end{aligned}
.\label{eq:diagram-construction-of-adapted}
\end{equation}
Here, the local chart for $\Vau\left(\StandardFibration\right)$ on
$\mathfrak{X}^{\mathrm{V}}\left(\StandardFibration\right)$ (resp.,
for $\Diff\left(B\right)_{\StandardFibration}$ on $\mathfrak{X}\left(B\right)$)
is given by post-composition with the fiberwise exponential $\bigsqcup_{x}\exp_{E_{x}}$
(resp., the base exponential $\exp_{B}$), which is just given by
the ordinary Riemannian exponential of the concrete fibers $E_{x}$
(resp., of the base $B$). Then these two charts can be in turn composed
together by virtue of the splittings of both rows, which have already
been justified previously: a linear section $\sigma'$ of the projection
$\mathfrak{X}^{\mathrm{A}}\left(\StandardFibration\right)\to\mathfrak{X}\left(B\right)$
in the Fréchet space category has been provided in \prettyref{lem:horizontal-lifting}
by horizontal lift, while a continuous local section $\sigma$ of
the projection $\Aut\left(\StandardFibration\right)\to\Diff\left(B\right)_{\StandardFibration}$
in the topological category has been provided in \prettyref{prop:The-topological-fibration-of-Aut-by-Vau}
by horizontal transport. Therefore, we have reached the following
proposition:
\begin{prop}
\label{prop:topological-manifold-structures-on-symmetry-groups}The
group $\Aut\left(\StandardFibration\right)$ of automorphisms admits
the structure of a topological manifold modeled on the space $\mathfrak{X}^{\mathrm{A}}\left(\StandardFibration\right)$
of projectable fields, whose local chart near the identity can be
given by post-composition with the adapted Riemannian exponential
(\prettyref{def:adapted-Riemannian-exponential-map}); i.e., the homeomorphism
\begin{equation}
\left.\Exp_{\StandardFibration}\right|_{\mathfrak{X}^{\mathrm{A}}}\colon V\cap\mathfrak{X}^{\mathrm{A}}\left(\StandardFibration\right)\xrightarrow{\approx}U\cap\Aut\left(\StandardFibration\right),\qquad X\mapsto\exp_{\StandardFibration}\circ X.\label{eq:adapted-exponential-on-Aut-chart}
\end{equation}
Further, this chart is constructed exactly such that the diagram \prettyref{eq:diagram-construction-of-adapted}
commutes, and hence the topological principal $\Vau\left(\StandardFibration\right)$-fibration
$\Aut\left(\StandardFibration\right)$ in \prettyref{prop:The-topological-fibration-of-Aut-by-Vau}
is promoted to the category of topological manifolds.
\end{prop}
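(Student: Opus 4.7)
The plan is to reduce the claim to the machinery already set up for canonical smooth structures (\prettyref{thm:local-addition-implies-canonical-smooth-manifold-structure}) by recognizing the adapted map $\exp_{\StandardFibration}\colon TE\supsetopen W\to E$ as itself a local addition on $E$. First I would check the defining diffeomorphism condition \prettyref{eq:local-addition-diff-requirement}: at each $0_{p}\in TE$, the differential of $\left(\left.\pi_{TE}\right|,\exp_{\StandardFibration}\right)$ is block-triangular in the splitting $T_{p}E=T_{p}E_{x}\oplus H_{p}\StandardFibration$ with identity blocks on the diagonal (because the Riemannian exponential of the fiber is the identity at order $1$, and horizontal transport over the base exponential likewise), so the finite-dimensional inverse function theorem gives the required local diffeomorphism near the zero section. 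Once this is established, post-composition with $\exp_{\StandardFibration}$ furnishes a chart for $\Diff\left(E\right)$ around the identity modeled on $\mathfrak{X}\left(E\right)$; by \prettyref{lem:canonical-smooth-structure-is-unique}, this agrees with the standard chart from $\exp_{E}$.

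Next, I would verify that this chart restricts to the stated map. For the forward direction, suppose $X\in V\cap\mathfrak{X}^{\mathrm{A}}\left(\StandardFibration\right)$ with projection $\pi_{\ast}X=Y$. For any $p\in E_{x}$, by definition $\overline{X_{p}}=Y\left(x\right)$ depends only on $x$ and $X_{p}^{\top}\in T_{p}E_{x}$; hence
\[
\StandardProjection\left(\exp_{\StandardFibration}\left(X_{p}\right)\right)=\StandardProjection\left(\HTransport_{x,\exp_{B}\left(Y\left(x\right)\right)}\left(\exp_{E_{x}}\left(X_{p}^{\top}\right)\right)\right)=\exp_{B}\left(Y\left(x\right)\right),
\]
which depends only on $x$, so $\Exp_{\StandardFibration}\left(X\right)$ is fiber-preserving and covers $\exp_{B}\circ Y\in\Diff\left(B\right)_{\StandardFibration}$, i.e., lies in $\Aut\left(\StandardFibration\right)$. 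For the reverse direction, if $h=\Exp_{\StandardFibration}\left(X\right)\in U\cap\Aut\left(\StandardFibration\right)$ covers some $\beta$, then the same computation yields $\exp_{B}\left(\overline{X_{p}}\right)=\beta\left(x\right)$ for all $p\in E_{x}$; since $\exp_{B}$ is injective on the relevant neighborhood, $\overline{X_{p}}$ depends only on $x$, and setting $Y\left(x\right)\coloneqq\overline{X_{p}}$ gives $d\StandardProjection\circ X=Y\circ\StandardProjection$, so $X\in\mathfrak{X}^{\mathrm{A}}\left(\StandardFibration\right)$. The restricted map \prettyref{eq:adapted-exponential-on-Aut-chart} is therefore a bijection, hence a homeomorphism between the corresponding subspaces.

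To promote the topological bundle \prettyref{prop:The-topological-fibration-of-Aut-by-Vau} to one of topological manifolds, I would check the commutativity of \prettyref{eq:diagram-construction-of-adapted} on the two summands of the $L^{2}$-orthogonal splitting \prettyref{eq:L2-splitting-of-automorphism-by-vertical-and-basic}. On the vertical summand, for $X\in\mathfrak{X}^{\mathrm{V}}\left(\StandardFibration\right)$ we have $\overline{X_{p}}=0$, whence $\exp_{\StandardFibration}\left(X_{p}\right)=\exp_{E_{x}}\left(X_{p}\right)$, recovering the fiberwise exponential for $\Vau\left(\StandardFibration\right)$. On the basic summand, for $X=\widetilde{Y}$ the horizontal lift of $Y\in\mathfrak{X}\left(B\right)$ (see \prettyref{eq:horizontal-lifting-explicit-formula}) we have $X_{p}^{\top}=0$ and $\overline{X_{p}}=Y\left(x\right)$, whence $\exp_{\StandardFibration}\left(X_{p}\right)=\HTransport_{x,\exp_{B}\left(Y\left(x\right)\right)}\left(p\right)$, which is exactly the horizontal transport formula \prettyref{eq:horizontal-transport-formula} applied to $\beta=\exp_{B}\circ Y$, i.e.\ $\sigma\left(\exp_{B}\circ Y\right)$. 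Both projection $\StandardProjection_{\ast}$ on the Fréchet model and $\AutProjection$ on the groups then match under these charts, so $\AutProjection$ is locally expressible as the projection $\mathfrak{X}^{\mathrm{A}}\left(\StandardFibration\right)\to\mathfrak{X}\left(B\right)$ composed with a homeomorphism; combined with the continuous linear section from horizontal lifting, this gives the manifold-category principal bundle structure. Translating the chart by left multiplication with any element of $\Aut\left(\StandardFibration\right)$ covers all of $\Aut\left(\StandardFibration\right)$.

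The main obstacle I expect is the first step — establishing that $\exp_{\StandardFibration}$ really is a local addition on $E$. The two-stage definition (fiberwise geodesic then horizontal transport) is not a priori smooth in the base variable in a uniform way, because horizontal transport depends on solving the horizontal lift ODE, and this dependence must be shown to interact smoothly with the fiberwise $\exp_{E_{x}}$. The cleanest route is to observe that $\exp_{\StandardFibration}$ is the time-one flow of an $E$-valued smooth ODE on $TE$ depending smoothly on initial data in $W\subsetopen TE$, so smooth dependence on parameters yields smoothness of $\exp_{\StandardFibration}$, after which the inverse function theorem finishes the local-diffeomorphism check.
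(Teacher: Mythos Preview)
Your proof is correct, but it reorganizes the paper's argument in a meaningful way. The paper keeps this proposition purely topological: it never checks that $\exp_{\StandardFibration}$ is a local addition here, instead factoring $\left.\Exp_{\StandardFibration}\right|_{\mathfrak{X}^{\mathrm{A}}}$ directly as a composite of three homeomorphisms
\[
V\cap\mathfrak{X}^{\mathrm{A}}\left(\StandardFibration\right)\xrightarrow{X\mapsto\left(\pi_{\ast}X,X^{\top}\right)}\left(\overline{V}\cap\mathfrak{X}\left(B\right)\right)\times\left(V\cap\mathfrak{X}^{\mathrm{V}}\left(\StandardFibration\right)\right)\xrightarrow{\exp_{B},\,\exp_{E_{x}}}\ldots\xrightarrow{\left(\beta,k\right)\mapsto\sigma\left(\beta\right)\circ k}U\cap\Aut\left(\StandardFibration\right),
\]
each of which is a homeomorphism by the already-established splittings (\prettyref{lem:horizontal-lifting}, \prettyref{prop:The-topological-fibration-of-Aut-by-Vau}) and the fact that $\exp_{B}$, $\exp_{E_{x}}$ are ordinary Riemannian exponentials. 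The local-addition property of $\exp_{\StandardFibration}$ is deferred to the separate \prettyref{lem:smooth-compatibility}, where it is used to upgrade the chart to the smooth category. Your route front-loads that lemma: you first establish $\exp_{\StandardFibration}$ as a local addition, invoke the canonical-structure machinery to get a chart for all of $\Diff\left(E\right)$, and then verify the restriction to $\mathfrak{X}^{\mathrm{A}}\left(\StandardFibration\right)$ lands in $\Aut\left(\StandardFibration\right)$ by a direct fiber-preservation computation. Both approaches ultimately rest on the same factorization formula $\Exp_{\StandardFibration}\left(X\right)=\sigma\left(\exp_{B}\circ\pi_{\ast}X\right)\circ\left(\bigsqcup_{x}\exp_{E_{x}}\circ X^{\top}\right)$; the paper's ordering buys a cleaner separation of the topological and smooth claims, while yours buys a shorter path once the local-addition check (which you correctly identify as the main obstacle, and which the paper handles exactly as you sketch) is done.
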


\begin{proof}
This essentially has been shown from the diagram \prettyref{eq:diagram-construction-of-adapted}
(and the subsequent discussion thereof), which indeed commutes by
directly verifying the following equation for (sufficiently small)
projectable vector field $X$:
\[
\Exp_{\StandardFibration}\left(X\right)=\sigma\left(\exp_{B}\circ\pi_{\ast}X\right)\circ\left(\bigsqcup_{x}\exp_{E_{x}}\circ X^{\top}\right),\qquad X\in\mathfrak{X}^{\mathrm{A}}\left(\StandardFibration\right).
\]
From this formula we see that the map $\Exp_{\StandardFibration}$
between neighborhoods of $0\in\mathfrak{X}^{\mathrm{A}}\left(\StandardFibration\right)$
and $\Identity\in\Aut\left(\StandardFibration\right)$ can be broken
down into a composition of the following three maps:
\begin{align*}
V\cap\mathfrak{X}^{\mathrm{A}}\left(\StandardFibration\right) & \to\left(\overline{V}\cap\mathfrak{X}\left(B\right)\right)\times\left(V\cap\mathfrak{X}^{\mathrm{V}}\left(\StandardFibration\right)\right) & X & \mapsto\left(\pi_{\ast}X,X^{\top}\right)\\
 & \to\left(\overline{U}\cap\Diff\left(B\right)_{\StandardFibration}\right)\times\left(U\cap\Vau\left(\StandardFibration\right)\right) & \left(Y,Z\right) & \mapsto\left(\exp_{B}\circ Y,\bigsqcup_{x}\exp_{E_{x}}\circ Z\right)\\
 & \to U\cap\Aut\left(\StandardFibration\right) & \left(\beta,k\right) & \mapsto\sigma\left(\beta\right)\circ k.
\end{align*}
Thus for justifying $\Exp_{\StandardFibration}$ a chart, it suffices
to show that each of these three maps is a homeomorphism. But this
is clear since, on the one hand, the first and the third maps are
homeomorphisms since they just the splittings of the corresponding
sequences (i.e., the two rows of the diagram in \prettyref{eq:diagram-construction-of-adapted})
given by the sections $\sigma'$ in \prettyref{lem:horizontal-lifting}
and $\sigma$ in \prettyref{prop:The-topological-fibration-of-Aut-by-Vau},
respectively. On the other hand, the second map is also a homeomorphism
since it is induced from $\exp_{B}$ and $\exp_{E_{x}}$ for $x\in B$
which (being just the ordinary Riemannian exponentials) are local
additions. This completes the proof.
\end{proof}
In particular, the preceding proposition implies that the symmetry
groups of $\StandardFibration$ have just the desired type of topological
structures which was set up in \prettyref{sec:Infinite-dimensional-topology},
as recorded in the following corollary:
\begin{cor}
The automorphism group $\Aut\left(\StandardFibration\right)$ is a
topological $\ell^{2}$-manifold (\prettyref{def:l2-manifold}), and
hence has the following property: if it admits a closed subset $N$
that is also a manifold, such that the subset inclusion is a weak
homotopy equivalence, then it is homeomorphic to $N\times\ell^{2}$
and contains $N$ as a (strong) deformation retract:
\begin{equation}
\text{\ensuremath{N\xhookrightarrow{\simeq}\Aut\left(\StandardFibration\right)}\ensuremath{\enskip}weak equivalence}\implies\begin{cases}
\Aut\left(\StandardFibration\right)\xleftrightarrow{\approx}N\times\ell^{2} & \text{homeomorphism}\\
\Aut\left(\StandardFibration\right)\xtwoheadrightarrow{}{\simeq}N & \text{deformation retraction}
\end{cases}.\label{eq:l2-property-for-aut}
\end{equation}
Moreover, this deformation retract $N$ is a minimal deformation retract
(a ``core'') of $\Aut\left(\StandardFibration\right)$ provided
that $N$ has compact components. Similarly, this result holds verbatim
for the vertical automorphism group $\Vau\left(\StandardFibration\right)$.
\end{cor}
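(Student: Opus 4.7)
The plan is to reduce the claim to \prettyref{thm:main-theorem-for-infinite-dimensional-topology} (together with \prettyref{cor:main-theorem-for-infinite-dimensional-topology-compact-case}) once we verify that $\Aut(\StandardFibration)$ carries the structure of a topological $\ell^{2}$-manifold. Given that, the two consequences in \prettyref{eq:l2-property-for-aut} follow immediately: the $\ell^{2}$-manifold hypothesis is exactly what feeds into the main theorem, which converts the assumed weak homotopy equivalence $N\hookrightarrow\Aut(\StandardFibration)$ into a homeomorphism $\Aut(\StandardFibration)\approx N\times\ell^{2}$ and a (strong) deformation retraction; the minimality assertion when $N$ has compact components is then exactly \prettyref{cor:main-theorem-for-infinite-dimensional-topology-compact-case}.

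First I would invoke \prettyref{prop:topological-manifold-structures-on-symmetry-groups}, which provides a topological manifold structure on $\Aut(\StandardFibration)$ whose chart around the identity is the homeomorphism \prettyref{eq:adapted-exponential-on-Aut-chart} modeled on the Fréchet space $\mathfrak{X}^{\mathrm{A}}(\StandardFibration)$ of projectable vector fields, with left translation by elements of $\Aut(\StandardFibration)$ supplying a homogeneous atlas around every other point. Next I would check that the three hypotheses in \prettyref{def:l2-manifold} are met: (i) Hausdorff and (ii) second countable are inherited from the ambient Fréchet Lie group $\Diff(E)$ (\prettyref{thm:Canonical-Smooth-Structure-on-DiffM-appendix}), of which $\Aut(\StandardFibration)$ is a topological subgroup with the subspace topology; (iii) the model space is homeomorphic to open subsets of $\ell^{2}$: here I would observe that $\mathfrak{X}^{\mathrm{A}}(\StandardFibration)$ is a closed Fréchet subspace of the separable Fréchet space $\mathfrak{X}(E)\cong T_{\Identity}\Diff(E)$ (closed because projectability $d\pi\circ X=Y\circ\pi$ is a continuous condition, as already recorded in \prettyref{def:projectable-or-vertical-vector-fields}), and is infinite-dimensional (it contains at least all vertical fields, whose space $\Gamma(T\StandardFibration)$ is infinite-dimensional), so the Kadec–Anderson theorem (\prettyref{thm:Kadec-Anderson}) applies to give $\mathfrak{X}^{\mathrm{A}}(\StandardFibration)\approx\ell^{2}$ as topological vector spaces.

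Having established that $\Aut(\StandardFibration)$ is a topological $\ell^{2}$-manifold, the implication in \prettyref{eq:l2-property-for-aut} is essentially a direct citation: the hypotheses of \prettyref{thm:main-theorem-for-infinite-dimensional-topology} on the pair $(\Aut(\StandardFibration),N)$ are satisfied by assumption (closed subset, manifold hence ANR, weak equivalence inclusion), and its conclusion yields both the homeomorphism $\Aut(\StandardFibration)\approx N\times\ell^{2}$ and the deformation retraction. For the minimality statement under the compactness hypothesis, I would apply \prettyref{cor:main-theorem-for-infinite-dimensional-topology-compact-case} verbatim, whose top-cohomology obstruction argument rules out any proper further deformation retract.

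Finally, the same argument works for $\Vau(\StandardFibration)$: the restriction of the chart \prettyref{eq:adapted-exponential-on-Aut-chart} to $\mathfrak{X}^{\mathrm{V}}(\StandardFibration)=\Gamma(T\StandardFibration)$, as already commuting in the left column of diagram \prettyref{eq:diagram-construction-of-adapted}, presents $\Vau(\StandardFibration)$ as a topological manifold modeled on the closed, infinite-dimensional, separable Fréchet subspace $\mathfrak{X}^{\mathrm{V}}(\StandardFibration)\leq\mathfrak{X}^{\mathrm{A}}(\StandardFibration)$; Kadec–Anderson applies verbatim, and then \prettyref{thm:main-theorem-for-infinite-dimensional-topology} and \prettyref{cor:main-theorem-for-infinite-dimensional-topology-compact-case} give the analogous statement. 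I do not anticipate a genuine obstacle here; the only mildly delicate point is confirming closedness of $\mathfrak{X}^{\mathrm{A}}(\StandardFibration)$ and $\mathfrak{X}^{\mathrm{V}}(\StandardFibration)$ inside $\mathfrak{X}(E)$, which is a routine continuity check on the $\pi$-relatedness condition in the compact-open $C^{\infty}$-topology.
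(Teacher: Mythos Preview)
Your proposal is correct and follows essentially the same approach as the paper's own proof: invoke \prettyref{prop:topological-manifold-structures-on-symmetry-groups} for the manifold structure modeled on $\mathfrak{X}^{\mathrm{A}}(\StandardFibration)$, inherit Hausdorff and second-countability from $\Diff(E)$, observe that $\mathfrak{X}^{\mathrm{A}}(\StandardFibration)$ is a closed (hence separable Fréchet) subspace of $\mathfrak{X}(E)$, apply Kadec--Anderson, and then cite \prettyref{thm:main-theorem-for-infinite-dimensional-topology} and \prettyref{cor:main-theorem-for-infinite-dimensional-topology-compact-case}. The extra details you supply (infinite-dimensionality via $\mathfrak{X}^{\mathrm{V}}(\StandardFibration)$, the routine closedness check) are welcome elaborations but do not constitute a different route.
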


\begin{proof}
The preceding proposition (\prettyref{prop:topological-manifold-structures-on-symmetry-groups})
implies that $\Aut\left(\StandardFibration\right)$ is an infinite-dimensional
topological manifold modeled on $\mathfrak{X}^{\mathrm{A}}\left(\StandardFibration\right)$.
Moreover, since $\Aut\left(\StandardFibration\right)$ is a topological
subspace of $\Diff\left(E\right)$, it inherits the property of being
Hausdorff and second-countable; on the other hand, since $\mathfrak{X}^{\mathrm{A}}\left(\StandardFibration\right)$
is a closed linear subspace of $\mathfrak{X}\left(E\right)$, it inherits
the property of being a separable Fréchet space. Therefore, $\Aut\left(\StandardFibration\right)$
is a topological $\ell^{2}$-manifold (in the sense of \prettyref{def:l2-manifold})
by the Kadec–Anderson theorem (\prettyref{thm:Kadec-Anderson}). Thus
the desired properties \prettyref{eq:l2-property-for-aut} for $\Aut\left(\StandardFibration\right)$
follows from the general properties of $\ell^{2}$-manifold (\prettyref{thm:main-theorem-for-infinite-dimensional-topology}
and \prettyref{cor:main-theorem-for-infinite-dimensional-topology-compact-case}).
This completes the proof for the case of $\Aut\left(\StandardFibration\right)$,
which also can be applied verbatim to the case of $\Vau\left(\StandardFibration\right)$
too, as desired.
\end{proof}

\subsection{Smooth manifold structures}

Having used the adapted Riemannian exponential $\exp_{\StandardFibration}$
(\prettyref{def:adapted-Riemannian-exponential-map}) to construct
a topological manifold structure $\Exp_{\StandardFibration}$ on our
symmetry groups $\Aut\left(\StandardFibration\right)$ and $\Vau\left(\StandardFibration\right)$
(\prettyref{prop:topological-manifold-structures-on-symmetry-groups}),
we are now in a position to move on to the (infinite-dimensional)
smooth category. Here, the crux is not whether they are smooth manifolds,
but rather whether they are smooth submanifolds of the diffeomorphism
group $\Diff\left(E\right)$ with its canonical smooth structure.
This is crucial because $\Diff\left(E\right)$, with its canonical
smooth structure provided by the ordinary Riemannian exponential,
is the starting point of our story (where our moduli space $\Fib\left(\StandardFibration\right)$
by definition inherits the structures of $\Diff\left(E\right)$ as
a homogeneous space). The following lemma assures that this privileged
smooth structure is not altered when the exponential is adapted:
\begin{lem}
\label{lem:smooth-compatibility}The (unrestricted) map $\Exp_{\StandardFibration}$
induced by the adapted Riemannian exponential $\exp_{\StandardFibration}$
for $\StandardFibration$ (\prettyref{def:adapted-Riemannian-exponential-map});
i.e., the map
\begin{equation}
\Exp_{\StandardFibration}\colon\mathfrak{X}\left(E\right)\supsetopen V\to\Diff\left(E\right),\qquad X\mapsto\exp_{\StandardFibration}\circ X,\label{eq:adapted-chart-induced-by-adapted-exp}
\end{equation}
is smoothly compatible with the chart $\Exp$ induced from the ordinary
Riemannian exponential $\exp_{E}$ as in \prettyref{eq:exponential-chart-ord},
and hence serve as a chart for the canonical smooth structure of $\Diff\left(E\right)$
near the identity.
\end{lem}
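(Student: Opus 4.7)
The plan is to recognize $\exp_{\StandardFibration}$ itself as a local addition on $E$ in the sense of \prettyref{def:local-addition}. Once this is done, \prettyref{thm:local-addition-implies-canonical-smooth-manifold-structure} will produce via post-composition the chart $\Exp_{\StandardFibration}$ for \emph{a} canonical smooth structure on $\MappingSpace^{\infty}(E,E)$ near $\Identity_{E}$, and \prettyref{lem:canonical-smooth-structure-is-unique} (uniqueness of the canonical smooth structure) will force it to coincide with the one induced by $\exp_{E}$; restricting to the open subset $\Diff(E)\subsetopen\MappingSpace^{\infty}(E,E)$ from \prettyref{lem:subm-and-emb-are-open-subsets} will then yield the stated smooth compatibility of $\Exp_{\StandardFibration}$ with $\Exp$ on $\Diff(E)$.

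The substantive work is therefore to verify the two defining conditions of a local addition for $\exp_{\StandardFibration}$. Smoothness on a neighborhood of the zero section $0_{E}\subseteq TE$ is essentially bookkeeping: the factorization $\exp_{\StandardFibration}(w)=\HTransport_{x,\exp_{B}(\overline{w})}(\exp_{E_{x}}(w^{\top}))$ for $w\in T_{p}E$ with $p\in E_{x}$ is built from three smooth ingredients—the base exponential $\exp_{B}$, the fiberwise family of Riemannian exponentials $\{\exp_{E_{x}}\}_{x\in B}$ (which vary smoothly by virtue of the projectable setup, which makes the concrete fibers a smooth family of Riemannian submanifolds of $E$), and the parallel horizontal transport $\HTransport$ of \prettyref{def:Horizontal-transport} (which depends smoothly on both of its base endpoints near the diagonal, since it arises from an ODE—the horizontal-lift equation—whose coefficients vary smoothly over the horizontal distribution $H\StandardFibration$).

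The nontrivial ingredient is the diffeomorphism requirement \prettyref{eq:local-addition-diff-requirement}. Since everything here is finite-dimensional, I will apply the ordinary inverse function theorem at each foot-point $p$, which reduces the matter to proving that $d_{0_{p}}(\pi_{TE},\exp_{\StandardFibration})\colon T_{0_{p}}(TE)\to T_{p}E\oplus T_{p}E$ is a linear isomorphism. Under the canonical splitting $T_{0_{p}}(TE)\cong T_{p}E\oplus T_{p}E$ into zero-section and fiber directions, the combined map is lower-triangular (the zero-section direction is sent to the diagonal, since $\exp_{\StandardFibration}(0_{p})=p$), so invertibility reduces to checking $\frac{d}{ds}\big|_{s=0}\exp_{\StandardFibration}(sw)=w$ for every $w\in T_{p}E$. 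Writing $w=w^{\top}+w^{\perp}$ with $\overline{w}=d\pi_{p}(w^{\perp})$, the Leibniz rule applied to the above factorization will yield two clean contributions—$w^{\top}$ from the fiber-geodesic factor, and the horizontal lift of $\overline{w}$ (namely $w^{\perp}$) from the transport factor—whose sum is $w$. I expect the main obstacle to lie less in this infinitesimal identity (which is clean because $\HTransport_{x,x}=\Identity_{E_{x}}$ kills the would-be mixed Leibniz term) and more in the smoothness bookkeeping of the previous paragraph for the family $\{\exp_{E_{x}}\}_{x}$ and for $\HTransport$: both reduce to classical smooth dependence on parameters for ODEs on fibered manifolds, but organizing them coherently over the base $B$ requires some care, and it is at this level that the full force of the Riemannian fibration assumption (a fiberwise metric, a connection, and a projectable total metric) is genuinely used.
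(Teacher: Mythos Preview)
Your proposal is correct and follows essentially the same route as the paper: reduce to showing that $\exp_{\StandardFibration}$ is a local addition on $E$, then invoke \prettyref{thm:local-addition-implies-canonical-smooth-manifold-structure} together with the uniqueness of the canonical smooth structure (\prettyref{lem:canonical-smooth-structure-is-unique}). The paper's verification of the local-addition condition \prettyref{eq:local-addition-diff-requirement} differs only cosmetically from yours---it factors $\exp_{\StandardFibration,p}$ as $\varphi_{x}^{-1}\circ(\exp_{B}\times\exp_{E_{x}})$ through a horizontal-transport trivialization and reads off invertibility of $d_{0_{p}}\exp_{\StandardFibration,p}$ from that composition of diffeomorphisms, whereas you compute the fiber-direction derivative directly as the identity via the Leibniz rule; both then feed into the same block-triangular argument and the finite-dimensional inverse function theorem.
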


\begin{proof}
In general, recall from \prettyref{thm:local-addition-implies-canonical-smooth-manifold-structure}
that every local addition $\alpha\colon TE\supsetopen W\to E$ on
$E$ induces a chart for $\Diff\left(E\right)$ modeled on $\mathfrak{X}\left(E\right)$
by post-composition with $\alpha$, and it was shown in \prettyref{lem:canonical-smooth-structure-is-unique}
that the resulting canonical smooth structure on $\Diff\left(E\right)$
is independent of the choice of $\alpha$ (i.e., the charts induced
by any two local additions are always smoothly compatible). In the
current case, our candidate $\Exp_{\StandardFibration}$ of a chart
for $\Diff\left(E\right)$ modeled on $\mathfrak{X}\left(E\right)$,
as given in \prettyref{eq:adapted-chart-induced-by-adapted-exp},
is induced by the adapted Riemannian exponential $\exp_{\StandardFibration}$
in the same way (i.e., via pushforward / post-composition). Therefore,
for the current lemma it suffices to prove the claim that
\[
\exp_{\StandardFibration}\colon TE\supsetopen V\xrightarrow{w\mapsto\HTransport_{x,\exp_{B}\left(\overline{w}\right)}\left(\exp_{E_{x}}\left(w^{\top}\right)\right)}E\quad\text{is a local addition on \ensuremath{E}}.
\]
This was already shown in \cite{MR1748878} in the general setting
of Riemannian foliations; in our case, it is straightforward to justify
this claim as follows. We start with setting up $\StandardFibration$
as a Riemannian fibration, so that we can construct a smooth local
trivialization $\varphi_{x}^{-1}$ of $\StandardFibration$ centered
around each fiber $E_{x}$ by horizontal transporting $E_{x}$ to
its nearby fibers:
\begin{equation}
\varphi_{x}^{-1}\colon U_{B}\times E_{x}\xrightarrow{\approx}\pi^{-1}\left(U_{B}\right),\qquad\left(x',q\right)\mapsto\HTransport_{x,x'}\left(q\right).\label{eq:horizontal-transport}
\end{equation}
Here, $\varphi_{x}^{-1}$ is indeed a diffeomorphism by virtue of
the assumption that $\StandardFibration$ is a Riemannian fibration,
so that the horizontal transport $\HTransport_{x,x'}\left(q\right)$
can be obtained by just traversing the horizontal geodesic in $E$
starting from $q$; namely, $\HTransport_{x,x'}\left(q\right)=\exp_{E}\left(\widetilde{u}\right)$
where $\widetilde{u}\in T_{q}^{\perp}E$ is the horizontal lift of
the base vector $\exp_{B,x}^{-1}\left(x'\right)\in T_{x}B$. Thus
for each point $p\in E_{x}$, the restriction of $\exp_{\StandardFibration}$
to the tangent space $T_{p}E=T_{x}B\times T_{p}E_{x}$ can be expressed
as the following composition of diffeomorphisms:
\[
\exp_{\StandardFibration,p}\colon T_{x}B\times T_{p}E_{x}\supsetopen V_{B}\times V_{E_{x}}\xrightarrow[\approx]{\exp_{B}\times\exp_{E_{x}}}U_{B}\times U_{E_{x}}\xrightarrow[\approx]{\varphi_{x}^{-1}}\pi^{-1}\left(U_{B}\right)\subsetopen E.
\]
This justifies that the restricted exponential $\exp_{\StandardFibration,p}$
is a diffeomorphism from some $V_{p}\subsetopen T_{p}E$ onto its
open image in $E$, hence its differential $d_{0_{p}}\exp_{\StandardFibration,p}$
at zero is an invertible linear transformation on $T_{0_{p}}T_{p}E\cong T_{p}E$.
To justify that $\exp_{\StandardFibration}$ is a local addition,
we need to verify the defining condition \prettyref{eq:local-addition-diff-requirement}
in \prettyref{def:local-addition}; i.e., that the induced map 
\[
\left(\left.\pi_{TE}\right|_{V},\exp_{\StandardFibration}\right)\colon TE\supsetopen V\to E\times E
\]
is a diffeomorphism onto an open neighborhood around the diagonal.
But this is clear since for each $p\in E$ its differential at the
zero vector $0_{p}\in T_{p}E$ is given by a linear transformation
on $T_{0_{p}}TE\cong T_{p}E\times T_{p}E$ as follows:
\[
d_{0_{p}}\left(\left.\pi_{TE}\right|_{V},\exp_{\StandardFibration}\right)=\left(\begin{array}{c|c}
I & 0\\
\hline I & d_{0_{p}}\exp_{\StandardFibration,p}
\end{array}\right),
\]
which is invertible because $d_{0_{p}}\exp_{\StandardFibration,p}$
is invertible. Since the domain $TE$ is finite-dimensional by assumption,
we can invoke the usual (finite-dimensional) inverse function theorem
to deduce that the map $\left(\left.\pi_{TE}\right|_{V},\exp_{\StandardFibration}\right)$
is indeed a local diffeomorphism, as desired. This justifies that
the adapted Riemannian exponential $\exp_{\StandardFibration,p}$
is a local addition, hence completes the proof of \prettyref{lem:smooth-compatibility}.
\end{proof}
In conclusion, all of this culminates in the following structure
theorem for our infinite-dimensional symmetry groups: 
\begin{thm}
\label{thm:lie-group-structure-on-symmetry-group}Both $\Aut\left(\StandardFibration\right)$
and $\Vau\left(\StandardFibration\right)$ are Fréchet Lie groups,
which fit in the following series of Fréchet Lie subgroups of diffeomorphisms
on the total space:%
\begin{equation}
\Vau\left(\StandardFibration\right)\leq\Aut\left(\StandardFibration\right)\leq\Diff\left(E\right),\label{eq:Lie-subgroups}
\end{equation}
whose Lie algebras are given by the following series of Fréchet Lie
subalgebras of vector fields on the total space:
\begin{equation}
\mathfrak{X}^{\mathrm{V}}\left(\StandardFibration\right)\leq\mathfrak{X}^{\mathrm{A}}\left(\StandardFibration\right)\leq\mathfrak{X}\left(E\right).\label{eq:Lie-subalgebras}
\end{equation}
Here, the Fréchet smooth manifold structures in the former series
are also given by the Fréchet spaces in the latter series as the model
spaces near the identity, where the submanifold chart is induced by
the adapted Riemannian exponential $\exp_{\StandardFibration}$ by
post-composition (as in \prettyref{lem:smooth-compatibility}).
\end{thm}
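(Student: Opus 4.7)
The plan is to assemble the theorem from the pieces already in place: the Lie group structure on $\Diff(E)$ (\prettyref{thm:Canonical-Smooth-Structure-on-DiffM-appendix}), the chart compatibility \prettyref{lem:smooth-compatibility}, the homeomorphism property of the adapted exponential \prettyref{prop:topological-manifold-structures-on-symmetry-groups}, and the Fréchet-space splittings \prettyref{eq:L2-splitting-of-automorphism-by-vertical-and-basic} and \prettyref{eq:L2-splitting-by-fair-vector-fields}. The key observation is that the adapted map
\[
\Exp_{\StandardFibration}\colon \mathfrak{X}\left(E\right)\supsetopen V \xrightarrow{\approx} U\subsetopen \Diff\left(E\right)
\]
is simultaneously (i) a chart for the canonical smooth structure on $\Diff(E)$ near the identity, by \prettyref{lem:smooth-compatibility}, and (ii) a homeomorphism carrying the closed subspace $\mathfrak{X}^{\mathrm{A}}(\StandardFibration)$ bijectively onto $U\cap\Aut(\StandardFibration)$, by \prettyref{prop:topological-manifold-structures-on-symmetry-groups}. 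These two facts together say exactly that $\Exp_{\StandardFibration}$ is a \emph{submanifold chart} straightening out $\Aut(\StandardFibration)$ onto the closed Fréchet subspace $\mathfrak{X}^{\mathrm{A}}(\StandardFibration)$ inside $\mathfrak{X}(E)$. I would verify splitness of this subspace by invoking the $L^{2}$-orthogonal decomposition $\mathfrak{X}(E)=\mathfrak{X}^{\mathrm{F}}(\StandardFibration)\oplus\mathfrak{X}^{\mathrm{A}}(\StandardFibration)$ from \prettyref{lem:L2-splitting-by-fair-vector-fields}, whose two summands are closed Fréchet subspaces and hence furnish a topological direct-sum decomposition that the chart $\Exp_{\StandardFibration}$ converts into a product structure near $\Identity\in\Diff(E)$.

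Having established the chart at the identity, I would propagate it to the whole of $\Aut(\StandardFibration)$ by left translation. Since $\Aut(\StandardFibration)\leq\Diff(E)$ is a closed subgroup and $\Diff(E)$ is a Lie group, left translation $L_{h}$ by any $h\in\Aut(\StandardFibration)$ is a diffeomorphism of $\Diff(E)$ that preserves $\Aut(\StandardFibration)$ setwise, so the translated chart $L_{h}\circ\Exp_{\StandardFibration}$ again straightens out $\Aut(\StandardFibration)$ onto the same model space $\mathfrak{X}^{\mathrm{A}}(\StandardFibration)$. Transition maps between such charts are smooth because they factor through smooth compositions in $\Diff(E)$. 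Smoothness of composition and inversion on $\Aut(\StandardFibration)$ is then inherited from $\Diff(E)$: these operations are smooth on the ambient group, and $\Aut(\StandardFibration)$ is a split submanifold invariant under both, so the restrictions are automatically smooth. This yields the Fréchet Lie group structure on $\Aut(\StandardFibration)$. The tangent space at the identity coincides with the model space, so $T_{\Identity}\Aut(\StandardFibration)\cong\mathfrak{X}^{\mathrm{A}}(\StandardFibration)$, and the Lie bracket agrees with the restriction of the bracket on $T_{\Identity}\Diff(E)=\mathfrak{X}(E)$ (which by \prettyref{thm:Canonical-Smooth-Structure-on-DiffM-appendix} is the negative of the vector-field bracket), matching the infinitesimal sequence \prettyref{eq:Lie-algebra-of-to-be-Lie-subgroups}.

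The case of $\Vau(\StandardFibration)$ follows by the same scheme applied one level down. Using the splitting $\mathfrak{X}^{\mathrm{A}}(\StandardFibration)=\mathfrak{X}^{\mathrm{B}}(\StandardFibration)\oplus\mathfrak{X}^{\mathrm{V}}(\StandardFibration)$ from \prettyref{eq:L2-splitting-of-automorphism-by-vertical-and-basic}, together with the bottom-row portion of the diagram \prettyref{eq:diagram-construction-of-adapted} (which shows $\Exp_{\StandardFibration}$ carries $\mathfrak{X}^{\mathrm{V}}(\StandardFibration)$ homeomorphically onto $U\cap\Vau(\StandardFibration)$ via the fiberwise exponential), the restricted chart exhibits $\Vau(\StandardFibration)$ as a split submanifold of $\Aut(\StandardFibration)$ modeled on the closed Fréchet Lie subalgebra $\mathfrak{X}^{\mathrm{V}}(\StandardFibration)$. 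Left translation and the closed-subgroup argument give the Fréchet Lie group structure with the desired Lie algebra.

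The main subtle point — and the only place where finite-dimensional intuition could mislead — is the transition from "homeomorphism of topological manifolds" (already in hand from \prettyref{prop:topological-manifold-structures-on-symmetry-groups}) to "chart for the smooth structure of the ambient $\Diff(E)$," because in infinite dimensions there is no automatic inverse function theorem to bridge these. This is precisely what \prettyref{lem:smooth-compatibility} accomplishes by reducing the question to a local-addition property of $\exp_{\StandardFibration}$, whose verification uses the finite-dimensional inverse function theorem on $TE\to E\times E$. The remaining burden is then purely bookkeeping: checking that the complement $\mathfrak{X}^{\mathrm{F}}(\StandardFibration)$ is a closed Fréchet subspace (clear from its description as the kernel of the continuous retraction $r'$ in \prettyref{lem:horizontal-averaging}) and that the product decomposition descends from the chart. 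No new estimates or analytic tools are required beyond what has already been set up.
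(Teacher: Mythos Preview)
Your proposal is correct and follows exactly the approach the paper intends: the theorem is stated there as a culmination of the preceding results without a separate proof, and your assembly of \prettyref{lem:smooth-compatibility} (chart for the canonical smooth structure) with \prettyref{prop:topological-manifold-structures-on-symmetry-groups} (bijection onto the subspace) into a submanifold chart, together with the Fréchet splittings \prettyref{eq:L2-splitting-by-fair-vector-fields} and \prettyref{eq:L2-splitting-of-automorphism-by-vertical-and-basic} and propagation by translation, is precisely the intended argument. You have in fact supplied more detail than the paper does, including the explicit verification of splitness and the inheritance of group operations, all of which are correct.
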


\subsection{The smooth fibration structure}

Finally, we are in a position to complete the proof of the following
main theorem in this section, which underlies the infinite-dimensional
differential geometry of our symmetry groups: 
\begin{thm}
\label{thm:main-theorem-about-smooth-symmetry-fibration}Let the diffeomorphism
groups of $E$ and of $B$ be equipped with the canonical smooth structure,
and let the automorphism group of $\left(E,\xi\right)$ be equipped
with the induced smooth structure as a submanifold. Then the topological
principal bundle (as constructed in \prettyref{prop:The-topological-fibration-of-Aut-by-Vau})%
\[
\Xi\colon\Vau\left(\StandardFibration\right)\hookrightarrow\Aut\left(\StandardFibration\right)\xrightarrow{\AutProjection}\Diff\left(B\right)_{\StandardFibration}\qquad\text{is a smooth fibration}.
\]
More precisely, the smooth principal bundle structure on $\Xi$ is
induced from a smooth slice $\mathrm{\mathcal{S}}^{\mathrm{B}}\left(\StandardFibration\right)\subseteq\Aut\left(\StandardFibration\right)$,
which is modeled on the Fréchet space $\mathfrak{X}^{\mathrm{B}}\left(\StandardFibration\right)\subseteq\mathfrak{X}^{\mathrm{A}}\left(\StandardFibration\right)$
of basic vector fields as given in \prettyref{def:basic-vector-field}.
\end{thm}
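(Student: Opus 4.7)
The plan is to invoke the smooth slice lemma (\prettyref{prop:manifold-slice-condition}) applied to the closed Lie subgroup $\Vau(\StandardFibration) \trianglelefteq \Aut(\StandardFibration)$, whose Fréchet Lie subgroup structure has been secured in \prettyref{thm:lie-group-structure-on-symmetry-group}. For this it suffices to construct a smooth submanifold $\mathcal{S}^{\mathrm{B}}(\StandardFibration) \subseteq \Aut(\StandardFibration)$ containing the identity such that the right-translation map $\rho\colon \mathcal{S}^{\mathrm{B}}(\StandardFibration) \times \Vau(\StandardFibration) \to \Aut(\StandardFibration)$ is a diffeomorphism onto an open image; the homogeneous $\Aut(\StandardFibration)$-structure on $\Diff(B)_{\StandardFibration}$ (as its image under the group homomorphism $\AutProjection$) then propagates the local bundle structure to the whole.

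The candidate slice is supplied by the adapted Riemannian exponential chart $\Exp_{\StandardFibration}$ from \prettyref{prop:topological-manifold-structures-on-symmetry-groups} combined with the $L^{2}$-orthogonal splitting $\mathfrak{X}^{\mathrm{A}}(\StandardFibration) = \mathfrak{X}^{\mathrm{B}}(\StandardFibration) \oplus^{L^{2}}_{\StandardFibration} \mathfrak{X}^{\mathrm{V}}(\StandardFibration)$ established in \prettyref{eq:L2-splitting-of-automorphism-by-vertical-and-basic}. Concretely, I would define
\[
\mathcal{S}^{\mathrm{B}}(\StandardFibration) \coloneqq \Exp_{\StandardFibration}\bigl(V \cap \mathfrak{X}^{\mathrm{B}}(\StandardFibration)\bigr),
\]
which is a smooth split submanifold of $\Aut(\StandardFibration)$ modeled on the closed Fréchet subspace $\mathfrak{X}^{\mathrm{B}}(\StandardFibration)$, since its complement $\mathfrak{X}^{\mathrm{V}}(\StandardFibration)$ is also closed (indeed, it is the tangent space of $\Vau(\StandardFibration)$). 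Unpacking \prettyref{def:adapted-Riemannian-exponential-map} for a basic field $X_{B}$ (for which $X_{B}^{\top} = 0$) and a vertical field $X_{V}$ (for which $\overline{X_{V}} = 0$), one reads off the factorization
\[
\Exp_{\StandardFibration}(X_{B} + X_{V}) = \Exp_{\StandardFibration}(X_{B}) \circ \Exp_{\StandardFibration}(X_{V}),
\]
where the first factor lies in $\mathcal{S}^{\mathrm{B}}(\StandardFibration)$ and the second factor $\bigsqcup_{x\in B}\exp_{E_{x}} \circ X_{V}$ lies in $\Vau(\StandardFibration)$.

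From this identity, the multiplication map $\rho$ restricted to a zero neighborhood is identified with the linear splitting of model spaces composed with the two pieces of the adapted exponential chart; both are diffeomorphisms, so $\rho$ is a local diffeomorphism near $(\Identity, \Identity)$. I would then extend this to a genuine diffeomorphism on a saturated neighborhood by right-translating in $\Vau(\StandardFibration)$, using that right-translation is smooth in the Lie group $\Aut(\StandardFibration)$ (\prettyref{thm:lie-group-structure-on-symmetry-group}) and that $\Vau(\StandardFibration)$ normalizes itself. The slice lemma (\prettyref{prop:manifold-slice-condition}) then delivers the smooth principal $\Vau(\StandardFibration)$-bundle structure on $\AutProjection\colon \Aut(\StandardFibration) \to \Diff(B)_{\StandardFibration}$, where the quotient smooth structure on the base is uniquely determined as the one making $\AutProjection$ a smooth submersion.

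The main obstacle I anticipate is verifying the factorization $\Exp_{\StandardFibration}(X_{B} + X_{V}) = \Exp_{\StandardFibration}(X_{B}) \circ \Exp_{\StandardFibration}(X_{V})$ cleanly at the level of the adapted exponential: the two steps in \prettyref{def:adapted-Riemannian-exponential-map} (first flow along the fiber geodesic, then horizontally transport along the base geodesic) do not \emph{a priori} commute with the group operation unless one carefully chases that horizontal transport preserves the fiberwise geodesic flow under the Riemannian fibration setup. The check essentially reduces to the fact (already used in \prettyref{lem:smooth-compatibility}) that under a horizontal-transport trivialization $\varphi_{x}^{-1}\colon U_{B} \times E_{x} \approx \pi^{-1}(U_{B})$, the adapted exponential splits as a product of base and fiber exponentials, after which the claimed factorization of $\Exp_{\StandardFibration}$ — and hence the smoothness of $\rho$ and of its inverse — follows.
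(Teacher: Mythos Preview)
Your proposal is correct and follows essentially the same route as the paper. The paper defines the slice as $\mathcal{S}^{\mathrm{B}}(\StandardFibration)\coloneqq\image(\sigma)$ (the image of the horizontal-transport section from \prettyref{prop:The-topological-fibration-of-Aut-by-Vau}) and then proves that this coincides with $\Exp_{\StandardFibration}\bigl(V\cap\mathfrak{X}^{\mathrm{B}}(\StandardFibration)\bigr)$, whereas you take the latter as the definition directly; your factorization $\Exp_{\StandardFibration}(X_{B}+X_{V})=\Exp_{\StandardFibration}(X_{B})\circ\Exp_{\StandardFibration}(X_{V})$ is exactly the commutativity of the diagram already established in \prettyref{prop:topological-manifold-structures-on-symmetry-groups} (there written as $\Exp_{\StandardFibration}(X)=\sigma(\exp_{B}\circ\pi_{\ast}X)\circ\bigl(\bigsqcup_{x}\exp_{E_{x}}\circ X^{\top}\bigr)$), so the concern you flag is not an obstacle but rather a fact already in hand.
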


\begin{proof}
Since $\Xi$ is given by the coset space construction associated with
the Lie subgroup $\Vau\left(\StandardFibration\right)$ of the Lie
group $\Aut\left(\StandardFibration\right)$ (\prettyref{thm:lie-group-structure-on-symmetry-group}),
the theorem will follow from the smooth slice lemma (\prettyref{prop:manifold-slice-condition})
once we verify the assumptions required therein. To this end, consider
the topological slice given by the image of the local section $\sigma$
in \prettyref{eq:local-section-in-small-bundle}:
\[
\mathrm{\mathcal{S}}^{\mathrm{B}}\left(\StandardFibration\right)\coloneqq\left\{ h\in\Aut\left(\StandardFibration\right)\mid h=\sigma\left(\beta\right),\,\exists\beta\in\Diff\left(B\right)_{\StandardFibration}\right\} .
\]
I claim that this subset $\mathrm{\mathcal{S}}^{\mathrm{B}}\subseteq\Aut\left(\StandardFibration\right)$
is a smooth submanifold modeled on the Fréchet subspace $\mathfrak{X}^{\mathrm{B}}\left(\StandardFibration\right)\subseteq\mathfrak{X}^{\mathrm{A}}\left(\StandardFibration\right)$
of basic vector fields, whose submanifold chart (near the identity)
is given by the adapted Riemannian exponential. To see this, recall
by definition that the adapted Riemannian exponential corresponds
a vector field $X$ to a diffeomorphism $\Exp_{\StandardFibration}\left(X\right)$
that moves each point by first traveling along the internal geodesic
and then horizontal transporting along the base geodesic. Now suppose
that $X\in\mathfrak{X}^{\mathrm{B}}\left(\StandardFibration\right)$
is basic. Then its vertical part $X^{\top}\in\mathfrak{X}^{\mathrm{V}}\left(\StandardFibration\right)$
is everywhere zero, hence the associated internal geodesics are all
stationary. In other words, the effect of $\Exp_{\StandardFibration}\left(X\right)$
is simply reduced to horizontal transporting all points along the
base geodesics determined by $\pi_{\ast}X\in\mathfrak{X}\left(B\right)$
— this is exactly our construction of $\sigma$, so that $\Exp_{\StandardFibration}\left(X\right)$
indeed lies in the slice $\mathrm{\mathcal{S}}^{\mathrm{B}}\coloneqq\image\left(\sigma\right)$
as desired:
\[
\Exp_{\StandardFibration}\left(X\right)=\sigma\left(\beta\right),\qquad\beta\coloneqq\Exp_{B}\left(\pi_{\ast}X\right)\in\Diff\left(B\right)_{\StandardFibration}.
\]
Conversely, suppose that $\sigma\left(\beta\right)\in\Aut\left(\StandardFibration\right)$
is an automorphism obtained as the $\sigma$-image of some base transformation
$\beta\in\Diff\left(B\right)_{\StandardFibration}$. Then as long
as $\sigma\left(\beta\right)$ is sufficiently close to the identity,
the Riemannian exponential on the base will associate to $\beta$
its corresponding vector field $Y\in\mathfrak{X}\left(B\right)$ on
the base, which in turn can be horizontally lifted to a vector field
$X\in\mathfrak{X}\left(E\right)$ on the total space. By construction,
this vector field has the properties $X\in\mathfrak{X}^{\mathrm{B}}\left(\StandardFibration\right)$
and $\Exp_{\StandardFibration}\left(X\right)=\sigma\left(\beta\right)$,
as desired. This completes the proof of the smooth structure on the
slice $\mathrm{\mathcal{S}}^{\mathrm{B}}\left(\StandardFibration\right)$.
Thus to complete the proof by the slice lemma (\prettyref{prop:manifold-slice-condition}),
it suffices to verify that the following right-translation map $\rho$
is a diffeomorphism onto an open subset of $\Aut\left(\StandardFibration\right)$:
\[
\rho\colon\mathrm{\mathcal{S}}^{\mathrm{B}}\left(\StandardFibration\right)\times\Vau\left(\StandardFibration\right)\xrightarrow{\approx}W\subsetopen\Aut\left(\StandardFibration\right),\qquad\left(s,h\right)\mapsto s\circ h.
\]
To this end, first recall that by construction, $\mathrm{\mathcal{S}}^{\mathrm{B}}\left(\StandardFibration\right)$
is the image of the local section $\sigma\colon U\to\Aut\left(\StandardFibration\right)$
given in the proof of \prettyref{prop:The-topological-fibration-of-Aut-by-Vau}.
Thus in particular the image of $\rho$ is $W=\pi^{-1}\left(U\right)$,
which is clearly open in $\Aut\left(\StandardFibration\right)$. Since
the right-translation map $\rho$ is obtained from the Lie group operation
on $\Aut\left(\StandardFibration\right)$ by restricting to some smooth
embedded submanifolds, we see that $\rho$ is smooth. Thus it suffices
to show that it admits a smooth inverse $\rho^{-1}$ on $\pi^{-1}\left(U\right)$,
which can be described in terms of the local section $\sigma$ by
\[
\rho^{-1}\colon\Aut\left(\StandardFibration\right)\supsetopen W\xrightarrow{\approx}\mathrm{\mathcal{S}}^{\mathrm{B}}\left(\StandardFibration\right)\times\Vau\left(\StandardFibration\right),\qquad f\mapsto\left(\sigma\left(\AutProjection\left(f\right)\right),\sigma\left(\AutProjection\left(f\right)\right)^{-1}\circ f\right).
\]
Thus it suffices to show that the local section $\sigma$ is smooth.
But this is clear since it is locally modeled (with respect to the
smooth charts given by the adapted Riemannian exponential) by a local
map from $\mathfrak{X}\left(B\right)$ to $\mathfrak{X}^{\mathrm{A}}\left(\StandardFibration\right)$
coinciding with the horizontal lifting $Y\mapsto\sigma'\left(Y\right)=\widetilde{Y}$
as defined in \prettyref{lem:horizontal-lifting}, which is continuous
linear and hence smooth. This confirms that local section $\sigma$
is smooth, and hence so is $\rho^{-1}$ as desired. This completes
the proof of \prettyref{thm:main-theorem-about-smooth-symmetry-fibration}.
\end{proof}
The developments in this section can be re-interpreted from the perspective
of infinite dimensional Riemannian geometry as follows. Recall from
\prettyref{def:adapted-Riemannian-exponential-map} that we associated
with our Riemannian fibration $\xi\colon F\hookrightarrow E\to B$
an adapted Riemannian exponential $\exp_{\left(E,\xi\right)}\colon TE\supsetopen V\to E$
for the fibered total space $\left(E,\xi\right)$, as given by first
vertically traveling for a unit time along the $E_{x}$-geodesic with
an initial velocity of $w^{\top}\in T_{p}E_{x}$ in the fiber, and
then horizontally traveling for a unit time along the $E$-geodesic
with an initial velocity of the horizontal lift of $\overline{w}\in T_{x}B$
from the base. Of course, this construction of an \emph{adapted Riemannian exponential associated with a Riemannian fibration}
carries over verbatim to the infinite-dimensional case (where the Riemannian metrics
involved are allowed to be in the weak sense only). Then just as the
Riemannian exponential for the infinite-dimensional manifold $\Diff\left(E\right)$
with respect to the $L^{2}$ (weak) Riemannian metric is exactly given
by pushing forward the Riemannian exponential for the finite-dimensional
manifold $E$ (see \cite{MR0271984}), the adapted Riemannian exponential
for the infinite-dimensional fibered manifold $\left(\Aut\left(\StandardFibration\right),\Xi\right)$
also admits a close relation with the finite-dimensional fibered manifold
$\left(E,\StandardFibration\right)$, as in the following corollary:
\begin{cor}
\label{cor:Riemannian-fibration-of-Aut-seq}Given a Riemannian fibration
$\xi\colon F\hookrightarrow E\to B$. Let the diffeomorphism groups
of $E$ and of $B$ be equipped with the $L^{2}$ (weak) Riemannian
metric, and let the automorphism group of $\left(E,\xi\right)$ be
equipped with the induced Riemannian metric as a submanifold. Then
the smooth principal bundle (as constructed in \prettyref{thm:main-theorem-about-smooth-symmetry-fibration})%
\[
\Xi\colon\Vau\left(\StandardFibration\right)\hookrightarrow\Aut\left(\StandardFibration\right)\xrightarrow{\AutProjection}\Diff\left(B\right)_{\StandardFibration}\qquad\text{is a Riemannian fibration}.
\]
Moreover, the adapted Riemannian exponential for the infinite-dimensional
fibered manifold $\left(\Aut\left(\StandardFibration\right),\Xi\right)$
is exactly given by pushing forward the adapted Riemannian exponential
for the finite-dimensional fibered manifold $\left(E,\StandardFibration\right)$
(as constructed in \prettyref{prop:topological-manifold-structures-on-symmetry-groups}).
\end{cor}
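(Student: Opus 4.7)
The plan is to leverage two principles already established: by Ebin--Marsden (see \cite{MR0271984}), the $L^{2}$ weak Riemannian metric on a diffeomorphism group $\Diff(M)$ is, after right-translation, the $L^{2}$-pairing of vector fields on $M$, and its Riemannian exponential at the identity is the pushforward of the Riemannian exponential of $M$; moreover, the Lie-algebra splittings $\mathfrak{X}(E) = \mathfrak{X}^{\mathrm{F}}(\StandardFibration) \oplus^{L^{2}} \mathfrak{X}^{\mathrm{A}}(\StandardFibration)$ from \prettyref{lem:L2-splitting-by-fair-vector-fields} and $\mathfrak{X}^{\mathrm{A}}(\StandardFibration) = \mathfrak{X}^{\mathrm{B}}(\StandardFibration) \oplus^{L^{2}} \mathfrak{X}^{\mathrm{V}}(\StandardFibration)$ are $L^{2}$-orthogonal by construction. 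Together, these two principles let us upgrade \prettyref{thm:main-theorem-about-smooth-symmetry-fibration} from a smooth principal bundle to a Riemannian fibration by a direct pointwise check at the identity, transferred elsewhere by right-invariance of all the data.

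First I would identify the vertical and horizontal distributions of $\Xi$. At the identity, $T_{\Identity}\Aut(\StandardFibration) = \mathfrak{X}^{\mathrm{A}}(\StandardFibration)$ by \prettyref{thm:lie-group-structure-on-symmetry-group}, the kernel of $T_{\Identity}\AutProjection$ is $\mathfrak{X}^{\mathrm{V}}(\StandardFibration)$ by \prettyref{lem:infinitesimal-projection-of-projectable-field}, and the induced submanifold $L^{2}$-metric on $\Aut(\StandardFibration)$ is the restriction of the $L^{2}$-pairing of vector fields to projectable fields. The $L^{2}$-orthogonal complement of $\mathfrak{X}^{\mathrm{V}}(\StandardFibration)$ inside $\mathfrak{X}^{\mathrm{A}}(\StandardFibration)$ is exactly $\mathfrak{X}^{\mathrm{B}}(\StandardFibration)$, so the horizontal distribution of $\Xi$ at $\Identity$ is the space of basic vector fields, and right-invariance extends this identification to every point. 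The submersion isometry condition reduces at $\Identity$ to the pointwise identity $g(\widetilde{Y}_{p}, \widetilde{Y}_{p}) = g_{B}(Y_{\pi(p)}, Y_{\pi(p)})$ for the horizontal lift $\widetilde{Y}$ of $Y \in \mathfrak{X}(B)$, which is precisely the Riemannian-submersion property of the finite-dimensional $\pi$; integration over $E$ via Fubini along the bundle-like volume decomposition then yields $\|\widetilde{Y}\|^{2}_{L^{2}(E)} = \|Y\|^{2}_{L^{2}(B)}$ up to a constant fiber-volume factor absorbed into the standing normalization of the projectable metric. This confirms $\AutProjection$ as a Riemannian submersion, hence $\Xi$ as a Riemannian fibration.

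For the adapted-exponential identification, the adapted Riemannian exponential on $(\Aut(\StandardFibration), \Xi)$ is built by composing, for an initial tangent $X \in \mathfrak{X}^{\mathrm{A}}(\StandardFibration)$, the $L^{2}$-geodesic inside the vertical fiber $\Vau(\StandardFibration)$ with initial velocity $X^{\top}$, followed by the horizontal transport along the base geodesic in $\Diff(B)_{\StandardFibration}$ with initial velocity $\pi_{*}X$. Applying Ebin--Marsden inside the fiber (viewing $\Vau(\StandardFibration)$ as a Fréchet Lie subgroup whose Lie algebra consists of fiberwise vector fields), the first step is the pushforward fiberwise Riemannian exponential $\bigsqcup_{x \in B} \exp_{E_{x}} \circ X^{\top}$; the second step is exactly the horizontal-transport section $\sigma(\exp_{B} \circ \pi_{*}X)$ whose concrete formula from \prettyref{eq:horizontal-transport-formula} is the pointwise parallel horizontal transport along base geodesics. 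By the commutativity of the diagram \prettyref{eq:diagram-construction-of-adapted}, these two pieces compose to $\exp_{\StandardFibration} \circ X = \Exp_{\StandardFibration}(X)$, which is precisely the pushforward of the finite-dimensional adapted Riemannian exponential.

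The main obstacle I anticipate is the volume-normalization step in the Fubini computation in Stage two: unless the Riemannian fibration $\StandardFibration$ has fibers of constant volume (automatic for, e.g., totally-geodesic compact fibers), $T_{\Identity}\AutProjection$ is only a conformal submersion with density $\mathrm{Vol}(E_{x})$, so one must either rescale $g_{B}$ by this density or restrict to the constant-fiber-volume setting to obtain an honest Riemannian submersion. Once this bookkeeping is settled, the rest of the argument is essentially a formal transfer from the finite-dimensional fibered manifold $(E, \StandardFibration)$ to the infinite-dimensional fibered manifold $(\Aut(\StandardFibration), \Xi)$ via right-translation and pushforward.
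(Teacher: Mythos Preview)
Your proposal is correct and follows essentially the same route as the paper: both read the result off the commutative diagram relating the splittings $\sigma',\sigma$ to the exponentials, identifying the vertical $L^{2}$-exponential with $\bigsqcup_{x}\exp_{E_{x}}\circ(-)$, the base $L^{2}$-exponential with $\exp_{B}\circ(-)$, and the horizontal transport with $\sigma$, so that the adapted exponential of $(\Aut(\StandardFibration),\Xi)$ becomes $\exp_{(E,\StandardFibration)}\circ(-)$. Your explicit Fubini verification of the Riemannian-submersion isometry, and your flag on the fiber-volume normalization, are in fact more careful than the paper's ``clear from the construction''; the paper silently absorbs this normalization here and only makes it explicit later (in the proof of \prettyref{thm:main-theorem-about-smooth-D-fibration}) via reduction to the volume-preserving structure group.
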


\begin{proof}
This is clear from the construction, as summarized in the diagram
\prettyref{eq:diagram-construction-of-adapted} and reiterated as
follows:
\begin{equation}
\begin{aligned}\xymatrix@C=4pc{U\cap\Vau\left(\StandardFibration\right)\ar@{^{(}->}[r] & U\cap\Aut\left(\StandardFibration\right)\ar@{->>}[r]_{\AutProjection} & \overline{U}\cap\Diff\left(B\right)_{\StandardFibration}\ar@/_{1pc}/[l]_{\sigma}\\
V\cap\mathfrak{X}^{\mathrm{V}}\left(\StandardFibration\right)\ar@{^{(}->}[r]\ar[u]_{\approx}^{\bigsqcup_{x\in B}\exp_{E_{x}}\circ\left(-\right)} & V\cap\mathfrak{X}^{\mathrm{A}}\left(\StandardFibration\right)\ar@{-->}[u]_{\approx}^{\exp_{\left(E,\xi\right)}\circ\left(-\right)}\ar@{->>}[r]^{\pi_{\ast}} & \overline{V}\cap\mathfrak{X}\left(B\right)\ar[u]_{\approx}^{\exp_{B}\circ\left(-\right)}\ar@/^{1pc}/[l]^{\sigma'}
}
\end{aligned}
,\label{eq:diagram-construction-of-adapted-repeated}
\end{equation}
Here, the local chart for $\Vau\left(\StandardFibration\right)$ on
$\mathfrak{X}^{\mathrm{V}}\left(\StandardFibration\right)$ (resp.,
for $\Diff\left(B\right)_{\StandardFibration}$ on $\mathfrak{X}\left(B\right)$)
is given by post-composition with the fiberwise exponential $\bigsqcup_{x}\exp_{E_{x}}$
(resp., the base exponential $\exp_{B}$), which is just given by
the ordinary Riemannian exponential of the concrete fibers $E_{x}$
(resp., of the base $B$), so that with respect to the induced $L^{2}$
(weak) Riemannian metrics we have the relations
\[
\exp_{\Vau\left(\StandardFibration\right)}=\bigsqcup_{x\in B}\exp_{E_{x}}\circ\left(-\right)\qquad\text{and}\qquad\exp_{\Diff\left(B\right)_{\StandardFibration}}=\exp_{B}\circ\left(-\right).
\]
Then these two charts are in turn composed together by the splittings
of the two rows: a linear section $\sigma'$ of the projection $\mathfrak{X}^{\mathrm{A}}\left(\StandardFibration\right)\to\mathfrak{X}\left(B\right)$
in the Fréchet space category was provided in \prettyref{lem:horizontal-lifting}
by horizontal lift, while a continuous local section $\sigma$ of
the projection $\Aut\left(\StandardFibration\right)\to\Diff\left(B\right)_{\StandardFibration}$
in the topological category was provided in \prettyref{prop:The-topological-fibration-of-Aut-by-Vau}
by horizontal transport, so that we indeed have the desired relation
\[
\exp_{\left(\Aut\left(\StandardFibration\right),\Xi\right)}=\exp_{\left(E,\xi\right)}\circ\left(-\right).
\]
This completes the proof of \prettyref{cor:Riemannian-fibration-of-Aut-seq}.
\end{proof}

\section{Smooth Fibration over the Moduli\label{sec:Smooth-Fibration-over-the-Moduli}}

In this section, we construct the following fibration of the total
diffeomorphism group by the automorphism group over the moduli space,
in the order of increasingly rich structures: firstly a topological
principal bundle, then an infinite-dimensional smooth fibration, and
lastly an infinite-dimensional Riemannian fibration:%
\[
\begin{gathered}\xymatrix{\Aut\left(\StandardFibration\right)\ar@{^{(}->}[r] & \Diff\left(E\right)\ar@{->>}[d]^{\DiffProjection}\\
 & \Fib\left(\StandardFibration\right)
}
\end{gathered}
\]
where the base space $\Fib\left(\StandardFibration\right)$, our main
object of study, will be simultaneously endowed with a (Fréchet) smooth
structure and a (weak) Riemannian structure.

\subsection{Fibrating diffeomorphisms by automorphisms}

Recall from \prettyref{prop:The-topological-fibration-of-Aut-by-Vau}
that we obtained a principal $\Vau\left(\StandardFibration\right)$-bundle
structure on $\AutProjection\colon\Aut\left(\StandardFibration\right)\to\Diff\left(B\right)_{\StandardFibration}$
by constructing a local cross section. One may try proceeding analogously
for our main sequence
\begin{equation}
\Aut\left(\StandardFibration\right)\hookrightarrow\Diff\left(E\right)\xrightarrow{\DiffProjection}\Fib\left(\StandardFibration\right).\label{eq:deformation sequence}
\end{equation}
However, this task is less straightforward than the previous case,
partly because in the current case the base $\Fib\left(\StandardFibration\right)$
is a coset space that lacks a concrete description. To circumvent
this inconvenience, we recall the splitting lemma from \prettyref{lem:splitting-lemma},
whose specialization into the current case is reprised as follows:
\begin{lem}
\label{lem:splitting-lemma-specialized-to-Diff-Aut}The canonical
projection $Q\colon\Diff\left(E\right)\to\Fib\left(\StandardFibration\right)$
onto the left coset space $\Fib\left(\StandardFibration\right)\coloneqq\Diff\left(E\right)/\Aut\left(\StandardFibration\right)$
is a topological principal bundle if and only if for some open neighborhood
$W$ around $\StandardFibration$ in $\Fib\left(\StandardFibration\right)$,
any (hence all) of the following three equivalent conditions is satisfied:
\begin{enumerate}[label=\textup{(\roman*)}]
\item There exists a continuous map $\sigma\colon\Fib\left(\StandardFibration\right)\supsetopen W\to\Diff\left(E\right)$,
sending $\StandardFibration$ to $\Identity_{E}$, such that $\DiffProjection\circ\sigma\equiv\Identity_{W}$.
\item There exists a continuous map $r\colon\Diff\left(E\right)\supsetopen\DiffProjection^{-1}\left(W\right)\to\Aut\left(\StandardFibration\right)$,
sending $\Identity_{E}$ to $\Identity_{E}$, such that $r\left(-\cdot h\right)\equiv r\left(-\right)\cdot h$
for all $\forall h\in\Aut\left(\StandardFibration\right)$.
\item There exists a subset $S\subseteq\Diff\left(E\right)$, containing
$\Identity_{E}$, such that the right translation $\rho$ maps $S\times\Aut\left(\StandardFibration\right)$
homeomorphically onto $\DiffProjection^{-1}\left(W\right)\subsetopen\Diff\left(E\right)$.
\end{enumerate}
In this case, $\sigma$, $r$, and $S$ are called a ``local section'',
an ``equivariant neighborhood retraction'', and a ``local slice''
for this fibration, respectively.
\end{lem}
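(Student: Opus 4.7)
The plan is to deduce this lemma directly as a specialization of the general splitting lemma (Lemma \ref{lem:splitting-lemma}) applied with $G \coloneqq \Diff(E)$ and $H \coloneqq \Aut(\StandardFibration)$, taking the identity coset of $G/H$ to be the point $\StandardFibration \in \Fib(\StandardFibration)$. The three conditions (i), (ii), (iii) in the statement are exactly the instances, under this substitution, of the three equivalent conditions from Lemma \ref{lem:splitting-lemma}: existence of a local section, an equivariant neighborhood retraction, and a local slice near the identity coset. Their mutual equivalence, and the fact that any of them upgrades $\DiffProjection$ to a topological principal $\Aut(\StandardFibration)$-bundle (with trivializing cover obtained by left-translating $W$ across $\Fib(\StandardFibration)$ via the homogeneous $\Diff(E)$-action), is then a direct transcription of the general lemma.

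To invoke Lemma \ref{lem:splitting-lemma}, I need to verify its sole hypothesis: that $\Aut(\StandardFibration)$ is a closed subgroup of $\Diff(E)$. This is already observed right after Definition \ref{def:automorphism} in view of the defining diagram \prettyref{eq:the-symmetries-diagram}: the subgroup $\Aut(\StandardFibration)$ coincides with the pre-image of the diagonal under the continuous self-map $h \mapsto (\StandardProjection_{\StandardFibration} \circ h, \StandardProjection_{\StandardFibration})$ into a suitable mapping space, combined with the closed stabilizing condition imposed by the requirement of descent to some $\underline{h} \in \Diff(B)$. More concretely, $h \in \Diff(E)$ lies in $\Aut(\StandardFibration)$ iff $\StandardProjection_{\StandardFibration} \circ h$ factors through $\StandardProjection_{\StandardFibration}$, and this is a closed condition in the $C^{\infty}$-topology on $\Diff(E)$.

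For the ``only if'' direction, once $\DiffProjection$ is a topological principal $\Aut(\StandardFibration)$-bundle, local triviality about the distinguished point $\StandardFibration \in \Fib(\StandardFibration)$ immediately yields a continuous local section $\sigma$ sending $\StandardFibration$ to $\Identity_E$, which then gives the retraction $r$ and the slice $S$ by the equivalence of the three conditions. I anticipate no essential obstacle here, since the argument is purely formal once Lemma \ref{lem:splitting-lemma} is in hand; the only small point requiring care is the verification of closedness of $\Aut(\StandardFibration) \leq \Diff(E)$, which follows as above. The substantive work (namely actually producing such a local section $\sigma$, and thereby confirming the principal bundle structure for $\DiffProjection$) is deferred to the subsequent infinitesimal and geometric constructions of Section \ref{sec:Smooth-Fibration-over-the-Moduli}, in parallel with the treatment of $\AutProjection$ in the preceding section.
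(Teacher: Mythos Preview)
Your proposal is correct and matches the paper's approach exactly: the paper presents this lemma without a separate proof, introducing it simply as ``the splitting lemma from \prettyref{lem:splitting-lemma}, whose specialization into the current case is reprised as follows.'' One minor correction: the closedness of $\Aut(\StandardFibration)\leq\Diff(E)$ is not actually stated right after Definition~\ref{def:automorphism} (that remark concerns $\Vau(\StandardFibration)\trianglelefteq\Aut(\StandardFibration)$), though your direct argument for it is fine and the paper later establishes it as part of \prettyref{thm:lie-group-structure-on-symmetry-group}.
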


There is an intuitive interpretation for the second criterion above,
the existence of an equivariant neighborhood retraction
\[
r\colon\Diff\left(E\right)\supsetopen U\to^{\Aut\left(\StandardFibration\right)}\Aut\left(\StandardFibration\right),\qquad\left.r\right|_{\Aut\left(\StandardFibration\right)}=\Identity_{\Aut\left(\StandardFibration\right)}.
\]
Namely, by viewing each $f\in\Diff\left(E\right)$ sufficiently close
to $\Aut\left(\StandardFibration\right)$ as a slightly perturbed
(``wiggly'') fibration $f_{\ast}\StandardFibration$, we can think
of a desired equivariant neighborhood retraction onto $\Aut\left(\StandardFibration\right)$
as a continuous way to ``straighten'' any such wiggly fibration
$f_{\ast}\StandardFibration$ to one that looks the same as $\StandardFibration$,
while without referring to the parametrizations or labels of the fibers.
This in turn breaks down to the following two questions for each wiggly
fiber $S\in\Shape\left(F,E\right)$ of $f_{\ast}\StandardFibration$:
\begin{itemize}
\item Which fiber $E_{x_{S}}\coloneqq\pi^{-1}\left(x_{S}\right)$ of $\StandardFibration$
should $S$ be straightened to?
\item How to match the points between $S$ and the chosen fiber $E_{x_{S}}=\pi^{-1}\left(x_{S}\right)$
of $\StandardFibration$?
\end{itemize}
Our answers to these two questions will be given in \prettyref{def:center-of-mass-answer-to-1st-question}
and \prettyref{def:normal-graph-answer-to-second-question}, respectively.
Let us start with the more straightforward second question: given
a slightly perturbed fibration $f_{\ast}\StandardFibration$, how
to match points between each fiber $S$ of $f_{\ast}\StandardFibration$
and a chosen nearby reference fiber of $\StandardFibration$. Roughly
speaking, we need to come up with a natural procedure to parametrize
the perturbed fiber $S$ with respect to the nearby reference fiber
$E_{x}\coloneqq\pi^{-1}\left(x\right)$ over $x\in B$, just based
on the shape $S\in\Shape\left(F,E\right)$. Here we adopt one such
procedure using the \emph{Riemannian normal exponential} from basic
Riemannian geometry, described as follows. Let $\pi_{NE_{x}}\colon NE_{x}\to E_{x}$
denote the (Riemannian) normal bundle of $E_{x}\subseteq E$, on which
the (Riemannian) normal exponential restricts to the map $\left.\exp_{E}\right|_{NE_{x}}$
taking sufficiently small vectors in $NE_{x}$ to the points near
$E_{x}\subseteq E$ along the normal geodesics. Then as long as the
perturbation of $S$ is small enough, we can choose a uniform tubular
neighborhood $W'\subsetopen E$ around $E_{x}$ and containing $S$,
such that the following tubular-neighborhood projection 
\begin{equation}
E\supsetopen W'\to E_{x}\subseteq E,\qquad p\mapsto\pi_{NE_{x}}\left(\left.\exp_{E}\right|_{NE_{x}}^{-1}\left(p\right)\right)\label{eq:tubular-neighborhood-projection}
\end{equation}
is well-defined and restricts to a diffeomorphism between $S$ and
$E_{x}$ (which is clearly independent of the choice of $W'$). This
justifies the following definition.
\begin{defn}
\label{def:normal-graph-answer-to-second-question}Suppose that $f\in\Diff\left(E\right)$
is sufficiently close to $\Aut\left(\StandardFibration\right)$, and
let $S\subseteq E$ be a fiber of the deformed fibration $f_{\ast}\StandardFibration$.
Given any reference fiber $E_{x}\coloneqq\pi^{-1}\left(x\right)$
of $\StandardFibration$ (over some $x\in B$) sufficiently close
to $S$. Then we define $u_{S,x}$ to be the diffeomorphism between
$S$ and $E_{x}$ uniquely determined by restricting the tubular-neighborhood
projection in \prettyref{eq:tubular-neighborhood-projection} as above:
\[
u_{S,x}\colon S\xrightarrow{\approx}E_{x},\qquad u_{S,x}\left(p\right)\coloneqq\pi_{NE_{x}}\left(\left.\exp_{E}\right|_{NE_{x}}^{-1}\left(p\right)\right).
\]
This procedure is said to be parametrizing a perturbed fiber $S$
as a \emph{normal graph} with respect to $x$.
\end{defn}

\subsection{Interlude: Riemannian center of mass}

Having answered the second question above, we consider next the first
question: given a slightly perturbed fibration $f_{\ast}\StandardFibration$,
how to select for each fiber $S$ of $f_{\ast}\StandardFibration$
an associated fiber of $\StandardFibration$. Roughly speaking, we
need to come up with a natural procedure to label the perturbed fiber
$S$ by a distinguished point $x\in B$, just based on the shape $S\in\Shape\left(F,E\right)$.
Here we adopt one such procedure using the Riemannian center of mass
due to Karcher \cite{MR442975} (see \prettyref{rem:Riemannian-center-of-mass}
below), described as follows. Let $d\hat{\nu}_{S}$ denote the (Riemannian)
normalized volume form of $S\subseteq E$ with the induced metric,
and let $\left.\exp_{B}\right|_{T_{x}B}$ be the restricted (Riemannian)
exponential map taking sufficiently small vectors in $T_{x}B$ to
the points near $x\in B$ along the radial geodesics. Then as long
as the perturbation of $S$ is small enough, we can choose a geodesic
ball $W\subsetopen B$, whose closure $\overline{W}$ is geodesically-convex
and contains the projected image $\StandardProjection\left(S\right)$,
such that the following restricted vector field
\begin{equation}
X_{S}\colon B\supseteq\overline{W}\to TB,\qquad X_{S}\left(x\right)\coloneqq\int_{a\in S}\left(\left.\exp_{B}\right|_{T_{x}B}\right)^{-1}\left(\StandardProjection\left(a\right)\right)\,d\hat{\nu}_{S},\label{eq:restricted-vector-field}
\end{equation}
is well-defined and has a unique zero $x_{S}$ in $W$ (which is clearly
independent of the choice of $W$). This justifies the following definition.
\begin{defn}
\label{def:center-of-mass-answer-to-1st-question}Suppose that $f\in\Diff\left(E\right)$
is sufficiently close to $\Aut\left(\StandardFibration\right)$, and
let $S\subseteq E$ be a fiber of the deformed fibration $f_{\ast}\StandardFibration$.
Then we define $x_{S}$ to be the point in $B$ uniquely determined
by the zero of the restricted vector field \prettyref{eq:restricted-vector-field}
as above:
\[
x_{S}\in B,\qquad\int_{a\in S}\left(\left.\exp_{B}\right|_{T_{x_{S}}B}\right)^{-1}\left(\StandardProjection\left(a\right)\right)\,d\hat{\nu}_{S}=0.
\]
This procedure is said to be labeling a perturbed fiber $S$ by the
\emph{center of mass}.
\end{defn}

The center of mass constructed in the preceding definition is essentially
equivalent to the one used in \cite{MR2976322}, the latter of which
is what more commonly is called ``Riemannian center of mass'' as
it captures the notion of minimizing the total distances. The relation
is explained in the following remark:
\begin{rem}
\label{rem:Riemannian-center-of-mass}Again let $S\in\Shape\left(F,E\right)$
be a deformed fiber by only a sufficiently small perturbation, so
that in particular there is a geodesic ball $W\subsetopen B$ whose
closure $\overline{W}$ is geodesically-convex and contains the projected
image $\StandardProjection\left(S\right)$. Then instead of the restricted
vector field $X_{S}$ in \prettyref{eq:restricted-vector-field} as
above, it was considered by \cite{MR2976322} the following $L^{2}$
distance $P_{S}$ to the projected image $\StandardProjection\left(S\right)$
in the base:
\[
P_{S}\left(x\right)\coloneqq\frac{1}{2}\int_{a\in S}d_{B}\left(x,\StandardProjection\left(a\right)\right)^{2}\,d\hat{\nu}_{S}.
\]
By computing the negative gradient field of $P_{S}$, we recover our
vector field $X_{S}$ in \prettyref{eq:restricted-vector-field}:
\[
-\gradient P_{S}=\int_{a\in S}\left(\left.\exp_{B}\right|_{T_{x}B}\right)^{-1}\left(\StandardProjection\left(a\right)\right)\,d\hat{\nu}_{S}=X_{S}.
\]
Therefore, the zeros of $X_{S}$ correspond to the critical points
of $P_{S}$. A minimum of a map of such form $P_{S}$ is known as
the \emph{($L^{2}$) Riemannian center of mass} of the measurable
subset in $B$. Usages of the Riemannian center of mass can be found
for instance in \cite{MR356104}, and subsequently a more systematic
study was done by Karcher in \cite{MR442975}; see also \cite{MR2736346}
for a general study of the $L^{p}$ Riemannian center of mass for
$1\leq p\leq\infty$. These references in particular address the question
of existence and uniqueness of the center, including the special case
used in our proof of \prettyref{prop:topological-slice-of-our-bundles}
where everything is assumed to be contained in a convex geodesic ball
of a compact manifold.
\end{rem}

\subsection{Fibrating diffeomorphisms by automorphisms (continued)}

The preceding two definitions (\prettyref{def:center-of-mass-answer-to-1st-question}
and \prettyref{def:normal-graph-answer-to-second-question}) provide
a geometrically natural two-step procedure to ``straighten'' any
slightly perturbed fiber $S$: we first label it by the center of
mass $x_{S}$ (\prettyref{def:center-of-mass-answer-to-1st-question}),
with respect to which we then parametrize it as a normal graph (\prettyref{def:normal-graph-answer-to-second-question}).
From this we can construct an equivariant neighborhood retraction
for the desired bundle structure of diffeomorphisms fibered by automorphisms,
as the following proposition shows.
\begin{prop}
\label{prop:topological-slice-of-our-bundles}The canonical quotient
projection associated with the subgroup $\Aut\left(\StandardFibration\right)\leq\Diff\left(E\right)$
gives the following topological principal bundle:
\[
\Aut\left(\StandardFibration\right)\hookrightarrow\Diff\left(E\right)\xrightarrow{\DiffProjection}\Fib\left(\StandardFibration\right).
\]
Specifically, the bundle structure is given by a certain equivariant
neighborhood retraction $r$ onto $\Aut\left(\StandardFibration\right)$,
whose construction is induced by the procedure of parametrizing perturbed
fibers as normal graphs with respect to centers of mass.
\end{prop}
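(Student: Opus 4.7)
The plan is to apply the splitting lemma (\prettyref{lem:splitting-lemma-specialized-to-Diff-Aut}): it suffices to exhibit a continuous, equivariant neighborhood retraction $r\colon\Diff(E)\supsetopen U\to\Aut(\StandardFibration)$ defined on some open neighborhood $U\supseteq\Aut(\StandardFibration)$, satisfying $\left.r\right|_{\Aut(\StandardFibration)}=\Identity$. I will construct $r(f)$ for any $f$ sufficiently close to $\Aut(\StandardFibration)$ by canonically straightening the wiggly fibers $S_{y}\coloneqq f(E_{y})$ of the perturbed fibration $f_{\ast}\StandardFibration$.

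For each $y\in B$, I first select a base label $\beta_{f}(y)\coloneqq x_{S_{y}}\in B$ as the Riemannian center of mass (\prettyref{def:center-of-mass-answer-to-1st-question}), and then parametrize the wiggly fiber as a normal graph $u_{S_{y},\beta_{f}(y)}\colon S_{y}\xrightarrow{\approx}E_{\beta_{f}(y)}$ over the reference fiber (\prettyref{def:normal-graph-answer-to-second-question}). Combining these two steps yields a map
\[
r(f)\colon E\to E,\qquad r(f)(p)\coloneqq u_{S_{y},\beta_{f}(y)}(f(p))\quad\text{for }p\in E_{y},
\]
which by construction obeys $\pi\circ r(f)=\beta_{f}\circ\pi$, so it is fiber-preserving covering $\beta_{f}$ and thus lies in $\Aut(\StandardFibration)$ whenever it is a diffeomorphism.

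Next I will verify the three required properties of $r$. For the retraction property, if $h\in\Aut(\StandardFibration)$ covers $\underline{h}\in\Diff(B)_{\StandardFibration}$, then each wiggly fiber $S_{y}=h(E_{y})=E_{\underline{h}(y)}$ is a genuine fiber of $\StandardFibration$ whose projected image in $B$ is the singleton $\{\underline{h}(y)\}$; the defining equation (\prettyref{eq:restricted-vector-field}) for the center of mass forces $\beta_{h}(y)=\underline{h}(y)$, and the normal-graph parametrization of a fiber over itself is the identity, so $r(h)(p)=h(p)$. Equivariance $r(f\circ h)=r(f)\circ h$ for $h\in\Aut(\StandardFibration)$ will follow from the intrinsic, shape-theoretic nature of both geometric ingredients: the $y$-th wiggly fiber of $f\circ h$ coincides with the $\underline{h}(y)$-th wiggly fiber of $f$, so $\beta_{f\circ h}(y)=\beta_{f}(\underline{h}(y))$ and the two matchings agree under pre-composition by $h$. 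Continuity of $r$ will follow from the continuous dependence of both $x_{S}$ and $u_{S,x}$ on the shape $S\in\Shape(F,E)$ and on the underlying diffeomorphism in the $C^{\infty}$-topology.

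The main obstacle will be to ensure that $r$ is simultaneously well-defined for all $y\in B$ on one common open neighborhood $U\subsetopen\Diff(E)$. Concretely, I need uniform-in-$y$ smallness so that (i) the restricted vector field $X_{S_{y}}$ on a geodesically-convex ball in $B$ has a unique zero, and (ii) each wiggly fiber $S_{y}$ lies inside a tubular neighborhood of $E_{\beta_{f}(y)}$ on which the normal exponential is a diffeomorphism. I will extract these uniform bounds from the compactness of $B$ together with the $C^{\infty}$-continuity of the fiber-family assignment $f\mapsto(S_{y})_{y\in B}$, which guarantees that for $f$ in a sufficiently $C^{1}$-small neighborhood of $\Aut(\StandardFibration)$ in $\Diff(E)$, both conditions hold uniformly in $y$ and thus single out the open set $U$. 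A final shrinking of $U$, using the openness of $\Diff(E)\subsetopen\MappingSpace^{\infty}(E,E)$ and of $\Diff(B)_{\StandardFibration}\subsetopen\Diff(B)$ together with the continuity of $r$, ensures that both $r(f)$ and $\beta_{f}$ are genuine diffeomorphisms, completing the construction.
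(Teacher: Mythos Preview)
Your proposal is correct and follows essentially the same approach as the paper's proof: both invoke \prettyref{lem:splitting-lemma-specialized-to-Diff-Aut} and build the equivariant neighborhood retraction by labeling each perturbed fiber $S_y=f(E_y)$ with its center of mass and then matching it to the reference fiber via the normal-graph parametrization. The paper packages this as a ``displacement'' diffeomorphism $\delta_f$ with $r(f)=\delta_f\circ f$, while you write out $r(f)(p)=u_{S_y,\beta_f(y)}(f(p))$ directly---but these are the same map, and your verification of the retraction, equivariance, and uniformity-in-$y$ conditions lines up with the paper's treatment.
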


\begin{proof}
By the splitting lemma for topological coset quotient projections
(\prettyref{lem:splitting-lemma-specialized-to-Diff-Aut}), it suffices
to construct an equivariant neighborhood retraction $r$ onto $\Aut\left(\StandardFibration\right)$.
Thus for any diffeomorphism $f\in\Diff\left(E\right)$ sufficiently
close to $\Aut\left(\StandardFibration\right)$, we need to construct
a desired automorphism $r\left(f\right)\in\Aut\left(\StandardFibration\right)$.
Or alternatively — taking the ``Lagrangian'' perspective rather
than ``Eulerian'' — we need to construct a desired displacement
diffeomorphism $\delta_{f}\in\Diff\left(E\right)$ that performs the
retraction $r$ by post-composition:
\[
\delta_{f}\in\Diff\left(E\right),\qquad r\left(f\right)=\delta_{f}\circ f.
\]
To this end, we view the source space $E$ as equipped with the perturbed
fibration $f_{\ast}\StandardFibration$, so that we can construct
the desired displacement $\delta_{f}\colon E\to E$ by describing
its effect on each fiber $S\subseteq E$ of $f_{\ast}\StandardFibration$.
We have already had a candidate; namely, the above two-step procedure
of straightening the perturbed fiber $S$. To recap, i) first, we
label $S$ by the center of mass (\prettyref{def:center-of-mass-answer-to-1st-question}),
which yields a point $x_{S}\in B$ and hence picks out a fiber $S'\coloneqq\StandardProjection^{-1}\left(x_{S}\right)$
of the standard fibration $\StandardFibration$; ii) then, we parametrize
$S$ as a normal graph with respect to $x_{S}$ (\prettyref{def:normal-graph-answer-to-second-question}),
which yields a diffeomorphism $u_{S,x_{S}}$ between $S$ and $S'$.
This last diffeomorphism $u_{S,x_{S}}$, for each fiber $S$ of $f_{\ast}\StandardFibration$,
will serve as the desired displacement $\delta_{f}$ on $E$ when
restricting to $S$:
\[
\delta_{f}\colon E\to E,\qquad\left.\delta_{f}\right|_{S}\coloneqq u_{S,x_{S}}\colon E\supseteq S\xrightarrow{\approx}S'\subseteq E.
\]
To see that the thus defined map $\delta_{f}\colon E\to E$ is smooth,
we consider its local representation with respect to smooth local
coordinates induced by the suitable smooth fibrations. More precisely,
as before we equip the source space $E$ with the local coordinates
$\left(x',y'\right)\in U\times F$ induced by the local trivialization
$\varphi_{\StandardFibration}$ of $\StandardFibration$, and moreover
we equip the target space $E$ with the local coordinates $\left(x,y\right)\in U\times F$
induced by the local trivialization $\varphi_{f_{\ast}\StandardFibration}=\varphi_{\StandardFibration}\circ f^{-1}$
of $f_{\ast}\StandardFibration$. Then with represent to these coordinates,
the map $\delta_{f}\colon E\to E$ admits a local representation
\begin{equation}
\hat{\delta}_{f}=\varphi_{\StandardFibration}\circ\delta_{f}\circ\left(f\circ\varphi_{\StandardFibration}^{-1}\right)\colon U\times F\to U\times F,\qquad\left(x,y\right)\mapsto\left(x',y'\right)\coloneqq\left(x_{S},\hat{u}_{S,x_{S}}\left(y\right)\right)\label{eq:local-rep-of-displacement}
\end{equation}
Here, $x_{S}$ is the center of mass of $S$, which depends smoothly
on $x$, and is independent of $y$; while $\hat{u}_{S,x_{S}}$ is
the coordinate representation of the diffeomorphism $u_{S,x_{S}}$
between $S$ and $S'$, which is hence smooth on $y$, and also smoothly
depends on $x_{S}$ hence on $x$. This shows that our construction
of the displacement $\delta_{f}$ has the desired smoothness:
\[
\delta_{f}\in\MappingSpace^{\infty}\left(E,E\right).
\]
Moreover, the assignment $f\mapsto\delta_{f}$ is continuous (as is
clear from the expression of $\delta_{f}$ in \prettyref{eq:local-rep-of-displacement}),
hence it yields a continuous map from an open subset $U'$ in $\Diff\left(E\right)$
to the smooth mapping space $\MappingSpace^{\infty}\left(E,E\right)$.
But since $\Diff\left(E\right)$ is an open subset of $\MappingSpace^{\infty}\left(E,E\right)$,
we can ensure $\delta_{f}$ to be a diffeomorphism by shrinking the
open subset $U'$ if necessary. In conclusion, we obtain a desired
construction of ``displacement'' diffeomorphisms
\[
\Diff\left(E\right)\supsetopen U'\to\Diff\left(E\right),\qquad f\mapsto\delta_{f},
\]
which in turn induces a desired neighborhood retraction
\[
r\colon\Diff\left(E\right)\supsetopen U\to\Aut\left(\StandardFibration\right),\qquad f\mapsto\delta_{f}\circ f.
\]
Lastly, we need to show that $r$ is equivariant with respect to $\Aut\left(\StandardFibration\right)$;
but this is clear by our construction of $\delta_{f}$. Indeed, since
$\delta_{f}$ is constructed in a way that does not rely on the parametrization
or labelling of the fibers of $f_{\ast}\StandardFibration$ (but only
depends on their shapes), it has the invariant property $\delta_{f}=\delta_{f\circ h}$
for any automorphism $h\in\Aut\left(\StandardFibration\right)$, and
hence $r\left(f\right)\coloneqq\delta_{f}\circ f$ has the equivariant
property $r\left(f\circ h\right)=r\left(f\right)\circ h$, as desired.
This completes the proof.
\end{proof}

\subsection{Smooth fibration structure}

\begin{lem}
\label{lem:intrinsic-horizontal-center}Assume without loss of generality
that $\StandardFibration$ is a harmonic Riemannian submersion. Let
$S\in\Shape\left(F,E\right)$ be sufficiently close to $\left|\StandardFibration\right|$,
and for each nearby fiber $E_{x}$ let $\Psi_{x}\in\Diff\left(S,E_{x}\right)$
denote the horizontal retraction. Then the following vector field
$X_{S}$ defined on a sufficiently small convex geodesic ball in $B$
admits a unique zero $\overline{x}_{S}$ in the interior:
\begin{equation}
X_{S}\left(\overline{x}_{S}\right)=0,\qquad X_{S}\left(x\right)\coloneqq\int_{q\in E_{x}}\left(\left.\exp_{B}\right|_{T_{x}B}\right)^{-1}\left(\pi\left(\Psi_{x}^{-1}\left(q\right)\right)\right)\,d\nu_{E_{x}}\in T_{x}B.\label{eq:intrinsic-vector-field-for-center}
\end{equation}
This point $\overline{x}_{S}\in B$ (or the fiber over it) is called
the \emph{``intrinsic'' horizontal center of $S$}.
\end{lem}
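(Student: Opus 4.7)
The plan is to reduce the lemma to the classical Karcher center-of-mass argument (as recalled in \prettyref{rem:Riemannian-center-of-mass}) by a perturbation around the base case where $S$ itself is a genuine fiber of $\StandardFibration$. First I would record this base case: if $S = E_{x_0}$ for some $x_0 \in B$, then the horizontal retraction $\Psi_{x_0} \colon E_{x_0} \to E_{x_0}$ is the identity, and for $x$ near $x_0$ the map $\Psi_x^{-1} \colon E_x \to S = E_{x_0}$ lands entirely in the single fiber $E_{x_0}$, so $\pi \circ \Psi_x^{-1} \equiv x_0$ is constant. Substituting into \prettyref{eq:intrinsic-vector-field-for-center} yields
\[
X_{E_{x_0}}(x) = \mathrm{Vol}(E_x) \cdot \left(\exp_B|_{T_xB}\right)^{-1}(x_0),
\]
which vanishes precisely at $x = x_0$. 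Thus the ideal case has the expected unique zero, and the task reduces to tracking this zero under small perturbations of $S$.

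Next, I would carry out the perturbation via the inverse function theorem applied to the smooth map
\[
F \colon \Shape(F,E) \supsetopen \mathcal{U} \times W \to TB, \qquad (S,x) \mapsto X_S(x),
\]
on a small neighborhood $\mathcal{U}$ of $E_{x_0}$ in $\Shape(F,E)$ and a small convex geodesic ball $W \ni x_0$. Differentiating the displayed expression above at $x = x_0$ (where the non-$x$-dependent factor vanishes) gives
\[
\partial_x F(E_{x_0}, x_0) = -\mathrm{Vol}(E_{x_0}) \cdot \Identity_{T_{x_0}B},
\]
since in Riemannian normal coordinates at $x_0$ the logarithm $x \mapsto (\exp_B|_{T_xB})^{-1}(x_0)$ has derivative $-\Identity$. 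This partial is invertible, and because $\dim B < \infty$ the finite-dimensional implicit function theorem applies to produce a smooth map $\overline{x} \colon \mathcal{U}' \to W'$ on shrunken neighborhoods with $F(S, \overline{x}(S)) = 0$, giving both existence and local uniqueness of the intrinsic horizontal center.

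Finally, to promote local uniqueness to uniqueness throughout the chosen convex geodesic ball $W$, I would adapt the Karcher/Kendall convexity argument: realize $X_S$ as (the leading part of) the negative gradient of the potential
\[
P_S(x) \coloneqq \tfrac{1}{2} \int_{q \in E_x} d_B\bigl(x, \pi(\Psi_x^{-1}(q))\bigr)^2 \, d\nu_{E_x},
\]
and show that on a sufficiently small geodesic ball (uniformly over $S$ close to $|\StandardFibration|$) the Hessian of $P_S$ is positive-definite, forcing a unique critical point. The main obstacle is precisely that, unlike in the setting of \prettyref{rem:Riemannian-center-of-mass}, both the domain of integration $E_x$ and the identification $\Psi_x$ depend on $x$, so differentiating $P_S$ produces extra terms from the variation of $\Psi_x$ and of $\mathrm{Vol}(E_x)$. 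These correction terms must be estimated and shown to be subordinate to the leading Hessian, which is where the harmonic Riemannian submersion assumption enters: minimality of the fibers and smooth dependence of the horizontal transport on the base point give the uniform $C^\infty$-bounds on $\Psi_x$ needed to reduce the estimate to the classical Karcher comparison on the geodesically convex ball.
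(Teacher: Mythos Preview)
Your implicit-function-theorem approach in the first two paragraphs is sound and in fact already suffices for the lemma. Once you have $\partial_x X_S(x)\big|_{(E_{x_0},x_0)} = -\mathrm{Vol}(E_{x_0})\cdot\Identity_{T_{x_0}B}$ invertible, the IFT (only $x$ varies in the finite-dimensional base; $S$ enters as a parameter for which mere continuity of $X_S$ and $\partial_x X_S$ is enough) yields a unique zero in some neighbourhood $V$ of $x_0$ for every $S$ near $E_{x_0}$. Since the lemma allows the convex geodesic ball to be ``sufficiently small'', you may simply take $W \subseteq V$ with $\pi(S) \subset W$, and you are done. Your third paragraph is therefore unnecessary --- and as written it does not close: positive-definiteness of $\mathrm{Hess}\,P_S$ forces a unique \emph{critical point of $P_S$}, but since (as you correctly observe) $X_S \neq -\mathrm{grad}\,P_S$, this says nothing directly about zeros of $X_S$.

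The paper takes a genuinely different route. It decouples the two sources of $x$-dependence in $X_S(x)$ by introducing the auxiliary family
\[
X_{S,m}(x) \coloneqq \int_{q\in E_m}\bigl(\exp_B\big|_{T_xB}\bigr)^{-1}\bigl(\pi(\Psi_m^{-1}(q))\bigr)\,d\nu_{E_m},
\]
with the fiber of integration frozen at $E_m$. Each $X_{S,m}$ \emph{is} an honest Karcher gradient with a unique zero $\overline{x}_{S,m}$, defining a self-map $\beta_S\colon m \mapsto \overline{x}_{S,m}$ on the ball. The harmonic assumption is used to rewrite every $X_{S,m}$ as an integral over the model fiber $F$ against a single fixed probability measure, so that the discrepancy between $X_{S,m}$ and $X_{S,m'}$ becomes a holonomy correction along geodesic triangles in $B$; Karcher's curvature estimates then make $\beta_S$ a contraction. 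Its unique fixed point $\overline{x}_S$ is a zero of $X_S$ (since $X_S(x) = X_{S,x}(x)$), and conversely every zero of $X_S$ is a fixed point of $\beta_S$, giving uniqueness. Your IFT argument is more direct and elementary; the paper's iterative construction is less so, but the Lipschitz estimates it establishes are recycled in the proof of \prettyref{thm:main-theorem-about-smooth-D-fibration}, where an analogous successive-approximation scheme reappears.
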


\begin{proof}
Let $W$ be such
a sufficiently small convex geodesic ball containing $\pi\left(S\right)$
in $B$. Then for each $m\in W$, the horizontal retraction $\Psi_{m}\in\Diff\left(S,E_{m}\right)$
induces a volume form on $S$ from the nearby fiber $E_{m}$, with
respect to which we can take the horizontal center of $S$; i.e.,
the unique zero $\overline{x}_{S,m}$ of a certain vector field $X_{S,m}$
given as follows:
\begin{equation}
X_{S,m}\left(\overline{x}_{S,m}\right)=0,\qquad X_{S,m}\left(x\right)\coloneqq\int_{q\in E_{m}}\left(\left.\exp_{B}\right|_{T_{x}B}\right)^{-1}\left(\pi\left(\Psi_{m}^{-1}\left(q\right)\right)\right)\,d\nu_{E_{m}}\in T_{x}B.\label{eq:semi-extrinsic-vector-field-for-center}
\end{equation}
This horizontal center $\overline{x}_{S,m}$ (taken with respect to
the probability measure induced from $E_{m}$) is almost but not exactly
the same as the desired intrinsic horizontal center $\overline{x}_{S}$,
but it yields a map that is up for inductive applications:
\[
\beta_{S}\colon B\supsetopen W\to W\subsetopen B,\qquad m\mapsto\overline{x}_{S,m}.
\]
To examine the regularity and convergence of this map $\beta_{S}$,
let us choose a suitable Riemannian metric on $E$, so that $\StandardFibration$
becomes a harmonic Riemannian submersion. To see that this can be
always arranged, fix a volume form $d\mu_{0}$ on $F$ that is positive
and normalized (i.e., of unit total volume). Then it was shown in
\cite{MR0271984} that the group $\Diff^{+}\left(F\right)$ of orientation-preserving
diffeomorphisms of $F$ deformation retracts to the subgroup $\Diff\left(F,d\mu_{0}\right)$
of volume-preserving ones. Thus we may assume without loss of generality
that $\StandardFibration$ admits a local trivialization with structure
cocycle taking values in $\Diff\left(F,d\mu_{0}\right)$. As a result,
there induces a well-defined fiberwise probability measure $\left\{ d\mu_{x}\right\} _{x\in B}$
on all fibers, which may be assumed to be induced from the original
fiberwise metric $\left\{ g_{x}\right\} _{x\in B}$ upon rescaling.
Then this local trivialization of $\StandardFibration$ induces a
(complete) connection $H\StandardFibration$ for which the horizontal
transport between fibers are all volume-preserving diffeomorphisms.
Now having constructed the desired fiberwise metric $\left\{ g_{x}\right\} _{x\in B}$
and connection $H\StandardFibration$, we may choose any base metric
$g_{B}$ to induce the desired total metric $g$ on $E$ such that
$\StandardFibration$ becomes harmonic, as desired. In this way, the
$L^{2}$ gradient field computed with respect to different measures
induced by $\mu_{m}$ on $E_{m}$ can be written intrinsically in
terms of the measure induced by $\mu_{0}$ on $F$, independent of
the choice of locally trivializing chart $q_{m}\left(y\right)$ parametrizing
each $E_{m}$:
\[
X_{S,m}\left(x\right)=\int_{y\in F}\left(\left.\exp_{B}\right|_{T_{x}B}\right)^{-1}\left(\pi\left(\Psi_{m}^{-1}\left(q_{m}\left(y\right)\right)\right)\right)\,d\mu_{0}\left(y\right).
\]
From this we see that for different reference points $m$ and $m'$,
the difference in the $L^{2}$ gradient fields $X_{S,m}$ and $X_{S,m'}$
is due to the holonomies arising from horizontal transporting along
the geodesic triangles around $m$, $m'$, and points in $\pi\left(S\right)$.
Thus in particular, $X_{S,m}$ is the same for all $m$ if $\StandardFibration$
is flat; more generally, a desired Lipschitz-type control can be deduced
from \cite{MR442975} that the map $\beta_{S}$ is Lipschitz continuous
with Lipschitz constant $\epsilon_{S}$ depending on the curvature
bounds of $\StandardFibration$ over $W$ and factoring by the radius
of $W$, so that $\epsilon_{S}<1$ can be arranged as long as $S$
is sufficiently close to $\left|\StandardFibration\right|$. Thus
$\beta_{S}$ admits a unique fixed point $x_{S}$ given by taking
the limit of successive approximations starting from any $x_{0}\in W$
(say, we can take $x_{0}$ to be the extrinsic center of mass as in
\prettyref{prop:topological-slice-of-our-bundles}):
\[
\overline{x}_{S}\coloneqq\lim_{n\to\infty}x_{n},\qquad x_{n}\coloneqq\beta_{S}\left(x_{n-1}\right).
\]
By construction, this fixed point $\overline{x}_{S}=\beta_{S}\left(\overline{x}_{S}\right)$
is the zero of the vector field $X_{S,m}$ in \prettyref{eq:semi-extrinsic-vector-field-for-center}
with $m=\overline{x}_{S}$, which is thus the zero of the vector field
$X_{S}$ in \prettyref{eq:intrinsic-vector-field-for-center} as desired.
This completes the proof.
\end{proof}
All of the preceding discussion culminates in the following theorem
that underlies the infinite-dimensional differential geometry of our
main sequence:
\begin{thm}
\label{thm:main-theorem-about-smooth-D-fibration}The moduli space
$\Fib\left(\StandardFibration\right)$ admits a unique smooth manifold
structure such that topological principal bundle (as constructed in
\prettyref{prop:topological-slice-of-our-bundles})%
\[
\Aut\left(\StandardFibration\right)\hookrightarrow\Diff\left(E\right)\xrightarrow{\DiffProjection}\Fib\left(\StandardFibration\right)\qquad\text{is a smooth fibration}.
\]
More precisely, both the smooth manifold structure on $\Fib\left(\StandardFibration\right)$
and the smooth bundle structure on $\DiffProjection$%
are induced from the smooth slice $\mathrm{\mathcal{S}}^{\mathrm{F}}\left(\StandardFibration\right)\subseteq\Diff\left(E\right)$,
which is modeled on the Fréchet space $\mathfrak{X}^{\mathrm{F}}\left(\StandardFibration\right)\subseteq\mathfrak{X}\left(E\right)$
of fair vector fields as given in \prettyref{def:fair-vector-field}.
\end{thm}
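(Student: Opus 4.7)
The plan is to apply the smooth slice lemma (\prettyref{prop:manifold-slice-condition}) to the closed Lie subgroup $\Aut\left(\StandardFibration\right)\leq\Diff\left(E\right)$, whose Lie subgroup structure was established in \prettyref{thm:lie-group-structure-on-symmetry-group}. The task is therefore to exhibit a smooth submanifold $\mathcal{S}^{\mathrm{F}}\left(\StandardFibration\right)\subseteq\Diff\left(E\right)$ containing $\Identity_{E}$, modeled on the Fréchet subspace $\mathfrak{X}^{\mathrm{F}}\left(\StandardFibration\right)$ of fair vector fields (\prettyref{def:fair-vector-field}), such that right-translation
\[
\rho\colon\mathcal{S}^{\mathrm{F}}\left(\StandardFibration\right)\times\Aut\left(\StandardFibration\right)\to\Diff\left(E\right),\qquad\left(s,h\right)\mapsto s\circ h,
\]
is a diffeomorphism onto an open image. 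Uniqueness of the smooth structure on $\Fib\left(\StandardFibration\right)$ and the smooth principal bundle structure on $\DiffProjection$ will then follow from the universal property in \prettyref{prop:manifold-slice-condition}.

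For the construction I would proceed in parallel with the proof of \prettyref{thm:main-theorem-about-smooth-symmetry-fibration}, promoting to smoothness the topological slice already constructed in \prettyref{prop:topological-slice-of-our-bundles}. Recall the equivariant retraction $r\colon\Diff\left(E\right)\supsetopen U\to\Aut\left(\StandardFibration\right)$ given by $r\left(f\right)=\delta_{f}\circ f$, where $\delta_{f}$ labels each perturbed fiber by a center point and parametrizes it as a normal graph (\prettyref{def:normal-graph-answer-to-second-question}). I would upgrade the labeling step by replacing the extrinsic center of mass with the \emph{intrinsic} horizontal center from \prettyref{lem:intrinsic-horizontal-center}, then define
\[
\mathcal{S}^{\mathrm{F}}\left(\StandardFibration\right)\coloneqq r^{-1}\left(\Identity_{E}\right).
\]
Geometrically, $\mathcal{S}^{\mathrm{F}}\left(\StandardFibration\right)$ consists of those $f\in U$ whose deformed fibers $S=f\left(E_{x}\right)$ are realized as horizontal normal graphs over their own intrinsic horizontal centers, with $f|_{E_{x}}$ prescribed precisely by that normal-graph parametrization.

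To verify that $\mathcal{S}^{\mathrm{F}}\left(\StandardFibration\right)$ is a smooth embedded submanifold modeled on $\mathfrak{X}^{\mathrm{F}}\left(\StandardFibration\right)$, I would pass through the adapted exponential chart $\Exp_{\StandardFibration}\colon\mathfrak{X}\left(E\right)\supsetopen V\to\Diff\left(E\right)$, which is smoothly compatible with the canonical smooth structure by \prettyref{lem:smooth-compatibility}. The claim is that, for sufficiently small neighborhoods,
\[
\Exp_{\StandardFibration}\left(V\cap\mathfrak{X}^{\mathrm{F}}\left(\StandardFibration\right)\right)=\mathcal{S}^{\mathrm{F}}\left(\StandardFibration\right)\cap U.
\]
The forward inclusion is geometric: a fair vector field $X$ is pointwise horizontal (\prettyref{def:fair-vector-field}), so by the two-step description of $\exp_{\StandardFibration}$ in \prettyref{def:adapted-Riemannian-exponential-map} the diffeomorphism $\Exp_{\StandardFibration}\left(X\right)$ moves each standard fiber $E_{x}$ purely along horizontal geodesics — realizing the image as a horizontal normal graph — while the zero-horizontal-center condition of $X$ forces the base-point label of this graph to coincide with the intrinsic horizontal center of the deformed fiber (\prettyref{lem:intrinsic-horizontal-center}). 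The reverse inclusion uses the $L^{2}$-orthogonal splitting $\mathfrak{X}\left(E\right)=\mathfrak{X}^{\mathrm{F}}\left(\StandardFibration\right)\oplus\mathfrak{X}^{\mathrm{A}}\left(\StandardFibration\right)$ from \prettyref{lem:L2-splitting-by-fair-vector-fields}: any $f\in\mathcal{S}^{\mathrm{F}}\left(\StandardFibration\right)$ pulls back to a vector field whose horizontal-averaging projection onto $\mathfrak{X}^{\mathrm{A}}\left(\StandardFibration\right)$ is forced to vanish by the slice condition on $r$, placing it in $\mathfrak{X}^{\mathrm{F}}\left(\StandardFibration\right)$. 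Since $\mathfrak{X}^{\mathrm{F}}\left(\StandardFibration\right)$ is a closed Fréchet subspace of $\mathfrak{X}\left(E\right)$ that is continuously complemented by $\mathfrak{X}^{\mathrm{A}}\left(\StandardFibration\right)$, the image $\mathcal{S}^{\mathrm{F}}\left(\StandardFibration\right)$ is then a smooth submanifold of $\Diff\left(E\right)$ modeled on $\mathfrak{X}^{\mathrm{F}}\left(\StandardFibration\right)$.

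The main obstacle, as in the proof of \prettyref{thm:main-theorem-about-smooth-symmetry-fibration}, will be to verify that $\rho$ is not merely a homeomorphism but a diffeomorphism onto an open image. Openness and bijectivity are already supplied by \prettyref{prop:topological-slice-of-our-bundles} via the splitting lemma (\prettyref{lem:splitting-lemma}), and smoothness of $\rho$ is immediate from smoothness of composition in $\Diff\left(E\right)$ (\prettyref{thm:Canonical-Smooth-Structure-on-DiffM-appendix}) restricted to smooth submanifolds. The delicate point is smoothness of the inverse $\rho^{-1}\left(f\right)=\left(f\circ h_{f}^{-1},h_{f}\right)$ with $h_{f}=r\left(f\right)$, which reduces to smoothness of the retraction $r$, hence via the exponential law to smoothness of the adjoint map $U\times E\to E$, $\left(f,p\right)\mapsto r\left(f\right)\left(p\right)$. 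This is where we cash in the intrinsic horizontal center: it is obtained as a Banach-type contraction fixed point with smooth parametric dependence on $f$ (as indicated in the proof of \prettyref{lem:intrinsic-horizontal-center}), while the normal-graph parametrization is a smooth finite-dimensional Riemannian construction depending smoothly on its defining data. Assembling these two smooth ingredients into $\delta_{f}$, and hence into $r\left(f\right)=\delta_{f}\circ f$, yields the required smoothness of $r$ and completes the application of \prettyref{prop:manifold-slice-condition}.
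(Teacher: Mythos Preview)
Your overall strategy matches the paper's: apply the smooth slice lemma (\prettyref{prop:manifold-slice-condition}) to the closed Lie subgroup $\Aut\left(\StandardFibration\right)\leq\Diff\left(E\right)$, take the slice $\mathcal{S}^{\mathrm{F}}\left(\StandardFibration\right)=r^{-1}\left(\Identity_{E}\right)$ for the retraction $r$ built from the \emph{intrinsic} horizontal center of \prettyref{lem:intrinsic-horizontal-center}, identify this slice with $\mathfrak{X}^{\mathrm{F}}\left(\StandardFibration\right)$ under the adapted exponential chart, and then reduce the diffeomorphism property of $\rho$ to smoothness of $r$. All of this is exactly what the paper does, and your characterization $g\in\mathcal{S}^{\mathrm{F}}\left(\StandardFibration\right)\Leftrightarrow\Exp_{\StandardFibration}^{-1}\left(g\right)\in\mathfrak{X}^{\mathrm{F}}\left(\StandardFibration\right)$ is precisely the paper's equivalence $r\left(g\right)=\Identity_{E}\Leftrightarrow r'\bigl(\Exp^{-1}\left(g\right)\bigr)=0$.

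The gap is in your final step. You write that smoothness of $r$ follows because the intrinsic horizontal center ``is obtained as a Banach-type contraction fixed point with smooth parametric dependence on $f$ (as indicated in the proof of \prettyref{lem:intrinsic-horizontal-center})''. But that lemma only establishes existence and uniqueness of the fixed point; it says nothing about smooth dependence on the infinite-dimensional parameter $f\in\Diff\left(E\right)$. A Banach fixed-point argument gives continuous dependence; smoothness in a Fr\'echet parameter requires something more (e.g.\ a Gl\"ockner-type implicit function theorem with finite-dimensional target, or a separate convergence argument in the $C^{\infty}$ topology), and you do not supply it. The paper does not take this route. Instead it introduces the manifestly smooth ``linear'' retraction
\[
C\coloneqq\Exp_{\StandardFibration}\circ r'\circ\Exp_{\StandardFibration}^{-1}\colon\Diff\left(E\right)\supsetopen U_{\Identity}\to\Aut\left(\StandardFibration\right),
\]
observes that $C$ and $r$ share the same kernel $\mathcal{S}^{\mathrm{F}}\left(\StandardFibration\right)$ but are not equal, and then builds an explicit successive approximation $r_{n}\left(f\right)\coloneqq C\bigl(f\circ r_{n-1}\left(f\right)^{-1}\bigr)\circ r_{n-1}\left(f\right)$ at the level of automorphisms. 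Each $r_{n}$ is smooth because $C$ is; the discrepancy $Y_{n}\left(f\right)=\Exp_{\StandardFibration}^{-1}\bigl(C\left(f\circ r_{n}\left(f\right)^{-1}\right)\bigr)$ is shown to converge to zero using Karcher-type curvature estimates (controlling the nonlinearity of geodesic triangles in $B$ and the holonomy of $\StandardFibration$ over them); and the limit $r_{\infty}$ is then identified with $r$ via $\Aut\left(\StandardFibration\right)$-equivariance. This iterative scheme is the substantive content you are missing --- your reduction via the exponential law is correct, but it only relocates the difficulty rather than resolving it.
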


\begin{proof}
Our
goal is to apply the smooth slice lemma (\prettyref{prop:manifold-slice-condition})
to the coset projection associated with $\Aut\left(\StandardFibration\right)\leq\Diff\left(E\right)$,
which is already shown to be a Lie subgroup in \prettyref{thm:lie-group-structure-on-symmetry-group}.
Thus we need to construct a smooth slice $\mathrm{\mathcal{S}}^{\mathrm{F}}\left(\StandardFibration\right)$;
i.e., a smooth submanifold $\mathrm{\mathcal{S}}^{\mathrm{F}}\left(\StandardFibration\right)$
in $\Diff\left(E\right)$ (containing the identity) such that the
following right-translation map $\rho$ is a diffeomorphism onto its
open image:
\begin{equation}
\rho\colon\mathrm{\mathcal{S}}^{\mathrm{F}}\left(\StandardFibration\right)\times\Aut\left(\StandardFibration\right)\xrightarrow{\approx}W\subsetopen\Diff\left(E\right),\qquad\left(s,h\right)\mapsto s\circ h.\label{eq:right-translation-map}
\end{equation}
To this end, recall from the splitting lemma (\prettyref{lem:splitting-lemma-specialized-to-Diff-Aut})
that in the topological category, such a slice can be constructed
as the kernel of an equivariant retraction $r$ onto $\Aut\left(\StandardFibration\right)$
from some saturated neighborhood $W\coloneqq\DiffProjection^{-1}\left(U\right)$
in $\Diff\left(E\right)$. In turn, such an equivariant retraction
$f\mapsto r\left(f\right)$ can be constructed by letting each perturbed
fiber shape $S\in\Shape\left(F,E\right)$ in $\left|f_{\ast}\StandardFibration\right|$
be horizontally retracted to a certain fiber in $\left|\StandardFibration\right|$
over some horizontal center of mass of $S$. For better adaptation
for our goal of a smooth promotion, we adopt the intrinsic horizontal
center $\overline{x}_{S}$ in \prettyref{lem:intrinsic-horizontal-center}.
Recall that this is the unique zero of a certain intrinsic $L^{2}$
gradient field (defined on a sufficiently small convex geodesic ball
containing $\pi\left(S\right)$ in $B$), as follows:
\begin{equation}
X_{S}\left(\overline{x}_{S}\right)=0,\qquad X_{S}\left(x\right)\coloneqq\int_{q\in E_{x}}\left(\left.\exp_{B}\right|_{T_{x}B}\right)^{-1}\left(\pi\left(\Psi_{x}^{-1}\left(q\right)\right)\right)\,d\mu_{E_{x}}\in T_{x}B.\label{eq:defining-vector-field-for-horizontal-center-intrinsic}
\end{equation}
Then just as in the proof of \prettyref{prop:topological-slice-of-our-bundles},
performing the horizontal retraction $\Psi\left(S,\overline{x}_{S}\right)$
of each perturbed fiber $S\in\left|f_{\ast}\StandardFibration\right|$
to its intrinsic horizontal center $\pi^{-1}\left(\overline{x}_{S}\right)\in\left|\StandardFibration\right|$
yields a smooth displacement map $\delta\left(f\right)$ from $E$
to itself, which we may assume without loss of generality to be a
diffeomorphism by shrinking $W$ if necessary (so that $f\in W$ is
sufficiently close to $\Aut\left(\StandardFibration\right)$):
\begin{equation}
\delta\colon\Diff\left(E\right)\supsetopen W\to\Diff\left(E\right),\qquad\left.\delta\left(f\right)\right|_{S}\coloneqq\Psi\left(S,\overline{x}_{S}\right)\colon S\xrightarrow{\approx}\pi^{-1}\left(\overline{x}_{S}\right)\enskip\left(\forall S\in\left|f_{\ast}\StandardFibration\right|\right).\label{eq:displacement-map-in-the-ENR}
\end{equation}
Since this construction of $\delta\left(f\right)$ only depends on
the shape $S\in\left|f_{\ast}\StandardFibration\right|$ of each perturbed
fiber, not on its parametrization or labelling, we see that $\delta$
is $\Aut\left(\StandardFibration\right)$-invariant. As a result,
post-composition with $\delta\left(f\right)$ yields a desired neighborhood
retraction that is $\Aut\left(\StandardFibration\right)$-equivariant:
\begin{equation}
r\colon\Diff\left(E\right)\supsetopen W\to^{\Aut\left(\StandardFibration\right)}\Aut\left(\StandardFibration\right),\qquad r\left(f\right)\coloneqq\delta\left(f\right)\circ f,\label{eq:equivariant-neighborhood-retraction}
\end{equation}
and inversion of $\delta\left(f\right)$ descends to a desired local
section:
\[
\sigma\colon\Fib\left(\StandardFibration\right)\supsetopen U\to\Diff\left(E\right),\qquad\sigma\left(f_{\ast}\StandardFibration\right)\coloneqq\delta\left(f\right)^{-1}=f\circ\left(r\left(f\right)\right)^{-1}.
\]
Neither of these characteristics in the topological category is fully
sufficient for a smooth proposition, so we focus on the most robust
one — the slice $\mathrm{\mathcal{S}}^{\mathrm{F}}\left(\StandardFibration\right)$,
which by the splitting lemma \prettyref{lem:splitting-lemma-specialized-to-Diff-Aut}
can be given by the kernel (resp., the image) of the retraction $r$
(resp., the section $\sigma$):
\[
\mathrm{\mathcal{S}}^{\mathrm{F}}\left(\StandardFibration\right)\coloneqq\ker r=\image\sigma\subseteq\Diff\left(E\right),
\]
on which the right-translation map $\rho$ in \prettyref{eq:right-translation-map}
is seen to be a homeomorphism as its continuous inverse can be given
by
\begin{equation}
\rho^{-1}\colon\Diff\left(E\right)\supsetopen W\xrightarrow{\approx}\mathrm{\mathcal{S}}^{\mathrm{F}}\left(\StandardFibration\right)\times\Aut\left(\StandardFibration\right),\qquad f\mapsto\left(f\circ\left(r\left(f\right)\right)^{-1},r\left(f\right)\right).\label{eq:inverse-tube-homeo}
\end{equation}
We are now in a position of promote this slice to the smooth category.
To this end, we first need to show that $\mathrm{\mathcal{S}}^{\mathrm{F}}\left(\StandardFibration\right)$
is a smooth submanifold of $\Diff\left(E\right)$; but this is clear
by construction and \prettyref{lem:intrinsic-horizontal-center}.
Indeed, recall from \prettyref{lem:horizontal-averaging} that at
the infinitesimal level we have the horizontal averaging $r'\colon\mathfrak{X}\left(E\right)\to\mathfrak{X}^{\mathrm{A}}\left(\StandardFibration\right)$,
which is a linear retraction explicitly given by \prettyref{eq:horizontal-averaging-explicit-formula};
i.e.,
\[
r'\left(X\right)\coloneqq X^{\top}+\sigma'\left(\overline{X}\right),\qquad\overline{X}_{x}\coloneqq\int_{q\in E_{x}}d\pi_{q}\left(X_{q}\right)\,d\mu_{E_{x}}\in T_{x}B.
\]
For a (sufficiently close-to-identity) diffeomorphism $g\in\Diff\left(E\right)$
to lie in the slice $\mathrm{\mathcal{S}}^{\mathrm{F}}\left(\StandardFibration\right)$,
the defining condition requires that $r\left(g\right)=\Identity_{E}$,
or equivalently $\delta\left(g\right)^{-1}=g$; in words, this says
that for every fiber $g\left(E_{x}\right)$ in the perturbed fibration
$g_{\ast}\StandardFibration$, the intrinsic horizontal center $\overline{x}_{g\left(E_{x}\right)}$
recovers $x$ itself, and to which the horizontal retraction $\overline{\Psi}_{g\left(E_{x}\right)}$
has the same countering effect as the inverse of $g$. By unraveling
these definitions (specifically, by plugging in the defining vector
field \prettyref{eq:defining-vector-field-for-horizontal-center-intrinsic}
for the intrinsic horizontal center), we see that this amounts to
requiring that the corresponding vector field $\Exp^{-1}\left(g\right)$
is eliminated under horizontal averaging $r'$, and hence is a fair
vector field (\prettyref{def:fair-vector-field}):
\[
g\in\mathrm{\mathcal{S}}^{\mathrm{F}}\left(\StandardFibration\right)\iff r\left(g\right)=\Identity_{E}\iff r'\left(\Exp^{-1}\left(g\right)\right)=0\iff\Exp^{-1}\left(g\right)\in\mathfrak{X}^{\mathrm{F}}\left(\StandardFibration\right).
\]
In particular, the adapted Riemannian exponential can serve as a
smooth submanifold chart for $\mathrm{\mathcal{S}}^{\mathrm{F}}\left(\StandardFibration\right)\subseteq\Diff\left(E\right)$,
as modeled on the Fréchet subspace $\mathfrak{X}^{\mathrm{F}}\left(\StandardFibration\right)\subseteq\mathfrak{X}\left(E\right)$.
This confirms that the slice $\mathrm{\mathcal{S}}^{\mathrm{F}}\left(\StandardFibration\right)$
is a smooth submanifold of $\Diff\left(E\right)$, and so for completing
the slice lemma it suffices to show that the right-translation map
$\rho$ in \prettyref{eq:right-translation-map} is a diffeomorphism
onto its open image. But since $\rho$ is obtained from the group
operation of the Lie group $\Diff\left(E\right)$ by restricting to
smooth embedded submanifolds $\Aut\left(\StandardFibration\right)$
and $\mathrm{\mathcal{S}}^{\mathrm{F}}\left(\StandardFibration\right)$,
we see that $\rho$ is smooth. Thus it suffices to prove the smoothness
of the inverse $\rho^{-1}$ in \prettyref{eq:inverse-tube-homeo};
or more specifically, the smoothness of the splitting $f\mapsto\left(f\circ\left(r\left(f\right)\right)^{-1},r\left(f\right)\right)$
for sufficiently small perturbations $f\in\Diff\left(E\right)$. In
this way, we have reduced the proof to the final task of proving the
smoothness of the equivariant neighborhood retraction $r$ (constructed
in \prettyref{eq:equivariant-neighborhood-retraction} via \prettyref{eq:displacement-map-in-the-ENR}).
Continuing with the spirit as above, we compare $r$ with the exponentiation
of the corresponding linear construction:
\[
C\colon\Diff\left(E\right)\supsetopen U_{\Identity}\to\Aut\left(\StandardFibration\right),\qquad C\coloneqq\Exp_{\StandardFibration}\circ r'\circ\Exp_{\StandardFibration}^{-1}.
\]
This map $C$ is clearly smooth (being modeled on the continuous linear
map $r'$ of Fréchet spaces), and was shown above to have the same
kernel with $r$; namely, the slice $\mathrm{\mathcal{S}}^{\mathrm{F}}\left(\StandardFibration\right)$.
However, in general this ``linear'' retraction $C$ does not suffice
to achieve the full effect of $r$, for otherwise $f\circ C\left(f\right)^{-1}$
would always lie in the kernel of $r$ for arbitrary (sufficiently
small) diffeomorphism $f$, which clearly need not be the case. This
prompts a construction for approximating $r$ successively: start
with $r_{0}\left(f\right)\coloneqq\Identity_{E}$ (hence the splitting
$\rho_{0}^{-1}\left(f\right)\coloneqq\left(f,\Identity_{E}\right)$),
then inductively improve $r_{n-1}\left(f\right)$ by composing with
the diffusion of $f\circ r_{n-1}\left(f\right)^{-1}$ under $C$:
\[
r_{n}\left(f\right)\coloneqq\varepsilon_{n}\left(f\right)\circ r_{n-1}\left(f\right),\qquad\varepsilon_{n}\left(f\right)\coloneqq C\left(f\circ r_{n-1}\left(f\right)^{-1}\right)\in\Aut\left(\StandardFibration\right).
\]
In this way, we obtain a sequence of operators $\left(r_{n}\right)_{n}$,
each sends a sufficiently small diffeomorphism $f\in\Diff\left(E\right)$
to an automorphism $r_{n}\left(f\right)\in\Aut\left(\StandardFibration\right)$,
in a way such that the assignment $f\mapsto r_{n}\left(f\right)$
is smooth. I claim that this sequence converges to a smooth operator
$r_{\infty}$ that coincides with $r$. To see this, let us look closer
into the discrepancy between $r_{n}\left(f\right)$ and $r\left(f\right)$,
or equivalently the failure for $\varepsilon_{n+1}\left(f\right)$
to lie in the slice $\mathrm{\mathcal{S}}^{\mathrm{F}}\left(\StandardFibration\right)$,
as measured by the following vector field:
\[
Y_{n}\left(f\right)\coloneqq\Exp_{\StandardFibration}^{-1}\left(\varepsilon_{n+1}\left(f\right)\right)=r'\left(\Exp_{\StandardFibration}^{-1}\left(f\circ r_{n}\left(f\right)^{-1}\right)\right)\in\mathfrak{X}\left(E\right).
\]
For each $x_{0}\in B$, the fiber $E_{x_{0}}$ of $\StandardFibration$
is perturbed to a fiber $S\coloneqq f\left(E_{x}\right)$ of $f_{\ast}\StandardFibration$,
which in turn is horizontally retracted to its center of mass $E_{\overline{x}_{S}}$
of $\StandardFibration$ — this procedure yields a diffeomorphic transport
from $E_{x_{0}}$ to $E_{\overline{x}_{S}}$ that exactly describes
the effect of $r\left(f\right)$ on $E_{x_{0}}$, while $C\left(f\right)$
is described by another diffeomorphic transport of $E_{x_{0}}$ that
is generally different. Both intend to capture the ``average'' of
$S$, but the crux of their discrepancy lies in the fact that the
procedure is not $\Aut\left(\StandardFibration\right)$-invariant
throughout: the coordinates (both horizontal and vertical) of $S$
observed by $E_{x_{0}}$ and $E_{x_{S}}$ need not relate linearly.
More specifically, consider the geodesic triangle $\gamma_{q}$ in
$B$ with vertices $\overline{x}_{S}$, $x_{0}$, and $\pi\left(q\right)$
for each $q\in S$, all of which are contained in a sufficiently small
convex geodesic ball $W$. Then the nonzero horizontal component of
$Y_{n}\left(f\right)$ is contributed by the nonlinearity of $\gamma_{q}$,
which is controlled by the curvature of $B$ in $W$; while the nonzero
vertical component of $Y_{n}\left(f\right)$ is contributed by the
holonomies along $\gamma_{q}$, which is controlled by the curvature
of $\StandardFibration$ in $\pi^{-1}\left(W\right)$. More precisely,
the desired Lipschitz-type controls are available in \cite{MR442975},
with the Lipschitz constant depends on the aforementioned curvature
bounds for $B$ in $W$, for $\StandardFibration$ in $\pi^{-1}\left(W\right)$,
as well as the radius of $W$. In particular, for $f\in\Diff\left(E\right)$
sufficiently close to $\Aut\left(\StandardFibration\right)$, we can
arrange $W$ to be sufficiently small such that $Y_{n}$ smoothly
converges to zero and $r_{n}$ converges to a smooth operator
\[
r_{\infty}\colon\Diff\left(E\right)\supsetopen U_{\Identity}\to\Aut\left(\StandardFibration\right),\qquad r_{\infty}\left(f\right)\coloneqq\lim_{n\to\infty}r_{n}\left(f\right)\in\Aut\left(\StandardFibration\right).
\]
By construction, we have $\varepsilon_{\infty}\left(f\right)=\Identity_{E}$;
in other words, the successive diffusion makes $f\circ r_{\infty}\left(f\right)^{-1}$
to lie in the kernel of $C$ and hence of $r$ (i.e., in the slice
$\mathrm{\mathcal{S}}^{\mathrm{F}}\left(\StandardFibration\right)$).
This implies that $r_{\infty}$ coincides with the desired equivariant
retraction $r$:
\[
r\left(f\right)=r\left(f\circ r_{\infty}\left(f\right)^{-1}\circ r_{\infty}\left(f\right)\right)=r\left(f\circ r_{\infty}\left(f\right)^{-1}\right)\circ r_{\infty}\left(f\right)=r_{\infty}\left(f\right).
\]
This in particular shows that $r$ is smooth, hence so is the splitting
$\rho^{-1}$, as desired. This completes the proof of \prettyref{thm:main-theorem-about-smooth-D-fibration}.
\end{proof}

In particular, the preceding theorem implies that the moduli space
$\Fib\left(\StandardFibration\right)$ has just the desired type of
topological manifold structures which was set up in \prettyref{sec:Infinite-dimensional-topology},
as recorded in the following corollary:
\begin{cor}
\label{prop:Promoting-weak-homotopy-type}The moduli space $\Fib\left(\StandardFibration\right)$
is a topological $\ell^{2}$-manifold (\prettyref{def:l2-manifold}),
and hence has the following property: if it admits a closed subset
$N$ that is also a manifold, such that the subset inclusion is a
weak homotopy equivalence, then it is homeomorphic to $N\times\ell^{2}$
and contains $N$ as a (strong) deformation retract:
\begin{equation}
\text{\ensuremath{N\xhookrightarrow{\simeq}\Fib\left(\StandardFibration\right)}\ensuremath{\enskip}weak equivalence}\implies\begin{cases}
\Fib\left(\StandardFibration\right)\xleftrightarrow{\approx}N\times\ell^{2} & \text{homeomorphism}\\
\Fib\left(\StandardFibration\right)\xtwoheadrightarrow{}{\simeq}N & \text{deformation retraction}
\end{cases}.\label{eq:l2-property-for-Fib}
\end{equation}
Moreover, this deformation retract $N$ is a minimal deformation retract
(a ``core'') of $\Fib\left(\StandardFibration\right)$ provided
that $N$ has compact components.
\end{cor}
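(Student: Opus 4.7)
The plan is to reduce this corollary to the general infinite-dimensional topology framework developed in Section~\ref{sec:Infinite-dimensional-topology}, specifically \prettyref{thm:main-theorem-for-infinite-dimensional-topology} and \prettyref{cor:main-theorem-for-infinite-dimensional-topology-compact-case}, by leveraging the smooth Fréchet manifold structure on $\Fib\left(\StandardFibration\right)$ just obtained in \prettyref{thm:main-theorem-about-smooth-D-fibration}. The essential observation is that once $\Fib\left(\StandardFibration\right)$ is known to be a topological $\ell^{2}$-manifold, both the homeomorphism statement and the deformation retraction statement are immediate consequences of the Kadec--Anderson, Henderson--Schori, and Torunczyk machinery.

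First, I would invoke \prettyref{thm:main-theorem-about-smooth-D-fibration} to note that $\Fib\left(\StandardFibration\right)$ admits a smooth Fréchet manifold structure whose local charts near $\StandardFibration$ are modeled on the Fréchet subspace $\mathfrak{X}^{\mathrm{F}}\left(\StandardFibration\right)\leq\mathfrak{X}\left(E\right)$ of fair vector fields, while global charts are provided by left-translating this identity chart using the homogeneous $\Diff\left(E\right)$-structure on $\Fib\left(\StandardFibration\right)$. Next, I would verify that the underlying model space is a separable Fréchet space: $\mathfrak{X}^{\mathrm{F}}\left(\StandardFibration\right)$ is the kernel of the continuous linear retraction $r'\colon\mathfrak{X}\left(E\right)\to\mathfrak{X}^{\mathrm{A}}\left(\StandardFibration\right)$ from \prettyref{lem:horizontal-averaging}, hence is a closed linear subspace of the separable Fréchet space $\mathfrak{X}\left(E\right)$ and therefore itself a separable Fréchet space. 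Regarding point-set properties of $\Fib\left(\StandardFibration\right)$: Hausdorffness and second countability follow from \prettyref{prop:topological-slice-of-our-bundles}, which establishes the topological principal $\Aut\left(\StandardFibration\right)$-bundle structure on $\DiffProjection\colon\Diff\left(E\right)\twoheadrightarrow\Fib\left(\StandardFibration\right)$; indeed, since $\Aut\left(\StandardFibration\right)\leq\Diff\left(E\right)$ is closed and $\Diff\left(E\right)$ is metrizable and second-countable, the coset projection is open and the resulting orbit space inherits both properties.

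Combining these facts with the Kadec--Anderson theorem (\prettyref{thm:Kadec-Anderson}), I obtain that $\Fib\left(\StandardFibration\right)$ is a topological $\ell^{2}$-manifold in the sense of \prettyref{def:l2-manifold}. From here the two implications \prettyref{eq:l2-property-for-Fib} follow mechanically: \prettyref{thm:main-theorem-for-infinite-dimensional-topology} directly gives both the homeomorphism $\Fib\left(\StandardFibration\right)\approx N\times\ell^{2}$ and the (strong) deformation retraction $\Fib\left(\StandardFibration\right)\twoheadrightarrow N$, given that $N$ is assumed to be a closed subset and a manifold (hence an ANR) whose inclusion is a weak homotopy equivalence.

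For the minimality assertion under the compact-components hypothesis, I would invoke \prettyref{cor:main-theorem-for-infinite-dimensional-topology-compact-case} verbatim, whose proof argues component-by-component via the elementary observation that a compact manifold without boundary cannot deformation retract to any proper subset (since that would force its top cohomology to vanish). There is no genuine obstacle in this proof: the substantive work was already carried out in \prettyref{thm:main-theorem-about-smooth-D-fibration}, and the present corollary is a formal consequence. The only point requiring mild care is the verification that the separability and metrizability properties transfer correctly from $\Diff\left(E\right)$ to the coset space $\Fib\left(\StandardFibration\right)$, which is handled by the openness of the principal bundle projection together with the closedness of $\Aut\left(\StandardFibration\right)$ established earlier.
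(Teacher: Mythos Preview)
Your proposal is correct and follows essentially the same route as the paper's proof: both invoke \prettyref{thm:main-theorem-about-smooth-D-fibration} to obtain the Fr\'echet manifold structure modeled on $\mathfrak{X}^{\mathrm{F}}\left(\StandardFibration\right)$, verify that this model space is a closed (hence separable Fr\'echet) subspace of $\mathfrak{X}\left(E\right)$, check second-countability via the open quotient map from $\Diff\left(E\right)$, apply Kadec--Anderson, and then feed the result into \prettyref{thm:main-theorem-for-infinite-dimensional-topology} and \prettyref{cor:main-theorem-for-infinite-dimensional-topology-compact-case}. Your version is slightly more explicit about where Hausdorffness comes from (closedness of $\Aut\left(\StandardFibration\right)$), but this is exactly what the paper's invocation of \prettyref{prop:manifold-slice-condition} encodes.
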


\begin{proof}
The preceding theorem (\prettyref{thm:main-theorem-about-smooth-D-fibration})
particularly implies that $\Fib\left(\StandardFibration\right)$ is
an infinite-dimensional Hausdorff topological manifold modeled on
$\mathfrak{X}^{\mathrm{F}}\left(\StandardFibration\right)$. Moreover,
since $\Fib\left(\StandardFibration\right)$ is a open quotient of
$\Diff\left(E\right)$, it inherits the property of being second-countable;
on the other hand, since $\mathfrak{X}^{\mathrm{F}}\left(\StandardFibration\right)$
is a closed linear subspace of $\mathfrak{X}\left(E\right)$, it inherits
the property of being a separable Fréchet space. Therefore, $\Fib\left(\StandardFibration\right)$
is a topological $\ell^{2}$-manifold (in the sense of \prettyref{def:l2-manifold})
by the Kadec–Anderson theorem (\prettyref{thm:Kadec-Anderson}). Thus
the desired properties \prettyref{eq:l2-property-for-Fib} for $\Fib\left(\StandardFibration\right)$
follows from the general properties of $\ell^{2}$-manifold (\prettyref{thm:main-theorem-for-infinite-dimensional-topology}
and \prettyref{cor:main-theorem-for-infinite-dimensional-topology-compact-case}).
This completes the proof.
\end{proof}

\chapter{First Examples\label{chap:First-Examples}}

\section{Oriented Circle Fibrations\label{sec:oriented-circle-fiberings}}

In this section, we specialize the preceding general study in \prettyref{chap:Algebraic-Topology}
to the case with the simplest fiber structure; i.e., \emph{oriented
circle fiberings}. For such fiberings, we shall describe the classification
and the vertical automorphism groups in terms of cohomology of the
base. After laying the general groundwork in this section, we shall
determine case by case the moduli space of oriented circle fiberings
on surfaces and on 3-manifolds in \prettyref{sec:Case-study-dimension2}
and \prettyref{sec:Case-study-dimension3}.

\subsection{The (fiberwise) oriented fiberings }

In general, we say that a smooth $F$-fibering is \emph{(fiberwise)
oriented} if the model fiber $F$ is oriented and the structure group
is reducible to the subgroup $\Diff_{+}\left(F\right)$ of orientation-preserving
diffeomorphisms.\footnote{It should be noted that there is an ambiguity with respect to the
term ``oriented'' for a fiber bundle or for a fibering. While some
authors use it to mean that the bundle is fiberwise oriented as we
have, there are some other authors using it to mean that the bundle
has an oriented total space.} We generally use a subscript ``$+$'' to denote the oriented counterpart
of objects; in particular,
\begin{itemize}
\item $\Fib_{+}\left(E,F\right)$ denotes the moduli space of oriented smooth
fiberings of $E$ by $F$.\nomenclature[Fib+]{$\Fib^{+}\left(E,F\right)$}{the moduli space of oriented smooth fiberings of $E$ by $F$}
\item $\ClassFib_{+}\left(E,F\right)$ denotes the set of equivalence classes
of oriented smooth fiberings of $E$ by $F$.\nomenclature[Fib+]{$\mathsf{Fib}^{+}\left(E,F\right)$}{the classification (i.e., set of equivalence classes) of oriented smooth fiberings of $E$ by $F$}
\end{itemize}
Given an oriented fibering $\StandardFibration\colon F\hookrightarrow E\to B$,
an \emph{automorphism} of $\StandardFibration$ will be given a more
restrictive condition; namely, it is not only an automorphism of $\StandardFibration$
regarded as an unoriented fibering, but also required to preserve
the orientation of $\StandardFibration$. As a result, its various
symmetry groups $\Aut\left(\StandardFibration\right)$, $\Vau\left(\StandardFibration\right)$,
and $\Diff\left(B\right)_{\StandardFibration}$ will acquire the modified
definitions as follows:
\begin{itemize}
\item $\Aut\left(\StandardFibration\right)$, the automorphism group of
$\StandardFibration$ (as an oriented fibering), consists of all the
automorphisms of $\StandardFibration$ (in the above more restrictive
sense); intuitively, mapping each oriented fiber to an oriented fiber.
\item $\Vau\left(\StandardFibration\right)$, the vertical automorphism
group of $\StandardFibration$ (as an oriented fibering), consists
of all those automorphisms of $\StandardFibration$ (in the above
more restrictive sense) that are vertical; intuitively, mapping each
oriented fiber to itself.
\item $\Diff\left(B\right)_{\StandardFibration}$, the basic transformation
group of $\StandardFibration$ (as an oriented fibering), consists
of all those diffeomorphisms of $B$ that are covered by automorphisms
of $\StandardFibration$ (in the above more restrictive sense).
\end{itemize}
Similarly, the associated moduli space $\Fib\left(\StandardFibration\right)$
will refer to the one consisting only of those oriented fiberings
that are orientedly equivalent to $\StandardFibration$. It is straightforward
to verify that the preceding general study about fiberings apply,
mutatis mutandis, to oriented fiberings. In particular, we have the
definitions
\[
\Fib_{+}\left(E,F\right)\coloneqq\bigsqcup_{\StandardFibration\in\ClassFib_{+}\left(E,F\right)}\Fib\left(\StandardFibration\right),\qquad\Fib\left(\StandardFibration\right)\coloneqq\Diff\left(E\right)/\Aut\left(\StandardFibration\right),
\]
as well as the two infinite-dimensional structure fibrations
\[
\Vau\left(\StandardFibration\right)\hookrightarrow\Aut\left(\StandardFibration\right)\to\Diff\left(B\right)_{\StandardFibration},\qquad\Aut\left(\StandardFibration\right)\hookrightarrow\Diff\left(E\right)\to\Fib\left(\StandardFibration\right).
\]
In what follows, we shall specialize to the case of smooth oriented
circle fiberings $\Sphere 1\hookrightarrow E\to B$, for which we
shall see that the structure group $\Diff_{+}\bigl(\Sphere 1\bigr)$
is a fairly special one that make it a simplest nontrivial case in
the study of the space of fiberings.

\subsection{The classification}

We start with classifying smooth oriented circle fiberings. Among
the various approaches developed in \prettyref{chap:Algebraic-Topology},
we choose a homotopy-theoretic one. This idea is to classify bundles
by the homotopy set into the classifying space of the structure group;
this approach is particularly effective for our structure group $\Diff_{+}\bigl(\Sphere 1\bigr)$
because its classifying space admits a fairly simple topological structure
(in particular, it is an Eilenberg–MacLane space). The upshot is the
following result, which classifies smooth oriented circle fiberings
in terms of the second integral cohomology of the base (as well as
how it is being acted upon by the base mapping class group):
\begin{prop}
\label{prop:Classifying-oriented-circle-fiberings}The smooth oriented
circle fiberings are classified by the second integral cohomology
modulo pullback by the mapping class group:
\begin{equation}
\bigsqcup_{E\in\ClassMan\left(m+1\right)}\ClassFib_{+}\left(E,\Sphere 1\right)\leftrightarrow\bigsqcup_{B\in\ClassMan\left(m\right)}H^{2}\left(B;\mathbb{Z}\right)/\mathop{\mathrm{Mod}}\left(B\right).\label{eq:Classifying-oriented-circle-fiberings}
\end{equation}
Here, associated to each bundle class of such a fibration $\StandardFibration$
over $B$ is its \emph{Euler class} $e_{\StandardFibration}$ in
$H^{2}\left(B;\mathbb{Z}\right)$, whose orbit under the $\mathop{\mathrm{Mod}}\left(B\right)$-action
accounts for those bundle classes that merge into the fibering class
of $\StandardFibration$.
\end{prop}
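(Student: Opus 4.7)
The plan is to specialize the general homotopy-theoretic classification theorem to the oriented setting and to the particular structure group $\Diff_{+}(\Sphere 1)$, and then to observe that the pertinent classifying space is an Eilenberg--MacLane space, so that the classifying homotopy set reduces to $H^{2}(B;\mathbb{Z})$. The entire machinery developed for the homotopy approach to classification carries over mutatis mutandis with $\Diff(F)$ replaced by $\Diff_{+}(F)$ and the shape space $\Shape(F,\ell^{2})$ replaced by its oriented counterpart $\Shape_{+}(F,\ell^{2}) \coloneqq \Emb(F,\ell^{2})/\Diff_{+}(F)$; the same arguments (contractibility of $\Emb(F,\ell^{2})$ together with the principal-bundle structure over the shape space) identify $\Shape_{+}(F,\ell^{2})$ as the $\ell^{2}$-model of $B\Diff_{+}(F)$ and yield the oriented analogue
\[
\ClassFib_{+}(E,F) \leftrightarrow \bigsqcup_{B\in\ClassMan(m)} \bigl[B,\Shape_{+}(F,\ell^{2})\bigr]_{E}/\mathrm{Mod}(B).
\]

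The key substitution at $F=\Sphere 1$ is the classical homotopy reduction that the rotation subgroup $\mathrm{SO}(2)\leq\Diff_{+}(\Sphere 1)$ is a deformation retract, as one exhibits by lifting to orientation-preserving diffeomorphisms of $\mathbb{R}$ commuting with integer translations and straight-line homotoping each lift to its rotational average (monotonicity is preserved because the homotopy is a convex combination of monotonic maps). Since $\mathrm{SO}(2)\cong\Sphere 1 \simeq K(\mathbb{Z},1)$, we obtain
\[
\Shape_{+}(\Sphere 1,\ell^{2}) \simeq B\Diff_{+}(\Sphere 1) \simeq B\mathrm{SO}(2) \simeq \CP{\infty} \simeq K(\mathbb{Z},2),
\]
and so by the standard Brown representability of integral cohomology
\[
\bigl[B,\Shape_{+}(\Sphere 1,\ell^{2})\bigr] \leftrightarrow [B,K(\mathbb{Z},2)] \leftrightarrow H^{2}(B;\mathbb{Z}),
\]
the image of $[\StandardFibration]$ under this composite bijection being by construction the Euler class $e_{\StandardFibration}$. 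Naturality of the classifying construction makes the whole chain $\mathrm{Mod}(B)$-equivariant with respect to pullback, so it descends to orbit sets.

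Finally I would unravel the disjoint union over $E$: the left-hand side of the claimed bijection partitions the oriented fibering classes by total diffeomorphism type, while the right-hand side carries no such partition. But every class in $H^{2}(B;\mathbb{Z})$ is the Euler class of a smooth oriented circle bundle whose total space has a well-defined diffeomorphism type (by the bundle-from-classifying-map construction applied in the oriented setting), and every oriented circle fibering over $B$ has a well-defined Euler class, so grouping cohomology classes by the induced total type and then taking the disjoint union over all $E\in\ClassMan(m+1)$ recovers exactly the full set $\bigsqcup_{B}H^{2}(B;\mathbb{Z})/\mathrm{Mod}(B)$. The main obstacle I anticipate is the Smale-type input $\Diff_{+}(\Sphere 1)\simeq\mathrm{SO}(2)$ at a level strong enough to pass through the classifying-space machinery in the infinite-dimensional category, but this is a classical and elementary result for the circle and fits comfortably inside the smooth-approximation framework already in place.
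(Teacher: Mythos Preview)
Your proposal is correct and follows essentially the same route as the paper: specialize the homotopy-theoretic classification (\prettyref{prop:classification-theorem-via-homotopy}) to the oriented setting, identify $\Shape_{+}(\Sphere 1,\ell^{2})\simeq\Classifying\Diff_{+}(\Sphere 1)\simeq K(\mathbb{Z},2)$ via $\Diff_{+}(\Sphere 1)\simeq\CircleGroup$, and then invoke the Eilenberg--MacLane representability of $H^{2}(-;\mathbb{Z})$ together with naturality to descend to $\mathrm{Mod}(B)$-orbits. Your additional remarks (the explicit straight-line retraction onto $\mathrm{SO}(2)$ and the bookkeeping for the disjoint union over $E$) are elaborations rather than departures; the paper simply asserts $\Diff_{+}(\Sphere 1)\simeq\CircleGroup$ and leaves the $E$-partition implicit in the general framework.
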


\begin{proof}
Recall that by definition, a fibering class over some base type
$B$ is obtained by merging those bundle classes in $\Bundle\left(B;\Diff_{+}\left(\Sphere 1\right)\right)$
lying in a single $\mathop{\mathrm{Mod}}\left(B\right)$-orbit, where
$\Bundle\left(B;\Diff_{+}\left(\Sphere 1\right)\right)$ denotes the
set of equivalence classes of smooth oriented circle bundles over
$B$. Thus for \prettyref{eq:Classifying-oriented-circle-fiberings}
it suffices to prove that this latter (pointed) set $\Bundle\left(B;\Diff_{+}\left(\Sphere 1\right)\right)$
admits a $\mathop{\mathrm{Mod}}\left(B\right)$-equivariant bijective
correspondence with the second integral cohomology of the base space:
\begin{equation}
\Bundle\left(B;\Diff_{+}\left(\Sphere 1\right)\right)\quad\leftrightarrow^{\mathop{\mathrm{Mod}}\left(B\right)}\quad H^{2}\left(B;\mathbb{Z}\right).\label{eq:Classifying-oriented-circle-bundles}
\end{equation}
To this end, recall from \prettyref{prop:classification-theorem-via-homotopy}
(or more precisely, its adaptation for the oriented case) that the
smooth bundle classification $\Bundle\left(B;\Diff_{+}\left(\Sphere 1\right)\right)$
is in bijective correspondence with the (topological) homotopy set
$\left[B,\Shape_{+}\left(\Sphere 1,\ell^{2}\right)\right]$. But since
$\Diff_{+}\left(\Sphere 1\right)\simeq\CircleGroup$ is an Eilenberg–MacLane
space $K\left(\mathbb{Z},1\right)$, its classifying space $\Shape_{+}\left(\Sphere 1,\ell^{2}\right)$
is a $K\left(\mathbb{Z},2\right)$, into which the desired homotopy
set can be characterized by the Euler class as follows:
\[
\left[B,\Shape_{+}\left(\Sphere 1,\ell^{2}\right)\right]\leftrightarrow\left[B,K\left(\mathbb{Z},2\right)\right]\leftrightarrow\left[B,K\left(\mathbb{Z},2\right)\right]_{\ast}\leftrightarrow H^{2}\left(B;\mathbb{Z}\right).
\]
Here, the correspondence between $\left[B,K\left(\mathbb{Z},2\right)\right]$
(the homotopy set) and $\left[B,K\left(\mathbb{Z},2\right)\right]_{\ast}$
(the pointed homotopy set) is due to the simple-connectedness of the
target space $K\left(\mathbb{Z},2\right)$; and in turn, the correspondence
between $\left[B,K\left(\mathbb{Z},2\right)\right]_{\ast}$ and $H^{2}\left(B;\mathbb{Z}\right)$
is due to a standard result in topology regarding the homotopy-cohomology
connections for Eilenberg–MacLane spaces (cf.\ \cite[Theorem 4.57]{MR1867354}).
Since all the bijective correspondences constructed above are natural
with respect to base diffeomorphisms, the resulting (pointed) bijective
correspondence between $\Bundle\left(B;\Diff_{+}\left(\Sphere 1\right)\right)$
and $H^{2}\left(B;\mathbb{Z}\right)$ is $\mathop{\mathrm{Mod}}\left(B\right)$-equivariant,
which completes the proof of \prettyref{eq:Classifying-oriented-circle-bundles}
hence \prettyref{eq:Classifying-oriented-circle-fiberings}, as desired.%
\end{proof}

\subsection{The base transformation group}

The preceding result allows us to classify smooth oriented circle
fiberings on any given total space $E$ of interest (indeed, the strategy
is to first apply the preceding result to all admissible base diffeomorphism
types, and then select those equivalence classes that have $E$ as
the total space). As such, from now on let us fix a representative fibration $\StandardFibration\colon\Sphere 1\hookrightarrow E\to B$
and study its symmetries. We first consider its basic transformation
group $\Diff\left(B\right)_{\StandardFibration}$. The idea is to
utilize the implicit description in \prettyref{prop:The-base-transformation-group-as-stabilizer},
which allows us to relate $\Diff\left(B\right)_{\StandardFibration}$
with the pullback action, which in turn was already described concretely
in the preceding proof. The upshot is the following result, which
characterizes $\Diff\left(B\right)_{\StandardFibration}$ as the stabilizer
of the Euler class:
\begin{cor}
\label{cor:Base-transformations-of-an-oriented-circle-bundle}Let
$\StandardFibration$ be a smooth oriented circle bundle over $B$,
with Euler class $e_{\StandardFibration}\in H^{2}\left(B;\mathbb{Z}\right)$.
Then its basic transformation group $\Diff\left(B\right)_{\StandardFibration}$
is an open subgroup of $\Diff\left(B\right)$ characterized as follows:
\[
\mathop{\mathrm{Mod}}\left(B\right)_{\StandardFibration}=\left\{ \left[\beta\right]\in\mathop{\mathrm{Mod}}\left(B\right)\mid\text{\ensuremath{\beta^{\ast}e_{\StandardFibration}=e_{\StandardFibration}} in \ensuremath{H^{2}\left(B;\mathbb{Z}\right)}}\right\} .
\]
In other words, $\Diff\left(B\right)_{\StandardFibration}$ is the
disjoint union of those components of $\Diff\left(B\right)$ that
stabilize the Euler class of $\StandardFibration$ under pullback.
\end{cor}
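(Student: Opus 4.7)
The plan is to reduce the statement to the combination of two previously established results: the stabilizer characterization of the basic transformation group (Proposition \prettyref{prop:The-base-transformation-group-as-stabilizer} and Corollary \prettyref{cor:Orbit-stabilizer-relation-for-pullback}) and the Euler class classification of oriented circle bundles (the bijection \prettyref{eq:Classifying-oriented-circle-bundles} extracted from the proof of Proposition \prettyref{prop:Classifying-oriented-circle-fiberings}). Concretely, Proposition \prettyref{prop:The-base-transformation-group-as-stabilizer} tells us that $\Diff(B)_{\StandardFibration}$ is exactly the stabilizer of the bundle isomorphism class $[\StandardFibration]\in\Bundle_{\Sphere 1}(B;\Diff_{+}(\Sphere 1))$ under the natural pullback action, and Corollary \prettyref{cor:Orbit-stabilizer-relation-for-pullback} upgrades this to the statement that $\Diff(B)_{\StandardFibration}$ is open in $\Diff(B)$ and equals the union of path components corresponding to the mapping-class-group stabilizer $\mathop{\mathrm{Mod}}(B)_{\StandardFibration}$.

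Next I would invoke the $\mathop{\mathrm{Mod}}(B)$-equivariant bijection
\[
\Bundle(B;\Diff_{+}(\Sphere 1))\;\leftrightarrow^{\mathop{\mathrm{Mod}}(B)}\;H^{2}(B;\mathbb{Z}),\qquad [\StandardFibration]\leftrightarrow e_{\StandardFibration},
\]
established in the proof of \prettyref{prop:Classifying-oriented-circle-fiberings}. The key point to verify (or rather, to point to) is that this correspondence is natural in $B$, so that pulling back a bundle along $\beta\in\Diff(B)$ corresponds on the cohomology side to pulling back its Euler class along $\beta$; this is immediate from the functoriality of each intermediate identification used in the proof of \prettyref{prop:Classifying-oriented-circle-fiberings} (the homotopy classification of bundles by classifying maps into $\Shape_{+}(\Sphere 1,\ell^{2})$, the identification of this space with $K(\mathbb{Z},2)$, and the standard bijection $[B,K(\mathbb{Z},2)]_{\ast}\leftrightarrow H^{2}(B;\mathbb{Z})$).

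Putting the two ingredients together, an element $[\beta]\in\mathop{\mathrm{Mod}}(B)$ stabilizes $[\StandardFibration]$ if and only if it stabilizes $e_{\StandardFibration}$, yielding
\[
\mathop{\mathrm{Mod}}(B)_{\StandardFibration}=\{[\beta]\in\mathop{\mathrm{Mod}}(B)\mid\beta^{\ast}e_{\StandardFibration}=e_{\StandardFibration}\text{ in }H^{2}(B;\mathbb{Z})\},
\]
and lifting back to $\Diff(B)$ gives the open-subgroup description as the union of the corresponding components. I do not anticipate any real obstacle: the entire proof is a bookkeeping exercise, and the only thing that deserves explicit emphasis is the $\mathop{\mathrm{Mod}}(B)$-equivariance of the Euler-class identification, which I would state as a brief remark rather than reprove.
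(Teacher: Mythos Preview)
Your proposal is correct and follows essentially the same approach as the paper: invoke \prettyref{prop:The-base-transformation-group-as-stabilizer} (and \prettyref{cor:Orbit-stabilizer-relation-for-pullback}) to identify $\Diff(B)_{\StandardFibration}$ as the stabilizer of the bundle class, then transport this across the $\mathop{\mathrm{Mod}}(B)$-equivariant bijection \prettyref{eq:Classifying-oriented-circle-bundles} to the Euler class. The paper's proof is slightly terser but identical in substance.
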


\begin{proof}
Recall from \prettyref{prop:The-base-transformation-group-as-stabilizer}
that $\Diff\left(B\right)_{\StandardFibration}$ is the stabilizer
of the bundle class $\left[\StandardFibration\right]$ with respect
to the pullback action of $\mathop{\mathrm{Mod}}\left(B\right)$ on
$\Bundle\left(B;\Diff_{+}\left(\Sphere 1\right)\right)$. But this
action is equivalent to the pullback action of $\mathop{\mathrm{Mod}}\left(B\right)$
on $H^{2}\left(B;\mathbb{Z}\right)$ via the equivariant correspondence
\prettyref{eq:Classifying-oriented-circle-bundles} between $\Bundle\left(B;\Diff_{+}\left(\Sphere 1\right)\right)$
and $H^{2}\left(B;\mathbb{Z}\right)$ (as shown in the proof of \prettyref{prop:Classifying-oriented-circle-fiberings}),
thus the desired result follows.
\end{proof}

\subsection{The vertical automorphism group}

Having understood the basic transformations from the preceding result,
we next turn to the vertical automorphism group $\Vau\left(\StandardFibration\right)$.
We first make a simple observation that makes the gauge analysis for
oriented circle fiberings relatively accessible:
\begin{rem}
For smooth oriented circle bundles, every equivalence class can be
represented by a \emph{principal} $\CircleGroup$-bundle $\StandardFibration$
where the circle group $\CircleGroup$ acts on the circle fibers of
$\StandardFibration$ by self-translations (indeed, this is due to
the fact that the corresponding embedding $\CircleGroup\hookrightarrow\Diff_{+}\left(\Sphere 1\right)$
is a homotopy equivalence, which thus induces a desired homotopy equivalence
$\Classifying\CircleGroup\simeq\Classifying\Diff_{+}\left(\Sphere 1\right)$
between their classifying spaces). Note that this does \emph{not}
say that $\Vau\left(\StandardFibration\right)$ acts on each fiber
only by translations in $\CircleGroup\subseteq\Diff_{+}\left(\Sphere 1\right)$,
nor can the homotopy equivalence $\CircleGroup\hookrightarrow\Diff_{+}\left(\Sphere 1\right)$
suffice to justify such a reduction through homotopy — though this
turns out to be true as we shall see. 
\end{rem}

By the preceding remark, we may and shall assume that $\StandardFibration$
is a principal $\CircleGroup$-bundle (with $\CircleGroup$ identified
as the subgroup of $\Diff_{+}\left(\Sphere 1\right)$ consisting of
self-translations on $\Sphere 1$). To understand the homotopy of
$\Vau\left(\StandardFibration\right)$, the idea is to utilize the
globalizing procedure in \prettyref{prop:Homotopy-reduction-of-Vau},
which allows us to deform $\Vau\left(\StandardFibration\right)$ by
means of smoothly, equivariantly deforming the structure group; this
approach is particularly effective for our structure group $\Diff_{+}\left(\Sphere 1\right)$
because its topological and geometric structure admits a fairly simple
interplay with its algebraic structure (in particular, it has an abelian
equivariant-deformation retract by a flow). The upshot is the following
result, which reduces the homotopy type of $\Vau\left(\StandardFibration\right)$
to a current group:

\begin{prop}
\label{prop:The-Vau-group-of-oriented-circle-bundle-I}Let $\StandardFibration$
be a smooth oriented circle bundle over $B$, assumed to be principal
$\CircleGroup$-bundle as above. Then the vertical automorphism group
of $\StandardFibration$ deformation retracts onto the smooth $\CircleGroup$-valued
current group:
\[
\Vau\left(\StandardFibration\right)\simeq\MappingSpace^{\infty}\left(B,\CircleGroup\right).
\]
Here, $\MappingSpace^{\infty}\left(B,\CircleGroup\right)$ is embedded
as the subgroup consisting of parametrized translations on the fibers
(where each smooth map from $B$ to $\CircleGroup$ prescribes the
amounts of translations).
\end{prop}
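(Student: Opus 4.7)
The plan is to apply the homotopy reduction principle for vertical automorphism groups (\prettyref{prop:Homotopy-reduction-of-Vau}) with coefficient subgroups $T = K = \CircleGroup \leq \Diff_{+}\bigl(\Sphere 1\bigr)$. Since $\StandardFibration$ is assumed to be a principal $\CircleGroup$-bundle, any trivializing atlas yields a transition cocycle $\tau \in \check{Z}^{1}\bigl(\mathcal{U},\Diff_{+}\bigl(\Sphere 1\bigr)\bigr)$ whose values lie in the self-translation subgroup $\CircleGroup$, so the hypothesis on $\tau$ is met. Since $\CircleGroup$ is abelian, the centrality condition $T \subseteq \Center(K)$ is automatic. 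Thus the only nontrivial task is to exhibit a $\CircleGroup$-equivariant deformation retraction
\[
R \colon \Diff_{+}\bigl(\Sphere 1\bigr) \times \left[0,1\right] \to \Diff_{+}\bigl(\Sphere 1\bigr), \qquad R_{0} = \Identity,\quad R_{1}\bigl(\Diff_{+}\bigl(\Sphere 1\bigr)\bigr) = \CircleGroup,
\]
where $\CircleGroup$ acts by conjugation (which on $\CircleGroup$ itself is trivial since it is abelian).

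For the construction, I would pass to the universal cover and use linear interpolation along lifts. Represent each $f \in \Diff_{+}\bigl(\Sphere 1\bigr)$ by its unique lift $\widetilde{f} \colon \mathbb{R} \to \mathbb{R}$ with $\widetilde{f}(x+1) = \widetilde{f}(x)+1$ and $\widetilde{f}(0) \in [0,1)$; let
\[
c(\widetilde{f}) \coloneqq \int_{0}^{1}\bigl(\widetilde{f}(x)-x\bigr)\, dx \in \mathbb{R},
\]
and define $R_{t}(f)$ to be the diffeomorphism of $\Sphere 1$ with lift
\[
\widetilde{R_{t}(f)}(x) \coloneqq (1-t)\,\widetilde{f}(x) + t\bigl(x + c(\widetilde{f})\bigr).
\]
The derivative $(1-t)\widetilde{f}'(x)+t$ is strictly positive, so $R_{t}(f)$ remains in $\Diff_{+}\bigl(\Sphere 1\bigr)$; at $t=1$ it is the translation by $c(\widetilde{f})$, giving the retraction. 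A direct substitution shows $c$ is invariant under the replacement $\widetilde{f} \mapsto \widetilde{f}(\,\cdot\,-\theta)+\theta$ induced by conjugation with a translation $\rho_{\theta}$ (the integrand is $\theta$-shifted and the domain is a fundamental period), and the same substitution shows $R_{t}\bigl(\rho_{\theta} f \rho_{-\theta}\bigr) = \rho_{\theta} R_{t}(f) \rho_{-\theta}$. Smoothness and continuity in $(t,f)$ are immediate from the explicit formula.

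With this deformation retraction in hand, the hypotheses of \prettyref{prop:Homotopy-reduction-of-Vau} are fully verified, and it yields
\[
\Vau(\StandardFibration) \simeq \MappingSpace^{\infty}(B,\CircleGroup) \simeq \MappingSpace(B,\CircleGroup),
\]
with $\MappingSpace^{\infty}(B,\CircleGroup)$ embedded as the fibrewise self-translation subgroup (this embedding is well defined precisely because the principal $\CircleGroup$-structure provides a global fibrewise action of $\CircleGroup$, so a smooth map $B \to \CircleGroup$ does act globally on $E$). The main obstacle is the equivariance of the retraction; every alternative I can think of (e.g.\ $c(\widetilde{f}) = \widetilde{f}(0)$) destroys equivariance, and integrating the translation amount over a fundamental period is what makes it work.
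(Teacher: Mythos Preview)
Your proof is correct and follows the same overall architecture as the paper: reduce to \prettyref{prop:Homotopy-reduction-of-Vau} by exhibiting a $\CircleGroup$-equivariant deformation retraction of $\Diff_{+}\bigl(\Sphere 1\bigr)$ onto $\CircleGroup$, constructed by passing to the universal cover $\mathbb{R}$ and deforming the displacement function $\widetilde{f}(x)-x$ toward its average. The only substantive difference is the choice of homotopy: the paper runs the \emph{heat flow} on the displacement function (invoking the maximum principle to preserve the open condition $u' > -1$), whereas you use the straight-line homotopy toward the constant $c(\widetilde{f})$. Your route is more elementary --- no parabolic PDE theory is needed, and the positivity check $(1-t)\widetilde{f}'+t>0$ is one line --- while the paper's heat-flow construction is chosen with an eye toward later reuse (it recurs in \prettyref{cor:The-Vau-group-of-oriented-circle-bundle-II} and is advertised as a general diffusion tool). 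Indeed, the paper itself acknowledges your alternative in passing (see the proof of \prettyref{lem:T2-case-base-transformations}, ``by running the affine homotopy on its universal cover'').

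One small remark: your normalization $\widetilde{f}(0)\in[0,1)$ is not continuous in $f$, so the phrase ``immediate from the explicit formula'' for continuity deserves a half-sentence noting that the formula for $R_t(f)$ is invariant under $\widetilde{f}\mapsto\widetilde{f}+n$ (which you can check just as you checked conjugation-invariance), hence descends to a well-defined continuous map on $\Diff_{+}\bigl(\Sphere 1\bigr)$ regardless of the choice of lift.
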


The proof of this proposition will occupy the next subsection.

\subsection{A proof that the Vau group deforms onto the current group}

In this subsection we prove \prettyref{prop:The-Vau-group-of-oriented-circle-bundle-I}.
In view of \prettyref{prop:Homotopy-reduction-of-Vau}, we aim to
deformation retract the structure group $\Diff_{+}\left(\Sphere 1\right)$
onto $\CircleGroup$, smoothly and equivariantly (with respect to
conjugation by $\CircleGroup$):
\begin{equation}
H\colon\Diff_{+}\left(\Sphere 1\right)\times\left[0,1\right]\to^{\CircleGroup}\Diff_{+}\left(\Sphere 1\right),\qquad\left.H\right|_{t=1}\colon\Diff_{+}\left(\Sphere 1\right)\simeq^{\CircleGroup}\CircleGroup.\label{eq:S1-Vau-proof-step-deform-K}
\end{equation}
To ease the notation, let $K\coloneqq\Diff_{+}\left(\Sphere 1\right)$
denote the structure group throughout the proof.

$\smallblacktriangleright$ \noun{Step 1: Setting the scene.} To
be more explicit, let us work on the universal cover $\mathbb{R}$
of $\Sphere 1$ via the standard covering map $\theta\mapsto\exp\left(2\pi i\theta\right)$.
Then each circle map $h\colon\Sphere 1\to\Sphere 1$ can be lifted
to a real function $\widetilde{h}\colon\mathbb{R}\to\mathbb{R}$,
unique up to post-composing with $\mathbb{Z}$-translations. We particularly
concern those lifts $\widetilde{h}$ that are associated with $h\in K$,
which form a topological subgroup $\widetilde{K}$ of $\MappingSpace^{\infty}\left(\mathbb{R}\right)$
(under pointwise additions) and can be characterized by two properties:
${\widetilde{h}'\left(x\right)>0}$ and ${\widetilde{h}\left(x+1\right)=\widetilde{h}\left(x\right)+1}$
for all $x$. Thus to sum up, taking the lifts yields the following
central extension of $K$ into $\widetilde{K}$:
\begin{equation}
\mathbb{Z}\hookrightarrow\widetilde{K}\overset{q}{\twoheadrightarrow}K,\qquad\widetilde{K}=\left\{ f\in\MappingSpace^{\infty}\left(\mathbb{R}\right)\mid f'\left(x\right)>0,\ f\left(x+1\right)=f\left(x\right)+1,\ \forall x\right\} .\label{eq:S1-Vau-proof-step-descend-map}
\end{equation}
In this way, the desired ($\CircleGroup$-equivariant) deformation
retraction $K\simeq\CircleGroup$ can be constructed by descending
a ($\mathbb{R}$-equivariant) deformation retraction $\widetilde{K}\simeq\mathbb{R}$
that is further equivariant under post-composing with $\mathbb{Z}$-translations.
Intuitively, such a desired deformation $\widetilde{K}\simeq\mathbb{R}$
can be accomplished by diffusing the ``pointwise displacements''
towards a constant displacement; this can be made precise by assigning
to each $f\colon\mathbb{R}\to\mathbb{R}$ its \emph{displacement function}
$\delta\left(f\right)\colon\mathbb{R}\to\mathbb{R}$ given by $\delta\left(f\right)\left(x\right)\coloneqq f\left(x\right)-x$.
We particularly concern those displacement functions $\delta\left(f\right)$
that are associated with $f\in\widetilde{K}$, which can be characterized
by two properties: ${\delta\left(f\right)'\left(x\right)>-1}$ and
${\delta\left(f\right)\left(x+1\right)=\delta\left(f\right)\left(x\right)}$
for all $x$. Thus to sum up, taking the displacement functions yields
an open embedding of $\widetilde{K}$ into $\MappingSpace^{\infty}\left(\Sphere 1\right)$
(viewed as the space of periodic real functions):
\begin{equation}
\delta\colon\widetilde{K}\hookrightarrow\MappingSpace^{\infty}\left(\Sphere 1\right),\qquad\delta\left(\widetilde{K}\right)=\left\{ u\in\MappingSpace^{\infty}\left(\Sphere 1\right)\mid u'\left(x\right)>-1\right\} \subsetopen\MappingSpace^{\infty}\left(\Sphere 1\right).\label{eq:S1-Vau-proof-step-open-embedding}
\end{equation}
Having switched the scene from $K$ to $\MappingSpace^{\infty}\left(\Sphere 1\right)$,
we may and shall deform the diffeomorphisms in $K$ by means of running
flows on the corresponding functions in $\MappingSpace^{\infty}\left(\Sphere 1\right)$,
as will be detailed in the next step.

$\smallblacktriangleright$ \noun{Step 2: Deforming circle diffeomorphisms.}
As we shall see here and throughout, the \emph{heat flow} will generally
come in useful whenever we need a natural (smooth, geometric, equivariant)
way to diffuse the amounts of disturbances, which is briefly recapped
as follows. For any smooth, connected, compact Riemannian manifold
$M$, the solution operator of the heat equation $\left(\partial_{t}-\Delta\right)u=0$
gives a flow in $\MappingSpace^{\infty}\left(M\right)$ that induces,
upon suitable rescaling of the time interval, a deformation retraction
$\Heat M$ of $\MappingSpace^{\infty}\left(M\right)$ onto its subspace
of constant functions $\mathbb{R}$:
\begin{equation}
\Heat M\colon\MappingSpace^{\infty}\left(M\right)\times\left[0,1\right]\to\MappingSpace^{\infty}\left(M\right),\qquad\left.\Heat M\right|_{t=1}\colon\MappingSpace^{\infty}\left(M\right)\simeq\mathbb{R},\label{eq:S1-Vau-proof-step-heat-flow}
\end{equation}
where as $t$ approaches to the terminal time, each given initial
function $u$ will even out towards a constant function $\overline{u}\in\mathbb{R}$
equal to the average value of $u$ over $M$. With this tool, we
are now in a position to construct the desired deformation of $K$
(as stated in \prettyref{eq:S1-Vau-proof-step-deform-K}) by following
the plan
\begin{equation}
\MappingSpace^{\infty}\left(\Sphere 1\right)\simeq\mathbb{R}{\quad\xRightarrow{\delta}\quad}\widetilde{K}\simeq\mathbb{R}{\quad\xRightarrow{q}\quad}K\simeq\CircleGroup,\label{eq:S1-Vau-proof-step-plan}
\end{equation}
which is implemented as follows. Firstly, we run the heat flow \prettyref{eq:S1-Vau-proof-step-heat-flow}
to obtain a deformation retraction of $\MappingSpace^{\infty}\left(\Sphere 1\right)$
onto its subgroup $\mathbb{R}$ of constant functions. Secondly, we
pull back this deformation of $\MappingSpace^{\infty}\left(\Sphere 1\right)$
via the the open embedding $\delta\colon\widetilde{K}\hookrightarrow\MappingSpace^{\infty}\left(\Sphere 1\right)$
in \prettyref{eq:S1-Vau-proof-step-open-embedding}, so as to obtain
a deformation retraction of $\widetilde{K}$ onto its subgroup $\mathbb{R}$
of translations — this is justified since the characteristic property
$u'\left(x\right)>-1$ is preserved under the heat flow at all times
(thanks to the maximum principle of the heat flow). Thirdly, we descend
this deformation of $\widetilde{K}$ via the quotient map $q\colon\widetilde{K}\twoheadrightarrow K$
in \prettyref{eq:S1-Vau-proof-step-descend-map}, so as to obtain
the desired deformation retraction $K\simeq\CircleGroup$ — this is
justified since it is equivariant under post-composing with $\mathbb{Z}$-translations
(thanks to the same property of the heat flow and of the displacement
function $\delta$). Thus to sum up, we have constructed a deformation
retraction $K\simeq\CircleGroup$ as follows:
\begin{equation}
H\colon K\times\left[0,1\right]\to K,\qquad H_{t}\circ q\coloneqq q\circ\left.\left(\delta^{-1}\circ\Heat{\Sphere 1}_{t}\circ\delta\right)\right|_{\widetilde{K}},\label{eq:S1-Vau-proof-step-construction}
\end{equation}
whose desired properties of smoothness and equivariance will be verified
in the next step.

$\smallblacktriangleright$ \noun{Step 3: Verifying smoothness and equivariance.}
To verify that the deformation retraction $H$ constructed in \prettyref{eq:S1-Vau-proof-step-construction}
has the desired properties of smoothness and equivariance, we shall
implement the following strategy in view of \prettyref{eq:S1-Vau-proof-step-plan}:
i) show that the heat flow on $\MappingSpace^{\infty}\left(\Sphere 1\right)$
has the corresponding properties; ii) show that these properties are
inherited by the induced flow on $\widetilde{K}$ under the embedding
$\delta\colon\widetilde{K}\hookrightarrow\MappingSpace^{\infty}\left(\Sphere 1\right)$;
and iii) show that these properties are inherited by the induced flow
on $K$ under the projection $q\colon\widetilde{K}\to K$. More specifically:
\begin{itemize}
\item $H$ is smooth. Indeed, we show that: i) the heat flow on $\MappingSpace^{\infty}\left(\Sphere 1\right)$
is smooth, as can be verified by observing that its Fourier coefficients
evolve smoothly with respect to the initial data; ii) the preceding
smoothness is inherited by the induced flow on $\widetilde{K}$ under
the embedding $\delta\colon\widetilde{K}\hookrightarrow\MappingSpace^{\infty}\left(\Sphere 1\right)$,
for the reason that $\delta$ is a smooth open embedding; and iii)
the preceding smoothness is inherited by the induced flow on $K$
under the projection $q\colon\widetilde{K}\to K$, for the reason
that $q$ is a smooth submersion. Therefore, the desired smoothness
of $H$ is confirmed.
\item $H$ is $\CircleGroup$-equivariant. Indeed, we show that: i) the
heat flow on $\MappingSpace^{\infty}\left(\Sphere 1\right)$ is equivariant
(with respect to pre-translations on $\MappingSpace^{\infty}\left(\Sphere 1\right)$),
as can be shown by observing that the heat equation is geometric (i.e.,
isometry-invariant); ii) under the embedding $\delta\colon\widetilde{K}\hookrightarrow\MappingSpace^{\infty}\left(\Sphere 1\right)$,
the induced deformation $\widetilde{K}\to\widetilde{K}$ inherits
an $\mathbb{R}$-equivariance (with respect to the $\mathbb{R}$-action
on $\widetilde{K}$ by conjugation), for the reason that $\delta$
is equivariant (as can be verified straightforwardly); and iii) under
the projection $q\colon\widetilde{K}\to K$, the induced deformation
$K\to K$ inherits a $\CircleGroup$-equivariance (with respect to
the $\CircleGroup$-action on $K$ by conjugation), for the reason
that $q$ is equivariant (as it is even a group homomorphism). Therefore,
the desired equivariance of $H$ is confirmed.
\end{itemize}
This completes the proof of the smooth, equivariant deformation retraction
$K\simeq\CircleGroup$ in \prettyref{eq:S1-Vau-proof-step-deform-K}.

$\smallblacktriangleright$ \noun{Step 4: Deforming circle-bundle automorphisms.}
As mentioned, the preceding smooth, equivariant deformation retraction
$K\simeq\CircleGroup$ in \prettyref{eq:S1-Vau-proof-step-deform-K}
will induce a desired deformation retraction $\Vau\left(\StandardFibration\right)\simeq\MappingSpace^{\infty}\left(B,\CircleGroup\right)$
by \prettyref{prop:Homotopy-reduction-of-Vau}. We reiterate the procedure
here for the purpose of preciseness. Let $\left(U_{i}\right)_{i}$
be some trivializing atlas of $\StandardFibration$, over which let
$\left(\tau_{ij}\right)_{i,j}$ be the structural cocycle of $\StandardFibration$
as given by transition maps $\tau_{ij}\colon U_{i}\cap U_{j}\to\CircleGroup$.
Recall that every vertical automorphism $h\in\Vau\left(\StandardFibration\right)$
can be viewed as a cochain $\left(h_{i}\right)_{i}$ given by smooth
maps $h_{i}\colon U_{i}\to K$, such that every pair of which satisfies
the gluing condition $h_{i}=\tau_{ij}h_{j}{\tau_{ij}}^{-1}$ on overlapped
domain. In this way, pushing forward by the smooth, equivariant deformation
retraction $H_{t}\colon K\to K$ in \prettyref{eq:S1-Vau-proof-step-deform-K}
yields a deformation retraction of $\Vau\left(\StandardFibration\right)$:
\[
H_{\ast}\colon\Vau\left(\StandardFibration\right)\times\left[0,1\right]\to\Vau\left(\StandardFibration\right),\qquad H_{\ast}\left(\left(h_{i}\right)_{i},t\right)\coloneqq\left(H_{t}\circ h_{i}\right)_{i}.
\]
Here, at the terminal time $t=1$, the resulting cochains take values
in the abelian group $\CircleGroup$, so that the gluing condition
simply reads $h_{i}=h_{j}$ on overlapped domains, rendering cochains
into maps globally-defined on $B$:
\[
\left.H_{\ast}\right|_{t=1}\colon\Vau\left(\StandardFibration\right)\xrightarrow{\simeq}\left\{ \left(h_{i}\right)_{i}\in\prod_{i\in I}\MappingSpace^{\infty}\left(U_{i},\CircleGroup\right)\mid\ensuremath{h_{i}=h_{j}},\ \forall x\in U_{i}\cap U_{j}\right\} \cong\MappingSpace^{\infty}\left(B,\CircleGroup\right).
\]
This completes the proof of \prettyref{prop:The-Vau-group-of-oriented-circle-bundle-I},
as desired.

\subsection{The vertical automorphism group (continued)}

By the preceding proposition (\prettyref{prop:The-Vau-group-of-oriented-circle-bundle-I}),
we are led to study the homotopy of the $\CircleGroup$-valued current
group $\MappingSpace^{\infty}\left(B,\CircleGroup\right)$. The weak
homotopy type of this group was already well-understood thanks to
the uncomplicated topological structure of $\CircleGroup$; in order
to keep track of the subspace that realizes the homotopy type, the
heat flow used in the preceding proof comes in useful again. The upshot
is the following result, which explicitly describes the homotopy structure
of $\Vau\left(\StandardFibration\right)$ in terms of the principal
circle action and the first integral cohomology of the base:
\begin{cor}[Vertical automorphisms of oriented circle fiberings, II]
\label{cor:The-Vau-group-of-oriented-circle-bundle-II}Let $\StandardFibration$
be a smooth oriented circle bundle over $B$, assumed to be principal
$\CircleGroup$-bundle as above. Then the vertical automorphism group
of $\StandardFibration$ deformation retracts onto the direct product
of $\CircleGroup$ and the first integral cohomology
of $B$:
\[
\Vau\left(\StandardFibration\right)\simeq\CircleGroup\times H^{1}\left(B;\mathbb{Z}\right).
\]
Here, the identity component $\CircleGroup$ is embedded as the subgroup
consisting of uniform translations on all fibers, while the component
group $H^{1}\left(B;\mathbb{Z}\right)\cong\left[B,\CircleGroup\right]$
has each of its members being represented by a parametrized translation
on the fibers.
\end{cor}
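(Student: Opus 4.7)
The plan is to build on \prettyref{prop:The-Vau-group-of-oriented-circle-bundle-I}, which already reduces the task to establishing the smooth deformation retraction
\[
\MappingSpace^{\infty}(B, \CircleGroup) \simeq \CircleGroup \times H^{1}(B;\mathbb{Z})
\]
for the smooth current group. The argument rests on two facts: $\CircleGroup$ is abelian (so the current group is abelian), and $\CircleGroup\simeq K(\mathbb{Z},1)$ (so its homotopy is simple). By the abelian group structure, inherited pointwise from $\CircleGroup$, every path component of $\MappingSpace^{\infty}(B,\CircleGroup)$ is a translate of the identity component, so it suffices to (i) retract the identity component onto the constants, and (ii) choose a smooth representative in each component.

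For step (i), I would identify the identity component as the image of $\MappingSpace^{\infty}(B,\mathbb{R})$ under post-composition with the universal covering $\exp\colon \mathbb{R}\twoheadrightarrow\CircleGroup$ --- i.e., those maps $f\colon B\to\CircleGroup$ admitting a smooth lift $\widetilde{f}\colon B\to\mathbb{R}$, unique up to integer additive constants. Fixing any Riemannian metric on $B$, applying the heat semigroup $\{e^{t\Delta}\}_{t\geq 0}$ to $\widetilde{f}$ furnishes a smooth deformation retraction of $\MappingSpace^{\infty}(B,\mathbb{R})$ onto its subgroup $\mathbb{R}$ of constant functions, with terminal value being the spatial average of $\widetilde{f}$. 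This flow annihilates constants and is therefore $\mathbb{Z}$-equivariant under additive translations by integers, so it descends through $\exp$ to a smooth deformation retraction $\MappingSpace^{\infty}(B,\CircleGroup)_{0}\simeq\CircleGroup$. This is the same mechanism as in the proof of \prettyref{prop:The-Vau-group-of-oriented-circle-bundle-I}, now applied to diffuse current values rather than pointwise displacements of circle diffeomorphisms.

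For step (ii), I would identify $\pi_{0}\MappingSpace^{\infty}(B,\CircleGroup)\cong[B,\CircleGroup]\cong H^{1}(B;\mathbb{Z})$ using that $\CircleGroup\simeq K(\mathbb{Z},1)$ together with standard smooth approximation of classifying maps, then select once and for all a smooth representative $\chi_{[\alpha]}\in\MappingSpace^{\infty}(B,\CircleGroup)$ for each class $[\alpha]\in H^{1}(B;\mathbb{Z})$; a concrete choice is to integrate a closed $1$-form with integer periods representing the de Rham image of $[\alpha]$. Translating the identity-component retraction by $\chi_{[\alpha]}$ produces a deformation retraction of the corresponding component onto $\chi_{[\alpha]}\cdot\CircleGroup$, and assembling across all classes yields the desired deformation retract
\[
\bigsqcup_{[\alpha]\in H^{1}(B;\mathbb{Z})}\chi_{[\alpha]}\cdot\CircleGroup \;\simeq\; \CircleGroup\times H^{1}(B;\mathbb{Z}).
\]
Finally, tracing through the embedding $\MappingSpace^{\infty}(B,\CircleGroup)\hookrightarrow\Vau(\StandardFibration)$ from \prettyref{prop:The-Vau-group-of-oriented-circle-bundle-I} --- under which a current $B\to\CircleGroup$ acts on the fiber over $x$ by translation by its value at $x$ --- recovers the geometric description: the identity component of uniform translations is $\CircleGroup$, while each twisted copy $\chi_{[\alpha]}\cdot\CircleGroup$ consists of parametrized fiber translations realizing the cohomology class $[\alpha]$. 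The main subtlety in the argument is the smooth choice of representatives $\chi_{[\alpha]}$, which is handled by standard smoothing combined with de Rham theory.
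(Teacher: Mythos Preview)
Your proposal is correct and follows essentially the same route as the paper: both invoke \prettyref{prop:The-Vau-group-of-oriented-circle-bundle-I} to reduce to the current group $\MappingSpace^{\infty}(B,\CircleGroup)$, identify $\pi_{0}$ with $H^{1}(B;\mathbb{Z})$ via $\CircleGroup\simeq K(\mathbb{Z},1)$, and retract the identity component onto $\CircleGroup$ by running the heat flow on lifts $\MappingSpace^{\infty}(B,\mathbb{R})$ and descending through the $\mathbb{Z}$-equivariant covering. The only cosmetic difference is that you explicitly translate the identity-component retraction by chosen representatives $\chi_{[\alpha]}$, whereas the paper simply observes that the abelian group structure forces the semidirect product $\CircleGroup\rtimes H^{1}(B;\mathbb{Z})$ to be direct; both amount to the same thing.
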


\begin{proof}
We first invoke \prettyref{prop:The-Vau-group-of-oriented-circle-bundle-I}
to deformation retract $\Vau\left(\StandardFibration\right)$ onto
$\MappingSpace^{\infty}\left(B,\CircleGroup\right)$. The latter group
has the desired (weak) homotopy type:
\[
\MappingSpace^{\infty}\left(B,\CircleGroup\right)\simeq\MappingSpace\left(B,\CircleGroup\right)\simeq\CircleGroup\times H^{1}\left(B;\mathbb{Z}\right).
\]
Here, the first equivalence between $\MappingSpace^{\infty}\left(B,\CircleGroup\right)$
and $\MappingSpace\left(B,\CircleGroup\right)$ follows from the general
smooth approximations for current groups (see e.g., \cite[Theorem A.3.7]{MR1935553}),
which in particular establishes isomorphisms between $\pi_{i}\MappingSpace^{\infty}\left(B,\CircleGroup\right)$
and $\pi_{i}\MappingSpace\left(B,\CircleGroup\right)$ for all $i$;
while the second equivalence between $\MappingSpace\left(B,\CircleGroup\right)$
and $\CircleGroup\times H^{1}\left(B;\mathbb{Z}\right)$ follows from
the general homotopy types for current groups with coefficients in
Eilenberg–MacLane spaces $K\left(\mathbb{Z},n\right)$ (see e.g.,
\cite{MR105106}), which in particular establishes isomorphisms between
$\pi_{i}\MappingSpace\left(B,\CircleGroup\right)$ and $H^{1-i}\left(B;\mathbb{Z}\right)$
for all $i$. Specifically, we can further keep track of the subspace
realizing this homotopy type as follows:
\begin{itemize}
\item For the component group $\pi_{0}\MappingSpace^{\infty}\left(B,\CircleGroup\right)$,
we confirm its asserted description by the following isomorphisms:
\[
\pi_{0}\MappingSpace^{\infty}\left(B,\CircleGroup\right)\cong\left[B,\CircleGroup\right]\cong\hom\left(\pi_{1}\left(B\right),\pi_{1}\left(\CircleGroup\right)\right)\cong\hom\left(H_{1}\left(B;\mathbb{Z}\right),\mathbb{Z}\right)\cong H^{1}\left(B;\mathbb{Z}\right).
\]
Here, the crux is at the second isomorphism from $\left[B,\CircleGroup\right]$
to $\hom\left(\pi_{1}\left(B\right),\pi_{1}\left(\CircleGroup\right)\right)$,
as given by the construction of induced homomorphisms, which is indeed
invertible thanks to the aspherical-ness of the target space $\CircleGroup$,
as desired.
\item For the identity component $\MappingSpace_{0}^{\infty}\left(B,\CircleGroup\right)$,
we need to confirm its asserted deformation retraction onto $\CircleGroup$.
Since null-homotopic maps $B\to\CircleGroup$ always lift, $\MappingSpace_{0}^{\infty}\left(B,\CircleGroup\right)$
admits the central extension $\mathbb{Z}\hookrightarrow\MappingSpace^{\infty}\left(B\right)\overset{q}{\twoheadrightarrow}\MappingSpace_{0}^{\infty}\left(B,\CircleGroup\right)$.
Thus arguing as in the proof of \prettyref{prop:The-Vau-group-of-oriented-circle-bundle-I},
we can use the heat flow to construct the desired deformation retraction:
\[
H\colon\MappingSpace_{0}^{\infty}\left(B,\CircleGroup\right)\times\left[0,1\right]\to\MappingSpace_{0}^{\infty}\left(B,\CircleGroup\right),\qquad H_{t}\circ q\coloneqq q\circ\Heat B_{t}.
\]
More specifically, running the heat flow \prettyref{eq:S1-Vau-proof-step-heat-flow}
yields a deformation retraction of $\MappingSpace^{\infty}\left(B\right)$
onto the subgroup $\mathbb{R}$ of constant functions, which thanks
to its $\mathbb{Z}$-equivariance can be descended to a deformation
retraction of $\MappingSpace_{0}^{\infty}\left(B,\CircleGroup\right)$
onto the subgroup $\CircleGroup$ of constant maps (hence corresponding
to the subgroup of $\Vau\left(\StandardFibration\right)$ consisting
of uniform translations), as desired.
\end{itemize}
Lastly, since both groups $\CircleGroup$ and $H^{1}\left(B;\mathbb{Z}\right)$
are embedded as subgroups of the abelian group $\MappingSpace^{\infty}\left(B,\CircleGroup\right)$,
their semidirect product must actually be direct product. This completes
the proof of \prettyref{cor:The-Vau-group-of-oriented-circle-bundle-II},
as desired.
\end{proof}
We are now in a position to apply the results as developed in this
subsection to yield precise information about low-dimensional manifolds
and their circle fiberings.

\section{Case Study: Dimension Two\label{sec:Case-study-dimension2}}

We specialize the preceding study in \prettyref{sec:oriented-circle-fiberings}
to the case of closed, 2-dimensional total space $E$. Our goal is
the following theorem, which completely determines the homotopy type
(hence topological type too) of the moduli space $\Fib_{+}\left(E,\Sphere 1\right)$
of smooth oriented circle fiberings in this case:
\begin{thm}[Space of oriented circle fiberings on surfaces]
\label{thm:T2-case-main-theorem}The only closed surface that admits
smooth oriented circle fiberings is the 2-torus $\Torus 2$, on which
all such fiberings form a single equivalence class modeled on the
standard meridian fibration $\StandardFibration\colon\Torus 2\to\Sphere 1$,
whose corresponding moduli space of fiberings $\Fib\left(\StandardFibration\right)$
admits a deformation retraction onto a discrete space $\ZPrim 2$
of coprime pairs of integers:
\[
\Fib_{+}\left(\Torus 2,\Sphere 1\right)=\Fib\left(\StandardFibration\right)\simeq\ZPrim 2.
\]
More precisely, this deformation retract consists of those fiberings
that are affinely equivalent to $\StandardFibration$ (aka. the ``rational
linear fiberings''), which is identified with $\ZPrim 2$ by extracting%
the ``slope''.
\end{thm}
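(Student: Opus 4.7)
\emph{Reduction.} By Proposition \ref{prop:Classifying-oriented-circle-fiberings}, an oriented circle fibering on a closed surface requires a closed $1$-manifold base, hence $B=\Sphere 1$; since $H^2(\Sphere 1;\mathbb{Z})=0$, there is a single equivalence class, represented by the trivial bundle $\StandardFibration\colon\Torus 2\to\Sphere 1$, and the total space is necessarily $\Torus 2$. So $\Fib_+(\Torus 2,\Sphere 1)=\Fib(\StandardFibration)=\Diff(\Torus 2)/\Aut(\StandardFibration)$.

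\emph{Rational linear core.} Each $(a,b)\in\ZPrim 2$ determines a smooth oriented circle bundle $\Torus 2\to\Sphere 1$ via $(x,y)\mapsto ax+by\bmod 1$, with fiber homology class $(-b,a)\in H_1(\Torus 2;\mathbb{Z})$. This assignment gives an injection $\ZPrim 2\hookrightarrow\Fib(\StandardFibration)$ whose image is a closed discrete subspace of point (hence compact) components. By Corollary \ref{prop:Promoting-weak-homotopy-type} (whose hypotheses apply since $\Fib(\StandardFibration)$ is an $\ell^2$-manifold by Theorem \ref{thm:main-theorem-about-smooth-D-fibration}), it suffices to prove this inclusion is a weak homotopy equivalence — the resulting strong deformation retraction is then automatic, and $\ZPrim 2$ is even a minimal core.

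\emph{Weak equivalence.} I would assemble the two principal bundles of Theorems \ref{thm:main-theorem-about-smooth-symmetry-fibration} and \ref{thm:main-theorem-about-smooth-D-fibration},
\[
\Vau(\StandardFibration)\hookrightarrow\Aut(\StandardFibration)\to\Diff(\Sphere 1)_{\StandardFibration},\qquad\Aut(\StandardFibration)\hookrightarrow\Diff(\Torus 2)\to\Fib(\StandardFibration),
\]
with three known ingredients: (i) Corollary \ref{cor:Base-transformations-of-an-oriented-circle-bundle} gives $\Diff(\Sphere 1)_{\StandardFibration}=\Diff(\Sphere 1)\simeq\Orthogonal 2$ (the Euler class is zero, so every base diffeomorphism lifts); (ii) Corollary \ref{cor:The-Vau-group-of-oriented-circle-bundle-II} gives $\Vau(\StandardFibration)\simeq\CircleGroup\times\mathbb{Z}$; and (iii) Earle--Eells gives $\Diff_0(\Torus 2)\simeq\Torus 2$ with $\pi_0\Diff(\Torus 2)=\GL 2{Z}$. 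Observing that the translation subgroup $\Torus 2\hookrightarrow\Aut(\StandardFibration)$ sections $\Aut_0\twoheadrightarrow\Diff(\Sphere 1)_0\simeq\CircleGroup$ with kernel $\Vau_0\simeq\CircleGroup$, I deduce $\Aut_0(\StandardFibration)\simeq\Torus 2$, making the inclusion $\Aut_0\hookrightarrow\Diff_0(\Torus 2)$ a homotopy equivalence. The long exact sequence for the second principal bundle then kills $\pi_i\Fib(\StandardFibration)$ for $i\geq1$, while at components it identifies $\pi_0\Fib(\StandardFibration)$ with the $\GL 2{Z}$-orbit of the fiber class of $\StandardFibration$ in $H_1(\Torus 2;\mathbb{Z})$, which by elementary linear algebra is exactly $\ZPrim 2$ — and the rational linear fiberings realize precisely one representative per component.

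\emph{Anticipated obstacle.} The delicate step is pinning down $\pi_0\Aut(\StandardFibration)$: I expect it to be the stabilizer $\mathbb{Z}\rtimes\mathbb{Z}/2\leq\GL 2{Z}$ of $(1,0)$, generated by Dehn twists along the fiber direction and base-orientation reversal. The key check from the long exact sequence of the first principal bundle is that the connecting homomorphism $\pi_1\Diff(\Sphere 1)\to\pi_0\Vau(\StandardFibration)$ is \emph{trivial} — geometrically, the lifted rotation loop $(x,y)\mapsto(x,y+t)$ for $t\in[0,1]$ closes at $t=1$ back to the identity rather than to a nontrivial component, so no ``extra'' components of $\Aut$ are produced by the base rotation. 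Once this is confirmed, the stabilizer matches $\pi_0\Aut$ exactly, the correspondence $\ZPrim 2\leftrightarrow\pi_0\Fib(\StandardFibration)$ by ``slope'' falls out, and Corollary \ref{prop:Promoting-weak-homotopy-type} delivers the minimal strong deformation retraction asserted.
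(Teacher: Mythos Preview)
Your proof is correct and follows essentially the same route as the paper: the same classification lemma, the same two principal fibrations from Theorems~\ref{thm:main-theorem-about-smooth-symmetry-fibration} and~\ref{thm:main-theorem-about-smooth-D-fibration}, the same three inputs ($\Vau\simeq\CircleGroup\times\mathbb{Z}$, $\Diff(\Sphere 1)_{\StandardFibration}\simeq\CircleGroup\rtimes C_2$, Earle--Eells), and the same promotion via Corollary~\ref{prop:Promoting-weak-homotopy-type}. The paper packages the homotopy computation more cleanly by exhibiting the affine subgroup $\Aut(\StandardFibration)\cap\Affine(\Torus 2)\cong\CircleGroup^{2}\rtimes\AffineGroupZ$ explicitly, building two commutative ladders of fibrations (affine model inside the full transformation groups), and applying the five lemma twice---this sidesteps the connecting-homomorphism check you flag as the ``anticipated obstacle,'' since the five lemma absorbs it automatically. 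One small slip: your lifted base-rotation loop should be $(x,y)\mapsto(x+t,y)$ rather than $(x,y)\mapsto(x,y+t)$, since $\StandardFibration$ projects onto the $x$-coordinate; your conclusion that the connecting map $\pi_1\Diff(\Sphere 1)_{\StandardFibration}\to\pi_0\Vau(\StandardFibration)$ vanishes is nevertheless correct.
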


In what follows, we shall justify that $\Fib\left(\StandardFibration\right)$
is indeed the only non-void case, and that its homotopy type is indeed
as claimed. The essence of our proof for the latter claim is to homotopy-equivalently
rigidify the various (infinite-dimensional) transformation groups
of $\StandardFibration$, by means of exploiting a suitable structure
on the ambient space — in the current case, the standard \emph{affine structure}
on $\Torus 2$. This scheme is succinctly summarized in the following
two commutative diagrams:
\[
\xymatrix{\Vau\left(\StandardFibration\right)\ar@{^{(}->}[r] & \Aut\left(\StandardFibration\right)\ar@{->>}[r] & \Diff\left(B\right)_{\StandardFibration}\\
\Vau_{\Affine}\left(\StandardFibration\right)\ar@{^{(}->}[r]\ar@{^{(}->}[u]^{\simeq} & \Aut_{\Affine}\left(\StandardFibration\right)\ar@{->>}[r]\ar@{^{(}..>}[u]^{\simeq} & \Diff_{\Affine}\left(B\right)_{\StandardFibration}\ar@{^{(}->}[u]^{\simeq}\\
\CircleGroup\times\mathbb{Z}\ar@{^{(}->}[r]\ar@{=}[u]^{\cong} & \CircleGroup^{2}\rtimes\mathrm{Aff}\left(1,\mathbb{Z}\right)\ar@{->>}[r]\ar@{=}[u]^{\cong} & \CircleGroup\rtimes C_{2}\ar@{=}[u]^{\cong}
}
\]
and
\[
\xymatrix{\Aut\left(\StandardFibration\right)\ar@{^{(}->}[r] & \Diff\left(\Torus 2\right)\ar@{->>}[r] & \Fib\left(\StandardFibration\right)\\
\Aut_{\Affine}\left(\StandardFibration\right)\ar@{^{(}->}[r]\ar@{^{(}->}[u]^{\simeq} & \Affine\left(\Torus 2\right)\ar@{->>}[r]\ar@{^{(}->}[u]^{\simeq} & \Fib_{\Affine}\left(\StandardFibration\right)\ar@{^{(}..>}[u]^{\simeq}\\
\CircleGroup^{2}\rtimes\mathrm{Aff}\left(1,\mathbb{Z}\right)\ar@{^{(}->}[r]\ar@{=}[u]^{\cong} & \CircleGroup^{2}\rtimes\GL 2Z\ar@{->>}[r]\ar@{=}[u]^{\cong} & \ZPrim 2\ar@{=}[u]^{\cong}
}
.
\]
The detailed explanation for these two diagrams will occupy the main
part of the remaining of this subsection.

\subsection{The classification}

To begin with, let us use \prettyref{prop:Classifying-oriented-circle-fiberings}
to classify smooth oriented circle fiberings on closed surface:
\begin{lem}
\label{lem:T2-case-classification}The only closed surface that admits
smooth oriented circle fiberings is the 2-torus $\Torus 2$, on which
all such fiberings form a single equivalence class:
\[
\Fib_{+}\left(\Torus 2,\Sphere 1\right)=\Fib\left(\StandardFibration\right).
\]
More precisely, this unique class can be represented by the (oriented)
standard meridian fibration $\StandardFibration\colon\Torus 2\to\Sphere 1$
(as given by the first coordinate projection of $\Sphere 1\times\Sphere 1$).
\end{lem}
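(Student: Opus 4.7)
The plan is to derive this directly from the cohomological classification theorem for oriented circle fiberings (Proposition \ref{prop:Classifying-oriented-circle-fiberings}), after first pinning down what the base space must be.

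First I would argue on dimension. For any smooth oriented circle fibering $\Sphere 1 \hookrightarrow E \to B$ with $E$ a closed surface, the base $B$ is a closed connected smooth manifold of dimension $\dim E - \dim \Sphere 1 = 1$, so $B \approx \Sphere 1$ is the only option. Thus the disjoint union appearing in the classification degenerates: the only admissible base diffeomorphism type is $\Sphere 1$, and we are reduced to understanding
\[
H^2\bigl(\Sphere 1; \mathbb{Z}\bigr) / \mathop{\mathrm{Mod}}\bigl(\Sphere 1\bigr).
\]

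Next I would invoke \prettyref{prop:Classifying-oriented-circle-fiberings}: smooth oriented circle fiberings on all closed surfaces correspond bijectively to
\[
\bigsqcup_{B \in \ClassMan(1)} H^2(B; \mathbb{Z}) / \mathop{\mathrm{Mod}}(B) \;=\; H^2\bigl(\Sphere 1; \mathbb{Z}\bigr) / \mathop{\mathrm{Mod}}\bigl(\Sphere 1\bigr).
\]
Since $H^2\bigl(\Sphere 1; \mathbb{Z}\bigr) = 0$, this is a one-element pointed set, represented by the unique Euler class $e = 0$. The bundle realizing $e = 0$ over $\Sphere 1$ is the trivial oriented circle bundle, whose total space is $\Sphere 1 \times \Sphere 1 = \Torus 2$ and whose bundle projection is (equivalent to) the first-coordinate projection $\StandardFibration \colon \Torus 2 \to \Sphere 1$.

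Finally I would read off both conclusions from this single-element classification: (i) \emph{existence}, namely that $\Torus 2$ does admit the oriented circle fibering $\StandardFibration$, is witnessed by the representative above; (ii) \emph{uniqueness up to fibering equivalence}, namely that every smooth oriented circle fibering of a closed surface is equivalent to $\StandardFibration$ (and in particular has total space diffeomorphic to $\Torus 2$), follows because there is literally nothing else on the right-hand side of the classification. No step here poses a genuine obstacle; the content of the lemma is really just a bookkeeping consequence of \prettyref{prop:Classifying-oriented-circle-fiberings} combined with the triviality of $H^2\bigl(\Sphere 1; \mathbb{Z}\bigr)$.
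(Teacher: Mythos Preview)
Your proposal is correct and follows essentially the same route as the paper: both reduce the base to $\Sphere 1$ by dimension, invoke \prettyref{prop:Classifying-oriented-circle-fiberings}, and conclude from $H^{2}\bigl(\Sphere 1;\mathbb{Z}\bigr)=0$ that the unique class is the trivial bundle $\StandardFibration\colon\Torus 2\to\Sphere 1$. The paper phrases the key step as an injection into $H^{2}\bigl(\Sphere 1;\mathbb{Z}\bigr)/\pi_{0}\Diff\bigl(\Sphere 1\bigr)=0$, but the content is identical.
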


\begin{proof}
Suppose that $E$ is a closed surface. Then the claim is equivalent
to saying that the desired classification $\ClassFib_{+}\left(E,\Sphere 1\right)$
(consisting of all equivalence classes of smooth oriented circle fiberings
on $E$) is completely determined as
\[
\ClassFib_{+}\left(E,\Sphere 1\right)=\begin{cases}
\left\{ \StandardFibration\right\}  & \text{if \ensuremath{E\approx\Torus 2}},\\
\emptyset & \text{otherwise}
\end{cases}\qquad\text{(\ensuremath{E} closed surface)}.
\]
To this end, first note that for any oriented circle bundle $E\to B$,
the base space $B$ must be a closed 1-manifold, hence $B\approx\Sphere 1$.
Thus we can apply \prettyref{prop:Classifying-oriented-circle-fiberings}
with the constraint $B\approx\Sphere 1$, which yields the following
injection via the Euler class:
\[
\bigsqcup_{\text{\ensuremath{E} closed surface}}\ClassFib_{+}\left(E,\Sphere 1\right)\quad\hookrightarrow\quad H^{2}\left(\Sphere 1;\mathbb{Z}\right)/\pi_{0}\Diff\left(\Sphere 1\right)=0.
\]
Thus there is only one candidate for the desired bundle classes $E\to\Sphere 1$;
i.e., the trivial one, which can be represented by the standard meridian
fibration $\StandardFibration\colon\Torus 2\to\Sphere 1$ on the 2-torus
$\Torus 2$. Therefore, we conclude that under our assumption on
$E$, the non-void cases of $\ClassFib_{+}\left(E,\Sphere 1\right)$
are exactly exhausted by $\ClassFib_{+}\bigl(\Torus 2,\Sphere 1\bigr)=\left\{ \StandardFibration\right\} $,
as desired.
\end{proof}

\subsection{The base transformation group}

Having obtained $\left\{ \StandardFibration\right\} $ as the complete
set of representatives from the preceding result, we next study its
symmetries. We first consider its basic transformation group $\Diff\left(B\right)_{\StandardFibration}$
and use \prettyref{cor:Base-transformations-of-an-oriented-circle-bundle}
to determine its homotopy type:
\begin{lem}
\label{lem:T2-case-base-transformations}The basic transformation
group of $\StandardFibration\colon\Torus 2\to\Sphere 1$ deformation
retracts onto its subgroup consisting of those that are affine:
\begin{equation}
\Diff\left(\Sphere 1\right)_{\StandardFibration}\simeq\CircleGroup\rtimes C_{2}.\label{eq:T2-case-base-transformations}
\end{equation}
More precisely, the identity component $\CircleGroup$ is embedded
as the subgroup consisting of translations on the base circle, while
the component group $C_{2}$ is generated by a distinguished reflection,
acting on $\CircleGroup$ by inversion.
\end{lem}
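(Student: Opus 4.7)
The plan is to reduce $\Diff\bigl(\Sphere 1\bigr)_{\StandardFibration}$ to the whole base diffeomorphism group $\Diff\bigl(\Sphere 1\bigr)$, and then to deformation retract the latter onto an affine model using the heat-flow machinery already established in the proof of \prettyref{prop:The-Vau-group-of-oriented-circle-bundle-I}. First, since the base is a circle, its second integral cohomology vanishes: $H^{2}\bigl(\Sphere 1;\mathbb{Z}\bigr)=0$. By \prettyref{cor:Base-transformations-of-an-oriented-circle-bundle}, every mapping class of $\Sphere 1$ stabilizes the (necessarily zero) Euler class $e_{\StandardFibration}$, and for the trivial fibration $\StandardFibration$ each base diffeomorphism $\beta\in\Diff\bigl(\Sphere 1\bigr)$ admits the explicit oriented lift $\widetilde{\beta}(x,y)\coloneqq(\beta(x),y)$, which acts as the identity on every fiber and hence preserves fiber orientations. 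Therefore
\[
\Diff\bigl(\Sphere 1\bigr)_{\StandardFibration}=\Diff\bigl(\Sphere 1\bigr),
\]
so the task is reduced to proving $\Diff\bigl(\Sphere 1\bigr)\simeq\CircleGroup\rtimes C_{2}$.

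Next, decompose $\Diff\bigl(\Sphere 1\bigr)=\Diff_{+}\bigl(\Sphere 1\bigr)\sqcup\Diff_{-}\bigl(\Sphere 1\bigr)$ into its two clopen components. On the identity component, I would directly invoke the smooth, $\CircleGroup$-equivariant deformation retraction $H\colon\Diff_{+}\bigl(\Sphere 1\bigr)\times[0,1]\to\Diff_{+}\bigl(\Sphere 1\bigr)$ constructed in Steps 2--3 of the proof of \prettyref{prop:The-Vau-group-of-oriented-circle-bundle-I}, whose terminal retract is exactly the translation subgroup $\CircleGroup\leq\Diff_{+}\bigl(\Sphere 1\bigr)$. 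To extend to the other component, fix the standard reflection $\iota(x)\coloneqq-x\in\Diff_{-}\bigl(\Sphere 1\bigr)$ and use the bijection $\Diff_{+}\bigl(\Sphere 1\bigr)\xrightarrow{\approx}\Diff_{-}\bigl(\Sphere 1\bigr)$, $h\mapsto\iota\circ h$, to transport $H$ to a deformation retraction of $\Diff_{-}\bigl(\Sphere 1\bigr)$ onto the coset $\iota\cdot\CircleGroup$. The two component-wise retractions assemble into a deformation retraction of $\Diff\bigl(\Sphere 1\bigr)$ onto $\CircleGroup\sqcup\iota\cdot\CircleGroup$, and this latter subgroup is closed under composition since $\iota\cdot\tau_{a}\cdot\iota=\tau_{-a}\in\CircleGroup$ for any translation $\tau_{a}$. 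Thus the retract carries the structure of the semidirect product $\CircleGroup\rtimes C_{2}$ with the nontrivial element of $C_{2}$ (represented by $\iota$) acting on $\CircleGroup$ by inversion, as asserted in \prettyref{eq:T2-case-base-transformations}.

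The only nontrivial technical point is ensuring compatibility across the two components: one must check that transporting $H$ by left-multiplication by $\iota$ yields a \emph{continuous} (indeed smooth) deformation on $\Diff_{-}\bigl(\Sphere 1\bigr)$ whose terminal image is a coset of $\CircleGroup$ that assembles consistently with the identity-component retraction. This is immediate from the fact that left-multiplication by the fixed element $\iota$ is a diffeomorphism of $\Diff\bigl(\Sphere 1\bigr)$ (hence a homotopy equivalence between its two components), and from the $\CircleGroup$-equivariance of $H$, which guarantees that the limiting subgroup structure on $\CircleGroup\sqcup\iota\cdot\CircleGroup$ is preserved. No further heat-flow analysis is required beyond what was already carried out in the proof of \prettyref{prop:The-Vau-group-of-oriented-circle-bundle-I}.
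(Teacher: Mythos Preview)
Your proof is correct and follows essentially the same approach as the paper: both first identify $\Diff\bigl(\Sphere 1\bigr)_{\StandardFibration}$ with the full $\Diff\bigl(\Sphere 1\bigr)$ (via triviality of $\StandardFibration$ or equivalently via \prettyref{cor:Base-transformations-of-an-oriented-circle-bundle} with $H^{2}\bigl(\Sphere 1;\mathbb{Z}\bigr)=0$), and then invoke the heat-flow retraction $\Diff_{+}\bigl(\Sphere 1\bigr)\simeq\CircleGroup$ from the proof of \prettyref{prop:The-Vau-group-of-oriented-circle-bundle-I}. Your treatment is slightly more explicit in spelling out the transport of the retraction to $\Diff_{-}\bigl(\Sphere 1\bigr)$ via left-multiplication by $\iota$, whereas the paper simply handles the identity component and the $\pi_{0}$-level separately without detailing the assembly; but this is an elaboration, not a different route.
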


\begin{proof}
This is clear on the level of identity components, where the desired
deformation retraction from $\Diff_{0}\left(\Sphere 1\right)$ onto
$\CircleGroup$ was already well known: e.g., by running the heat
flow as in the proof of \prettyref{prop:The-Vau-group-of-oriented-circle-bundle-I},
or by running the affine homotopy on its universal cover. As for the
$\pi_{0}$-level, we need to show that those components making up
$\Diff\left(\Sphere 1\right)_{\StandardFibration}$ attain the full
mapping class group $\pi_{0}\Diff\left(\Sphere 1\right)\cong C_{2}$.
This is already clear since $\StandardFibration$ is trivial (so that
all the basic transformations can lift to automorphisms). Alternatively,
we can apply \prettyref{cor:Base-transformations-of-an-oriented-circle-bundle}
to the (trivial) action of $\pi_{0}\Diff\left(\Sphere 1\right)\cong C_{2}$
on $H^{2}\left(\Sphere 1;\mathbb{Z}\right)=0$, for which the stabilizer
$\pi_{0}\Diff\left(\Sphere 1\right)_{\StandardFibration}$ clearly
attains the full group $C_{2}$, as desired.
\end{proof}

\subsection{The vertical automorphism group}

Having understood the basic transformations from the preceding result,
we next consider the vertical automorphism group $\Vau\left(\StandardFibration\right)$
and use \prettyref{cor:The-Vau-group-of-oriented-circle-bundle-II}
to determine its homotopy type:
\begin{lem}
\label{lem:T2-case-vertical-automorphisms}The vertical automorphism
group of $\StandardFibration\colon\Torus 2\to\Sphere 1$ deformation
retracts onto its subgroup consisting of those that are affine:
\begin{equation}
\Vau\left(\StandardFibration\right)\simeq\CircleGroup\times\mathbb{Z}.\label{eq:T2-case-vertical-automorphisms}
\end{equation}
More precisely, the identity component $\CircleGroup$ is embedded
as the subgroup consisting of uniform translations on all fibers,
while the component group $\mathbb{Z}$ is generated by the Dehn twist
along a distinguished fiber, acting on $\CircleGroup$ trivially.
\end{lem}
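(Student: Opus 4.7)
The plan is to specialize Corollary \prettyref{cor:The-Vau-group-of-oriented-circle-bundle-II} to the bundle $\StandardFibration\colon\Torus 2=\Sphere 1\times\Sphere 1\to\Sphere 1$, which is the trivial principal $\CircleGroup$-bundle. With $B=\Sphere 1$, the first integral cohomology is $H^{1}(\Sphere 1;\mathbb{Z})\cong\mathbb{Z}$, so the corollary immediately yields a deformation retraction $\Vau(\StandardFibration)\simeq\CircleGroup\times\mathbb{Z}$, establishing the abstract homotopy type in \prettyref{eq:T2-case-vertical-automorphisms}.

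To identify this retract with the affine subgroup $\Vau_{\Affine}(\StandardFibration)$, I will track the concrete embedding furnished by the proof of that corollary, which realizes $\CircleGroup\times H^{1}(B;\mathbb{Z})$ inside $\Vau(\StandardFibration)$ as a subgroup of $\MappingSpace^{\infty}(\Sphere 1,\CircleGroup)$ via parametrized fiberwise translations. First, the $\CircleGroup$-factor arises from the constant $\CircleGroup$-valued maps on $\Sphere 1$ and embeds as the uniform translations $(x,y)\mapsto(x,y+\theta)$, which are manifestly affine. Second, the component group $\mathbb{Z}\cong[\Sphere 1,\CircleGroup]$ is generated, under the degree isomorphism, by the identity map $\Identity_{\Sphere 1}\colon\Sphere 1\to\CircleGroup$; this embeds into $\Vau(\StandardFibration)$ as the vertical automorphism
\[
T\colon(x,y)\mapsto(x,y+x),
\]
which is the transvection $\bigl(\begin{smallmatrix}1 & 0\\1 & 1\end{smallmatrix}\bigr)\in\SL 2Z$ acting affinely on $\Torus 2$. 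Thus both factors of $\CircleGroup\times\mathbb{Z}$ land in $\Vau_{\Affine}(\StandardFibration)$. Moreover, $T$ is smoothly isotopic through $\Vau(\StandardFibration)$ to the classical Dehn twist $\tau_{F}$ supported in a tubular neighborhood of a distinguished fiber $F=\{x_{0}\}\times\Sphere 1$ (any two parametrized translations of the same degree are homotopic through degree-preserving ones), which identifies the component-group generator with a Dehn twist along a fiber as claimed.

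Finally, the product is \emph{direct} (not merely semidirect) because the ambient group $\MappingSpace^{\infty}(\Sphere 1,\CircleGroup)$ is abelian under pointwise addition, so the two subgroups $\CircleGroup$ of constant maps and $\mathbb{Z}$ of degree-$n$ translations commute — equivalently, $T$ centralizes the uniform translations, so the $\mathbb{Z}$-action on $\CircleGroup$ is trivial. The proof is essentially a bookkeeping specialization of Corollary \prettyref{cor:The-Vau-group-of-oriented-circle-bundle-II}; the only non-automatic step is the verification that the standard embedding of the retract lands in $\Vau_{\Affine}$, and this is transparent from the explicit formula $x\mapsto\theta+nx$ being an affine map $\Sphere 1\to\CircleGroup$.
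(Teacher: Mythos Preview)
Your proof is correct and follows essentially the same approach as the paper: both specialize \prettyref{cor:The-Vau-group-of-oriented-circle-bundle-II} with $B=\Sphere 1$ to obtain $\Vau(\StandardFibration)\simeq\CircleGroup\times H^{1}(\Sphere 1;\mathbb{Z})\cong\CircleGroup\times\mathbb{Z}$, and then identify the generator of the $\mathbb{Z}$-factor as the degree-$1$ parametrized translation $(x,y)\mapsto(x,y+x)$, which is the Dehn twist along a fiber. Your additional remarks (the explicit transvection matrix, the affineness of the embedded retract, and the abelianness forcing a direct product) are compatible elaborations of the same argument.
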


\begin{proof}
Applying \prettyref{cor:The-Vau-group-of-oriented-circle-bundle-II}
with $B=\Sphere 1$, we obtain a deformation retraction of $\Vau\left(\StandardFibration\right)$
onto $\CircleGroup\times H^{1}\left(\Sphere 1;\mathbb{Z}\right)\cong\CircleGroup\times\mathbb{Z}$.
We need to verify that this deformation retract as explicitly described
in \prettyref{cor:The-Vau-group-of-oriented-circle-bundle-II} agrees
with what is claimed here. This is already clear on the level of identity
components $\Vau_{0}\left(\StandardFibration\right)\simeq\CircleGroup$.
As for the $\pi_{0}$-level, the proposition tells us that $H^{1}\left(\Sphere 1;\mathbb{Z}\right)\cong\left[\Sphere 1,\CircleGroup\right]$
has each of its members being represented by a parametrized translation
$\Sphere 1\to\CircleGroup$ on the fibers. In other words, $H^{1}\left(\Sphere 1;\mathbb{Z}\right)\cong\mathbb{Z}$
encodes the degree of this circle map $\Sphere 1\to\CircleGroup$,
so that it is generated by the degree-$1$ map $e^{i\theta}\mapsto\theta$,
which in turn corresponds to the vertical automorphism that translates
the circle fiber over each $e^{i\theta}\in\Sphere 1$ by $\theta$
— this is exactly the Dehn twist as desired.
\end{proof}

\subsection{The ambient space and its diffeomorphism group}

So far, only the fiber structure and the base structure were concerned;
to proceed, we bring in the ambient structure. For the ambient 2-torus,
the homotopy type of its diffeomorphism group was known to Earle and
Eells in \cite{MR276999}:
\begin{thm}[\noun{Earle–Eells}]
\label{thm:T2-case-ambient-diffeomorphisms}The diffeomorphism group
of $\Torus 2$ admits the following deformation retract:
\begin{equation}
\Diff\left(\Torus 2\right)\simeq\Affine\left(\Torus 2\right)\cong\CircleGroup^{2}\rtimes\GL 2Z,\label{eq:T2-case-ambient-diffeomorphisms}
\end{equation}
where $\Affine\left(\Torus 2\right)$ is the affine diffeomorphism
group of the standard Euclidean $\Torus 2$.
\end{thm}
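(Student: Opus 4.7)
The plan is to establish the asserted homotopy equivalence by a three-step reduction, after which the topological machinery of \prettyref{sec:Infinite-dimensional-topology} promotes it to a strong deformation retraction.

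First I would identify the components. The natural $H_{1}$-representation yields a continuous homomorphism
\[
\Diff\left(\Torus 2\right)\to\mathrm{Aut}\left(H_{1}\left(\Torus 2;\mathbb{Z}\right)\right)\cong\GL 2Z,
\]
surjective because it is already surjective when restricted to the $\GL 2Z$-factor of $\Affine\left(\Torus 2\right)$, and with kernel exactly $\Diff_{0}\left(\Torus 2\right)$ (the classical mapping-class-group computation for the torus). The inclusion $\Affine\left(\Torus 2\right)\hookrightarrow\Diff\left(\Torus 2\right)$ therefore induces a bijection on $\pi_{0}$, and the task reduces to showing that $\Diff_{0}\left(\Torus 2\right)\simeq\CircleGroup^{2}$.

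Second I would split off the translation factor via the evaluation fibration. The smooth map $\mathrm{ev}_{0}\colon\Diff_{0}\left(\Torus 2\right)\to\Torus 2$, $f\mapsto f\left(0\right)$, is a smooth principal fibration whose fiber is the basepoint-fixing subgroup $\Diff_{0}\left(\Torus 2,0\right)$, and the subgroup $\CircleGroup^{2}$ of translations supplies a canonical smooth section. The bundle thus trivializes as $\Diff_{0}\left(\Torus 2\right)\approx\CircleGroup^{2}\times\Diff_{0}\left(\Torus 2,0\right)$, reducing the problem to proving that $\Diff_{0}\left(\Torus 2,0\right)$ is contractible.

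Third, for the contractibility, I would invoke the harmonic-map heat flow of Eells--Sampson with the standard flat metric on both source and target. Lifting $f\in\Diff_{0}\left(\Torus 2,0\right)$ to the universal cover gives $\tilde{f}\left(x\right)=x+u_{f}\left(x\right)$ with $u_{f}\colon\Torus 2\to\mathbb{R}^{2}$ smooth and $u_{f}\left(0\right)=0$, and because the target is flat the Eells--Sampson flow reduces to the linear heat equation on $u_{f}$. As $t\to\infty$ the flow converges to the unique harmonic map in the homotopy class of $f$; with flat target this limit lifts to an affine map of $\mathbb{R}^{2}$, and the normalisations $u_{f}\left(0\right)=0$ together with $\left[f\right]=\Identity$ in $\pi_{0}$ force that affine limit to be $\Identity_{\Torus 2}$. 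Reparametrising $t\in[0,\infty)$ to $s\in[0,1]$ yields a continuous contraction $H\colon\Diff_{0}\left(\Torus 2,0\right)\times[0,1]\to\Diff_{0}\left(\Torus 2,0\right)$ to the identity.

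The principal obstacle is verifying that the heat flow $f_{t}$ remains inside the open subset of diffeomorphisms for every $t\geq0$: one must rule out the formation of critical points of $f_{t}$, equivalently show that $I+Du_{f_{t}}$ never loses positive determinant along the flow. For the flat target this is a maximum-principle argument for the Jacobian of $f_{t}$ and constitutes the technical core of the Earle--Eells paper \cite{MR276999}. Once this monotonicity is in place, the weak equivalence $\Affine\left(\Torus 2\right)\hookrightarrow\Diff\left(\Torus 2\right)$ between closed ANR $\ell^{2}$-manifolds is promoted by \prettyref{thm:main-theorem-for-infinite-dimensional-topology} to the asserted strong deformation retraction.
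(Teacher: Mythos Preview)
The paper does not supply its own proof of this theorem: it is stated as a citation to Earle--Eells \cite{MR276999}, and the surrounding text only unpacks the isomorphism $\Affine\left(\Torus 2\right)\cong\CircleGroup^{2}\rtimes\GL 2Z$. So there is nothing in the paper to compare your argument against; you have gone further and sketched an actual proof.

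Your outline is broadly in the spirit of Earle--Eells, and Steps 1 and 2 are fine. Step 3 has a genuine slip. You write that the normalisation $u_{f}\left(0\right)=0$ forces the harmonic limit to be $\Identity_{\Torus 2}$, but the linear heat flow on $u_{f}$ does \emph{not} preserve the pointwise condition $u_{t}\left(0\right)=0$: the limit is the constant function equal to the average $\bar{u}_{f}=\int_{\Torus 2}u_{f}$, so $f_{t}$ converges to the translation by $\bar{u}_{f}$, not to the identity. As written, the flow exits $\Diff_{0}\left(\Torus 2,0\right)$ immediately and your contraction of that subgroup fails. The fix is easy and you have two options: drop Step~2 and run the heat flow directly on $\Diff_{0}\left(\Torus 2\right)$ to retract onto the translation subgroup $\CircleGroup^{2}$ (this is cleaner and matches the paper's use of the heat flow elsewhere, e.g.\ \prettyref{prop:The-Vau-group-of-oriented-circle-bundle-I}); or keep Step~2 but post-compose $f_{t}$ at each time with the translation sending $f_{t}\left(0\right)$ back to $0$. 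Either route closes the gap, and your identification of the Jacobian-positivity issue as the analytic core is correct.
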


Here, the affine structure of the Euclidean torus $\Torus 2=\mathbb{R}^{2}/\mathbb{Z}^{2}$
is induced from the standard Euclidean plane $\mathbb{R}^{2}$, with
respect to which the affine diffeomorphisms form the group $\Affine\left(\Torus 2\right)$.
In turn, this group can be described in terms of the linear structure
on the cover $\mathbb{R}^{2}$; more specifically, each affine diffeomorphism
on $\Torus 2$ has a unique representation by its lift to $\mathbb{R}^{2}$
of the form
\[
\bigl(x,y\bigr)\mapsto A\bigl(x,y\bigr)+\bigl(a,b\bigr)\qquad\text{with}\qquad\bigl(a,b\bigr)\in\mathbb{R}^{2},\ A\in\GL 2Z.
\]
This yields the desired isomorphism $\Affine\left(\Torus 2\right)\cong\CircleGroup^{2}\rtimes\GL 2Z$,
with the semidirect product corresponding to the canonical action
of $\GL 2Z$ on $\CircleGroup^{2}$. Our remaining goal is to use
this homotopy model of $\Diff\left(\Torus 2\right)$ to induce the
desired homotopy models of $\Aut\left(\StandardFibration\right)$
and of $\Fib\left(\StandardFibration\right)$; for this purpose, recall
that our model fibration $\StandardFibration\colon\Torus 2\to\Sphere 1$
is descended from the coordinate projection $\mathbb{R}^{2}\to\mathbb{R}$
given by $\bigl(x,y\bigr)\mapsto x$.

\subsection{The homotopy core of the automorphism group}

We first consider the group $\Aut\left(\StandardFibration\right)$
of automorphisms of $\StandardFibration$. By restricting (to $\Aut\left(\StandardFibration\right)$)
the preceding homotopy model $\Affine\left(\Torus 2\right)$ of $\Diff\left(\Torus 2\right)$,
we obtain the following candidate for a homotopy model of $\Aut\left(\StandardFibration\right)$:
\begin{equation}
\Aut\left(\StandardFibration\right)\hookleftarrow\Aut\left(\StandardFibration\right)\cap\Affine\left(\Torus 2\right)\cong\CircleGroup^{2}\rtimes\AffineGroupZ.\label{eq:T2-case-automorphisms-pre}
\end{equation}
Here, $\mathrm{Aff}\left(1,\mathbb{Z}\right)\coloneqq\mathbb{Z}^{1}\rtimes\GL 1Z=\mathbb{Z}\rtimes C_{2}$\nomenclature[AffnZ]{$\mathrm{Aff}\left(n,\mathbb{Z}\right)$}{the general affine group of degree $n$ over $\mathbb{Z}$ (i.e., $\mathbb{Z}^{n}\rtimes\GL nZ$)}
denotes the general affine group of degree 1 over $\mathbb{Z}$, which
can be viewed as the subgroup of $\GL 2Z$ consisting of matrices
of the form $\left(\begin{smallmatrix}\epsilon & 0\\
n & 1
\end{smallmatrix}\right)$ for $\epsilon=\pm1$ and $n\in\mathbb{Z}$. This leads us to the
following result, which asserts that the above embedding \prettyref{eq:T2-case-automorphisms-pre}
is indeed a homotopy equivalence:
\begin{thm}
\label{thm:T2-case-automorphisms}The automorphism group of $\StandardFibration\colon\Torus 2\to\Sphere 1$
deformation retracts onto its subgroup consisting of those that are
affine:
\begin{equation}
\Aut\left(\StandardFibration\right)\simeq\CircleGroup^{2}\rtimes\AffineGroupZ,\label{eq:T2-case-automorphisms}
\end{equation}
where $\CircleGroup^{2}\rtimes\AffineGroupZ$ is embedded as a subgroup
via \prettyref{eq:T2-case-automorphisms-pre}.
\end{thm}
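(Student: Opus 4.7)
The plan is to restrict the smooth principal fibration from \prettyref{thm:main-theorem-about-smooth-symmetry-fibration}
\[
\Vau\left(\StandardFibration\right)\hookrightarrow\Aut\left(\StandardFibration\right)\xrightarrow{\AutProjection}\Diff\left(\Sphere 1\right)_{\StandardFibration}
\]
to the affine subgroup, thereby obtaining a subfibration
\[
\left(\CircleGroup\times\mathbb{Z}\right)\hookrightarrow\left(\CircleGroup^{2}\rtimes\AffineGroupZ\right)\twoheadrightarrow\left(\CircleGroup\rtimes C_{2}\right),
\]
then to apply a five-lemma on the associated long exact sequences of homotopy groups, and finally to upgrade the resulting weak homotopy equivalence into a strong deformation retraction via the infinite-dimensional topology framework of \prettyref{thm:main-theorem-for-infinite-dimensional-topology}.

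First I would explicitly identify $\Aut(\StandardFibration)\cap\Affine(\Torus 2)$. An affine self-diffeomorphism of $\Torus 2$ lifts to $(x,y)\mapsto A(x,y)+(a,b)$ with $A\in\GL 2Z$ and $(a,b)\in\mathbb{R}^{2}$. Requiring that it preserve the oriented meridian fibration $(x,y)\mapsto x$ forces the upper-right entry of $A$ to be $0$ and, for fiberwise orientation, the lower-right entry to be $+1$; hence $A=\bigl(\begin{smallmatrix}\epsilon & 0\\ n & 1\end{smallmatrix}\bigr)$ with $\epsilon=\pm1$ and $n\in\mathbb{Z}$, giving $\Aut(\StandardFibration)\cap\Affine(\Torus 2)\cong\CircleGroup^{2}\rtimes\AffineGroupZ$. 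Under $\AutProjection$ such an element projects to the base affine map $x\mapsto\epsilon x+a$, hitting precisely the subgroup $\CircleGroup\rtimes C_{2}\subseteq\Diff(\Sphere 1)_{\StandardFibration}$ from \prettyref{lem:T2-case-base-transformations}; its kernel consists of the affine vertical maps $(x,y)\mapsto(x,nx+y+b)$, i.e.\ a uniform fiber rotation composed with $n$ Dehn twists, matching the subgroup $\CircleGroup\times\mathbb{Z}\subseteq\Vau(\StandardFibration)$ from \prettyref{lem:T2-case-vertical-automorphisms}.

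Next I would compare the two principal fibrations via the ladder of long exact sequences of homotopy groups. The outer vertical inclusions $\CircleGroup\times\mathbb{Z}\hookrightarrow\Vau(\StandardFibration)$ and $\CircleGroup\rtimes C_{2}\hookrightarrow\Diff(\Sphere 1)_{\StandardFibration}$ are weak homotopy equivalences by the lemmas just cited, and the distinguished generators (the uniform rotations, the Dehn twist, the base reflection) are all realized by bona fide affine maps, so the connecting homomorphisms are compatible with the inclusions. The five-lemma then forces the middle inclusion $\CircleGroup^{2}\rtimes\AffineGroupZ\hookrightarrow\Aut(\StandardFibration)$ to be a weak homotopy equivalence.

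Finally, $\Aut(\StandardFibration)$ is a separable Fréchet (hence $\ell^{2}$-) manifold by \prettyref{thm:main-theorem-about-smooth-symmetry-fibration}, while $\CircleGroup^{2}\rtimes\AffineGroupZ$ embeds as a closed finite-dimensional submanifold, in particular a closed ANR. So \prettyref{thm:main-theorem-for-infinite-dimensional-topology} upgrades the weak homotopy equivalence to the asserted strong deformation retraction. The main delicacy lies in the five-lemma step: one must verify that the deformation retractions provided by \prettyref{lem:T2-case-vertical-automorphisms} and \prettyref{lem:T2-case-base-transformations} are mutually compatible with the affine subfibration. Since the heat-flow argument behind \prettyref{prop:The-Vau-group-of-oriented-circle-bundle-I} retracts fiber-translation cocycles to uniform rotations, staying affine, and the retraction on $\Diff_{0}(\Sphere 1)$ lands in rigid rotations, these affine witnesses of the outer homotopy equivalences line up over a common local section, so the compatibility required by the five-lemma follows.
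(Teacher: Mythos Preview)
Your argument is correct and follows essentially the same route as the paper: set up the ladder of fibrations with $\Vau(\StandardFibration)\hookrightarrow\Aut(\StandardFibration)\twoheadrightarrow\Diff(\Sphere 1)_{\StandardFibration}$ on top and its affine restriction below, invoke \prettyref{lem:T2-case-vertical-automorphisms} and \prettyref{lem:T2-case-base-transformations} for the outer equivalences, apply the five lemma, and promote via the $\ell^{2}$-manifold machinery. Your final paragraph on ``compatibility'' is unnecessary worry: once the square of fibrations commutes (which it does, since every map is either an inclusion or the restriction of $\AutProjection$), naturality of the long exact sequence handles the connecting homomorphisms automatically---no further alignment of the specific deformation retractions is required.
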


\begin{proof}
Consider the surjective map
\[
\CircleGroup^{2}\rtimes\mathrm{Aff}\left(1,\mathbb{Z}\right)\twoheadrightarrow\CircleGroup\rtimes C_{2},\qquad\left(\left(\begin{smallmatrix}a\\
b
\end{smallmatrix}\right),\left(\begin{smallmatrix}\epsilon & 0\\
n & 1
\end{smallmatrix}\right)\right)\mapsto\left(a,\epsilon\right).
\]
This projection yields a finite-dimensional fibration (resp., extension)
that is embedded into our infinite-dimensional fibration (resp., extension)
$\Vau\left(\StandardFibration\right)\hookrightarrow\Aut\left(\StandardFibration\right)\twoheadrightarrow\Diff\left(B\right)_{\StandardFibration}$,
as in the following commutative diagram:
\[
\xymatrix{\Vau\left(\StandardFibration\right)\ar@{^{(}->}[r] & \Aut\left(\StandardFibration\right)\ar@{->>}[r] & \Diff\left(B\right)_{\StandardFibration}\\
\CircleGroup\times\mathbb{Z}\ar@{^{(}->}[r]\ar@{^{(}->}[u] & \CircleGroup^{2}\rtimes\mathrm{Aff}\left(1,\mathbb{Z}\right)\ar@{->>}[r]\ar@{^{(}->}[u] & \CircleGroup\rtimes C_{2}\ar@{^{(}->}[u]
}
.
\]
Here, the right, left, and middle vertical arrows are given by the
embeddings \prettyref{eq:T2-case-base-transformations}, \prettyref{eq:T2-case-vertical-automorphisms},
and \prettyref{eq:T2-case-automorphisms-pre}, respectively; in other
words, they are embedded as the ``affine models'' of $\Diff\left(B\right)_{\StandardFibration}$,
$\Vau\left(\StandardFibration\right)$, and $\Aut\left(\StandardFibration\right)$,
from which we see that the diagram indeed commutes.

Turning to homotopy types, we use the (surjective) fibration structures
on both rows to induce two long exact sequences of homotopy groups.
Since by \prettyref{lem:T2-case-base-transformations} and \prettyref{lem:T2-case-vertical-automorphisms}
the right embedding of $\CircleGroup\rtimes C_{2}$ into $\Diff\left(B\right)_{\StandardFibration}$
and the left embedding of $\CircleGroup\times\mathbb{Z}$ into $\Vau\left(\StandardFibration\right)$
are both homotopy equivalences, we can deduce by the five lemma that
the middle embedding of ${\CircleGroup^{2}\rtimes\AffineGroupZ}$
into $\Aut\left(\StandardFibration\right)$ is a homotopy equivalence
too, as desired. Lastly, the promotion to deformation retraction
follows from \prettyref{prop:Promoting-weak-homotopy-type}, which
completes the proof of \prettyref{thm:T2-case-automorphisms}.
\end{proof}

\subsection{The homotopy core of the moduli space}

Finally, we turn to our main object of study: the moduli space $\Fib\left(\StandardFibration\right)$
of fiberings modeled on $\StandardFibration$. By projecting (to $\Fib\left(\StandardFibration\right)$)
the preceding homotopy model $\Affine\left(\Torus 2\right)$ of $\Diff\left(\Torus 2\right)$,
we obtain the following candidate for a homotopy model of $\Fib\left(\StandardFibration\right)$:
\begin{equation}
\Fib\left(\StandardFibration\right)\hookleftarrow\Affine\left(\Torus 2\right)\cdot\left\{ \StandardFibration\right\} \approx\ZPrim 2.\label{eq:T2-case-space-of-fiberings-pre}
\end{equation}
Here, $\ZPrim 2$\nomenclature[ZPn]{$\ZPrim n$}{the discrete space of all setwise-coprime $n$-tuples of integers}
denotes the discrete space of coprime pairs of integers, which can
be viewed as the homogeneous space of $\GL 2Z$ consisting of vectors
$\left(p,q\right)\in\ZPrim 2$ in the orbit of $\left(0,1\right)$.
By construction, $\ZPrim 2$ is embedded in $\Fib\left(\StandardFibration\right)$
as the subspace of (oriented) \emph{rational linear fiberings} on
$\Torus 2$, which are those that are \emph{affinely equivalent}
to the standard one $\StandardFibration$ (as characterized by the
``slope'' ranging in $\ZPrim 2$). Therefore, proving that the above
embedding \prettyref{eq:T2-case-space-of-fiberings-pre} is indeed
a homotopy equivalence will complete the final step of our proof of
\prettyref{thm:T2-case-main-theorem}:
\begin{proof}[Proof of \prettyref{thm:T2-case-main-theorem}]
The claim regarding classification was proved in \prettyref{lem:T2-case-classification},
which in particular shows that $\Fib_{+}\left(\Torus 2,\Sphere 1\right)=\Fib\left(\StandardFibration\right)$.
In turn, to determine the homotopy type of $\Fib\left(\StandardFibration\right)$,
consider the surjective map
\[
\CircleGroup^{2}\rtimes\GL 2Z\twoheadrightarrow\ZPrim 2,\qquad\left(\left(\begin{smallmatrix}a\\
b
\end{smallmatrix}\right),\left(\begin{smallmatrix}m & p\\
n & q
\end{smallmatrix}\right)\right)\mapsto\left(\begin{smallmatrix}m & p\\
n & q
\end{smallmatrix}\right)\left(\begin{smallmatrix}0\\
1
\end{smallmatrix}\right)=\left(\begin{smallmatrix}p\\
q
\end{smallmatrix}\right).
\]
This projection yields a finite-dimensional fibration that is embedded
into our infinite-dimensional fibration $\Aut\left(\StandardFibration\right)\to\Diff\left(\Torus 2\right)\to\Fib\left(\StandardFibration\right)$,
as in the following commutative diagram:
\[
\xymatrix{\Aut\left(\StandardFibration\right)\ar@{^{(}->}[r] & \Diff\left(\Torus 2\right)\ar@{->>}[r] & \Fib\left(\StandardFibration\right)\\
\CircleGroup^{2}\rtimes\mathrm{Aff}\left(1,\mathbb{Z}\right)\ar@{^{(}->}[r]\ar@{^{(}->}[u] & \CircleGroup^{2}\rtimes\GL 2Z\ar@{->>}[r]\ar@{^{(}->}[u] & \ZPrim 2\ar@{^{(}->}[u]
}
,
\]
Here, the middle, left, and right vertical arrows are given by the
embeddings \prettyref{eq:T2-case-ambient-diffeomorphisms}, \prettyref{eq:T2-case-automorphisms},
and \prettyref{eq:T2-case-space-of-fiberings-pre}, respectively;
in other words, they are embedded as the ``affine models'' of $\Diff\left(\Torus 2\right)$,
$\Aut\left(\StandardFibration\right)$, and $\Fib\left(\StandardFibration\right)$,
from which we see that the diagram indeed commutes.

Turning to homotopy types, we use the (surjective) fibration structures
on both rows to induce two long exact sequences of homotopy groups.
Since by \prettyref{thm:T2-case-ambient-diffeomorphisms} and \prettyref{thm:T2-case-automorphisms}
the middle embedding of $\CircleGroup^{2}\rtimes\GL 2Z$ into $\Diff\left(\Torus 2\right)$
and the left embedding of $\CircleGroup^{2}\rtimes\mathrm{Aff}\left(1,\mathbb{Z}\right)$
into $\Aut\left(\StandardFibration\right)$ are both homotopy equivalences,
we can deduce by the five lemma that the right embedding of $\ZPrim 2$
into $\Fib\left(\StandardFibration\right)$ is a homotopy equivalence
too, as desired. Lastly, the promotion to deformation retraction follows
from \prettyref{prop:Promoting-weak-homotopy-type}, which completes
the proof of \prettyref{thm:T2-case-main-theorem}.
\end{proof}

\section{Case Study: Dimension Three\label{sec:Case-study-dimension3}}

We specialize the preceding study in \prettyref{sec:oriented-circle-fiberings}
to the case of closed, orientable, 3-dimensional total space $E$.

\subsection{The case of flat geometry}

First, let us consider those total spaces $E$ with \emph{flat} geometry,
which serves as a natural generalization of the previous 2-torus case.
Our goal is the following theorem, which completely determines the
homotopy type (hence topological type too) of the moduli space $\Fib_{+}\left(E,\Sphere 1\right)$
of smooth oriented circle fiberings on $E$ in this case:
\begin{thm}[Space of oriented circle fiberings on flat 3-manifolds]
\label{thm:T3-case-main-theorem}The only closed, orientable, flat
3-manifold that admits smooth oriented circle fiberings is the 3-torus
$\Torus 3$, on which all such fiberings form a single equivalence
class modeled on the standard meridian fibration $\StandardFibration_{0}\colon\Torus 3\to\Torus 2$,
whose corresponding moduli space of fiberings $\Fib\left(\StandardFibration_{0}\right)$
admits a deformation retraction onto a discrete space $\ZPrim 3$
of (setwise) coprime triples of integers:
\[
\Fib_{+}\left(\Torus 3,\Sphere 1\right)=\Fib\left(\StandardFibration_{0}\right)\simeq\ZPrim 3.
\]
More precisely, this deformation retract consists of those fiberings
that are affinely equivalent to $\StandardFibration_{0}$, which is
identified with $\ZPrim 3$ by extracting the ``slope''.
\end{thm}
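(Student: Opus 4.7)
The plan is to follow the template set by the proof of \prettyref{thm:T2-case-main-theorem}, exploiting the standard affine structure on $\Torus 3$ as the ambient rigidifier and assembling the pieces via two applications of the five lemma on the two structural fibrations of \prettyref{chap:Differential-Topology}.

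I would first address the classification claim. By \prettyref{prop:Classifying-oriented-circle-fiberings}, each fibering class over a closed surface base $B$ corresponds to an orbit in $H^{2}(B;\mathbb{Z})/\mathrm{Mod}(B)$; to pin down those whose total space is a closed orientable flat 3-manifold I would argue via the Thurston/Seifert classification of $\Sphere 1$-bundles over surfaces: bases $\Sphere 2$ yield lens spaces $\Lens e1$ or $\Sphere 2\times\Sphere 1$ (spherical or $\Sphere 2\times\mathbb{R}$ geometry), higher-genus bases yield $\SLtilde$- or $\mathbb{H}^{2}\times\mathbb{R}$-geometric total spaces, while $\Torus 2$-bases yield $\Nil$ geometry for nonzero Euler class and flat geometry (the 3-torus) exactly for Euler class zero. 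Hence the only candidate is the trivial oriented circle bundle over $\Torus 2$, so $\Fib_{+}(\Torus 3,\Sphere 1)=\Fib(\StandardFibration_{0})$ with $\StandardFibration_{0}\colon\Torus 3\to\Torus 2$, and \prettyref{cor:Base-transformations-of-an-oriented-circle-bundle} gives $\Diff(\Torus 2)_{\StandardFibration_{0}}=\Diff(\Torus 2)$ since $e_{\StandardFibration_{0}}=0$.

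I would next compute the homotopy types of the three relevant symmetry groups, each rigidified into its ``affine model''. By \prettyref{thm:T2-case-ambient-diffeomorphisms} (Earle--Eells) we already have $\Diff(\Torus 2)_{\StandardFibration_{0}}\simeq\Affine(\Torus 2)\cong\CircleGroup^{2}\rtimes\GL 2Z$. By \prettyref{cor:The-Vau-group-of-oriented-circle-bundle-II}, $\Vau(\StandardFibration_{0})\simeq\CircleGroup\times H^{1}(\Torus 2;\mathbb{Z})\cong\CircleGroup\times\mathbb{Z}^{2}$, realized by uniform fiber translations together with Dehn twists along the two generating base circles. For the ambient diffeomorphism group, I would invoke the 3-dimensional analog of Earle--Eells for the torus---due to Hatcher and completed via Ricci flow by Bamler--Kleiner \cite{MR4536904}---yielding $\Diff(\Torus 3)\simeq\Affine(\Torus 3)\cong\CircleGroup^{3}\rtimes\GL 3Z$, where each affine self-diffeomorphism has a unique lift $v\mapsto Av+b$ to $\mathbb{R}^{3}$ with $A\in\GL 3Z$ and $b\in\mathbb{R}^{3}$.

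With these in hand, I would first assemble $\Aut(\StandardFibration_{0})$ by restriction. The affine automorphisms of $\StandardFibration_{0}$ form the subgroup $\CircleGroup^{3}\rtimes\mathrm{Aff}(2,\mathbb{Z})$, where $\mathrm{Aff}(2,\mathbb{Z})=\mathbb{Z}^{2}\rtimes\GL 2Z$ embeds in $\GL 3Z$ as block matrices $\left(\begin{smallmatrix}A & 0\\ n & 1\end{smallmatrix}\right)$---these are exactly the oriented fibering-stabilizers of the coordinate projection $(x,y,z)\mapsto(x,y)$. Applying the five lemma to the long exact homotopy sequence of the principal fibration $\Vau(\StandardFibration_{0})\hookrightarrow\Aut(\StandardFibration_{0})\twoheadrightarrow\Diff(\Torus 2)_{\StandardFibration_{0}}$ (\prettyref{thm:main-theorem-about-smooth-symmetry-fibration}), compared against its affine counterpart $(\CircleGroup\times\mathbb{Z}^{2})\hookrightarrow(\CircleGroup^{3}\rtimes\mathrm{Aff}(2,\mathbb{Z}))\twoheadrightarrow(\CircleGroup^{2}\rtimes\GL 2Z)$, the two outer homotopy equivalences force the middle one, giving a weak equivalence $\CircleGroup^{3}\rtimes\mathrm{Aff}(2,\mathbb{Z})\xhookrightarrow{\simeq}\Aut(\StandardFibration_{0})$.

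Finally, I would project down to $\Fib(\StandardFibration_{0})$. The $\Affine(\Torus 3)$-orbit of $\StandardFibration_{0}$ exhausts the rational linear fiberings on $\Torus 3$, each labeled by the primitive integer triple $(p,q,r)\in\ZPrim 3$ spanning its fiber direction; since $\GL 3Z$ acts transitively on $\ZPrim 3$ with stabilizer of $(0,0,1)^{\top}$ equal to $\mathrm{Aff}(2,\mathbb{Z})$, this exhibits $\ZPrim 3\leftrightarrow\GL 3Z/\mathrm{Aff}(2,\mathbb{Z})$ as a discrete subspace of the moduli. A second five-lemma comparison on the fibration $\Aut(\StandardFibration_{0})\hookrightarrow\Diff(\Torus 3)\twoheadrightarrow\Fib(\StandardFibration_{0})$ (\prettyref{thm:main-theorem-about-smooth-D-fibration}) against its affine counterpart promotes the previously established equivalences on $\Aut$ and $\Diff(\Torus 3)$ to a weak equivalence $\ZPrim 3\xhookrightarrow{\simeq}\Fib(\StandardFibration_{0})$; invoking \prettyref{prop:Promoting-weak-homotopy-type} strengthens this to the asserted deformation retraction, as $\ZPrim 3$ is a closed $0$-submanifold with compact (singleton) components. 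The principal obstacle I anticipate is the reliance on the 3-dimensional Smale-type input $\Diff(\Torus 3)\simeq\Affine(\Torus 3)$; modulo that, everything else is a direct transcription of the 2-torus argument with one extra slot of coprime integer data.
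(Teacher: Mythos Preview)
Your proposal is correct and follows essentially the same approach as the paper: the paper itself presents only the two commutative diagrams (the affine rigidification of the $\Vau\hookrightarrow\Aut\twoheadrightarrow\Diff(B)_{\StandardFibration_{0}}$ and $\Aut\hookrightarrow\Diff(\Torus 3)\twoheadrightarrow\Fib(\StandardFibration_{0})$ sequences) and then explicitly omits the detailed justifications as being ``essentially the same as the 2-torus case,'' which is exactly the template you carry out. Your identification of the input $\Diff(\Torus 3)\simeq\Affine(\Torus 3)$ as the one nontrivial external ingredient is apt; note that for $\Torus 3$ this was already available from the Haken case prior to the Ricci-flow work.
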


As before, we need to justify that $\Fib\left(\StandardFibration_{0}\right)$
is indeed the only non-void case, and that its homotopy type is indeed
as claimed. The essence of our proof for the latter claim is to homotopy-equivalently
rigidify the various (infinite-dimensional) transformation groups
of $\StandardFibration_{0}$, by means of exploiting a suitable structure
on the ambient space — in the current case, the standard \emph{affine structure}
on $\Torus 3$. This scheme is succinctly summarized in the following
two commutative diagrams:
\[
\xymatrix{\Vau\left(\StandardFibration_{0}\right)\ar@{^{(}->}[r] & \Aut\left(\StandardFibration_{0}\right)\ar@{->>}[r] & \Diff\left(\Torus 2\right)_{\StandardFibration_{0}}\\
\Vau_{\Affine}\left(\StandardFibration_{0}\right)\ar@{^{(}->}[r]\ar@{^{(}->}[u]^{\simeq} & \Aut_{\Affine}\left(\StandardFibration_{0}\right)\ar@{->>}[r]\ar@{^{(}..>}[u]^{\simeq} & \Diff_{\Affine}\left(\Torus 2\right)_{\StandardFibration_{0}}\ar@{^{(}->}[u]^{\simeq}\\
\CircleGroup\times\mathbb{Z}^{2}\ar@{^{(}->}[r]\ar@{=}[u]^{\cong} & \CircleGroup^{3}\rtimes\mathrm{Aff}\left(2,\mathbb{Z}\right)\ar@{->>}[r]\ar@{=}[u]^{\cong} & \CircleGroup^{2}\rtimes\GL 2Z\ar@{=}[u]^{\cong}
}
\]
and
\[
\xymatrix{\Aut\left(\StandardFibration_{0}\right)\ar@{^{(}->}[r] & \Diff\left(\Torus 3\right)\ar@{->>}[r] & \Fib\left(\StandardFibration_{0}\right)\\
\Aut_{\Affine}\left(\StandardFibration_{0}\right)\ar@{^{(}->}[r]\ar@{^{(}->}[u]^{\simeq} & \Affine\left(\Torus 3\right)\ar@{->>}[r]\ar@{^{(}->}[u]^{\simeq} & \Fib_{\Affine}\left(\StandardFibration_{0}\right)\ar@{^{(}..>}[u]^{\simeq}\\
\CircleGroup^{3}\rtimes\mathrm{Aff}\left(2,\mathbb{Z}\right)\ar@{^{(}->}[r]\ar@{=}[u]^{\cong} & \CircleGroup^{3}\rtimes\GL 3Z\ar@{->>}[r]\ar@{=}[u]^{\cong} & \ZPrim 3\ar@{=}[u]^{\cong}
}
\]
Since the proof for the case of flat geometry is essentially the same
as the 2-torus case, we shall omit the detailed justifications and
turn to other geometries.

\subsection{The case of elliptic geometry}

Next, let us consider those total spaces $E$ with \emph{elliptic}
geometry, which serves as our first example of the ``twisted'' case.
Our goal is the following theorem, which completely determines the
homotopy type (hence topological type too) of the moduli space $\Fib_{+}\left(E,\Sphere 1\right)$
of smooth oriented circle fiberings on $E$ in this case:
\begin{thm}[Space of oriented circle fiberings on elliptic 3-manifolds]
\label{thm:elliptic-case-main-theorem}The only closed, elliptic
3-manifolds that admit smooth oriented circle fiberings are the lens
spaces $\Lens e1$ for $e>0$, on each of which all such fiberings
form a single equivalence class modeled on the standard Hopf fibration
$\StandardFibration_{e}\colon\Lens e1\to\Sphere 2$, whose corresponding
moduli space of fiberings $\Fib\left(\StandardFibration_{e}\right)$
admits a deformation retraction onto a pair of disjoint 2-spheres
if $e=1,2$, and a discrete space of two elements otherwise:
\[
\Fib_{+}\left(\Lens e1,\Sphere 1\right)=\Fib\left(\StandardFibration_{e}\right)\simeq\begin{cases}
\Sphere 2\sqcup\Sphere 2 & \text{if \ensuremath{e=1,2}},\\
\Sphere 0 & \text{if \ensuremath{e\geq3}}.
\end{cases}
\]
More precisely, this deformation retract consists of those fiberings
that are congruent to $\StandardFibration_{e}$ (aka. the ``Hopf
fiberings''), which is identified with $\Sphere 2\sqcup\Sphere 2$
by extracting the ``direction'' and ``chirality'' if $e=1,2$,
and is the oppositely-oriented pair $\left\{ \StandardFibration_{e},\StandardFibration_{-e}\right\} $
otherwise.
\end{thm}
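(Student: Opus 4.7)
The plan is to mirror the strategy executed for the flat case and the $2$-torus case, with the ``affine'' model for deformation retract replaced by the ``isometric'' (Hopf) model coming from the round metric on $S^3$ and its quotients. First, I would run the classification machinery of \prettyref{prop:Classifying-oriented-circle-fiberings}: a smooth oriented circle fibering on a closed orientable 3-manifold $E$ must have as base a closed orientable surface $\Sigma_g$, so its fibering class is determined by an Euler class $e_{\StandardFibration}\in H^2(\Sigma_g;\mathbb{Z})\cong\mathbb{Z}$ modulo pullback by $\mathrm{Mod}(\Sigma_g)$. Among the possible total spaces, I single out those carrying an elliptic geometry: since the total space of such an $S^1$-bundle has universal cover $S^3$ exactly when $g=0$ and $e\neq 0$ (while $e=0$ yields $S^2\times S^1$ with $S^2\times\mathbb{R}$ geometry), this forces $B\approx\Sphere 2$ and $E\approx\Lens e1$ with $e\geq 1$ (after possibly reversing orientation of the base, which merges $e$ and $-e$ into a single fibering class). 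Uniqueness of the fibering class for each $e$ then follows from \prettyref{cor:Base-transformations-of-an-oriented-circle-bundle} together with the fact that $\mathop{\mathrm{Mod}}\left(\Sphere 2\right)\cong C_{2}$ acts on $H^{2}\left(\Sphere 2;\mathbb{Z}\right)\cong\mathbb{Z}$ by sign.

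Having pinned down $\StandardFibration_{e}$ as the unique model, I would compute its symmetry groups. From \prettyref{cor:Base-transformations-of-an-oriented-circle-bundle} the basic transformation group $\Diff\left(\Sphere 2\right)_{\StandardFibration_{e}}$ consists of those components of $\Diff\left(\Sphere 2\right)$ fixing $\pm e\in H^2$, hence is all of $\Diff\left(\Sphere 2\right)$, which by Smale \cite{MR112149} deformation retracts onto $\SOrthogonal 3\cdot\left\{ \Identity,\sigma\right\}\simeq\Orthogonal 3$. From \prettyref{cor:The-Vau-group-of-oriented-circle-bundle-II}, $\Vau\left(\StandardFibration_{e}\right)\simeq\CircleGroup\times H^{1}\left(\Sphere 2;\mathbb{Z}\right)\cong\CircleGroup$, given by simultaneous fiber rotations. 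Splicing these into the extension $\Vau\left(\StandardFibration_{e}\right)\hookrightarrow\Aut\left(\StandardFibration_{e}\right)\twoheadrightarrow\Diff\left(\Sphere 2\right)_{\StandardFibration_{e}}$ and using the five-lemma, I expect $\Aut\left(\StandardFibration_{e}\right)$ to deformation retract onto the isometric automorphism group, which (viewing $\Lens e1=S^3/\mathbb{Z}_{e}$ with its round metric and $\StandardFibration_{e}$ as the descended Hopf fibration) is a closed subgroup $G_e$ of $\Isometry(\Lens e1)$ whose identity component is the maximal torus $\CircleGroup^{2}\subseteq\SOrthogonal 4/\mathbb{Z}_{e}$.

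Next, I would bring in the ambient diffeomorphism group. By the generalized Smale conjecture as proved by Hatcher \cite{MR701256} for $e=1$, by Ivanov \cite{MR624946} and Bamler--Kleiner \cite{MR4536904} for $e=2$, and by Hong--Kalliongis--McCullough--Rubinstein \cite{MR2976322} for $e\geq 3$, the inclusion $\Isometry(\Lens e1)\hookrightarrow\Diff(\Lens e1)$ is a homotopy equivalence. Consequently the Hopf-fibering orbit $\Isometry(\Lens e1)\cdot\StandardFibration_{e}\subseteq\Fib(\StandardFibration_{e})$ is a finite-dimensional closed submanifold, homeomorphic to $\Isometry(\Lens e1)/G_e$. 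A dimension/component count using the fibration $G_e\hookrightarrow\Isometry(\Lens e1)\twoheadrightarrow\Isometry(\Lens e1)/G_e$ then produces the asserted topological type of this orbit: for $e=1,2$ the isometry group is sufficiently large (containing full $\SOrthogonal 4$-symmetry up to quotient), so the orbit has two components—one per chirality of the Hopf fibration—each a copy of $\Sphere 2$ parametrizing the ``direction''; while for $e\geq 3$ the isometry group shrinks so that the orbit collapses onto just the pair $\{\StandardFibration_{e},\StandardFibration_{-e}\}\approx\Sphere 0$.

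Finally, I assemble these pieces in the diagram
\[
\xymatrix{\Aut\left(\StandardFibration_{e}\right)\ar@{^{(}->}[r] & \Diff\left(\Lens e1\right)\ar@{->>}[r]^-{\DiffProjection} & \Fib\left(\StandardFibration_{e}\right)\\
G_{e}\ar@{^{(}->}[r]\ar@{^{(}->}[u]^{\simeq} & \Isometry\left(\Lens e1\right)\ar@{^{(}->}[u]^{\simeq}\ar@{->>}[r] & \Isometry\left(\Lens e1\right)/G_{e}\ar@{^{(}.>}[u]
}
\]
and apply the five-lemma on the induced long exact sequences of homotopy groups to conclude that the bottom-right embedding is a weak equivalence. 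The promotion to a genuine strong deformation retract of $\Fib\left(\StandardFibration_{e}\right)$ onto its Hopf core is then delivered by \prettyref{prop:Promoting-weak-homotopy-type}, since the Hopf core is a closed compact submanifold of the Fréchet $\ell^{2}$-manifold $\Fib\left(\StandardFibration_{e}\right)$. The principal obstacle I anticipate is the verification that the relevant Smale-type input is available and packaged in the right equivariant form—in particular, that in the borderline case $e=2$ one can genuinely invoke the Bamler--Kleiner theorem to match homotopy types at the total-space level with the isometric subgroup at the symmetry level, and that for $e\geq 3$ the mapping-class contribution is captured correctly so that the two sign-reversed Hopf fiberings remain in distinct components.
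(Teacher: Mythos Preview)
Your overall architecture matches the paper's exactly: classification via Euler class, computation of $\Vau$ and the base transformation group, identification of the isometric model for $\Aut$, invocation of the Smale-type theorem for $\Diff\left(\Lens e1\right)$, and a five-lemma comparison between the isometric fibration and the full one, finished off with \prettyref{prop:Promoting-weak-homotopy-type}. However, two concrete computations go wrong, and they do not cancel.

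First, the base transformation group. You assert that $\Diff\left(\Sphere 2\right)_{\StandardFibration_{e}}$ consists of the components ``fixing $\pm e$,'' hence is all of $\Diff\left(\Sphere 2\right)\simeq\Orthogonal 3$. But \prettyref{cor:Base-transformations-of-an-oriented-circle-bundle} (in the \emph{oriented} setting you are working in) requires $\beta^{\ast}e_{\StandardFibration}=e_{\StandardFibration}$ exactly, not up to sign. An orientation-reversing $\beta\in\Diff\left(\Sphere 2\right)$ sends $e$ to $-e\neq e$, so $\pi_{0}\Diff\left(\Sphere 2\right)_{\StandardFibration_{e}}$ is trivial and $\Diff\left(\Sphere 2\right)_{\StandardFibration_{e}}\simeq\SOrthogonal 3$, as in \prettyref{lem:RP3-case-base-transformations}. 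The orientation-reversal of the base is precisely what relates $\StandardFibration_{e}$ and $\StandardFibration_{-e}$ as distinct \emph{oriented} fiberings, and is what eventually accounts for the second component of the core.

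Second, and consequently, the isometric automorphism group $G_{e}$ is much larger than a maximal torus. Since the base transformations $\SOrthogonal 3$ lift to isometric (oriented) automorphisms of $\StandardFibration_{e}$, the identity component of $G_{e}$ is $4$-dimensional: $\Sphere 3\tildetimes\Sphere 1$ for $e=1$, $\SOrthogonal 3\times\CircleGroup$ for $e\geq 2$ (cf.\ \prettyref{thm:RP3-case-automorphisms}). With your $\CircleGroup^{2}$ the quotient $\Isometry\left(\Lens e1\right)/G_{e}$ would be $4$-dimensional for $e=1,2$, not the claimed $\Sphere 2\sqcup\Sphere 2$. Once you correct $G_{e}$, the dimension and component counts go through: for $e=1,2$ one gets $\left(\dim 6\right)-\left(\dim 4\right)=2$ with two components from the $C_{2}$ swapping chirality; for $e\geq 3$ the isometry group drops to dimension $4$ and the quotient is the two-point set $\left\{\StandardFibration_{e},\StandardFibration_{-e}\right\}$. (Also, your citation for $e=2$ is off: \cite{MR624946} is Hatcher on $\Sphere 1\times\Sphere 2$; the relevant input is Bamler--Kleiner \cite{bamler2019ricci}.)
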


In what follows, we shall justify that $\Fib\left(\StandardFibration_{e}\right)$
for $e>0$ are indeed the only non-void cases, and that their homotopy
types are indeed as claimed. The essence of our proof for the latter
claim is to homotopy-equivalently rigidify the various (infinite-dimensional)
transformation groups of $\StandardFibration$, by means of exploiting
a suitable structure on the ambient space — in the current case, the
standard \emph{metric structure} on $\Lens e1$. We shall focus on
the case $e=2$ (i.e., $E\approx\RP 3$, the real projective 3-space),\footnote{On the other hand, the case $e=1$ (i.e., $E\approx\Sphere 3$) is
treated in a joint work \cite{deturck2025homotopytypespacefibrations}
with a more concrete, classical approach; while the remaining cases
$e\geq3$ are similar and turn out to be simpler (with the spaces
of fiberings being contractible up to orientation).} for which the scheme is succinctly summarized in the following two
commutative diagrams:
\[
\xymatrix{\Vau\left(\StandardFibration_{2}\right)\ar@{^{(}->}[r] & \Aut\left(\StandardFibration_{2}\right)\ar@{->>}[r] & \Diff\left(B\right)_{\StandardFibration_{2}}\\
\Vau_{\Isometry}\left(\StandardFibration_{2}\right)\ar@{^{(}->}[r]\ar@{^{(}->}[u]^{\simeq} & \Aut_{\Isometry}\left(\StandardFibration_{2}\right)\ar@{->>}[r]\ar@{^{(}..>}[u]^{\simeq} & \Diff_{\Isometry}\left(B\right)_{\StandardFibration_{2}}\ar@{^{(}->}[u]^{\simeq}\\
\CircleGroup\ar@{^{(}->}[r]\ar@{=}[u]^{\cong} & \SOrthogonal 3\times\CircleGroup\ar@{->>}[r]\ar@{=}[u]^{\cong} & \SOrthogonal 3\ar@{=}[u]^{\cong}
}
\]
and
\[
\xymatrix{\Aut\left(\StandardFibration_{2}\right)\ar@{^{(}->}[r] & \Diff\left(\RP 3\right)\ar@{->>}[r] & \Fib\left(\StandardFibration_{2}\right)\\
\Aut_{\Isometry}\left(\StandardFibration_{2}\right)\ar@{^{(}->}[r]\ar@{^{(}->}[u]^{\simeq} & \Isometry\left(\RP 3\right)\ar@{->>}[r]\ar@{^{(}->}[u]^{\simeq} & \Fib_{\Isometry}\left(\StandardFibration_{2}\right)\ar@{^{(}..>}[u]^{\simeq}\\
\SOrthogonal 3\times\CircleGroup\ar@{^{(}->}[r]\ar@{=}[u]^{\cong} & \left(\SOrthogonal 3\times\SOrthogonal 3\right)\rtimes C_{2}\ar@{->>}[r]\ar@{=}[u]^{\cong} & \Sphere 2\sqcup\Sphere 2\ar@{=}[u]^{\cong}
}
\]
The detailed explanation for these two diagrams will occupy the main
part of the remaining of this subsection.

\subsection{The classification}

To begin with, let us use \prettyref{prop:Classifying-oriented-circle-fiberings}
to classify smooth oriented circle fiberings on closed, elliptic 3-manifolds:
\begin{lem}
\label{lem:elliptic-case-classification}The only closed, elliptic
3-manifolds that admit smooth oriented circle fiberings are the lens
spaces $\Lens e1$ for $e>0$, on each of which all such fiberings
form a single equivalence class:
\[
\Fib_{+}\left(\Lens e1,\Sphere 1\right)=\Fib\left(\StandardFibration_{e}\right),\qquad\forall e>0.
\]
More precisely, for each $e>0$, this unique class can be represented
by the (oriented) standard Hopf fibration $\StandardFibration_{e}\colon\Lens e1\to\Sphere 2$
(as descended from the standard Hopf map on $\Sphere 3$).
\end{lem}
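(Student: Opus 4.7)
The plan is to combine the cohomology-theoretic classification of \prettyref{prop:Classifying-oriented-circle-fiberings} with the topological constraint imposed by elliptic geometry. The strategy runs in three steps: first pin down the base as $\Sphere 2$, then enumerate oriented circle fiberings over $\Sphere 2$ via the Euler number, and finally identify which Euler numbers produce elliptic total spaces.

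\textbf{Step 1 (the base must be $\Sphere 2$).} Let $\StandardFibration\colon\Sphere 1\hookrightarrow E\to B$ be a smooth oriented circle fibering with $E$ a closed, orientable, elliptic 3-manifold. Since the fibers carry consistent orientations, the orientation class of $E$ factors through those of $B$ and of the fiber, so $E$ orientable forces $B$ orientable; being a closed orientable surface, $B\approx\Sigma_g$ for some $g\geq 0$. Elliptic geometry on $E$ means its universal cover is $\Sphere 3$, whence $\pi_1(E)$ is finite. The homotopy long exact sequence for $\StandardFibration$ ends with $\pi_1(E)\twoheadrightarrow\pi_1(B)\to\pi_0(\Sphere 1)=0$, so $\pi_1(\Sigma_g)$ is a finite quotient of $\pi_1(E)$, which forces $g=0$; i.e., $B\approx\Sphere 2$.

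\textbf{Step 2 (classification via Euler number).} With $B\approx\Sphere 2$, \prettyref{prop:Classifying-oriented-circle-fiberings} supplies the bijection between oriented circle fiberings over $\Sphere 2$ and $H^2(\Sphere 2;\mathbb{Z})/\mathrm{Mod}(\Sphere 2)$. The Euler class identifies $H^2(\Sphere 2;\mathbb{Z})\cong\mathbb{Z}$, and the nontrivial element of $\mathrm{Mod}(\Sphere 2)\cong C_2$ (any orientation-reversing self-diffeomorphism) negates the fundamental class and so acts on the Euler number by $e\mapsto -e$. The orbit set is $\mathbb{N}_0$, one fibering class per absolute Euler number $|e|$.

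\textbf{Step 3 (matching elliptic total spaces).} The circle bundle over $\Sphere 2$ with Euler number $e\geq 1$ has total space diffeomorphic to $\Lens e1$ (obtained from $\Sphere 3$ by quotienting the Hopf action by $\mathbb{Z}/e$), while $e=0$ gives $\Sphere 2\times\Sphere 1$, which carries $\Sphere 2\times\mathbb{R}$ geometry and is not elliptic. Consequently the non-void cases of the lemma are precisely $E\approx\Lens e1$ for each $e\geq 1$, and the unique fibering class for each such $e$ is represented by the descended Hopf map $\StandardFibration_e\colon\Lens e1\to\Sphere 2$.

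\textbf{Main obstacle.} The subtlest point is validating the collapse $\{e,-e\}\mapsto\{e\}$ in Step 2 against the oriented-fibering convention of \prettyref{sec:oriented-circle-fiberings}: one must ensure that the $\mathrm{Mod}(\Sphere 2)$-orbit identification produces a genuine equivalence of \emph{oriented} fiberings rather than a mere unoriented one, when the base diffeomorphism reverses orientation. This is handled by noting that the pullback action underlying \prettyref{prop:Classifying-oriented-circle-fiberings} is defined for oriented $\Sphere 1$-bundles and that the lift of the antipodal map of $\Sphere 2$ to $\Lens e1$ can be chosen (in either chirality) to preserve the oriented bundle structure, matching the pullback-stabilizer picture of \prettyref{cor:Base-transformations-of-an-oriented-circle-bundle}; everything else is bookkeeping with the homotopy long exact sequence and standard Hopf-bundle identifications.
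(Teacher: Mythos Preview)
Your proof is correct and follows essentially the same three-step skeleton as the paper: pin down $B\approx\Sphere 2$, invoke \prettyref{prop:Classifying-oriented-circle-fiberings} to reduce to $\mathbb{Z}/C_2$, then sort the resulting total spaces by geometry. The one substantive difference is in Step~1: the paper argues that the base of an oriented circle bundle with elliptic total space must itself be a closed orientable \emph{elliptic} 2-manifold (hence $\Sphere 2$), which is a geometric claim about how the base orbifold inherits curvature; you instead use the homotopy long exact sequence together with finiteness of $\pi_1(E)$ to force $\pi_1(\Sigma_g)$ finite, hence $g=0$. Your route is more elementary and self-contained---it avoids appealing to the geometry-of-Seifert-fibrations machinery---while the paper's phrasing is terser but leans on a background fact. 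Your ``main obstacle'' paragraph is a bit overcautious: the oriented-fibering convention is already baked into \prettyref{prop:Classifying-oriented-circle-fiberings}, so the $\mathrm{Mod}(\Sphere 2)$-quotient is automatically an equivalence of oriented fiberings and needs no separate verification.
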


\begin{proof}
Suppose that $E$ is a closed, elliptic 3-manifold (hence necessarily
orientable). Then the claim is equivalent to saying that the desired
classification $\ClassFib_{+}\left(E,\Sphere 1\right)$ (consisting
of all equivalence classes of smooth oriented circle fiberings on
$E$) is completely determined as
\[
\ClassFib_{+}\left(E,\Sphere 1\right)=\begin{cases}
\left\{ \StandardFibration_{e}\right\}  & \text{if \ensuremath{E\approx\Lens e1} for \ensuremath{e>0}},\\
\emptyset & \text{otherwise}
\end{cases}\qquad\text{(\ensuremath{E} closed, elliptic 3-manifold)}.
\]
To this end, first note that for any oriented circle bundle $E\to B$,
the base space $B$ must be a closed, orientable, elliptic 2-manifold,
hence $B\approx\Sphere 2$. Thus we can apply \prettyref{prop:Classifying-oriented-circle-fiberings}
with the constraint $B\approx\Sphere 2$, which yields the following
injection via the Euler class:
\[
\bigsqcup_{\text{\ensuremath{E} closed, elliptic 3-manifold}}\ClassFib_{+}\left(E,\Sphere 1\right)\quad\hookrightarrow\quad H^{2}\left(\Sphere 2;\mathbb{Z}\right)/\pi_{0}\Diff\left(\Sphere 2\right)\cong\mathbb{Z}/C_{2}.
\]
Thus there is a $\mathbb{Z}$'s worth of candidates for the desired
bundle classes $E\to\Sphere 2$; i.e., the ones classified by the
Euler class $e\in\mathbb{Z}$, each of which can be represented by
the canonical circle fibration on a certain Seifert manifold:
\[
\StandardFibration_{e}\colon\left(\Sphere 2,1/e\right)\to\Sphere 2\qquad\left(e\in\mathbb{Z}\right).
\]
Here, the Seifert manifold $\left(\Sphere 2,1/e\right)$ is obtained
by Dehn filling the solid torus that kills the slope $1/e$ on the
boundary torus. For such Seifert manifolds, their first integral homology
groups are isomorphic to $\mathbb{Z}/e\mathbb{Z}$, so that their
diffeomorphism types can also be distinguished by the Euler number
at least up to a sign:
\[
\left(\Sphere 2,1/e\right)\approx\left(\Sphere 2,1/e'\right)\iff\left|e\right|=\left|e'\right|.
\]
In fact, these diffeomorphism types can be represented by the familiar
lens spaces:
\[
\left(\Sphere 2,1/e\right)\approx\begin{cases}
\Sphere 1\times\Sphere 2 & e=0\\
\Lens{\left|e\right|}1 & e\neq0
\end{cases}.
\]
Among these 3-manifolds, $\Sphere 1\times\Sphere 2$ admits (and only
admits) $\mathbb{R}\times\Sphere 2$ geometry, while $\Lens{\left|e\right|}1$
for each $e\neq0$ admits (and only admits) elliptic geometry. Thus
under our constraint on the total space, a complete list of representatives
for bundle classification can be given by $\left\{ \StandardFibration_{e}\mid e\neq0\right\} $;
furthermore since $\pi_{0}\Diff\left(\Sphere 2\right)\cong C_{2}$
acts on $H^{2}\left(\Sphere 2;\mathbb{Z}\right)\cong\mathbb{Z}$ by
negation, this list is coarsened to $\left\{ \StandardFibration_{e}\mid e>0\right\} $
for a desired fibering classification. Therefore, we conclude that
under our assumption on $E$, the non-void cases of $\ClassFib_{+}\left(E,\Sphere 1\right)$
are exactly exhausted by $\ClassFib_{+}\bigl(\Lens e1,\Sphere 1\bigr)=\left\{ \StandardFibration_{e}\mid e>0\right\} $,
as desired.
\end{proof}

\subsection{The base transformation group}

Having obtained $\left\{ \StandardFibration_{e}\mid e>0\right\} $
as the complete set of representatives from the preceding result,
we next study their symmetries. Let us from now on focus on the case
$e=2$ (i.e., $E\approx\RP 3$, the real projective 3-space). We first
consider its basic transformation group $\Diff\left(B\right)_{\StandardFibration_{2}}$
and use \prettyref{cor:Base-transformations-of-an-oriented-circle-bundle}
to determine its homotopy type:
\begin{lem}
\label{lem:RP3-case-base-transformations}The basic transformation
group of $\StandardFibration_{2}\colon\RP 3\to\Sphere 2$ deformation
retracts onto its subgroup consisting of those that are isometric:
\begin{equation}
\Diff\left(\Sphere 2\right)_{\StandardFibration_{2}}\simeq\SOrthogonal 3.\label{eq:RP3-case-base-transformations}
\end{equation}
More precisely, $\SOrthogonal 3$ is embedded as the subgroup consisting
of rotations on the base 2-sphere.
\end{lem}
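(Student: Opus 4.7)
The plan is to reduce the claim to the classical Smale theorem for $\Sphere^2$ by first cutting $\Diff\left(\Sphere^2\right)$ down to the components stabilizing the Euler class of $\StandardFibration_2$. Concretely, I would apply \prettyref{cor:Base-transformations-of-an-oriented-circle-bundle} with $B=\Sphere^2$ and $e_{\StandardFibration_2}=\pm 2\in H^2\left(\Sphere^2;\mathbb{Z}\right)\cong\mathbb{Z}$, which identifies $\Diff\left(\Sphere^2\right)_{\StandardFibration_2}$ with the union of those components of $\Diff\left(\Sphere^2\right)$ whose classes in $\mathop{\mathrm{Mod}}\left(\Sphere^2\right)\cong C_2$ fix $e_{\StandardFibration_2}$. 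Since the nontrivial element of $C_2$ is represented by an orientation-reversing diffeomorphism (e.g.\ the antipodal map on $\Sphere^2$) and thus acts on $H^2\left(\Sphere^2;\mathbb{Z}\right)\cong\mathbb{Z}$ by negation, and since $2\neq-2$, the stabilizer subgroup of $C_2$ is trivial. Hence $\Diff\left(\Sphere^2\right)_{\StandardFibration_2}$ coincides with the identity component $\Diff_0\left(\Sphere^2\right)$.

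Next I would invoke Smale's theorem \cite{MR112149}, which provides a deformation retraction $\Diff_0\left(\Sphere^2\right)\simeq\SOrthogonal 3$ onto the rotation subgroup. Composing the two identifications yields a homotopy equivalence between $\Diff\left(\Sphere^2\right)_{\StandardFibration_2}$ and $\SOrthogonal 3$, realized by the inclusion of the rotation group as asserted. To promote this weak homotopy equivalence to an honest deformation retraction, I would appeal to the infinite-dimensional topology package (the analogue of \prettyref{prop:Promoting-weak-homotopy-type} applied to the open subgroup $\Diff\left(\Sphere^2\right)_{\StandardFibration_2}\subsetopen\Diff\left(\Sphere^2\right)$, which inherits the structure of a topological $\ell^2$-manifold): the inclusion $\SOrthogonal 3\hookrightarrow\Diff\left(\Sphere^2\right)_{\StandardFibration_2}$ has compact connected image and is a weak equivalence, so it is automatically a deformation retract.

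The verification is essentially routine once the two ingredients (the Euler-class stabilizer computation and Smale's theorem) are in place; the only point requiring any care is confirming that the nontrivial mapping class of $\Sphere^2$ indeed acts by $-1$ on $H^2\left(\Sphere^2;\mathbb{Z}\right)$, which is standard since a reflection on $\Sphere^2$ reverses the fundamental class. No infinite-dimensional geometry beyond the $\ell^2$-manifold promotion is needed here, in contrast with the more subtle cases $e=1$ and $e\geq 3$ that follow.
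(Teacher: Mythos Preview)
Your proposal is correct and follows essentially the same approach as the paper: both use \prettyref{cor:Base-transformations-of-an-oriented-circle-bundle} to compute that the stabilizer of $2\in H^{2}\left(\Sphere 2;\mathbb{Z}\right)\cong\mathbb{Z}$ under the negation action of $\mathop{\mathrm{Mod}}\left(\Sphere 2\right)\cong C_{2}$ is trivial, hence $\Diff\left(\Sphere 2\right)_{\StandardFibration_{2}}=\Diff_{0}\left(\Sphere 2\right)$, and then invoke Smale's theorem \cite{MR112149}. The paper simply cites Smale's result as already furnishing the deformation retraction, so your extra $\ell^{2}$-manifold promotion step is harmless but unnecessary; also, your closing remark that the cases $e\geq 3$ are ``more subtle'' is backwards---those are in fact the easier, component-contractible cases.
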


\begin{proof}
This is clear on the level of identity components, where the desired
deformation retraction from $\Diff_{0}\left(\Sphere 2\right)$ onto
$\SOrthogonal 3$ was already known by \cite{MR112149}. As for the
$\pi_{0}$-level, we need to show that those components making up
$\Diff\left(\Sphere 1\right)_{\StandardFibration_{2}}$ form the trivial
subgroup of the mapping class group $\pi_{0}\Diff\left(\Sphere 2\right)\cong C_{2}$.
To see this, we can apply \prettyref{cor:Base-transformations-of-an-oriented-circle-bundle}
to the (negation) action of $\pi_{0}\Diff\left(\Sphere 2\right)\cong C_{2}$
on $H^{2}\left(\Sphere 2;\mathbb{Z}\right)=\mathbb{Z}$, for which
the stabilizer $\pi_{0}\Diff\left(\Sphere 1\right)_{\StandardFibration_{2}}$
of $2\in\mathbb{Z}$ clearly is the trivial group, as desired.
\end{proof}

\subsection{The vertical automorphism group}

Having understood the basic transformations from the preceding result,
we next consider the vertical automorphism group $\Vau\left(\StandardFibration_{2}\right)$
and use \prettyref{cor:The-Vau-group-of-oriented-circle-bundle-II}
to determine its homotopy type:
\begin{lem}
\label{lem:RP3-case-vertical-automorphisms}The vertical automorphism
group of $\StandardFibration_{2}\colon\RP 3\to\Sphere 2$ deformation
retracts onto its subgroup consisting of those that are isometric:
\begin{equation}
\Vau\left(\StandardFibration_{2}\right)\simeq\CircleGroup.\label{eq:RP3-case-vertical-automorphisms}
\end{equation}
More precisely, $\CircleGroup$ is embedded as the subgroup consisting
of uniform translations on all fibers.
\end{lem}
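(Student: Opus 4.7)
The plan is to deduce this lemma directly from the general result \prettyref{cor:The-Vau-group-of-oriented-circle-bundle-II}, applied to the base space $B=\Sphere 2$. First I would note that $\StandardFibration_{2}\colon\RP 3\to\Sphere 2$ is a principal $\CircleGroup$-bundle (being the standard Hopf fibration descended from $\Sphere 3$), so the hypothesis of that corollary is satisfied. Applying it yields a deformation retraction
\[
\Vau\left(\StandardFibration_{2}\right)\simeq\CircleGroup\times H^{1}\left(\Sphere 2;\mathbb{Z}\right).
\]
Since $H^{1}\left(\Sphere 2;\mathbb{Z}\right)=0$, the right-hand side collapses to $\CircleGroup$, which gives the asserted weak homotopy type.

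Next I would verify that this homotopy equivalence has the explicit realization claimed; namely, that $\CircleGroup$ is embedded as the subgroup of uniform translations on all fibers. This is immediate from the description in \prettyref{cor:The-Vau-group-of-oriented-circle-bundle-II}: the identity component $\CircleGroup$ in that corollary is precisely the subgroup of constant $\CircleGroup$-valued maps $B\to\CircleGroup$, which acts on $\StandardFibration_{2}$ by simultaneously translating every fiber by the same element. The vanishing of $H^{1}\left(\Sphere 2;\mathbb{Z}\right)\cong\left[\Sphere 2,\CircleGroup\right]$ confirms that $\Vau\left(\StandardFibration_{2}\right)$ is connected, so there is no component group contribution analogous to the Dehn twist seen in the 2-torus case.

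Finally, to promote the weak homotopy equivalence to a genuine (strong) deformation retraction, I would invoke \prettyref{prop:Promoting-weak-homotopy-type} (or its analogue for $\Vau$): the subgroup $\CircleGroup\leq\Vau\left(\StandardFibration_{2}\right)$ is a closed, compact manifold, and its inclusion is a weak homotopy equivalence, so the $\ell^{2}$-manifold structure on $\Vau\left(\StandardFibration_{2}\right)$ guarantees that $\CircleGroup$ is in fact a (strong) deformation retract. There is essentially no obstacle here, since all the substantive work — the heat-flow-based deformation of the structure group $\Diff_{+}\left(\Sphere 1\right)$ onto $\CircleGroup$ and the subsequent current-group computation — has already been carried out in the general discussion of \prettyref{sec:oriented-circle-fiberings}. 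The only point that requires any care is the bookkeeping to confirm that the deformation retract inherited from the abstract weak equivalence genuinely coincides with the geometric subgroup of uniform fiber translations, which is ensured by the naturality of the construction in \prettyref{prop:The-Vau-group-of-oriented-circle-bundle-I}.
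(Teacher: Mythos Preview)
Your proposal is correct and follows exactly the paper's approach: apply \prettyref{cor:The-Vau-group-of-oriented-circle-bundle-II} with $B=\Sphere 2$ and use $H^{1}\left(\Sphere 2;\mathbb{Z}\right)=0$. The paper's own proof is a single sentence doing precisely this; your additional paragraphs on verifying the principal-bundle hypothesis, identifying the $\CircleGroup$-subgroup explicitly, and promoting to a strong deformation retraction are all accurate elaborations of points the paper leaves implicit.
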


\begin{proof}
Applying \prettyref{cor:The-Vau-group-of-oriented-circle-bundle-II}
with $B=\Sphere 2$, we obtain the desired deformation retraction
of $\Vau\left(\StandardFibration_{2}\right)$ onto $\CircleGroup\rtimes H^{1}\left(\Sphere 2;\mathbb{Z}\right)\cong\CircleGroup$.
\end{proof}

\subsection{The ambient space and its diffeomorphism group}

So far, only the fiber structure and the base structure were concerned;
to proceed, we bring in the ambient structure. For the ambient real
projective 3-space, the homotopy type of its diffeomorphism group
was known to Bamler and Kleiner in \cite{bamler2019ricci}:
\begin{thm}[\noun{Bamler–Kleiner}]
\label{thm:RP3-case-ambient-diffeomorphisms}The diffeomorphism group
of $\RP 3$ admits the following deformation retract:
\begin{equation}
\Diff\left(\RP 3\right)\simeq\Isometry\left(\RP 3\right)\cong\left(\SOrthogonal 3\times\SOrthogonal 3\right)\rtimes C_{2},\label{eq:RP3-case-ambient-diffeomorphisms}
\end{equation}
where $\Isometry\left(\RP 3\right)$ is the isometry group of the
standard round $\RP 3$.
\end{thm}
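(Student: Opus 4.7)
The plan has three components: first identify the isometry group algebraically, second reduce the homotopy equivalence to the Generalized Smale Conjecture for $\RP 3$ (which is the deep content of Bamler–Kleiner), and third promote the resulting weak equivalence to a strong deformation retract via the infinite-dimensional machinery from \prettyref{sec:Infinite-dimensional-topology}.

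For the algebraic identification, I would exploit the double cover $\Sphere 3\to\RP 3$ by the antipodal involution $-I\in\Orthogonal 4$. Since $-I$ lies in the center of $\Orthogonal 4=\Isometry\left(\Sphere 3\right)$, the round metric on $\RP 3$ identifies $\Isometry\left(\RP 3\right)\cong\Orthogonal 4/\left\{ \pm I\right\} $. The accidental isomorphism $\mathrm{Spin}\left(4\right)\cong\SUnitary 2\times\SUnitary 2$ (via the action of $\mathbb{H}^{\times}\times\mathbb{H}^{\times}$ on $\mathbb{H}$ by $\left(p,q\right)\cdot x=pxq^{-1}$) presents $\SOrthogonal 4\cong\left(\SUnitary 2\times\SUnitary 2\right)/\left\{ \pm\left(I,I\right)\right\} $, in which the central element $-I\in\SOrthogonal 4$ corresponds to the class of $\left(-I,I\right)$. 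Quotienting further yields $\SOrthogonal 4/\left\{ \pm I\right\} \cong\SOrthogonal 3\times\SOrthogonal 3$, and the remaining component group of $\Orthogonal 4/\left\{ \pm I\right\} $ is generated by any orientation-reversing reflection, giving the claimed $\left(\SOrthogonal 3\times\SOrthogonal 3\right)\rtimes C_{2}$.

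For the substantive content, the homotopy equivalence $\Isometry\left(\RP 3\right)\xhookrightarrow{\simeq}\Diff\left(\RP 3\right)$, I would invoke Bamler–Kleiner's Ricci-flow strategy \cite{bamler2019ricci}. Let $\mathcal{M}_{+}\left(\RP 3\right)$ denote the space of Riemannian metrics with positive scalar curvature (which contains the round metrics and is nonempty, indeed contractible). The main technical step is running the family Ricci flow with surgery starting from a continuous family of initial metrics, producing a continuous deformation retraction of $\mathcal{M}_{+}\left(\RP 3\right)$ onto the subspace of round metrics—itself a copy of $\Diff\left(\RP 3\right)/\Isometry\left(\RP 3\right)$ times $\mathbb{R}_{>0}$ (the scaling parameter). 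Since the reference round metric is stabilized by $\Isometry\left(\RP 3\right)$, the principal $\Diff\left(\RP 3\right)$-bundle structure (Ebin–Marsden slice theorem) on $\mathcal{M}\left(\RP 3\right)\to\mathcal{M}\left(\RP 3\right)/\Diff\left(\RP 3\right)$ restricts to identify the full fiber $\Diff\left(\RP 3\right)$ with a weakly equivalent total space fibered over a contractible moduli, yielding the weak homotopy equivalence with $\Isometry\left(\RP 3\right)$.

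The main obstacle, and Bamler–Kleiner's decisive contribution, is controlling Ricci flow through singularities in a manner sufficiently continuous on families (rather than on single initial data). Unlike the $\Sphere 3$ case handled by Hatcher via purely topological methods, the nontrivial $\pi_{1}$ of $\RP 3$ creates $\RP 2\times\mathbb{R}$-necks in potential singular models; these must be excluded or surgically resolved in a canonical, family-continuous way. Their canonical-neighborhood theorem, the uniqueness of singular Ricci flows, and continuous dependence on initial data together yield the required smooth family flow. Assuming this, the final step is purely formal: by \prettyref{thm:Canonical-Smooth-Structure-on-DiffM-appendix} combined with \prettyref{thm:Kadec-Anderson}, $\Diff\left(\RP 3\right)$ is a topological $\ell^{2}$-manifold, and $\Isometry\left(\RP 3\right)$ is a closed, compact, finite-dimensional submanifold, so \prettyref{cor:main-theorem-for-infinite-dimensional-topology-compact-case} upgrades the weak equivalence into the asserted strong deformation retraction (and as a bonus yields the homeomorphism $\Diff\left(\RP 3\right)\approx\Isometry\left(\RP 3\right)\times\ell^{2}$).
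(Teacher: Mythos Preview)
Your proposal is correct, but it goes well beyond what the paper does. The paper does not prove this theorem at all: it is stated as a named result attributed to \noun{Bamler--Kleiner} and cited to \cite{bamler2019ricci} (later also mentioning \cite{MR4536904} and the alternative min-max approach of \noun{Ketover--Liokumovich} \cite{MR4921571}). The only thing the paper actually unpacks is the algebraic identification $\Isometry\left(\RP 3\right)\cong\left(\SOrthogonal 3\times\SOrthogonal 3\right)\rtimes C_{2}$, and it does so via the quaternion description $p\mapsto q_{1}p^{\varepsilon}q_{2}^{-1}$ on the cover $\Sphere 3$, rather than your route through $\Orthogonal 4/\left\{ \pm I\right\} $ and the accidental isomorphism for $\mathrm{Spin}\left(4\right)$. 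These two descriptions are of course equivalent; one thing the paper's presentation makes explicit that yours leaves implicit is the action of the $C_{2}$ factor: it swaps the two $\SOrthogonal 3$'s (since quaternion inversion intertwines left and right multiplication), a point worth recording because the semidirect product is only well-defined once that action is specified.

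Your sketch of the Ricci-flow argument---contractibility of the space of positive-scalar-curvature metrics via family singular Ricci flow, continuous dependence, and the slice theorem to extract the weak equivalence of $\Isometry$ with $\Diff$---is an accurate high-level summary of the Bamler--Kleiner strategy, and your final upgrade via \prettyref{cor:main-theorem-for-infinite-dimensional-topology-compact-case} is exactly in the spirit of how the paper uses that machinery elsewhere. Since the paper treats the theorem as a black-box input, your expansion is a genuine addition rather than a comparison point.
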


Here, the metric structure of the round $\RP 3=\Sphere 3/C_{2}$ is
induced from the standard round 3-sphere $\Sphere 3$, with respect
to which the isometries form the group $\Isometry\left(\RP 3\right)$.
In turn, this group can be described in terms of the group structure
on the cover $\Sphere 3$ (i.e., as the group of unit quaternions);
more specifically, each isometry on $\RP 3$ has a unique representation
by its lift to $\Sphere 3$ of the form
\[
p\mapsto q_{1}p^{\varepsilon}q_{2}^{-1}\qquad\text{with}\qquad q_{1},q_{2}\in\Sphere 3,\ \varepsilon=\pm1.
\]
This yields the desired isomorphism $\Isometry\left(\RP 3\right)\cong\left(\SOrthogonal 3\times\SOrthogonal 3\right)\rtimes C_{2}$,
with the semidirect product corresponding to the action of $C_{2}$
on $\SOrthogonal 3\times\SOrthogonal 3$ by interchanging factors.
Our remaining goal is to use this homotopy model of $\Diff\left(\RP 3\right)$
to induce the desired homotopy models of $\Aut\left(\StandardFibration_{2}\right)$
and of $\Fib\left(\StandardFibration_{2}\right)$; for this purpose,
recall that our model fibration $\StandardFibration_{2}\colon\RP 3\to\Sphere 2$
is descended from the Hopf map $\Sphere 3\to\Sphere 2$ given by $q\mapsto qiq^{-1}$.%

\subsection{The homotopy core of the automorphism group}

We first consider the group $\Aut\left(\StandardFibration_{2}\right)$
of automorphisms of $\StandardFibration_{2}$. By restricting (to
$\Aut\left(\StandardFibration_{2}\right)$) the preceding homotopy
model $\Isometry\left(\RP 3\right)$ of $\Diff\left(\RP 3\right)$,
we obtain the following candidate for a homotopy model of $\Aut\left(\StandardFibration_{2}\right)$:%
\begin{equation}
\Aut\left(\StandardFibration_{2}\right)\hookleftarrow\Aut\left(\StandardFibration_{2}\right)\cap\Isometry\left(\RP 3\right)\cong\SOrthogonal 3\times\CircleGroup.\label{eq:RP3-case-automorphisms-pre}
\end{equation}
Here, $\SOrthogonal 3\times\CircleGroup$ is the subgroup of $\Isometry_{0}\left(\RP 3\right)\cong\SOrthogonal 3\times\SOrthogonal 3$
consisting of those represented by $\left(q_{1},q_{2}\right)\in\Sphere 3\times\Sphere 3$
with $q_{2}\in\CircleGroup$, where $\CircleGroup\subseteq\Sphere 3$
is the subgroup of unit complex numbers. This leads us to the following
result, which asserts that the above embedding \prettyref{eq:RP3-case-automorphisms-pre}
is indeed a homotopy equivalence:
\begin{thm}
\label{thm:RP3-case-automorphisms}The automorphism group of $\StandardFibration_{2}\colon\RP 3\to\Sphere 2$
deformation retracts onto its subgroup consisting of those that are
isometric:
\begin{equation}
\Aut\left(\StandardFibration_{2}\right)\simeq\SOrthogonal 3\times\CircleGroup,\label{eq:RP3-case-automorphisms}
\end{equation}
where $\SOrthogonal 3\times\CircleGroup$ is embedded as a subgroup
via \prettyref{eq:RP3-case-automorphisms-pre}.
\end{thm}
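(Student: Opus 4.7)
The plan is to mirror the strategy already carried out for the 2-torus case in \prettyref{thm:T2-case-automorphisms}: exhibit the isometric model as a finite-dimensional Lie-group extension embedded into the infinite-dimensional extension for the symmetries of $\StandardFibration_{2}$, and then deduce the homotopy equivalence of the middle term by the five lemma applied to the long exact homotopy sequences of the two fibrations. Concretely, I would begin by writing down the surjective homomorphism
\[
\SOrthogonal 3\times\CircleGroup\twoheadrightarrow\SOrthogonal 3,\qquad\left(q_{1},e^{i\theta}\right)\mapsto q_{1},
\]
whose kernel $\{e\}\times\CircleGroup\cong\CircleGroup$ plays the role of the isometric model of $\Vau\left(\StandardFibration_{2}\right)$. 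On the infinite-dimensional side, the smooth principal fibration
\[
\Vau\left(\StandardFibration_{2}\right)\hookrightarrow\Aut\left(\StandardFibration_{2}\right)\xrightarrow{\AutProjection}\Diff\left(\Sphere 2\right)_{\StandardFibration_{2}}
\]
is already provided by \prettyref{thm:main-theorem-about-smooth-symmetry-fibration}, and our candidate embedding \prettyref{eq:RP3-case-automorphisms-pre} is supposed to slot into a ladder between these two short exact sequences.

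The first substantive step is to verify that this ladder commutes. Using the quaternionic model $\RP 3=\Sphere 3/C_{2}$ and the Hopf map $q\mapsto qiq^{-1}$, an isometry $p\mapsto q_{1}pq_{2}^{-1}$ preserves the oriented Hopf foliation precisely when $q_{2}$ centralizes $\CircleGroup\subseteq\Sphere 3$, i.e.\ $q_{2}\in\CircleGroup$; in that case a short direct calculation shows its projection to $\Sphere 2$ is the rotation $x\mapsto q_{1}xq_{1}^{-1}$ (independent of the $\CircleGroup$-factor), so that the kernel of $\AutProjection$ on the isometric model is exactly the uniform translations $\{e\}\times\CircleGroup$. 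Once commutativity is in hand, the proof proceeds exactly as in \prettyref{thm:T2-case-automorphisms}: the right-hand vertical map $\SOrthogonal 3\hookrightarrow\Diff\left(\Sphere 2\right)_{\StandardFibration_{2}}$ is a homotopy equivalence by \prettyref{lem:RP3-case-base-transformations}, the left-hand vertical map $\CircleGroup\hookrightarrow\Vau\left(\StandardFibration_{2}\right)$ is a homotopy equivalence by \prettyref{lem:RP3-case-vertical-automorphisms}, and comparing the two long exact sequences of homotopy groups via the five lemma forces the middle arrow $\SOrthogonal 3\times\CircleGroup\hookrightarrow\Aut\left(\StandardFibration_{2}\right)$ to be a weak homotopy equivalence. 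Finally, since $\SOrthogonal 3\times\CircleGroup$ is a closed finite-dimensional submanifold of the $\ell^{2}$-manifold $\Aut\left(\StandardFibration_{2}\right)$ with compact components, \prettyref{prop:Promoting-weak-homotopy-type} (or rather its automorphism-group analogue \prettyref{eq:l2-property-for-aut}) upgrades this weak equivalence to a genuine (and minimal) deformation retraction.

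The only step with real content is the quaternionic bookkeeping described above: confirming that $\SOrthogonal 3\times\CircleGroup$ really lies inside $\Aut\left(\StandardFibration_{2}\right)$ and respects fiber orientation, and that $\AutProjection$ restricted to it is exactly the first-factor projection. Everything else is then a formal application of machinery already assembled in \prettyref{chap:Differential-Topology}, and in particular requires no new infinite-dimensional analysis specific to $\RP 3$.
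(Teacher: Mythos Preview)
Your proposal is correct and follows essentially the same approach as the paper: embed the finite-dimensional extension $\CircleGroup\hookrightarrow\SOrthogonal 3\times\CircleGroup\twoheadrightarrow\SOrthogonal 3$ into the infinite-dimensional one $\Vau\left(\StandardFibration_{2}\right)\hookrightarrow\Aut\left(\StandardFibration_{2}\right)\twoheadrightarrow\Diff\left(\Sphere 2\right)_{\StandardFibration_{2}}$, invoke \prettyref{lem:RP3-case-base-transformations} and \prettyref{lem:RP3-case-vertical-automorphisms} for the outer arrows, apply the five lemma, and promote via the $\ell^{2}$-manifold machinery. Your explicit quaternionic verification of commutativity and your care in citing the $\Aut$-version \prettyref{eq:l2-property-for-aut} rather than \prettyref{prop:Promoting-weak-homotopy-type} are slightly more detailed than the paper's treatment, but the argument is otherwise identical.
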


\begin{proof}
Consider the surjective map
\[
\SOrthogonal 3\times\CircleGroup\twoheadrightarrow\SOrthogonal 3,\qquad\left(q_{1},q_{2}\right)\mapsto q_{1}.
\]
This projection yields a finite-dimensional fibration (resp., extension)
that is embedded into our infinite-dimensional fibration (resp., extension)
$\Vau\left(\StandardFibration_{2}\right)\hookrightarrow\Aut\left(\StandardFibration_{2}\right)\twoheadrightarrow\Diff\left(B\right)_{\StandardFibration_{2}}$,
as in the following commutative diagram:
\[
\xymatrix{\Vau\left(\StandardFibration_{2}\right)\ar@{^{(}->}[r] & \Aut\left(\StandardFibration_{2}\right)\ar@{->>}[r] & \Diff\left(B\right)_{\StandardFibration_{2}}\\
\CircleGroup\ar@{^{(}->}[r]\ar@{^{(}->}[u] & \SOrthogonal 3\times\CircleGroup\ar@{->>}[r]\ar@{^{(}->}[u] & \SOrthogonal 3\ar@{^{(}->}[u]
}
.
\]
Here, the right, left, and middle vertical arrows are given by the
embeddings \prettyref{eq:RP3-case-base-transformations}, \prettyref{eq:RP3-case-vertical-automorphisms},
and \prettyref{eq:RP3-case-automorphisms-pre}, respectively; in other
words, they are embedded as the ``isometric models'' of $\Diff\left(B\right)_{\StandardFibration_{2}}$,
$\Vau\left(\StandardFibration_{2}\right)$, and $\Aut\left(\StandardFibration_{2}\right)$,
from which we see that the diagram indeed commutes.

Turning to homotopy types, we use the (surjective) fibration structures
on both rows to induce two long exact sequences of homotopy groups.
Since by \prettyref{lem:RP3-case-base-transformations} and \prettyref{lem:RP3-case-vertical-automorphisms}
the right embedding of $\SOrthogonal 3$ into $\Diff\left(B\right)_{\StandardFibration_{2}}$
and the left embedding of $\CircleGroup$ into $\Vau\left(\StandardFibration_{2}\right)$
are both homotopy equivalences, we can deduce by the five lemma that
the middle embedding of ${\SOrthogonal 3\times\CircleGroup}$ into
$\Aut\left(\StandardFibration_{2}\right)$ is a homotopy equivalence
too, as desired. Lastly, the promotion to deformation retraction follows
from \prettyref{prop:Promoting-weak-homotopy-type}, which completes
the proof of \prettyref{thm:RP3-case-automorphisms}.
\end{proof}

\subsection{The homotopy core of the moduli space}

Finally, we turn to our main object of study: the moduli space $\Fib\left(\StandardFibration_{2}\right)$
of fiberings modeled on $\StandardFibration_{2}$. By projecting (to
$\Fib\left(\StandardFibration_{2}\right)$) the preceding homotopy
model $\Isometry\left(\RP 3\right)$ of $\Diff\left(\RP 3\right)$,
we obtain the following candidate for a homotopy model of $\Fib\left(\StandardFibration_{2}\right)$:
\begin{equation}
\Fib\left(\StandardFibration_{2}\right)\hookleftarrow\Isometry\left(\RP 3\right)\cdot\left\{ \StandardFibration_{2}\right\} \approx\Sphere 2\sqcup\Sphere 2.\label{eq:RP3-case-space-of-fiberings-pre}
\end{equation}
By construction, $\Sphere 2\sqcup\Sphere 2$ is embedded in $\Fib\left(\StandardFibration_{2}\right)$
as the subspace consisting of those fiberings that are metrically
equivalent (i.e., congruent) to $\StandardFibration_{0}$, which are
characterized by the ``direction'' and ``chirality'' ranging in
$\Sphere 2\sqcup\Sphere 2$. Therefore, proving that the above embedding
\prettyref{eq:RP3-case-space-of-fiberings-pre} is indeed a homotopy
equivalence will complete the final step of our proof of \prettyref{thm:elliptic-case-main-theorem}:
\begin{proof}[Proof of \prettyref{thm:elliptic-case-main-theorem}]
The claim regarding classification was proved in \prettyref{lem:elliptic-case-classification},
which in particular shows that $\Fib_{+}\left(\Lens e1,\Sphere 1\right)=\Fib\left(\StandardFibration_{e}\right)$
for each $e>0$. In turn, we need to determine the homotopy type of
$\Fib\left(\StandardFibration_{e}\right)$, for which we focus on
the case $e=2$. (The case $e=1$ is treated in a joint work \cite{deturck2025homotopytypespacefibrations}
with a more concrete, classical approach; while the remaining cases
$e\geq3$ are similar and simpler, with the moduli spaces being contractible
up to orientation.) Consider the surjective map
\[
\left(\SOrthogonal 3\times\SOrthogonal 3\right)\rtimes C_{2}\twoheadrightarrow\Sphere 2_{+}\sqcup\Sphere 2_{-},\qquad\left(\left(q_{1},q_{2}\right)\in\Sphere 3\times\Sphere 3,\pm1\right)\mapsto q_{2}iq_{2}^{-1}\in\Sphere 2_{\pm}.
\]
This projection yields a finite-dimensional fibration that is embedded
into our infinite-dimensional fibration $\Aut\left(\StandardFibration_{2}\right)\to\Diff\left(\RP 3\right)\to\Fib\left(\StandardFibration_{2}\right)$,
as in the following commutative diagram:
\[
\xymatrix{\Aut\left(\StandardFibration_{2}\right)\ar@{^{(}->}[r] & \Diff\left(\RP 3\right)\ar@{->>}[r] & \Fib\left(\StandardFibration_{2}\right)\\
\SOrthogonal 3\times\CircleGroup\ar@{^{(}->}[r]\ar@{^{(}->}[u] & \left(\SOrthogonal 3\times\SOrthogonal 3\right)\rtimes C_{2}\ar@{->>}[r]\ar@{^{(}->}[u] & \Sphere 2\sqcup\Sphere 2\ar@{^{(}->}[u]
}
.
\]
Here, the middle, left, and right vertical arrows are given by the
embeddings \prettyref{eq:RP3-case-ambient-diffeomorphisms}, \prettyref{eq:RP3-case-automorphisms},
and \prettyref{eq:RP3-case-space-of-fiberings-pre}, respectively;
in other words, they are embedded as the ``isometric models'' of
$\Diff\left(\RP 3\right)$, $\Aut\left(\StandardFibration_{2}\right)$,
and $\Fib\left(\StandardFibration_{2}\right)$, from which we see
that the diagram indeed commutes.

Turning to homotopy types, we use the (surjective) fibration structures
on both rows to induce two long exact sequences of homotopy groups.
Since by \prettyref{thm:RP3-case-ambient-diffeomorphisms} and \prettyref{thm:RP3-case-automorphisms}
the middle embedding of $\left(\SOrthogonal 3\times\SOrthogonal 3\right)\rtimes C_{2}$
into $\Diff\left(\RP 3\right)$ and the left embedding of $\SOrthogonal 3\times\CircleGroup$
into $\Aut\left(\StandardFibration_{2}\right)$ are both homotopy
equivalences, we can deduce by the five lemma that the right embedding
of $\Sphere 2\sqcup\Sphere 2$ into $\Fib\left(\StandardFibration_{2}\right)$
is a homotopy equivalence too, as desired. Lastly, the promotion to
deformation retraction follows from \prettyref{prop:Promoting-weak-homotopy-type},
which completes the proof of \prettyref{thm:elliptic-case-main-theorem}.
\end{proof}

\subsection{Completing the landscape in dimension three }

Let us summarize our proof of \prettyref{thm:elliptic-case-main-theorem}
and put it into perspective. For $\Lens 11=\Sphere 3$, the Smale
conjecture was proved—first partially at the $\pi_{0}$-level by \noun{Cerf}
\cite{MR229250}—by work of \noun{Hatcher} \cite{MR701256}:
\[
\Diff\left(\Sphere 3\right)\simeq\Isometry\left(\Sphere 3\right)\cong\SOrthogonal 4\rtimes C_{2}.
\]
Here, the isometry group has two components distinguished by the property
of orientation-preserving, with the identity component $\SOrthogonal 4$
being realized by quaternion multiplication on the 3-sphere as ${\Sphere 3\times\Sphere 3}/\left\langle \left(-1,-1\right)\right\rangle =\Sphere 3\tildetimes\Sphere 3$,
so that our main commutative diagram reads
\[
\xymatrix{\Aut\left(\StandardFibration_{1}\right)\ar@{^{(}->}[r] & \Diff\left(\Sphere 3\right)\ar@{->>}[r] & \Fib\left(\StandardFibration_{1}\right)\\
\Sphere 3\tildetimes\Sphere 1\ar@{^{(}->}[r]\ar@{^{(}->}[u] & \left(\Sphere 3\tildetimes\Sphere 3\right)\rtimes C_{2}\ar@{->>}[r]\ar@{^{(}->}[u] & \Sphere 2\sqcup\Sphere 2\ar@{^{(}->}[u]
}
.
\]
For $\Lens 21=\RP 3$, the (generalized) Smale conjecture was first
proved by work of \noun{Bamler} and \noun{Kleiner} (\cite{MR4536904,bamler2019ricci})
using Ricci flow, and later in a different approach by work of \noun{Ketover}
and \noun{Liokumovich} (\cite{MR4921571}) using minimal surfaces
and min-max theory:
\[
\Diff\left(\RP 3\right)\simeq\Isometry\left(\RP 3\right)\cong\mathrm{P}\SOrthogonal 4\rtimes C_{2}
\]
Here, the isometry group has two components distinguished by the property
of orientation-preserving, with the identity component $\mathrm{PSO}\left(4\right)$
being realized by quaternion multiplication on the 3-sphere as $\Sphere 3/_{\left\langle -1\right\rangle }\times\Sphere 3/_{\left\langle -1\right\rangle }=\SOrthogonal 3\times\SOrthogonal 3$,
so that our main commutative diagram reads
\[
\xymatrix{\Aut\left(\StandardFibration_{2}\right)\ar@{^{(}->}[r] & \Diff\left(\RP 3\right)\ar@{->>}[r] & \Fib\left(\StandardFibration_{2}\right)\\
\SOrthogonal 3\times\Sphere 1\ar@{^{(}->}[r]\ar@{^{(}->}[u] & \left(\SOrthogonal 3\times\SOrthogonal 3\right)\rtimes C_{2}\ar@{->>}[r]\ar@{^{(}->}[u] & \Sphere 2\sqcup\Sphere 2\ar@{^{(}->}[u]
}
.
\]
By contrast, for $\Lens e1$ with $e>2$, the moduli spaces of fiberings
are contractible up to the choice of orientation: For $\Lens e1$
with $e=2n+1>2$ odd, the (generalized) Smale conjecture is proved
by work of \noun{Hong}, \noun{Kalliongis}, \noun{McCullough}, and
\noun{Rubinstein} (\cite{MR2976322}): 
\[
\Diff\left(L\left(2n+1,1\right)\right)\simeq\Isometry\left(L\left(2n+1,1\right)\right)\cong\Sphere 3\tildetimes\left(\Sphere 1\cup j\Sphere 1\right)\qquad\left(n>0\right).
\]
Here, all isometries are orientation preserving, and the isometry
group $\Sphere 3\tildetimes\left(\Sphere 1\cup j\Sphere 1\right)$
is realized by quaternion multiplication on the 3-sphere as ${{\Sphere 3\times\left(\Sphere 1\cup j\Sphere 1\right)}/\left\langle \left(-1,-1\right)\right\rangle }\cong{\Sphere 3\tildetimes\left(\Sphere 1\cup j\Sphere 1\right)}$,
so that our main commutative diagram reads
\[
\xymatrix{\Aut\left(\StandardFibration_{2n+1}\right)\ar@{^{(}->}[r] & \Diff\left(L\left(2n+1,1\right)\right)\ar@{->>}[r] & \Fib\left(\StandardFibration_{2n+1}\right)\\
\Sphere 3\tildetimes\Sphere 1\ar@{^{(}->}[r]\ar@{^{(}->}[u] & \Sphere 3\tildetimes\left(\Sphere 1\cup j\Sphere 1\right)\ar@{->>}[r]\ar@{^{(}->}[u] & \left\{ \StandardFibration_{2n+1},\StandardFibration_{-2n-1}\right\} \ar@{^{(}->}[u]
}
.
\]
For $\Lens e1$ with $e=2n+2>2$ odd, the (generalized) Smale conjecture
is also proved by work of \noun{Hong}, \noun{Kalliongis}, \noun{McCullough},
and \noun{Rubinstein} (\cite{MR2976322}): 
\[
\Diff\left(L\left(\StandardFibration_{2n+2},1\right)\right)\simeq\Isometry\left(L\left(\StandardFibration_{2n+2},1\right)\right)\cong\SOrthogonal 3\times\Orthogonal 2\qquad\left(n>0\right).
\]
Here, all isometries are orientation preserving, and the isometry
group $\SOrthogonal 3\times\Orthogonal 2$ is realized by quaternion
multiplication on the 3-sphere as ${\Sphere 3/_{\left\langle -1\right\rangle }\times\left(\Sphere 1\cup j\Sphere 1\right)/_{\left\langle -1\right\rangle }}\cong{\SOrthogonal 3\times\Orthogonal 2}$,
so that our main commutative diagram reads
\[
\xymatrix{\Aut\left(\StandardFibration_{2n+2}\right)\ar@{^{(}->}[r] & \Diff\left(L\left(2n+2,1\right)\right)\ar@{->>}[r] & \Fib\left(\StandardFibration_{2n+2}\right)\\
\SOrthogonal 3\times\Sphere 1\ar@{^{(}->}[r]\ar@{^{(}->}[u] & \SOrthogonal 3\times\Orthogonal 2\ar@{->>}[r]\ar@{^{(}->}[u] & \left\{ \StandardFibration_{2n+2},\StandardFibration_{-2n-2}\right\} \ar@{^{(}->}[u]
}
.
\]
As shown in \prettyref{lem:elliptic-case-classification} (and the
proof thereof), these are the only elliptic cases and, more generally,
the only 3-dimensional total spaces that admit smooth oriented circle
fibrations over a spherical base are of the form $\left(\Sphere 2,\left(1,e\right)\right)$
for $e\geq0$ over the 2-sphere, as obtained by Dehn filling the solid
torus that kills the slope $1/e$ on the boundary torus. The cases
$e>0$ correspond exactly to the lens spaces $\Lens e1$ as already
treated above, and the remaining case $e=0$ corresponds to the trivial
fibration $\StandardFibration_{0}$ of the direct product $\Sphere 2\times\Sphere 1$.
In this case, the homotopy type of its total diffeomorphism group
was known by work of \noun{Hatcher} \cite{MR624946}:
\[
\Diff\left(\Sphere 2\times\Sphere 1\right)\simeq\Omega\SOrthogonal 3\times\Orthogonal 3\times\Orthogonal 2,
\]
where $\Omega\SOrthogonal 3$ corresponds to the ``rota-translational''
diffeomorphisms (which are not accounted for by isometries); our main
commutative diagram reads
\[
\xymatrix{\Aut\left(\StandardFibration_{0}\right)\ar@{^{(}->}[r] & \Diff\left(\Sphere 2\times\Sphere 1\right)\ar@{->>}[r] & \Fib\left(\StandardFibration_{0}\right)\\
\Orthogonal 3\times\Sphere 1\ar@{^{(}->}[r]\ar@{^{(}->}[u] & \Omega\SOrthogonal 3\times\Orthogonal 3\times\Orthogonal 2\ar@{->>}[r]\ar@{^{(}->}[u] & \widetilde{\Omega\SOrthogonal 3}\ar@{^{(}->}[u]
}
.
\]
In words, this shows that the moduli space of oriented circle fiberings
on $\Sphere 2\times\Sphere 1$ deformation retracts to a double cover
$\widetilde{\Omega\SOrthogonal 3}$, the oriented based loop space
of $\SOrthogonal 3$. This completes the (3-dimensional, oriented)
case over a spherical base ($B=\Sphere 2$), with the $e\leq1$ cases
first treated in \cite{deturck2025homotopytypespacefibrations,wang2024homotopytypespacefiberings}.
Similarly, one can proceed to the other cases such as the one over
a flat base ($B=\Torus 2$), with the eligible total spaces classified
by the analogous construction $\left(\Torus 2,\left(1,e\right)\right)$
for $e\geq0$ over the 2-torus, as obtained by Dehn filling a trivial
circle bundle over the punctured torus that kills the slope $1/e$
on the boundary torus. The trivial case $e=0$ has been treated above
in \prettyref{thm:T3-case-main-theorem} already:
\[
\Fib_{+}\left(\Torus 3,\Sphere 1\right)=\Fib\left(\StandardFibration_{0}\colon\Torus 3\to\Torus 2\right)\simeq\ZPrim 3.
\]
On the other hand, the remaining cases $e>0$ correspond to the mapping
tori of $\Torus 2$ with reducible monodromy $\left(\begin{smallmatrix}1 & e\\
0 & 1
\end{smallmatrix}\right)\in\pi_{0}\Diff\left(\Torus 2\right)$, which is denoted by $\MappingTorus{\Torus 2}e$ and admits the natural
structure of an oriented circle fibration $\StandardFibration_{e}$
over the base torus, unique up to fibering equivalence. In this case,
the total space is of $\Nil$ geometry, for which the homotopy type
of diffeomorphism group was known by work of \noun{Bamler} and \noun{Kleiner}
\cite{MR4685084}, so that we can apply our framework as before to
deduce that the corresponding moduli space of fiberings deformation
retracts onto a discrete space of the pair $\StandardFibration_{e},\StandardFibration_{-e}$:
\[
\Fib_{+}\left(\MappingTorus{\Torus 2}e,\Sphere 1\right)=\Fib\left(\StandardFibration_{e}\colon\MappingTorus{\Torus 2}e\to\Torus 2\right)\simeq\Sphere 0\qquad\left(e>0\right).
\]
Lastly, one can approach similarly the remaining cases over a hyperbolic
base ($B=S_{g}$ for $g\geq2$), and show that the corresponding moduli
space of fiberings $\Fib_{+}\left(\left(S_{g},\left(1,e\right)\right),\Sphere 1\right)$
deformation retracts to $\Sphere 0$ for all $g\geq2$ and $e\geq0$.
But it is worth noticing that the preceding cases were already known
in others authors' work—most notably the result deduced from the work
of \noun{Hong}, \noun{Kalliongis}, \noun{McCullough}, and \noun{Rubinstein}
\cite{MR2976322} that $\Fib_{+}\left(E^{3},\Sphere 1\right)$ has
contractible components if the 3-dimensional total space $E^{3}$
is Haken. In conclusion, this completes a solution to the problem
of finding ``homotopy cores'' (a minimal deformation retract that
encodes the topological structure of such a moduli space of fibrations)
for all (closed, orientable) $3$-manifold $E$.

\newpage\pagestyle{plain}\bibliographystyle{bibtotoc}

\begin{thebibliography}{HKMR12}

\bibitem[Afs11]{MR2736346}
Bijan Afsari, \emph{Riemannian {$L^p$} center of mass: existence, uniqueness,
  and convexity}, Proc. Amer. Math. Soc. \textbf{139} (2011), no.~2, 655--673.
  \MR{2736346}

\bibitem[AGS20]{MR4188745}
Habib Amiri, Helge Gl\"{o}ckner, and Alexander Schmeding, \emph{Lie groupoids
  of mappings taking values in a {L}ie groupoid}, Arch. Math. (Brno)
  \textbf{56} (2020), no.~5, 307--356. \MR{4188745}

\bibitem[And66]{MR0190888}
R.~D. Anderson, \emph{Hilbert space is homeomorphic to the countable infinite
  product of lines}, Bull. Amer. Math. Soc. \textbf{72} (1966), 515--519.
  \MR{190888}

\bibitem[And67]{MR0205212}
\bysame, \emph{Topological properties of the {H}ilbert cube and the infinite
  product of open intervals}, Trans. Amer. Math. Soc. \textbf{126} (1967),
  200--216. \MR{205212}

\bibitem[Arn66]{MR202082}
V.~Arnold, \emph{Sur la g\'eom\'etrie diff\'erentielle des groupes de {L}ie de
  dimension infinie et ses applications \`a{} l'hydrodynamique des fluides
  parfaits}, Ann. Inst. Fourier (Grenoble) \textbf{16} (1966), 319--361.
  \MR{202082}

\bibitem[Ban97]{MR1445290}
Augustin Banyaga, \emph{The structure of classical diffeomorphism groups},
  Mathematics and its Applications, vol. 400, Kluwer Academic Publishers Group,
  Dordrecht, 1997. \MR{1445290}

\bibitem[BF81]{MR0613004}
E.~Binz and H.~R. Fischer, \emph{The manifold of embeddings of a closed
  manifold}, Differential geometric methods in mathematical physics ({P}roc.
  {I}nternat. {C}onf., {T}ech. {U}niv. {C}lausthal, {C}lausthal-{Z}ellerfeld,
  1978), Lecture Notes in Phys., vol. 139, Springer, Berlin-New York, 1981,
  With an appendix by P. Michor, pp.~310--329. \MR{613004}

\bibitem[BK19]{bamler2019ricci}
Richard~H. Bamler and Bruce Kleiner, \emph{Ricci flow and contractibility of
  spaces of metrics}.

\bibitem[BK23]{MR4536904}
Richard~H. Bamler and Bruce Kleiner, \emph{Ricci flow and diffeomorphism groups
  of 3-manifolds}, J. Amer. Math. Soc. \textbf{36} (2023), no.~2, 563--589.
  \MR{4536904}

\bibitem[BK24]{MR4685084}
\bysame, \emph{Diffeomorphism groups of prime 3-manifolds}, J. Reine Angew.
  Math. \textbf{806} (2024), 23--35. \MR{4685084}

\bibitem[Cai35]{MR1563139}
S.~S. Cairns, \emph{Triangulation of the manifold of class one}, Bull. Amer.
  Math. Soc. \textbf{41} (1935), no.~8, 549--552. \MR{1563139}

\bibitem[Cer68]{MR229250}
Jean Cerf, \emph{Sur les diff\'eomorphismes de la sph\`ere de dimension trois
  {$(\Gamma \sb{4}=0)$}}, Lecture Notes in Mathematics, vol. No. 53,
  Springer-Verlag, Berlin-New York, 1968. \MR{229250}

\bibitem[CG72]{MR309010}
Jeff Cheeger and Detlef Gromoll, \emph{On the structure of complete manifolds
  of nonnegative curvature}, Ann. of Math. (2) \textbf{96} (1972), 413--443.
  \MR{309010}

\bibitem[Dax72]{MR0321110}
Jean-Pierre Dax, \emph{\'{E}tude homotopique des espaces de plongements}, Ann.
  Sci. \'{E}cole Norm. Sup. (4) \textbf{5} (1972), 303--377. \MR{321110}

\bibitem[DFG{\etalchar{+}}25]{deturck2025homotopytypespacefibrations}
Dennis Deturck, Ziqi Fang, Herman Gluck, Leandro Lichtenfelz, Mona Merling,
  Yi~Wang, and Jingye Yang, \emph{Homotopy type of the space of fibrations of
  the three-sphere by simple closed curves}.

\bibitem[DR19]{MR3943433}
Tobias Diez and Gerd Rudolph, \emph{Slice theorem and orbit type stratification
  in infinite dimensions}, Differential Geom. Appl. \textbf{65} (2019),
  176--211. \MR{3943433}

\bibitem[EE69]{MR276999}
Clifford~J. Earle and James Eells, \emph{A fibre bundle description of
  {T}eichm\"{u}ller theory}, J. Differential Geometry \textbf{3} (1969),
  19--43. \MR{276999}

\bibitem[EM70]{MR0271984}
David~G. Ebin and Jerrold Marsden, \emph{Groups of diffeomorphisms and the
  motion of an incompressible fluid}, Ann. of Math. (2) \textbf{92} (1970),
  102--163. \MR{271984}

\bibitem[Fan24]{me2024thesis}
Ziqi Fang, \emph{On topology and geometry of the infinite-dimensional space of
  fibrations}, Ph.D. thesis, University of Pennsylvania, 2024.

\bibitem[Gab01]{MR1895350}
David Gabai, \emph{The {S}male conjecture for hyperbolic 3-manifolds: {${\rm
  Isom}(M^3)\simeq{\rm Diff}(M^3)$}}, J. Differential Geom. \textbf{58} (2001),
  no.~1, 113--149. \MR{1895350}

\bibitem[Gag84]{MR742856}
M.~E. Gage, \emph{Curve shortening makes convex curves circular}, Invent. Math.
  \textbf{76} (1984), no.~2, 357--364. \MR{742856}

\bibitem[Gag90]{MR1046497}
Michael~E. Gage, \emph{Curve shortening on surfaces}, Ann. Sci. \'{E}cole Norm.
  Sup. (4) \textbf{23} (1990), no.~2, 229--256. \MR{1046497}

\bibitem[GH86]{MR840401}
M.~Gage and R.~S. Hamilton, \emph{The heat equation shrinking convex plane
  curves}, J. Differential Geom. \textbf{23} (1986), no.~1, 69--96. \MR{840401}

\bibitem[GK73]{MR356104}
Karsten Grove and Hermann Karcher, \emph{How to conjugate {$C^{1}$}-close group
  actions}, Math. Z. \textbf{132} (1973), 11--20. \MR{356104}

\bibitem[Glo06]{MR2269430}
Helge Glockner, \emph{Implicit functions from topological vector spaces to
  {B}anach spaces}, Israel J. Math. \textbf{155} (2006), 205--252. \MR{2269430}

\bibitem[Glo16]{glockner2016fundamentalssubmersionsimmersionsinfinitedimensional}
Helge Glockner, \emph{Fundamentals of submersions and immersions between
  infinite-dimensional manifolds}.

\bibitem[GNng]{GlocknerNeeb202x}
Helge Gl\"ockner and Karl-Hermann Neeb, \emph{Infinite-dimensional lie groups},
  forthcoming.

\bibitem[Gra87]{MR906392}
Matthew~A. Grayson, \emph{The heat equation shrinks embedded plane curves to
  round points}, J. Differential Geom. \textbf{26} (1987), no.~2, 285--314.
  \MR{906392}

\bibitem[Gra89]{MR979601}
\bysame, \emph{Shortening embedded curves}, Ann. of Math. (2) \textbf{129}
  (1989), no.~1, 71--111. \MR{979601}

\bibitem[GS22]{MR4376531}
Helge Gl\"ockner and Alexander Schmeding, \emph{Manifolds of mappings on
  {C}artesian products}, Ann. Global Anal. Geom. \textbf{61} (2022), no.~2,
  359--398. \MR{4376531}

\bibitem[GW83]{MR0700132}
Herman Gluck and Frank~W. Warner, \emph{Great circle fibrations of the
  three-sphere}, Duke Math. J. \textbf{50} (1983), no.~1, 107--132. \MR{700132}

\bibitem[Ham82]{MR0656198}
Richard~S. Hamilton, \emph{The inverse function theorem of {N}ash and {M}oser},
  Bull. Amer. Math. Soc. (N.S.) \textbf{7} (1982), no.~1, 65--222. \MR{656198}

\bibitem[Han51]{MR0043459}
Olof Hanner, \emph{Some theorems on absolute neighborhood retracts}, Ark. Mat.
  \textbf{1} (1951), 389--408. \MR{43459}

\bibitem[Han71]{MR0319206}
Vagn~Lundsgaard Hansen, \emph{Some theorems on direct limits of expanding
  sequences of manifolds}, Math. Scand. \textbf{29} (1971), 5--36. \MR{319206}

\bibitem[Han72]{MR0322900}
\bysame, \emph{On the homotopy type of certain spaces of differentiable maps},
  Math. Scand. \textbf{30} (1972), 75--87. \MR{322900}

\bibitem[Hat81]{MR624946}
A.~Hatcher, \emph{On the diffeomorphism group of {$S^{1}\times S^{2}$}}, Proc.
  Amer. Math. Soc. \textbf{83} (1981), no.~2, 427--430. \MR{624946}

\bibitem[Hat83]{MR701256}
Allen~E. Hatcher, \emph{A proof of the {S}male conjecture,
  {${\mathrm{Diff}}(S^{3})\simeq {\mathrm{O}}(4)$}}, Ann. of Math. (2)
  \textbf{117} (1983), no.~3, 553--607. \MR{701256}

\bibitem[Hat02]{MR1867354}
Allen Hatcher, \emph{Algebraic topology}, Cambridge University Press,
  Cambridge, 2002. \MR{1867354}

\bibitem[Hat12]{hatcher201250}
\bysame, \emph{A 50-year view of diffeomorphism groups}, 2012, Slides from a
  talk at the 50th Cornell Topology Festival.

\bibitem[HKMR12]{MR2976322}
Sungbok Hong, John Kalliongis, Darryl McCullough, and J.~Hyam Rubinstein,
  \emph{Diffeomorphisms of elliptic 3-manifolds}, Lecture Notes in Mathematics,
  vol. 2055, Springer, Heidelberg, 2012. \MR{2976322}

\bibitem[HS70]{MR251749}
David~W. Henderson and R.~Schori, \emph{Topological classification of infinite
  dimensional manifolds by homotopy type}, Bull. Amer. Math. Soc. \textbf{76}
  (1970), 121--124. \MR{251749}

\bibitem[Iva76]{MR448370}
N.~V. Ivanov, \emph{Groups of diffeomorphisms of {W}aldhausen manifolds}, Zap.
  Nau\v cn. Sem. Leningrad. Otdel. Mat. Inst. Steklov. (LOMI) \textbf{66}
  (1976), 172--176, 209, Studies in topology, II. \MR{448370}

\bibitem[Iva82]{MR661467}
\bysame, \emph{Homotopy of spaces of diffeomorphisms of some three-dimensional
  manifolds}, Zap. Nauchn. Sem. Leningrad. Otdel. Mat. Inst. Steklov. (LOMI)
  \textbf{122} (1982), 72--103, 164--165, Studies in topology, IV. \MR{661467}

\bibitem[Kad66]{MR0201951}
M.~I. Kadec, \emph{Topological equivalence of all separable {B}anach spaces},
  Dokl. Akad. Nauk SSSR \textbf{167} (1966), 23--25. \MR{201951}

\bibitem[Kad67]{MR0209804}
\bysame, \emph{A proof of the topological equivalence of all separable
  infinite-dimensional {B}anach spaces}, Funkcional. Anal. i Prilo\v{z}en.
  \textbf{1} (1967), 61--70. \MR{209804}

\bibitem[Kar77]{MR442975}
H.~Karcher, \emph{Riemannian center of mass and mollifier smoothing}, Comm.
  Pure Appl. Math. \textbf{30} (1977), no.~5, 509--541. \MR{442975}

\bibitem[Kih23]{MR4632309}
Hiroshi Kihara, \emph{Smooth homotopy of infinite-dimensional
  {$C^\infty$}-manifolds}, Mem. Amer. Math. Soc. \textbf{289} (2023), no.~1436,
  vii+129. \MR{4632309}

\bibitem[KL25]{MR4921571}
Daniel Ketover and Yevgeny Liokumovich, \emph{The {S}male conjecture and
  min-max theory}, Invent. Math. \textbf{241} (2025), no.~1, 1--34.
  \MR{4921571}

\bibitem[KM97]{MR1471480}
Andreas Kriegl and Peter~W. Michor, \emph{The convenient setting of global
  analysis}, Mathematical Surveys and Monographs, vol.~53, American
  Mathematical Society, Providence, RI, 1997. \MR{1471480}

\bibitem[KM20]{MR4157915}
Oleksandra Khokhliuk and Sergiy Maksymenko, \emph{Smooth approximations and
  their applications to homotopy types}, Proc. Int. Geom. Cent. \textbf{13}
  (2020), no.~2, 68--108. \MR{4157915}

\bibitem[KS69]{MR0242166}
R.~C. Kirby and L.~C. Siebenmann, \emph{On the triangulation of manifolds and
  the {H}auptvermutung}, Bull. Amer. Math. Soc. \textbf{75} (1969), 742--749.
  \MR{242166}

\bibitem[KS77]{MR0645390}
Robion~C. Kirby and Laurence~C. Siebenmann, \emph{Foundational essays on
  topological manifolds, smoothings, and triangulations}, Annals of Mathematics
  Studies, vol. No. 88, Princeton University Press, Princeton, NJ; University
  of Tokyo Press, Tokyo, 1977, With notes by John Milnor and Michael Atiyah.
  \MR{645390}

\bibitem[KW09]{MR2456522}
Boris Khesin and Robert Wendt, \emph{The geometry of infinite-dimensional
  groups}, Ergebnisse der Mathematik und ihrer Grenzgebiete. 3. Folge. A Series
  of Modern Surveys in Mathematics [Results in Mathematics and Related Areas.
  3rd Series. A Series of Modern Surveys in Mathematics], vol.~51,
  Springer-Verlag, Berlin, 2009. \MR{2456522}

\bibitem[Lee13]{MR2954043}
John~M. Lee, \emph{Introduction to smooth manifolds}, second ed., Graduate
  Texts in Mathematics, vol. 218, Springer, New York, 2013. \MR{2954043}

\bibitem[Mat02]{MR1873233}
Yukio Matsumoto, \emph{An introduction to {M}orse theory}, Translations of
  Mathematical Monographs, vol. 208, American Mathematical Society, Providence,
  RI, 2002, Translated from the 1997 Japanese original by Kiki Hudson and
  Masahico Saito, Iwanami Series in Modern Mathematics. \MR{1873233}

\bibitem[Mic80]{MR0583436}
Peter~W. Michor, \emph{Manifolds of differentiable mappings}, Shiva Mathematics
  Series, vol.~3, Shiva Publishing Ltd., Nantwich, 1980. \MR{583436}

\bibitem[Mil59]{MR0100267}
John Milnor, \emph{On spaces having the homotopy type of a
  {$\textup{CW}$}-complex}, Trans. Amer. Math. Soc.h \textbf{90} (1959),
  272--280. \MR{100267}

\bibitem[Mil84]{MR830252}
J.~Milnor, \emph{Remarks on infinite-dimensional {L}ie groups}, Relativity,
  groups and topology, {II} ({L}es {H}ouches, 1983), North-Holland, Amsterdam,
  1984, pp.~1007--1057. \MR{830252}

\bibitem[MS13]{MR3024309}
Darryl McCullough and Teruhiko Soma, \emph{The {S}male conjecture for {S}eifert
  fibered spaces with hyperbolic base orbifold}, J. Differential Geom.
  \textbf{93} (2013), no.~2, 327--353. \MR{3024309}

\bibitem[MVSC00]{MR1748878}
E.~Macias-Virg\'{o}s and E.~Sanmart\'{\i}n~Carb\'{o}n, \emph{Manifolds of maps
  in {R}iemannian foliations}, Geom. Dedicata \textbf{79} (2000), no.~2,
  143--156. \MR{1748878}

\bibitem[MW09]{MR2574141}
Christoph M\"{u}ller and Christoph Wockel, \emph{Equivalences of smooth and
  continuous principal bundles with infinite-dimensional structure group}, Adv.
  Geom. \textbf{9} (2009), no.~4, 605--626. \MR{2574141}

\bibitem[Nee02]{MR1935553}
Karl-Hermann Neeb, \emph{Central extensions of infinite-dimensional {L}ie
  groups}, Ann. Inst. Fourier (Grenoble) \textbf{52} (2002), no.~5, 1365--1442.
  \MR{1935553}

\bibitem[Nee06]{MR2261066}
\bysame, \emph{Towards a {L}ie theory of locally convex groups}, Jpn. J. Math.
  \textbf{1} (2006), no.~2, 291--468. \MR{2261066}

\bibitem[Nee11]{MR2743767}
\bysame, \emph{Lie groups of bundle automorphisms and their extensions},
  Developments and trends in infinite-dimensional {L}ie theory, Progr. Math.,
  vol. 288, Birkh\"auser Boston, Boston, MA, 2011, pp.~281--338. \MR{2743767}

\bibitem[Omo78]{MR0579603}
Hideki Omori, \emph{On {B}anach-{L}ie groups acting on finite dimensional
  manifolds}, Tohoku Math. J. (2) \textbf{30} (1978), no.~2, 223--250.
  \MR{579603}

\bibitem[Pal66]{MR189028}
Richard~S. Palais, \emph{Homotopy theory of infinite dimensional manifolds},
  Topology \textbf{5} (1966), 1--16. \MR{189028}

\bibitem[Qui82]{MR0679069}
Frank Quinn, \emph{Ends of maps. {III}. {D}imensions {$4$} and {$5$}}, J.
  Differential Geometry \textbf{17} (1982), no.~3, 503--521. \MR{679069}

\bibitem[Sak20]{MR4179591}
Katsuro Sakai, \emph{Topology of infinite-dimensional manifolds}, Springer
  Monographs in Mathematics, Springer, Singapore, 2020. \MR{4179591}

\bibitem[Sch23]{MR4505843}
Alexander Schmeding, \emph{An introduction to infinite-dimensional differential
  geometry}, Cambridge Studies in Advanced Mathematics, vol. 202, Cambridge
  University Press, Cambridge, 2023. \MR{4505843}

\bibitem[Sma59]{MR112149}
Stephen Smale, \emph{Diffeomorphisms of the {$2$}-sphere}, Proc. Amer. Math.
  Soc. \textbf{10} (1959), 621--626. \MR{112149}

\bibitem[Smo06]{MR2568571}
N.~K. Smolentsev, \emph{Diffeomorphism groups of compact manifolds}, Sovrem.
  Mat. Prilozh. (2006), no.~37, 3--100. \MR{2568571}

\bibitem[Spa59]{MR105106}
E.~Spanier, \emph{Infinite symmetric products, function spaces, and duality},
  Ann. of Math. (2) \textbf{69} (1959), 142--198[ erratum, 733. \MR{105106}

\bibitem[Whi36]{MR1503303}
Hassler Whitney, \emph{Differentiable manifolds}, Ann. of Math. (2) \textbf{37}
  (1936), no.~3, 645--680. \MR{1503303}

\bibitem[Whi40]{MR0002545}
J.~H.~C. Whitehead, \emph{On {$C^1$}-complexes}, Ann. of Math. (2) \textbf{41}
  (1940), 809--824. \MR{2545}

\bibitem[Whi44]{MR0010274}
Hassler Whitney, \emph{The self-intersections of a smooth {$n$}-manifold in
  {$2n$}-space}, Ann. of Math. (2) \textbf{45} (1944), 220--246. \MR{10274}

\bibitem[Whi49]{MR0030759}
J.~H.~C. Whitehead, \emph{Combinatorial homotopy. {I}}, Bull. Amer. Math. Soc.
  \textbf{55} (1949), 213--245. \MR{30759}

\bibitem[Whi50]{MR0035997}
\bysame, \emph{A certain exact sequence}, Ann. of Math. (2) \textbf{52} (1950),
  51--110. \MR{35997}

\bibitem[Woc07]{MR2353707}
Christoph Wockel, \emph{Lie group structures on symmetry groups of principal
  bundles}, J. Funct. Anal. \textbf{251} (2007), no.~1, 254--288. \MR{2353707}

\bibitem[Woc09]{MR2591666}
\bysame, \emph{A generalization of {S}teenrod's approximation theorem}, Arch.
  Math. (Brno) \textbf{45} (2009), no.~2, 95--104. \MR{2591666}

\bibitem[WY24]{wang2024homotopytypespacefiberings}
Yi~Wang and Jingye Yang, \emph{On the homotopy type of the space of fiberings
  of $s^1 \times s^2$ by simple closed curves}, 2024.

\end{thebibliography}
\phantomsection\addcontentsline{toc}{chapter}{\bibname}
\makeatletter
\@ifpackageloaded{latexml}{\newcommand{\etalchar}[1]{$^{#1}$}
\providecommand{\bysame}{\leavevmode\hbox to3em{\hrulefill}\thinspace}
\providecommand{\MR}{\relax\ifhmode\unskip\space\fi MR }
\providecommand{\MRhref}[2]{%
  \href{http://www.ams.org/mathscinet-getitem?mr=#1}{#2}
}
\providecommand{\href}[2]{#2}

}{
\begin{bibdiv}
\begin{biblist}

\bib{MR2736346}{article}{
      author={Afsari, Bijan},
       title={Riemannian {$L^p$} center of mass: existence, uniqueness, and
  convexity},
        date={2011},
        ISSN={0002-9939},
     journal={Proc. Amer. Math. Soc.},
      volume={139},
      number={2},
       pages={655\ndash 673},
  url={https://doi-org.proxy.library.upenn.edu/10.1090/S0002-9939-2010-10541-5},
      review={\MR{2736346}},
}

\bib{MR4188745}{article}{
      author={Amiri, Habib},
      author={Gl\"{o}ckner, Helge},
      author={Schmeding, Alexander},
       title={Lie groupoids of mappings taking values in a {L}ie groupoid},
        date={2020},
        ISSN={0044-8753},
     journal={Arch. Math. (Brno)},
      volume={56},
      number={5},
       pages={307\ndash 356},
         url={https://doi.org/10.5817/am2020-5-307},
      review={\MR{4188745}},
}

\bib{MR0190888}{article}{
      author={Anderson, R.~D.},
       title={Hilbert space is homeomorphic to the countable infinite product
  of lines},
        date={1966},
        ISSN={0002-9904},
     journal={Bull. Amer. Math. Soc.},
      volume={72},
       pages={515\ndash 519},
         url={https://doi.org/10.1090/S0002-9904-1966-11524-0},
      review={\MR{190888}},
}

\bib{MR0205212}{article}{
      author={Anderson, R.~D.},
       title={Topological properties of the {H}ilbert cube and the infinite
  product of open intervals},
        date={1967},
        ISSN={0002-9947,1088-6850},
     journal={Trans. Amer. Math. Soc.},
      volume={126},
       pages={200\ndash 216},
         url={https://doi.org/10.2307/1994447},
      review={\MR{205212}},
}

\bib{MR202082}{article}{
      author={Arnold, V.},
       title={Sur la g\'eom\'etrie diff\'erentielle des groupes de {L}ie de
  dimension infinie et ses applications \`a{} l'hydrodynamique des fluides
  parfaits},
        date={1966},
        ISSN={0373-0956,1777-5310},
     journal={Ann. Inst. Fourier (Grenoble)},
      volume={16},
       pages={319\ndash 361},
         url={http://www.numdam.org/item?id=AIF_1966__16_1_319_0},
      review={\MR{202082}},
}

\bib{bamler2019ricci}{article}{
      author={Bamler, Richard~H.},
      author={Kleiner, Bruce},
       title={Ricci flow and contractibility of spaces of metrics},
        date={2019},
      eprint={1909.08710},
}

\bib{MR4536904}{article}{
      author={Bamler, Richard~H.},
      author={Kleiner, Bruce},
       title={Ricci flow and diffeomorphism groups of 3-manifolds},
        date={2023},
        ISSN={0894-0347},
     journal={J. Amer. Math. Soc.},
      volume={36},
      number={2},
       pages={563\ndash 589},
         url={https://doi-org.proxy.library.upenn.edu/10.1090/jams/1003},
      review={\MR{4536904}},
}

\bib{MR4685084}{article}{
      author={Bamler, Richard~H.},
      author={Kleiner, Bruce},
       title={Diffeomorphism groups of prime 3-manifolds},
        date={2024},
        ISSN={0075-4102,1435-5345},
     journal={J. Reine Angew. Math.},
      volume={806},
       pages={23\ndash 35},
         url={https://doi.org/10.1515/crelle-2023-0069},
      review={\MR{4685084}},
}

\bib{MR1445290}{book}{
      author={Banyaga, Augustin},
       title={The structure of classical diffeomorphism groups},
      series={Mathematics and its Applications},
   publisher={Kluwer Academic Publishers Group, Dordrecht},
        date={1997},
      volume={400},
        ISBN={0-7923-4475-8},
         url={https://doi.org/10.1007/978-1-4757-6800-8},
      review={\MR{1445290}},
}

\bib{MR0613004}{incollection}{
      author={Binz, E.},
      author={Fischer, H.~R.},
       title={The manifold of embeddings of a closed manifold},
        date={1981},
   booktitle={Differential geometric methods in mathematical physics ({P}roc.
  {I}nternat. {C}onf., {T}ech. {U}niv. {C}lausthal, {C}lausthal-{Z}ellerfeld,
  1978)},
      series={Lecture Notes in Phys.},
      volume={139},
   publisher={Springer, Berlin-New York},
       pages={310\ndash 329},
        note={With an appendix by P. Michor},
      review={\MR{613004}},
}

\bib{MR1563139}{article}{
      author={Cairns, S.~S.},
       title={Triangulation of the manifold of class one},
        date={1935},
        ISSN={0002-9904},
     journal={Bull. Amer. Math. Soc.},
      volume={41},
      number={8},
       pages={549\ndash 552},
         url={https://doi.org/10.1090/S0002-9904-1935-06140-3},
      review={\MR{1563139}},
}

\bib{MR229250}{book}{
      author={Cerf, Jean},
       title={Sur les diff\'eomorphismes de la sph\`ere de dimension trois
  {$(\Gamma \sb{4}=0)$}},
      series={Lecture Notes in Mathematics},
   publisher={Springer-Verlag, Berlin-New York},
        date={1968},
      volume={No. 53},
      review={\MR{229250}},
}

\bib{MR309010}{article}{
      author={Cheeger, Jeff},
      author={Gromoll, Detlef},
       title={On the structure of complete manifolds of nonnegative curvature},
        date={1972},
        ISSN={0003-486X},
     journal={Ann. of Math. (2)},
      volume={96},
       pages={413\ndash 443},
         url={https://doi.org/10.2307/1970819},
      review={\MR{309010}},
}

\bib{MR0321110}{article}{
      author={Dax, Jean-Pierre},
       title={\'{E}tude homotopique des espaces de plongements},
        date={1972},
        ISSN={0012-9593},
     journal={Ann. Sci. \'{E}cole Norm. Sup. (4)},
      volume={5},
       pages={303\ndash 377},
         url={http://www.numdam.org/item?id=ASENS_1972_4_5_2_303_0},
      review={\MR{321110}},
}

\bib{deturck2025homotopytypespacefibrations}{article}{
      author={Deturck, Dennis},
      author={Fang, Ziqi},
      author={Gluck, Herman},
      author={Lichtenfelz, Leandro},
      author={Merling, Mona},
      author={Wang, Yi},
      author={Yang, Jingye},
       title={Homotopy type of the space of fibrations of the three-sphere by
  simple closed curves},
        date={2025},
      eprint={2508.01185},
         url={https://arxiv.org/abs/2508.01185},
}

\bib{MR3943433}{article}{
      author={Diez, Tobias},
      author={Rudolph, Gerd},
       title={Slice theorem and orbit type stratification in infinite
  dimensions},
        date={2019},
        ISSN={0926-2245,1872-6984},
     journal={Differential Geom. Appl.},
      volume={65},
       pages={176\ndash 211},
         url={https://doi.org/10.1016/j.difgeo.2019.03.005},
      review={\MR{3943433}},
}

\bib{MR276999}{article}{
      author={Earle, Clifford~J.},
      author={Eells, James},
       title={A fibre bundle description of {T}eichm\"{u}ller theory},
        date={1969},
        ISSN={0022-040X},
     journal={J. Differential Geometry},
      volume={3},
       pages={19\ndash 43},
         url={http://projecteuclid.org/euclid.jdg/1214428816},
      review={\MR{276999}},
}

\bib{MR0271984}{article}{
      author={Ebin, David~G.},
      author={Marsden, Jerrold},
       title={Groups of diffeomorphisms and the motion of an incompressible
  fluid},
        date={1970},
        ISSN={0003-486X},
     journal={Ann. of Math. (2)},
      volume={92},
       pages={102\ndash 163},
         url={https://doi.org/10.2307/1970699},
      review={\MR{271984}},
}

\bib{me2024thesis}{thesis}{
      author={Fang, Ziqi},
       title={On topology and geometry of the infinite-dimensional space of
  fibrations},
        type={Ph.D. Thesis},
        date={2024},
}

\bib{MR1895350}{article}{
      author={Gabai, David},
       title={The {S}male conjecture for hyperbolic 3-manifolds: {${\rm
  Isom}(M^3)\simeq{\rm Diff}(M^3)$}},
        date={2001},
        ISSN={0022-040X,1945-743X},
     journal={J. Differential Geom.},
      volume={58},
      number={1},
       pages={113\ndash 149},
         url={http://projecteuclid.org/euclid.jdg/1090348284},
      review={\MR{1895350}},
}

\bib{MR840401}{article}{
      author={Gage, M.},
      author={Hamilton, R.~S.},
       title={The heat equation shrinking convex plane curves},
        date={1986},
        ISSN={0022-040X},
     journal={J. Differential Geom.},
      volume={23},
      number={1},
       pages={69\ndash 96},
         url={http://projecteuclid.org/euclid.jdg/1214439902},
      review={\MR{840401}},
}

\bib{MR742856}{article}{
      author={Gage, M.~E.},
       title={Curve shortening makes convex curves circular},
        date={1984},
        ISSN={0020-9910},
     journal={Invent. Math.},
      volume={76},
      number={2},
       pages={357\ndash 364},
         url={https://doi.org/10.1007/BF01388602},
      review={\MR{742856}},
}

\bib{MR1046497}{article}{
      author={Gage, Michael~E.},
       title={Curve shortening on surfaces},
        date={1990},
        ISSN={0012-9593},
     journal={Ann. Sci. \'{E}cole Norm. Sup. (4)},
      volume={23},
      number={2},
       pages={229\ndash 256},
         url={http://www.numdam.org/item?id=ASENS_1990_4_23_2_229_0},
      review={\MR{1046497}},
}

\bib{MR2269430}{article}{
      author={Glockner, Helge},
       title={Implicit functions from topological vector spaces to {B}anach
  spaces},
        date={2006},
        ISSN={0021-2172,1565-8511},
     journal={Israel J. Math.},
      volume={155},
       pages={205\ndash 252},
         url={https://doi.org/10.1007/BF02773955},
      review={\MR{2269430}},
}

\bib{glockner2016fundamentalssubmersionsimmersionsinfinitedimensional}{article}{
      author={Glockner, Helge},
       title={Fundamentals of submersions and immersions between
  infinite-dimensional manifolds},
        date={2016},
      eprint={1502.05795},
         url={https://arxiv.org/abs/1502.05795},
}

\bib{GlocknerNeeb202x}{book}{
      author={Gl\"ockner, Helge},
      author={Neeb, Karl-Hermann},
       title={Infinite-dimensional lie groups},
        date={forthcoming},
}

\bib{MR4376531}{article}{
      author={Gl\"ockner, Helge},
      author={Schmeding, Alexander},
       title={Manifolds of mappings on {C}artesian products},
        date={2022},
        ISSN={0232-704X,1572-9060},
     journal={Ann. Global Anal. Geom.},
      volume={61},
      number={2},
       pages={359\ndash 398},
         url={https://doi.org/10.1007/s10455-021-09816-y},
      review={\MR{4376531}},
}

\bib{MR0700132}{article}{
      author={Gluck, Herman},
      author={Warner, Frank~W.},
       title={Great circle fibrations of the three-sphere},
        date={1983},
        ISSN={0012-7094,1547-7398},
     journal={Duke Math. J.},
      volume={50},
      number={1},
       pages={107\ndash 132},
         url={http://projecteuclid.org/euclid.dmj/1077303001},
      review={\MR{700132}},
}

\bib{MR906392}{article}{
      author={Grayson, Matthew~A.},
       title={The heat equation shrinks embedded plane curves to round points},
        date={1987},
        ISSN={0022-040X},
     journal={J. Differential Geom.},
      volume={26},
      number={2},
       pages={285\ndash 314},
         url={http://projecteuclid.org/euclid.jdg/1214441371},
      review={\MR{906392}},
}

\bib{MR979601}{article}{
      author={Grayson, Matthew~A.},
       title={Shortening embedded curves},
        date={1989},
        ISSN={0003-486X},
     journal={Ann. of Math. (2)},
      volume={129},
      number={1},
       pages={71\ndash 111},
         url={https://doi.org/10.2307/1971486},
      review={\MR{979601}},
}

\bib{MR356104}{article}{
      author={Grove, Karsten},
      author={Karcher, Hermann},
       title={How to conjugate {$C^{1}$}-close group actions},
        date={1973},
        ISSN={0025-5874},
     journal={Math. Z.},
      volume={132},
       pages={11\ndash 20},
         url={https://doi-org.proxy.library.upenn.edu/10.1007/BF01214029},
      review={\MR{356104}},
}

\bib{MR0656198}{article}{
      author={Hamilton, Richard~S.},
       title={The inverse function theorem of {N}ash and {M}oser},
        date={1982},
        ISSN={0273-0979,1088-9485},
     journal={Bull. Amer. Math. Soc. (N.S.)},
      volume={7},
      number={1},
       pages={65\ndash 222},
         url={https://doi.org/10.1090/S0273-0979-1982-15004-2},
      review={\MR{656198}},
}

\bib{MR0043459}{article}{
      author={Hanner, Olof},
       title={Some theorems on absolute neighborhood retracts},
        date={1951},
        ISSN={0004-2080,1871-2487},
     journal={Ark. Mat.},
      volume={1},
       pages={389\ndash 408},
         url={https://doi.org/10.1007/BF02591376},
      review={\MR{43459}},
}

\bib{MR0319206}{article}{
      author={Hansen, Vagn~Lundsgaard},
       title={Some theorems on direct limits of expanding sequences of
  manifolds},
        date={1971},
        ISSN={0025-5521,1903-1807},
     journal={Math. Scand.},
      volume={29},
       pages={5\ndash 36},
         url={https://doi.org/10.7146/math.scand.a-11031},
      review={\MR{319206}},
}

\bib{MR0322900}{article}{
      author={Hansen, Vagn~Lundsgaard},
       title={On the homotopy type of certain spaces of differentiable maps},
        date={1972},
        ISSN={0025-5521,1903-1807},
     journal={Math. Scand.},
      volume={30},
       pages={75\ndash 87},
         url={https://doi.org/10.7146/math.scand.a-11065},
      review={\MR{322900}},
}

\bib{MR624946}{article}{
      author={Hatcher, A.},
       title={On the diffeomorphism group of {$S^{1}\times S^{2}$}},
        date={1981},
        ISSN={0002-9939},
     journal={Proc. Amer. Math. Soc.},
      volume={83},
      number={2},
       pages={427\ndash 430},
         url={https://doi.org/10.2307/2043543},
      review={\MR{624946}},
}

\bib{MR1867354}{book}{
      author={Hatcher, Allen},
       title={Algebraic topology},
   publisher={Cambridge University Press, Cambridge},
        date={2002},
        ISBN={0-521-79160-X; 0-521-79540-0},
      review={\MR{1867354}},
}

\bib{hatcher201250}{misc}{
      author={Hatcher, Allen},
       title={A 50-year view of diffeomorphism groups},
        date={2012},
        note={Slides from a talk at the 50th Cornell Topology Festival.},
}

\bib{MR701256}{article}{
      author={Hatcher, Allen~E.},
       title={A proof of the {S}male conjecture, {${\mathrm{Diff}}(S^{3})\simeq
  {\mathrm{O}}(4)$}},
        date={1983},
        ISSN={0003-486X},
     journal={Ann. of Math. (2)},
      volume={117},
      number={3},
       pages={553\ndash 607},
         url={https://doi.org/10.2307/2007035},
      review={\MR{701256}},
}

\bib{MR251749}{article}{
      author={Henderson, David~W.},
      author={Schori, R.},
       title={Topological classification of infinite dimensional manifolds by
  homotopy type},
        date={1970},
        ISSN={0002-9904},
     journal={Bull. Amer. Math. Soc.},
      volume={76},
       pages={121\ndash 124},
         url={https://doi.org/10.1090/S0002-9904-1970-12392-8},
      review={\MR{251749}},
}

\bib{MR2976322}{book}{
      author={Hong, Sungbok},
      author={Kalliongis, John},
      author={McCullough, Darryl},
      author={Rubinstein, J.~Hyam},
       title={Diffeomorphisms of elliptic 3-manifolds},
      series={Lecture Notes in Mathematics},
   publisher={Springer, Heidelberg},
        date={2012},
      volume={2055},
        ISBN={978-3-642-31563-3; 978-3-642-31564-0},
         url={https://doi.org/10.1007/978-3-642-31564-0},
      review={\MR{2976322}},
}

\bib{MR448370}{article}{
      author={Ivanov, N.~V.},
       title={Groups of diffeomorphisms of {W}aldhausen manifolds},
        date={1976},
     journal={Zap. Nau\v cn. Sem. Leningrad. Otdel. Mat. Inst. Steklov.
  (LOMI)},
      volume={66},
       pages={172\ndash 176, 209},
        note={Studies in topology, II},
      review={\MR{448370}},
}

\bib{MR661467}{article}{
      author={Ivanov, N.~V.},
       title={Homotopy of spaces of diffeomorphisms of some three-dimensional
  manifolds},
        date={1982},
        ISSN={0373-2703},
     journal={Zap. Nauchn. Sem. Leningrad. Otdel. Mat. Inst. Steklov. (LOMI)},
      volume={122},
       pages={72\ndash 103, 164\ndash 165},
        note={Studies in topology, IV},
      review={\MR{661467}},
}

\bib{MR0201951}{article}{
      author={Kadec, M.~I.},
       title={Topological equivalence of all separable {B}anach spaces},
        date={1966},
        ISSN={0002-3264},
     journal={Dokl. Akad. Nauk SSSR},
      volume={167},
       pages={23\ndash 25},
      review={\MR{201951}},
}

\bib{MR0209804}{article}{
      author={Kadec, M.~I.},
       title={A proof of the topological equivalence of all separable
  infinite-dimensional {B}anach spaces},
        date={1967},
        ISSN={0374-1990},
     journal={Funkcional. Anal. i Prilo\v{z}en.},
      volume={1},
       pages={61\ndash 70},
      review={\MR{209804}},
}

\bib{MR442975}{article}{
      author={Karcher, H.},
       title={Riemannian center of mass and mollifier smoothing},
        date={1977},
        ISSN={0010-3640},
     journal={Comm. Pure Appl. Math.},
      volume={30},
      number={5},
       pages={509\ndash 541},
         url={https://doi-org.proxy.library.upenn.edu/10.1002/cpa.3160300502},
      review={\MR{442975}},
}

\bib{MR4921571}{article}{
      author={Ketover, Daniel},
      author={Liokumovich, Yevgeny},
       title={The {S}male conjecture and min-max theory},
        date={2025},
        ISSN={0020-9910,1432-1297},
     journal={Invent. Math.},
      volume={241},
      number={1},
       pages={1\ndash 34},
         url={https://doi.org/10.1007/s00222-025-01334-z},
      review={\MR{4921571}},
}

\bib{MR2456522}{book}{
      author={Khesin, Boris},
      author={Wendt, Robert},
       title={The geometry of infinite-dimensional groups},
      series={Ergebnisse der Mathematik und ihrer Grenzgebiete. 3. Folge. A
  Series of Modern Surveys in Mathematics [Results in Mathematics and Related
  Areas. 3rd Series. A Series of Modern Surveys in Mathematics]},
   publisher={Springer-Verlag, Berlin},
        date={2009},
      volume={51},
        ISBN={978-3-540-77262-0},
      review={\MR{2456522}},
}

\bib{MR4157915}{article}{
      author={Khokhliuk, Oleksandra},
      author={Maksymenko, Sergiy},
       title={Smooth approximations and their applications to homotopy types},
        date={2020},
        ISSN={2072-9812},
     journal={Proc. Int. Geom. Cent.},
      volume={13},
      number={2},
       pages={68\ndash 108},
         url={https://doi.org/10.15673/tmgc.v13i2.1781},
      review={\MR{4157915}},
}

\bib{MR4632309}{article}{
      author={Kihara, Hiroshi},
       title={Smooth homotopy of infinite-dimensional {$C^\infty$}-manifolds},
        date={2023},
        ISSN={0065-9266,1947-6221},
     journal={Mem. Amer. Math. Soc.},
      volume={289},
      number={1436},
       pages={vii+129},
         url={https://doi.org/10.1090/memo/1436},
      review={\MR{4632309}},
}

\bib{MR0242166}{article}{
      author={Kirby, R.~C.},
      author={Siebenmann, L.~C.},
       title={On the triangulation of manifolds and the {H}auptvermutung},
        date={1969},
        ISSN={0002-9904},
     journal={Bull. Amer. Math. Soc.},
      volume={75},
       pages={742\ndash 749},
         url={https://doi.org/10.1090/S0002-9904-1969-12271-8},
      review={\MR{242166}},
}

\bib{MR0645390}{book}{
      author={Kirby, Robion~C.},
      author={Siebenmann, Laurence~C.},
       title={Foundational essays on topological manifolds, smoothings, and
  triangulations},
      series={Annals of Mathematics Studies},
   publisher={Princeton University Press, Princeton, NJ; University of Tokyo
  Press, Tokyo},
        date={1977},
      volume={No. 88},
        note={With notes by John Milnor and Michael Atiyah},
      review={\MR{645390}},
}

\bib{MR1471480}{book}{
      author={Kriegl, Andreas},
      author={Michor, Peter~W.},
       title={The convenient setting of global analysis},
      series={Mathematical Surveys and Monographs},
   publisher={American Mathematical Society, Providence, RI},
        date={1997},
      volume={53},
        ISBN={0-8218-0780-3},
         url={https://doi.org/10.1090/surv/053},
      review={\MR{1471480}},
}

\bib{MR2954043}{book}{
      author={Lee, John~M.},
       title={Introduction to smooth manifolds},
     edition={Second},
      series={Graduate Texts in Mathematics},
   publisher={Springer, New York},
        date={2013},
      volume={218},
        ISBN={978-1-4419-9981-8},
      review={\MR{2954043}},
}

\bib{MR1748878}{article}{
      author={Macias-Virg\'{o}s, E.},
      author={Sanmart\'{\i}n~Carb\'{o}n, E.},
       title={Manifolds of maps in {R}iemannian foliations},
        date={2000},
        ISSN={0046-5755,1572-9168},
     journal={Geom. Dedicata},
      volume={79},
      number={2},
       pages={143\ndash 156},
         url={https://doi.org/10.1023/A:1005217109018},
      review={\MR{1748878}},
}

\bib{MR1873233}{book}{
      author={Matsumoto, Yukio},
       title={An introduction to {M}orse theory},
      series={Translations of Mathematical Monographs},
   publisher={American Mathematical Society, Providence, RI},
        date={2002},
      volume={208},
        ISBN={0-8218-1022-7},
         url={https://doi.org/10.1090/mmono/208},
        note={Translated from the 1997 Japanese original by Kiki Hudson and
  Masahico Saito, Iwanami Series in Modern Mathematics},
      review={\MR{1873233}},
}

\bib{MR3024309}{article}{
      author={McCullough, Darryl},
      author={Soma, Teruhiko},
       title={The {S}male conjecture for {S}eifert fibered spaces with
  hyperbolic base orbifold},
        date={2013},
        ISSN={0022-040X,1945-743X},
     journal={J. Differential Geom.},
      volume={93},
      number={2},
       pages={327\ndash 353},
         url={http://projecteuclid.org/euclid.jdg/1361800869},
      review={\MR{3024309}},
}

\bib{MR0583436}{book}{
      author={Michor, Peter~W.},
       title={Manifolds of differentiable mappings},
      series={Shiva Mathematics Series},
   publisher={Shiva Publishing Ltd., Nantwich},
        date={1980},
      volume={3},
        ISBN={0-906812-03-8},
      review={\MR{583436}},
}

\bib{MR830252}{incollection}{
      author={Milnor, J.},
       title={Remarks on infinite-dimensional {L}ie groups},
        date={1984},
   booktitle={Relativity, groups and topology, {II} ({L}es {H}ouches, 1983)},
   publisher={North-Holland, Amsterdam},
       pages={1007\ndash 1057},
      review={\MR{830252}},
}

\bib{MR0100267}{article}{
      author={Milnor, John},
       title={On spaces having the homotopy type of a {$\textup{CW}$}-complex},
        date={1959},
        ISSN={0002-9947,1088-6850},
     journal={Trans. Amer. Math. Soc.h},
      volume={90},
       pages={272\ndash 280},
         url={https://doi.org/10.2307/1993204},
      review={\MR{100267}},
}

\bib{MR2574141}{article}{
      author={M\"{u}ller, Christoph},
      author={Wockel, Christoph},
       title={Equivalences of smooth and continuous principal bundles with
  infinite-dimensional structure group},
        date={2009},
        ISSN={1615-715X},
     journal={Adv. Geom.},
      volume={9},
      number={4},
       pages={605\ndash 626},
         url={https://doi.org/10.1515/ADVGEOM.2009.032},
      review={\MR{2574141}},
}

\bib{MR1935553}{article}{
      author={Neeb, Karl-Hermann},
       title={Central extensions of infinite-dimensional {L}ie groups},
        date={2002},
        ISSN={0373-0956},
     journal={Ann. Inst. Fourier (Grenoble)},
      volume={52},
      number={5},
       pages={1365\ndash 1442},
         url={http://aif.cedram.org/item?id=AIF_2002__52_5_1365_0},
      review={\MR{1935553}},
}

\bib{MR2261066}{article}{
      author={Neeb, Karl-Hermann},
       title={Towards a {L}ie theory of locally convex groups},
        date={2006},
        ISSN={0289-2316,1861-3624},
     journal={Jpn. J. Math.},
      volume={1},
      number={2},
       pages={291\ndash 468},
         url={https://doi.org/10.1007/s11537-006-0606-y},
      review={\MR{2261066}},
}

\bib{MR2743767}{incollection}{
      author={Neeb, Karl-Hermann},
       title={Lie groups of bundle automorphisms and their extensions},
        date={2011},
   booktitle={Developments and trends in infinite-dimensional {L}ie theory},
      series={Progr. Math.},
      volume={288},
   publisher={Birkh\"auser Boston, Boston, MA},
       pages={281\ndash 338},
         url={https://doi.org/10.1007/978-0-8176-4741-4_9},
      review={\MR{2743767}},
}

\bib{MR0579603}{article}{
      author={Omori, Hideki},
       title={On {B}anach-{L}ie groups acting on finite dimensional manifolds},
        date={1978},
        ISSN={0040-8735,2186-585X},
     journal={Tohoku Math. J. (2)},
      volume={30},
      number={2},
       pages={223\ndash 250},
         url={https://doi.org/10.2748/tmj/1178230027},
      review={\MR{579603}},
}

\bib{MR189028}{article}{
      author={Palais, Richard~S.},
       title={Homotopy theory of infinite dimensional manifolds},
        date={1966},
        ISSN={0040-9383},
     journal={Topology},
      volume={5},
       pages={1\ndash 16},
         url={https://doi.org/10.1016/0040-9383(66)90002-4},
      review={\MR{189028}},
}

\bib{MR0679069}{article}{
      author={Quinn, Frank},
       title={Ends of maps. {III}. {D}imensions {$4$} and {$5$}},
        date={1982},
        ISSN={0022-040X,1945-743X},
     journal={J. Differential Geometry},
      volume={17},
      number={3},
       pages={503\ndash 521},
         url={http://projecteuclid.org/euclid.jdg/1214437139},
      review={\MR{679069}},
}

\bib{MR4179591}{book}{
      author={Sakai, Katsuro},
       title={Topology of infinite-dimensional manifolds},
      series={Springer Monographs in Mathematics},
   publisher={Springer, Singapore},
        date={2020},
        ISBN={978-981-15-7575-4; 978-981-15-7574-7},
         url={https://doi.org/10.1007/978-981-15-7575-4},
      review={\MR{4179591}},
}

\bib{MR4505843}{book}{
      author={Schmeding, Alexander},
       title={An introduction to infinite-dimensional differential geometry},
      series={Cambridge Studies in Advanced Mathematics},
   publisher={Cambridge University Press, Cambridge},
        date={2023},
      volume={202},
        ISBN={978-1-316-51488-7},
      review={\MR{4505843}},
}

\bib{MR112149}{article}{
      author={Smale, Stephen},
       title={Diffeomorphisms of the {$2$}-sphere},
        date={1959},
        ISSN={0002-9939},
     journal={Proc. Amer. Math. Soc.},
      volume={10},
       pages={621\ndash 626},
         url={https://doi.org/10.2307/2033664},
      review={\MR{112149}},
}

\bib{MR2568571}{article}{
      author={Smolentsev, N.~K.},
       title={Diffeomorphism groups of compact manifolds},
        date={2006},
        ISSN={1512-1712},
     journal={Sovrem. Mat. Prilozh.},
      number={37},
       pages={3\ndash 100},
         url={https://doi.org/10.1007/s10958-007-0471-0},
      review={\MR{2568571}},
}

\bib{MR105106}{article}{
      author={Spanier, E.},
       title={Infinite symmetric products, function spaces, and duality},
        date={1959},
        ISSN={0003-486X},
     journal={Ann. of Math. (2)},
      volume={69},
       pages={142\ndash 198[ erratum, 733},
         url={https://doi.org/10.2307/1970099},
      review={\MR{105106}},
}

\bib{wang2024homotopytypespacefiberings}{misc}{
      author={Wang, Yi},
      author={Yang, Jingye},
       title={On the homotopy type of the space of fiberings of $s^1 \times
  s^2$ by simple closed curves},
        date={2024},
         url={https://arxiv.org/abs/2404.08545},
}

\bib{MR0002545}{article}{
      author={Whitehead, J. H.~C.},
       title={On {$C^1$}-complexes},
        date={1940},
        ISSN={0003-486X},
     journal={Ann. of Math. (2)},
      volume={41},
       pages={809\ndash 824},
         url={https://doi.org/10.2307/1968861},
      review={\MR{2545}},
}

\bib{MR0030759}{article}{
      author={Whitehead, J. H.~C.},
       title={Combinatorial homotopy. {I}},
        date={1949},
        ISSN={0002-9904},
     journal={Bull. Amer. Math. Soc.},
      volume={55},
       pages={213\ndash 245},
         url={https://doi.org/10.1090/S0002-9904-1949-09175-9},
      review={\MR{30759}},
}

\bib{MR0035997}{article}{
      author={Whitehead, J. H.~C.},
       title={A certain exact sequence},
        date={1950},
        ISSN={0003-486X},
     journal={Ann. of Math. (2)},
      volume={52},
       pages={51\ndash 110},
         url={https://doi.org/10.2307/1969511},
      review={\MR{35997}},
}

\bib{MR1503303}{article}{
      author={Whitney, Hassler},
       title={Differentiable manifolds},
        date={1936},
        ISSN={0003-486X,1939-8980},
     journal={Ann. of Math. (2)},
      volume={37},
      number={3},
       pages={645\ndash 680},
         url={https://doi.org/10.2307/1968482},
      review={\MR{1503303}},
}

\bib{MR0010274}{article}{
      author={Whitney, Hassler},
       title={The self-intersections of a smooth {$n$}-manifold in
  {$2n$}-space},
        date={1944},
        ISSN={0003-486X},
     journal={Ann. of Math. (2)},
      volume={45},
       pages={220\ndash 246},
         url={https://doi.org/10.2307/1969265},
      review={\MR{10274}},
}

\bib{MR2353707}{article}{
      author={Wockel, Christoph},
       title={Lie group structures on symmetry groups of principal bundles},
        date={2007},
        ISSN={0022-1236},
     journal={J. Funct. Anal.},
      volume={251},
      number={1},
       pages={254\ndash 288},
         url={https://doi.org/10.1016/j.jfa.2007.05.016},
      review={\MR{2353707}},
}

\bib{MR2591666}{article}{
      author={Wockel, Christoph},
       title={A generalization of {S}teenrod's approximation theorem},
        date={2009},
        ISSN={0044-8753},
     journal={Arch. Math. (Brno)},
      volume={45},
      number={2},
       pages={95\ndash 104},
         url={https://mathscinet.ams.org/mathscinet-getitem?mr=2591666},
      review={\MR{2591666}},
}

\end{biblist}
\end{bibdiv}
}
\makeatother

~

{\footnotesize\noun{Department of Mathematics \& MIT Open Learning, Massachusetts Institute
of Technology}}\\
{\footnotesize\emph{Email address}:\enskip{}\code{ziqifang@mit.edu}}{\footnotesize\par}
\end{document}